\newtheorem{theorem}{Theorem}[section]
\newtheorem{lemma}[theorem]{Lemma}
\newtheorem{cor}[theorem]{Corollary}
\newtheorem{prop}[theorem]{Proposition}
\newtheorem{conjecture}[theorem]{Conjecture}
\theoremstyle{definition}
\newtheorem{definition}[theorem]{Definition}
\newtheorem{question}[theorem]{Question}
\theoremstyle{remark}
\newtheorem{remark}[theorem]{Remark}
\numberwithin{equation}{section}
\def\bS{\mathbb{S}}
\def\bJ{\mathbb{J}}
\def\bB{\mathbb{B}}
\def\bK{\mathbb{K}}
\def\bF{\mathbb{F}}
\def\eps{\varepsilon}
\def\bA{\mathbb{A}}
\def\bC{\mathbb{C}}
\def\bE{\mathbb{E}}
\def\bM{\mathbb{M}}
\def\bN{\mathbb{N}}
\def\bR{\mathbb{R}}
\def\diag{\mathrm{diag}}
\def\bI{\mathbb{I}}
\def\bB{\mathbb{B}}
\begin{document}

\setlength{\baselineskip}{0.2 in}

\setlength{\parindent}{5.0 true mm}

\setcounter{page}{1}

\title{

\ \vskip 150pt \centerline{\Huge \bf A journey into Matrix Analysis }

{\color{blue}
\vskip 30pt
{\Large 
$$
\left|\begin{bmatrix} 1&4&5 \\
2&6&7 \\
3&8&9 \\
\end{bmatrix}\right|^2
= U\left|\begin{bmatrix} 1 \\ 2\\ 3\end{bmatrix}\right|^2U^* + V\left|\begin{bmatrix} 4&5 \end{bmatrix}\right|^2V^*+
W\left|\begin{bmatrix} 6&7 \\ 8&9
\end{bmatrix}\right|^2W^*
$$
}
}

\ \vskip 100pt \centerline{\Large \bf Jean-Christophe Bourin }

}

\date{}

\maketitle

\chapter*{Outline of the thesis}

Matrix Analysis is essential in many areas of mathematics and sciences. This is the main topic of 
this thesis which presents a substantial part of my research.  There are 9 chapters.

\vskip 10pt\noindent Chapter 1 is an introductory chapter, some results from the period 1999-2010 are given. 

\vskip 10pt\noindent
 Chapters 2--8 are the central part of the thesis. Each  chapter presents
 an  article (with a blue title). This article is complemented with an additional section,  {\it Around this article}.

  We may divide these chapters into three groups. 
\begin{itemize}  
\item[] Chapters 2-4 deal with matrix inequalities, Chapter 2 is concerned with norm inequalities and logmajorization and Chapters 3-4 with functional calculus and a unitary orbit
technique that  I started to develop in 2003.

\item[]  Chapter 5 is a time-break in infinite dimensional Hilbert space operators

\item[]  Chapters 6-8 establish several decompositions for partitioned matrices, especially for positive block matrices. Some norm inequalities involving the numerical range are derived.
\end{itemize}  

\vskip 10pt\noindent Chapter 9 is for students; a proof of the Spectral Theorem for bounded operators is derived from the matrix case.

\vskip 10pt\noindent 
The thesis is divided into two parts of similar size.
  The first part establishes matrix inequalities involving symmetric norms, eigenvalues and unitary orbits. These results are also used in the second part,  dealing with operator diagonals of Hilbert space operators, partitioned matrices, and numerical ranges. 

\tableofcontents


\part{Matrix inequalities, norms and unitary orbits}

\ 
\ 

\vskip 100pt

{\Large
{\color{blue}
$$
f''(t)\le 0, \  f(0)\ge 0, \  A\ge 0, \ Z^*Z\ge I
$$
}
}

\vskip -20pt
{\Large
{\color{blue}
$$
\Longrightarrow
$$
}
}

\vskip -20pt

{\Large
{\color{blue}
$$
{\mathrm{Tr\,}} f(Z^*AZ)  \le {\mathrm{Tr\,}} Z^*f(A)Z 
$$
}
}



\chapter{Some results as an introduction}

This chapter gives some   theorems that are not discussed in the thesis. We will only mention their relation with the other chapters. This chapter covers a significant part of my work in the period 1999-2010.

\section{Rearrangement inequalities}

In an old paper \cite{B1999} (see \cite{B2006-mv} for a simplified proof)
I obtained the following inequality for the Hilbert-Schmidt norm $\|\cdot\|_2$.

Denote by $\bM_n$ the space of $n$-by-$n$ matrices and by $\bM_n^+$ the positive semidefinite cone. A pair $(A,B)$ in $\bM_n^+$ is called a monotone pair if
$A=f(C)$ and $B=g(C)$ for two non-decreasing functions $f(t),\,g(t)$ and some $C\in\bM_n^+$. If $f(t)$ is non-decreasing and
$g(t)$ is non-increasing, then we say that $(A,B)$ is antimonotone.

\begin{theorem}\label{th-monotone1} Let $Z\in\bM_n$ be a normal matrix. If $(A,B)$ is a monotone pair in $\bM_n^+$ then, 
$$
\| AZB \|_2 \le \| ZAB \|_2.
$$
If $(A,B)$ is antimonotone, then the inequality reverses.
\end{theorem}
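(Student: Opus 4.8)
The plan is to diagonalise the commuting pair, reduce the statement to the nonnegativity of a bilinear expression in the entries of $Z$, and then exploit normality through a projection mass-balance identity.

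First I would use that a monotone pair commutes: since $A=f(C)$ and $B=g(C)$, there is an orthonormal basis in which $C=\diag(c_1,\dots,c_n)$, $A=\diag(a_1,\dots,a_n)$, $B=\diag(b_1,\dots,b_n)$, with $a_i=f(c_i)$, $b_i=g(c_i)$. Ordering so that $c_1\le\cdots\le c_n$ and writing $u_i=a_i^2$, $v_i=b_i^2$, the sequences $(u_i)$ and $(v_i)$ are both nondecreasing (\emph{similarly ordered}). Writing $Z=(z_{ij})$ in this basis, a direct computation of $\|ZAB\|_2^2=\mathrm{Tr}(A^2B^2Z^*Z)$ and $\|AZB\|_2^2=\mathrm{Tr}(A^2ZB^2Z^*)$, together with the normality hypothesis $Z^*Z=ZZ^*$ used to replace $Z^*Z$ by $ZZ^*$ in the first trace, would give the key identity
\[
\|ZAB\|_2^2-\|AZB\|_2^2=\sum_{i,j}u_i\,(v_i-v_j)\,|z_{ij}|^2 .
\]
So it suffices to prove that the right-hand side is nonnegative.

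The decisive observation is that this expression is \emph{linear} in the differences $v_i-v_j$. Using the layer-cake formula $v_i-v_j=\int_0^\infty\big(\mathbf 1\{v_i>t\}-\mathbf 1\{v_j>t\}\big)\,dt$ and setting, for each $t$, $J=J(t)=\{i:v_i>t\}$, I would reduce the claim to the pointwise inequality
\[
\sum_{i\in J,\ j\notin J}u_i\,|z_{ij}|^2\ \ge\ \sum_{i\notin J,\ j\in J}u_i\,|z_{ij}|^2 .
\]
Two facts then close the argument. First, normality yields a mass-balance across the cut: if $Q$ is the orthogonal projection onto $\mathrm{span}\{e_i:i\in J\}$, then $\|QZ(I-Q)\|_2^2-\|(I-Q)ZQ\|_2^2=\mathrm{Tr}\big((ZZ^*-Z^*Z)Q\big)=0$, that is $\sum_{i\in J,\,j\notin J}|z_{ij}|^2=\sum_{i\notin J,\,j\in J}|z_{ij}|^2$. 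Second, since $(u_i)$ and $(v_i)$ are similarly ordered and $J$ is an upper set for $v$, there is a threshold $\theta$ with $u_i\ge\theta$ for $i\in J$ and $u_i\le\theta$ for $i\notin J$; hence the left sum is $\ge\theta$ times the common mass while the right sum is $\le\theta$ times it. This gives the pointwise inequality, and integrating in $t$ proves $\|AZB\|_2\le\|ZAB\|_2$.

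For the antimonotone case I would run the same argument: now $g$ is nonincreasing, so on the level set $J=\{i:v_i>t\}$ one has $u_i\le\theta\le u_j$ for $i\in J$, $j\notin J$, which reverses the threshold comparison and hence the inequality. The step I expect to be the crux is producing the identity for $\|ZAB\|_2^2-\|AZB\|_2^2$ in the form that is linear in $v_i-v_j$, since this is precisely what makes the level-set reduction legitimate; the only place normality is genuinely needed is the mass-balance $\mathrm{Tr}\big((ZZ^*-Z^*Z)Q\big)=0$, and it is striking that nothing more about $Z$ is required.
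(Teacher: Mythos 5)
Your proof is correct. The paper itself does not reproduce a proof of this theorem (it only cites \cite{B1999} and \cite{B2006-mv}), so there is nothing to compare line by line; but your argument is complete and is essentially the known simplified one: after simultaneous diagonalization the identity
$$
\| ZAB\|_2^2-\| AZB\|_2^2=\sum_{i,j}u_i(v_i-v_j)|z_{ij}|^2
$$
checks out, the layer-cake reduction to up-sets $J=\{i: v_i>t\}$ is legitimate because $v$ is monotone in the ordering that makes $u$ monotone, and normality enters exactly where it must, through $\|QZ(I-Q)\|_2=\|(I-Q)ZQ\|_2$ for the spectral projection $Q$ onto $J$ (note that a naive symmetrization $i\leftrightarrow j$ would require $|z_{ij}|=|z_{ji}|$, which normality does not give, so your cut-mass balance is genuinely the right use of the hypothesis). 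The antimonotone case is handled correctly by the reversed threshold comparison on the down-set $J$.
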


The result still holds for Hilbert space operators whenever $A$ is Hilbert-Schmidt. Letting $Z$ be a rank one projection, we recapture
Chebyshev inequality for pairs of non-decreasing functions on $(0,1)$,
$$
\int_0^1 f(t)\, {\mathrm{d}}t\ \int_0^1  g(t) \,{\mathrm{d}}t \le \int_0^1  f(t)g(t) \, {\mathrm{d}}t.
$$
Letting $Z$ be a unitary matrix, we recapture von Neumann trace inequality (1937) for general matrices $X,Y\in\bM_n$,
$$
\left|{\mathrm{Tr}\,} XY \right| \le \sum_{j=1}^n \mu_j(X) \mu_j(Y)
$$
where $\mu_1(X)\ge \cdots \ge \mu_n(X)$ are the singular values of $X$.

Hence studying the product  $AZB$ is useful. Chapter 2 will be devoted to the study of the functional $(s,t)\mapsto A^sZB^t$ for a general matrix $Z$ and any pair of positive matrices $A,B$. This will provide a number of new inequalities extending some famous inequalities.

Two consequences of Theorem \ref{th-monotone1} for the operator norm $\|\cdot\|_{\infty}$ are given in \cite{B1999}.  If $(A,B)$ is a monotone pair in $\bM_n^+$, then
\begin{equation}\label{eqproj}
\| AEB\|_{\infty} \le \| EAB\|_{\infty}
\end{equation}
for all projections $E$, and
\begin{equation*}\label{eqsemi}
\| ASB\|_{\infty} \le \sqrt{2}\| SAB\|_{\infty}
\end{equation*}
for all semi-unitary matrices $S$. 
Here semi-unitary means that $S^*S=SS^*=F$ for some projection $F$. This suggests the following conjecture for  any symmetric norm $\|\cdot\|$ (i.e., unitarily invariant norm).

\vskip 5pt\noindent
{\bf Conjecture A.} {\it Let $Z\in\bM_n$ be a normal matrix. If $(A,B)$ is a monotone pair in $\bM_n^+$, then, for all symmetric norms,
$$
\| AZB \| \le \sqrt{2}\| ZAB \|.
$$
}

\vskip 7pt
Conjecture A is supported by the  remarkable result recently proved by Eric Ricard (private communication) : Conjecture A holds true if we take the constant 8. 

The norm inequality  \eqref{eqproj} can be considerably extended. In \cite{B2003} I give the singular value inequality:

\begin{theorem}\label{th-monotone2}  Let $(A,B)$ be a monotone pair in $\bM_n^+$ and  let $E\in\bM_n$ be a projection. Then for some unitary $V\in\bM_n$,
$$
\left| AEB \right| \le V\left| EAB \right| V^*
$$
\end{theorem}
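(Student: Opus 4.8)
The plan is to reduce the operator inequality to a family of scalar inequalities. By Weyl's monotonicity principle, two positive matrices $P,Q$ satisfy $P\le VQV^*$ for some unitary $V$ exactly when $\lambda_j(P)\le\lambda_j(Q)$ for every $j$. Applying this to $P=|AEB|$ and $Q=|EAB|$, the assertion $|AEB|\le V|EAB|V^*$ is equivalent to the singular value inequalities $\mu_j(AEB)\le\mu_j(EAB)$ for $j=1,\dots,n$. Thus I would aim to prove these $n$ inequalities, the case $j=1$ being precisely the operator norm inequality \eqref{eqproj}, which I may invoke.

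Next I would exploit the structure of the monotone pair. Diagonalizing $C$ and using that $f,g$ are non-decreasing, $A$ and $B$ become diagonal with their entries arranged in the \emph{same} non-decreasing order; hence both are nonnegative combinations $A=\sum_k c_k G_k$ and $B=\sum_l d_l G_l$ (with $c_k,d_l\ge 0$) of a single nested family $G_k$ of projections onto the top coordinates $\mathrm{span}(e_k,\dots,e_n)$. The base case, where $A=G_p$ and $B=G_q$ are single such projections, is then immediate: the two projections are comparable, so the smaller one absorbs the larger, and since $\mu_j(RM)\le\|R\|_\infty\,\mu_j(M)$ for any $R$, multiplying by the remaining projection (a contraction) only lowers singular values. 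This gives $\mu_j(G_pEG_q)\le\mu_j(EG_pG_q)$ for all $j$, and already shows where the common ordering is essential, since for an anti-monotone pair the comparison reverses.

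The main obstacle is the combination step: passing from single projections to the combinations $A,B$, and from $j=1$ to all $j$, without the additivity of singular values and without losing the unitary freedom. That freedom is genuinely needed, for neither $(AEB)(AEB)^*\le(EAB)(EAB)^*$ nor $(AEB)^*(AEB)\le(EAB)^*(EAB)$ holds in general, so no single Douglas-type contraction realizes $AEB$ from $EAB$. A tempting shortcut, applying \eqref{eqproj} on the antisymmetric tensor powers $\Lambda^k$ to control the products $\prod_{i\le k}\mu_i$, fails as well, because $(\Lambda^k A,\Lambda^k B)$ need not be a monotone pair (the two factors can become anti-aligned on certain wedge vectors), which is exactly why an index-by-index argument appears unavoidable. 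The route I would pursue uses the Ky Fan identity $\mu_j(EAB)=\min\{\|EAB\,Q\|_\infty:\ Q \text{ a projection of rank } n-j+1\}$: selecting $Q$ from the right singular vectors of $EAB$ and working on the compression to the complementary $j$-dimensional subspace, one reduces the bound on $\mu_j(AEB)$ to an operator-norm inequality of type \eqref{eqproj}, combined with eigenvalue interlacing and an induction that peels the nested layers $G_k$ one at a time. The hardest point, and the one where the unitary-orbit technique of \cite{B2003} and the common ordering of $A$ and $B$ must be used beyond the top singular value, is controlling simultaneously, for every index $j$, the non-commutation created by moving $A$ across the projection $E$.
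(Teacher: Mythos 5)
The paper does not actually prove Theorem \ref{th-monotone2} in this text; it is quoted from \cite{B2003}, so your proposal can only be judged on its own terms. Your framing is sound as far as it goes: the reduction via Weyl monotonicity to the singular value inequalities $\mu_j(AEB)\le\mu_j(EAB)$ is correct, the layer-cake decomposition $A=\sum_k c_kG_k$, $B=\sum_l d_lG_l$ into a common nested family of spectral projections is the right way to use the monotonicity hypothesis, the base case $\mu_j(G_pEG_q)\le\mu_j(EG_pG_q)$ is verified correctly (the smaller of the two nested projections absorbs the product, and the remaining factor is a contraction), and your observation that $(\Lambda^kA,\Lambda^kB)$ need not be a monotone pair is a genuine and correctly diagnosed obstruction to the antisymmetric-tensor shortcut.

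The gap is exactly where you flag it, and it is not a technicality but the whole theorem. Expanding the double sum gives $AEB=\sum_{k,l}c_kd_l\,G_kEG_l$ and $EAB=\sum_{k,l}c_kd_l\,EG_kG_l$, and individual singular values $\mu_j$ for $j\ge 2$ are neither subadditive nor monotone under termwise comparison, so the base-case inequalities cannot be summed; at best one could weakly majorize, which is strictly weaker than the claimed unitary-orbit domination. Your proposed repair via the Ky Fan formula also does not close: choosing the rank-$(n-j+1)$ projection $Q$ that attains $\mu_j(EAB)=\|EABQ\|_\infty$ only yields $\mu_j(AEB)\le\|AEBQ\|_\infty$, and there is no inequality of type \eqref{eqproj} comparing $\|AEBQ\|_\infty$ with $\|EABQ\|_\infty$ once an arbitrary projection $Q$, unrelated to the spectral structure of $A$ and $B$, is inserted on the right; the pair $(A,BQ)$ is not monotone and $Q$ does not commute with $E$. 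The concluding appeal to ``eigenvalue interlacing and an induction that peels the nested layers'' names a hope rather than an argument: no induction hypothesis is formulated, and nothing is said about how the unitary realizing the orbit inequality is to be produced or made uniform across $j$. As it stands the proposal establishes the case where $A$ and $B$ are themselves (nested) spectral projections and the case $j=1$, but not the theorem.
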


\vskip 5pt
From this result follow several eigenvalue inequalities for compressions onto subspaces, and more generally for unital,  positive  linear map $\Phi$. For instance :

\begin{cor}\label{cor-monotone}  Let $(A,B)$ be a monotone pair in $\bM_n^+$ and  let $\Phi:\bM_n\to \bM_d$ be a positive, unital linear map. Then for some unitary $V\in\bM_d$,
$$
\Phi(ABA) \le V\Phi(A)\Phi(B)\Phi(A)  V^*.
$$
\end{cor}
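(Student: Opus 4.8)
The plan is to deduce the corollary from Theorem~\ref{th-monotone2} in two moves: an exact reduction of a general positive unital $\Phi$ to a compression, and then the treatment of the compression case through the singular value inequality. Throughout I will use the elementary equivalence that, for Hermitian $X,Y\in\bM_d$, there is a unitary $V$ with $X\le VYV^*$ if and only if $\lambda_k(X)\le\lambda_k(Y)$ for every $k$ (eigenvalues in decreasing order); one direction is Weyl's monotonicity, the other follows by simultaneously diagonalizing. Since both $\Phi(ABA)=\Phi\big((B^{1/2}A)^*(B^{1/2}A)\big)$ and $\Phi(A)\Phi(B)\Phi(A)=\big(\Phi(B)^{1/2}\Phi(A)\big)^*\big(\Phi(B)^{1/2}\Phi(A)\big)$ are positive semidefinite, the corollary is equivalent to the eigenvalue comparison between $\Phi(ABA)$ and $\Phi(A)\Phi(B)\Phi(A)$.

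First I would reduce to a compression. The key observation is that $A=f(C)$ and $B=g(C)$ commute, so $A$, $B$ and $ABA$ all lie in the commutative algebra $\mathcal A$ generated by $C$. A positive linear map is automatically completely positive on a commutative domain, so $\Phi|_{\mathcal A}$ admits a Stinespring representation $\Phi(X)=W^*\pi(X)W$ for $X\in\mathcal A$, with $\pi$ a $*$-representation on a larger space and $W$ an isometry. Writing $P=WW^*$, this gives $\Phi(ABA)=W^*\pi(ABA)W$ and $\Phi(A)\Phi(B)\Phi(A)=W^*\pi(A)P\pi(B)P\pi(A)W$. Since $\pi(A)=f(\pi(C))$ and $\pi(B)=g(\pi(C))$ again form a monotone pair and $P$ is a projection, the desired eigenvalue comparison for $\Phi$ follows from the same comparison for the compression $X\mapsto PXP$ applied to the pair $(\pi(A),\pi(B))$. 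Thus it suffices to prove the statement when $\Phi$ is a compression.

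For a compression $\Phi(X)=PXP$ I would expand both words in $A,B,P$: on the range of $P$ one has $\Phi(ABA)=PABAP$, while by cyclicity of the nonzero spectrum $\Phi(A)\Phi(B)\Phi(A)=(PAP)(PBP)(PAP)$ shares its nonzero eigenvalues with the more heavily compressed word $PAPAPBP$. Theorem~\ref{th-monotone2} now supplies the mechanism to compare such words: its conclusion is the singular value inequality $\mu_k(AEB)\le\mu_k(EAB)$, and forming $(AEB)(AEB)^*$ and $(EAB)(EAB)^*$ turns it into an eigenvalue inequality relating $AEB^2EA$ and $EAB^2AE$. Because squares and square roots of one member of a monotone pair again pair monotonically with the other member, the square can be absorbed and the inequality reapplied, each use transporting one factor across a projection. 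Chaining these single-factor transfers is what converts one word into the other and delivers the required eigenvalue comparison for $\Phi=$ compression.

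The main obstacle I anticipate is precisely this last, combinatorial, step: organizing the iterated applications of Theorem~\ref{th-monotone2} so that the internal projections are inserted or removed in the order dictated by the singular value inequality $\mu_k(AEB)\le\mu_k(EAB)$, and checking that the intermediate words remain of the admissible $\,|\,\cdot\,|\,$-type at each stage so that the theorem applies. The reduction to a compression, by contrast, is clean once one notices the commutativity of the monotone pair and the automatic complete positivity of $\Phi$ on the abelian algebra generated by $C$.
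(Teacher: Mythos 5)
Your reduction of a general unital positive $\Phi$ to a compression is correct, and it is the intended route: since $A$, $B$ and $ABA$ all lie in the abelian algebra generated by $C$, the restriction of $\Phi$ to that algebra dilates to a representation, $\Phi(X)=W^*\pi(X)W$, the pair $(\pi(A),\pi(B))$ is again monotone, and the compression case then yields the general one (this is the same argument as Lemma~\ref{L-2.9} of Chapter~3). The genuine gap is the step you yourself defer as the ``main obstacle'': no chain of applications of Theorem~\ref{th-monotone2} can produce the eigenvalue inequality $\lambda_k(EABAE)\le\lambda_k(EAEBEAE)$, because that inequality is false. Take $A=B=\diag(1,2)$ (a monotone pair) and $E$ the orthogonal projection onto the span of $(1,1)/\sqrt 2$. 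Then $EABAE=EA^3E$ has largest eigenvalue $9/2$, whereas $(EAE)(EBE)(EAE)=(EAE)^3$ has largest eigenvalue $27/8$. In other words, for $d=1$ the statement you are trying to prove reads $\Phi(ABA)\le\Phi(A)\Phi(B)\Phi(A)$ for every state $\Phi$, which is the reverse of Chebyshev's inequality---the very inequality this chapter recovers from Theorem~\ref{th-monotone1}---and for $B=I$ it is the reverse of Kadison's inequality $\Phi(A)^2\le\Phi(A^2)$.

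What Theorem~\ref{th-monotone2} actually gives is the opposite comparison, which is the result recorded in the cited Bourin--Ricard paper; the corollary as printed in the thesis has the two sides interchanged. Indeed, $(A,B^{1/2})$ is again a monotone pair, so Theorem~\ref{th-monotone2} gives $\mu_k(AEB^{1/2})\le\mu_k(EAB^{1/2})$ for all $k$; since $E$ is a contraction, $\mu_k(EAEB^{1/2})\le\mu_k(AEB^{1/2})$; squaring these singular value inequalities gives
$$
\lambda_k(EAEBEAE)=\lambda_k\bigl((EAEB^{1/2})(EAEB^{1/2})^*\bigr)\le\lambda_k\bigl((EAB^{1/2})(EAB^{1/2})^*\bigr)=\lambda_k(EABAE),
$$
i.e.\ the compression case of $\Phi(A)\Phi(B)\Phi(A)\le V\Phi(ABA)V^*$, and your Stinespring reduction then gives the general case. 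So your framework is the right one and needs no delicate word combinatorics at all; but the only direction it can prove is $\Phi(A)\Phi(B)\Phi(A)\le V\Phi(ABA)V^*$, and the inequality your proposal takes at face value cannot be established because it fails.
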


\vskip 5pt
This was recorded in a joint paper with  Ricard \cite{BR}.
Concerning Rearrangement inequalities, one may state another conjecture \cite{bourin-IJM}.

\vskip 7pt\noindent
{\bf Conjecture B.} {\it Let $A,B \in \bM_n^+$ and $p,q>0$. Then, for all symmetric norms,
$$
\| A^pB^q+ B^pA^q \| \le \| A^{p+q} + B^{p+q} \|.
$$
}

Several authors, including Audenaert, Bhatia, Kittaneh, proved some very special cases of the conjecture, but the general case is still open.

\section{Subadditivity and superadditivity }

\vskip 10pt
In 1969, Rotfel'd stated a remarkable trace inequality for  all non-negative concave functions $f(t)$ defined  on $[0,\infty)$ and any pair $A,B\in\bM_n^+$, 
$$
{\mathrm{Tr\,}}f(A+B)\le {\mathrm{Tr\,}}f(A) + {\mathrm{Tr\,}}f(B).
$$

In a joint paper with  Aujla \cite{AB} we give the stronger statement,
\begin{equation}\label{eqAB}
f(A+B)\le Uf(A)U^* + Vf(B)V^*
\end{equation}
for some unitary matrices $U,V\in\bM_n$. This result and a number of related Jensen's type inequalities are discussed at length in Chapter 3.

The Rotfel'd trace inequality can also be extended as a norm inequality. In the paper written with Uchiyama \cite{BU}, we obtain the following theorem.

\vskip 5pt
\begin{theorem}\label{T-BU} If $f(t)$ is a nonnegative concave function on $[0,\infty)$ and
$A,B\in\bM_n^+$, then, for all symmetric norms,
$$
\| f(A+B)\| \le \| f(A) + f(B)\|.
$$
\end{theorem}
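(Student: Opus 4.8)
The plan is to pass from the symmetric-norm statement to a weak majorization and then to realise $f(A+B)$ as the nonzero part of $f$ applied to a positive block matrix whose diagonal blocks are $A$ and $B$. First, by Fan's dominance theorem it suffices to prove the weak majorization $\lambda(f(A+B))\prec_w\lambda(f(A)+f(B))$, that is $\sum_{j=1}^k\lambda_j(f(A+B))\le\sum_{j=1}^k\lambda_j(f(A)+f(B))$ for every $k$. A nonnegative concave function on $[0,\infty)$ is automatically nondecreasing, so $\lambda_j(f(X))=f(\lambda_j(X))$ and the left side equals $\sum_{j\le k}f(\lambda_j(A+B))$. I would also reduce at the outset to the normalisation $f(0)=0$: writing $f=f(0)+g$ with $g$ concave, nonnegative and $g(0)=0$, the shift adds $k\,f(0)$ to the left and $2k\,f(0)$ to the right of each partial-sum inequality, so the $g$-case implies the $f$-case because $f(0)\ge 0$.

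With $f(0)=0$ in force, here is the key device. Put $M=\begin{pmatrix}\sqrt A\\ \sqrt B\end{pmatrix}$ and $W=MM^*=\begin{pmatrix}A & \sqrt A\sqrt B\\ \sqrt B\sqrt A & B\end{pmatrix}\ge 0$. Since $M^*M=A+B$, the matrix $W$ has the same nonzero eigenvalues as $A+B$ (with extra zeros), and because $f(0)=0$ the operator $f(W)$ then has the same nonzero eigenvalues as $f(A+B)$; as symmetric norms depend only on the singular values, $\|f(W)\|=\|f(A+B)\|$. The diagonal blocks of $W$ are precisely $A$ and $B$, so the theorem reduces to the block-matrix subadditivity statement: for every positive block matrix $\begin{pmatrix}A & Y\\ Y^* & B\end{pmatrix}\ge 0$ one has $\|f(W)\|\le\|f(A)+f(B)\|$.

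To prove this block inequality I would seek, in the spirit of the decomposition \eqref{eqAB}, unitaries $\mathcal U,\mathcal V\in\bM_{2n}$ with $f(W)\le \mathcal U(f(A)\oplus 0)\mathcal U^*+\mathcal V(0\oplus f(B))\mathcal V^*$; monotonicity of symmetric norms on $\bM_{2n}^+$ would then give $\|f(W)\|\le\|\mathcal U(f(A)\oplus 0)\mathcal U^*+\mathcal V(0\oplus f(B))\mathcal V^*\|$. The main obstacle is the final estimate: one cannot in general bound $\|\mathcal U X\mathcal U^*+\mathcal V Y\mathcal V^*\|$ by $\|X+Y\|$, since rotating one summand to align with the other can strictly increase the norm. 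The argument must therefore use that $f(A)\oplus 0$ and $0\oplus f(B)$ have rank at most $n$ and are supported on complementary $n$-dimensional subspaces of $\bC^{2n}$, which is what should permit the two rotated pieces to be recombined, without loss, into $f(A)+f(B)$. Squeezing the constant down to $1$ in this recombination is the delicate heart of the matter; the same difficulty explains why \eqref{eqAB} alone is insufficient, as it only bounds $f(A+B)$ above by $Uf(A)U^*+Vf(B)V^*$ with unitaries $U,V$ whose combined norm is not controlled by $\|f(A)+f(B)\|$.

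As an alternative attack on the crux I would reformulate the weak majorization through the variational identity $\sum_{j\le k}\lambda_j(Z)=\min_{s\ge 0}\bigl(ks+\mathrm{Tr}(Z-sI)_+\bigr)$, valid for $Z\ge 0$. Comparing the two sides term by term in $s$ shows it is enough to prove the one-parameter family of trace inequalities $\mathrm{Tr}(f(W)-sI)_+\le\mathrm{Tr}(f(A)+f(B)-sI)_+$ for all $s\ge 0$. Each is a single trace inequality, to be handled by the dual formula $\mathrm{Tr}(X-sI)_+=\max_{0\le P\le I}\mathrm{Tr}\,P(X-sI)$ together with a superadditivity argument for $X\mapsto\mathrm{Tr}(X-sI)_+$ of the type that yields Rotfel'd's trace inequality. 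The obstruction is again concavity pointing the wrong way: the spectral projection of $f(W)$ is not the right test projection, because the operator Jensen inequality pushes the compressions of $f(A)$ and $f(B)$ downward, so a more careful choice of $P$, married to the scalar subadditivity $f(s+t)\le f(s)+f(t)$, is what I expect to be needed.
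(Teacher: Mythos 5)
Your preparatory reductions are all correct: a nonnegative concave function on $[0,\infty)$ is automatically nondecreasing, the case $f(0)=0$ suffices, Fan dominance converts the claim into the weak majorization $f(A+B)\prec_{w} f(A)+f(B)$, and with $T=\begin{pmatrix}A^{1/2}\\ B^{1/2}\end{pmatrix}$ one has $\|f(TT^*)\|=\|f(A+B)\|$ because $T^*T=A+B$. But neither of your two attacks on what remains closes, and the first one aims at a false target. Read literally for an arbitrary off-diagonal block $Y$, the statement ``$\|f(H)\|\le\|f(A)+f(B)\|$ for every positive block matrix $H$ with diagonal blocks $A,B$'' already fails for $f(t)=t$ and the operator norm: Chapter 6 records positive matrices in $\bM_6$, with \emph{normal} off-diagonal blocks, for which $\|H\|_{\infty}>\|A+B\|_{\infty}$, and Hayashi's theorem in Chapter 7 shows that even in the linear case the off-diagonal block must essentially be normal. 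Read only for your particular $W=TT^*$, the block inequality is not a reduction at all but a verbatim restatement of the theorem, since $\|f(W)\|=\|f(A+B)\|$ exactly; any proof must therefore exploit the specific off-diagonal block $A^{1/2}B^{1/2}$, i.e.\ the relation $T^*T=A+B$, which your argument never does. The decomposition \eqref{eqAB} combined with Lemma \ref{L-3.4} only delivers $\|f(H)\|\le\|f(A)\|+\|f(B)\|$ (Corollary \ref{corlee}), and you rightly observe that two freely rotated summands cannot in general be recombined at no cost into $\|f(A)+f(B)\|$. Your second route, through $\mathrm{Tr}(\,\cdot\,-sI)_+$, is an equivalent reformulation of the same weak majorization and stalls at the same spot: as you note, the Jensen-type inequalities for compressions of concave functions push in the unfavourable direction, so the obvious test projections do not work.

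What is missing is the actual engine of the Bourin--Uchiyama argument, which this thesis does not reproduce but explicitly flags: Theorem \ref{T-B3}, the inequality $\|f(Z^*CZ)\|\le\|Z^*f(C)Z\|$ for expansive $Z$ and nonnegative concave $f$. Feeding congruence representations of $A$ and $B$ in terms of $A+B$ into that result (or into its combined form, Theorem \ref{T-JOT}, of which the present statement is exactly the case $Z_1=Z_2=I$) is how the Ky Fan partial sums of $f(A)+f(B)$ are bounded below by those of $f(A+B)$. Until an ingredient of at least this strength is supplied, your proposal is a collection of correct equivalent reformulations of the theorem rather than a proof of it.
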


\vskip 5pt
This result was first proved for {\it operator} concave  functions  by Ando and Zhan (1999).
Our proof (2007) is completely different, and does
 not use the theory of operator monotone functions. A part of our proof comes from \cite{B3} where the following theorem was obtained.

\vskip 5pt
\begin{theorem}\label{T-B3} Let $f(t)$ be a nonnegative concave function on $[0,\infty)$, let $A\in\bM_n^+$, and let $Z\in\bM_n$ be expansive. Then, for all symmetric norms,
$$
\| f(Z^*AZ)\| \le \| Z^*f(A)Z\| .
$$
\end{theorem}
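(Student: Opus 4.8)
The plan is to deduce the symmetric-norm inequality from the trace inequality displayed at the opening of this part, namely $\mathrm{Tr}\,f(W^*AW)\le \mathrm{Tr}\,W^*f(A)W$ for expansive $W$, by climbing up the Ky Fan $k$-norms. First I would record two elementary facts. Since $f$ is nonnegative and concave on $[0,\infty)$, its right derivative is nonincreasing and can never become negative (otherwise $f$ would eventually fall below $0$), so $f$ is nondecreasing. Moreover both $f(Z^*AZ)$ and $Z^*f(A)Z$ are positive semidefinite, so their singular values coincide with their eigenvalues, and by the Fan dominance theorem the desired inequality for all symmetric norms is equivalent to the weak majorization $\lambda(f(Z^*AZ))\prec_w\lambda(Z^*f(A)Z)$, that is, $\sum_{j=1}^k\lambda_j(f(Z^*AZ))\le\sum_{j=1}^k\lambda_j(Z^*f(A)Z)$ for every $k$.

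Next I would fix $k$ and compress to the top eigenspace. Let $u_1,\dots,u_n$ be eigenvectors of $C:=Z^*AZ$ arranged for the decreasing eigenvalues, let $V=\mathrm{span}(u_1,\dots,u_k)$ with inclusion $J\colon V\hookrightarrow\bC^n$ (so $J^*J=I_V$ and $JJ^*=P$, the rank-$k$ spectral projection), and set $W:=ZJ\colon V\to\bC^n$. Because $Z^*Z\ge I$ we get $W^*W=J^*Z^*ZJ\ge J^*J=I_V$, so $W$ is expansive. Now $W^*AW=J^*CJ$ is the compression of $C$ onto its own top eigenvectors, hence $W^*AW=\mathrm{diag}(\lambda_1(C),\dots,\lambda_k(C))$ on $V$; applying $f$ and using that $f$ is nondecreasing gives $\mathrm{Tr}_V f(W^*AW)=\sum_{j=1}^k f(\lambda_j(C))=\sum_{j=1}^k\lambda_j(f(Z^*AZ))$, which is exactly the left-hand Ky Fan sum.

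It then remains to bound this sum. Applied to the expansive $W$, the trace inequality yields $\mathrm{Tr}_V f(W^*AW)\le \mathrm{Tr}_V W^*f(A)W=\mathrm{Tr}\,\bigl(P\,Z^*f(A)Z\bigr)$. Finally, for the positive matrix $M:=Z^*f(A)Z$ and any rank-$k$ projection $P$ one has the elementary Ky Fan bound $\mathrm{Tr}(PM)\le\sum_{j=1}^k\lambda_j(M)$, since the top eigenspace maximizes $\mathrm{Tr}(PM)$ over rank-$k$ projections. Chaining the three relations gives $\sum_{j\le k}\lambda_j(f(Z^*AZ))\le\sum_{j\le k}\lambda_j(Z^*f(A)Z)$ for every $k$, which is the required weak majorization; Fan dominance then closes the argument.

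The crux, and the step I expect to be the real obstacle, is the trace inequality itself, which I am using in the slightly more general compression form where $W\colon V\to\bC^n$ maps between spaces of different dimension. One must check that the inequality persists in this rectangular setting (it is the natural Hilbert-space form, and its proof is dimension-free). If one instead wanted a fully self-contained argument, the heart would be proving $\mathrm{Tr}\,f(W^*AW)\le\mathrm{Tr}\,W^*f(A)W$ from scratch: the scalar germ is the homogeneity bound $f(\lambda t)\le\lambda f(t)$ for $\lambda\ge1$ (a consequence of concavity and $f(0)\ge0$), and converting this pointwise estimate into a global trace statement, where it genuinely fails term by term and succeeds only after summation, is the delicate point.
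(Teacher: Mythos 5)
Your architecture --- Fan dominance, compression to the top-$k$ spectral subspace of $Z^*AZ$, a trace inequality for the compressed operator, then the Ky Fan bound $\mathrm{Tr}(PM)\le\sum_{j\le k}\lambda_j(M)$ --- is the right skeleton, and the elementary observations (automatic monotonicity of $f$, $\lambda_j(f(C))=f(\lambda_j(C))$) are correct. The gap is exactly at the step you flagged, and it is fatal as written: the trace inequality $\mathrm{Tr}\,f(W^*AW)\le\mathrm{Tr}\,W^*f(A)W$ is \emph{false} for a rectangular $W\colon V\to\bC^n$ with $\dim V=k<n$ and $W^*W\ge I_V$. Take $n=2$, let $W\colon\bC\to\bC^2$ be the inclusion $x\mapsto(x,0)$ (so $W^*W=1$), let $A$ be the orthogonal projection onto the span of $(1,1)$, and let $f(t)=\min(t,1/2)$, which is nonnegative and concave on $[0,\infty)$ with $f(0)=0$. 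Then $W^*AW=1/2$, so $\mathrm{Tr}\,f(W^*AW)=1/2$, whereas $f(A)=\tfrac12 A$ gives $\mathrm{Tr}\,W^*f(A)W=1/4$. The point is that an isometry of a $k$-dimensional space into $\bC^n$ is simultaneously ``expansive'' in your sense and a compression map, and for compressions the Brown--Kosaki/Jensen inequality runs the \emph{other} way for concave $f$ with $f(0)\ge 0$. So the proof of the square case is not dimension-free: the correct hypothesis in unequal dimensions is $WW^*\ge I$ on the space where $A$ lives, which is impossible when $W$ has rank at most $k<n$.

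What your argument actually requires is the inequality $\mathrm{Tr}\,Ef(Z^*AZ)E\le\mathrm{Tr}\,EZ^*f(A)ZE$, where $E$ is the top-$k$ spectral projection of $Z^*AZ$ and $Z$ is the original \emph{square} expansive matrix. This statement is true --- it is, in substance, the key lemma of the actual proof --- but it is strictly stronger than the Ky Fan $k$-norm conclusion (since $\mathrm{Tr}\,EZ^*f(A)ZE\le\sum_{j\le k}\lambda_j(Z^*f(A)Z)$), and it cannot be obtained by black-boxing a rectangular trace inequality: its proof has to use both $ZZ^*\ge I$ and the fact that $E$ is a spectral projection of $Z^*AZ$ rather than an arbitrary rank-$k$ projection. (One workable route: both sides are linear in $f$ and $E$ does not depend on $f$, so by the integral representation of a concave nondecreasing $f$ with $f(0)\ge 0$ one reduces to $f(t)=1$, $f(t)=t$ and $f(t)=\min(t,a)$; the first two cases are immediate from $\mathrm{Tr}\,EZ^*ZE\ge k$, and the case $\min(t,a)$ is where the expansivity of the square $Z$ enters in an essential way.) In short, your reduction reproduces the standard strategy faithfully, but the entire content of the theorem is concentrated in the step you deferred, and the justification you propose for that step is incorrect.
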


\vskip 5pt
Here $Z$ expansive means that $Z^*Z\ge I$. For the trace, the function $f(t)$ is not necessarily increasing, and we get the trace inequality \cite{B1} of Page 11. We may gather Theorems \ref{T-BU} and
\ref{T-B3} into a single statement \cite{BL-JOT} :

\vskip 5pt
\begin{theorem}\label{T-JOT}
In the space $\bM_n$, let $\{A_i\}_{i=1}^m$ be positive and  let  $\{Z_i\}_{i=1}^m$ be
expansive. Let
 $f(t)$ be a non-negative concave function on $[0,\infty)$. Then, for all symmetric norms,
$$
\left\|f\left(\sum_{i=1}^m Z_i^*A_iZ_i\right) \right\| \le \left\|  \sum_{i=1}^m
Z_i^*f(A_i)Z_i \right\|.
$$
\end{theorem}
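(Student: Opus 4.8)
The statement simultaneously contains Theorem~\ref{T-B3} (take $m=1$) and Theorem~\ref{T-BU} (take all $Z_i=I$), so the natural plan is to feed both of them into one block construction. First I would normalise the $Z_i$: writing the polar decomposition $Z_i=U_iR_i$ with $U_i$ unitary and $R_i=(Z_i^*Z_i)^{1/2}\ge I$, and using $f(U_i^*A_iU_i)=U_i^*f(A_i)U_i$, one checks that replacing $A_i$ by $U_i^*A_iU_i$ and $Z_i$ by $R_i$ alters neither $\sum_iZ_i^*A_iZ_i$ nor $\sum_iZ_i^*f(A_i)Z_i$. Hence I may assume each $Z_i=R_i\ge I$ is positive. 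Now put $\mathbf{Z}=\diag(R_1,\dots,R_m)$ and $\mathbf{A}=\diag(A_1,\dots,A_m)$ in $\bM_{mn}$: then $\mathbf{Z}$ is positive and expansive, $\mathbf{Z}^*\mathbf{A}\mathbf{Z}=\diag(Z_i^*A_iZ_i)$ is block diagonal, and the two quantities in the theorem are the sums of the diagonal blocks (the partial trace $\Phi$ over the block index) of $\mathbf{Z}^*\mathbf{A}\mathbf{Z}$ and of $\mathbf{Z}^*f(\mathbf{A})\mathbf{Z}$ respectively.

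With $S=\sum_iZ_i^*A_iZ_i=\Phi(\mathbf{Z}^*\mathbf{A}\mathbf{Z})$ and $T=\sum_iZ_i^*f(A_i)Z_i=\Phi(\mathbf{Z}^*f(\mathbf{A})\mathbf{Z})$, I would split the target into two inequalities,
$$\|f(S)\|\ \le\ \Big\|\sum_i f(Z_i^*A_iZ_i)\Big\|\ \le\ \|T\|.$$
The first is exactly the $m$-term form of Theorem~\ref{T-BU} applied to the positive matrices $C_i=Z_i^*A_iZ_i$, since $f(\mathbf{Z}^*\mathbf{A}\mathbf{Z})=\diag(f(C_i))$ and $\Phi(f(\mathbf{Z}^*\mathbf{A}\mathbf{Z}))=\sum_if(C_i)$. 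For the second, the raw material is Theorem~\ref{T-B3}, which, applied to the \emph{square} expansive pair $(\mathbf{Z},\mathbf{A})$, gives the matrix-norm bound $\|f(\mathbf{Z}^*\mathbf{A}\mathbf{Z})\|\le\|\mathbf{Z}^*f(\mathbf{A})\mathbf{Z}\|$ for every symmetric norm.

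The hard part will be the passage from this block-diagonal bound to the summed inequality $\|\Phi(f(\mathbf{Z}^*\mathbf{A}\mathbf{Z}))\|\le\|\Phi(\mathbf{Z}^*f(\mathbf{A})\mathbf{Z})\|$. One is tempted by the shortcut of citing Theorem~\ref{T-B3} for the single column operator $Z$ with blocks $Z_1,\dots,Z_m$, which satisfies $Z^*Z=\sum_iZ_i^*Z_i\ge I$ and $Z^*\mathbf{A}Z=S$. This is illegitimate: Theorem~\ref{T-B3} is false for rectangular (``tall'') operators obeying only $Z^*Z\ge I$---the isometry $Z=\tfrac{1}{\sqrt2}(1,1)^{\mathrm t}$ already violates it, since there the asserted bound reads $f((\alpha+\beta)/2)\le (f(\alpha)+f(\beta))/2$, the reverse of concavity---so it cannot be invoked for the column, and the squareness of $\mathbf{Z}$ cannot be bypassed. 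What makes the column inequality nonetheless true is the expansivity of each \emph{individual} block, $Z_i^*Z_i\ge I$, and harnessing this is precisely the content of the theorem; note also that the termwise matrix inequality $f(Z_i^*A_iZ_i)\le Z_i^*f(A_i)Z_i$ is simply false, and that $\|X\|\le\|Y\|$ is not preserved under the partial trace $\Phi$. My plan would therefore be to reduce, by Fan's dominance principle, to the weak-majorization statement $\sum_{j\le k}f(\lambda_j(S))\le\sum_{j\le k}\lambda_j(T)$ for each $k$, and to prove this by a variational argument on the top-$k$ eigenspace resting on the scalar mechanism $f(\sum_iw_it_i)\le\sum_iw_if(t_i)$, valid for weights $w_i\ge1$ because $f$ is subadditive and satisfies $f(wt)\le wf(t)$ when $w\ge1$ (both from concavity together with $f(0)\ge0$). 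Lifting this scalar fact to the \emph{joint} eigenvectors of $S$, rather than treating one vector at a time, is the genuine obstacle.
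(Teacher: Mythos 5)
Your preliminary reductions are sound: the polar-decomposition normalisation of the $Z_i$ is legitimate, a non-negative concave $f$ on $[0,\infty)$ is automatically non-decreasing (so $\lambda_j(f(S))=f(\lambda_j(S))$, which your Ky Fan reduction tacitly needs), and the scalar inequality $f(\sum_iw_it_i)\le\sum_iw_if(t_i)$ for $w_i\ge1$ is correct. You also rightly reject the two tempting shortcuts: the operator inequality $f(Z^*AZ)\le Z^*f(A)Z$ fails for expansive $Z$ and merely concave $f$, and neither symmetric-norm inequalities nor weak majorization pass through the partial trace. But after these observations the entire theorem is still in front of you, and you explicitly defer it. (For reference, the thesis itself contains no proof of Theorem \ref{T-JOT}; it is quoted from \cite{BL-JOT}, so your plan must stand on its own.) The factorisation through $\bigl\|\sum_if(Z_i^*A_iZ_i)\bigr\|$ does not isolate the difficulty: the first link is not the two-matrix Theorem \ref{T-BU} but its $m$-term form, which is exactly the case $Z_i=I$ of the theorem you are proving, and it cannot be obtained by iterating the two-term statement, since $X\prec_wY$ does not imply $X+C\prec_wY+C$ even for positive $X,Y,C$; the second link is no easier than the original, because the termwise weak majorizations supplied by Theorem \ref{T-B3} cannot be summed, for the same reason.

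The step you call ``the genuine obstacle'' is the whole content of the theorem, and the mechanism you sketch provably cannot close it one vector at a time. Testing a unit eigenvector $e$ of $S=\sum_iZ_i^*A_iZ_i$ with eigenvalue $\lambda$, your scalar inequality with $w_i=\|Z_ie\|^2\ge1$ and $h_i=Z_ie/\|Z_ie\|$ gives $f(\lambda)\le\sum_iw_if(\langle h_i,A_ih_i\rangle)$; but Jensen's inequality for a concave function runs the wrong way, $f(\langle h_i,A_ih_i\rangle)\ge\langle h_i,f(A_i)h_i\rangle$, so there is no route from here to $\langle e,Te\rangle$ with $T=\sum_iZ_i^*f(A_i)Z_i$. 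This reversal is exactly why the result is delicate (compare the remark in Chapter 3 that Corollary \ref{C-2.5} does not reverse for expansive $Z$). The argument of \cite{BL-JOT} therefore works with the whole top-$k$ spectral projection $E$ of $S$ at once and replaces pointwise Jensen estimates by comparisons of $f$ with affine functions $\ell(t)=\alpha t+\beta$, $\alpha,\beta\ge0$, adapted to the spectrum of $S$ on the range of $E$; for such $\ell$ the expansivity enters through the exact operator inequality $\sum_iZ_i^*\ell(A_i)Z_i=\alpha S+\beta\sum_iZ_i^*Z_i\ge\ell(S)$. None of that apparatus appears in your proposal, so what you have is a correct frame around a missing core.
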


\vskip 5pt
A basic principle for symmetric norms shows that Theorem \ref{T-JOT} entails a reversed inequality in case of convex functions.

\vskip 5pt
\begin{cor}
 In the space $\bM_n$, let $\{A_i\}_{i=1}^m$ be positive and  let  $\{Z_i\}_{i=1}^m$ be
expansive. Let $g(t)$ be a non-negative convex function  on $[0,\infty)$ with
  $g(0)=0$.  Then, for all symmetric norms,
\begin{equation*}
\left\| \sum_{i=1}^m  Z_i^*g(A_i)Z_i \right\| \le \left\|  g\left(\sum_{i=1}^m 
Z_i^*A_iZ_i \right) \right\|. \end{equation*}
\end{cor}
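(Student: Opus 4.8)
The plan is to reduce this convex statement to the concave statement of Theorem \ref{T-JOT} by \emph{inverting} the function, and then to lift the resulting inequality back through $g$ via the standard fact that weak majorization of positive matrices is preserved by non-negative increasing convex functions vanishing at the origin. To begin, I would record that a non-negative convex $g$ on $[0,\infty)$ with $g(0)=0$ is automatically non-decreasing (a negative right-derivative at $0$ would push $g$ below $0$, and convexity keeps the derivative non-decreasing thereafter). Assuming first that $g$ is \emph{strictly} increasing, such a $g$ is necessarily unbounded and hence a bijection of $[0,\infty)$ onto itself; its inverse $f:=g^{-1}$ is then a genuine non-negative concave non-decreasing function on all of $[0,\infty)$ with $f(0)=0$, satisfying $f\circ g=g\circ f=\mathrm{id}$. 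This is exactly the class of functions to which Theorem \ref{T-JOT} applies, and working with $g$ strictly increasing avoids any question about the domain of $f$.

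Next I would apply Theorem \ref{T-JOT} to the concave function $f$ and to the positive matrices $g(A_i)$, keeping the same expansive $Z_i$. Since $f\circ g=\mathrm{id}$ in the functional calculus, $f(g(A_i))=A_i$, and the theorem yields, for every symmetric norm,
$$
\left\| f\Bigl(\sum_{i=1}^m Z_i^*g(A_i)Z_i\Bigr)\right\| \le \left\|\sum_{i=1}^m Z_i^* f(g(A_i))Z_i\right\| = \left\|\sum_{i=1}^m Z_i^*A_iZ_i\right\|.
$$
Writing $T=\sum_{i=1}^m Z_i^*g(A_i)Z_i$ and $S=\sum_{i=1}^m Z_i^*A_iZ_i$, both positive, this says $\|f(T)\|\le\|S\|$ for every symmetric norm, which is equivalent to the weak majorization $\lambda(f(T))\prec_w\lambda(S)$ (Ky Fan dominance).

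Finally I would invoke the basic principle: for positive matrices, $\lambda(X)\prec_w\lambda(Y)$ together with $h$ non-negative increasing convex and $h(0)=0$ force $\lambda(h(X))\prec_w\lambda(h(Y))$, hence $\|h(X)\|\le\|h(Y)\|$ for all symmetric norms (here $\lambda(h(\cdot))=h(\lambda(\cdot))$ since $h$ is non-decreasing, and weak majorization is preserved by increasing convex maps). Applying this with $h=g$, $X=f(T)$, $Y=S$, and using $g\circ f=\mathrm{id}$ so that $g(f(T))=T$, I obtain $\|T\|\le\|g(S)\|$, which is precisely the asserted inequality. The only delicate point is the passage from strictly increasing $g$ to a general $g$ (which may vanish on an initial interval): this I would handle by the perturbation $g_\eps(t)=g(t)+\eps t$, which is strictly increasing, convex, non-negative, and equal to $0$ at $0$, applying the above to $g_\eps$ and letting $\eps\to 0$, using continuity of symmetric norms and $g_\eps\to g$ on the (bounded) spectra involved. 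Everything else is the clean two-step ``invert, then re-apply'' argument, so I expect this approximation step to be the only real, though mild, obstacle.
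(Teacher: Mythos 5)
Your argument is correct and is precisely the ``basic principle'' the paper invokes in its one-line justification: invert $g$ to get a concave $f=g^{-1}$, apply Theorem \ref{T-JOT} to $f$ and the matrices $g(A_i)$, and transfer the resulting weak majorization back through the increasing convex function $g$ via Fan dominance, with the $g_\eps(t)=g(t)+\eps t$ perturbation handling non-injective $g$. No gaps; this fills in exactly the details the paper omits.
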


\vskip 5pt
Another extension of Theorem \ref{T-BU} is given in \cite{bourin-PAMS}.

\vskip 5pt
\begin{theorem}\label{T-normal} If $f(t)$ is a nonnegative concave function on $[0,\infty)$ and
$A,B\in\bM_n$ are normal matrices, then,  for all symmetric norms,
$$
\| f(|A+B|)\| \le \| f(|A|) + f(|B|)\|.
$$
\end{theorem}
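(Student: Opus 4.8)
The plan is to reduce the inequality, in two stages, to the positive case furnished by Theorem~\ref{T-BU}.

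\emph{First stage: from normal to Hermitian.} To each $X\in\bM_n$ I associate the Hermitian matrix $\widetilde X=\begin{pmatrix}0&X\\ X^*&0\end{pmatrix}\in\bM_{2n}$, whose square is $\begin{pmatrix}XX^*&0\\0&X^*X\end{pmatrix}$, so that $|\widetilde X|=|X^*|\oplus|X|$. Normality enters exactly here: $XX^*=X^*X$ forces $|X^*|=|X|$, whence $|\widetilde X|=|X|\oplus|X|$ and $f(|\widetilde X|)=f(|X|)\oplus f(|X|)$. Since $\widetilde A+\widetilde B=\widetilde{A+B}$, the inequality for the \emph{Hermitian} pair $\widetilde A,\widetilde B$ reads, for every symmetric norm on $\bM_{2n}$, $\|f(|(A+B)^*|)\oplus f(|A+B|)\|\le\|(f(|A|)+f(|B|))\oplus(f(|A|)+f(|B|))\|$. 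As $|A+B|$ and $|(A+B)^*|$ have the same eigenvalues, reading this off at the even Ky Fan cut-offs returns precisely the desired inequality in $\bM_n$. Thus it suffices to prove the theorem for Hermitian $A,B$.

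\emph{Second stage: the Hermitian case via Theorem~\ref{T-BU}.} Let $\{e_j\}$ diagonalize the Hermitian matrix $A+B$, with eigenvalues $\mu_j$, so that the eigenvalues of $|A+B|$ are the $|\mu_j|$. The elementary bound $|\langle e,Xe\rangle|\le\langle e,|X|\,e\rangle$, valid for Hermitian $X$, gives $|\mu_j|=|\langle e_j,(A+B)e_j\rangle|\le\langle e_j,|A|\,e_j\rangle+\langle e_j,|B|\,e_j\rangle$. Writing $\mathcal D$ for the pinching onto $\{e_j\}$, this is the operator inequality $|A+B|\le\mathcal D(|A|)+\mathcal D(|B|)$ between commuting positive matrices. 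Applying the nondecreasing $f$, then the monotonicity of symmetric norms, then Theorem~\ref{T-BU} to the positive pair $\mathcal D(|A|),\mathcal D(|B|)$, yields $\|f(|A+B|)\|\le\|f(\mathcal D(|A|))+f(\mathcal D(|B|))\|$. It then remains to compare this right–hand side with $\|f(|A|)+f(|B|)\|$.

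The hard part is precisely this last comparison, and the difficulty is genuine. First, no pointwise eigenvalue domination can hold: for the Hermitian pair $A=\diag(-2,0)$ and $B=\begin{pmatrix}1&1\\1&1\end{pmatrix}$ with $f(t)=\sqrt t$ one gets $|A+B|=\sqrt2\,I$, so $\lambda_2(f(|A+B|))=2^{1/4}>\sqrt2-1=\lambda_2(f(|A|)+f(|B|))$, although the weak majorization (Fan dominance) still holds. Second, Jensen's inequality runs the wrong way, $f(\mathcal D(|A|))=\diag\!\big(f(\langle e_j,|A|\,e_j\rangle)\big)\ge \diag\!\big(\langle e_j,f(|A|)\,e_j\rangle\big)=\mathcal D(f(|A|))$; so although $\|\mathcal D(f(|A|))+\mathcal D(f(|B|))\|\le\|f(|A|)+f(|B|)\|$ because pinchings contract symmetric norms, the quantity actually produced by Theorem~\ref{T-BU} is the larger $\|f(\mathcal D(|A|))+f(\mathcal D(|B|))\|$, which can overshoot the target (indeed $\|f(\mathcal D(X))\|$ may exceed $\|f(X)\|$ for concave $f$). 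The argument must therefore be carried out entirely at the level of Fan dominance: represent the concave $f$ as the infimum of its affine tangents $t\mapsto\alpha+\beta t$ with $\alpha,\beta\ge0$, run Ky Fan's maximal principle at each cut-off $k$ against the optimal tangent at the $k$th singular value of $A+B$, and feed the resulting linear estimate into Theorem~\ref{T-BU} for $|A|,|B|$ so as to absorb the Jensen defect. Making the tangent–line bookkeeping match the positive–case inequality exactly is where the real work concentrates.
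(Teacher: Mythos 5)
Your first stage is sound: the self-adjoint dilation $\widetilde X=\begin{pmatrix}0&X\\ X^*&0\end{pmatrix}$, the identity $|\widetilde X|=|X^*|\oplus|X|$ combined with normality of $A$ and $B$ (correctly \emph{not} assumed for $A+B$, which is why the two summands of $|\widetilde{A+B}|$ are only unitarily equivalent), and the passage through the even Ky Fan cut-offs do reduce the statement to Hermitian $A,B$. The second stage, however, is not a proof. The chain $\|f(|A+B|)\|\le\|f(\mathcal{D}(|A|))+f(\mathcal{D}(|B|))\|$ is correct, but the remaining comparison with $\|f(|A|)+f(|B|)\|$ is the entire content of the theorem in this approach, and it is not merely left undone: it fails termwise. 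By concavity $f(\mathcal{D}(X))\ge\mathcal{D}(f(X))$, and the excess is real in norm; for $X=\begin{pmatrix}1&1\\1&1\end{pmatrix}$, $f(t)=\sqrt t$ and the trace norm one has $\|f(\mathcal{D}(X))\|_1=2>\sqrt 2=\|f(X)\|_1$. So the quantity your reductions produce genuinely overshoots the target, and nothing in the proposal brings it back below $\|f(|A|)+f(|B|)\|$. This is a structural defect of the strategy, not a presentational one: any argument that first pinches or compresses $|A|,|B|$ and \emph{then} applies the concave $f$ creates this wrong-way Jensen term.

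The sketched repair by tangent lines does not close the gap. If $g(t)=\alpha+\beta t$ is a tangent majorant of $f$ (so $g\ge f$, $\alpha,\beta\ge 0$), replacing $f$ by $g$ on the left enlarges $\sum_{j\le k}f\bigl(\lambda_j(|A+B|)\bigr)$, while the right-hand side you must reach, $\|f(|A|)+f(|B|)\|_{(k)}$, still involves $f$ and is smaller than $\|g(|A|)+g(|B|)\|_{(k)}$; the affine estimate you would need is therefore strictly stronger than the original inequality, and no rule for choosing the tangent is given that makes the two sides meet. Saying that "making the bookkeeping match is where the real work concentrates" is an accurate self-diagnosis: that work is the theorem. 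Note also that the thesis itself states this result without proof (it is quoted from \cite{bourin-PAMS}), so there is no in-text argument to lean on; the known proofs keep the right-hand side in the form $f(|A|)+f(|B|)$ throughout, for instance by exploiting decompositions such as $|X+Y|\le\frac{1}{2}\bigl((|X|+|Y|)+W^*(|X|+|Y|)W\bigr)$ for normal $X,Y$ (inequality \eqref{F-2.11} of Chapter 3) together with a genuine refinement of Theorem \ref{T-BU}, rather than by pinching first. As written, your argument establishes only the weaker bound $\|f(|A+B|)\|\le\|f(\mathcal{D}(|A|))+f(\mathcal{D}(|B|))\|$.
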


\vskip 5pt
A number of corollaries follow.

\vskip 5pt
\begin{cor}\label{cor-normal1} If $f(t)$ is a nonnegative concave function on $[0,\infty)$ and
$Z\in\bM_n$ has the Cartesian decomposition $Z=A+iB$, then,  for all symmetric norms,
$$
\| f(|Z|)\| \le \| f(|A|) + f(|B|)\|.
$$
\end{cor}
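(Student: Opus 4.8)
The plan is to deduce the corollary directly from Theorem~\ref{T-normal} by feeding it a judiciously chosen pair of normal matrices. First I recall what the Cartesian decomposition $Z=A+iB$ means: $A$ and $B$ are the Hermitian matrices $A=(Z+Z^*)/2$ and $B=(Z-Z^*)/(2i)$. In particular $A$ is Hermitian, hence normal, and $iB$ is skew-Hermitian, hence also normal. Thus $(A,iB)$ is a pair of normal matrices to which Theorem~\ref{T-normal} applies verbatim.

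Applying that theorem to the pair $(A,iB)$ gives, for every symmetric norm,
$$
\| f(|A+iB|)\| \le \| f(|A|) + f(|iB|)\|.
$$
It then remains only to rewrite both sides. On the left, $A+iB=Z$, so $|A+iB|=|Z|$. On the right, since $B$ is Hermitian we have $(iB)^*(iB)=(-iB)(iB)=B^2$, whence $|iB|=(B^2)^{1/2}=|B|$ and therefore $f(|iB|)=f(|B|)$. Substituting these two identities yields precisely
$$
\| f(|Z|)\| \le \| f(|A|) + f(|B|)\|,
$$
which is the claim.

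Because the argument is a one-step specialization, there is no substantive obstacle to overcome; the entire content lies in the choice of the normal pair. The only point deserving care is the elementary observation that multiplying the Hermitian part $B$ by $i$ turns it into a normal (skew-Hermitian) matrix while leaving its modulus unchanged, $|iB|=|B|$. This is exactly what permits the imaginary part of $Z$ to be handled by a theorem phrased for normal matrices without disturbing the right-hand side, and it is why the constant remains $1$ rather than being inflated.
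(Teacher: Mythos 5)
Your proof is correct and is exactly the derivation the paper intends: the thesis states Corollary \ref{cor-normal1} as an immediate consequence of Theorem \ref{T-normal} without spelling out the argument, and the only step needed is the one you supply, namely applying the theorem to the normal pair $(A,iB)$ and observing that $|iB|=|B|$. Nothing is missing.
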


\vskip 5pt
\begin{cor}\label{cor-normal2} If $f(t)$ is a nonnegative concave function on $[0,\infty)$ and
$Z\in\bM_n$, then,  for all symmetric norms,
$$
\| f(|Z+Z^*|)\| \le \| f(|Z|) + f(|Z^*|)\|.
$$
\end{cor}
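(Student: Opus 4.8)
The plan is to deduce Corollary~\ref{cor-normal2} from Theorem~\ref{T-normal} by a self-adjoint dilation trick. The obvious attempt—apply Theorem~\ref{T-normal} to the pair $(Z,Z^*)$—fails immediately, because $Z$ and $Z^*$ are not normal in general, whereas the theorem requires normal matrices. The remedy I would use is to pass to $\bM_{2n}$, where the off-diagonal dilation of a single matrix is automatically Hermitian (hence normal) while still encoding all the relevant absolute values.

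Concretely, I would set $H=Z+Z^*$ and introduce in $\bM_{2n}$ the Hermitian matrices
\[
A=\begin{pmatrix} 0 & Z \\ Z^* & 0\end{pmatrix},\qquad B=\begin{pmatrix} 0 & Z^* \\ Z & 0\end{pmatrix}.
\]
Squaring gives $A^2=\diag(ZZ^*,Z^*Z)$, $B^2=\diag(Z^*Z,ZZ^*)$, and $(A+B)^2=\diag(H^2,H^2)$, so that
\[
|A|=\diag(|Z^*|,|Z|),\qquad |B|=\diag(|Z|,|Z^*|),\qquad |A+B|=\diag(|Z+Z^*|,|Z+Z^*|).
\]
Because these absolute values are block diagonal, the functional calculus acts blockwise, and Theorem~\ref{T-normal} applied to the normal pair $(A,B)$ yields, for every symmetric norm on $\bM_{2n}$,
\[
\big\| f(|Z+Z^*|)\oplus f(|Z+Z^*|)\big\| \le \big\| \big(f(|Z|)+f(|Z^*|)\big)\oplus\big(f(|Z|)+f(|Z^*|)\big)\big\|.
\]

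It then remains to descend from $\bM_{2n}$ back to $\bM_n$. For any $P\in\bM_n$ the singular values of $P\oplus P$ are exactly those of $P$, each repeated twice; a short check with partial sums shows that $\mu(P)\prec_w\mu(Q)$ holds in $\bR^n$ if and only if $\mu(P\oplus P)\prec_w\mu(Q\oplus Q)$ holds in $\bR^{2n}$. By the Ky Fan dominance theorem, the displayed $\bM_{2n}$-inequality (holding for all symmetric norms) is therefore equivalent to the weak majorization $\mu\big(f(|Z+Z^*|)\big)\prec_w \mu\big(f(|Z|)+f(|Z^*|)\big)$ in $\bR^n$, which is in turn equivalent to the desired conclusion $\|f(|Z+Z^*|)\|\le\|f(|Z|)+f(|Z^*|)\|$ for all symmetric norms on $\bM_n$. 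I expect the one genuinely delicate point—once the dilation is in hand—to be precisely this transfer of a ``for all symmetric norms'' statement across dimensions: it is legitimate only because such statements are equivalent to dimension-stable weak-majorization relations between singular values, and the doubling of multiplicities must be handled with care for the odd partial sums.
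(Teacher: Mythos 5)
Your proof is correct and follows the route the paper intends: the thesis states this as a consequence of Theorem \ref{T-normal}, and the standard derivation is exactly your Hermitian off-diagonal dilation $A=\begin{pmatrix} 0 & Z \\ Z^* & 0\end{pmatrix}$, $B=\begin{pmatrix} 0 & Z^* \\ Z & 0\end{pmatrix}$ in $\bM_{2n}$, whose absolute values are $|Z^*|\oplus|Z|$, $|Z|\oplus|Z^*|$ and $|Z+Z^*|\oplus|Z+Z^*|$. The descent from $\bM_{2n}$ to $\bM_n$ via Ky Fan dominance is also handled correctly, including the odd partial sums (which follow from writing $2\sum_{j\le k}\mu_j+\mu_{k+1}$ as the sum of the $k$-th and $(k+1)$-th partial sums).
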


\section{Diagonal blocks}

Chapters 6-8 deal with partitioned matrices and their diagonal blocks. Chapter 5 concerns
operator diagonals of Hilbert space operators.

My earlier result on operator diagonals of block matrices \cite{B-tot}  is :

\vskip 5pt
\begin{theorem}\label{T-tot} Let  $A\in \bM_{2n}$. Then, there exists some  $B\in\bM_n$ such that
$$
A\simeq \begin{bmatrix} B& \star \\ \star & B\end{bmatrix}.
$$
\end{theorem}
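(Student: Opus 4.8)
The plan is to pass to a coordinate-free formulation. Writing $\bC^{2n}=H_1\oplus H_2$ with $\dim H_i=n$ and letting $P$, $Q=I-P$ be the orthogonal projections onto $H_1,H_2$, the asserted block form is equivalent to producing a single projection $P$ for which the two compressions $PAP|_{H_1}$ and $QAQ|_{H_2}$ are \emph{unitarily equivalent}. Indeed, a unitary $W\colon H_1\to H_2$ implementing that equivalence carries an orthonormal basis $(e_i)$ of $H_1$ to a basis $(We_i)$ of $H_2$, and in the combined orthonormal basis the $(1,1)$ and $(2,2)$ blocks of $A$ both become the matrix of $PAP|_{H_1}$; the off-diagonal blocks are then irrelevant.

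Two things come for free. Since $\operatorname{Tr}(PAP)+\operatorname{Tr}(QAQ)=\operatorname{Tr}A$, the common trace of two unitarily equivalent compressions is automatically $\tfrac12\operatorname{Tr}A$; and the case $n=1$ follows at once from the Toeplitz--Hausdorff convexity of $\NumRan(A)$. Indeed $\tfrac12\operatorname{Tr}A$ is the midpoint of the two eigenvalues, hence lies in the convex set $\NumRan(A)$, so there is a unit vector $u_1$ with $\ps{Au_1}{u_1}=\tfrac12\operatorname{Tr}A$, and any unit $u_2\perp u_1$ then satisfies $\ps{Au_2}{u_2}=\operatorname{Tr}A-\ps{Au_1}{u_1}=\tfrac12\operatorname{Tr}A$. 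I would try to run the general case off the same symmetry. The engine is the flip $J=\begin{bmatrix}0&I\\I&0\end{bmatrix}$: if $M=U^*AU$ has diagonal blocks $(B_1,B_2)$ then $(UJ)^*A(UJ)=JMJ$ has diagonal blocks $(B_2,B_1)$. Note first that a fixed point $JMJ=M$ is a matrix commuting with $J$, i.e.\ of the form $\begin{bmatrix}X&Y\\Y&X\end{bmatrix}$, which \emph{already} has equal diagonal blocks; so either we are done or the induced $\bZ/2$-action $M\mapsto JMJ$ on the unitary orbit $\mathcal O_A$ is free. Moving along a path from $U$ to $UJ$ in the connected unitary group interchanges the two compressions, so any continuous scalar comparing them antisymmetrically must vanish somewhere; this is what forces, say, $\operatorname{Tr}B_1=\operatorname{Tr}B_2$ or $\|B_1\|_2=\|B_2\|_2$ at some point of the path.

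The main obstacle is that unitary equivalence of $n\times n$ matrices is not captured by one scalar: by Specht--Pearcy it is governed by the traces of all words in $B$ and $B^*$, and a one-parameter intermediate-value argument matches only one invariant at a time, with the zeros occurring at different parameters. The crux is therefore to match all relevant invariants simultaneously. My proposal is to fix a finite generating family of real invariants $I_1,\dots,I_N$ and form the map $\delta\colon\mathcal O_A\to\bR^N$, $\delta(M)=\big(I_k(B_1)-I_k(B_2)\big)_{k=1}^N$, which is odd for the free flip action, $\delta(JMJ)=-\delta(M)$, and to seek a zero by a Borsuk--Ulam--type theorem; the hard point is the topological estimate that the $\bZ/2$-coindex of $\mathcal O_A$ dominates $N$. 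A tempting alternative is variational: minimize $\|B_1-B_2\|_2^2$ over the compact orbit $\mathcal O_A$, and use the orbit tangent directions $M\mapsto i[M,K]$ ($K$ Hermitian) to produce a descent direction whenever $B_1-B_2\neq0$, thereby ruling out a strictly positive minimum. In either route the genuine difficulty is the same: upgrading the clean ``one-invariant'' symmetry argument to a statement that controls the entire unitary orbit structure of the two compressions at once.
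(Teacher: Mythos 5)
Your reduction is sound: producing an $n$-dimensional subspace $H_1$ whose compression is unitarily equivalent to the compression onto $H_1^{\perp}$ does give the block form, and your $n=1$ argument is correct (the trace constraint forces both diagonal entries to equal $\tfrac12\mathrm{Tr}\,A$, and that value, being the midpoint of the two eigenvalues, lies in the convex set $W(A)$). But for general $n$ what you have written is a plan, not a proof, and you say so yourself. Neither route is carried out, and each hides the whole difficulty. In the Borsuk--Ulam route, your odd map $\delta$ really lives on the Grassmannian of $n$-planes in $\bC^{2n}$ with the free involution $H\mapsto H^{\perp}$, and the conclusion you want needs the $\bZ/2$-coindex of that space to dominate the number $N$ of \emph{real} invariants in a complete Specht--Pearcy separating family. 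You give no estimate of this coindex; it is bounded above by $\dim_{\bR}\mathrm{Gr}_{\bC}(n,2n)=2n^2$, whereas separating unitary orbits in $\bM_n$ already requires at least $n^2+1$ real functions and any concrete complete trace-word family is far larger, so the scheme is at best delicate and quite possibly dimensionally impossible for large $n$. (It does recover $n=1$: the involution on $\bC P^1\cong S^2$ is antipodal, of coindex $2$, and two real invariants suffice there.) In the variational route, the claim that a descent direction for $\|B_1-B_2\|_2^2$ exists whenever $B_1\neq B_2$ is essentially the theorem itself: the first-order condition at a critical point of the orbit only yields a commutator identity, and you would still have to exclude critical points, in particular local minima, at which $B_1\neq B_2$. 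Nothing in your text does this.

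For what it is worth, the thesis states this theorem without proof and refers to the paper ``Total dilations'' [20], where the result is established by a finite-dimensional matrix construction; the remark following the statement (that the proof does not extend to Hilbert space operators, whence Conjecture C) already signals that the known argument is not a soft fixed-point or compactness principle. Your identification of the flip symmetry $M\mapsto JMJ$ and of the obstruction (one intermediate-value argument matches only one scalar invariant) is a fair diagnosis of why the naive approach fails, but as it stands the general case remains unproved.
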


\vskip 5pt
Here, the stars hold for unspecified entries, and $\simeq$ means unitarily congruence. The simplest (and well-known) case is for $A\in \bM_2$, and it is the key for the proof of the Hausdorff-Toeplitz theorem (1918) ensuring that the numerical range is convex. The proof of
Theorem \ref{T-tot} cannot cover the case of Hilbert space operators. However we may propose a conjecture.

\vskip 5pt\noindent
{\bf Conjecture C.} {\it Let $A$ be an operator acting on a infinite dimensional separable Hilbert space ${\mathcal{H}}$. Then, for some subspace ${\mathcal{S}}\subset{\mathcal{H}}$, 
$$
A_{\mathcal{S}} \simeq A_{\mathcal{S}^{\perp}}.
$$
}

\vskip 5pt\noindent
{\bf Conjecture D.} {\it There exists  $A\in\bM_6$ such that, for any  $B\in\bM_2$,
$A$ is not unitarily equivalent to a matrix of the form
$$
\begin{bmatrix} B& \star & \star \\ \star & B& \star \\ \star & \star & B\end{bmatrix}.
$$
}

\vskip 5pt\noindent
Chapters 6-8  consider positive partitioned matrices; the matrix inequalities and the decomposition of Chapter 3  come into play.

\section{Missing topics}

To keep a reasonable lenght for the thesis, I do not give any results involving the matrix geometric mean and I do not develop the topic of symmetric antinorms. In a paper\footnote{Bourin-Hiai 2015} devoted to the study of antinorms, a version of \eqref{eqAB} for $\tau$-measurable operators is given (this is nontrivial when the algebra is not a factor). In a paper\footnote{Bourin-Shao 2020} devoted to the geodesics associated to the geometric mean, the following exotic H\"older inequality is obtained,
$$
  \left\| \sinh\left( \sum_{i=1}^m A_iB_i\right) \right\| \le  \left\| \sinh\left( \sum_{i=1}^m A_i^p\right) \right\|^{1/p} \left\| \sinh\left( \sum_{i=1}^m B_i^q\right) \right\|^{1/q},
$$
for all  positives matrices $A_i,  B_i$, such that $A_iB_i=B_iA_i$, $(i=1,\ldots, m)$, conjugate exponents $p,q>1$ and symmetric norms $\|\cdot\|$. A related result states that
$$
(t_1,\ldots,t_m) \mapsto {\mathrm{Tr\,}} \log \left(\sum_{i=1}^mX_i^*A_i^{t_i}X_i  \right)
$$
is jointly convex on $\bR^m$, where $A_i>0$  and $X_i$ is invertible  $(i=1,\ldots, m)$. Hence, the geometric mean provides results without the geometric mean ! A well-known phenomenon since the fundamental paper by Ando (1979).

\section{References of Chapter 1}

{\small
\begin{itemize}

\item[[18\!\!\!]]  J.-C.\ Bourin, Some inequalities for norms on matrices and operators, {\it Linear Algebra Appl}.\ 292 (1999), no.\ 1--3, 139--154.

\item[[19\!\!\!]] J.-C.\ Bourin,
Singular values of compressions, restrictions and dilations, {\it Linear Algebra Appl.}\ 360 (2003), 259--272.

\item[[20\!\!\!]]
 J.-C.\ Bourin,
Total dilations, {\it Linear Algebra Appl}.\ 368 (2003), 159--169.

\item[[22\!\!\!]] 
J.-C.\ Bourin, Convexity or concavity inequalities for Hermitian operators. \textit{Math. Inequal. Appl.} 7
(2004), no.\ 4, 607–620.

\item[[24\!\!\!]]
 J.-C.\ Bourin, A concavity inequality for symmetric norms, \textit{ Linear
Algebra Appl.}\  413 (2006), 212-217.

\item[[25\!\!\!]]
 J.-C.\ Bourin, Matrix versions of some classical inequalities. {\it Linear Algebra Appl.}\ 416 (2006), no.\ 2--3, 890--907.

\item[[26\!\!\!]]
 J.-C. Bourin, Matrix subadditivity inequalities and block-matrices, {\it Internat.\ J.\ Math.}\ 20 (2009), no.\
6, 679--691.

\item[[27\!\!\!]]
 J.-C. Bourin,
A matrix subadditivity inequality for symmetric norms, {\it Proc.\ Amer.\ Math.\ Soc.}\ 138 (2010), no.\ 2, 495--504.

\item[[30\!\!\!]] J.-C.\ Bourin and E.-Y.\ Lee, Concave functions of
positive operators, sums, and congruences, \textit{J.\ Operator Theory} \textbf{63}
(2010), 151--157.

\item[[48\!\!\!]]  J.-C.\ Bourin and E.\ Ricard,  An asymmetric Kadison's inequality, \textit{Linear Algebra Appl.}\
433 (2010) 499--510.

\item[[49\!\!\!]] J.-C.\ Bourin and M.\ Uchiyama, A matrix subadditivity inequality for $f(A+B)$ and $f(A)+f(B)$, \textit{Linear Algebra
Appl.}\
423 (2007), 512--518.

\end{itemize}
}


\chapter{Majorization and Perspective}

{\color{blue}{\Large {\bf Matrix Inequalities \\ from  a two variables functional} \large{\cite{BL-Persp}}}}

\vskip 10pt\noindent
{\small
{\bf Abstract.} We introduce a two variables norm functional and establish its joint log-convexity. This entails and improves many
remarkable matrix inequalities, most of them related to the log-majorization theorem of Araki. In particular: {\it if $A$ is a
positive semidefinite matrix and $N$ is a normal matrix, $p\ge 1$ and $\Phi$ is a sub-unital positive linear map, then
$|A\Phi(N)A|^p$ is weakly log-majorized by $A^p\Phi(|N|^p)A^p$}. This far extension of Araki's theorem (when $\Phi$ is the identity
and $N$ is positive) complements some recent results of Hiai and contains several special interesting cases such as a triangle
inequality for normal operators and some extensions of the Golden-Thompson trace inequality. Some applications to Schur products are
also obtained.
\vskip 5pt\noindent
{\it Keywords.} Matrix inequalities, Majorization, Positive linear maps, Schur products.
\vskip 5pt\noindent
{\it 2010 mathematics subject classification.} 47A30, 15A60.
}

\section{Log-majorization and log-convexity }

\vskip 5pt\noindent
  Matrices   are regarded as  non-commutative extensions of
scalars and functions. Since matrices do not commute in general, most scalars identities cannot be brought to the matrix setting,
however they sometimes have a matrix version, which is not longer an identity but an inequality. These kind of inequalities are of
fundamental importance in our understanding of the noncommutative world of matrices. A famous, fifty years old example of such an
inequality is the Golden-Thompson trace inequality: for Hermitian $n$-by-$n$ matrices $S$ and $T$,
$$
{\mathrm{Tr\,}} e^{S+T}  \le {\mathrm{Tr\,}} e^{S/2} e^T e^{S/2} .
$$
A decade after, Lieb and Thirring \cite{LT} obtained a stronger, remarkable trace inequality: for all positive semidefinite
$n$-by-$n$ matrices $A,B\in\bM_n^+$ and all integers $p\ge 1$,
\begin{equation}\label{LT}
{\mathrm{Tr\,}} (ABA)^p \le {\mathrm{Tr\,}} A^pB^pA^p.
\end{equation}
This was finally extended some fifteen years later by Araki \cite{Araki} as a very important theorem in matrix analysis and its
applications. Given $X,Y\in \bM_n^+ $, we write $X\prec_{w\!\log} Y$ when the series of $n$ inequalities holds,
$$
\prod_{j=1}^k \lambda_j(X) \le \prod_{j=1}^k \lambda_j(Y)
$$
for $k=1,\ldots n$, where $\lambda_j(\cdot)$ stands for the eigenvalues arranged in decreasing order. If further equality occurs for
$k=n$, we write $X\prec_{\log} Y$. Araki's theorem considerably strenghtens the Lieb-thirring trace inequality as the beautiful
log-majorization
\begin{equation}\label{araki}
 (ABA)^p \prec_{\log} A^pB^pA^p
\end{equation}
for all real numbers $p\ge 1$. In particular, this ensures \eqref{LT} for all $p\ge 1$.

Log- and weak log-majorization relations play a fundamental role in matrix analysis, a basic one for normal operators $X,Y\in\bM_n$
asserts that
\begin{equation}\label{folklore}
 |X+Y| \prec_{w\!\log} |X|+|Y|.
\end{equation}
This useful version of the triangle inequality belongs to the folklore and is a byproduct of Horn's inequalities, see the proof of
\cite[Corollary 1.4]{bourin-IJM}.

This article aims to provide new matrix inequalities containing \eqref{araki} and \eqref{folklore}. These inequalities are given in
Section 2. The first part dealing with positive operators is closely related to a recent paper of Hiai \cite{Hiai2}. The second part
of Section 2 considers normal operators and contains our main theorem (Theorem \ref{cornormal}), mentioned in the Abstract.

Our main idea, and technical tool, is Theorem \ref{th1} below. It establishes the log-convexity of a two variables functional. Fixing
one variable in this functional yields a generalization of $\eqref{araki}$ involving a third matrix $Z\in\bM_n $, of the form
\begin{equation*}
 (AZ^*BZA)^p \prec_{w\!\log} A^pZ^*B^pZA^p.
\end{equation*}
We will also derive the following weak log-majorization which contains both \eqref{araki} and \eqref{folklore} and thus unifies these
two inequalities.

\vskip 10pt
\begin{prop}\label{th0}  Let $A\in\bM_n^+$ and  let $X,Y\in\bM_n$  be normal. Then, for all $p\ge 1,$
\begin{equation*}
 |A(X+Y)A|^p \prec_{w\!\log} 2^{p-1}A^p(|X|^p+|Y|^p)A^p.
\end{equation*}
\end{prop}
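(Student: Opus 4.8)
The plan is to reduce the statement, which mixes a sum of two normal matrices with a congruence by $A$ and a power $p$, to a \emph{single} normal matrix acted on by a unital positive map, so that the two variables log-convexity of Theorem \ref{th1} can be applied. To this end I would introduce, on $\bC^n\oplus\bC^n$, the block-diagonal matrix $M=X\oplus Y$, which is normal with $|M|=|X|\oplus|Y|$, together with the row contraction $R=\tfrac{1}{\sqrt2}\begin{bmatrix} I & I\end{bmatrix}$, so that $RR^*=I_n$ and $\Phi(\cdot):=R(\cdot)R^*$ is a unital positive linear map. A direct computation gives $RMR^*=\tfrac12(X+Y)$ and $R|M|^pR^*=\tfrac12(|X|^p+|Y|^p)$.

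With this encoding the two sides become congruences of $\Phi(M)$ and of $\Phi(|M|^p)$ by $A$: indeed $A(X+Y)A=2A\Phi(M)A$, hence $|A(X+Y)A|^p=2^p|A\Phi(M)A|^p$, while $2^{p-1}A^p(|X|^p+|Y|^p)A^p=2^pA^p\Phi(|M|^p)A^p$. Thus the proposition is equivalent, after cancelling the common factor $2^p$, to the single weak log-majorization
\[
|A\Phi(M)A|^p \prec_{w\!\log} A^p\Phi(|M|^p)A^p ,
\]
and I would obtain this from Theorem \ref{th1} by fixing one variable in the functional, exactly as the previewed majorization $(AZ^*BZA)^p\prec_{w\!\log}A^pZ^*B^pZA^p$ is obtained there, now in the form adapted to the normal matrix $M$ and the map $\Phi$. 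As a consistency check, setting $A=I$ and $p=1$ collapses the displayed inequality to the folklore triangle inequality \eqref{folklore}; Araki's log-majorization \eqref{araki} enters one level deeper, as the positive prototype behind Theorem \ref{th1}. This confirms that the proposition indeed unifies the two.

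The main obstacle is precisely the step that the encoding isolates, namely controlling the \emph{non-positive} middle term $X+Y$. One cannot first invoke \eqref{folklore} to replace $|X+Y|$ by $|X|+|Y|$ and then conjugate by $A$: weak log-majorization is not stable under the congruence $N\mapsto ANA$, as one already sees for a nilpotent $2\times2$ matrix $N$ written as a sum of its Hermitian and skew-Hermitian parts. Likewise, bounding the singular values of $A(X+Y)A$ by submultiplicativity (Horn's inequalities) is too crude to recover the strengthening carried by Araki's theorem, in which the exponent $p$ genuinely passes onto the outer factors $A$. The resolution, and the heart of the argument, is that the joint log-convexity of the two variables functional processes the congruence by $A$ and the exponent $p$ simultaneously while retaining the normal structure of $M$; the constant $2^{p-1}$ is then not produced by any convexity estimate but simply records the normalisation $\Phi(I)=I$ against the two summands $|X|^p$ and $|Y|^p$.
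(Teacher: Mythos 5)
Your reduction is the same as the paper's: you dilate to the normal matrix $M=X\oplus Y$ and extract $\tfrac12(X+Y)$ and $\tfrac12(|X|^p+|Y|^p)$ through a unital positive map (your congruence by $R$ agrees on block-diagonal matrices with the normalized partial trace used in the paper), so that the proposition becomes the single weak log-majorization $|A\Phi(M)A|^p\prec_{w\!\log}A^p\Phi(|M|^p)A^p$. The gap is in how you propose to prove that inequality. You cannot get it ``by fixing one variable in Theorem \ref{th1} in the form adapted to the normal matrix $M$'': the functional of Theorem \ref{th1} involves real powers $B^{t/p}$ of the middle factor, so fixing $t=1$ only produces Corollary \ref{corstriking}, namely $(AZ^*BZA)^p\prec_{w\!\log}A^pZ^*B^pZA^p$ for \emph{positive} $B$ and contractions $Z$. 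There is no version of the two-variables functional that accepts a normal, non-positive middle factor, and your closing assertion that the joint log-convexity processes the congruence and the exponent ``while retaining the normal structure of $M$'' is precisely the missing argument rather than a proof of it.

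The paper bridges this gap with the very step you discard. Writing $\Phi(\cdot)=\tilde Z(\cdot)\tilde Z^*$ for a contraction $\tilde Z$ (your $R$ already has this form) and using the polar decomposition $M=U|M|$, one has $A\tilde ZM\tilde Z^*A=\bigl(A\tilde Z|M|^{1/2}\bigr)U\bigl(A\tilde Z|M|^{1/2}\bigr)^*$, so Horn's log-majorization $|XKX^*|\prec_{w\!\log}XX^*$ for contractions $K$ gives $|A\Phi(M)A|\prec_{w\!\log}A\tilde Z|M|\tilde Z^*A$. This is an exponent-free comparison; it is raised to the power $p$ afterwards, and only then is Corollary \ref{corstriking} applied to the positive middle factor $|M|$ to move $p$ onto the outer copies of $A$. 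So Horn's inequality is not ``too crude'' here: it is used solely to replace the normal $M$ by $|M|$ at the $p=1$ level, and the Araki-type strengthening does the rest. Without this (or an equivalent) reduction from a normal to a positive middle factor, your argument does not close.
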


\vskip 5pt
Letting $X=Y=B$ in Proposition \ref{th0} we have \eqref{araki}, more generally,
\begin{equation}\label{araki-normal}
|AXA|^p \prec_{\log} A^p|X|^pA^p
\end{equation}
for all $A\in\bM_n^+$ and normal matrices $X\in\bM_n$. When $X$ is Hermitian, this was noted by Audenaert \cite[Proposition
3]{Audenaert}.
If $A$ is the identity and $p=1$, Proposition \ref{th0} gives \eqref{folklore}. From \eqref{araki-normal} follows several nice
inequalities for the matrix exponential, due to Cohen and al.\ \cite{Cohen1}, \cite{Cohen2}, including the Golden-Thompson trace
inequality and the elegant relation
\begin{equation}\label{exponential}
|e^Z| \prec_{\log} e^{{\mathrm{Re}}\,Z}
\end{equation}
for all matrices $Z\in\bM_n$, where ${\mathrm{Re}}\,Z=(Z+Z^*)/2$, \cite[Theorem 2]{Cohen1}.

Fixing the other variable in Theorem \ref{th1} below entails a H\"older inequality due to Kosaki. Several 
matrix versions of an inequality of Littlewood related to H\"older's inequality will be also obtained. 

The two variables in Theorem \ref{th1} are essential and reflect a construction with the perspective of a convex function. Recall
that a norm on $\bM_n$ is symmetric whenever $\| UAV\|=\| A\|$ for all $A\in\bM_n$ and all unitary $U,V\in\bM_n$. For
$X,Y\in\bM_n^+$, the condition $X\prec_{w\!\log} Y$ implies $\| X\| \le
\| Y\|$ for all symmetric norms. We  state our log-convexity theorem.

\vskip 10pt
\begin{theorem}\label{th1} Let $A,B\in\bM_n^+$ and $Z\in\bM_n$. Then, for all symmetric norms and   $\alpha>0$, the map
$$
(p,t) \mapsto \left\| \left|A^{t/p}ZB^{t/p}\right|^{\alpha p} \right\| 
$$
is jointly log-convex on $(0,\infty)\times(-\infty,\infty)$.
\end{theorem}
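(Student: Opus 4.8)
The plan is to peel off the three features of this functional one at a time: the outer power $\alpha p$, the two variables $(p,t)$, and the generality of the symmetric norm. The starting point is that for the operator norm $\|\cdot\|_\infty$ one has the identity $\||M|^{\alpha p}\|_\infty=\|M\|_\infty^{\alpha p}$, so the whole functional collapses, up to the factor $\alpha$, to a perspective. Concretely, I would first prove the one-variable statement that
$$
h(s) := \log \|A^s Z B^s\|_\infty
$$
is convex on $(-\infty,\infty)$, and then observe that the operator-norm instance of the quantity to be studied is
$$
\log \||A^{t/p}ZB^{t/p}|^{\alpha p}\|_\infty = \alpha\, p\, h(t/p),
$$
which is $\alpha$ times the perspective of $h$. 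Since the perspective $(p,t)\mapsto p\,h(t/p)$ of a convex function is jointly convex on $(0,\infty)\times(-\infty,\infty)$, the operator-norm case follows at once.

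For the convexity of $h$ I would use the Hadamard three-lines theorem. Fixing $s_0,s_1$ and $\theta\in[0,1]$, set, for $z$ in the strip $0\le \mathrm{Re}\,z\le 1$,
$$
F(z) = A^{(1-z)s_0+zs_1}\,Z\,B^{(1-z)s_0+zs_1},
$$
an entire $\bM_n$-valued function via $A^w=e^{w\log A}$. On the edge $\mathrm{Re}\,z=0$ one has $F(iy)=A^{iy(s_1-s_0)}\big(A^{s_0}ZB^{s_0}\big)B^{iy(s_1-s_0)}$, and because imaginary powers of a positive matrix are unitary, $\|F(iy)\|_\infty=\|A^{s_0}ZB^{s_0}\|_\infty$; likewise $\|F(1+iy)\|_\infty=\|A^{s_1}ZB^{s_1}\|_\infty$. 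Applying the three-lines theorem to $z\mapsto\|F(z)\|_\infty$ (equivalently to the scalar functions $z\mapsto \ps{F(z)u}{v}$ and taking a supremum over unit vectors $u,v$) gives $\|F(\theta)\|_\infty\le \|A^{s_0}ZB^{s_0}\|_\infty^{1-\theta}\|A^{s_1}ZB^{s_1}\|_\infty^{\theta}$, which is exactly $h((1-\theta)s_0+\theta s_1)\le(1-\theta)h(s_0)+\theta h(s_1)$.

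To pass to a general symmetric norm I would use antisymmetric tensor powers. Writing $\wedge^k M$ for the $k$-th exterior power, one has the multiplicativity $\wedge^k(A^sZB^s)=(\wedge^kA)^s(\wedge^kZ)(\wedge^kB)^s$ together with $\|\wedge^k M\|_\infty=\prod_{j=1}^k\mu_j(M)$, so applying the operator-norm result to the triple $(\wedge^kA,\wedge^kB,\wedge^kZ)$ shows that for every $k$ the map
$$
(p,t)\mapsto \prod_{j=1}^k \lambda_j\big(|A^{t/p}ZB^{t/p}|^{\alpha p}\big)
$$
is jointly log-convex. Finally I would invoke the standard principle that controlling all these top-$k$ products upgrades to all symmetric norms: joint log-convexity of the top-$k$ products says that the log-eigenvalue vector of the midpoint matrix is weakly majorized by the corresponding convex combination of the two endpoint log-eigenvalue vectors; applying the convex increasing map $\exp$ preserves weak majorization, and a symmetric gauge function $\Phi$ is monotone under weak majorization and obeys the Hölder bound $\Phi(x^{1-\theta}y^\theta)\le\Phi(x)^{1-\theta}\Phi(y)^\theta$. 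Chaining these three facts yields the desired $\||A^{t_\theta/p_\theta}ZB^{t_\theta/p_\theta}|^{\alpha p_\theta}\|\le \||A^{t_0/p_0}ZB^{t_0/p_0}|^{\alpha p_0}\|^{1-\theta}\||A^{t_1/p_1}ZB^{t_1/p_1}|^{\alpha p_1}\|^{\theta}$ for every symmetric norm.

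The main obstacle I anticipate is not the complex-analytic core but the interplay of the power and the norm: the clean identity $\||M|^{\alpha p}\|=\|M\|^{\alpha p}$ holds only for the operator norm, which is exactly why the reduction to the operator norm via exterior powers must come first rather than being performed at the level of a general symmetric norm. A secondary technical point is the meaning of $A^{t/p}$ when $A$ is merely positive semidefinite and $t/p<0$: here I would either assume $A,B$ invertible and recover the general case by a limiting or lower-semicontinuity argument, or adopt the convention that negative powers act on the support of $A$, checking that the three-lines estimate is unaffected since the unitary imaginary powers enter the boundary bounds only multiplicatively.
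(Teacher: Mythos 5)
Your proposal is correct, and its overall architecture coincides with the paper's: reduce to the operator norm via antisymmetric tensor powers, prove one-variable log-convexity of $t\mapsto\|A^tZB^t\|_\infty$, pass to joint log-convexity in $(p,t)$ by the perspective construction, and upgrade from the top-$k$ eigenvalue products to all symmetric norms via weak log-majorization and the Cauchy--Schwarz/H\"older property of symmetric gauge functions. The one genuinely different ingredient is the core one-variable step. The paper proves $g((t+s)/2)\le\{g(t)g(s)\}^{1/2}$ for $g(t)=\|A^tZB^t\|_\infty$ by a purely algebraic device: writing the norm as $\|A^{(t+s)/2}ZB^{t+s}Z^*A^{(t+s)/2}\|_\infty^{1/2}=\rho^{1/2}(A^tZB^{t+s}Z^*A^s)$ using the cyclic invariance of the spectral radius, then bounding $\rho$ by the operator norm and using submultiplicativity. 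You instead invoke the Hadamard three-lines theorem applied to $F(z)=A^{(1-z)s_0+zs_1}ZB^{(1-z)s_0+zs_1}$, exploiting that imaginary powers of positive definite matrices are unitary. Both arguments are valid (yours requires $A,B$ invertible for boundedness on the strip, which you correctly flag and which the paper also arranges by its convention on generalized inverses); the paper's is more elementary in that it avoids complex analysis entirely and needs only two lines, which is why the authors can also remark that the whole proof can be rewritten without antisymmetric tensors using a variational formula for $\prod_{j\le k}\lambda_j$. Your interpolation route buys nothing extra here but is a perfectly sound alternative, and your final majorization chain is the paper's Cauchy--Schwarz step expressed in the language of symmetric gauge functions.
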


\vskip 5pt
Here, if $A\in\bM_n^+$ is not invertible, we naturally define for $t\ge 0$, $A^{-t}:=(A+F)^{-t}E$ where $F$ is the projection onto
the nullspace of $A$ and $E$ is the range projection of $A$.

The next two sections present many hidden consequences of Theorem \ref{th1}, several of them extending \eqref{araki} and/or
\eqref{folklore}, for instance,
$$
\left|A\frac{T+T^*}{2}A\right|^p \prec_{w\!\log} A^p\frac{|T|^p+|T^*|^p}{2}A^p
$$
for all $A\in\bM_n^+$, $p\ge 1$, and any $T\in\bM_n$. The proof of Theorem \ref{th1} is in Section 4. The last section provides a
version of Theorem \ref{th1} for operators acting on an infinite dimensional Hilbert space.

\section{Araki type inequalities}

\subsection{With positive operators}

\vskip 5pt To obtain new Araki's type inequalities, we  fix $t=1$ in Theorem \ref{th1} and thus use  the following special case.

\vskip 5pt
\begin{cor}\label{cornew} Let $A,B\in\bM_n^+$ and $Z\in\bM_n$. Then, for all symmetric norms and   $\alpha>0$, the map
$$
p \mapsto \left\| \left|A^{1/p}ZB^{1/p}\right|^{\alpha p } \right\| 
$$
is  log-convex on $(0,\infty)$.
\end{cor}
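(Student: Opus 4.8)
The plan is to obtain this statement as the one-variable slice $t=1$ of the jointly log-convex map of Theorem~\ref{th1}, so that the entire analytic content has already been packaged into that theorem. By definition, the assertion that
$$
(p,t) \mapsto \left\| \left|A^{t/p}ZB^{t/p}\right|^{\alpha p} \right\|
$$
is jointly log-convex on $(0,\infty)\times(-\infty,\infty)$ means precisely that $(p,t) \mapsto \log \left\| \left|A^{t/p}ZB^{t/p}\right|^{\alpha p} \right\|$ is a convex function on that convex domain. Everything then reduces to a soft fact about convex functions of two variables.

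First I would record the standard principle that the restriction of a convex function to the intersection of its domain with any affine line is again convex; equivalently, a function of two variables is convex exactly when each of its one-dimensional restrictions along straight lines is convex. Applying this to the horizontal line $\{(p,1) : p>0\}$, which lies entirely inside $(0,\infty)\times(-\infty,\infty)$ since $1\in(-\infty,\infty)$ and $p$ ranges over all of $(0,\infty)$, yields that $p \mapsto \log \left\| \left|A^{1/p}ZB^{1/p}\right|^{\alpha p} \right\|$ is convex on $(0,\infty)$. Observing that $A^{t/p}=A^{1/p}$ and $B^{t/p}=B^{1/p}$ when $t=1$, this restriction is exactly the logarithm of the map in the statement, so the map itself is log-convex, as claimed.

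Since the deduction is purely formal, the only genuine obstacle is hidden upstream in Theorem~\ref{th1} itself, whose joint log-convexity is assumed here. I expect that theorem to be established by a perspective/interpolation argument: one rewrites the functional in a form amenable to a complex three-lines (Hadamard) estimate, exploiting the unitary invariance of the symmetric norm and the convention for $A^{-t}$ introduced just after the theorem to accommodate non-invertible $A$ or $B$. None of that machinery is needed for the corollary; fixing $t=1$ merely harvests the $p$-log-convexity already present in the two-variable statement.
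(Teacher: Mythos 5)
Your proof is correct and is exactly the paper's argument: the corollary is obtained by fixing $t=1$ in Theorem \ref{th1} and using the elementary fact that a jointly (log-)convex function restricted to an affine line in its domain remains (log-)convex. Nothing further is needed.
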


\vskip 5pt
We may now state a series of corollaries extending Araki's theorem.

\vskip 5pt
\begin{cor}\label{cor1} Let $A,B\in\bM_n^+$ and $p\ge 1$. Let $Z\in\bM_n$ be a contraction. Then, for all symmetric norms and
$\alpha>0$,
$$
  \| (AZ^*BZA)^{\alpha p} \| \le \| (A^{p}Z^*B^{p}ZA^{p})^{\alpha}\|.
$$
\end{cor}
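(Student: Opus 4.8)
The plan is to deduce the statement directly from the log-convexity in Corollary \ref{cornew}, by choosing the base matrices so that the two sides of the desired inequality become two values of a single log-convex function. Fix $p\ge 1$ and set $P:=B^{p/2}$ and $Q:=A^p$, two matrices in $\bM_n^+$, and put $\beta:=2\alpha$. First I would apply Corollary \ref{cornew} to $P,Q$ and $Z$: the map
$$
f(s):=\left\| \left|P^{1/s}ZQ^{1/s}\right|^{\beta s}\right\|
$$
is log-convex on $(0,\infty)$. The point of this substitution is that the two arguments $s=p$ and $s=1$ reproduce exactly the two sides of the claim. Indeed $P^{1/p}ZQ^{1/p}=B^{1/2}ZA$, so $\left|P^{1/p}ZQ^{1/p}\right|^2=AZ^*BZA$ and hence $f(p)=\|(AZ^*BZA)^{\alpha p}\|$; likewise $PZQ=B^{p/2}ZA^p$, so $\left|PZQ\right|^2=A^pZ^*B^pZA^p$ and $f(1)=\|(A^pZ^*B^pZA^p)^{\alpha}\|$. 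Thus the corollary amounts to the single inequality $f(p)\le f(1)$ for $p\ge 1$.

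To obtain $f(p)\le f(1)$ I would show that $f$ is non-increasing on $(0,\infty)$, and here is where the contraction hypothesis enters. Writing $\|\cdot\|_\infty$ for the operator norm (the largest singular value), submultiplicativity together with $\|Z\|_\infty\le 1$ gives
$$
\left\|P^{1/s}ZQ^{1/s}\right\|_\infty^{\beta s}\le\left(\|P\|_\infty^{1/s}\,\|Z\|_\infty\,\|Q\|_\infty^{1/s}\right)^{\beta s}\le \left(\|P\|_\infty\,\|Q\|_\infty\right)^{\beta}.
$$
Since $\left|P^{1/s}ZQ^{1/s}\right|^{\beta s}$ is positive with largest eigenvalue $\|P^{1/s}ZQ^{1/s}\|_\infty^{\beta s}$, it is dominated by this number times $I$, whence by monotonicity of symmetric norms on $\bM_n^+$ we get $f(s)\le \left(\|P\|_\infty\|Q\|_\infty\right)^{\beta}\,\|I\|$ for every $s>0$; that is, $f$ is bounded above on $(0,\infty)$. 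Now I would invoke the elementary fact that a convex function on a half-line which is bounded above there must be non-increasing (an upward step would force an unbounded linear growth, contradicting the bound). Applying this to $\log f$ shows that $f$ is non-increasing, so $f(p)\le f(1)$, which is the assertion.

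The hard part is really contained in the boundedness step: it is only here that $\|Z\|_\infty\le 1$ is used, and it is essential, for if $Z$ were expansive the factor $\|Z\|_\infty^{\beta s}$ would grow and destroy the bound (and indeed one expects the inequality to reverse in that regime). One should also dispose of the degenerate case first: if $AZ^*BZA=0$ the left-hand side vanishes and there is nothing to prove, so one may assume $f>0$ and pass to logarithms legitimately. Note finally that no negative powers of $A$ or $B$ occur (all exponents $1/s$, $p/2$, $p$ are positive), so the extension convention for non-invertible matrices recorded after Theorem \ref{th1} is not needed here.
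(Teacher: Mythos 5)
Your proof is correct and follows essentially the same route as the paper: the paper defines $f(p)=\| |B^{1/p}ZA^{1/p}|^{2\alpha p}\|$, notes it is log-convex (hence convex) and bounded above by $\|B\|_{\infty}^{2\alpha}\|A\|_{\infty}^{2\alpha}\|I\|$ because $Z$ is a contraction, concludes it is nonincreasing so $f(1)\ge f(p)$, and then substitutes $B\mapsto B^{p/2}$, $A\mapsto A^p$. You perform that substitution at the outset rather than at the end, and you spell out the degenerate case and the ``convex plus bounded implies nonincreasing'' step more explicitly, but the argument is the same.
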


\vskip 5pt Let $I$ be the identity of $\bM_n$. A matrix $Z$ is contractive, or a contraction, if $Z^*Z\le I$, equivalently if its
operator norm satisfies $\| Z\|_{\infty}\le 1$.

\vskip 5pt
\begin{proof} The function $f(p)=\| | B^{1/p}ZA^{1/p} |^{2\alpha p} \|$ is log-convex, hence convex on $(0,\infty)$, and bounded
since $Z$ is contractive, $0\le f(p) \le \| B\|_{\infty}^{2\alpha}
\| A\|_{\infty}^{2\alpha} \| I\|$. Thus $f(p)$ is nonincreasing, so $f(1)\ge f(p)$ for all $p\ge1$. Replacing $B$ by $B^{p/2}$ and
$A$ by $A^{p}$ completes the proof.
\end{proof}

\vskip 5pt
Let $\|\cdot\|_{\{k\}}$, $k=1,\ldots, n$, denote the normalized Ky Fan $k$-norms on $\bM_n$,
$$
\| T\|_{\{k\}} =\frac{1}{k}\sum_{j=1}^k \lambda_j(|T|).
$$
Since, for all $A\in\bM_n^+$,
$$
\lim_{\alpha\to 0^+} \| A^{\alpha}\|_{\{k\}}^{1/\alpha} =\left\{\prod_{j=1}^k \lambda_j(A)\right\}^{1/k}
$$
we obtain from Corollary \ref{cor1} applied to the normalized Ky Fan $k$-norms, with $\alpha\to 0^+$, a striking
weak-log-majorization extending Araki's theorem.

\vskip 5pt
\begin{cor}\label{corstriking}  Let $A,B\in\bM_n^+$ and $p\ge 1$. Then,  for all contractions $Z\in \bM_n$,
$$
   (AZ^*BZA)^p \prec_{w\!\log}   A^{p}Z^*B^{p}ZA^{p}.
$$
\end{cor}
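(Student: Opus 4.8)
The plan is to deduce this from Corollary~\ref{cor1} by specializing the symmetric norm to the normalized Ky Fan $k$-norms $\|\cdot\|_{\{k\}}$ and then letting $\alpha\to 0^+$, exactly as anticipated in the discussion preceding the statement. First I would record that both matrices in play are positive semidefinite: writing $C=B^{1/2}ZA$ gives $AZ^*BZA=C^*C\ge 0$, and similarly $A^{p}Z^*B^{p}ZA^{p}\ge 0$. This makes the weak log-majorization meaningful and, unpacked by definition, reduces the corollary to the $n$ scalar inequalities
$$
\prod_{j=1}^k \lambda_j\big((AZ^*BZA)^p\big) \le \prod_{j=1}^k \lambda_j\big(A^{p}Z^*B^{p}ZA^{p}\big), \qquad k=1,\dots,n,
$$
so it suffices to produce each of these separately.

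Next, I would fix $k\in\{1,\dots,n\}$ and apply Corollary~\ref{cor1} with $\|\cdot\|=\|\cdot\|_{\{k\}}$. Since $Z$ is a contraction, this yields, for every $\alpha>0$,
$$
\big\| (AZ^*BZA)^{\alpha p} \big\|_{\{k\}} \le \big\| (A^{p}Z^*B^{p}ZA^{p})^{\alpha} \big\|_{\{k\}}.
$$
Using the functional calculus for the positive matrix $AZ^*BZA$, I would rewrite the left-hand matrix as $(AZ^*BZA)^{\alpha p}=\big((AZ^*BZA)^p\big)^{\alpha}$, so that both sides now have the form $\|M^{\alpha}\|_{\{k\}}$ with $M$ positive semidefinite, namely $M=(AZ^*BZA)^p$ on the left and $M=A^{p}Z^*B^{p}ZA^{p}$ on the right. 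Raising the inequality to the power $1/\alpha$ preserves it.

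Then I would let $\alpha\to 0^+$ and invoke the stated limit $\lim_{\alpha\to 0^+}\|M^{\alpha}\|_{\{k\}}^{1/\alpha}=\big\{\prod_{j=1}^k \lambda_j(M)\big\}^{1/k}$, applied once to each of the two positive matrices above. Since a non-strict inequality passes to the limit, this produces
$$
\Big\{\prod_{j=1}^k \lambda_j\big((AZ^*BZA)^p\big)\Big\}^{1/k} \le \Big\{\prod_{j=1}^k \lambda_j\big(A^{p}Z^*B^{p}ZA^{p}\big)\Big\}^{1/k}.
$$
Raising to the $k$-th power recovers the desired scalar inequality, and letting $k$ range over $\{1,\dots,n\}$ gives precisely the defining chain of the weak log-majorization $(AZ^*BZA)^p \prec_{w\!\log} A^{p}Z^*B^{p}ZA^{p}$.

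I do not expect any serious obstacle: the whole analytic content sits inside Corollary~\ref{cor1}, and what remains is a routine limiting argument. The only point deserving a little care is the justification of the limit formula $\lim_{\alpha\to 0^+}\|M^{\alpha}\|_{\{k\}}^{1/\alpha}=\{\prod_{j=1}^k \lambda_j(M)\}^{1/k}$, which is an elementary computation on the nonnegative eigenvalues of $M$ (the top $k$ of them dominate as $\alpha\to 0^+$). Once that is granted, as it is in the excerpt, the inequality survives both the continuous map $x\mapsto x^{1/\alpha}$ and the passage to the limit, and nothing further is required.
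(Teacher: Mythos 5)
Your proposal is correct and is exactly the argument the paper gives: Corollary \ref{cor1} is applied to the normalized Ky Fan $k$-norms, the identity $(AZ^*BZA)^{\alpha p}=\bigl((AZ^*BZA)^p\bigr)^{\alpha}$ is used, and the limit formula $\lim_{\alpha\to 0^+}\|M^{\alpha}\|_{\{k\}}^{1/\alpha}=\bigl\{\prod_{j=1}^k\lambda_j(M)\bigr\}^{1/k}$ yields the weak log-majorization. Nothing is missing.
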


\vskip 5pt
If $Z=I$, we have the determinant equality and thus Araki's log-majorization \eqref{araki}.
Corollary \ref{cor1} and \ref{corstriking} are equivalent. Our proof of these extensions of Araki's theorem follows from the two
variables technic of Theorem \ref{th1}. It's worth mentioning that Fumio Hiai also obtained Corollary \ref{corstriking} in the
beautiful note \cite{Hiai2}. Hiai's approach is based on some subtle estimates for the operator geometric mean.

For $X,Y\in \bM_n^+ $, the notation $X\prec^{w\!\log} Y$ indicates that the series of $n$ inequalities holds, 
$$
\prod_{j=1}^k \nu_j(X) \ge \prod_{j=1}^k \nu_j(Y)
$$
for $k=1,\ldots n$, where $\nu_j(\cdot)$ stands for the eigenvalues arranged in increasing order. The following so-called super
weak-log-majorization is another extension of Araki's theorem.
A matrix  $Z$ is expansive when  $Z^*Z\ge I$.

\vskip 5pt
\begin{cor}\label{corsuper}  Let $A,B\in\bM_n^+$ and $p\ge 1$. Then,  for all expansive matrices $Z\in \bM_n$,
$$
   (AZ^*BZA)^p \prec^{w\!\log}   A^{p}Z^*B^{p}ZA^{p}.
$$
\end{cor}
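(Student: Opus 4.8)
The plan is to deduce Corollary \ref{corsuper} from the contraction case, Corollary \ref{corstriking}, by passing to inverses. Two elementary dualities drive the argument. First, for positive definite $X,Y$, the correspondence $\nu_j(X)=1/\lambda_j(X^{-1})$ between the eigenvalues listed in increasing and in decreasing order shows that
$$
X\prec^{w\!\log} Y \quad\Longleftrightarrow\quad X^{-1}\prec_{w\!\log}Y^{-1}.
$$
Second, $Z$ is expansive exactly when $W:=(Z^*)^{-1}$ is a contraction: indeed $Z^*Z\ge I$ forces $Z$ to be invertible, and $W^*W=(Z^*Z)^{-1}\le I$, while $W^*=Z^{-1}$.

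First I would assume $A,B\in\bM_n^+$ invertible. Using $(Z^*BZ)^{-1}=Z^{-1}B^{-1}(Z^*)^{-1}=W^*B^{-1}W$, a direct computation gives
$$
(AZ^*BZA)^{-p}=\left(A^{-1}W^*B^{-1}WA^{-1}\right)^p,\qquad
(A^pZ^*B^pZA^p)^{-1}=A^{-p}W^*B^{-p}WA^{-p}.
$$
Now I would apply Corollary \ref{corstriking} to the positive matrices $A^{-1},B^{-1}$ and the contraction $W$, which yields
$$
\left(A^{-1}W^*B^{-1}WA^{-1}\right)^p\prec_{w\!\log}A^{-p}W^*B^{-p}WA^{-p},
$$
that is, $(AZ^*BZA)^{-p}\prec_{w\!\log}(A^pZ^*B^pZA^p)^{-1}$. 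Inverting this weak log-majorization via the first duality gives precisely $(AZ^*BZA)^p\prec^{w\!\log}A^pZ^*B^pZA^p$.

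It remains to drop the invertibility of $A$ and $B$, noting that $Z$ is automatically invertible. Here I would run a perturbation: replace $A,B$ by $A+\eps I$ and $B+\eps I$, apply the invertible case just established, and let $\eps\to0^+$. Since $\prec^{w\!\log}$ is defined by the non-strict inequalities $\prod_{j=1}^k\nu_j(\cdot)\ge\prod_{j=1}^k\nu_j(\cdot)$, and the eigenvalues of both sides depend continuously on $\eps$, the relation passes to the limit.

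The conceptual point to secure is the inversion duality together with the identification of $W=(Z^*)^{-1}$ as the contraction feeding Corollary \ref{corstriking}; once these are in place the reduction is purely algebraic. The only genuinely technical step is the $\eps\to0^+$ limit, and this is painless here precisely because super weak-log-majorization is a closed condition, so no separate bookkeeping of the (matching) zero eigenvalues of $AZ^*BZA$ and $A^pZ^*B^pZA^p$ is required.
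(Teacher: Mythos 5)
Your proof is correct and takes essentially the same route as the paper's: reduce to invertible $A,B$ by a limit argument, pass to inverses, and apply Corollary \ref{corstriking} with the contraction $(Z^*)^{-1}$. You have simply written out in full the algebraic and limiting details that the paper compresses into a two-line sketch.
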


\vskip 5pt
\begin{proof} By a limit argument, we may assume invertibility of $A$ and $B$. Taking inverses, and using that $Z^{-1}$ is
contractive, Corollary \ref{corsuper} is then equivalent to Corollary \ref{corstriking}.
\end{proof}

\vskip 5pt Corollaries \ref{corstriking}-\ref{corsuper} imply a host of trace inequalities. We say that a continuous function
$h:[0,\infty)\to (-\infty,\infty)$ is e-convex, (resp.\ e-concave), if $h(e^t)$ is convex, (resp.\ concave) on $(-\infty,\infty)$.
For instance, for all $\alpha >0$, $t\mapsto \log(1+t^{\alpha})$ is e-convex, while $t\mapsto \log(t^{\alpha}/(t+1))$ is e-concave.
The equivalence between Corollary \ref{corstriking} and Corollary \ref{cortrace} below is a basic property of majorization discussed
in any monograph on this topic such as \cite{Bh} and \cite{HP}.

\vskip 25pt
\begin{cor}\label{cortrace}  Let $A,B\in\bM_n^+$,  $Z\in\bM_n$, and $p\ge 1$. 
\begin{itemize}
\item[{\rm(a)}] If $Z$ is contractive and $f(t)$ is e-convex and nondecreasing, then
$$
  {\mathrm{Tr\,}} f((AZ^*BZA)^p) \le  {\mathrm{Tr\,}} f( A^{p}Z^*B^{p}ZA^{p}).
$$
\item[{\rm(b)}]  If $Z$ is expansive and $g(t)$ is e-concave and nondecreasing, then
 $$
  {\mathrm{Tr\,}} g((AZ^*BZA)^p) \ge  {\mathrm{Tr\,}} g( A^{p}Z^*B^{p}ZA^{p}).
$$
\end{itemize}
\end{cor}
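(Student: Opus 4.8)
The plan is to obtain both trace inequalities directly from the (super) weak log-majorizations already proved in Corollaries \ref{corstriking} and \ref{corsuper}, by transporting them through the logarithm into ordinary (super) weak majorizations of eigenvalue vectors and then invoking the classical monotonicity principles for convex functions under majorization. Throughout I abbreviate $X=(AZ^*BZA)^p$ and $Y=A^pZ^*B^pZA^p$; since both matrices are positive semidefinite, ${\mathrm{Tr\,}} h(X)=\sum_{j=1}^n h(\lambda_j(X))$ for any function $h$ defined on $[0,\infty)$.

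For part (a), Corollary \ref{corstriking} gives $X\prec_{w\!\log}Y$, that is $\prod_{j=1}^k\lambda_j(X)\le\prod_{j=1}^k\lambda_j(Y)$ for each $k$, with the eigenvalues decreasingly ordered. Taking logarithms, and noting that $t\mapsto\log t$ preserves the decreasing order, yields $\sum_{j=1}^k\log\lambda_j(X)\le\sum_{j=1}^k\log\lambda_j(Y)$ for each $k$; thus the vector $(\log\lambda_j(X))_j$ is weakly majorized by $(\log\lambda_j(Y))_j$. Since $f$ is e-convex and nondecreasing, $\phi(s):=f(e^s)$ is convex and nondecreasing on $\bR$, and ${\mathrm{Tr\,}} f(X)=\sum_j\phi(\log\lambda_j(X))$. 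I then invoke the standard fact (see \cite{Bh} or \cite{HP}) that a weak majorization $a\prec_w b$ together with a convex nondecreasing $\phi$ gives $\sum_j\phi(a_j)\le\sum_j\phi(b_j)$; this is exactly ${\mathrm{Tr\,}} f(X)\le{\mathrm{Tr\,}} f(Y)$.

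For part (b), Corollary \ref{corsuper} gives the super weak log-majorization $X\prec^{w\!\log}Y$, namely $\prod_{j=1}^k\nu_j(X)\ge\prod_{j=1}^k\nu_j(Y)$ with the eigenvalues increasingly ordered. Passing to logarithms in the same way gives $\sum_{j=1}^k\log\nu_j(X)\ge\sum_{j=1}^k\log\nu_j(Y)$ for each $k$, so $(\log\nu_j(X))_j$ weakly supermajorizes $(\log\nu_j(Y))_j$. Now $g$ e-concave and nondecreasing makes $\psi(s):=g(e^s)$ concave and nondecreasing, whence $-\psi$ is convex and nonincreasing. The companion classical fact, that a weak supermajorization $c\prec^w d$ together with a convex nonincreasing $\varphi$ gives $\sum_j\varphi(c_j)\le\sum_j\varphi(d_j)$, applied to $\varphi=-\psi$ yields $\sum_j\psi(\log\nu_j(X))\ge\sum_j\psi(\log\nu_j(Y))$, which is precisely ${\mathrm{Tr\,}} g(X)\ge{\mathrm{Tr\,}} g(Y)$.

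The one point demanding care, and the step I expect to be the main obstacle, is the passage to logarithms when $A$ or $B$ is singular: then some $\lambda_j(X)$ or $\nu_j(X)$ vanish and the logarithm sends them to $-\infty$. I would dispose of this exactly as in the proof of Corollary \ref{corsuper}, first establishing the trace inequalities for invertible $A,B$, where all eigenvalues are strictly positive and the logarithmic transport is harmless, and then letting a regularizing parameter tend to $0$ and using the continuity of $f$ and $g$ on $[0,\infty)$. Apart from this, the only real subtlety is bookkeeping: the weak majorization (from below, for contractions) must be paired with a nondecreasing $\phi$, while the super weak majorization (from above, for expansive $Z$) must be paired with a nonincreasing $-\psi$. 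Once these monotonicity directions are correctly matched to the two flavors of majorization, both conclusions follow at once from the standard majorization machinery.
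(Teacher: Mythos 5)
Your argument is correct and is exactly the route the paper intends: the paper gives no written proof, stating only that the equivalence of Corollary \ref{cortrace} with Corollaries \ref{corstriking}--\ref{corsuper} is ``a basic property of majorization'' found in \cite{Bh} and \cite{HP}, and what you have written out is precisely that standard argument (log-transport of the weak log-majorization, then the convex-nondecreasing and convex-nonincreasing order-preservation principles), including the appropriate limit argument for singular $A,B$.
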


\vskip 5pt 
We will propose in Section 4 a proof of Theorem \ref{th1} making use of antisymmetric tensor powers, likewise in the proof of Araki's
log-majorization. We will also indicate another, more elementary way, without antisymmetric tensors. The antisymmetric tensor technic
goes back to Hermann Weyl, cf.\ \cite{Bh}, \cite{HP}. We use it to derive our next corollary.

\vskip 5pt
\begin{cor}\label{corlim}  Let $A,B\in\bM_n^+$ and  $Z\in\bM_n$. For each $j=1,\ldots, n$, the function defined on $(0,\infty)$
$$
p\mapsto \lambda_j^{1/p}( A^{p}Z^*B^{p}ZA^{p})
$$ 
converges  as $p\to\infty$.
\end{cor}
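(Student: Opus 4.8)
The plan is to reduce the statement to the convexity in $p$ of the logarithm of the product of the $k$ largest eigenvalues, and then to read off convergence from elementary properties of convex functions. First I would pass to antisymmetric tensor powers. For $N\in\bM_n^+$ one has $\prod_{j=1}^k\lambda_j(N)=\lambda_1(\wedge^k N)=\|\wedge^k N\|_{\infty}$, and $\wedge^k$ is multiplicative, commutes with positive powers, and intertwines adjoints. Hence, writing $\tilde A=\wedge^k A$, $\tilde B=\wedge^k B$, $\tilde Z=\wedge^k Z$ (matrices in $\bM_N$ with $N=\binom nk$, and $\tilde A,\tilde B\ge 0$),
$$
F_k(p):=\log\prod_{j=1}^k\lambda_j(A^pZ^*B^pZA^p)=\log\big\|\,\tilde A^{\,p}\tilde Z^{*}\tilde B^{\,p}\tilde Z\tilde A^{\,p}\,\big\|_{\infty}.
$$
Since $\lambda_j^{1/p}(A^pZ^*B^pZA^p)=\exp\{\tfrac1p(F_j(p)-F_{j-1}(p))\}$ with $F_0\equiv 0$, it suffices to prove that $F_k(p)/p$ converges for every $k$.

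The crucial input is a one-variable convexity coming from Theorem \ref{th1}. Applying that theorem to the triple $(A,B^{1/2},Z^*)$, fixing the first variable at $p_0$, choosing $\alpha=1/p_0$ and the operator norm, the jointly log-convex map restricts, along the horizontal line $\{p=p_0\}$, to the convex function $v\mapsto\log\|A^{v}Z^{*}B^{v/2}\|_{\infty}$ on $(0,\infty)$ (here $v=t/p_0$). Combined with the identity $\|A^pZ^*B^pZA^p\|_{\infty}=\|A^pZ^*B^{p/2}\|_{\infty}^{2}$, this shows that $p\mapsto\log\|A^pZ^*B^pZA^p\|_{\infty}$ is convex; applying the same reasoning to $\tilde A,\tilde B,\tilde Z$ yields convexity of each $F_k$ on $(0,\infty)$.

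I would then conclude by a soft argument. The crude bound $\prod_{j\le k}\lambda_j(A^pZ^*B^pZA^p)\le(\|A\|_{\infty}^{2p}\|B\|_{\infty}^{p}\|Z\|_{\infty}^{2})^{k}$ shows that $F_k(p)/p$ is bounded above. A convex function on $(0,\infty)$ whose average slope is bounded above has a finite asymptotic slope, so $F_k(p)/p\to m_k$ with $m_k$ finite whenever $F_k$ is finite-valued, i.e. whenever the product of the top $k$ eigenvalues is positive. Since the rank of $A^pZ^*B^pZA^p=|B^{p/2}ZA^p|^{2}$ equals $\mathrm{rank}(P_BZP_A)$ independently of $p$ (the range projections $P_A,P_B$ do not depend on the exponent), the set of indices $j$ with $\lambda_j>0$ is fixed. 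For those indices $\lambda_j^{1/p}\to e^{\,m_j-m_{j-1}}$, while for the remaining $j$ the eigenvalue is identically $0$ and $\lambda_j^{1/p}=0$; in all cases the limit exists.

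The main obstacle is the second step, and specifically recognizing the asymmetric exponents: $A^pZ^*B^pZA^p$ equals $|B^{p/2}ZA^p|^{2}$, with $A$ and $B$ raised to the unequal powers $p$ and $p/2$, so it is \emph{not} directly of the symmetric form $|A^sZB^s|^2$ that one first tries to match with Theorem \ref{th1}. Getting around this by pre-substituting $B\mapsto B^{1/2}$ and then slicing the two-variable functional along a \emph{horizontal} line (rather than along the parabola $t\sim p^2$ one is first tempted to use, which would not preserve convexity) is the delicate point. Once convexity of $F_k$ is in hand, the antisymmetric-tensor reduction and the convex-function asymptotics are routine.
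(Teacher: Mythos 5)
Your proof is correct, and while it draws on the same engine as the paper (Theorem \ref{th1} plus antisymmetric tensor powers), it slices that engine differently and finishes with a different elementary convexity fact. The paper fixes $t=1$, works with $g(p)=\lambda_1^p(A^{1/p}Z^*B^{1/p}ZA^{1/p})$, normalizes $Z$ to a contraction so that this log-convex function is bounded, deduces that it is nonincreasing hence convergent as $p\to 0$, and then undoes the substitution $p\mapsto 1/p$; you instead fix the first variable, pre-substitute $B\mapsto B^{1/2}$ and $Z\mapsto Z^*$ so that the identity $\|A^pZ^*B^pZA^p\|_{\infty}=\|A^pZ^*B^{p/2}\|_{\infty}^2$ turns the $t$-slice of Theorem \ref{th1} into convexity of $F_k(p)$ directly in the exponent $p$, and then invoke the existence of the asymptotic slope of a convex function with bounded average slope. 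What your route buys: no normalization of $Z$, no inversion of the variable, and a cleaner endgame — in particular your observation that the rank of $A^pZ^*B^pZA^p$ equals $\mathrm{rank}(P_BZP_A)$ independently of $p$ makes the passage from convergence of the products $\prod_{j\le k}\lambda_j^{1/p}$ to convergence of the individual $\lambda_j^{1/p}$ airtight (one must rule out a vanishing denominator when taking ratios), a point the paper leaves implicit. What the paper's route buys is monotonicity of $p\mapsto\lambda_j^{1/p}(A^pZ^*B^pZA^p)$ for contractive $Z$, which is slightly more information than bare convergence. Both arguments are sound.
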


\vskip 5pt
\begin{proof} We may assume that $Z$ is contractive. As in the proof of Corollary \ref{cor1} we then see that the function
$
g(p)=\lambda_1^p( A^{1/p}Z^*B^{1/p}ZA^{1/p})
$
is log-convex and bounded, hence nonincreasing on $(0,\infty)$. Therefore $g(p)$ converges as $p\to 0$ and so $g(1/p)$ converges as
$p\to \infty$. Thus
$
p\mapsto  \lambda_1^{1/p}( A^{p}Z^*B^{p}ZA^{p})
$
converges
 as $p\to\infty$. Considering $k$-th antisymmetric tensor products, $k=1,\ldots, n$, we infer the convergence of
$$
p\mapsto\prod_{j=1}^k \lambda_j^{1/p}( A^{p}Z^*B^{p}ZA^{p})=\lambda_1^{1/p}\left((\wedge^kA)^p\wedge^kZ^*
(\wedge^kB)^p\wedge^kZ(\wedge^kA)^p\right)
$$
 and so, the convergence of
$
p\mapsto \lambda_j^{1/p}( A^{p}Z^*B^{p}ZA^{p})
$
as $p\to\infty$, for each $j=1,2,\ldots.$
\end{proof}

\vskip 5pt
When $Z=I$, Audenaert and Hiai \cite{Aud-Hiai} recently gave a remarkable improvement of Corollary \ref{corlim} by showing that
$p\mapsto (A^pB^pA^p)^{1/p}$ converges in $\bM_n$ as $p\to \infty$. We do not know whether such a reciprocal Lie-Trotter limit still
holds with a third matrix $Z$ as in Corollary \ref{corlim}.

It is possible to state Corollary \ref{corstriking} in a stronger form involving a positive linear map $\Phi$. Such a map is called
sub-unital when $\Phi(I)\le I$.

\vskip 5pt
\begin{cor}\label{corstronger} Let $A,B\in\bM_n^+$ and $p\ge 1$. Then, for all positive linear, sub-unital map $\Phi: \bM_n\to\bM_n$,
$$
   (A\Phi(B)A)^p \prec_{w\!\log}   A^{p}\Phi(B^{p})A^{p}.
$$
\end{cor}

\vskip 5pt
\begin{proof} We may assume (the details are given, for a more general class of maps,  in the proof of Corollary 
\ref{poslin}) that
$$
\Phi(X) =\sum_{i=1}^m Z_i^* XZ_i
$$
where $m=n^2$ and $Z_i\in\bM_n$, $i=1,\ldots, m$, satisfy $\sum_{i=1}^m Z^*_iZ_i \le I$. Corollary \ref{corstronger} then follows
from Corollary \ref{corstriking} applied to the operators $\tilde{A},\tilde{B},\tilde{Z}\in \bM_{mn}$,
\begin{equation*}\label{eqblock}
\tilde{A}=\begin{pmatrix} A& 0_n&\cdots &0_n \\
0_n &0_n&\cdots &0_n \\
\vdots &\vdots &\ddots &\vdots \\
0_n& 0_n&\cdots &0_n \\
\end{pmatrix}, \
\tilde{B}=\begin{pmatrix} B& 0_n&\cdots &0_n \\
0_n &B&\cdots &0_n \\
\vdots &\vdots &\ddots &\vdots \\
0_n& 0_n&\cdots &B \\
\end{pmatrix}, \
\tilde{Z}=\begin{pmatrix} Z_1& 0_n&\cdots &0_n \\
Z_2 &0_n&\cdots &0_n \\
\vdots &\vdots &\ddots &\vdots \\
Z_m& 0_n&\cdots &0_n \\
\end{pmatrix}
\end{equation*}
where $0_n$ stands for the zero matrix in $\bM_n$.
\end{proof}

\vskip 5pt
  Corollary \ref{corstronger} can be applied for the Schur product $\circ$ (i.e., entrywise product) in $\bM_n$.

\vskip 5pt
\begin{cor}\label{corschur} Let $A,B,C\in\bM_n^+$ and $p\ge 1$. If $C$ has all its diagonal entries less than or equal to one, then \
$$
   (A(C\circ B)A)^p \prec_{w\!\log}   A^{p}(C\circ B^{p})A^{p}.
$$
\end{cor}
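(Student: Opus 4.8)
The plan is to realize Schur multiplication by $C$ as a sub-unital positive linear map and then invoke Corollary \ref{corstronger} directly. I would define $\Phi:\bM_n\to\bM_n$ by $\Phi(X)=C\circ X$. This map is clearly linear, and by the Schur product theorem (the Hadamard product of two positive semidefinite matrices is positive semidefinite) it is positive, since $C\in\bM_n^+$. With this choice $\Phi(B)=C\circ B$ and $\Phi(B^p)=C\circ B^p$, so the asserted weak log-majorization is exactly the conclusion of Corollary \ref{corstronger}, provided one checks that $\Phi$ is sub-unital.

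The sub-unitality is where the diagonal hypothesis on $C$ enters, and it is the only place. Indeed $\Phi(I)=C\circ I=\diag(c_{11},\ldots,c_{nn})$, and the requirement $c_{jj}\le 1$ for every $j$ is precisely the statement $\Phi(I)\le I$. Thus the hypothesis that $C$ has all its diagonal entries less than or equal to one is equivalent to $\Phi$ being sub-unital, and nothing further is needed.

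To make the reduction fully transparent and to match the Kraus form used in the proof of Corollary \ref{corstronger}, I would record a decomposition $C=\sum_{k} v_k v_k^*$ (for instance from the spectral decomposition of $C$) and set $D_k=\diag(v_k)$. A one-line computation gives $C\circ X=\sum_k D_k X D_k^*$, so $\Phi(X)=\sum_k Z_k^* X Z_k$ with $Z_k=D_k^*$, and $\sum_k Z_k^*Z_k=\sum_k D_k D_k^*=\diag(c_{11},\ldots,c_{nn})\le I$. This exhibits $\Phi$ in exactly the form required by Corollary \ref{corstronger} and re-derives the sub-unitality straight from the diagonal condition on $C$.

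There is no genuine obstacle here: the statement is an immediate specialization of Corollary \ref{corstronger}. The only content is the identification of the Schur multiplier with a positive linear map and the observation that its sub-unitality is governed solely by the diagonal of $C$; both are classical, the first being Schur's theorem and the second a direct evaluation of $\Phi$ at the identity.
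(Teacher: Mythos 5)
Your proof is correct and follows essentially the same route as the paper, which also observes that $X\mapsto C\circ X$ is a positive linear, sub-unital map (sub-unitality coming from the diagonal condition on $C$) and then applies Corollary \ref{corstronger}. Your explicit Kraus decomposition $C\circ X=\sum_k D_kXD_k^*$ is a welcome extra detail but does not change the argument.
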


\vskip 5pt
\begin{proof} The map $X\mapsto C\circ X$ is a positive linear, sub-unital map on $\bM_n$.
\end{proof}

\vskip 5pt
Corollary \ref{corschur} with the matrix $C$ whose entries are all equal to one is Araki's log-majorization. With $C=I$, Corollary
\ref{corschur} is already an interesting extension of
Araki's theorem as we may assume that $B$ is diagonal in \eqref{araki}. We warn the reader that the super weak-log-majorization, for
$A,B\in\bM_n^+$ and $p\ge 1$,
$
   (A(I\circ B)A)^p \prec^{w\!\log}   A^{p}(I\circ B^{p})A^{p}
$
does not hold, in fact, in general, $\det^ 2 I\circ B< \det I\circ B^2 $. 

\subsection{With normal operators}

To obtain Proposition \ref{th0} we need the following generalization of Corollary \ref{corstronger}.

\vskip 5pt
\begin{theorem}\label{cornormal} Let $A\in \bM_n^+$ and let $N\in\bM_m$ be normal. Then, for all positive linear, sub-unital maps
$\Phi: \bM_m\to\bM_n$, and $p\ge 1$,
$$
   |A\Phi(N)A|^p \prec_{w\!\log}   A^{p}\Phi(|N|^{p})A^{p}.
$$
\end{theorem}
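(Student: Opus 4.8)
The plan is to reduce the statement, in three stages, to the positive case already in hand (Corollary \ref{corstriking}), the one genuinely new ingredient being a sharp way to strip the unimodular phases of the eigenvalues of $N$.

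\emph{First stage: from $\Phi$ to a single congruence.} Following verbatim the reduction used for Corollary \ref{corstronger}, I would write the sub-unital positive map in Kraus form $\Phi(X)=\sum_i Z_i^*XZ_i$ with $\sum_i Z_i^*Z_i\le I$, and pass to the block-diagonal data $\tilde A=\diag(A,0,\dots,0)$, $\tilde N=\diag(N,\dots,N)$ (still normal, with $|\tilde N|=\diag(|N|,\dots,|N|)$), and $\tilde Z$ the block matrix whose first block-column is $(Z_1,\dots,Z_m)^{\!\top}$, which is a contraction. The $(1,1)$ corners of $\tilde Z^*\tilde N\tilde Z$ and of $\tilde Z^*|\tilde N|^{p}\tilde Z$ are exactly $\Phi(N)$ and $\Phi(|N|^{p})$, so the theorem follows once I have the single-congruence statement: for $A\ge0$, a contraction $Z$, and a normal $N$,
$$|AZ^*NZA|^p\prec_{w\!\log}A^pZ^*|N|^pZA^p. \qquad (\star)$$

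\emph{Second stage: reduce $(\star)$ to an operator-norm bound.} Since for positive $X,Y$ one has $X\prec_{w\!\log}Y$ iff $\|\wedge^kX\|_{\infty}\le\|\wedge^kY\|_{\infty}$ for every $k$, and since $\wedge^k$ carries $A,Z,N$ into data of the same kind ($\wedge^kA\ge0$, $\wedge^kZ$ a contraction, $\wedge^kN$ normal with $|\wedge^kN|=\wedge^k|N|$, and $\wedge^k(A^p)=(\wedge^kA)^p$), the relation $(\star)$ for all $k$ is equivalent to the single operator-norm bound, valid in every dimension,
$$\|AZ^*NZA\|_{\infty}\le \|A^pZ^*|N|^pZA^p\|_{\infty}^{1/p}. \qquad (\dagger)$$
This passage to antisymmetric powers is the key structural move: at the level of the full modulus one cannot avoid a spurious factor $2^{p-1}$ (the positive dilation $\begin{pmatrix}|N|&N\\N^*&|N|\end{pmatrix}$ has eigenvalues $2|\lambda_j|$ and $0$, which is precisely the source of the constant in Proposition \ref{th0}), whereas at the level of the operator norm the phases can be removed with the sharp constant $1$.

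\emph{Third stage: prove $(\dagger)$.} I would split it as $\|AZ^*NZA\|_{\infty}\le\|AZ^*|N|ZA\|_{\infty}\le\|A^pZ^*|N|^pZA^p\|_{\infty}^{1/p}$. The second inequality is just the $\lambda_1$ instance of the positive case Corollary \ref{corstriking} applied to $B=|N|$. For the first inequality I use Cauchy--Schwarz adapted to a normal matrix: writing $N=\sum_j\lambda_j|e_j\rangle\langle e_j|$ gives $|\langle Nu,v\rangle|\le\langle|N|u,u\rangle^{1/2}\langle|N|v,v\rangle^{1/2}$ for all $u,v$; taking $u=ZAx$, $v=ZAy$ and supremizing over unit vectors yields $\|AZ^*NZA\|_{\infty}\le\|AZ^*|N|ZA\|_{\infty}$. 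The hard part is exactly this sharp phase removal, and the plan handles it by doing the two steps in the right order: first descend to an operator-norm statement through $\wedge^k$, and only then invoke the normal Cauchy--Schwarz inequality; performing them in the reverse order would reintroduce the factor $2^{p-1}$ and destroy the clean constant.
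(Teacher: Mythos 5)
Your proof is correct and is essentially the paper's argument: the reduction of $\Phi$ to a single contractive congruence is identical, and your combination of antisymmetric powers with the normal Cauchy--Schwarz bound $\|AZ^*NZA\|_\infty\le\|AZ^*|N|ZA\|_\infty$ is precisely an unpacking of Horn's weak log-majorization $|XKX^*|\prec_{w\!\log}XX^*$, which the paper invokes directly with $X=A\tilde Z|N|^{1/2}$ and $K=U$ coming from the polar decomposition $N=|N|^{1/2}U|N|^{1/2}$ of the normal matrix. After this phase removal both proofs conclude by the same appeal to Corollary \ref{corstriking}.
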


\vskip 5pt
\begin{proof} By completing, if necessary, our matrices $A$ and $N$ with some 0-entries, we may assume that $m=n$ and then, as in the
proof of Corollary \ref{corstronger}, that $\Phi$ is a congruence map with a contraction $\tilde{Z}$,
$\Phi(X)=\tilde{Z}X\tilde{Z}^*$. Now, we have with the polar decomposition $N=U|N|$,
\begin{align*}
|A\tilde{Z}N\tilde{Z}^*A| &= \left|A\tilde{Z}|N|^{1/2}U|N|^{1/2}\tilde{Z}^*A\right| \\
&\prec_{\log} A\tilde{Z}|N|\tilde{Z}^*A
\end{align*}
by using Horn's log-majorization $|XKX^*|\prec_{w\!\log} XX^*$ for all $X\in\bM_n$ and all contractions $K\in\bM_n$. Hence, from
Corollary \ref{corstriking}, for all $p\ge 1$,
$$
|A\tilde{Z}N\tilde{Z}^*A|^p \prec_{\log} \left|A\tilde{Z}|N|\tilde{Z}^*A\right|^p  \prec_{w\!\log} A^p\tilde{Z}|N|^p\tilde{Z}^*A^p
$$
which completes the proof.
\end{proof}

\vskip 10pt
We are in a position to prove Proposition \ref{th0}  whose $m$-variables version is given here.

\vskip 10pt
\begin{cor}\label{th0a}  Let $A\in\bM_n^+$ and  let $X_1,\cdots,X_m\in\bM_n$  be normal. Then, for all $p\ge 1,$
\begin{equation*}
 \left|A\left(\sum_{k=1}^m X_k\right)A\right|^p \prec_{w\!\log} m^{p-1}A^p\left(\sum_{k=1}^m |X_k|^p\right)A^p.
\end{equation*}
\end{cor}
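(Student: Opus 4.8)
The plan is to deduce this $m$-variable statement directly from Theorem \ref{cornormal}, by packaging $X_1,\dots,X_m$ into a single block-diagonal normal matrix and realizing the sum $\sum_k X_k$ as the image of a unital positive map. First I would form $N=\diag(X_1,\dots,X_m)\in\bM_{mn}$ (so that the ambient dimension $mn$ plays the role of $m$ in Theorem \ref{cornormal}). Since each $X_k$ is normal, $N^*N=\diag(X_1^*X_1,\dots,X_m^*X_m)=\diag(X_1X_1^*,\dots,X_mX_m^*)=NN^*$, so $N$ is normal, and taking the positive square root block by block gives $|N|^p=\diag(|X_1|^p,\dots,|X_m|^p)$.

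Next I would introduce the $mn\times n$ isometry $e$ whose $m$ stacked $n\times n$ blocks all equal $\tfrac{1}{\sqrt m}I_n$; then $e^*e=\tfrac1m\sum_k I_n=I_n$, so the positive linear map $\Phi:\bM_{mn}\to\bM_n$ given by $\Phi(Y)=e^*Ye$ is unital, hence in particular sub-unital, exactly as required by Theorem \ref{cornormal}. A direct block computation gives $\Phi(N)=\tfrac1m\sum_k X_k$ and $\Phi(|N|^p)=\tfrac1m\sum_k|X_k|^p$. Applying Theorem \ref{cornormal} to $A$, $N$ and this $\Phi$ then yields
$$\left|A\Bigl(\tfrac1m\sum_k X_k\Bigr)A\right|^p \prec_{w\!\log} A^p\Bigl(\tfrac1m\sum_k|X_k|^p\Bigr)A^p,$$
that is, $\tfrac{1}{m^p}\bigl|A(\sum_k X_k)A\bigr|^p \prec_{w\!\log} \tfrac1m A^p(\sum_k|X_k|^p)A^p$.

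The last step is to clear the scalars. Weak log-majorization between positive matrices is invariant under multiplying both sides by the same positive constant $c$, since each $k$-fold product of eigenvalues is scaled by $c^k$ on both sides simultaneously; multiplying by $c=m^p$ therefore gives precisely $|A(\sum_k X_k)A|^p \prec_{w\!\log} m^{p-1}A^p(\sum_k|X_k|^p)A^p$, as claimed. I do not expect a genuine obstacle here: the entire content is encapsulated in Theorem \ref{cornormal}, and the constant $m^{p-1}=m^p\cdot m^{-1}$ is produced automatically, the $m^{-p}$ coming from the $p$-th power acting on the averaged left-hand side and the $m^{-1}$ from the average on the right. The only point that requires care is the normalization of $e$: choosing $e^*e=I_n$ is what keeps $\Phi$ sub-unital (so that the theorem applies), and the resulting averaging is reconciled with the honest sum precisely by the final scaling by $m^p$.
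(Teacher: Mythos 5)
Your proof is correct and follows essentially the same route as the paper: apply Theorem \ref{cornormal} to $N=X_1\oplus\cdots\oplus X_m$ with a unital averaging map, then clear the factors of $m$. The only cosmetic difference is that you realize the average via the compression $Y\mapsto e^*Ye$ with $e$ the stacked $\tfrac{1}{\sqrt m}I_n$ isometry, whereas the paper uses the normalized sum of diagonal blocks; the two maps coincide on the block-diagonal matrices $N$ and $|N|^p$, so the arguments are interchangeable.
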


\vskip 10pt\noindent
\begin{proof}  Applying Theorem \ref{cornormal} to
$N=X_1\oplus \cdots\oplus X_m$
and to the unital, positive linear map $\Phi:\bM_{mn}\to \bM_n$, 
$$
\begin{pmatrix} S_{1,1}&\cdots&S_{1,m} \\ \vdots& \ddots &\vdots \\ S_{m,1}&\cdots&S_{m,m}\end{pmatrix}\mapsto
\frac{1}{m}\sum_{k=1}^m S_{k,k}
$$
yields 
\begin{equation*}
 \left|A\frac{\sum_{k=1}^m X_k}{m}A\right|^p \prec_{w\!\log} A^p\frac{\sum_{k=1}^m |X_k|^p}{m}A^p
\end{equation*}
which is equivalent to the desired inequality.
\end{proof}

\vskip 5pt
A special case of Corollary \ref{th0a} deals with the Cartesian decomposition of an arbitrary matrix.

\vskip 5pt
\begin{cor}\label{corcartesian} Let $X,Y\in \bM_n$ be Hermitian. Then, for all $p\ge 1$, 
\begin{equation*}
 \left|A(X+iY)A\right|^p  \prec_{w\!\log} 2^{p-1}A^p(|X|^p+|Y|^p)A^p
\end{equation*}
where the constant $2^{p-1}$ is the best possible.
\end{cor}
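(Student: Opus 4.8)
The plan is to obtain the stated weak-log-majorization as the immediate $m=2$ case of Corollary \ref{th0a}, and then to exhibit a small explicit example showing that the constant $2^{p-1}$ cannot be lowered.

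First I would set $X_1=X$ and $X_2=iY$ and check that both summands are normal. Since $X$ is Hermitian it is normal; since $Y$ is Hermitian, $iY$ satisfies $(iY)^*=-iY$, so $iY$ is skew-Hermitian and hence normal. Moreover $|iY|=((iY)^*(iY))^{1/2}=(Y^*Y)^{1/2}=|Y|$. With $\sum_{k=1}^2 X_k=X+iY$ and $\sum_{k=1}^2|X_k|^p=|X|^p+|Y|^p$, Corollary \ref{th0a} applied with $m=2$ gives precisely
$$
|A(X+iY)A|^p \prec_{w\!\log} 2^{p-1}A^p(|X|^p+|Y|^p)A^p,
$$
which is the asserted inequality. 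Thus the forward direction requires no new work beyond recognizing the Cartesian decomposition as a sum of two normal summands with the same moduli as $X$ and $Y$.

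The remaining, and only substantive, point is the optimality of $2^{p-1}$. Here I would use that $U\prec_{w\!\log} V$ forces in particular the $k=1$ inequality $\lambda_1(U)\le\lambda_1(V)$, so it suffices to find Hermitian $X,Y$ and a positive $A$ for which $\lambda_1(|A(X+iY)A|^p)=2^{p-1}\lambda_1(A^p(|X|^p+|Y|^p)A^p)$. I would take $A=I$ in $\bM_2$ and the nilpotent matrix $Z=\begin{pmatrix} 0 & 2 \\ 0 & 0\end{pmatrix}$ with Cartesian parts $X=\begin{pmatrix} 0 & 1 \\ 1 & 0\end{pmatrix}$ and $Y=\begin{pmatrix} 0 & -i \\ i & 0\end{pmatrix}$, so that $X+iY=Z$. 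A direct computation gives $X^2=Y^2=I$, whence $|X|=|Y|=I$ and the right-hand side has leading eigenvalue $2^{p-1}\cdot 2=2^p$; on the other hand $|Z|=\begin{pmatrix} 0 & 0 \\ 0 & 2\end{pmatrix}$, so $\lambda_1(|Z|^p)=2^p$. The two leading eigenvalues coincide, so any admissible constant $c$ must satisfy $2^p\le 2c$, i.e. $c\ge 2^{p-1}$.

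The forward part is essentially free given Corollary \ref{th0a}; the only thing to get right is the optimality example, where the aim is to arrange that the real and imaginary parts have equal modulus (here $|X|=|Y|=I$) while their combination $X+iY$ concentrates all its mass in a single direction, so that the factor $2^{p-1}$ is saturated already at the level of the leading eigenvalue. I do not anticipate a genuine obstacle, only the need to choose the example so that exact equality, rather than a strict inequality, is attained.
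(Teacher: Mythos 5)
Your proof is correct and follows essentially the same route as the paper: the inequality is the $m=2$ case of Corollary \ref{th0a} applied to the normal summands $X$ and $iY$ (with $|iY|=|Y|$), and optimality is checked on a two-nilpotent matrix $X+iY=\begin{pmatrix}0&T\\0&0\end{pmatrix}$, of which your $2\times 2$ example with $T=2$ is the explicit scalar instance, correctly saturating the $k=1$ term of the weak log-majorization.
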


\vskip 5pt
To check that $2^{p-1}$ is optimal, take $A=I\in\bM_{2n}$ and pick any two-nilpotent matrix,
$$
X+iY=\begin{pmatrix} 0&T \\ 0&0\end{pmatrix}.
$$

For a single normal operator, Corollary \ref{th0a} gives \eqref{araki-normal} as we have equality for the determinant. This entails
the following remarkable log-majorization for the matrix exponential.

\vskip 5pt
\begin{cor}\label{corthompson} Let $A,B\in \bM_n$. Then,  
\begin{equation*}
\left| e^{A+B}  \right|\prec_{\log} e^{{\mathrm{Re}\,} A/2}  e^{{\mathrm{Re}\,}B}
e^{{\mathrm{Re}\,} A/2}.
\end{equation*}
\end{cor}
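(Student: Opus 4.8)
The plan is to reduce the statement to two ingredients that are already available, the single-exponential log-majorization \eqref{exponential} and the classical log-majorized Golden--Thompson inequality for the Hermitian parts, and then to conclude by transitivity of $\prec_{\log}$. Throughout I write $S=\mathrm{Re}\,A$ and $T=\mathrm{Re}\,B$, which are Hermitian, and note that $\mathrm{Re}(A+B)=S+T$.

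First I would apply \eqref{exponential} to $Z=A+B$, which gives at once
$$
|e^{A+B}| \prec_{\log} e^{\mathrm{Re}(A+B)}=e^{S+T}.
$$
This disposes of the non-Hermitian part of the problem, so it only remains to prove the purely Hermitian log-majorization
$$
e^{S+T}\prec_{\log} e^{S/2}e^{T}e^{S/2},
$$
after which transitivity of $\prec_{\log}$ closes the argument. I expect this Golden--Thompson step to be the main obstacle, since it is where the genuine work lies.

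For that step the idea is to combine Araki's log-majorization \eqref{araki} — which is the positive case $X\ge 0$ of \eqref{araki-normal}, as then $|AXA|=AXA$ and $|X|=X$ — with the symmetric Lie product formula. For each integer $m\ge 1$, set $P=e^{S/2m}\ge 0$ and $Q=e^{T/m}\ge 0$ and apply \eqref{araki} with exponent $r=m$:
$$
\left(e^{S/2m}e^{T/m}e^{S/2m}\right)^{m}=(PQP)^{m}\prec_{\log} P^{m}Q^{m}P^{m}=e^{S/2}e^{T}e^{S/2}.
$$
The left-hand side converges to $e^{S+T}$ as $m\to\infty$ by the symmetric Lie--Trotter formula $e^{X+Y}=\lim_{m}(e^{X/2m}e^{Y/m}e^{X/2m})^{m}$. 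Since all the matrices in sight are positive definite, no eigenvalue escapes to $0$ or $\infty$; the partial products $\prod_{j=1}^{k}\lambda_j(\cdot)$ are continuous, so each of the $n$ inequalities defining $\prec_{\log}$ passes to the limit, and the relation $\prec_{\log}$ is therefore closed under this limit. This yields the desired $e^{S+T}\prec_{\log} e^{S/2}e^{T}e^{S/2}$.

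Finally I would do the (routine but necessary) determinant bookkeeping that upgrades each relation from weak to full log-majorization: for \eqref{exponential} one has $\det|e^{A+B}|=e^{\mathrm{Re}\,\mathrm{Tr}(A+B)}=\det e^{S+T}$, while $\det e^{S+T}=e^{\mathrm{Tr}(S+T)}=\det\bigl(e^{S/2}e^{T}e^{S/2}\bigr)$ gives equality at $k=n$ in the Lie--Trotter step. Hence both relations are genuine $\prec_{\log}$, and their composition is again $\prec_{\log}$, proving the claim. The delicate points to watch are precisely the legitimacy of passing $\prec_{\log}$ through the Lie--Trotter limit and the verification of these determinant equalities, rather than the algebra itself.
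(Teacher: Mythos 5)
Your proof is correct and follows essentially the same route as the paper: both decompose the claim into $|e^{A+B}|\prec_{\log}e^{\mathrm{Re}A+\mathrm{Re}B}$ (Cohen's relation \eqref{exponential}, which the paper re-derives in situ via the Lie product formula and \eqref{araki-normal}, while you simply cite it) followed by the Hermitian Golden--Thompson step $e^{S+T}\prec_{\log}e^{S/2}e^{T}e^{S/2}$ obtained from Araki's log-majorization applied to $(e^{S/2m}e^{T/m}e^{S/2m})^{m}$ and the symmetric Lie--Trotter limit. The paper itself notes this exact factorization through \eqref{exponential} and \eqref{gtlog} as an equivalent argument, and your determinant bookkeeping correctly upgrades the weak relations to full log-majorization.
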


\vskip 5pt
Corollary \ref{corthompson} contains \eqref{exponential} and shows that when $A$ and $B$ are Hermitian we have the famous Thompson
log-majorization, \cite[Lemma 6]{Thompson},
\begin{equation}\label{gtlog}
 e^{A+B}  \prec_{\log} e^{ A/2}  e^{B}
e^{ A/2}
\end{equation}
which entails
\begin{equation*}
 \| e^{A+B} \| \le \| e^{ A/2}  e^{B}
e^{ A/2}\|
\end{equation*}
for all symmetric norms. For the operator norm this is Segal's inequality while for the trace norm this is the Golden-Thompson
inequality. Taking the logarithms in \eqref{gtlog}, we have a classical majorization between $A+B$ and $\log e^{A/2}e^Be^{A/2}$.
Since $t\mapsto |t|$ is convex, we infer, replacing $B$ by $-B$ that
\begin{equation*}
 \| A-B \| \le \| \log (e^{ A/2}  e^{-B}
e^{ A/2})\|
\end{equation*}
for all symmetric norms.
For the Hilbert-Schmidt norm, this is the Exponential Metric Increasing inequality, reflecting the nonpositive curvature of the
positive definite cone with its Riemannian structure (\cite[Chapter 6]{Bhatia2}).

Corollary \ref{corthompson} follows from \eqref{araki-normal} combined with the Lie Product Formula \cite[p.\ 254]{Bh} as shown
in the next proof. Note that Corollary \ref{corthompson} also follows from Cohen's log-majorization \eqref{exponential} combined with
Thompson's log-majorization \eqref{gtlog}, thus we do not pretend to originality.

\vskip 5pt
\begin{proof} We have a Hermitian matrix $C$ such that, using the Lie Product Formula,
$$
e^{A+B}=e^{{\mathrm{Re}\,}A +{\mathrm{Re}\,} B +iC}=\lim_{n\to+\infty} \left(
e^{({\mathrm{Re}\,}A+{\mathrm{Re}\,}B)/2n}e^{iC/n}e^{({\mathrm{Re}\,}A
+{\mathrm{Re}\,}B)/2n}\right)^{n}.
$$
On the other hand, by \eqref{araki-normal}, for all $n\ge 1$,
$$
\left| \left( e^{({\mathrm{Re}\,}A+{\mathrm{Re}\,}B)/2n}e^{iC/n}e^{({\mathrm{Re}\,}A
+{\mathrm{Re}\,}B)/2n}\right)^{n} \right| \prec_{\log} e^{{\mathrm{Re}\,}A +{\mathrm{Re}\,} B}
$$
so that
$$
\left|e^{A+B}\right| \prec_{\log}  e^{{\mathrm{Re}\,}A +{\mathrm{Re}\,} B}.
$$
Using again the Lie Product Formula,
$$
e^{{\mathrm{Re}\,}A +{\mathrm{Re}\,} B} =\lim_{n\to+\infty} \left( e^{{\mathrm{Re}\,}A/2n}
e^{{\mathrm{Re}\,}B/n}e^{{\mathrm{Re}\,}A/2n}
\right)^{n},
$$
combined with \eqref{araki-normal} (or \eqref{araki}) completes the proof.
\end{proof}

\vskip 5pt
 Theorem 2.2.9 is the main result of Section 2 as all the other results in this section are special cases.
One more elegant extension of Araki's inequality  follows, involving an arbitrary matrix.

\vskip 5pt
\begin{cor}\label{corlast} Let $A\in \bM_n^+$ and $p\ge 1$. Then, for any $T\in\bM_n$,
\begin{equation*}
 \left|A\frac{T+T^*}{2}A\right|^p \prec_{w\!\log} A^p\frac{|T|^p+|T^*|^p}{2}A^p.
\end{equation*}
\end{cor}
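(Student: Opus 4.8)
The plan is to deduce Corollary~\ref{corlast} directly from Corollary~\ref{corcartesian} by applying the latter to the Cartesian decomposition of $T$. First I would write $T=X+iY$ where $X=(T+T^*)/2$ and $Y=(T-T^*)/2i$ are both Hermitian; this is the standard Cartesian decomposition, and it immediately identifies the real part $(T+T^*)/2$ appearing on the left-hand side with the Hermitian matrix $X$. Applying Corollary~\ref{corcartesian} with this $X$ and $Y$ then yields
\begin{equation*}
 \left|A X A\right|^p = \left|A(X+0\cdot iY')A\right|^p,
\end{equation*}
so the real task is to isolate $X$ alone rather than $X+iY$.

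The cleaner route, which I would actually follow, is to apply the two-variable estimate to a single Hermitian matrix. Since $X=(T+T^*)/2$ is Hermitian, I can invoke the single-normal-operator case \eqref{araki-normal}, namely $|AXA|^p\prec_{\log}A^p|X|^pA^p$, but this only controls $|X|^p$ and not the sum $(|T|^p+|T^*|^p)/2$ that the statement demands. So the essential step is to bound $|X|^p$ above (in the weak-log-majorization sense) by $(|T|^p+|T^*|^p)/2$. To this end I would write $X=(T+T^*)/2$ and view $T$ and $T^*$ as the two ``normal pieces'' — except that $T$ need not be normal. The correct device is therefore to recognize $(T+T^*)/2$ as the arithmetic mean of the two \emph{normal} operators obtained from the polar parts, or more efficiently, to apply Corollary~\ref{th0a} (the $m$-variables version) not to $T$ and $T^*$ directly but to a $2\times2$ block normal operator whose diagonal blocks are the polar factors of $T$.

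Concretely, the key construction I expect to use is the dilation that turns an arbitrary $T$ into a normal matrix on a larger space. Writing the polar decomposition $T=U|T|$, one forms the normal (indeed Hermitian or unitary-block) operator
\begin{equation*}
N=\begin{pmatrix} 0 & T \\ T^* & 0\end{pmatrix},
\end{equation*}
whose modulus is $|N|=\diag(|T^*|,|T|)$ and whose real part $(N+N^*)/2$ has the block $(T+T^*)/2$ sitting off-diagonal. Applying Theorem~\ref{cornormal} (or Corollary~\ref{th0a}) to this $N$ together with an appropriately padded congruence built from $A$, and then reading off the relevant compression, produces exactly the bound $|A(T+T^*)/2\,A|^p\prec_{w\!\log}A^p(|T|^p+|T^*|^p)/2\,A^p$. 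The main obstacle is bookkeeping: one must choose the sub-unital map $\Phi$ (equivalently the contraction $\tilde Z$) and the block structure so that the diagonal compression of $\Phi(N)$ reproduces $(T+T^*)/2$ while $\Phi(|N|^p)$ reproduces $(|T^*|^p+|T|^p)/2$, and verify that the factor $2^{p-1}/2=2^{p-1}\cdot\tfrac12$ collapses correctly — that is, that the averaging map ($m=2$) supplies precisely the normalization needed so that no spurious constant survives. Once the dilation and the map are correctly matched, the inequality follows mechanically from the already-established Corollary~\ref{th0a}.
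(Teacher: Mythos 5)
Your final route is exactly the paper's: apply Theorem~\ref{cornormal} to the Hermitian dilation $N=\begin{pmatrix}0&T\\ T^*&0\end{pmatrix}$, for which $|N|^p=|T^*|^p\oplus|T|^p$. However, the one step you leave as ``bookkeeping'' is the only step that matters, and the candidate you gesture at --- reading off a \emph{diagonal} compression of $\Phi(N)$ --- cannot work, since the diagonal blocks of $N$ are zero. The map the paper uses is the unital positive linear map $\Phi:\bM_{2n}\to\bM_n$ sending $\begin{pmatrix}B&C\\ D&E\end{pmatrix}$ to $(B+C+D+E)/2$ (a congruence by the isometry $\tfrac{1}{\sqrt2}\begin{pmatrix}I\\ I\end{pmatrix}$, hence positive and unital); this picks up the \emph{off-diagonal} blocks, giving $\Phi(N)=(T+T^*)/2$ and $\Phi(|N|^p)=(|T|^p+|T^*|^p)/2$, after which Theorem~\ref{cornormal} yields the claim with no constant at all. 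Your worry about a surviving factor $2^{p-1}\cdot\tfrac12$ is a red herring: that constant belongs to Corollary~\ref{th0a}, which you cannot invoke here anyway because $T$ and $T^*$ are not normal --- precisely the reason the block dilation is needed. Also, your opening detour through Corollary~\ref{corcartesian} and the claim that $(N+N^*)/2$ has $(T+T^*)/2$ ``sitting off-diagonal'' are both off ($N$ is already Hermitian, with off-diagonal blocks $T$ and $T^*$); neither affects the final argument once the correct $\Phi$ is written down.
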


\vskip 5pt
\begin{proof} It suffices to apply  Theorem \ref{cornormal} to 
$$
N=\begin{pmatrix} 0& T \\ T^*&0\end{pmatrix}
$$
and to the unital, positive linear map $\Phi:\bM_{2n}\to\bM_n$,
$$
\begin{pmatrix} B&C  \\ D&E \end{pmatrix}\mapsto \frac{B+C+D+E}{2}.
$$
\end{proof}

\vskip 5pt
We apply Theorem \ref{cornormal} to Schur products in the next two corollaries.

\vskip 5pt
\begin{cor}\label{corschurlast}  Let $A\in\bM_n^+$ and  let $X,Y\in\bM_n$  be normal. Then, for all $p\ge 1,$
\begin{equation*}
 |A(X\circ Y)A|^p \prec_{w\!\log} A^p(|X|^p\circ |Y|^p)A^p.
\end{equation*}
\end{cor}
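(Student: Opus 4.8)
The plan is to recognize the Schur product as a compression of a tensor product and then invoke Theorem \ref{cornormal} directly. First I would introduce the isometry $J:\bC^n\to\bC^n\otimes\bC^n$ determined by $Je_i=e_i\otimes e_i$ on the standard basis $\{e_i\}$, together with the associated unital positive linear map $\Phi:\bM_{n^2}\to\bM_n$ given by $\Phi(S)=J^*SJ$. A short computation on matrix entries shows $\Phi(X\otimes Y)=X\circ Y$ for all $X,Y\in\bM_n$, since $\langle e_i,J^*(X\otimes Y)Je_j\rangle=X_{ij}Y_{ij}$. Because $J$ is an isometry, $\Phi(I)=J^*J=I$, so $\Phi$ is unital and a fortiori sub-unital.

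Next I would set $N=X\otimes Y\in\bM_{n^2}$. Since $X$ and $Y$ are normal, so is $N$. The modulus behaves multiplicatively under tensor products: from $N^*N=(X^*X)\otimes(Y^*Y)=|X|^2\otimes|Y|^2=(|X|\otimes|Y|)^2$ and positivity of $|X|\otimes|Y|$ one obtains $|N|=|X|\otimes|Y|$, whence $|N|^p=|X|^p\otimes|Y|^p$ for every $p\ge1$. Applying $\Phi$ then gives $\Phi(N)=X\circ Y$ and $\Phi(|N|^p)=|X|^p\circ|Y|^p$.

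With these identifications in hand, Theorem \ref{cornormal} applied to $A\in\bM_n^+$, the normal matrix $N\in\bM_{n^2}$, and the unital positive map $\Phi:\bM_{n^2}\to\bM_n$ yields, for all $p\ge1$,
$$|A(X\circ Y)A|^p=|A\Phi(N)A|^p\prec_{w\!\log}A^p\Phi(|N|^p)A^p=A^p(|X|^p\circ|Y|^p)A^p,$$
which is precisely the claim.

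The only genuinely substantive step is the opening observation that $X\circ Y$ is the compression $J^*(X\otimes Y)J$ of the tensor product; everything after that is bookkeeping. I expect the main obstacle to be ensuring that $N$ is normal—this is exactly why the hypothesis that \emph{both} $X$ and $Y$ are normal is needed, since Theorem \ref{cornormal} demands a normal $N$, and $X\otimes Y$ fails to be normal when either factor is not. The identity $|X\otimes Y|=|X|\otimes|Y|$ and the unitality of $\Phi$ are routine to verify.
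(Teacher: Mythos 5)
Your proof is correct and follows exactly the route the paper takes: the paper's proof also sets $N=X\otimes Y$ and applies Theorem \ref{cornormal} to the positive linear map that realizes the Schur product as the extraction of a principal submatrix of the tensor product, which is precisely your compression $S\mapsto J^*SJ$. You have merely written out the routine verifications (unitality of $\Phi$, normality of $X\otimes Y$, and $|X\otimes Y|=|X|\otimes|Y|$) that the paper leaves implicit.
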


\vskip 5pt
\begin{proof} We need to see the Schur product as a positive linear map,
$$
X\circ Y= \Phi(X\otimes Y)
$$
where $\Phi:\bM_n\otimes\bM_n\to \bM_n$ merely consists in extracting a principal submatrix. Setting $N=X\otimes Y$ in Theorem
\ref{cornormal} completes the proof.
\end{proof}

\vskip 5pt
We note that Corollary \ref{corschurlast} extends \eqref{araki-normal} (with $X$ in diagonal form and $Y=I$) and contains the
classical log-majorization for normal operators,
$$|X\circ Y|\prec_{w\\log} |X|\circ |Y|.$$
As a last illustration of the scope of Theorem \ref{cornormal} we have the following result.

\vskip 5pt
\begin{cor}\label{corverylast} Let $A\in \bM_n^+$ and $p\ge 1$. Then, for any $T\in\bM_n$,
\begin{equation*}
 \left|A(T\circ T^*)A\right|^p \prec_{w\!\log} A^p(|T|^p\circ|T^*|^p)A^p.
\end{equation*}
\end{cor}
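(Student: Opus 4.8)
The plan is to realize the Schur product $T \circ T^*$ as the image of a single normal matrix under a positive, sub-unital (indeed unital) linear map, so that Corollary \ref{corverylast} becomes an immediate instance of Theorem \ref{cornormal}. The guiding observation, already exploited in the proof of Corollary \ref{corschurlast}, is that the Schur product on $\bM_n$ factors through the tensor product: for any $X,Y\in\bM_n$ one has $X\circ Y=\Phi(X\otimes Y)$, where $\Phi:\bM_n\otimes\bM_n\to\bM_n$ is the positive linear map extracting the principal submatrix indexed by the ``diagonal'' pairs $(i,i)$. This $\Phi$ is completely positive and unital, hence a fortiori sub-unital, so it meets the hypotheses of Theorem \ref{cornormal}.

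The key step is to choose the right normal matrix $N\in\bM_{n^2}$ so that $\Phi(N)=T\circ T^*$ and $\Phi(|N|^p)=|T|^p\circ |T^*|^p$ simultaneously. First I would set $N=T\otimes T^*$. This is the natural candidate since $\Phi(T\otimes T^*)=T\circ T^*$ by the factorization above. The essential point is that $N=T\otimes T^*$ is normal: writing the polar decomposition $T=U|T|$, one checks that $T^*=|T|U^*$ and $|T^*|=U|T|U^*$, and a direct computation shows $N^*N=NN^*$ precisely because $T\otimes T^*$ is a tensor product of the two ``halves'' of the same polar factor; equivalently, $T\otimes T^*$ is unitarily equivalent via a swap to $T^*\otimes T$, and its modulus behaves multiplicatively. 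Once normality is in hand, one computes $|N|^p=|T\otimes T^*|^p=|T|^p\otimes|T^*|^p$, using that the modulus and functional calculus respect tensor products. Applying $\Phi$ then gives $\Phi(|N|^p)=|T|^p\circ|T^*|^p$, exactly the right-hand side.

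With $N$ and $\Phi$ so chosen, Theorem \ref{cornormal} applied to $A\in\bM_n^+$, the normal matrix $N\in\bM_{n^2}$, and the unital positive map $\Phi:\bM_{n^2}\to\bM_n$ yields directly
$$
\left|A\Phi(N)A\right|^p \prec_{w\!\log} A^p\Phi(|N|^p)A^p,
$$
which is precisely the asserted weak log-majorization $\left|A(T\circ T^*)A\right|^p \prec_{w\!\log} A^p(|T|^p\circ|T^*|^p)A^p$ for all $p\ge 1$.

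The step I expect to be the main obstacle is verifying that $N=T\otimes T^*$ is genuinely normal for an arbitrary (non-normal) $T$, and that its modulus splits as $|N|^p=|T|^p\otimes|T^*|^p$. The subtlety is that $T$ itself need not be normal, so one cannot simply invoke $|T\otimes S|=|T|\otimes|S|$ as a black box without care about the order of polar factors; the correct reading is $|T\otimes T^*|=|T|\otimes |T^*|$, which holds because $|T\otimes T^*|^2=(T\otimes T^*)^*(T\otimes T^*)=(T^*T)\otimes(T T^*)=|T|^2\otimes|T^*|^2$, and taking the unique positive square root of a tensor product of positives respects the tensor decomposition. Making this polar-decomposition bookkeeping precise, and confirming that the principal-submatrix map $\Phi$ sends the diagonal pairing of $|T|^p\otimes|T^*|^p$ to $|T|^p\circ|T^*|^p$, is the one place where a genuine (if short) computation is required; everything else is a direct appeal to Theorem \ref{cornormal}.
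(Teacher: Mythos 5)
There is a genuine gap, and it sits exactly where you anticipated trouble: $N=T\otimes T^*$ is \emph{not} normal for a general $T\in\bM_n$, so Theorem \ref{cornormal} cannot be applied to it. The computation you sketch in fact disproves your claim. One has $N^*N=(T^*T)\otimes(TT^*)=|T|^2\otimes|T^*|^2$ while $NN^*=(TT^*)\otimes(T^*T)=|T^*|^2\otimes|T|^2$; these are unitarily equivalent via the swap operator but are not equal as matrices. (The swap shows only that $N$ is unitarily equivalent to $N^*$, which is far weaker than normality.) Concretely, for
$T=\begin{pmatrix}0&1\\0&0\end{pmatrix}$ the matrix $N=T\otimes T^*$ is a nonzero nilpotent ($N^2=0$), and $N^*N=\mathrm{diag}(0,0,1,0)\neq\mathrm{diag}(0,1,0,0)=NN^*$. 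Your remaining bookkeeping is fine: $|T\otimes T^*|=|T|\otimes|T^*|$ does hold (take the positive square root of $|T|^2\otimes|T^*|^2$), and $\Phi(N)=T\circ T^*$, $\Phi(|N|^p)=|T|^p\circ|T^*|^p$ are correct. The sole, but fatal, obstruction is the normality hypothesis.

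The repair, which is the paper's route, is to restore normality by a Hermitian dilation \emph{before} tensoring. Apply Corollary \ref{corschurlast} (the legitimate instance of Theorem \ref{cornormal} with $N=X\otimes Y$ for normal $X,Y$) to the Hermitian matrices
$$
X=\begin{pmatrix} 0&T^* \\ T&0\end{pmatrix}, \qquad Y=\begin{pmatrix} 0&T \\ T^*&0\end{pmatrix}
$$
in $\bM_{2n}$, together with $A\oplus A\in\bM_{2n}^+$. Since $X\circ Y=\begin{pmatrix}0&T\circ T^*\\ T\circ T^*&0\end{pmatrix}$ (note $T^*\circ T=T\circ T^*$), $|X|^p=|T|^p\oplus|T^*|^p$ and $|Y|^p=|T^*|^p\oplus|T|^p$, the resulting weak log-majorization holds between $2n\times 2n$ matrices whose nonzero blocks are exactly $A(T\circ T^*)A$ and $A(|T|^p\circ|T^*|^p)A$, each singular value appearing twice; this is equivalent to the stated inequality in $\bM_n$.
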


\vskip 5pt
\begin{proof} We apply Corollary \ref{corschurlast} to the pair of Hermitian operators in $\bM_{2n}$,
$$
X=\begin{pmatrix} 0&T^* \\ T&0\end{pmatrix}, \quad Y=\begin{pmatrix} 0&T \\ T^*&0\end{pmatrix}
$$
with  $A\oplus A$ in $\bM_{2n}^+$. We then obtain
$$
\left| \begin{pmatrix} 0&A(T\circ T^*)A \\ A(T\circ T^*)A&0\end{pmatrix} \right|^p \prec_{w\!\log} 
\begin{pmatrix} A(|T|^p\circ |T^*|^p)A&0 \\ 0&A(|T|^p\circ |T^*|^p)A\end{pmatrix}
$$
which is equivalent to the statement of our corollary. 
\end{proof}

\section{H\"older type inequalities}

\vskip 5pt Now we turn to H\"older's type inequalities. Fixing $p=1$ in Theorem \ref{th1}, we have the following special case.

\vskip 5pt
\begin{cor}\label{corspe} Let $A,B\in\bM_n^+$ and $Z\in\bM_n$. Then, for all symmetric norms and   $\alpha>0$, the map
$$
t \mapsto \left\| \left|A^{t}ZB^{t}\right|^{\alpha } \right\| 
$$
is  log-convex on $(-\infty,\infty)$.
\end{cor}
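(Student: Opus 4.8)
The plan is to deduce this directly from Theorem \ref{th1} by restricting the jointly log-convex two-variable functional to the affine slice $p=1$. The only input beyond Theorem \ref{th1} is the elementary principle that the restriction of a (jointly) convex function to any affine subset of its domain is again convex: if $\log F(p,t)$ is jointly convex on $(0,\infty)\times(-\infty,\infty)$, then in particular $t\mapsto \log F(1,t)$ is convex on $(-\infty,\infty)$, i.e.\ $t\mapsto F(1,t)$ is log-convex.

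First I would set $p=1$ in the functional of Theorem \ref{th1}. Since then $t/p=t$ and $\alpha p=\alpha$, the map
$$
(p,t)\mapsto \left\| \left|A^{t/p}ZB^{t/p}\right|^{\alpha p}\right\|
$$
specializes exactly to
$$
t\mapsto \left\| \left|A^{t}ZB^{t}\right|^{\alpha}\right\|,
$$
which is the map appearing in the corollary. I would also check that the admissible range of $t$ is unaffected: in Theorem \ref{th1} the variable $t$ runs over all of $(-\infty,\infty)$ for each fixed $p>0$, so fixing $p=1$ leaves $t$ ranging over the full line $(-\infty,\infty)$ claimed here. Here one uses the convention for $A^{-t}$ and $B^{-t}$ introduced after Theorem \ref{th1}, so that negative exponents are meaningful even when $A$ or $B$ is singular.

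Combining these two observations gives the result immediately: restricting the jointly log-convex map to the line $p=1$ produces a log-convex function of $t$, which is precisely the assertion of Corollary \ref{corspe}. There is no genuine obstacle in this step; the entire analytic difficulty is packaged into Theorem \ref{th1}, and the corollary is a pure specialization. The one point deserving a word of care is simply the verification that the substitution $p=1$ reproduces the stated functional with the correct domain, which is routine.
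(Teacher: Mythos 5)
Your proof is correct and is exactly the paper's argument: the corollary is introduced in the text with the words ``Fixing $p=1$ in Theorem \ref{th1}, we have the following special case,'' which is precisely your restriction of the jointly log-convex functional to the affine slice $p=1$. Nothing further is needed.
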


\vskip 5pt
This implies a fundamental fact, the L\"owner-Heinz inequality stating the operator monotonicity of $t^p$, $p\in(0,1)$.

\vskip 5pt
\begin{cor}  Let $A,B\in\bM_n^+$. If $A\ge B$, then $A^p\ge B^p$ for all $p\in(0,1)$.
\end{cor}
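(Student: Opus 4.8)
The plan is to translate the operator inequality into a statement about the operator norm and then read the conclusion directly off the log-convexity in Corollary \ref{corspe}. First I would record the standard order-to-norm dictionary: for invertible $A,B\in\bM_n^+$ one has $A\ge B$ if and only if $A^{-1/2}BA^{-1/2}\le I$, that is $\|A^{-1/2}BA^{-1/2}\|_{\infty}\le 1$; and since $A^{-1/2}BA^{-1/2}=(A^{-1/2}B^{1/2})(A^{-1/2}B^{1/2})^*$, this is the same as $\|A^{-1/2}B^{1/2}\|_{\infty}\le 1$. The identical computation applied to the powers shows that the desired conclusion $A^p\ge B^p$ is equivalent to $\|A^{-p/2}B^{p/2}\|_{\infty}\le 1$. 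A routine perturbation ($A\mapsto A+\eps I$, $B\mapsto B+\eps I$, then $\eps\to 0^+$, using continuity of $X\mapsto X^p$ on $\bM_n^+$ together with closedness of the positive cone) reduces everything to the case where $A$ and $B$ are invertible.

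Next I would invoke Corollary \ref{corspe} with $Z=I$, $\alpha=1$, the operator norm $\|\cdot\|_{\infty}$ (which is a symmetric norm), and with $A^{-1}$ in the role of $A$. Since $\||X|\|_{\infty}=\|X\|_{\infty}$, the corollary says precisely that the function
$$
\psi(s):=\|A^{-s}B^{s}\|_{\infty}
$$
is log-convex on $(-\infty,\infty)$. I then note the two boundary values I need: $\psi(0)=\|I\|_{\infty}=1$, while the hypothesis $A\ge B$ reads $\psi(1/2)\le 1$ by the first paragraph.

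Finally I would exploit log-convexity on the segment $[0,1/2]$. For $p\in(0,1)$ set $s=p/2\in(0,1/2)$ and write $s=(1-p)\cdot 0+p\cdot\tfrac12$. Convexity of $\log\psi$ gives $\log\psi(s)\le (1-p)\log\psi(0)+p\log\psi(1/2)\le 0$, whence $\psi(p/2)=\|A^{-p/2}B^{p/2}\|_{\infty}\le 1$, which is exactly $A^p\ge B^p$ by the dictionary above.

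The argument is short because all the analytic content is already packaged into Theorem \ref{th1} and its specialization Corollary \ref{corspe}; the log-convexity does the heavy lifting. The only point requiring genuine care is the order-to-norm dictionary of the first paragraph (in particular the identity $\|A^{-1/2}BA^{-1/2}\|_{\infty}=\|A^{-1/2}B^{1/2}\|_{\infty}^2$) and the verification that the endpoint values of $\psi$ are $1$ and $\le 1$ respectively. Once Corollary \ref{corspe} is available, no real obstacle remains.
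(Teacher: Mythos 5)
Your proof is correct and is essentially the paper's own argument: both invoke Corollary \ref{corspe} for the operator norm with $Z=I$ to get log-convexity of $t\mapsto\|A^{-t}B^{t}\|_{\infty}$ (the paper uses $\alpha=2$ and the pair $A^{-1/2},B^{1/2}$, which is just a reparametrization of your $\alpha=1$ version), and then interpolate between the value $1$ at $t=0$ and the value $\le 1$ forced by the hypothesis $A\ge B$. Your explicit order-to-norm dictionary and the $\eps$-perturbation to reach the invertible case are sound and merely make explicit what the paper leaves implicit.
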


\vskip 5pt
\begin{proof} Corollary \ref{corspe} for the operator norm, with $Z=I$, $\alpha=2$, and the pair $A^{-1/2},B^{1/2}$ in place of the
pair $A,B$ shows that
$
f(t)= \| A^{-t/2}B^{t}A^{-t/2}\|_{\infty}
$
is log-convex. Hence for $p\in(0,1)$, we have $f(p)\le f(1)^p f(0)^{1-p}$. Since $f(0)=1$ and by assumption $f(1)\le 1$, we obtain
$f(p)\le 1$ and so $A^p\ge B^p$.
\end{proof}

Corollary \ref{corspe} entails a H\"older inequality with a parameter. This inequality was first proved by Kosaki \cite[Theorem
3]{kosaki}. Here, we state it without the weight $Z$.

\vskip 5pt
\begin{cor}  Let $X,Y\in\bM_n$ and $p,q\ge 1$ such that $p^{-1}+q^{-1}=1$. Then, for all symmetric norms and   $\alpha>0$,  
$$
\left\| |XY|^{\alpha} \right\| \le  \left\|  |X|^{\alpha p}  \right\|^{1/p}  \left\|  |Y|^{q \alpha }  \right\|^{1/q}.
$$
\end{cor}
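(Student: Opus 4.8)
The plan is to obtain the inequality from a single application of the log-convexity in Corollary \ref{corspe}, choosing the three ingredients $A,B,Z$ so that the two endpoints $t=0,1$ isolate $X$ and $Y$ separately, while the interior point $t=1/q$ reproduces $XY$. First I would assume $X,Y$ invertible and write the polar decompositions $X=U|X|$ and $Y=W|Y|$ with $U,W$ unitary. Writing $P=|X|$ and $Q=|Y|$, we get $XY=UPWQ$, and since a symmetric norm is unchanged under left multiplication by a unitary (indeed $|UM|=|M|$), we have $\||XY|^{\alpha}\|=\||PWQ|^{\alpha}\|$; thus only the unitary $W$ trapped between $P$ and $Q$ matters.

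Next I would apply Corollary \ref{corspe} to the positive matrices $A=P^{-p}$ and $B=Q^{q}$ with the weight $Z=P^{p}W$. The point is that the telescoping $A^{t}ZB^{t}=P^{-pt}\,P^{p}W\,Q^{qt}=P^{\,p(1-t)}\,W\,Q^{\,qt}$ carries only nonnegative powers for $t\in[0,1]$. Hence the log-convex map $f(t)=\||A^{t}ZB^{t}|^{\alpha}\|$ can be read off at three points: using $|P^{p}W|=W^{*}P^{p}W$ one gets $f(0)=\||P^{p}W|^{\alpha}\|=\|P^{\alpha p}\|=\||X|^{\alpha p}\|$; using $|WQ^{q}|=Q^{q}$ one gets $f(1)=\||WQ^{q}|^{\alpha}\|=\|Q^{\alpha q}\|=\||Y|^{\alpha q}\|$; and since $1-\tfrac1q=\tfrac1p$, the exponents at $t=1/q$ are $p(1-\tfrac1q)=1$ and $q\cdot\tfrac1q=1$, so $f(1/q)=\||PWQ|^{\alpha}\|=\||XY|^{\alpha}\|$. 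Reading $\tfrac1q=(1-\tfrac1q)\cdot 0+\tfrac1q\cdot 1=\tfrac1p\cdot 0+\tfrac1q\cdot 1$ as a convex combination, log-convexity of $f$ yields $f(1/q)\le f(0)^{1/p}f(1)^{1/q}$, which is exactly
\[
\||XY|^{\alpha}\|\le\||X|^{\alpha p}\|^{1/p}\,\||Y|^{\alpha q}\|^{1/q}.
\]

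The only genuine subtlety, and the step I expect to need the most care, is the negative power $A=P^{-p}$. This is precisely the situation covered by the convention $P^{-p}:=(P+F)^{-p}E$ fixed right after Theorem \ref{th1}, so Corollary \ref{corspe} applies verbatim; moreover, as noted above, on the relevant interval $t\in[0,1]$ the expression $A^{t}Z=P^{\,p(1-t)}W$ involves no inverse at all, so no singular behaviour actually enters the three evaluated values. Alternatively, I would prove the inequality first in the invertible case and pass to the general case by a routine continuity/limiting argument, both sides being continuous in $X,Y$. The boundary exponents ($p$ or $q$ equal to $1$) reduce to the submultiplicativity $\||XY|^{\alpha}\|\le\|\,|X|^{\alpha}\,\|\,\|\,|Y|\,\|_{\infty}^{\alpha}$ and are handled as a separate limiting case; for conjugate $p,q>1$ the argument above is complete.
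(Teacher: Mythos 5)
Your proof is correct and follows essentially the same route as the paper: both reduce the statement to a single application of Corollary \ref{corspe}, inverting one positive factor so that the map becomes $t\mapsto\bigl\|\,|P^{\,p(1-t)}\,\cdot\,Q^{\,qt}|^{\alpha}\bigr\|$ and then reading off the H\"older inequality from log-convexity at the three points $0$, $1$ and the conjugate-exponent interior point. The only (cosmetic) difference is that you carry the polar unitary $W$ inside the weight $Z$, whereas the paper absorbs it by working with $|Y^{*}|$ and the equality of singular values of $XY$ and $|X|\,|Y^{*}|$.
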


\vskip 5pt
\begin{proof} Let $A,B\in \bM_n^+$ with $B$ invertible. By replacing $B$ with $B^{-1}$
 and letting $Z=B$ in Corollary \ref{corspe} show that
$
t\mapsto  \| |A^tB^{1-t}|^{\alpha}\|
$
is log-convex on $(-\infty,\infty)$. Thus, for $t\in(0,1)$,
$
\| |A^tB^{1-t}|^{\alpha}\| \le \| A^{\alpha}\|^t\| B^{\alpha}\|^{1-t}
$.
Then, choose $A=|X|^{p}$, $B=|Y^*|^q$, $t=1/p$.
\end{proof}

\vskip 5pt More original H\"older's type inequalities are given in the next series of corollaries. 

\vskip 5pt
\begin{cor}\label{corcong} Let $A\in \bM_n^+$ and $Z\in\bM_{n,m}$. Then, for all symmetric norms and  $\alpha>0$, the map
$$
(p,t) \mapsto \left\| \left(Z^*A^{t/p}Z\right)^{\alpha p} \right\| 
$$
is jointly log-convex on $(0,\infty)\times(-\infty,\infty)$.
\end{cor}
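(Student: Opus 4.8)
The plan is to reduce Corollary \ref{corcong} to Theorem \ref{th1} by a block embedding that converts the one-sided congruence $Z^*A^{t/p}Z$ into a genuinely two-sided expression in a \emph{square} matrix algebra. The key elementary observation is that, for $Z\in\bM_{n,m}$ and $A\in\bM_n^+$, one has $Z^*A^{t/p}Z=\left|A^{t/(2p)}Z\right|^2$, where $|M|=(M^*M)^{1/2}$. Raising to the power $\alpha p$ gives $(Z^*A^{t/p}Z)^{\alpha p}=\left|A^{t/(2p)}Z\right|^{2\alpha p}$, so it suffices to prove joint log-convexity of $(p,t)\mapsto\left\|\,\left|A^{t/(2p)}Z\right|^{2\alpha p}\right\|$. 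This already has the exact shape of the functional in Theorem \ref{th1} (with parameter $2\alpha$ and the middle factor $B$ taken to be the identity), the only defect being that $Z$ is rectangular.

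To square off $Z$, I would set $N=n+m$ and pass to $\bM_N$, writing $2\times 2$ block matrices with blocks of sizes $n$ and $m$. Define
\[
\hat A=\begin{pmatrix} A^{1/2} & 0 \\ 0 & 0_m\end{pmatrix}\in\bM_N^+,\qquad \hat Z=\begin{pmatrix} 0 & Z \\ 0 & 0\end{pmatrix}\in\bM_N,\qquad \hat B=I_N\in\bM_N^+.
\]
Then $\hat B^{t/p}=I_N$ and $\hat A^{t/p}=\begin{pmatrix} A^{t/(2p)} & 0 \\ 0 & 0\end{pmatrix}$, so a direct block multiplication yields $\hat A^{t/p}\hat Z\hat B^{t/p}=\begin{pmatrix} 0 & A^{t/(2p)}Z \\ 0 & 0\end{pmatrix}$ and hence
\[
\left|\hat A^{t/p}\hat Z\hat B^{t/p}\right|^{2\alpha p}=\begin{pmatrix} 0_n & 0 \\ 0 & (Z^*A^{t/p}Z)^{\alpha p}\end{pmatrix}.
\]
Since a symmetric norm depends only on the singular values and is unchanged by adjoining extra zero singular values, the norm of this matrix in $\bM_N$ equals $\left\|(Z^*A^{t/p}Z)^{\alpha p}\right\|$ in $\bM_m$. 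Applying Theorem \ref{th1} to $\hat A,\hat B,\hat Z$ with exponent $2\alpha$ then gives the desired joint log-convexity on $(0,\infty)\times(-\infty,\infty)$.

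The substance of the argument is entirely the identity $Z^*A^{t/p}Z=\left|A^{t/(2p)}Z\right|^2$ together with the embedding; everything else is bookkeeping. The one point I expect to need genuine care is the meaning of $\hat A^{t/p}$ when $A$ is singular and $t<0$: here one must read the powers through the convention fixed just after Theorem \ref{th1}. One checks that $A^{1/2}+F=(A+F)^{1/2}$, where $F$ is the nullspace projection of $A$, so that $\hat A^{t/p}$ is indeed $A^{t/(2p)}\oplus 0$ in that convention and the block computation stands. Alternatively, and perhaps more cleanly, I would first establish the statement for $A+\eps I$ with $\eps>0$, where all powers are honest, and then let $\eps\to 0^+$, using that a pointwise limit of log-convex functions is log-convex. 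Either way the verification is routine, and the factor $2$ on $\alpha$ (forced by the square root in $A^{1/2}$) is the only thing one must not lose track of.
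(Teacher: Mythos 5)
Your proposal is correct and follows essentially the same route as the paper: the paper's own proof likewise pads the matrices with zero entries to reduce to square matrices and then applies Theorem \ref{th1} with $B=I$, leaving the bookkeeping with the identity $Z^*A^{t/p}Z=\left|A^{t/(2p)}Z\right|^2$ (the factor $2$ on $\alpha$, or equivalently the rescaling of $t$) implicit. Your block embedding with $\hat A=A^{1/2}\oplus 0$ merely makes that reduction, and the convention for singular $A$, explicit.
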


\vskip 5pt
\begin{proof} By completing, if necessary, our matrices with some 0-entries, we may suppose $m=n$ and then apply Theorem \ref{th1}
with $B=I$. \end{proof}

\vskip 5pt
\begin{cor}\label{corlittlewood} Let $a=(a_1,\cdots,a_m)$ and $w=(w_1,\cdots,w_m)$ be two $m$-tuples in $\bR^+$ and define, for all
$p>0$,
$\| a\|_p:=(\sum_{i=1}^mw_ia_i^p)^{1/p}$. Then, for all $p,q>0$ and  $\theta\in(0,1)$, 
$$
\| a\|_{\frac{1}{\theta p +(1-\theta)q}} \le \| a\|_{\frac{1}{p}}^{\theta} \| a\|_{\frac{1}{q}}^{1-\theta}.
$$
\end{cor}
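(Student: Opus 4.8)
The plan is to recognize the stated inequality as nothing but the log-convexity in $s$ of the single-variable function $s \mapsto \|a\|_{1/s}$, and then to obtain this log-convexity as a scalar specialization of Corollary \ref{corcong}. Writing $\|a\|_{1/s} = \left(\sum_{i=1}^m w_i a_i^{1/s}\right)^s$ and taking logarithms, the desired inequality with $r = \theta p + (1-\theta)q$ reads
$$
\log \|a\|_{1/r} \le \theta \log \|a\|_{1/p} + (1-\theta)\log \|a\|_{1/q},
$$
which is precisely the assertion that $s \mapsto \log\|a\|_{1/s}$ is convex on $(0,\infty)$, i.e. that $s\mapsto \|a\|_{1/s}$ is log-convex there. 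So it suffices to prove this one-dimensional log-convexity.

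First I would encode the weighted sum $\sum_i w_i a_i^{u}$ as a congruence of a diagonal matrix. Set $A=\diag(a_1,\dots,a_m)\in\bM_m^+$ and let $Z\in\bM_{m,1}$ be the column vector with entries $\sqrt{w_i}$, so that $Z^*A^{u}Z=\sum_{i=1}^m w_i a_i^{u}$ for every exponent $u$. Apply Corollary \ref{corcong} to this $A$ and $Z$, fixing $t=1$ and $\alpha=1$; it asserts that $p\mapsto \big\|(Z^*A^{1/p}Z)^{p}\big\|$ is log-convex on $(0,\infty)$ for every symmetric norm. Because $Z^*A^{1/p}Z$ is a $1$-by-$1$ matrix, every symmetric norm reduces to a fixed positive multiple of the absolute value (for $x\in\bM_1$ one has $\|[x]\|=|x|\,\|[1]\|$ by unitary invariance), and this constant only shifts $\log$ by an additive term, leaving convexity untouched. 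Hence $p\mapsto \big(\sum_i w_i a_i^{1/p}\big)^{p}=\|a\|_{1/p}$ is log-convex on $(0,\infty)$, which is exactly what we wanted.

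The argument involves no genuine analytic difficulty: the only thing to get right is the bookkeeping of exponents, namely choosing $t=1$ and $\alpha=1$ so that the functional $(Z^*A^{t/p}Z)^{\alpha p}$ collapses to $\|a\|_{1/p}$ rather than to some other power, together with the observation that the $1$-by-$1$ output makes the symmetric norm irrelevant. Strictly speaking one should also allow vanishing $a_i$ or $w_i$, but this is harmless: the convexity of $s\mapsto\log\|a\|_{1/s}$ for strictly positive data passes to the boundary by continuity, or one simply applies the inequality to $a_i+\eps$ and $w_i+\eps$ and lets $\eps\to 0^+$. I expect the ``hard part'' to be purely presentational, namely persuading the reader that Littlewood's classical inequality is literally the one-dimensional shadow of the operator log-convexity of Corollary \ref{corcong}.
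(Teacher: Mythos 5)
Your proof is correct and is essentially identical to the paper's: the paper's entire proof reads ``Fix $t=1$ and pick $A=\diag(a_1,\ldots,a_m)$ and $Z^*=(w_1^{1/2},\ldots,w_m^{1/2})$ in the previous corollary,'' which is exactly your specialization of Corollary \ref{corcong}. Your additional remarks (that a symmetric norm on $1\times 1$ matrices is a positive multiple of the absolute value, and the limiting argument for vanishing entries) are harmless elaborations of details the paper leaves implicit.
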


\vskip 5pt
\begin{proof} Fix $t=1$ and pick $A=\diag(a_1,\ldots a_m)$ and $Z^*=(w_1^{1/2},\ldots,w_m^{1/2})$ in the previous corollary.
\end{proof}

\vskip 5pt
Corollary \ref{corlittlewood} is the classical log-convexity of $p\to \|\cdot\|_{1/p}$, or Littlewood's version of H\"older's
inequality \cite[Theorem 5.5.1]{garling}. The next two corollaries, seemingly stronger but actually equivalent to Corollary
\ref{corcong}, are also generalizations of this inequality.

\vskip 5pt
\begin{cor} Let $A_i\in\bM_n^+$ and $Z_i\in\bM_{n,m}$, $i=1,\ldots,k$. Then, for all symmetric norms and  $\alpha>0$, the map
$$
(p,t) \mapsto \left\| \left\{\sum_{i=1}^k Z_i^*A_i^{t/p}Z_i\right\}^{\alpha p} \right\| 
$$
is jointly log-convex on $(0,\infty)\times(-\infty,\infty)$.
\end{cor}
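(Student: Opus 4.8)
The plan is to reduce the statement to Corollary~\ref{corcong}, which is exactly the $k=1$ case, by folding the finite sum into a single congruence via a direct-sum construction. The guiding observation is that a sum $\sum_i Z_i^* A_i^{t/p} Z_i$ is itself a congruence $\tilde Z^* \tilde A^{t/p} \tilde Z$ once the $A_i$ are arranged block-diagonally and the $Z_i$ are stacked vertically. This also explains the remark preceding the corollary that the two statements, although seemingly stronger, are in fact equivalent to the single-congruence case.

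Concretely, I would set
$$
\tilde A = A_1 \oplus \cdots \oplus A_k \in \bM_{kn}^+, \qquad
\tilde Z = \begin{pmatrix} Z_1 \\ \vdots \\ Z_k \end{pmatrix} \in \bM_{kn, m}.
$$
Since $\tilde A$ is block-diagonal and positive, each real power acts blockwise, $\tilde A^{t/p} = A_1^{t/p} \oplus \cdots \oplus A_k^{t/p}$, and a direct computation then gives the key identity
$$
\tilde Z^* \tilde A^{t/p} \tilde Z = \sum_{i=1}^k Z_i^* A_i^{t/p} Z_i,
$$
the off-diagonal blocks of $\tilde A^{t/p}$ being zero so that no cross terms survive. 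Both sides live in $\bM_m$, so the symmetric norm in the statement matches the one furnished by Corollary~\ref{corcong} applied on $\bM_m$. Applying that corollary to $\tilde A \in \bM_{kn}^+$ and $\tilde Z \in \bM_{kn,m}$ yields at once that
$$
(p,t) \mapsto \left\| \left(\tilde Z^* \tilde A^{t/p} \tilde Z\right)^{\alpha p} \right\|
= \left\| \left\{\sum_{i=1}^k Z_i^* A_i^{t/p} Z_i\right\}^{\alpha p} \right\|
$$
is jointly log-convex on $(0,\infty)\times(-\infty,\infty)$, which is the claim.

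The only point deserving care, and the mild obstacle, is the convention for $A_i^{t/p}$ when $t<0$ and some $A_i$ is singular: one must check that the paper's definition $A^{-s} := (A+F)^{-s} E$, with $F$ the projection onto $\Ker A$ and $E$ the range projection of $A$, is compatible with direct sums, i.e.\ that $\tilde A^{t/p} = A_1^{t/p} \oplus \cdots \oplus A_k^{t/p}$ persists for negative exponents. This is immediate, since the nullspace and range projections of a block-diagonal matrix are themselves the corresponding direct sums, so the regularization is carried out independently on each block and the key identity holds verbatim. With this verified, the reduction to Corollary~\ref{corcong} is complete, and taking $k=1$ conversely recovers Corollary~\ref{corcong}, confirming the equivalence.
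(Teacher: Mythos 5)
Your proof is correct and is exactly the paper's argument: the paper's one-line proof reads ``Apply Corollary \ref{corcong} with $A=A_1\oplus\cdots\oplus A_k$ and $Z^*=(Z_1^*,\ldots,Z_k^*)$,'' which is precisely your direct-sum reduction. Your added check that the convention $A^{-s}:=(A+F)^{-s}E$ is compatible with direct sums is a worthwhile detail the paper leaves implicit.
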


\vskip 5pt The unweighted case, $Z_i=I$ for all $i=1,\ldots,k$, is especially interesting. With $t=\alpha=1$, it is a matrix version
of the unweighted Littlewood inequality.

\vskip 5pt
\begin{proof} Apply Corollary \ref{corcong} with $A=A_1\oplus\cdots\oplus A_k$ and $Z^*=(Z_1^*,\ldots,Z_k^*)$. \end{proof}

\vskip 5pt
\begin{cor}\label{poslin} Let $A\in \bM_m^+$ and let $\Phi:\bM_m^+\to\bM_n^+$ be a positive linear map. Then, for all symmetric norms
and $\alpha>0$, the map
$$
(p,t) \mapsto \left\| \left\{\Phi(A^{t/p})\right\}^{\alpha p} \right\| 
$$
is jointly log-convex on $(0,\infty)\times(-\infty,\infty)$.
\end{cor}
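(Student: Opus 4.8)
The plan is to reduce the statement to the congruence form already handled in Corollary \ref{corcong}, by exploiting that inside the functional the map $\Phi$ is only ever evaluated on powers of the \emph{single} positive matrix $A$, which all commute. First I would diagonalize, writing the spectral decomposition $A=\sum_j a_j P_j$ over the distinct positive eigenvalues $a_j>0$ with orthogonal spectral projections $P_j$; the convention adopted after Theorem \ref{th1} for $A^{t/p}$ in the singular case means that the zero eigenvalue, if present, contributes nothing for either sign of $t$. Then for every real exponent $t/p$ one has $A^{t/p}=\sum_j a_j^{t/p}P_j$, so by linearity of $\Phi$,
$$
\Phi(A^{t/p})=\sum_j a_j^{t/p}\,C_j,\qquad C_j:=\Phi(P_j).
$$

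Since each $P_j\ge 0$ and $\Phi$ is positive, every $C_j\in\bM_n^+$; factor $C_j=Z_j^*Z_j$ with $Z_j\in\bM_n$ (for instance $Z_j=C_j^{1/2}$). Setting $A_j:=a_jI\in\bM_n^+$ gives $a_j^{t/p}C_j=Z_j^*A_j^{t/p}Z_j$, whence
$$
\Phi(A^{t/p})=\sum_j Z_j^*A_j^{t/p}Z_j,
$$
which is exactly a congruence-sum of the type appearing in the multi-term corollary preceding this one. Concretely, I would assemble the block matrices $\hat A=\bigoplus_j A_j\in\bM_{kn}^+$ and $\hat Z\in\bM_{kn,n}$ with $\hat Z^*=(Z_1^*,\ldots,Z_k^*)$, so that $\hat A^{t/p}=\bigoplus_j a_j^{t/p}I$ and $\hat Z^*\hat A^{t/p}\hat Z=\sum_j a_j^{t/p}Z_j^*Z_j=\Phi(A^{t/p})$, and then invoke Corollary \ref{corcong} with $\hat A$ and $\hat Z$ in place of $A$ and $Z$. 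The joint log-convexity of $(p,t)\mapsto\|(\hat Z^*\hat A^{t/p}\hat Z)^{\alpha p}\|=\|\{\Phi(A^{t/p})\}^{\alpha p}\|$ on $(0,\infty)\times(-\infty,\infty)$ is then immediate, finishing the proof.

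The one genuinely substantive step is the representation $\Phi(A^{t/p})=\sum_j Z_j^*A_j^{t/p}Z_j$: this is what lets a \emph{general} positive linear map — not assumed completely positive or unital — be absorbed into the already-established congruence machinery, and it is the ``more general class of maps'' promised in the proof of Corollary \ref{corstronger}. The key realization making it work is that only the commuting family $\{A^{t/p}\}$ enters the functional, so $\Phi$ need only be understood on the abelian algebra generated by $A$, where positivity on the finitely many spectral projections $P_j$ is all that is required; one never needs a Kraus representation valid on all of $\bM_m$. The remaining points — the singular-$A$ convention and the bookkeeping of the direct-sum dimensions — are routine.
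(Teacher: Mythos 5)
Your proof is correct and follows essentially the same route as the paper: both arguments observe that $\Phi$ only needs to be understood on the commutative algebra spanned by $A$, use the spectral decomposition of $A$ to write $\Phi(A^{t/p})$ as a congruence-sum $\sum_j Z_j^*A_j^{t/p}Z_j$, and then invoke Corollary \ref{corcong} via a direct-sum assembly. The only (harmless) difference is bookkeeping: the paper keeps all blocks equal to $A$ and uses rank-one weights $Z_{i,j}=x_iR_{i,j}$ built from the rows of $\Phi(E_i)^{1/2}$ (which also yields the side remark that a positive map on a commutative domain is completely positive), whereas you move the eigenvalues into scalar blocks $a_jI$ and take $Z_j=\Phi(P_j)^{1/2}$.
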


\vskip 5pt
\begin{proof} When restricted to the $*$-commutative subalgebra spanned by $A$, the map $\Phi$ has the form
\begin{equation}\label{decomp}\Phi(X)=\sum_{i=1}^m\sum_{j=1}^n Z_{i,j}^* XZ_{i,j}\end{equation}
for some rank 1 or 0 matrices $Z_{i,j}\in\bM_{m,n}$, $i=1,\ldots,m$, $j=1,\ldots,n$. So we are in the range of the previous
corollary. To check the decomposition \eqref{decomp}, write the spectral decomposition
$A=\sum_{i=1}^m \lambda_i(A) E_i$
with rank one projections $E_i=x_i x_i^*$  for some column vectors $x_i\in \bM_{m,1}$ and set 
$Z_{i,j}= x_i R_{i,j}$ where $R_{i,j}\in\bM_{1,n}$ is the $j$-th row of $\Phi(E_i)^{1/2}$. \end{proof}

\vskip 5pt
The above proof shows a classical fact, a positive linear map on a commutative domain is completely positive. Our proof seems shorter
than the ones in the literature.
We close this section with an application to Schur products.

\vskip 5pt
\begin{cor} Let $A,B\in \bM_n^+$. If $p\ge r\ge s\ge q$ and $p+q=r+s$, then, for all symmetric norms and   $\alpha>0$,
$$
\| \{A^r\circ B^s\}^{\alpha}\| \| \{A^s\circ B^r\}^{\alpha}\| \le \| \{A^p\circ B^q\}^{\alpha}\| \| \{A^q\circ B^p\}^{\alpha}\|
$$
and
$$
\| \{A^r\circ B^s\}^{\alpha}\| + \| \{A^s\circ B^r\}^{\alpha}\| \le \| \{A^p\circ B^q\}^{\alpha}\| +\| \{A^q\circ B^p\}^{\alpha}\|.
$$
\end{cor}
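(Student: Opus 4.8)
The plan is to reduce the whole statement to the log-convexity of a single one-parameter functional. Put $\sigma:=p+q=r+s$ and, for $u\in\bR$, set
$$
f(u):=\left\| \left\{ A^{u}\circ B^{\sigma-u} \right\}^{\alpha} \right\|.
$$
The four quantities in the statement are exactly $f(p),f(r),f(s),f(q)$, and the hypotheses $p\ge r\ge s\ge q$ together with $p+q=r+s$ say precisely that these points are symmetric about the common midpoint $c:=\sigma/2$, with the inner pair nested inside the outer pair: writing $r=c+\delta_1$, $s=c-\delta_1$, $p=c+\delta_2$, $q=c-\delta_2$, the constraints amount to $\delta_2\ge\delta_1\ge0$. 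Hence the corollary will follow once I show that $f$ is log-convex on $\bR$ (log-convexity also yields ordinary convexity, which I will need for the additive inequality).

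To get the log-convexity of $f$, I will exhibit $A^{u}\circ B^{\sigma-u}$ as a congruence of a fixed positive matrix raised to the power $u$, so that Corollary \ref{corcong} applies. First I reduce to invertible $B$ (replace $B$ by $B+\varepsilon I$ and let $\varepsilon\to0^{+}$; since the $B$-exponents that actually occur are $q,s,r,p\ge0$, all four terms vary continuously, and it suffices to pass to the limit in the finitely many evaluated inequalities). As in the proof of Corollary \ref{corschurlast}, the Schur product is the compression $X\circ Y=V^{*}(X\otimes Y)V$, where $V:\bC^{n}\to\bC^{n}\otimes\bC^{n}$ is the isometry $Ve_i=e_i\otimes e_i$. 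Because $A\otimes I$ and $I\otimes B$ are commuting positive matrices, one has $A^{u}\otimes B^{\sigma-u}=(I\otimes B^{\sigma/2})(A\otimes B^{-1})^{u}(I\otimes B^{\sigma/2})$. Setting $C:=A\otimes B^{-1}\in\bM_{n^2}^{+}$ and $Z:=(I\otimes B^{\sigma/2})V\in\bM_{n^2,n}$, this gives
$$
A^{u}\circ B^{\sigma-u}=Z^{*}C^{u}Z,
$$
so that $f(u)=\|(Z^{*}C^{u}Z)^{\alpha}\|$. Now Corollary \ref{corcong}, specialized to $p=1$ and read with $C,Z$ in place of $A,Z$, states exactly that $u\mapsto\|(Z^{*}C^{u}Z)^{\alpha}\|$ is log-convex on $(-\infty,\infty)$.

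It remains to pass from log-convexity to the two inequalities. For any convex function $h$ and any center $c$, the map $x\mapsto h(c+x)+h(c-x)$ is convex and even, hence nondecreasing on $[0,\infty)$; applied with $0\le\delta_1\le\delta_2$ it gives $h(c+\delta_1)+h(c-\delta_1)\le h(c+\delta_2)+h(c-\delta_2)$, that is, $h(r)+h(s)\le h(p)+h(q)$. Taking $h=\log f$ (convex, since $f$ is log-convex) and exponentiating yields the multiplicative inequality $f(r)f(s)\le f(p)f(q)$; taking $h=f$ (convex, since log-convex implies convex) yields the additive inequality $f(r)+f(s)\le f(p)+f(q)$. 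I expect the only genuinely delicate step to be the algebraic identity realizing the Schur power $A^{u}\circ B^{\sigma-u}$ as a single congruence $Z^{*}C^{u}Z$ — this is what lets Corollary \ref{corcong} do all the work — together with the harmless perturbation to invertible $B$; once $f$ is known to be log-convex, the symmetric-nesting argument is entirely routine.
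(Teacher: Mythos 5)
Your proof is correct and follows essentially the same route as the paper: both reduce the statement to the log-convexity of $u\mapsto\|\{A^{u}\circ B^{\sigma-u}\}^{\alpha}\|$ by realizing the Schur product of powers through the tensor product with base matrix $A\otimes B^{-1}$ (you via the explicit isometry and Corollary \ref{corcong}, the paper via the positive linear map $\Phi(X)=\Psi(A^{w/2}\otimes B^{w/2}\,X\,A^{w/2}\otimes B^{w/2})$ and Corollary \ref{poslin} — these are interchangeable), and then conclude with the same convex-plus-even-hence-nondecreasing argument. The only nitpick is that the paper's limit argument perturbs both $A$ and $B$ to be invertible, which you would also need if negative exponents are permitted.
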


\vskip 5pt
\begin{proof} By a limit argument we may assume invertibility of $A$ and $B$. Let $w:=(p+q)/2$. We will show that the maps
\begin{equation}\label{eqshow}
t\mapsto \| \{A^{w+t} \circ B^{w-t}\}^{\alpha} \|, \quad t\mapsto \| \{A^{w-t} \circ B^{w+t}\}^{\alpha} \|
\end{equation}
are log-convex on $(-\infty,\infty)$. This implies that the functions
$$
f(t)=  \| \{A^{w+t} \circ B^{w-t}\}^{\alpha} \| \| \{A^{w-t} \circ B^{w+t}\}^{\alpha} \|
$$
and
$$
g(t)=  \| \{A^{w+t} \circ B^{w-t}\}^{\alpha} \| +\| \{A^{w-t} \circ B^{w+t}\}^{\alpha} \|
$$
are convex and even, hence nondecreasing on $[0,\infty)$. So we have
$$
f((r-s)/2)\le f((p-q)/2) \ {\text{and}} \ g((r-s)/2)\le g((p-q)/2)
$$
which prove the corollary.

To check the log-convexity of the maps \eqref{eqshow}, we  see the Schur product as a positive linear map acting on a tensor product,$A\circ B= \Psi(A\otimes B)$. 
 By Corollary \ref{poslin}, the map
$$
t\mapsto \| \{\Phi(Z^t)\}^{\alpha} \|
$$
is log-convex on $(-\infty,\infty)$ for any positive matrix
 $Z\in\bM_n\otimes\bM_n$ and any positive linear map $\Phi:\bM_n\otimes\bM_n\to \bM_n$. Taking
$
Z=A\otimes B^{-1}
$
and
$$
\Phi(X)= \Psi(A^{w/2}\otimes B^{w/2}\cdot  X \cdot A^{w/2}\otimes B^{w/2})
$$
we obtain the log-convexity of the first map
$
t\mapsto \| \{A^{w+t}\circ B^{w-t}\}^{\alpha} \|
$
in $\eqref{eqshow}$. The log-convexity of the second one is similar.
\end{proof}

\section{Proof of Theorem \ref{th1}}

In the proof of the theorem, we will denote the $k$-th antisymmetric power $\wedge^k T$ of an operator $T$ simply as $T_k$. The
symbol $\|\cdot\|_{\infty}$ stands for the usual operator norm while $\rho(\cdot)$ denotes the spectral radius. Given $A\in\bM_n^+$
we denote by $A^{\downarrow}$ the diagonal matrix with the eigenvalues of $A$ in decreasing order down to the diagonal,
$A^{\downarrow}={\diag(\lambda_j(A)})$.

\begin{proof} Recall that if $A\in\bM_n^+$ is not invertible and $t\ge 0$, we define $A^{-t}$ as the generalized inverse of $A^t$,
i.e., $A^{-t}:=(A+F)^{-t}E$ where $F$ is the projection onto the nullspace of $A$ and $E$ is the range projection of $A$. With this
convention, replacing if necessary $Z$ by $EZE'$ where $E$ is the range projection of $A$ and $E'$ that of $B$, we may and do assume
that $A$ and $B$ are invertible.

Let
$$
g_k(t):= \prod_{j=1}^k \lambda_j(|A^{t}ZB^{t}|)= \|  A_k^{t}Z_kB_k^{t}\|_{\infty}
$$
Then
\begin{align*}g_k((t+s)/2) &= \| A_k^{(t+s)/2}Z_kB_k^{t+s}Z^*_kA_k^{(t+s)/2} \|_{\infty}^{1/2} \\
&=\rho^{1/2}( A_k^{t}Z_kB_k^{t+s}Z^*_kA_k^{s}) \\
&\le  \| A_k^{t}Z_kB_k^{t+s}Z^*_kA_k^{s} \|_{\infty}^{1/2} \\
&\le  \| A_k^{t}Z_kB_k^{t}\|_{\infty}^{1/2} \| B_k^{s}Z^*_kA_k^{s} \|_{\infty}^{1/2} \\
&=\{g_k(t)g_k(s)\}^{1/2}.
\end{align*}
Thus $t\mapsto g_k(t)$ is log-convex on $(-\infty,\infty)$ and so $(p,t)\mapsto g_k^p(t/p)$ is jointly log-convex on
$(0,\infty)\times(-\infty,\infty)$. Indeed, its logarithm $p\log g_k(t/p)$
is the perspective of the convex function $\log g_k(t)$, and hence is jointly convex. Therefore
\begin{equation}\label{equa1}
g_k^{(p+q)/2}\left(\frac{(t+s)/2}{(p+q)/2}\right) \le \{ g_k^p(t/p)g_k^q(s/q)\}^{1/2}
\end{equation}
for $k=1,2,\ldots,n$, with equality for $k=n$ as it then involves the determinant. This is equivalent to the log-majorization
$$
\left|A^{\frac{t+s}{p+q}}ZB^{\frac{t+s}{p+q}}\right|^{\frac{p+q}{2}} \prec_{\log}
|A^{t/p}ZB^{t/p}|^{\frac{p}{2}\downarrow}\left|A^{s/q}ZB^{s/q}\right|^{\frac{q}{2}\downarrow}
$$
which is equivalent, for any  $\alpha>0$, to the log-majorization
$$
\left|A^{\frac{t+s}{p+q}}ZB^{\frac{t+s}{p+q}}\right|^{\alpha \frac{p+q}{2}} \prec_{\log}
\left|A^{t/p}ZB^{t/p}\right|^{\frac{\alpha p}{2}\downarrow}\left|A^{s/q}ZB^{s/q}\right|^{\frac{\alpha q}{2}\downarrow}
$$
ensuring that
\begin{equation}\label{equa2}
\left\|  \left|A^{\frac{t+s}{p+q}}ZB^{\frac{t+s}{p+q}}\right|^{\alpha \frac{p+q}{2}}\right\|
\le \left\| \left|A^{t/p}ZB^{t/p}\right|^{\frac{\alpha p}{2}\downarrow}\left|A^{s/q}ZB^{s/q}\right|^{\frac{\alpha q}{2}\downarrow}
\right\|
\end{equation}
for all symmetric norms. Thanks to the Cauchy-Schwarz inequality for symmetric norms, we then have
\begin{equation}\label{equa3}
\left\|  \left|A^{\frac{t+s}{p+q}}ZB^{\frac{t+s}{p+q}}\right|^{\alpha \frac{p+q}{2}}\right\|
\le \left\| \left|A^{t/p}ZB^{t/p}\right|^{\alpha p} \right\|^{1/2}\left\| \left|A^{t/q}ZB^{t/q}\right|^{\alpha q}\right\|^{1/2}
\end{equation}
which means that 
$$
(p,t) \mapsto \left\| \left|A^{t/p}ZB^{t/p}\right|^{\alpha p} \right\| 
$$
is jointly log-convex on $(0,\infty)\times(-\infty,\infty)$.
\end{proof}

\vskip 5pt 
Denote by $I_k$ the identity of $\bM_k$ and by $\det_k$
the determinant on $\bM_k$. If $n\ge k$, $\Theta(k,n)$ stands for the set of $n\times k$ isometry matrices $T$, i.e, $T^*T=I_k$. One
easily checks the variational formula, for $A\in\bM_n$ and $k=1,\ldots,n$,
$$
\prod_{j=1}^k \lambda_j(|A|)=\max_{V,W\in\Theta(k,n)}\left| {\mathrm{det}}_k\, V^*AW\right|.
$$
From this formula follow two facts, Horn's inequality,
$$
\prod_{j=1}^k \lambda_j(|AB|) \le \prod_{j=1}^k \lambda_j(|A|)\lambda_j(|B|)
$$
for all $A,B\in\bM_n$ and $k=1,\dots,n$, and, making use of Schur's triangularization,  the inequality
$$
\prod_{j=1}^k \lambda_j(|AB|) \le \prod_{j=1}^k \lambda_j(|BA|)
$$
whenever $AB$ is normal (indeed, by Schur's theorem we may assume that $BA$ is upper triangular with the eigenvalues of $BA$, hence
of $AB$, down to the diagonal and our variational formula then gives the above log-majorization). This shows that the proof of
Theorem \ref{th1} can be written without the machinery of antisymmetric tensors.

The novelty of this proof consists in using the perspective of a one variable convex function. One more perspective yields the
following variation of Corollary \ref{cornew}.

\vskip 5pt
\begin{cor} Let $A,B\in\bM_m^{+}$, let $Z\in\bM_m$. Then, for all symmetric norms and   $\alpha>0$, the map
$$
p \mapsto \left\| \left|A^{1/p}ZB^{1/p}\right|^{\alpha } \right\|^p 
$$
is  log-convex on $(0,\infty)$. 
\end{cor}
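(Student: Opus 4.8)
The plan is to reuse the perspective device from the proof of Theorem~\ref{th1}, but applied to the one-variable functional obtained by fixing $p=1$. Concretely, I would set
$$
\psi(u):=\log\left\| \left|A^{u}ZB^{u}\right|^{\alpha}\right\|,\qquad u\in(-\infty,\infty).
$$
By Corollary~\ref{corspe} (the $p=1$ instance of Theorem~\ref{th1}), the map $u\mapsto \||A^{u}ZB^{u}|^{\alpha}\|$ is log-convex on $(-\infty,\infty)$, which says precisely that $\psi$ is convex. As in Theorem~\ref{th1}, I would first reduce to invertible $A,B$ via the generalized-inverse convention, so that $\psi$ is well defined for all real exponents.

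Next I would form the perspective of $\psi$, namely $(p,u)\mapsto p\,\psi(u/p)$ on $(0,\infty)\times(-\infty,\infty)$; this is jointly convex exactly as in the proof of Theorem~\ref{th1}, where the same device was applied to $\log g_k$. I would then restrict this perspective to the affine line $\{(p,1):p>0\}$. Since the restriction of a convex function to an affine subset is again convex, the map $p\mapsto p\,\psi(1/p)$ is convex on $(0,\infty)$.

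It then remains only to identify this restriction with the logarithm of the target functional:
$$
p\,\psi(1/p)=p\log\left\| \left|A^{1/p}ZB^{1/p}\right|^{\alpha}\right\|=\log\left( \left\| \left|A^{1/p}ZB^{1/p}\right|^{\alpha}\right\|^{p}\right).
$$
Hence $p\mapsto \log\bigl(\||A^{1/p}ZB^{1/p}|^{\alpha}\|^{p}\bigr)$ is convex, i.e.\ $p\mapsto \||A^{1/p}ZB^{1/p}|^{\alpha}\|^{p}$ is log-convex on $(0,\infty)$, as claimed.

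I do not expect a serious obstacle here: once the $p=1$ functional is recognized as the convex seed $\psi$, the whole statement reduces to the convexity of its perspective restricted to a line. The only point requiring care is the bookkeeping that matches $p\,\psi(1/p)$ with the claimed expression, together with the observation that slicing the jointly convex perspective along $u=1$ preserves convexity. This is the same \emph{one more perspective} idea already exploited in the proof of Theorem~\ref{th1}, now taken one layer further.
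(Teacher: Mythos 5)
Your proposal is correct and is essentially identical to the paper's own proof: both take the convex seed $t\mapsto\log\||A^{t}ZB^{t}|^{\alpha}\|$ from the $p=1$ case of Theorem~\ref{th1}, form its perspective $(p,t)\mapsto p\log\||A^{t/p}ZB^{t/p}|^{\alpha}\|$, and then fix $t=1$. The only difference is cosmetic bookkeeping in how the restriction to the line $t=1$ is phrased.
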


\vskip 5pt
\begin{proof} By Theorem \ref{th1} with fixed $p=1$, the map
$
t \mapsto \log\left( \| |A^{t}ZB^{t}|^{\alpha} \|\right)
$
is convex on $(0,\infty)$, thus its perpective
$$
(p,t) \mapsto p\log \left( \left\| |A^{t/p}ZB^{t/p}|^{\alpha} \right\|\right) = \log \left( \left\| |A^{t/p}ZB^{t/p}|^{\alpha}
\right\|^p\right)
$$
is jointly log-convex on $(0,\infty)\times(0,\infty)$. Now fixing $t=1$ completes the proof.
\end{proof}

\vskip 5pt
From this corollary we may derive the next one exactly as Corollary \ref{poslin}
follows from Theorem \ref{th1}. This result is another noncommutative version of Littlewood's inequality (\cite[Theorem
5.5.1]{garling}).

\vskip 5pt
\begin{cor}  Let $\Phi:\bM_m\to \bM_n$ be a positive linear map and let $A\in\bM_m^+$. Then, for all symmetric norms and  $\alpha>0$, the map
$$
p \mapsto \left\| \Phi^{\alpha }(A^{1/p}) \right\|^p
$$
is  log-convex on $(0,\infty)$. 
\end{cor}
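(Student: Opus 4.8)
The plan is to follow the very same route that leads from Theorem~\ref{th1} to Corollary~\ref{poslin}, but with the preceding corollary (log-convexity of $p\mapsto \| |A^{1/p}ZB^{1/p}|^{\alpha}\|^p$) in the role that Theorem~\ref{th1} played there. Since every step is a structural substitution into an already established log-convexity, no genuinely new estimate is needed; the work is purely organizational.

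First I would specialize the preceding corollary to a single congruence. Setting $B=I$ shows that $p\mapsto \| |A^{1/p}Z|^{\alpha}\|^p$ is log-convex. Writing $|A^{1/p}Z|^{\alpha}=(Z^*A^{2/p}Z)^{\alpha/2}$, then replacing $A$ by $A^{1/2}$ and $\alpha/2$ by $\alpha$ (legitimate, as $\alpha>0$ and $A^{1/2}\in\bM_m^+$ are arbitrary), I obtain that
$$
p\mapsto \left\| \left(Z^*A^{1/p}Z\right)^{\alpha}\right\|^p
$$
is log-convex on $(0,\infty)$. This is the exact analog, for the outer-power functional, of Corollary~\ref{corcong}.

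Next I would pass to finite sums by the block-diagonal trick already used in the excerpt: taking $A=A_1\oplus\cdots\oplus A_k$ and $Z^*=(Z_1^*,\ldots,Z_k^*)$ turns $Z^*A^{1/p}Z$ into $\sum_i Z_i^*A_i^{1/p}Z_i$, whence
$$
p\mapsto \left\| \left(\sum_{i=1}^k Z_i^*A_i^{1/p}Z_i\right)^{\alpha}\right\|^p
$$
is log-convex. Finally, to bring in $\Phi$, I would invoke the representation of a positive linear map on the abelian subalgebra generated by $A$, established in the proof of Corollary~\ref{poslin}: there are matrices $Z_{i,j}$ with $\Phi(A^{1/p})=\sum_{i,j}Z_{i,j}^*A^{1/p}Z_{i,j}$. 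Applying the previous display with all summands equal to $A$ then yields the log-convexity of $p\mapsto \|\{\Phi(A^{1/p})\}^{\alpha}\|^p$, which is the assertion.

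The proof carries no real obstacle beyond the single point it inherits from Corollary~\ref{poslin}: the representation of $\Phi$ as a finite sum of congruences on the commutative algebra spanned by $A$, which rests only on $\Phi(E_i)\ge 0$ for the spectral projections $E_i$ of $A$ (so that the square roots $\Phi(E_i)^{1/2}$ exist) and is quoted rather than reproved. The only thing to verify en route is that the outer exponent $p$ survives each substitution; it does, because every congruence $Z$, $Z_i$, $Z_{i,j}$ and the map $\Phi$ are independent of $p$, so the $p$-dependence enters solely through $A^{1/p}$ inside and through the exponent $p$ outside, both left untouched by the substitutions.
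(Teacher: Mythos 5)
Your proposal is correct and follows exactly the route the paper intends: it states that this corollary is derived from the preceding one ``exactly as Corollary \ref{poslin} follows from Theorem \ref{th1}'', which is precisely your chain of substitutions ($B=I$, rewriting $|A^{1/p}Z|^{\alpha}$ as $(Z^*A^{2/p}Z)^{\alpha/2}$, the block-diagonal trick for sums, and the congruence representation of $\Phi$ on the commutative algebra spanned by $A$). You have merely made explicit the steps the paper leaves implicit, and each is carried out correctly.
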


\section{Hilbert space operators}

\vskip 5pt
In this section we give a version  
 of Theorem \ref{th1}  for the algebra $\bB$ of bounded linear operators on a separable, infinite dimensional 
Hilbert space ${\mathcal{H}}$. 
We first include a brief treatment of symmetric norms for operators in $\bB$. Our approach does not require to discuss any underlying
ideal, we refer the reader to \cite[Chapter 2]{Simon} for a much more complete discussion.

We may define symmetric norms on $\bB$ in a closely related way to the finite dimensional case as follows. Let $\bF$ be the set of
finite rank operators and $\bF^+$ its positive part.

\vskip 5pt\noindent
\begin{definition}\label{def}A symmetric norm $\|\cdot\|$ on $\bB$ is a functional taking value in $[0,\infty]$ such that:
\begin{itemize}
\item[(1)] $\|\cdot \|$ induces a norm on $\bF$.
\item[(2)] If $\{X_n\}$ is a  sequence  in $\bF^+$ strongly increasing to $X$, then $\|X\|=\lim_n \| X_n\|$.
\item[(3)] $\| KZL\| \le \| Z\|$ for all $Z\in \bB$ and all contractions $K,L\in\bB$.
\end{itemize}
\end{definition}

\vskip 5pt
The reader familiar to the theory of symmetrically normed ideals may note that our definition of a symmetric norm is equivalent to
the usual one. More precisely, restricting $\|\cdot\|$ to the set where it takes finite values, Definition \ref{def} yields the
classical notion of a symmetric norm defined on its maximal ideal.

Definition \ref{def} shows that a symmetric norm on $\bB$ induces a symmetric norm on $\bM_n$ for each $n$, say $\|\cdot\|_{\bM_n}$.
In fact $\|\cdot\|$ can be regarded as a limit of the norms $\|\cdot\|_{\bM_n}$, see Lemma \ref{lem1} for a precise statement, so
that basic properties of symmetric norms on $\bM_n$ can be extended to symmetric norms on $\bB$. For instance the Cauchy-Schwarz
inequality also holds for symmetric norms on $\bB$, (with possibly the $\infty$ value) as well as the Ky Fan principle for
$A,B\in\bB^+$:
If $A\prec_w B$, then $\| A\| \le \| B\|$ for all symmetric norms. In fact, even for a noncompact operator $A\in\bB^+$, the sequence
$\{\lambda_j(A)\}_{j=1}^{\infty}$ and the corresponding diagonal operator $A^{\downarrow}={\diag(\lambda_j(A)})$ are well defined,
via the minmax formulae (see \cite[Proposition 1.4]{Hiai})
$$
\lambda_j(A) = \inf_E\{ \| EAE \|_{\infty} \ : E  \ {\mathrm{ projection\ with\ }}{\mathrm{rank}}(I-E)=j-1 \}
$$
 The  Ky Fan principle then still holds for $A,B\in\bB^+$ by Lemma \ref{lem1} and the obvious property
$$
\lambda_j(A)=\lim_{n\to\infty} \lambda_j(E_nAE_n)
$$
for all sequences of finite rank projections $\{E_n\}_{n=1}^{\infty}$ strongly converging to the identity. 
Note also that we still have $\| \wedge^k A\|_{\infty} =\prod_{j=1}^k \lambda_j(A)$.

Thus we have the same tools as in the matrix case and we will be able to adapt the proof of Theorem \ref{th1} for $\bB$. The infinite
dimensional version of Theorem \ref{th1}
is the following statement.

\vskip 5pt
\begin{theorem}\label{th2} Let $A,B\in\bB^{+}$, let $Z\in\bB$. Then, for all symmetric norms and   $\alpha>0$, the map
$$
(p,t) \mapsto \left\| \left|A^{t/p}ZB^{t/p}\right|^{\alpha p} \right\| 
$$
is jointly log-convex on $(0,\infty)\times(0,\infty)$. This map takes its finite values in the  open quarter-plan 
$$\Omega(p_0,t_0)=\{ (p,t) \ | \ p>p_0, \ t>t_0\}$$
for some $p_0,t_0\in [0,\infty]$, or on its  closure $\overline{\Omega}(p_0,t_0)$.
\end{theorem}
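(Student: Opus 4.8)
The plan is to mirror the proof of Theorem \ref{th1}, replacing the finite-dimensional ingredients by their $\bB$-counterparts collected in Section 5. The joint log-convexity is the principal assertion and I would prove it first; the description of the finiteness region is then a structural consequence of log-convexity together with two monotonicity properties.

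For the log-convexity, I would first note that the restriction to $t>0$ (rather than $t\in\bR$) is exactly what keeps every operator in sight bounded: since $A,B\in\bB^+$ are bounded, $A^{t/p}$ and $B^{t/p}$ are bounded positive operators for all $p,t>0$, so no generalized-inverse convention is needed and $|A^{t/p}ZB^{t/p}|$ is a genuine bounded operator. I would then run the antisymmetric-tensor argument of Section 4 in $\bB$. Fix a symmetric norm and set $g_k(t)=\prod_{j=1}^k\lambda_j(|A^tZB^t|)=\|(\wedge^kA)^t(\wedge^kZ)(\wedge^kB)^t\|_\infty$, which is finite for every $t>0$ because all factors are bounded and $\|\wedge^k(\cdot)\|_\infty=\prod_{j\le k}\lambda_j(\cdot)$ still holds in $\bB$. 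The chain spectral radius $\le$ operator norm, together with submultiplicativity, gives log-convexity of $t\mapsto g_k(t)$ on $(0,\infty)$, and forming the perspective $(p,t)\mapsto g_k^p(t/p)$ yields joint convexity of its logarithm. These partial-product inequalities are precisely the weak-log-majorization which, via the Ky Fan principle for $\bB^+$ and the Cauchy--Schwarz inequality for symmetric norms on $\bB$ — both recorded in Section 5 and ultimately justified by Lemma \ref{lem1} and Definition \ref{def}(2) — produce the two-point log-convexity inequality for $f(p,t):=\||A^{t/p}ZB^{t/p}|^{\alpha p}\|$. The inequality is read in $[0,\infty]$, so no finiteness is presupposed; alternatively one may compress by finite-rank projections $E_n\uparrow I$, apply Theorem \ref{th1} to the compressions, and pass to the limit using the monotone-convergence clause of Definition \ref{def} and $\lambda_j(E_nAE_n)\to\lambda_j(A)$.

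For the finiteness region $D=\{(p,t):f(p,t)<\infty\}$, I would argue as follows. Since $\log f$ is jointly convex, $D$ is convex. Next I would normalize: replacing $A,B,Z$ by $cA,cB,cZ$ only multiplies $f$ by a factor of the form $c^{\alpha t}$ (for $A$ or $B$) or $c^{\alpha p}$ (for $Z$), changing $\log f$ by an affine function of $(p,t)$ and hence altering neither the convexity nor $D$; so I may assume $A,B,Z$ are contractions. Writing $h(s):=|A^sZB^s|$ and $s:=t/p$, one has $f(p,t)=\|h(s)^{\alpha p}\|$ with $h(s)\le I$. Two monotonicities then hold: (i) along each ray $t/p=s$ fixed, $h(s)^{\alpha p}$ is operator-decreasing in $p$ because $h(s)\le I$, so $f$ is non-increasing and $D$ is closed upward along such rays; (ii) for fixed $p$, the singular-value inequality $s_j(A^{s-s'}YB^{s-s'})\le s_j(Y)$ combined with $\|A^{s-s'}\|_\infty,\|B^{s-s'}\|_\infty\le 1$ gives $\lambda_j(h(s))\le\lambda_j(h(s'))$ for $s\ge s'$ and every $j$, whence $f(p,\cdot)$ is non-increasing in $t$. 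Equivalently, in the coordinates $(\alpha p,\,t/p)$ the set $D$ is an up-set for the product order.

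The main obstacle is the last step: upgrading ``convex up-set'' to ``(the closure of) the quarter-plane $\Omega(p_0,t_0)$.'' This requires showing that finiteness decouples into one threshold depending only on $p$ and one depending only on $t$, i.e.\ that the two monotonicities above do not interact. I would attempt to isolate the $p$-threshold from the part of the spectrum on which $A^sZB^s$ does not decay as $s\to\infty$ (the joint eigenvalue-$1$ subspaces of $A$ and $B$) and the $t$-threshold from the decaying part, and to prove that $D$ splits as the intersection of the corresponding two half-regions using the additivity of symmetric norms over spectral direct sums. This decoupling is the delicate point: convexity together with the two monotonicities a priori only forces $D$ to be a convex up-set, so the sharper quarter-plane form genuinely needs the additional argument, and it is there that the precise determination of $p_0,t_0\in[0,\infty]$ would lie.
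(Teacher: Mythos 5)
Your first paragraph is, in substance, the paper's own proof of the log-convexity assertion: one reruns the antisymmetric-tensor computation of Section 4 to obtain \eqref{equa1} for every $k$, converts it into a weak log-majorization, and invokes the Ky Fan principle and the Cauchy--Schwarz inequality for symmetric norms on $\bB$ (both reduced to the matrix case via Lemma \ref{lem1} and Definition \ref{def}), reading all inequalities in $[0,\infty]$. The paper only adds the remark that in the degenerate case $AZB=0$ the map is identically $0$ and its logarithm, constantly $-\infty$, is declared convex. So this half of your proposal is correct and takes the same route.

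For the finiteness region, your monotonicities are essentially the two implications the paper proves: finiteness at $(p,t)$ persists when $t$ increases at fixed $p$ (because for $s>t$ the singular values of $A^{s/p}ZB^{s/p}$ are dominated by a constant times those of $A^{t/p}ZB^{t/p}$), and when $p$ increases at fixed $t$ (after normalizing $Z$ to a contraction, the Ky Fan norms of $\left|A^{t/p}ZB^{t/p}\right|^{\alpha p}$ are log-convex and bounded, hence nonincreasing in $p$); note that this second implication is a \emph{horizontal} monotonicity, slightly stronger than your monotonicity along the rays $t/p=\mathrm{const}$. The paper then simply asserts that these two implications suffice to identify the domain as $\Omega(p_0,t_0)$ or its closure, and stops --- exactly the step you declined to take. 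Your caution is justified: a convex product-order up-set need not be a quarter-plane, and the decoupling you were hoping for fails in general. Take $A=B=Z=\diag(1/j)_{j\ge 1}$, $\alpha=1$ and the trace norm; then $\left|A^{t/p}ZB^{t/p}\right|^{p}=\diag\bigl(j^{-(p+2t)}\bigr)$, so the map equals $\sum_{j\ge 1}j^{-(p+2t)}$, which is finite exactly on $\{(p,t)\,:\,p+2t>1\}$. Both coordinate projections of this set are all of $(0,\infty)$, which would force $p_0=t_0=0$, yet the set is a proper subset of $\Omega(0,0)=(0,\infty)\times(0,\infty)$. Hence the second assertion of the theorem is false as stated, the paper's own proof is incomplete at precisely the point you flagged, and the gap in your proposal cannot be closed. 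What your argument (and the paper's) actually establishes is that the finiteness set is a convex up-set for the product order, i.e.\ its horizontal and vertical sections are half-lines.
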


\vskip 5pt
Note that, contrarily to Theorem \ref{th1}, we confine the variable $t$ to the positive half-line. Indeed, when dealing with a
symmetric norm, the operators $A$ and $B$ are often compact, so that, for domain reasons, we cannot consider two unbounded operators
such as $A^{-1}$ and $B^{-1}$.

\vskip 5pt
\begin{proof} Note that $AZB=0$ if and only if $A^qZB^q=0$, for any $q>0$. In this case, our map is the 0-map, and its logarithm with
constant value $-\infty$ can be regarded as convex. Excluding this trivial case, our map takes values in $(0,\infty]$ and it makes
sense to consider the log-convexity property.
   We may reproduce the  proof of Theorem \ref{th1} and obtain \eqref{equa1}
for all $k=1,2,\cdots$. This leads to weak-logmajorizations and so to a weak majorization equivalent (Ky Fan's principle in $\bB$) to
\eqref{equa2}, with possibly the $\infty$ value on the right side or both sides. The Cauchy-Schwarz inequality for symmetric norms in
$\bB$ yields \eqref{equa3} (possibly with the $\infty$ value). Therefore our map is jointly log-convex. To show that the domain where
it takes finite values is $\Omega(p_0,t_0)$ or $\overline{\Omega}(p_0,t_0)$, it suffices to show the following two implications:

\vskip 5pt
{\it Let $0<t<s$ and $0<p<q$. If $\left\| \left|A^{t/p}ZB^{t/p}\right|^{\alpha p} \right\| <\infty$, then
\begin{itemize}
\item[{\rm(i)}] $\left\| \left|A^{s/p}ZB^{s/p}\right|^{\alpha p} \right\| <\infty$, and
\item[{\rm (ii)}] $\left\| \left|A^{t/q}ZB^{t/q}\right|^{\alpha q} \right\| <\infty$.
\end{itemize} }

\vskip 5pt
Since $0<t<s$ ensures that, for some constant $c=c(s,t)>0$,
$$
\lambda_j(|A^{t/p}ZB^{t/p}|) \ge c\lambda_j(|A^{s/p}ZB^{s/p}|)
$$
for all $j=1,2,\ldots$, we obtain (i). To obtain (ii) we may assume that $Z$ is a contraction. Then arguing as in the proof of
Corollary \ref{cor1} we see that the finite value map
$$p\mapsto  \left\| \left|A^{t/p}ZB^{t/p}\right|^{\alpha p} \right\| $$
is nonincreasing for all Ky-Fan norms. Thus this map is also nonincreasing for all symmetric norms. This gives (ii).
\end{proof}

\vskip 5pt
Exactly as in the matrix case, we can derive the following two corollaries.

\vskip 5pt
\begin{cor}  Let $A,B\in\bB^+$ and $p\ge 1$. Then,  for all contractions $Z\in \bB$,
$$
   (AZ^*BZA)^p \prec_{w\!\log}   A^{p}Z^*B^{p}ZA^{p}.
$$
\end{cor}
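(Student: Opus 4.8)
The plan is to transplant, into the operator algebra $\bB$, the two-step passage from Theorem \ref{th1} to Corollary \ref{corstriking} used in the matrix case, but to run the whole argument through the normalized Ky Fan $k$-norms $\|\cdot\|_{\{k\}}$ rather than a general symmetric norm. This choice is deliberate: since $\lambda_j(|T|)\le\|T\|_\infty$ for every $T\in\bB$, each $\|\cdot\|_{\{k\}}$ takes only finite values on bounded operators, which sidesteps the $\infty$-value and domain complications that Theorem \ref{th2} must contend with. The eigenvalues $\lambda_j$ of the (possibly noncompact) positive operators $AZ^*BZA$ and $A^pZ^*B^pZA^p$ are available through the minmax formula recorded in Section 5, so the relation $\prec_{w\!\log}$ is meaningful here.

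First I would fix $t=1$ in Theorem \ref{th2} and read it against $\|\cdot\|_{\{k\}}$, obtaining that for each $k$ and each $\alpha>0$ the finite-valued map
$$
f(p)=\left\| \left|B^{1/p}ZA^{1/p}\right|^{2\alpha p}\right\|_{\{k\}}
$$
is log-convex, hence convex, on $(0,\infty)$. Assuming $Z$ contractive, I would then bound $f$: the top eigenvalue of $\left|B^{1/p}ZA^{1/p}\right|^{2\alpha p}$ is at most $\|A\|_\infty^{2\alpha}\|B\|_\infty^{2\alpha}$ uniformly in $p$ because $\|Z\|_\infty\le 1$, so $f$ is bounded. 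A bounded convex function on $(0,\infty)$ is nonincreasing, whence $f(1)\ge f(p)$ for $p\ge 1$. Replacing $B$ by $B^{p/2}$ and $A$ by $A^p$, exactly as in the proof of Corollary \ref{cor1}, this becomes
$$
\left\| (AZ^*BZA)^{\alpha p}\right\|_{\{k\}}\le\left\| (A^pZ^*B^pZA^p)^{\alpha}\right\|_{\{k\}}.
$$

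Finally I would let $\alpha\to 0^+$. Invoking the limit $\lim_{\alpha\to 0^+}\|X^\alpha\|_{\{k\}}^{1/\alpha}=\bigl\{\prod_{j=1}^k\lambda_j(X)\bigr\}^{1/k}$, valid in $\bB$ once $\lambda_j$ is read off the minmax formula, the displayed inequality collapses to
$$
\prod_{j=1}^k\lambda_j\bigl((AZ^*BZA)^p\bigr)\le\prod_{j=1}^k\lambda_j\bigl(A^pZ^*B^pZA^p\bigr),
$$
and letting $k$ range over $1,2,\ldots$ yields $(AZ^*BZA)^p\prec_{w\!\log}A^pZ^*B^pZA^p$, as claimed.

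The step I expect to be the main obstacle is not any single inequality but the faithful transfer of the two scalar facts, the uniform boundedness estimate and the $\alpha\to 0^+$ limit, to operators that need not be compact. Both hinge on $\lambda_j$ being well defined and suitably continuous on $\bB^+$; the minmax description, the Ky Fan principle supplied by Lemma \ref{lem1}, and the identity $\|\wedge^kA\|_\infty=\prod_{j=1}^k\lambda_j(A)$ from Section 5 are precisely what legitimize these manipulations, so the real work lies in verifying that the matrix estimates persist in this generality rather than in discovering anything new.
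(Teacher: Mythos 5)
Your proposal is correct and follows essentially the same route the paper intends: the text introduces this corollary with ``Exactly as in the matrix case, we can derive the following two corollaries,'' meaning one fixes $t=1$ in the Hilbert-space log-convexity theorem, uses boundedness of the resulting convex function for a contraction $Z$ to get monotonicity and the norm inequality of Corollary 2.2.3, and then passes to the normalized Ky Fan $k$-norms with $\alpha\to 0^+$ as in Corollary 2.2.4. Your explicit decision to run the whole argument through the finite-valued norms $\|\cdot\|_{\{k\}}$, justified by the minmax definition of $\lambda_j$ and the identity $\|\wedge^k A\|_\infty=\prod_{j=1}^k\lambda_j(A)$, is exactly the right way to avoid the $\infty$-value issues and matches what the paper's derivation implicitly does.
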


\vskip 5pt
\begin{cor} Let $A,B\in\bB^+$ and let $Z\in\bB$ be a contraction. Assume that at least one of these three operators is compact. Then,
if $p\ge 1$ and
$f(t)$ is e-convex and nondecreasing,
$$
  {\mathrm{Tr\,}} f((AZ^*BZA)^p) \le  {\mathrm{Tr\,}} f( A^{p}Z^*B^{p}ZA^{p}).
$$
\end{cor}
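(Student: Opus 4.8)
The plan is to deduce this exactly as Corollary \ref{cortrace}(a) was deduced from Corollary \ref{corstriking} in the matrix case, namely by converting the weak log-majorization of the preceding corollary into a trace inequality through the classical majorization principle for e-convex functions. Write $X:=(AZ^*BZA)^p$ and $Y:=A^pZ^*B^pZA^p$. First I would use the compactness hypothesis to give the statement content: since the compact operators form an ideal of $\bB$, if any one of $A$, $B$, $Z$ is compact then both $X$ and $Y$ are compact positive operators, so their eigenvalue sequences $\{\lambda_j(X)\}$ and $\{\lambda_j(Y)\}$, arranged in decreasing order, converge to $0$, and the spectral calculus defining ${\mathrm{Tr\,}}f(X)$ and ${\mathrm{Tr\,}}f(Y)$ makes sense.

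By the preceding corollary we have $X\prec_{w\!\log}Y$, that is $\sum_{j=1}^k\log\lambda_j(X)\le\sum_{j=1}^k\log\lambda_j(Y)$ for every $k$. Setting $a_j:=\log\lambda_j(X)$ and $b_j:=\log\lambda_j(Y)$, these are decreasing real sequences satisfying the weak majorization $a\prec_w b$. Now put $g(s):=f(e^s)$; by hypothesis $g$ is convex (this is precisely e-convexity of $f$) and, being the composition of the nondecreasing $f$ with the increasing exponential, it is nondecreasing. For each fixed $k$ the truncations $(a_1,\dots,a_k)$ and $(b_1,\dots,b_k)$ already obey all the weak-majorization partial-sum inequalities, so the finite-dimensional Tomić--Weyl principle (weak majorization is preserved by a nondecreasing convex function) yields
$$
\sum_{j=1}^k f(\lambda_j(X))=\sum_{j=1}^k g(a_j)\le\sum_{j=1}^k g(b_j)=\sum_{j=1}^k f(\lambda_j(Y)).
$$
Letting $k\to\infty$ gives ${\mathrm{Tr\,}}f(X)\le{\mathrm{Tr\,}}f(Y)$, which is the claim.

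The delicate point, and the only place where infinite dimension really intervenes, is the passage $k\to\infty$: one must check that the two series define the traces consistently in $(-\infty,+\infty]$. This is where compactness is used a second time, guaranteeing $a_j,b_j\to-\infty$ and hence $g(a_j),g(b_j)\to f(0)$, so that the partial sums above are genuinely the truncations of ${\mathrm{Tr\,}}f(\cdot)$ and the inequality survives the limit (e-convex nondecreasing $f$ is bounded below by $f(0)$, so the partial sums are eventually monotone and the limits exist in the extended reals). I expect this limiting and convergence bookkeeping, rather than the majorization step itself, to be the main obstacle; everything finite reduces to the matrix inequality Corollary \ref{cortrace}(a) and the Ky Fan machinery recorded via Lemma \ref{lem1}.
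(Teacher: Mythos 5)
Your proposal is correct and follows essentially the route the paper intends: the statement is obtained from the preceding weak log-majorization corollary in $\bB$ together with the standard majorization principle for e-convex nondecreasing functions (Tomi\'c--Weyl applied to the truncated log-eigenvalue sequences), and the compactness hypothesis is used exactly as you say, to make the eigenvalue sequences and the limit definition ${\mathrm{Tr\,}} f(X)=\lim_{k}\sum_{j=1}^k f(\lambda_j(X))$ meaningful. The limiting bookkeeping you flag is handled correctly: since $f(\lambda_j)\to f(0)$ and $f(\lambda_j)\ge f(0)$, the partial sums are eventually monotone and both traces exist in $[-\infty,\infty]$, so the inequality survives the passage $k\to\infty$.
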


\vskip 5pt
Here, we use the fact that for $X\in\bK^+$ and a nondecreasing continuous function $f:[0,\infty)\to (-\infty,\infty)$, we can define
${\mathrm{Tr\,}} f(X)$ as an element in $[-\infty,\infty]$ by
$$
  {\mathrm{Tr\,}} f(X)=\lim_{k\to \infty} \sum_{j=1}^k f(\lambda_j(X)).
$$

\vskip 5pt
Given a symmetric norm $\|\cdot\|$ on $\bB$, the set where $\|\cdot\|$ takes a finite value is an ideal. We call it the maximal ideal
of $\|\cdot\|$ or the domain of $\|\cdot\|$. From Theorem \ref{th2} we immediately infer our last corollary.

\vskip 5pt
\begin{cor} Let $A,B\in\bB^+$ and $Z\in\bB$. Suppose that $AZB\in\bJ$, the domain of a symmetric norm. Then, for all $q\in(0,1)$, we
also have $|A^qZB^q|^{1/q}\in\bJ$.
\end{cor}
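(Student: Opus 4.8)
The plan is to read the statement directly off Theorem \ref{th2} by specializing its two parameters. I would take $\alpha=1$ and freeze the second variable at $t=1$, so that the theorem supplies the single-variable log-convex map $p\mapsto \left\| \left|A^{1/p}ZB^{1/p}\right|^{p} \right\|$ on $(0,\infty)$. Its value at $p=1$ is $\||AZB|\|=\|AZB\|$, while its value at $p=1/q$ is exactly $\left\| \left|A^{q}ZB^{q}\right|^{1/q} \right\|$, since then the inner exponent is $t/p=q$ and the outer exponent is $\alpha p=1/q$. Because $q\in(0,1)$ gives $1/q>1$, the hypothesis $AZB\in\bJ$ (finiteness at $p=1$) should force finiteness at the larger abscissa $p=1/q$, \emph{provided} that finiteness propagates as $p$ increases with $t$ held fixed. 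This is precisely implication (ii) established inside the proof of Theorem \ref{th2}.

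The first reduction I would make is that membership in $\bJ$ is unaffected when $Z$ is multiplied by a positive scalar, as rescaling merely multiplies the symmetric norm by the same factor. Hence there is no loss in assuming $\|Z\|_{\infty}\le 1$, i.e.\ that $Z$ is a contraction, which is the setting in which implication (ii) was proved. I would then invoke the monotonicity: for a contraction $Z$ and fixed $t$, the map $p\mapsto \left\| \left|A^{t/p}ZB^{t/p}\right|^{\alpha p} \right\|$ is nonincreasing. For the Ky Fan $k$-norms this holds because the map is there finite, log-convex (hence convex), and bounded above uniformly in $p$ — each singular value of $\left|A^{1/p}ZB^{1/p}\right|^{p}$ is at most $\|A\|_{\infty}\|B\|_{\infty}$, independently of $p$ — and a bounded convex function on $(0,\infty)$ is nonincreasing. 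By the Ky Fan principle in $\bB$ this monotonicity transfers to every symmetric norm, so finiteness at $p=1$ yields finiteness for all $p\ge 1$.

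Evaluating at $p=1/q$ then gives $\left\| \left|A^{q}ZB^{q}\right|^{1/q} \right\|<\infty$, i.e.\ $\left|A^{q}ZB^{q}\right|^{1/q}\in\bJ$, as desired. I expect the only genuinely delicate point to be the infinite-dimensional bookkeeping of possibly infinite norm values: one must argue at the level of weak majorization (via the Ky Fan norms, which stay finite on all of $\bB$) rather than directly with the symmetric norm. The quarter-plane description of the finiteness domain $\Omega(p_0,t_0)$ in Theorem \ref{th2}, together with the Ky Fan principle, is exactly what makes the elementary scalar-case monotonicity argument of Corollary \ref{cor1} survive in $\bB$, and this is what justifies the word ``immediately'' preceding the statement.
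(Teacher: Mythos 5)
Your proof is correct and follows essentially the paper's own route: the paper infers the corollary ``immediately'' from the quarter-plane description of the finiteness domain in Theorem \ref{th2}, whose substance at the point $(p,t)=(1/q,1)$, $\alpha=1$, is exactly the implication (ii) (monotonicity in $p$ for a contraction $Z$, proved via boundedness and convexity of the Ky Fan norms and then the Ky Fan principle) that you invoke. The reduction to a contraction by rescaling $Z$ is harmless since $\bJ$ is an ideal, so nothing is missing.
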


\vskip 5pt
Following \cite[Chapter 2]{Simon}, we denote by $\bJ^{(0)}$ the $\|\cdot\|$-closure of the finite rank operators. In most cases
$\bJ=\bJ^{(0)}$, however the strict inclusion $\bJ^{(0)}\subset_{\neq} \bJ$ may happen. We do not know whether we can replace in the
last corollary $\bJ$ by $\bJ^{(0)}$.

We close our article with two simple lemmas and show how the Cauchy-Schwarz inequality for the infinite dimensional case follows from
the matrix case.

\vskip 5pt\noindent
\begin{lemma}\label{lem1} Let $\|\cdot \|$ be a symmetric norm on $\bB$ and let $\{E_n\}_{n=1}^{\infty}$ be an increasing sequence of
finite rank projections in $\bB$, strongly converging to $I$. Then, for all $X\in \bB$, $\| X\|=\lim_n \| E_nXE_n\|$.
\end{lemma}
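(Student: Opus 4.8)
The plan is to establish the two inequalities $\lim_n\|E_nXE_n\|\le\|X\|$ and $\|X\|\le\liminf_n\|E_nXE_n\|$ separately, treating first the positive case, where axiom~(2) of Definition~\ref{def} applies directly. For the upper bound, note that each $E_n$ is a projection, hence a contraction, so axiom~(3) gives $\|E_nXE_n\|\le\|X\|$ for every $n$. Moreover, writing $E_nXE_n=E_n(E_{n+1}XE_{n+1})E_n$ (using $E_nE_{n+1}=E_n$) and applying axiom~(3) again shows that $n\mapsto\|E_nXE_n\|$ is nondecreasing; it therefore converges to some $L\le\|X\|$.

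The positive case is the clean core. For $A\in\bB^+$, the finite rank operators $A^{1/2}E_nA^{1/2}\in\bF^+$ increase strongly to $A$, so axiom~(2) yields $\|A\|=\lim_n\|A^{1/2}E_nA^{1/2}\|$. Setting $P=A^{1/2}E_n$, which is finite rank, we have $A^{1/2}E_nA^{1/2}=PP^*$ and $E_nAE_n=P^*P$; these two finite rank positive operators share the same singular values, hence (by the unitary invariance of the norm on $\bF$, itself a consequence of axiom~(3)) the same norm. Thus $\|A\|=\lim_n\|E_nAE_n\|$, settling the lemma for $A\in\bB^+$.

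For general $X\in\bB$ I would reduce to singular values. First, both $E_nXE_n\to X$ and $E_nX^*E_n\to X^*$ strongly, so the convergence is $*$-strong; since $\|E_nXE_n\|_\infty\le\|X\|_\infty$, a minmax argument gives the lower semicontinuity $\lambda_j(|X|)\le\liminf_n\lambda_j(|E_nXE_n|)$ for each $j$. On the other hand, axiom~(3) applied to the normalized Ky Fan norms gives the weak majorization $|E_nXE_n|\prec_w|X|$, that is $\|E_nXE_n\|_{\{k\}}\le\|X\|_{\{k\}}$ for every $k$. Combining the two, and using that a $\liminf$ of a sum dominates the sum of the $\liminf$s, forces $\|E_nXE_n\|_{\{k\}}\to\|X\|_{\{k\}}$ for every $k$, and hence $\lambda_j(|E_nXE_n|)\to\lambda_j(|X|)$ for every $j$.

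It remains to pass from convergence of all Ky Fan norms to convergence of an arbitrary symmetric norm $\|\cdot\|$, and this is the main obstacle, since pointwise convergence of singular values alone does not control the tail. The remedy is to use the weak majorization $|E_nXE_n|\prec_w|X|$ as a domination: expressing $\|\cdot\|$ through its dual norm (equivalently, by Abel summation over the Ky Fan norms) writes $\|E_nXE_n\|$ as a positive combination of the quantities $\|E_nXE_n\|_{\{k\}}$, each bounded by $\|X\|_{\{k\}}$ and converging to it, so a dominated-convergence argument gives $\liminf_n\|E_nXE_n\|\ge\|X\|$ when $\|X\|<\infty$; the case $\|X\|=\infty$ follows since then already some Ky Fan norm tends to $\infty$. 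Together with the upper bound this yields $\|X\|=\lim_n\|E_nXE_n\|$. The only delicate points are the $*$-strong lower semicontinuity of the singular values and this final tail estimate; the positive case and the monotonicity are routine.
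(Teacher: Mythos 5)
Your upper bound, the monotonicity of $n\mapsto\|E_nXE_n\|$, and the positive case are all correct; the positive case is in fact close in spirit to the paper's argument, both resting on $\|P^*P\|=\|PP^*\|$ together with Definition \ref{def}(2) applied to an increasing finite-rank approximation. The gap lies in the passage to general $X$. Your final step rests on writing $\|Y\|$ ``as a positive combination of the $\|Y\|_{\{k\}}$'' via duality and Abel summation. Two problems. First, this is not literally true: a symmetric norm is a \emph{supremum} of such nonnegative combinations (one for each element of the dual unit ball), not a single one; the argument can be repaired by putting the $\liminf$ inside that supremum, but as written the claim is false. Second, and more seriously, the representation $\|X\|\le\sup_y\sum_j y_j\lambda_j(|X|)$ for an \emph{arbitrary} $X\in\bB$ --- possibly non-compact, possibly of infinite norm --- is exactly the sort of fact about the axiomatic norm of Definition \ref{def} that cannot be taken for granted here. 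Axioms (1)--(3) give this duality on $\bF$ only; extending it to all of $\bB$ requires an argument of the same nature as the lemma itself (for instance $\|X\|=\||X|\|=\lim_n\|\,|X|^{1/2}E_n|X|^{1/2}\|$ by axiom (2), followed by a weak-majorization bound of each term against $\lambda(|X|)$), and none of this appears in your write-up. The whole point of the section is that Definition \ref{def} is not yet known to coincide with the classical gauge-function picture, so invoking that picture at the decisive moment is circular in flavor. Your disposal of the case $\|X\|=\infty$ (``some Ky Fan norm tends to $\infty$'') is also wrong as stated: the normalized Ky Fan norms of a bounded operator are all finite, and relating $\|X\|=\infty$ to the growth of the unnormalized ones again requires the very representation in question.

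For comparison, the paper's proof avoids singular-value asymptotics and duality entirely: since $\|E_nX\|=\|(X^*E_nX)^{1/2}\|$ and $(X^*E_nX)^{1/2}$ is a finite-rank positive operator increasing to $|X|$ (operator monotonicity of $t^{1/2}$), axiom (2) gives $\|E_nX\|\to\|X\|$ directly; the same argument applied on the right gives $\lim_k\|E_nXE_k\|=\|E_nX\|$, and a diagonal extraction combined with your monotonicity observation and axiom (3) squeezes $\|E_pXE_p\|$ to $\|X\|$. This is three lines and works verbatim when $\|X\|=\infty$.
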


\vskip 5pt
\begin{proof} We first show that $\| E_nX\| \to \| X\|$ as $n\to \infty$.  Since  
$\| E_n X\|=\| (X^*E_nX)^{1/2}\|$ and $(X^*E_nX)^{1/2} \nearrow |X|$ by operator monotonicity of $t^{1/2}$, we obtain $\lim_n\|
E_nX\|= \| X\|$ by Definition \ref{def}(2). Similarly,
$\lim_k\| E_nXE_k\|=\| E_nX\|$, and so $\lim_n\| E_nXE_{k(n)}\|=\| X\|$, and thus, by Definition \ref{def}(3), $\lim_p\| E_pXE_p\|=\|
X\|$.
\end{proof}

\vskip 5pt\noindent
\begin{lemma}\label{lem2} Let $\|\cdot \|$ be a symmetric norm on $\bB$ and let $\{E_n\}_{n=1}^{\infty}$ and $\{F_n\}_{n=1}^{\infty}$
be two increasing sequences of finite rank projections in $\bB$, strongly converging to $I$. Then, for all $X\in \bB$, $\|
X^*X\|=\lim_n \| E_nX^*F_nXE_n\|$.
\end{lemma}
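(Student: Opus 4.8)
The plan is to reduce the statement to a monotone-limit computation for positive operators and then to decouple the two coupled indices by an order argument. Write $Y:=X^*X$ and, for each $n$, set $Y_n:=X^*F_nX=|F_nX|^2$. Since $F_n$ has finite rank, so does $F_nX$, hence each $Y_n$ lies in $\bF^+$. Because the $F_n$ increase to $I$ strongly, the sequence $\{Y_n\}$ is increasing (from $F_n\le F_{n+1}$ we get $X^*F_nX\le X^*F_{n+1}X$) and converges strongly to $Y$: indeed $\langle \xi, Y_n\xi\rangle=\|F_nX\xi\|^2\to\|X\xi\|^2=\langle\xi,Y\xi\rangle$ for every $\xi$, and a bounded increasing sequence of positive operators converges strongly to its supremum. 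Thus Definition \ref{def}(2) yields $\|Y\|=\lim_n\|Y_n\|$, a fact I will use for the lower bound.

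For the upper bound, I would note that $Y_n\le Y$ gives $E_nY_nE_n\le E_nYE_n$, so by monotonicity of symmetric norms on positive operators (a consequence of the Ky Fan principle, since $0\le S\le T$ forces $S\prec_w T$) followed by Definition \ref{def}(3) applied to the contraction $E_n$,
$$
\|E_nY_nE_n\|\le\|E_nYE_n\|\le\|Y\|.
$$
Hence $\limsup_n\|E_nY_nE_n\|\le\|Y\|$.

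The heart of the argument is the matching lower bound, and here lies the main obstacle: the two projection sequences are evaluated at the \emph{same} index $n$, so $\|E_nY_nE_n\|$ is a diagonal limit in which both compressions move at once, whereas Lemma \ref{lem1} applies only when the compressed operator is held fixed. I would decouple the indices by monotonicity: fix $m$ and restrict to $n\ge m$, so that $Y_n\ge Y_m$ and hence $E_nY_nE_n\ge E_nY_mE_n$, giving $\|E_nY_nE_n\|\ge\|E_nY_mE_n\|$. Now the operator $Y_m$ is fixed, so Lemma \ref{lem1} gives $\lim_n\|E_nY_mE_n\|=\|Y_m\|$, whence $\liminf_n\|E_nY_nE_n\|\ge\|Y_m\|$ for every $m$. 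Taking the supremum over $m$ and using $\sup_m\|Y_m\|=\lim_m\|Y_m\|=\|Y\|$ from the first paragraph yields $\liminf_n\|E_nY_nE_n\|\ge\|Y\|$.

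Combining the two estimates sandwiches the sequence, so $\lim_n\|E_nY_nE_n\|=\|Y\|=\|X^*X\|$, as claimed. The only points requiring care are the strong monotone convergence $Y_n\nearrow Y$ (to license Definition \ref{def}(2)) and the observation that freezing $F_m$ while letting $E_n$ run is exactly the situation covered by Lemma \ref{lem1}; everything else is routine.
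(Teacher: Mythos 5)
Your proof is correct and follows essentially the same route as the paper's: both arguments decouple the diagonal limit by monotonicity and then combine Lemma \ref{lem1} with Definition \ref{def}(2). The only (immaterial) difference is the order of the two limiting steps — the paper freezes $E_p$ and lets $F_n$ run, invoking Definition \ref{def}(2) on $E_pX^*F_nXE_p$ and then Lemma \ref{lem1} on $X^*X$, whereas you freeze $F_m$ and let $E_n$ run, invoking Lemma \ref{lem1} on $X^*F_mX$ and then Definition \ref{def}(2) on $X^*F_mX\nearrow X^*X$.
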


\vskip 5pt
\begin{proof} By Definition \ref{def}(2)-(3), the map $n\mapsto \| E_nX^*F_nXE_n\|$ is nondecreasing. By Definition \ref{def}(2), for
any integer $p$, its limit is greater than or equal $\| E_pX^*XE_p\|$. By Lemma \ref{lem1}, the limit is precisely $\| X^*X\|$.
\end{proof}

\vskip 5pt
Let $X,Y\in\bB$, let $\|\cdot\|$ be a symmetric norm on $\bB$, and let $\{E_n\}_{n=1}^{\infty}$ be as in the above lemma. Let $F_n$
be the range projection of $YE_n$. We have by Lemma \ref{lem1}
$$
\| X^*Y\| =\lim_n \| E_n X^*Y E_n \| =\lim_n  \| E_n X^*F_nY E_n \|.
$$
Let ${\mathcal{H}}_n$ be the sum of the ranges of $E_n$ and $F_n$. This is a finite dimensional subspace, say
$\dim{\mathcal{H}}_n=d(n)$. Applying the Cauchy-Schwarz inequality for a symmetric norm on $\bM_{d(n)}$, we obtain, thanks to Lemma
\ref{lem2},
\begin{align*}
\| X^*Y\| &=\lim_n  \| E_n X^*F_nY E_n\|_{\bM_{d(n)}}  \\
&\le \lim_n  \| E_n X^*F_nX E_n\|_{\bM_{d(n)}}^{1/2}  \| E_n Y^*F_n YE_n\|_{\bM_{d(n)}}^{1/2}  \\
&= \|  X^*X \|^{1/2} \|  Y^*Y \|^{1/2}. 
\end{align*}
Thus the Cauchy-Schwarz inequality for a symmetric norm on $\bB$ follows from the Cauchy-Schwarz inequality for symmetric norms on
$\bM_{n}$. Of course, the two previous lemmas and this discussion are rather trivial, but we wanted to stress on the fact that
Theorem \ref{th2} is essentially of finite dimensional nature. However, it would be also desirable to extend these results in the
setting of a semifinite von Neumann algebra.

\section{Around this article}

Let $\Phi: \mathrm{M}_n\rightarrow \mathrm{M}_m$ be a positive linear map and let $N\in\bM_n$ be normal. Then
there exists a unitary $V\in\mathrm{M}_m$ such that
\begin{equation}\label{comp1}
|\Phi(N)|\leq \frac{\Phi(|N|)+V\Phi(|N|)V^*}{2}
\end{equation}
and
\begin{equation*}\label{comp2}
|\Phi(N)|\leq \Phi(|N|)+\frac{1}{4} V\Phi(|N|)V^*.
\end{equation*}
These two inequalities and
several consequences are proved in \cite{BL-IJM2}, \cite{BL-russodye}. As an application for the Schur product of two normal matrices
$A, B\in\mathrm{M}_n$, one may infer that
\begin{equation*}\label{inequality for Schur product}
|A\circ B|\leq|A|\circ|B|+\frac{1}{4}V(|A|\circ|B|)V^*
\end{equation*}
for some unitary $V\in\mathrm{M}_n$, where the constant $1/4$ is optimal.
Another interesting consequence of \eqref{comp1} is the following improvement of the Russo-Dye theorem stating that every positive linear map attains
its norm at the identity: if $Z\in\bM_n$ is a contraction, then
\begin{equation*}\label{RDimprov}
 |\Phi(Z)|  \le \frac{\Phi(I) +V\Phi(I)V^*}{2}
\end{equation*}
for some unitary $V\in\bM_n$. Applying this to the Schur product with $S\in\bM_n^+$ yields some exotic eigenvalue inequalities such as
\begin{equation*}\label{eqschur}
\lambda_3(|S\circ Z|) \le \delta_{2}(S)
\end{equation*}
where $\lambda_3(\cdot)$ stands for the third largest eigenvalue, and $\delta_2(\cdot)$ for the second largest diagonal entry.

\section{References of Chapter 2}

{\small
\begin{itemize}

\item[[7\!\!\!]] H.\ Araki, On an inequality of Lieb and Thirring, {\it Let.\ Math.\ Phys}.\ 19 (1990 )167-170.

\item[[8\!\!\!]] K.\ Audenaert,  On the Araki-Lieb-Thirring Inequality, {\it Int.\ J.\ Inf.\ Syst.\ Sci.}\ 4 (2008),  \ 78-83.

\item[[9\!\!\!]] K.\ Audenaert and F.\ Hiai,  Reciprocal Lie-Trotter formula, {\it Linear Mult.\ Algebra}, in \ press

\item[[13\!\!\!]]  R.\ Bhatia, Matrix Analysis, Gradutate Texts in Mathematics, Springer, New-York, 1996.

\item[[15\!\!\!]] R.\ Bhatia, Positive Definite Matrices, Princeton University press, Princeton 2007.

\item[[26\!\!\!]] J.-C. Bourin, Matrix subadditivity inequalities and block-matrices, {\it Internat.\ J.\ Math.}\ 20 (2009), no.\
6, 679--691.

\item[[36\!\!\!]]  J.-C.\ Bourin and E.-Y.\ Lee, Matrix inequalities from a two variables functional, {\it  Internat.\ J.\ Math.}\ 27 (2016), no.\ 9, 1650071, 19 pp.

\item[[38\!\!\!]]   J.-C.\ Bourin and E.-Y. Lee,  Positive linear maps on normal matrices, {\it Internat.\ J.\ Math.}\ 29 (2018), no.\ 12, 1850088, 11 pp.

\item[[39\!\!\!]]   J.-C.\ Bourin and E.-Y. Lee, On the Russo-Dye theorem for positive linear maps. {\it Linear Algebra Appl.}\ 571 (2019), 92--102. 

\item[[54\!\!\!]] J.E. Cohen, Spectral inequalities for matrix exponentials, {\it  Linear Algebra Appl.} 111  (1988)  25-28.

\item[[55\!\!\!]] J.E. Cohen, S. Friedland, T. Kate, and F. Kelly, Eigenvalue inequalities for
products of matrix exponentials, {\it  Linear Algebra Appl.} 45  (1982)  55-95.

\item[[59\!\!\!]] D.J.H.\ Garling, Inequalities  - A journey into linear analysis. Cambridge University Press, Cambridge, 2007.

\item[[66\!\!\!]] F. Hiai,
A generalization of Araki's log-majorization. {\it Linear Algebra Appl.}\ 501 (2016), 1--16.

\item[[67\!\!\!]]  F. Hiai, D. Petz, Introduction to Matrix Analysis and applications. Universitext, Springer, New Delhi, 2014.

\item[[74\!\!\!]] H.\ Kosaki, Arithmetic-geometric mean and related inequalities for operators. {\it J.\ Funct.\ Anal}.\  156  (1998),  no. 2, 429-451.

\item[[89\!\!\!]] B.\ Simon, Trace ideal and their applications, Cambridge University Press, Cambridge, 1979.

\end{itemize}
}


\chapter{Unitary Orbits and Functions}

{\color{blue}{\Large {\bf Unitary orbits of Hermitian operators  with convex or concave functions} \large{\cite{BL-London}}}}

\vskip 10pt\noindent
{\bf Abstract.}
This short but self-contained survey presents a number of elegant matrix/operator inequalities for general convex or concave
functions, obtained with a unitary orbit technique. Jensen, sub or super-additivity type inequalities are considered. Some of them
are substitutes to classical inequalities (Choi, Davis, Hansen-Pedersen) for operator convex or concave functions. Various trace,
norm and determinantal inequalities are derived. Combined with an interesting decomposition for positive semi-definite matrices,
several results for partitioned matrices are also obtained.

{\small\noindent
Keywords: Operator inequalities, positive linear map, trace, unitary orbit, convex function,
symmetric norm, anti-norm.

AMS subjects classification 2010: Primary 15A60, 47A30, 47A60}

\vskip 15pt

\section{Introduction}

\noindent
The functional analytic aspect of
Matrix Analysis is evident when matrices or operators are considered as non-commutative numbers, sequences or functions. In
particular, a significant part of this theory consists in establishing theorems for Hermitian matrices regarded
as generalized real numbers or functions. Two classical trace inequalities may illustrate quite well this assertion. Given two
Hermitian matrices $A$, $B$ and a concave function $f(t)$ defined on the real line,
\begin{equation}\label{VN1}
{\mathrm{Tr\,}} f\left(\frac{ A+B}{2}\right) \ge {\mathrm{Tr\,}}\frac{f(A) +f(B)}{2}
\end{equation}
and, if further $f(0)\ge 0$ and both $A$ and $B$ are positive semi-definite,
\begin{equation}\label{Rot1}
{\mathrm{Tr\,}} f(A+B) \le {\mathrm{Tr\,}} f(A)+ {\mathrm{Tr\,}}f(B).
\end{equation}
The first inequality goes back to von-Neumann in the 1920's, the second is more subtle and has been proved only in 1969 by Rotfel'd
\cite{Rot}. These trace inequalities are matrix versions of obvious scalar inequalities.

The aim of this short survey is to present in a unified and self-contained way two recent significant improvement of the trace
inequalities \eqref{VN1}-\eqref{Rot1} and some of their consequences. Our unitary orbit method is also used to prove some basic facts
such as the triangle inequality for Schatten $p$-norms or Minkowski's determinantal inequality.

By operator, we mean a linear operator on a finite dimensional Hilbert space. We use interchangeably the terms operator and matrix.
Especially, a positive operator means a positive (semi-definite) matrix. Consistently $\bM_n$ denotes the set of operators on a space
of dimension $n$ and $\bM_n^+$ stands for the positive part. As many operator inequalities, our results lie in the scope of matrix
techniques. Of course, there are versions for operators acting on infinite dimensional, separable Hilbert spaces (and operator
algebras); we will indicate the slight modifications which might then be necessary.

The rest of this introduction explains why inequalities with unitary orbits are relevant for inequalities involving functional
calculus of operators such as the concavity-subadditivity statements \eqref{VN1} and \eqref{Rot1}.

That inequalities with unitary orbits naturally occur can be seen from the following two elementary facts. Firstly, If $A,B\in
\bM_n^+$ are such that $A\ge B$ (that is $A-B$ is positive semi-definite) then, whenever $p>1$,
it does not follow in general that $A^p\ge B^p$. However, for any non-decreasing function $f(t)$, the eigenvalues (arranged in
decreasing order and counted with their multiplicities) of $f(A)$ are greater or equal to the corresponding ones of $f(B)$. By the
min-max characterization of eigenvalues, this is equivalent to
 \begin{equation}\label{fact}
f(A) \ge Uf(B)U^*
\end{equation}
for some unitary $U\in\bM_n$. Secondly, if $A\in\bM_n^+$ and $C\in\bM_n$ is a contraction, then we have $ C^*AC \le UAU^*$ for some
unitary $U\in\bM_n$, i.e., the eigenvalues of $ C^*AC$ are smaller or equal to those of $A$. Note also that
$C^*AC=VA^{1/2}CC^*A^{1/2}V^*\le A$ for some unitary $V$, since $TT^*$ and $T^*T$ are unitarily congruent for any operator $T$. The
reading of this paper does not require more knowledge about matrices, see \cite{Bh} for a good background.

The most well-known matrix inequality involving unitary orbits is undoubtedly the triangle inequality due to Thompson \cite{T}: If
$X$ and $Y$ are two operators in $\bM_n$, then
\begin{equation}\label{Thompson}
|X+Y| \le U|X|U^* + V|Y|V^*
\end{equation}
for some unitary $U, V\in \bM_n$. Here $|X|:=(X^*X)^{1/2}$ is the positive part of $X$ occurring in the polar decomposition $X=V|X|$
for some unitary $V$. By letting
$$
X=\begin{pmatrix} A^{1/2}& 0 \\ 0&0 \end{pmatrix}, \qquad Y=\begin{pmatrix} 0& 0\\ B^{1/2}&0 \end{pmatrix}
$$
where $A, B\in\bM_n^+$, the triangle inequality \eqref{Thompson} yields
$
\sqrt{A+B} \le K\sqrt{A}K^* + L\sqrt{B}L^*
$
for some contractions $K, L\in \bM_n$. Thus, for some unitaries
$U, V\in \bM_n$
\begin{equation}\label{sqrt}
\sqrt{A+B} \le U\sqrt{A}U^* + V\sqrt{B}V^*.
\end{equation}
This inequality for the  function $\sqrt{t}$ is a special case of the main theorem of Section 3.

If $f(t)$ is convex on $[0,\infty)$, then \eqref{Rot1} is obviously reversed. In case of $f(t)=t^p$ with exponents $p\in[1,2]$ a much
stronger inequality holds,
\begin{equation}\label{opconv}
\left(\frac{ A+B}{2}\right)^p \le \frac{A^p +B^p}{2},
\end{equation}
this says that $t^p$ is operator convex for $p\in[1,2]$, and this is no longer true if $p>2$. However by making use of \eqref{fact}
and \eqref{opconv} we get, for any $p>1$,
\begin{equation}\label{F-I.1}
\left(\frac{ A+B}{2}\right)^p \le U\frac{A^p +B^p}{2}U^*,
\end{equation}
for some unitary $U\in\bM_n$. In fact, if we assume that \eqref{F-I.1} holds for $p\in[2^n,2^{n+1}]$, $n$ a positive integer, then it
also holds for $2p\in[2^{n+1},2^{n+2}]$ since
$$
\left(\frac{ A+B}{2}\right)^{2p} \le U_0\left(\frac{A^2 +B^2}{2}\right)^{p} U^*_0\le  U_0U_1\frac{A^{2p} +B^{2p}}{2}U_1^*U^*_0 
$$
for some unitary $U_0,\, U_1$.
Inequality \eqref{F-I.1} may serve as a motivation for Section 2. It is worthwhile to notice that, in contrast with the theory of
operator convex functions, our methods are rather elementary.

\section{ A matrix Jensen type inequality }

\subsection{Jensen type inequalities via unitary orbits}

\noindent In this section we present some extension of \eqref{VN1}. The most general one involves a unital positive linear map. A linear
map $\Phi: \bM_n\to \bM_d$ is unital if $\Phi(I)=I$ where $I$ stands for the identity of any order, and $\Phi$ is positive if
$\Phi(A)\in\bM_d^+$ for all $A\in\bM_n^+$. The simplest case is given when $d=1$ by the map
\begin{equation}\label{F-2.1}
A \mapsto \langle h, Ah\rangle
\end{equation}
for some unit vector $h$ (our inner product is linear in the second variable). Restricting this map to the diagonal part (more
generally, to any commutative $*$-subalgebra) of $\bM_n$, we have
\begin{equation}\label{F-2.2}
A \mapsto \langle h, Ah\rangle =\sum_{i=1}^n w_i \lambda_i(A)
\end{equation}
where the $\lambda_i(A)$'s are the eigenvalues of the normal operator $A$ and the $w_i$'s form a probability weight. For this reason,
unital positive linear maps are regarded as non-commutative versions of expectations. If $A$ is Hermitian, and $f(t)$ is a convex
function defined on the real line, the Jensen's inequality may be written in term of the map $\Phi$ in \eqref{F-2.1}-\eqref{F-2.2} as
\begin{equation}\label{Jensen}
f( \langle h, Ah\rangle) \le  \langle h, f(A)h\rangle.
\end{equation}

The map \eqref{F-2.1} is a special case of a compression. Given an $n$-dimensional Hilbert space ${\mathcal{H}}$ and a $d$-dimensional
subspace ${\mathcal{S}} \subset\mathcal{H}$, we have a natural map from the algebra ${\mathrm{L}}(\mathcal{H})$ of operators on
$\mathcal{H}$ onto the algebra ${\mathrm{L}}({\mathcal{S}})$, the compression map onto ${\mathcal{S}}$,
$$
A\mapsto A_{\mathcal{S}} := EA_{|{\mathcal{S}}}, \qquad A\in {\mathrm{L}}(\mathcal{H}),
$$
where $E$ denotes the ortho-projection onto ${\mathcal{S}}$. Identifying ${\mathrm{L}}(\mathcal{H})$ with $\bM_n$ by picking an
orthonormal basis of ${\mathcal{H}}$ and ${\mathrm{L}}(\mathcal{S})$ with $\bM_d$ via an orthonormal basis ${\mathcal{S}}$, we may
consider compressions as unital positive linear maps acting from $\bM_n$ onto $\bM_d$, and they are then represented as
$$
A\mapsto J^*AJ, \qquad A\in\bM_n,
$$
where $J$ is any $n$-by-$d$ matrix such that $J^*J=I$, the identity of order $d$.

In view of \eqref{Jensen} it is quite natural to compare for a convex function $f(A_{\mathcal S})$ and $f(A)_{\mathcal S}$ when $A$
is a Hermitian on $\mathcal{H}$, i.e, a Hermitian in $\bM_n$. In this setting, the Jensen inequality \eqref{Jensen} is adapted by
using unitary orbits on $\mathcal{S}$. This is actually true for any unital positive linear maps, as stated in Theorem 2.1 below.
This is the main result of this section.
The notation  $\bM_n\{\Omega\}$  stands for the Hermitian part of $\bM_n$ with spectra in an interval $\Omega$ of the real line.
\vskip 10pt
\begin{theorem}\label{T-2.1} Let $\Phi : \bM_n\to \bM_d$ be a unital positive linear map, let $f(t)$ be a convex function on an
interval $\Omega$, and let $A,B\in \bM_n\{\Omega\}$. Then, for some unitary $U,\,V\in\bM_d$,
\begin{equation*}
f(\Phi(A))\le \frac{U\Phi( f(A))U^*+V\Phi( f(A))V^*}{2}.
\end{equation*}
If furthermore $f(t)$ is monotone, then we can take $U=V$. The inequality reverses for concave functions.
\end{theorem}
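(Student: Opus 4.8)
The plan is to reduce the statement to a compression and then split the treatment according to whether $f$ is monotone. Note first that only $A$ appears in the conclusion, so it suffices to handle a single $A\in\bM_n\{\Omega\}$; also, since $\Phi$ is positive and unital, $aI\le A\le bI$ forces $aI\le\Phi(A)\le bI$ on $\Omega=[a,b]$, so $\Phi(A)\in\bM_d\{\Omega\}$ and all functional calculus below is legitimate. For the reduction I would restrict $\Phi$ to the commutative $*$-subalgebra $\mathcal A$ generated by $A$ and $I$. A positive unital map on a commutative domain is completely positive, so Stinespring supplies a $*$-representation $\pi$ and an isometry $W:\bC^d\to\bC^N$ with $\Phi|_{\mathcal A}(X)=W^*\pi(X)W$. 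Setting $C=\pi(A)$ (Hermitian, $\mathrm{spec}(C)\subseteq\Omega$) and $\mathcal S=\mathrm{ran}\,W$, and using $\pi(f(A))=f(C)$, I get simultaneously $\Phi(A)=W^*CW=C_{\mathcal S}$ and $\Phi(f(A))=W^*f(C)W=(f(C))_{\mathcal S}$. Thus it is enough to prove, for a Hermitian $C$ and its compression onto a $d$-dimensional subspace $\mathcal S$, that $f(C_{\mathcal S})\le \tfrac12\bigl(U(f(C))_{\mathcal S}U^*+V(f(C))_{\mathcal S}V^*\bigr)$.

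For the monotone case, say $f$ nondecreasing and convex, I would establish the eigenvalue domination $\lambda_j(f(C_{\mathcal S}))\le\lambda_j((f(C))_{\mathcal S})$ for every $j$, after which the principle behind \eqref{fact} (eigenvalue domination is equivalent to domination of one unitary orbit) gives a single unitary $U$ with $f(C_{\mathcal S})\le U(f(C))_{\mathcal S}U^*$, i.e.\ the case $U=V$. The eigenvalue bound comes from min--max: choose a $j$-dimensional $\mathcal M\subseteq\mathcal S$ attaining $\lambda_j(C_{\mathcal S})=\min_{h\in\mathcal M,\,\|h\|=1}\langle h,Ch\rangle$. For each such $h$, monotonicity gives $f(\langle h,Ch\rangle)\ge f(\lambda_j(C_{\mathcal S}))=\lambda_j(f(C_{\mathcal S}))$, while the scalar Jensen inequality \eqref{Jensen} gives $\langle h,f(C)h\rangle\ge f(\langle h,Ch\rangle)$. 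Hence $\min_{h\in\mathcal M}\langle h,f(C)h\rangle\ge\lambda_j(f(C_{\mathcal S}))$, and the min--max formula for $\lambda_j((f(C))_{\mathcal S})$ closes the argument. The nonincreasing case is symmetric, so the monotone assertion of the theorem is settled.

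For the general convex case I would split $f$ at a minimizer $m\in\Omega$ into $f^+$ (equal to $f$ on $[m,\infty)$ and to $f(m)$ below; nondecreasing convex) and $f^-$ (equal to $f$ on $(-\infty,m]$ and to $f(m)$ above; nonincreasing convex), so that $f=f^++f^--f(m)$. Applying the monotone case to each piece yields unitaries $U,V$ with $f^+(C_{\mathcal S})\le U(f^+(C))_{\mathcal S}U^*$ and $f^-(C_{\mathcal S})\le V(f^-(C))_{\mathcal S}V^*$. I expect the \emph{assembly} of these two one-sided orbit bounds into the single symmetric form with the factor $1/2$ to be the main obstacle: naively adding them only produces the weaker estimate $f(C_{\mathcal S})\le U(f^+(C))_{\mathcal S}U^*+V(f^-(C))_{\mathcal S}V^*-f(m)I$, which carries no factor $1/2$ and does not feature $(f(C))_{\mathcal S}$ itself. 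Overcoming this requires genuinely two-sided information, and I would isolate as the technical core a two-unitary domination lemma: if $X,Y\in\bM_d$ are Hermitian and $\lambda(X)\prec_w\lambda(Y)$ together with the extra trace/ordering structure that convexity forces here, then $X\le\tfrac12(UYU^*+VYV^*)$ for some unitaries. The weak majorization $\lambda(f(C_{\mathcal S}))\prec_w\lambda((f(C))_{\mathcal S})$ feeding this lemma would be read off from the two monotone eigenvalue estimates above. Verifying that this majorization is realizable by an average of exactly two conjugates (and not by a larger pinching) is the delicate point, since mere weak majorization is not enough in general.

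Finally, the concave case is immediate: applying the convex result to $-f$ and reversing the inequality gives $f(\Phi(A))\ge\tfrac12\bigl(U\Phi(f(A))U^*+V\Phi(f(A))V^*\bigr)$, again with $U=V$ when $f$ is monotone.
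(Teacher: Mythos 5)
Your reduction to compression maps via Stinespring on the abelian algebra generated by $A$ is exactly the paper's route, and your min--max argument for the monotone case is correct and essentially identical to the one in the text. The gap is in the general convex case, and it is precisely the point you flag as ``the delicate point'': your proposed two-unitary domination lemma is not true at the level of generality you invoke. For Hermitian $X,Y\in\bM_d$, the majorization $X\prec Y$ only guarantees $X\le\frac{1}{d}\sum_{j=1}^d V_jYV_j^*$ (an average of $d$ conjugates, cf.\ Proposition 2.6 of \cite{BL-PAMS} as quoted in Chapter 4), and weak majorization gives even less; nothing forces an average of exactly two conjugates. So the route ``split the function $f=f^++f^--f(m)$, get two one-sided orbit bounds, then upgrade a weak majorization'' cannot be completed as written.

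The idea you are missing is to split the \emph{space} rather than the function. Choose $r$ so that $f$ is monotone on $(-\infty,r]\cap\Omega$ and on $[r,\infty)\cap\Omega$, and decompose $\mathcal{S}=\mathcal{S}'\oplus\mathcal{S}''$ into spectral subspaces of the compression $C_{\mathcal S}$ with $\mathrm{spec}(C_{\mathcal{S}'})\subseteq(-\infty,r]$ and $\mathrm{spec}(C_{\mathcal{S}''})\subseteq[r,\infty)$. Your monotone argument then applies separately on each summand and yields unitaries $U_0$ on $\mathcal{S}'$ and $V_0$ on $\mathcal{S}''$ with $f(C_{\mathcal{S}'})\le U_0\,(f(C))_{\mathcal{S}'}\,U_0^*$ and $f(C_{\mathcal{S}''})\le V_0\,(f(C))_{\mathcal{S}''}\,V_0^*$, so that $f(C_{\mathcal S})\le \mathrm{diag}(U_0,V_0)\,\bigl[(f(C))_{\mathcal{S}'}\oplus (f(C))_{\mathcal{S}''}\bigr]\,\mathrm{diag}(U_0,V_0)^*$. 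The factor $1/2$ and the two unitaries now come for free from the elementary pinching identity
$$
(f(C))_{\mathcal{S}'}\oplus (f(C))_{\mathcal{S}''}
=\frac{1}{2}\left\{ (f(C))_{\mathcal{S}} + W\,(f(C))_{\mathcal{S}}\,W\right\},
\qquad W=\begin{pmatrix} I&0\\0&-I\end{pmatrix},
$$
which expresses the two-block pinching as the average of exactly two unitary conjugates. Taking $U=\mathrm{diag}(U_0,V_0)$ and $V=\mathrm{diag}(U_0,-V_0)$ finishes the compression case, and your Stinespring reduction then gives the general statement. With this replacement for your third paragraph, the rest of your argument (monotone case, concave case by passing to $-f$) stands.
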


The next corollaries list some consequences of the theorem.
This statement for positive linear maps contains several Jensen type inequalities. The simplest one is obtained by taking $\Phi :
\bM_{2n}\to \bM_n$,
\begin{equation*}
\Phi\left(\begin{bmatrix}  A &X \\ Y &B\end{bmatrix}\right) :=\frac{A+B}{2}.
\end{equation*}
With $X=Y=0$, Theorem \ref{T-2.1} then says:

\vskip 5pt
\begin{cor} \label{C-2.2} If $A,B\in \bM_n\{\Omega\}$ and $f(t)$ is a convex function on an interval $\Omega$, then, for some
unitaries $U,\,V\in\bM_n$,
\begin{equation*}
f\left(\frac{A+B}{2}\right)\le \frac{1}{2}\left\{U\frac{f(A)+f(B)}{2}U^*+V\frac{f(A)+f(B)}{2}V^*\right\}.
\end{equation*}
If furthermore $f(t)$ is monotone, then we can take $U=V$.
\end{cor}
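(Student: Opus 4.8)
The plan is to obtain Corollary \ref{C-2.2} as a direct specialization of Theorem \ref{T-2.1}, taking the diagonal-averaging map on $2\times 2$ block matrices as the positive linear map and a block-diagonal matrix as the argument. First I would introduce the candidate map $\Phi:\bM_{2n}\to\bM_n$ defined by
$$
\Phi\left(\begin{bmatrix} P & X \\ Y & Q\end{bmatrix}\right) = \frac{P+Q}{2},
$$
and verify that it satisfies the hypotheses of the theorem. It is manifestly linear; it is unital, since $\Phi(I_{2n})=\tfrac{1}{2}(I_n+I_n)=I_n$; and it is positive, because the diagonal blocks of a positive semidefinite matrix are themselves positive semidefinite, so their average lies in $\bM_n^+$.

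Next I would apply Theorem \ref{T-2.1} to $\Phi$ with the block-diagonal matrix $\widetilde A := A\oplus B\in\bM_{2n}$. Since $A,B\in\bM_n\{\Omega\}$, the matrix $\widetilde A$ is Hermitian and its spectrum is the union of the spectra of $A$ and $B$, hence still contained in $\Omega$; thus $\widetilde A\in\bM_{2n}\{\Omega\}$ and the theorem applies. The two identities I need are $\Phi(\widetilde A)=(A+B)/2$ and, because the functional calculus acts blockwise on a block-diagonal Hermitian matrix, $f(\widetilde A)=f(A)\oplus f(B)$, so that $\Phi(f(\widetilde A))=(f(A)+f(B))/2$. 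Substituting these into the conclusion of Theorem \ref{T-2.1},
$$
f(\Phi(\widetilde A))\le \frac{U\,\Phi(f(\widetilde A))\,U^*+V\,\Phi(f(\widetilde A))\,V^*}{2},
$$
yields precisely the asserted inequality with unitaries $U,V\in\bM_n$, and the monotone clause follows verbatim from the corresponding clause of the theorem, which lets us take $U=V$.

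There is essentially no obstacle here: all the content resides in Theorem \ref{T-2.1}, and the corollary merely packages it for the averaging map. The one point deserving care is the identity $f(\widetilde A)=f(A)\oplus f(B)$, which rests on the observation that the spectral projections of a block-diagonal Hermitian matrix are themselves block-diagonal, so that the functional calculus splits across the two blocks. This is routine but worth recording, since it is exactly what turns $\Phi(f(\widetilde A))$ into the average $(f(A)+f(B))/2$ appearing on the right-hand side of the statement.
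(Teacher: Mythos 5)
Your proof is correct and is exactly the paper's argument: the paper also obtains Corollary \ref{C-2.2} by applying Theorem \ref{T-2.1} to the unital positive linear map $\Phi\bigl(\begin{smallmatrix} P & X \\ Y & Q\end{smallmatrix}\bigr)=\frac{P+Q}{2}$ evaluated at the block-diagonal matrix $A\oplus B$ (i.e.\ with $X=Y=0$). The points you flag — unitality and positivity of $\Phi$, and $f(A\oplus B)=f(A)\oplus f(B)$ — are the same routine verifications the paper leaves implicit.
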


\vskip 5pt
From this corollary we can get a generalization of the famous  Minkowski  inequality,
\begin{equation}\label{Minkowski}
{\det}^{1/n} (A+B) \ge {\det}^{1/n} A + {\det}^{1/n} B, \qquad A,\,B\in \bM_n^+.
\end{equation}
A proof is given after the proof of Corollary \ref{C-2.10} below. Equivalently, \eqref{Minkowski} says that the Minkowski functional
$X\mapsto \det^{1/n}X$ is concave on the positive cone $\bM_n^+$.
Combined with the concave version of Corollary \ref{C-2.2}, this concavity aspect of \eqref{Minkowski} is  improved as:

\vskip 5pt
\begin{cor}\label{cor-minkowski}
If $f(t)$ is a non-negative concave function on an interval   $\Omega$ and if $A,B\in \bM_n\{\Omega\}$, then,
\begin{equation*}
{\det}^{1/n} f\left(\frac{A+B}{2}\right) \ge \frac{ {\det}^{1/n} f(A) + {\det}^{1/n} f(B)}{2}.
\end{equation*}
\end{cor}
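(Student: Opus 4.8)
The plan is to combine the concave version of Corollary~\ref{C-2.2} with the Minkowski determinantal inequality~\eqref{Minkowski}, exploiting the key fact that the determinant is unitarily invariant, so the two unitary conjugations produced by the orbit inequality simply disappear once we pass to ${\det}^{1/n}$. This is what makes the (a priori weaker) unitary-orbit Jensen inequality just as effective as genuine operator concavity for a determinantal conclusion.

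First I would record that, since $f$ is non-negative on $\Omega$ and $A,B,\tfrac{A+B}{2}\in\bM_n\{\Omega\}$, all three operators $f(A)$, $f(B)$ and $f\!\left(\tfrac{A+B}{2}\right)$ lie in $\bM_n^+$; this positivity is exactly what makes ${\det}^{1/n}$, its monotonicity, and the Loewner order meaningful throughout. Next I would apply the concave case of Corollary~\ref{C-2.2} to produce unitaries $U,V\in\bM_n$ with
$$
f\!\left(\frac{A+B}{2}\right) \ge \frac14\left( U f(A) U^* + U f(B) U^* + V f(A) V^* + V f(B) V^*\right).
$$
Then I would invoke the monotonicity of $X\mapsto{\det}^{1/n}X$ on $\bM_n^+$ for the Loewner order to pass to determinants, and apply the superadditivity~\eqref{Minkowski} of ${\det}^{1/n}$ to the four positive summands on the right-hand side. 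Since ${\det}(WCW^*)=\det C$ for every unitary $W$, each of the four terms contributes either ${\det}^{1/n}f(A)$ or ${\det}^{1/n}f(B)$, and the overall factor $\tfrac14$ yields precisely
$$
{\det}^{1/n} f\!\left(\frac{A+B}{2}\right) \ge \frac{{\det}^{1/n}f(A)+{\det}^{1/n}f(B)}{2},
$$
which is the claim.

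I do not expect a genuinely hard step once Corollary~\ref{C-2.2} is available. The one point deserving care is conceptual rather than technical: one cannot use a naive operator-concavity bound $f\!\left(\tfrac{A+B}{2}\right)\ge\tfrac{f(A)+f(B)}{2}$, since that fails for general (non operator-concave) $f$; only the unitary-orbit version holds, and the argument works precisely because those unitaries are invisible to the determinant. I would also be careful to apply the monotonicity of ${\det}^{1/n}$ and the Minkowski superadditivity~\eqref{Minkowski} only to bona fide positive operators, which the hypothesis $f\ge 0$ guarantees.
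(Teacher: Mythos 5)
Your proof is correct and follows exactly the route the paper indicates: apply the concave version of Corollary \ref{C-2.2}, then use the monotonicity, unitary invariance, homogeneity and superadditivity (i.e.\ the concavity on $\bM_n^+$ expressed by \eqref{Minkowski}) of $X\mapsto{\det}^{1/n}X$ to make the unitaries disappear. The paper leaves these details to the reader, and your write-up supplies them accurately.
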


\vskip 5pt
Corollary \ref{C-2.2} deals with the simplest convex combination, the arithmetic mean of two operators. Similar statements holds for
weighted means of several operators. In fact these means may even have operator weights (called $C^*$-convex combinations). An
$m$-tuple $\{Z_i\}_{i=1}^m$ in $\bM_n$ is an isometric column if $\sum_{i=1}^m Z_i^*Z_i =I$. We may then perform the $C^*$-convex
combination $\sum_{i=1}^m Z^*_i A_iZ_i $. If all the $A_i$'s
are Hermitian operators in $\bM_n\{\Omega\}$ for some interval $\Omega$, then so is $\sum_{i=1}^m Z^*_i A_iZ_i $. Hence, Corollary
\ref{C-2.2} is a very special case of the next one.

\vskip 5pt
\begin{cor}\label{C-2.3} Let $\{Z_i\}_{i=1}^m$ be an isometric column in $\bM_n$, let $\{A_i\}_{i=1}^m$ be in $\bM_n\{\Omega\}$ and
let $f(t)$ be a convex function on $\Omega$. Then, for some unitary $U,\,V\in\bM_n$,
\begin{equation*}
f\left( \sum_{i=1}^m Z^*_i A_iZ_i \right) \le \frac{1}{2} \left\{ U\left(\sum_{i=1}^m Z^*_i f(A_i)Z_i \right)U^*+V\left(\sum_{i=1}^m
Z^*_i f(A_i)Z_i\right) V^*\right\}.
\end{equation*}
If furthermore $f(t)$ is monotone, then we can take $U=V$. The inequality reverses for concave functions.
\end{cor}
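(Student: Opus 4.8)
The plan is to realize the $C^*$-convex combination $\sum_{i=1}^m Z_i^* A_i Z_i$ as the image of a single operator under a unital positive linear map, and then invoke Theorem \ref{T-2.1} directly. The guiding observation is that an isometric column is precisely the data of a compression by an isometry acting on an orthogonal direct sum, so the seemingly more general statement of Corollary \ref{C-2.3} is in fact a repackaging of the single-operator theorem.

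First I would assemble the block objects. Set $A := A_1 \oplus \cdots \oplus A_m \in \bM_{mn}$, the block-diagonal operator, and
$$
Z := \begin{pmatrix} Z_1 \\ \vdots \\ Z_m \end{pmatrix} \in \bM_{mn,n}.
$$
The isometric-column hypothesis $\sum_{i=1}^m Z_i^* Z_i = I$ says exactly that $Z^* Z = I$, so $Z$ is an isometry. Since each $A_i$ is Hermitian with spectrum in $\Omega$, the direct sum $A$ again lies in $\bM_{mn}\{\Omega\}$, and Theorem \ref{T-2.1} will be applicable.

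Next, I would define $\Phi : \bM_{mn} \to \bM_n$ by $\Phi(X) := Z^* X Z$. Because $Z^* Z = I$ this map is unital, and it is manifestly positive (if $X = Y^*Y$ then $Z^*XZ = (YZ)^*(YZ)\ge 0$), so Theorem \ref{T-2.1} applies to $\Phi$, $f$, and the operator $A$. A direct block computation gives $\Phi(A) = Z^* A Z = \sum_{i=1}^m Z_i^* A_i Z_i$, and, because functional calculus respects orthogonal direct sums, $f(A) = f(A_1) \oplus \cdots \oplus f(A_m)$, whence $\Phi(f(A)) = \sum_{i=1}^m Z_i^* f(A_i) Z_i$. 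Substituting these two identities into the conclusion of Theorem \ref{T-2.1} yields exactly the claimed inequality with unitaries $U, V \in \bM_n$; the monotone case $U=V$ and the reversal for concave $f$ are inherited verbatim.

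There is no substantial obstacle here, as the content lies entirely in the earlier theorem. The only points needing care are the two structural identities: that $Z^*Z = I$ is equivalent to the isometric-column condition, which is what makes $\Phi$ genuinely unital, and that $f$ commutes with the orthogonal direct sum, which is what keeps $\Phi(f(A))$ equal to the desired sum rather than something larger. Both are immediate from the block-diagonal form, so the reduction to Theorem \ref{T-2.1} is clean, and Corollary \ref{C-2.2} is then recovered as the special case $m=2$ with $Z_1 = Z_2 = I/\sqrt{2}$.
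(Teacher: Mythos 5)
Your reduction is correct and is exactly the route the paper intends: Corollary \ref{C-2.3} is obtained by applying Theorem \ref{T-2.1} to the unital positive map $\Phi(X)=Z^*XZ$ on $\bM_{mn}$ (with $Z$ the column isometry) and the block-diagonal operator $A_1\oplus\cdots\oplus A_m$, whose functional calculus splits blockwise. The verification that $\Phi$ is unital, that $\Phi(A)=\sum_i Z_i^*A_iZ_i$, and that $\Phi(f(A))=\sum_i Z_i^*f(A_i)Z_i$ is exactly what is needed, and the monotone and concave cases are inherited as you say.
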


\vskip 5pt
If all the $A_i$'s are zero except the first one, we obtain an inequality involving a congruence $Z_1^*A_1Z_1$ with a contraction
$Z_1$ (that is $Z_1^*Z_1 \le I$). We state the concave version in the next corollary. It is a matrix version of the basic inequality
$f(za) \ge zf(a)$ for a concave function with $f(0)\ge 0$ and real numbers $z,a$ with $z\in[0,1]$.

\vskip 5pt
\begin{cor}\label{C-2.5} Let $f(t)$ be a concave function on an interval $\Omega$ with $0\in\Omega$ and $f(0)\ge 0$, let $A\in
\bM_n\{\Omega\}$ and let $Z$ be a contraction in $\bM_n$. Then, for some unitaries $U,\,V\in\bM_n$,
\begin{equation*}
f\left(Z^*AZ\right)\ge \frac{ U\left( Z^*f(A)Z \right)U^*+V\left(Z^* f(A)Z\right) V^*}{2}.
\end{equation*}
If furthermore $f(t)$ is monotone, then we can take $U=V$.
\end{cor}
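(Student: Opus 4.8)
The plan is to derive this corollary directly from the concave version of Corollary \ref{C-2.3}, by completing $Z$ into an isometric column in which $Z$ occupies the first slot and all but the first operator are taken to be zero, exactly as anticipated in the remark preceding the statement.

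First I would build the isometric column. Since $Z$ is a contraction, $I - Z^*Z \ge 0$, so I may set $Z_1 := Z$ and $Z_2 := (I - Z^*Z)^{1/2}$. Then
$$
Z_1^*Z_1 + Z_2^*Z_2 = Z^*Z + (I - Z^*Z) = I,
$$
so $\{Z_1, Z_2\}$ is an isometric column in $\bM_n$.

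Next I would apply the concave version of Corollary \ref{C-2.3} to this column with $A_1 := A$ and $A_2 := 0$; this is legitimate because $0 \in \Omega$ guarantees $0 \in \bM_n\{\Omega\}$, and $f$ is concave on $\Omega$. Evaluating the two $C^*$-convex combinations gives $\sum_i Z_i^* A_i Z_i = Z^*AZ$ together with $\sum_i Z_i^* f(A_i) Z_i = Z^* f(A) Z + f(0)\, Z_2^* Z_2$, so the corollary yields, for some unitaries $U,V\in\bM_n$,
\begin{equation*}
f(Z^*AZ) \ge \frac{1}{2}\left\{ U\bigl(Z^*f(A)Z + f(0)Z_2^*Z_2\bigr)U^* + V\bigl(Z^*f(A)Z + f(0)Z_2^*Z_2\bigr)V^*\right\}.
\end{equation*}

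The one point requiring care --- and the only place where the hypothesis $f(0)\ge 0$ enters --- is the disposal of the spurious term $f(0)Z_2^*Z_2$. Since $f(0)\ge 0$ and $Z_2^*Z_2 \ge 0$, we have $f(0)Z_2^*Z_2 \ge 0$, whence $U\bigl(f(0)Z_2^*Z_2\bigr)U^* \ge 0$ and likewise for $V$. Discarding these nonnegative contributions only decreases the right-hand side, so the displayed inequality implies $f(Z^*AZ) \ge \frac{1}{2}\{U(Z^*f(A)Z)U^* + V(Z^*f(A)Z)V^*\}$, which is the asserted inequality. Finally, when $f$ is monotone Corollary \ref{C-2.3} permits the choice $U=V$, and the same term-dropping argument then delivers the monotone conclusion with $U=V$ retained.
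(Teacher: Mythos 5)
Your proof is correct and follows exactly the route the paper intends: the remark preceding the corollary ("If all the $A_i$'s are zero except the first one...") indicates that Corollary \ref{C-2.5} is obtained from the concave version of Corollary \ref{C-2.3} by completing the contraction $Z$ to an isometric column and taking the remaining $A_i$'s to be zero, with $f(0)\ge 0$ used to discard the term $f(0)Z_2^*Z_2$ precisely as you do. Nothing is missing.
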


For a sub-unital positive linear map $\Phi$, i.e., $\Phi(I)\le I$, it is easy to see that Theorem \ref{T-2.1} can be extended in the convex
case when $f(0)\le 0$, and in the concave case, when $f(0)\ge 0$ (this sub-unital version is proved in the proof of Corollary \ref{C-2.7}
below). This also contains Corollary \ref{C-2.5}. The above results contains some inequalities for various norms and functionals, as
noted in some of the corollaries and remarks below. For instance we have the following Jensen trace inequalities.

\vskip 5pt
\begin{cor}\label{C-2.6} Let $f(t)$ be a convex function defined on an interval $\Omega$, let $\{A_i\}_{i=1}^m$ be in
$\bM_n\{\Omega\}$, and let $\{Z_i\}_{i=1}^m$ be an isometric column in $\bM_n$. Then,
\begin{equation}\label{HP}
{\mathrm{Tr\,}} f\left( \sum_{i=1}^m Z^*_i A_iZ_i \right)\le {\mathrm{Tr\,}} \sum_{i=1}^m Z^*_i f(A)_iZ_i.
\end{equation}
If further $0\in\Omega$ and $f(0)\le 0$, we also have
\begin{equation}\label{BK}
{\mathrm{Tr\,}} f\left( Z^*_1 A_1Z_1 \right)\le {\mathrm{Tr\,}} Z^*_1 f(A_1)Z_1.
\end{equation}
\end{cor}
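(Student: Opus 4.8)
The plan is to obtain both trace inequalities as direct trace consequences of the unitary-orbit inequality already furnished by Corollary \ref{C-2.3}. The two elementary facts I would invoke are that the trace is unitarily invariant and that it is monotone for the L\"owner order: if $X\le Y$ then ${\mathrm{Tr\,}}X\le{\mathrm{Tr\,}}Y$, because ${\mathrm{Tr\,}}(Y-X)\ge 0$ whenever $Y-X\ge 0$.

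For \eqref{HP}, set $C=\sum_{i=1}^m Z_i^*f(A_i)Z_i$. Corollary \ref{C-2.3} produces unitaries $U,V\in\bM_n$ with
$$
f\Bigl(\sum_{i=1}^m Z_i^*A_iZ_i\Bigr)\le \frac{UCU^*+VCV^*}{2}.
$$
Taking traces and using ${\mathrm{Tr\,}}\,UCU^*={\mathrm{Tr\,}}\,VCV^*={\mathrm{Tr\,}}\,C$ collapses the right-hand average to ${\mathrm{Tr\,}}\,C$, which is exactly \eqref{HP}.

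For \eqref{BK} I would reduce the sub-unital (contraction) situation to the isometric-column case \eqref{HP} by completing the column. Given the contraction $Z_1$, put $Z_2:=(I-Z_1^*Z_1)^{1/2}$, so that $\{Z_1,Z_2\}$ is an isometric column, and put $A_2:=0$, which lies in $\bM_n\{\Omega\}$ precisely because $0\in\Omega$. Applying \eqref{HP} to this pair gives
$$
{\mathrm{Tr\,}}\,f(Z_1^*A_1Z_1)\le{\mathrm{Tr\,}}\,Z_1^*f(A_1)Z_1+f(0)\,{\mathrm{Tr\,}}\,Z_2^*Z_2.
$$
Since $Z_2^*Z_2=I-Z_1^*Z_1\ge 0$ we have ${\mathrm{Tr\,}}\,Z_2^*Z_2\ge 0$, while $f(0)\le 0$ by hypothesis; hence the surplus term is nonpositive and may be dropped, yielding \eqref{BK}.

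There is no genuine obstacle here, as the statement is an immediate trace corollary of the orbit inequality. The only points needing care are that $0\in\Omega$ is used twice in the completion step (to place $A_2=0$ in $\bM_n\{\Omega\}$ and, via the $C^*$-convex-combination property noted earlier, to keep the spectrum of $Z_1^*A_1Z_1$ inside $\Omega$), and that $f(0)\le 0$ is exactly what licenses discarding the extra term. The concave analogue follows identically by reversing the inequalities and requiring $f(0)\ge 0$.
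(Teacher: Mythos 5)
Your proof is correct and follows essentially the same route the paper intends: \eqref{HP} is obtained by taking traces in the unitary-orbit inequality of Corollary \ref{C-2.3} and using unitary invariance of the trace, and \eqref{BK} follows by the same column-completion with $A_2=0$ (requiring $0\in\Omega$) that the paper itself uses to pass from Corollary \ref{C-2.3} to the contraction case, with $f(0)\le 0$ licensing the removal of the surplus term $f(0)\,{\mathrm{Tr\,}}(I-Z_1^*Z_1)$. No gaps.
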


\vskip 5pt
A typical example of positive linear map on $\bM_n$ is the Schur multiplication map $A\mapsto Z\circ A$ with an operator
$Z\in\bM_n^+$. Here $Z\circ A$ is the entrywise product of $A$ and $Z$. The fact that the Schur multiplication with $Z\in\bM_n^+$ is
a positive linear map can be easily checked by restricting the Schur product to positive rank ones operators.
Hence, Theorem \ref{T-2.1} contains results for the Schur product. In particular, the sub-unital version yields:

\vskip 5pt
\begin{cor}\label{C-2.7} Let $f(t)$ be a concave function on an interval $\Omega$ with $0\in\Omega$ and $f(0)\ge 0$, and let $A\in
\bM_n\{\Omega\}$. If $Z\in\bM_n^+$ has diagonal entries all less than or equal to 1, then, for some unitaries $U,\,V\in\bM_n$,
\begin{equation*}
f\left(Z\circ A\right)\ge \frac{ U\left( Z\circ f(A) \right)U^*+V\left(Z\circ f(A)\right) V^*}{2}.
\end{equation*}
If furthermore $f(t)$ is monotone, then we can take $U=V$.
\end{cor}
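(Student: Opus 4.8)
The plan is to derive Corollary \ref{C-2.7} from the sub-unital, concave form of Theorem \ref{T-2.1}, applied to the Schur multiplication map $\Phi(X)=Z\circ X$. First I would verify the two structural facts about this $\Phi$. It is positive and linear: restricting the Schur product to positive rank-one operators shows $Z\circ(xx^*)\ge 0$ whenever $Z\in\bM_n^+$, and the general case follows by linearity and the spectral decomposition. Moreover $\Phi(I)=Z\circ I=\diag(z_{11},\ldots,z_{nn})$, so the hypothesis that every diagonal entry of $Z$ is at most $1$ is exactly the assertion that $\Phi$ is sub-unital, $\Phi(I)\le I$. Thus the corollary reduces to proving the concave analogue of Theorem \ref{T-2.1} for sub-unital maps, which is the statement the preceding text promised to establish here.

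To prove that analogue I would dilate $\Phi$ to a genuinely unital positive map. Set $D:=I-\Phi(I)\ge 0$ and define $\tilde\Phi:\bM_{n+1}\to\bM_d$, depending only on the diagonal blocks, by
$$
\tilde\Phi\left(\begin{bmatrix} X & y \\ w & \lambda \end{bmatrix}\right) := \Phi(X) + \lambda D,
\qquad X\in\bM_n,\ \lambda\in\bC .
$$
On a positive semidefinite input the diagonal blocks $X$ and $\lambda$ are themselves positive, so $\Phi(X)\ge 0$ and $\lambda D\ge 0$, whence $\tilde\Phi$ is positive; and it is unital since $\tilde\Phi(I_{n+1})=\Phi(I)+D=I$. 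Now I would feed in the Hermitian operator $A\oplus 0\in\bM_{n+1}$; because $0\in\Omega$, its spectrum still lies in $\Omega$, so the concave form of Theorem \ref{T-2.1} supplies unitaries $U,V\in\bM_d$ with
$$
f\bigl(\tilde\Phi(A\oplus 0)\bigr) \ge \frac{U\,\tilde\Phi\bigl(f(A\oplus 0)\bigr)\,U^* + V\,\tilde\Phi\bigl(f(A\oplus 0)\bigr)\,V^*}{2}.
$$

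It then remains to unwind both sides. On the left $\tilde\Phi(A\oplus 0)=\Phi(A)$. On the right $f(A\oplus 0)=f(A)\oplus f(0)$, so $\tilde\Phi(f(A\oplus 0))=\Phi(f(A))+f(0)D$. Here the two hypotheses $f(0)\ge 0$ and $D\ge 0$ combine to give $f(0)D\ge 0$, hence $\tilde\Phi(f(A\oplus 0))\ge\Phi(f(A))$; conjugation by $U$ and by $V$ preserves this order, so discarding the nonnegative term $f(0)D$ only decreases the right-hand side and yields
$$
f(\Phi(A)) \ge \frac{U\,\Phi(f(A))\,U^* + V\,\Phi(f(A))\,V^*}{2}.
$$
Specializing $\Phi$ to Schur multiplication gives the corollary, and when $f$ is monotone Theorem \ref{T-2.1} already allows $U=V$, a property that survives the dilation. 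The only genuinely delicate point is the bookkeeping in the dilation: making $\tilde\Phi$ simultaneously positive and unital, checking that $A\oplus 0$ keeps its spectrum in $\Omega$ (which forces the hypothesis $0\in\Omega$), and verifying that the sign of the discarded term $f(0)D$ points the correct way for a \emph{lower} bound in the concave case. Everything else is routine.
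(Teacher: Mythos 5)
Your proof is correct and follows essentially the same route as the paper: the same one-dimensional dilation of the sub-unital map $\Psi(X)=Z\circ X$ to the unital map $X\oplus\lambda\mapsto\Psi(X)+\lambda(I-\Psi(I))$, the same application of the concave form of Theorem \ref{T-2.1} to $A\oplus 0$, and the same discarding of the nonnegative term $f(0)(I-\Psi(I))$ using $f(0)\ge 0$. Your write-up is in fact slightly cleaner than the paper's, which contains a typo ($\Psi(A)$ where $\Psi(f(A))$ is meant) at the corresponding step.
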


\vskip 5pt
\begin{proof} Let $\Psi:\bM_n\to M_d$ be a positive linear map and suppose that $\Psi$ is sub-unital, i.e., $\Psi(I)=C$ for some
contraction $C\in\bM_d^+$. Then the map $\Phi:\bM_{n+1}\to M_d$,
$$
\begin{bmatrix}
A & \vdots \\
\hdots & b
\end{bmatrix}
\mapsto
\Psi(A) + b(I-C)
$$
is  unital. Thus, by Theorem  \ref{T-2.1}, If $A\in\bM_n\{{\Omega}\}$ where $\Omega$ contains $0$ and if $f(t)$ is concave on $\Omega$,
$$
f\left(\Phi
(A \oplus 0)
\right) \ge
\frac{U\Phi(f(A\oplus 0))U^*+ V\Phi(f(A\oplus 0))V^*}{2}
$$
for some unitary $U,V\in\bM_d$,
equivalently,
$$
f(\Psi(A)) \ge
\frac{U\{\Psi(A)+f(0)(I-C)\}U^*+ V\{\Psi(A)+f(0)(I-C)\}V^*}{2}
$$
 hence, if further $f(0)\ge 0$, the sub-unital form of Theorem \ref{T-2.1} :
$$
f(\Psi(A)) \ge
\frac{U\Psi(f(A))U^*+ V\Psi(f(A))V^*}{2}.
$$
Applying this to the sub-unital map $\Psi : A\mapsto Z\circ A$ yields the corollary.  \end{proof}

\vskip 5pt Corollary \ref{C-2.7} obviously contains a trace inequality companion to \eqref{BK}. By making use of \eqref{Minkowski} we
also have the next determinantal inequality.

\vskip 5pt
\begin{cor}\label{C-2.8} Let $f(t)$ be a non-negative concave function on an interval $\Omega$, $0\in\Omega$, and let $A\in
\bM_n\{\Omega\}$. If $Z\in\bM_n^+$ has diagonal entries all less than or equal to 1, then,
\begin{equation}
\det f\left(Z\circ A\right)\ge \det Z\circ f(A).
\end{equation}
\end{cor}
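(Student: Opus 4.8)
The plan is to combine the unitary-orbit Schur inequality of Corollary \ref{C-2.7} (in its concave form) with Minkowski's determinantal inequality \eqref{Minkowski}, exactly in the spirit of the proof of Corollary \ref{cor-minkowski}. First I would do the positivity bookkeeping. Since $f\ge 0$ on $\Omega$ and $A\in\bM_n\{\Omega\}$, the matrix $f(A)$ lies in $\bM_n^+$, and hence so does its Schur product $Z\circ f(A)$ with $Z\in\bM_n^+$, by the Schur product theorem. Moreover, the map $A\mapsto Z\circ A$ is positive and sub-unital (its value at $I$ is the diagonal of $Z$, which is $\le I$), so together with $0\in\Omega$ the spectrum of $Z\circ A$ stays in $\Omega$; thus $f(Z\circ A)$ is defined by functional calculus and, because $f\ge 0$, also lies in $\bM_n^+$.

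Next I would invoke Corollary \ref{C-2.7} to produce unitaries $U,V\in\bM_n$ with
$$
f(Z\circ A)\ge \frac{U(Z\circ f(A))U^* + V(Z\circ f(A))V^*}{2}.
$$
Taking determinants and using the monotonicity of $\det$ on the positive cone (if $X\ge Y\ge 0$ then $\det X\ge \det Y$), the problem reduces to bounding the determinant of the right-hand side from below by $\det(Z\circ f(A))$.

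Finally, setting $M:=Z\circ f(A)\in\bM_n^+$, I would apply Minkowski's inequality \eqref{Minkowski} to the two summands $\tfrac12 UMU^*$ and $\tfrac12 VMV^*$:
$$
{\det}^{1/n}\!\left(\frac{UMU^*+VMV^*}{2}\right)\ge {\det}^{1/n}\!\left(\tfrac12 UMU^*\right)+{\det}^{1/n}\!\left(\tfrac12 VMV^*\right).
$$
Since $U$ and $V$ are unitary, each term on the right equals $\tfrac12{\det}^{1/n}M$, so their sum is exactly ${\det}^{1/n}M$. Raising to the $n$-th power gives $\det\bigl(\tfrac12(UMU^*+VMV^*)\bigr)\ge \det M=\det(Z\circ f(A))$, and chaining this with the previous step yields $\det f(Z\circ A)\ge \det(Z\circ f(A))$.

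The argument is essentially routine, the only substantive input being the pairing of Corollary \ref{C-2.7} with \eqref{Minkowski}. The single point to watch is that the factor $\tfrac12$ must be distributed across the average \emph{before} invoking Minkowski, so that the two determinant contributions recombine to the full ${\det}^{1/n}M$ rather than to something smaller; this is precisely what makes the averaging over the two unitaries $U$ and $V$ harmless for the determinant. I do not expect any genuine obstacle beyond this.
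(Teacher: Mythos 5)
Your proof is correct and follows exactly the route the paper intends: Corollary \ref{C-2.7} gives $f(Z\circ A)\ge \tfrac12\{U(Z\circ f(A))U^*+V(Z\circ f(A))V^*\}$, and Minkowski's inequality \eqref{Minkowski} applied to the two halves recovers $\det^{1/n}(Z\circ f(A))$ on the right. The positivity bookkeeping (sub-unitality of $X\mapsto Z\circ X$ keeping the spectrum in $\Omega$, and $Z\circ f(A)\ge 0$ by the Schur product theorem) is also handled correctly.
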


\vskip 5pt
Some other consequences of Theorem \ref{T-2.1} are given below in Corollary \ref{C-2.10} and in Subsection 2.2, as well as references and
related results.

We turn to the proof of Theorem  \ref{T-2.1}. Thanks to the next lemma, we will see that it is enough to prove Theorem  \ref{T-2.1} for compressions.
By an abelian $*$-subalgebra $\mathcal{A}$ of $\bM_m$ we mean a subalgebra containing the identity of $\bM_m$ and closed under the
involution $A\mapsto A^*$. Any abelian $*$-subalgebra $ {\mathcal{A}}$ of $\bM_m$ is spanned by a total family of ortho-projections,
i.e., a family of mutually orthogonal projections adding up to the identity. A representation $\pi : \mathcal{A}\to \bM_n$ is a
unital linear map such $\pi(A^*B)=\pi^*(A)\pi(B)$.

\vskip 5pt
 \begin{lemma}\label{L-2.9}  Let $\Phi$ be a unital  positive  map from
 an abelian $*$-subalgebra ${\mathcal{A}}$ of $\bM_n$
to the algebra $\bM_m$ identified as ${\mathrm{L}}({\mathcal{S}})$. Then, there exists a space ${\mathcal{H}}\supset{\mathcal{S}}$,
$\dim {\mathcal{H}}\le nm$, and a
representation
$\pi$ from ${\mathcal{A}}$ to ${\mathrm{L}}({\mathcal{H}})$
such that
$$
\Phi (X)=(\pi(X))_{\mathcal{S}}.
$$
\end{lemma}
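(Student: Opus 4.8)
Given a unital positive map $\Phi$ from an abelian $*$-subalgebra $\mathcal{A}$ of $\bM_n$ into $\bM_m = \mathrm{L}(\mathcal{S})$, one produces a space $\mathcal{H} \supset \mathcal{S}$ with $\dim\mathcal{H} \le nm$ and a representation $\pi : \mathcal{A} \to \mathrm{L}(\mathcal{H})$ such that $\Phi(X) = (\pi(X))_{\mathcal{S}}$. This is essentially a Naimark/Stinespring dilation tailored to the abelian, finite-dimensional setting, and the plan is to exploit the structure of $\mathcal{A}$ directly rather than invoke the general dilation theorem.

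Let me think about this carefully.

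The key structural fact is that $\mathcal{A}$, being an abelian $*$-subalgebra containing the identity of $\bM_n$, is spanned by a total family of mutually orthogonal projections $\{P_1, \dots, P_r\}$ with $\sum_{i=1}^r P_i = I$. Every element $X \in \mathcal{A}$ is then a linear combination $X = \sum_i x_i P_i$ with scalars $x_i$. So the map $\Phi$ is completely determined by the $r$ operators $\Phi(P_i) \in \bM_m$. Positivity of $\Phi$ forces each $\Phi(P_i) \ge 0$, and unitality gives $\sum_i \Phi(P_i) = I_{\mathcal{S}}$. In other words, $\{\Phi(P_i)\}_{i=1}^r$ is a resolution of the identity on $\mathcal{S}$ by positive operators — a POVM. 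The task reduces to dilating this POVM to a genuine projection-valued measure, which is exactly Naimark's dilation theorem.

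Now let me think about how to prove it concretely with the dimension bound.

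The plan is as follows. First I would record the decomposition $\mathcal{A} = \mathrm{span}\{P_i\}$ and reduce the problem to dilating the positive operators $A_i := \Phi(P_i)$ summing to $I_{\mathcal{S}}$. For the dilation, the natural construction is to set $\mathcal{H} := \bigoplus_{i=1}^r \mathcal{S}_i$, where each $\mathcal{S}_i$ is a copy of (a subspace of) $\mathcal{S}$ carrying the range of $A_i^{1/2}$; one embeds $\mathcal{S}$ into $\mathcal{H}$ via the isometry $V : s \mapsto (A_1^{1/2}s, \dots, A_r^{1/2}s)$, which is indeed isometric precisely because $\sum_i A_i = I_{\mathcal{S}}$. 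On $\mathcal{H}$ one defines $\pi(P_i)$ to be the orthogonal projection onto the $i$-th summand $\mathcal{S}_i$; since these are mutually orthogonal and sum to the identity of $\mathcal{H}$, extending by $\pi(X) = \sum_i x_i \pi(P_i)$ gives a genuine $*$-representation of $\mathcal{A}$. Identifying $\mathcal{S}$ with its image $V\mathcal{S} \subset \mathcal{H}$, a direct computation gives $V^* \pi(P_i) V = A_i = \Phi(P_i)$, whence $(\pi(X))_{\mathcal{S}} = V^*\pi(X)V = \Phi(X)$ for all $X \in \mathcal{A}$, which is the desired conclusion.

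For the dimension count, since $r \le n$ (the number of orthogonal projections adding to the identity of $\bM_n$ is at most $n$) and each summand $\mathcal{S}_i$ has dimension at most $m = \dim\mathcal{S}$, we get $\dim\mathcal{H} \le rm \le nm$, matching the stated bound. The main obstacle, and the only genuinely delicate point, is verifying that $\pi$ is multiplicative — i.e.\ that $\pi(P_i P_j) = \pi(P_i)\pi(P_j)$ — which here is automatic because the $P_i$ are orthogonal projections ($P_iP_j = \delta_{ij}P_i$) and the $\pi(P_i)$ were constructed as orthogonal projections onto orthogonal summands. Everything else is the routine check that $V$ is an isometry and that compressing the dilation recovers $\Phi$; these amount to unwinding the block-diagonal structure of $\pi$ against the column isometry $V$.
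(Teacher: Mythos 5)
Your proof is correct and is essentially the paper's own argument: both reduce to the family $A_i=\Phi(P_i)$ of positive operators summing to $I_{\mathcal S}$ and perform the standard Naimark dilation on $\oplus^r\mathcal S$ using the column isometry $s\mapsto (A_1^{1/2}s,\dots,A_r^{1/2}s)$. The only cosmetic difference is that the paper completes this block row to a unitary $V$ on $\oplus^n\mathcal S$ and conjugates the coordinate projections by $V$ (keeping $\mathcal S$ as the first summand), whereas you keep the coordinate projections and identify $\mathcal S$ with the image of the isometry; the two are unitarily equivalent descriptions of the same dilation.
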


 \vskip 5pt
\begin{proof} ${\mathcal{A}}$ is generated by a total family of $k$
projections $E_i$, $i=1,\dots ,k$ (say $E_i$ are rank one, that is
$k=n$). Let $A_i=\Phi(E_i)$, $i=1,\dots ,n$. Since $\sum_{i=1}^n A_i$ is
the identity on ${\mathcal{S}}$, we can find operators $X_{i,j}$ such
that
$$
V=
\begin{pmatrix}
A_1^{1/2}&\dots &A_n^{1/2} \\
X_{1,1} &\dots &X_{n,1} \\
\vdots &\ddots &\vdots \\
X_{1,n-1} &\dots &X_{n,n-1}
\end{pmatrix}
$$
is a unitary operator on ${\mathcal{F}}=\oplus^n{\mathcal{S}}$. Let $R_i$ be the block matrix with the same $i$-th column than $V$
and with all other entries $0$. Then, setting $P_i=R_iR_i^*$, we obtain a total family of projections on ${\mathcal{F}}$ satifying
$A_i=(P_i)_{\mathcal{S}}$. We define $\pi$ by $\pi(E_i)=P_i$. \end{proof}

In the following proof of Theorem  \ref{T-2.1}, and in the rest of the paper,
the eigenvalues of a Hermitian $X$ on an $n$-dimensional space are denoted in non-increasing order as $\lambda_1(X)\ge
\cdots\ge\lambda_n(X)$.

\vskip 5pt\noindent
\begin{proof}
We consider the convex case. We first deal with a compression map. Hence $\bM_n$ is identified with ${\mathrm{L}}({\mathcal{H}})$ and
$
\Phi(A) = A_{\mathcal S}
$
where ${\mathcal{S}}$  is a subspace of ${\mathcal{H}}$.
We may find spectral subspaces ${\mathcal{S}}'$ and ${\mathcal{S}}''$ for $A_{\mathcal{S}}$ and a real $r$ such that

\begin{itemize}
\item[(a)]
 ${\mathcal{S}}={\mathcal{S}}'\oplus{\mathcal{S}}''$,

\item[(b)] the spectrum of $A_{\mathcal{S'}}$ lies on $(-\infty,r]$ and the spectrum of  $A_{\mathcal{S''}}$ lies on $[r,\infty)$,

 \item[(c)] $f$ is monotone both on $(-\infty,r]\cap\Omega$ and  $[r,\infty)\cap\Omega$.
\end{itemize}

Let $k$ be an integer, $1\le k\le \dim{\mathcal{S}}'$. There exists a spectral subspace ${\mathcal{F}}\subset{\mathcal{S}}'$ for
$A_{\mathcal{S'}}$ (hence for $f(A_{\mathcal{S'}})$), $\dim{\mathcal{F}}=k$, such that
\begin{align*} \lambda_k[f(A_{\mathcal{S'}})] &=\min_{h\in{\mathcal{F}};\ \Vert h\Vert=1} \langle h,f(A_{\mathcal{F}})h \rangle  \\
&= \min\{f(\lambda_1(A_{\mathcal{F}}))\,;\,f(\lambda_k(A_{\mathcal{F}}))\} \\
&= \min_{h\in{\mathcal{F}};\ \Vert h\Vert=1} f(\langle h,A_{\mathcal{F}}h \rangle)  \\
&= \min_{h\in{\mathcal{F}};\ \Vert h\Vert=1} f(\langle h,Ah \rangle)
\end{align*}
where at the second and third steps we use the monotony of $f$ on $(-\infty,r]$ and the fact that $A_{\mathcal{F}}$'s spectrum lies
on $(-\infty,r]$. The convexity of $f$ implies
$$
f(\langle h,Ah \rangle) \le \langle h,f(A)h \rangle
$$
for all normalized vectors $h$. Therefore, by the minmax principle,
\begin{align*}
\lambda_k[f(A_{\mathcal{S'}})] &\le \min_{h\in{\mathcal{F}};\ \Vert h\Vert=1} \langle h,f(A)h \rangle \\
&\le \lambda_k[f(A)_{\mathcal{S'}}].
\end{align*}
This statement is equivalent (by unitary congruence to diagonal matrices) to the existence of a unitary operator $U_0$ on
${\mathcal{S}}'$ such that
$$
f(A_{\mathcal{S'}})\le U_0 f(A)_{\mathcal{S'}}U_0^*.
$$
(Note that the monotone case is established.) Similarly we get a unitary $V_0$ on ${\mathcal{S}}''$ such that
$$
f(A_{\mathcal{S''}})\le V_0 f(A)_{\mathcal{S''}}V_0^*.
$$
Thus we have
$$
f(A_{\mathcal{S}})\le
\begin{pmatrix}
U_0 &0 \\ 0&V_0
\end{pmatrix}
 \begin{pmatrix}
f(A)_{\mathcal{S'}} &0 \\ 0& f(A)_{\mathcal{S''}}
\end{pmatrix}
 \begin{pmatrix}
U_0^* &0 \\ 0&V_0^*
\end{pmatrix}.
$$
Besides we note that, still in respect with the decomposition ${\mathcal{S}}={\mathcal{S}}'\oplus{\mathcal{S}}''$,
$$
\begin{pmatrix}
f(A)_{\mathcal{S'}} &0 \\ 0& f(A)_{\mathcal{S''}}
\end{pmatrix}
= \frac{1}{2}\left\{
\begin{pmatrix}
I &0 \\ 0& I
\end{pmatrix}
f(A)_{\mathcal{S}}
\begin{pmatrix}
I &0 \\ 0& I
\end{pmatrix}
+
\begin{pmatrix}
I &0 \\ 0& -I
\end{pmatrix}
f(A)_{\mathcal{S}}
\begin{pmatrix}
I &0 \\ 0& -I
\end{pmatrix}
\right\}.
$$
So, letting
$$
U=\begin{pmatrix}
U_0 &0 \\ 0& V_0
\end{pmatrix}
\quad{\rm and}\quad
V=
\begin{pmatrix}
U_0 &0 \\ 0& -V_0
\end{pmatrix}
$$
we get
\begin{equation}
f(A_{\mathcal{S}}) \le \frac{Uf(A)_{\mathcal{S}}U^* + Vf(A)_{\mathcal{S}}V^*}{2}
\end{equation}
for some unitary $U,V\in{\mathrm{L}}({\mathcal{S}})$, with $U=V$ if $f(t)$ is convex and monotone. This proves the case of
compression maps.

Next we turn to the case of a general unital linear map $\Phi:\bM_n\to \bM_m$.
Let $\mathcal{A}$ be the abelian $*$-subalgebra of $\bM_n$ spanned by $A$. By restricting $\Phi$ to $\mathcal{A}$ and by identifying
$\bM_m$ with ${\mathrm{L}}({\mathcal{S}})$, Lemma \ref{L-2.9} shows that $\Phi(X)=(\pi(X))_{\mathcal{S}}$ for all $X\in\mathcal{A}$.
Since $f$ and $\pi$ commutes, $f(\pi(A))=\pi(f(A))$, we have from the compression case some unitary $U, V\in
{\mathrm{L}}({\mathcal{S}})=\bM_m$ such that,
\begin{align*}
f(\Phi(A) ) &=f((\pi(A))_{\mathcal{S}})  \\ &\le \frac{U(f(\pi(A))_{\mathcal{S}}U^* + V(f(\pi(A))_{\mathcal{S}}V^*}{2} \\
&= \frac{U(\pi(f(A))_{\mathcal{S}}U^* + V(\pi (f(A))_{\mathcal{S}}V^*}{2} \\
&= \frac{U\Phi(f(A))U^* + V\Phi(f(A))V^*}{2},
\end{align*}
where we can take $U=V$ if the function is convex and monotone.
\end{proof}

\vskip 5pt
The following is an application of Theorem \ref{T-2.1} to norm inequalities.
A norm $\|\cdot\|$ on $\bM_n$ is a symmetric norm if $\|A\|=\|UAV\|$ for all $A\in\bM_n$ and all unitary $U,V\in\bM_n$. These norms
are also called unitarily invariant norms. They contain the Schatten $p$-norms $\|\cdot\|_p$, $1\le p<\infty$, defined as $\| A\|_p
=\{\mathrm{Tr\,}|A|^p\}^{1/p}.$
The polar decomposition shows that a symmetric norm $\|\cdot\|$ is well defined by its value on the positive cone $\bM_n^+$. The map
on $\bM_n^+$, $A\mapsto\|A\|$ is invariant under unitary congruence and is subadditive. There are also some interesting, related
superadditive functionals.
Fix $p<0$. The map $X\mapsto\|X^p\|^{1/p}$ is continuous on the invertible part of $\bM_n^+$. If $X\in\bM_n^+$ is not invertible,
setting $\| X^p\|^{1/p}:=0$, we obtain a continuous map on $\bM_n^+$.

\vskip 5pt
\begin{cor}\label{C-2.10} Let $A,B\in\bM_n^+$ and let $p<0$. Then, for all symmetric norms,
\begin{equation} \label{derived}
\|\, (A+B)^p\,\|^{1/p} \,\ge\, \| A^p\|^{1/p}+ \| B^p\|^{1/p}.
\end{equation}
\end{cor}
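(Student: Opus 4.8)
The functional $N_p(X):=\|X^p\|^{1/p}$ is positively homogeneous of degree one on $\bM_n^+$, since $N_p(\lambda X)=(\lambda^p\|X^p\|)^{1/p}=\lambda N_p(X)$ for $\lambda>0$. Hence the claimed superadditivity is equivalent to midpoint concavity, $N_p\bigl(\tfrac{A+B}{2}\bigr)\ge\tfrac12\bigl(N_p(A)+N_p(B)\bigr)$, and by a routine limiting argument (replace $A$ by $A+\eps I$ and $B$ by $B+\eps I$, then let $\eps\downarrow 0$) I may assume $A,B$ invertible. I would first point out that the seemingly obvious route fails: applying the convex, monotone case of Corollary \ref{C-2.2} to $t\mapsto t^p$ gives an operator inequality $\bigl(\tfrac{A+B}{2}\bigr)^p\le U\tfrac{A^p+B^p}{2}U^*$, hence $\bigl\|(\tfrac{A+B}{2})^p\bigr\|\le\bigl\|\tfrac{A^p+B^p}{2}\bigr\|$, but raising to the negative power $1/p$ and invoking subadditivity of $\|\cdot\|$ sends the estimate in the wrong direction. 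The point is that collapsing $A$ and $B$ through a single norm destroys the fine eigenvalue information that reverse Minkowski requires, so a genuinely different mechanism is needed.

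The plan is therefore to use a duality (variational) representation that writes $N_p$ as an infimum of \emph{linear}, hence additive, functionals, after which superadditivity is automatic. Set $q:=p/(p-1)$, so that $p^{-1}+q^{-1}=1$ with $q\in(0,1)$. I would establish a reverse Hölder inequality for the symmetric norm $\|\cdot\|$ and its dual $\|\cdot\|^{*}$, of the form $\|A^p\|^{1/p}\le\mathrm{Tr}(AH)$ for every $H\in\bM_n^+$ in a suitably normalised set $\mathcal{C}$ (built from the condition on $\|H^q\|^{*}$), together with sharpness of this bound. Granting the resulting representation $N_p(A)=\inf_{H\in\mathcal{C}}\mathrm{Tr}(AH)$, the conclusion is immediate: since $\mathrm{Tr}\bigl((A+B)H\bigr)=\mathrm{Tr}(AH)+\mathrm{Tr}(BH)$,
$$N_p(A+B)=\inf_{H\in\mathcal{C}}\mathrm{Tr}\bigl((A+B)H\bigr)\ge\inf_{H\in\mathcal{C}}\mathrm{Tr}(AH)+\inf_{H\in\mathcal{C}}\mathrm{Tr}(BH)=N_p(A)+N_p(B),$$
because the infimum of a sum dominates the sum of the infima.

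The main obstacle will be the reverse Hölder representation itself for a general symmetric norm, since the clean Schatten case $N_p(A)=(\mathrm{Tr}\,A^p)^{1/p}=\inf\{\mathrm{Tr}(AH):\mathrm{Tr}\,H^{q}\le 1\}$ does not transfer verbatim. I would handle it by passing to eigenvalue vectors: write $\|X\|=g(\lambda(X))$ for the symmetric gauge function $g$, so that the matrix problem reduces to a statement about positive $n$-tuples, where the scalar reverse Hölder inequality for conjugate exponents $p<0$ and $q\in(0,1)$ applies, and where the infimum is attained by the explicit choice $H$ proportional to $A^{p-1}$ (suitably rescaled so that $H\in\mathcal{C}$). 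An alternative, which I would keep in reserve, is to bypass duality and instead use that each ``anti-Ky-Fan'' functional $A\mapsto\sum_{j=n-k+1}^{n}\lambda_j(A)$ (sum of the $k$ smallest eigenvalues) is superadditive, then recover $N_p$ from these via reverse Minkowski on the eigenvalue sequences; this, however, leans on the anti-norm representation theory that the thesis deliberately leaves undeveloped, so the duality route is the one I would write up.
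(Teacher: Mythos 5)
There is a genuine gap, and it occurs at the one step you explicitly postpone: the variational representation $N_p(A)=\inf_{H\in\mathcal{C}}{\mathrm{Tr}}(AH)$ is not established, and the normalisation of $\mathcal{C}$ you sketch (a constraint on $\|H^q\|^{*}$ with $q=p/(p-1)$) does not give the right object in general. Test it on the operator norm: there $N_p(A)=\|A^p\|_\infty^{1/p}=\lambda_{\min}(A)$, whose correct representation is $\inf\{{\mathrm{Tr}}(AH): H\ge 0,\ {\mathrm{Tr}}\,H=1\}$, a constraint on $\|H\|_1$ with no $q$ in sight; constraining $\|H^q\|_1^{1/q}=1$ instead gives $\inf {\mathrm{Tr}}(H)=n^{1-1/q}<1$ already for $A=I$. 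More structurally, a positively homogeneous functional on $\bM_n^+$ admits a representation as an infimum of linear functionals \emph{precisely when} it is concave and upper semicontinuous — so producing the representation for a general symmetric norm is essentially equivalent to the concavity you are trying to prove, and your proposed reduction ``to positive $n$-tuples'' quietly assumes the infimum can be restricted to $H$ commuting with $A$; to let $H$ range over all of $\mathcal{C}$ you would additionally need the anti-von Neumann pairing ${\mathrm{Tr}}(AH)\ge\sum_j\lambda_j^{\downarrow}(A)\lambda_j^{\uparrow}(H)$ plus a reverse H\"older inequality for the symmetric gauge function and its (anti-)dual, none of which you supply. This is exactly the anti-norm duality machinery of Bourin--Hiai that the thesis says it is avoiding.

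The second problem is that you dismiss the route that actually works. You are right that the naive chain $\bigl\|(\tfrac{A+B}{2})^p\bigr\|^{1/p}\ge\bigl(\tfrac{\|A^p\|+\|B^p\|}{2}\bigr)^{1/p}$ lands on the wrong side of $\tfrac12(\|A^p\|^{1/p}+\|B^p\|^{1/p})$, by convexity of $t\mapsto t^{1/p}$. But that obstruction vanishes at the point of equality $\|A^p\|=\|B^p\|=1$, and homogeneity of $N_p$ reduces the general case to exactly that point. This is the paper's proof: normalise so that $\|A^p\|=\|B^p\|=1$, apply Theorem \ref{T-2.1} (or Corollary \ref{C-2.3}) to the monotone convex function $t\mapsto t^p$ to get $\|(sA+(1-s)B)^p\|\le\|sA^p+(1-s)B^p\|\le s\|A^p\|+(1-s)\|B^p\|=1$, flip the inequality by raising to the power $1/p<0$ to obtain $\|(sA+(1-s)B)^p\|^{1/p}\ge1$, and then insert $A/\|A^p\|^{1/p}$, $B/\|B^p\|^{1/p}$ with $s=\|A^p\|^{1/p}/(\|A^p\|^{1/p}+\|B^p\|^{1/p})$. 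Your instinct that ``collapsing $A$ and $B$ through a single norm destroys the fine eigenvalue information'' is what led you astray: after normalisation no fine information is needed, only the single scalar bound $\le 1$.
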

\vskip 5pt
\begin{proof}
 We  will apply Theorem \ref{T-2.1}  to the monotone convex function on  $(0,\infty)$, $t\mapsto t^p$.
First, assume that $A,B\in\bM_n^+$ are such that $\|A^p\|=\|B^p\|=1$ and let $s\in[0,1]$. Then, thanks to Theorem \ref{T-2.1} (or Corollary
\ref{C-2.3}),
$$
\|(sA+(1-s)B)^p\| \le \| sA^p + (1-s)B^p \|
\le  s\|A^p\| + (1-s)\|B^p\|
=1,
$$
hence
\begin{equation}\label{F-2.14}
\|(sA+(1-s)B)^p\|^{1/p} \ge 1.
\end{equation}
Now, for general invertible $A,B\in\bM_n^+$, insert $A/\|A^p\|^{1/p}$ and $B/\|B^p\|^{1/p}$ in place of $A, B$ in \eqref{F-2.14} and
take
$$
s=\frac{\|A^p\|^{1/p}}{\|A^p\|^{1/p} +\|B^p\|^{1/p}}.
$$
This yields \eqref{derived}.
\end{proof}

\vskip 5pt
Corollary \ref{C-2.10} implies Minkowski's determinantal inequality \eqref{Minkowski}. Indeed, in \eqref{derived} take the norm on $\bM_n^+$ defined by $\|
A\|:=\frac{1}{n}{\mathrm{Tr\,}}A$,
and note that $\det^{1/n} A=\lim_{p\nearrow 0} \| A^p\|^{1/p}$.
Hence,  the superadditivity of $A\mapsto \|A^p\|^{1/p}$ for $p<0$ entails the superadditivity of $A\mapsto \det^{1/n} A$.

If we apply Theorem \ref{T-2.1} (or Corollary \ref{C-2.2}) to the convex function on the real line $t\mapsto |t|$ we obtain: {\it If $A,B\in\bM_n$
are Hermitian, then
\begin{equation*}
|A+B| \le \frac{U(|A|+|B|)U^*+V(|A|+|B|)V^*}{2}
\end{equation*}
for some unitaries $U,V\in\bM_n$.} In fact, we can take $U=I$ and this remains true for normal operators $A,B$. This is shown in the
proof of the following proposition.

\vskip 5pt
\begin{prop} If $f(t)$ is a nondecreasing convex function on $[0,\infty)$ and if $Z\in\bM_n$ has a Cartesian decomposition $Z=A+iB$,
then, for some unitaries $U,V\in\bM_n$,
\begin{equation*}
f(|Z|) \le \frac{Uf(|A|+|B|)U^*+Vf(|A|+|B|)V^*}{2}.
\end{equation*}
\end{prop}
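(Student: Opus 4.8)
The plan is to peel the function $f$ off from the outside and reduce everything to the absolute-value (linear) case $|Z|\le \tfrac12\{U(|A|+|B|)U^*+V(|A|+|B|)V^*\}$, then to lift this through $f$. Write $P:=|A|+|B|$. Since $f$ is nondecreasing, any Loewner bound $|Z|\le M$ yields $f(|Z|)\le Wf(M)W^*$ for a unitary $W$ by \eqref{fact}; the role of convexity is to convert $f$ of an average of conjugates of $P$ into an average of conjugates of $f(P)$. Convexity is genuinely needed here: the single-conjugate bound $\lambda_k(|Z|)\le\lambda_k(P)$ fails in general (for $A=\mathrm{diag}(1,0)$ and $B=\bigl(\begin{smallmatrix}1&1\\1&1\end{smallmatrix}\bigr)$ one gets $\lambda_2(|Z|)>\lambda_2(P)$), so the averaging of two conjugates cannot be avoided.

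For the linear step I would first record the elementary lemma: if $X$ is Hermitian and $P\ge 0$ satisfy $-P\le X\le P$, then $|X|\le \tfrac12(P+VPV^*)$, where $V=\Pi_+-\Pi_-$ is the sign-symmetry of $X$. Indeed, with the spectral projections $\Pi_\pm$ of $X$ one has $|X|=X_+\oplus X_-$, while $X\le P$ and $-X\le P$ compress to $X_+\le \Pi_+P\Pi_+$ and $X_-\le \Pi_-P\Pi_-$; the right-hand side $\tfrac12(P+VPV^*)$ is exactly the pinching $\Pi_+P\Pi_+\oplus\Pi_-P\Pi_-$. To apply this to $Z=A+iB$ I pass to the self-adjoint dilation $H=\bigl(\begin{smallmatrix}0&Z^*\\Z&0\end{smallmatrix}\bigr)=\mathcal A+\mathcal B$ with $\mathcal A=\bigl(\begin{smallmatrix}0&A\\A&0\end{smallmatrix}\bigr)$ and $\mathcal B=\bigl(\begin{smallmatrix}0&-iB\\iB&0\end{smallmatrix}\bigr)$ Hermitian; then $|H|=|Z|\oplus|Z^*|$ and $|\mathcal A|+|\mathcal B|=P\oplus P=:\mathcal P$. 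From $|\mathcal A|\ge\pm\mathcal A$ and $|\mathcal B|\ge\pm\mathcal B$ one gets $-\mathcal P\le H\le\mathcal P$, so the lemma gives $|H|\le \tfrac12(\mathcal P+\mathcal V\mathcal P\mathcal V^*)$, which restricts to a bound of the required form for $|Z|$.

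To lift through $f$, the clean target is the lemma in functional form: if $-P\le X\le P$, then $f(|X|)\le \tfrac12\{Uf(P)U^*+Vf(P)V^*\}$. Starting from $X_\pm\le \Pi_\pm P\Pi_\pm$ and using $f$-monotonicity inside each sign block, $f(|X|)$ is dominated by a unitary conjugate of $f(\Pi_+P\Pi_+)\oplus f(\Pi_-P\Pi_-)=f(\mathcal E(P))$, where $\mathcal E$ is the two-block pinching. It then remains to bound $f(\mathcal E(P))$ by an average of two conjugates of $f(P)$, and this is where convexity enters: one feeds the compressions $\Pi_\pm P\Pi_\pm$ into Theorem \ref{T-2.1} (equivalently Corollary \ref{C-2.2}).

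The hard part is precisely this last step. Applying the unitary-orbit Jensen inequality of Theorem \ref{T-2.1} to the pinching produces an average of \emph{four} conjugates of $f(P)$, because $\mathcal E(f(P))$ is itself a two-term average; and four conjugates cannot in general be re-dominated by two, since an average of unitary conjugates preserves the trace and no such Loewner inequality can hold. For operator convex $f$ the extra averaging disappears, as then $f(\mathcal E(P))\le\mathcal E(f(P))=\tfrac12(f(P)+Vf(P)V^*)$ directly; for a merely convex $f$ one must avoid the four-term blow-up altogether. The way I would resolve this is to replace the pinching-then-Jensen argument by the Bourin--Lee positive-block decomposition (the identity displayed on the title page), which writes a positive block matrix as the sum of exactly two unitary conjugates of its zero-padded diagonal blocks; applied to a suitable positive $2\times2$ block matrix built from $f(P)$ it delivers the two conjugates by construction, which is exactly what the statement requires.
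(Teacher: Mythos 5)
Your overall architecture parallels the paper's — first prove the linear inequality $|Z|\le\tfrac12\{(|A|+|B|)+W^*(|A|+|B|)W\}$ with exactly two conjugates, then push $f$ through it — and your counterexample showing that a single conjugate cannot suffice is correct. But two steps need repair. First, the restriction of the dilation bound $|H|\le \tfrac12(\mathcal P+\mathcal V\mathcal P\mathcal V^*)$ to the upper-left corner is not automatic: a priori the $(1,1)$ block of $\mathcal V\mathcal P\mathcal V^*$ is only an isometric compression $R^*(P\oplus P)R$ of $P\oplus P$, and such a compression need not be dominated by any unitary conjugate of $P$ (its $j$-th eigenvalue is controlled by $\lambda_j(P\oplus P)$, not by $\lambda_j(P)$). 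What saves the argument is the specific form of the sign symmetry of $H=\bigl(\begin{smallmatrix}0&Z^*\\ Z&0\end{smallmatrix}\bigr)$: writing $Z=W|Z|$, one computes $\Pi_\pm=\tfrac12\bigl(\begin{smallmatrix}I&\pm W^*\\ \pm W&I\end{smallmatrix}\bigr)$, hence $\mathcal V=\bigl(\begin{smallmatrix}0&W^*\\ W&0\end{smallmatrix}\bigr)$ and $\mathcal V\mathcal P\mathcal V^*=W^*PW\oplus WPW^*$ is block diagonal; only then does the corner yield $|Z|\le\tfrac12(P+W^*PW)$, which is the paper's \eqref{F-2.11}. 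The paper reaches that inequality more directly, from positivity of $\bigl(\begin{smallmatrix}|X|&X^*\\ X&|X|\end{smallmatrix}\bigr)$ for normal $X$, applied to $X=A$ and $Y=iB$ and compressed by $(I,-W^*)$.

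Second, and more seriously, the difficulty you locate in the final step is not there, and your proposed fix is not a proof. Since $f$ is assumed nondecreasing, the monotone case of Theorem \ref{T-2.1} (equivalently Corollary \ref{C-2.2}) applies with $U=V$, so $f\bigl(\tfrac{P+W^*PW}{2}\bigr)\le U_1\tfrac{f(P)+W^*f(P)W}{2}U_1^*$ is an average of exactly \emph{two} unitary conjugates of $f(P)$, not four; combined with $f(|Z|)\le U_0\, f\bigl(\tfrac{P+W^*PW}{2}\bigr)U_0^*$, which follows from \eqref{fact} and the linear inequality, the proposition is proved — this is precisely the paper's argument. The ``four-conjugate blow-up'' you describe only threatens the non-monotone case, which is outside the hypotheses. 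By contrast, your replacement — invoking Lemma \ref{L-3.4} on ``a suitable positive $2\times2$ block matrix built from $f(P)$'' — is left unspecified: that lemma decomposes a positive block matrix into conjugates of its zero-padded diagonal blocks, and you indicate no choice of block matrix from which the required upper bound on $f(|Z|)$ would follow. As written, the argument is incomplete at exactly the step where it departs from the paper.
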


\vskip 5pt
\begin{proof} let $X$, $Y$ be two normal operators in $\bM_n$. Then, the following operators in $\bM_{2n}$ are positive
semi-definite,
$$
\begin{pmatrix}
|X| &X^* \\
X&|X|
\end{pmatrix} \ge 0,
\qquad
\begin{pmatrix}
|Y| &Y^* \\
Y&|Y|
\end{pmatrix} \ge 0,
$$
and consequently
$$
\begin{pmatrix}
|X| +|Y|&X^*+Y^* \\
X+Y&|X|+|Y|
\end{pmatrix}
\ge 0.
$$
Next, let $W$ be the unitary part in the polar decomposition $X+Y=W|X+Y|$. Then
$$
\begin{pmatrix}
I&-W^*
\end{pmatrix}
\begin{pmatrix}
|X| +|Y|&X^*+Y^* \\
X+Y&|X|+|Y|
\end{pmatrix}
\begin{pmatrix}
I \\ -W
\end{pmatrix}
\ge0,
$$
that is
$$
|X|+|Y|+W^*(|X|+|Y|)W- 2|X+Y|\ge 0.
$$
Equivalently,
\begin{equation}\label{F-2.11}
|X+Y| \le \frac{|X|+|Y|+W^*(|X|+|Y|)W}{2}.
\end{equation}
Letting $X=A$ and $Y=iB$, and applying $f(t)$ to both sides of \eqref{F-2.11},
  Corollary 2.2 completes the proof since $f(t)$ is nondecreasing and convex.
\end{proof}

\vskip 5pt
\begin{prop} If $f(t)$ is a nondecreasing convex function on $[0,\infty)$ and if $A,B\in\bM_n$ are Hermitian, then, for some
unitaries $U,V\in\bM_n$,
\begin{equation*}
f((A+B)_+) \le \frac{Uf(A_++B_+)U^*+Vf(A_++B_+)V^*}{2}.
\end{equation*}
\end{prop}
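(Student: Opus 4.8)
The plan is to imitate the proof of the preceding proposition: I would first isolate a single operator inequality of the shape
\begin{equation*}
(A+B)_+ \le \frac{(A_++B_+)+W^*(A_++B_+)W}{2},
\end{equation*}
which I will call $(\star)$, for an appropriate unitary $W$, and then push the nondecreasing convex function $f$ through it by combining monotonicity with Corollary \ref{C-2.2}. Here $A_+=(|A|+A)/2$ is the positive part, and the natural choice is to let $W$ be the sign (Hermitian polar) unitary of $A+B$, so that $A+B=W|A+B|$ and, since $W$ commutes with every function of $A+B$ and $W^2=I$, one has the key identity $W^*(A+B)W=A+B$. The point of aiming at $(\star)$ is that its right-hand side is the arithmetic mean of the two unitary conjugates $A_++B_+$ and $W^*(A_++B_+)W$, which is exactly the shape Corollary \ref{C-2.2} digests.

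To establish $(\star)$ I would use the two identities $(A+B)_+=\tfrac12\{(A+B)+|A+B|\}$ and $A_++B_+=\tfrac12\{(A+B)+(|A|+|B|)\}$, together with inequality \eqref{F-2.11} applied to the Hermitian (hence normal) pair $X=A$, $Y=B$, namely $|A+B|\le \tfrac12\{(|A|+|B|)+W^*(|A|+|B|)W\}$ with the very same $W$. Writing $H=A+B$ and $M=|A|+|B|$, this yields $(A+B)_+\le \tfrac{H}{2}+\tfrac14\{M+W^*MW\}$, while expanding the right-hand side of $(\star)$ gives $\tfrac14\{(H+M)+W^*(H+M)W\}$. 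The two expressions coincide precisely because the cross term $W^*HW$ equals $H$ by the identity above, so $(\star)$ follows. Verifying this cancellation is the main (and essentially the only) obstacle; everything else is bookkeeping.

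With $(\star)$ in hand the finish mirrors the previous proof. Since $f$ is nondecreasing, the operator inequality $(\star)$ forces the eigenvalue domination $\lambda_k(f((A+B)_+))\le \lambda_k(f(\tfrac12\{(A_++B_+)+W^*(A_++B_+)W\}))$, so by fact \eqref{fact} there is a unitary $U_0$ with $f((A+B)_+)\le U_0\, f(\tfrac12\{(A_++B_+)+W^*(A_++B_+)W\})\,U_0^*$. Now I would apply Corollary \ref{C-2.2} to the positive operators $P=A_++B_+$ and $W^*PW$, whose average is the argument; convexity gives the bound, monotonicity lets me take a single unitary $U'$, and $f(W^*PW)=W^*f(P)W$ turns the outcome into an average of two unitary conjugates of $f(A_++B_+)$. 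Absorbing $U_0$, I would set $U=U_0U'$ and $V=U_0U'W^*$ to reach the claim. I would also note a slicker route: since $A\le A_+$ and $B\le B_+$ give $A+B\le A_++B_+$, and $g(t):=f(t_+)$ is nondecreasing with $g(A+B)=f((A+B)_+)$ and $g(A_++B_+)=f(A_++B_+)$, Weyl monotonicity plus \eqref{fact} already yield the stronger single-unitary bound $f((A+B)_+)\le U_1 f(A_++B_+)U_1^*$, which incidentally shows convexity is not actually required for the stated two-unitary conclusion.
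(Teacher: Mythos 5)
Your proof is correct and, in its skeleton, is the paper's proof: both arguments reduce the statement to the single operator inequality $(A+B)_+\le \tfrac12\{(A_++B_+)+W(A_++B_+)W^*\}$ with $W$ the Hermitian sign unitary of $A+B$, and then finish by pushing the nondecreasing $f$ through via \eqref{fact} and invoking Corollary \ref{C-2.2}. Where you genuinely differ is in the derivation of that inequality. The paper gets it in one line: with $E$ the projection onto ${\mathrm{ran}}\,(A+B)_+$ and $F=I-E$, one has $(A+B)_+=E(A+B)E\le E(A_++B_+)E\le E(A_++B_+)E+F(A_++B_+)F$, and the pinching $EXE+FXF=\tfrac12(X+WXW^*)$ with $W=E-F$ gives the claim; the only inputs are $A+B\le A_++B_+$ and the positivity of the discarded block $F(A_++B_+)F$. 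You instead route through \eqref{F-2.11} applied to $X=A$, $Y=B$ and cancel the cross term using $W^*(A+B)W=A+B$; the algebra checks out and produces the same $W$ up to the irrelevant choice of sign on $\ker(A+B)$, so this is a valid but slightly less economical path to the same place. Your closing observation is also correct and is worth recording: since $A+B\le A_++B_+$ and $t\mapsto f(t_+)$ is nondecreasing with $f((A+B)_+)=g(A+B)$ for $g(t)=f(t_+)$, Weyl monotonicity and \eqref{fact} already give the single-unitary bound $f((A+B)_+)\le U_1f(A_++B_+)U_1^*$, so neither convexity nor the two-unitary average is needed for the conclusion as literally stated; the substantive content of the paper's argument is really the intermediate pinching inequality (2.12) itself, which is what gets reused in the subsequent remark on $g(t)=|t|$ and $g(t)=t_+$.
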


\vskip 5pt
\begin{proof} Here $A_+:=(A+|A|)/2$.
Note that $A+B\le A_++ B_+$. Let $E$ be the projection onto ${\mathrm{ran\,}}(A+B)_+$ and let $F$ be the projection onto
$\ker(A+B)_+$
Since
$(A+B)_+ =E(A+B)E$, we have
$$
(A+B)_+\le  E(A_++ B_+)E + F(A_++B_+)F,
$$
equivalently
\begin{equation}
(A+B)_+\le \frac{(A_++ B_+) +W(A_++ B_+)W^*}{2}
\end{equation}
where $W= E-F$ is a unitary. Applying Corollary 2.2 completes the proof.
 \end{proof}

\subsection{Comments and references}

\noindent
In this second part of Section 2, we collect few remarks which complete Theorem \ref{T-2.1} and the above corollaries. Good references for
positive maps and operator convex functions are the nice survey and book \cite{Hiai1} and \cite{Bh}.

\vskip 10pt
\begin{remark} Theorem \ref{T-2.1} appears in \cite{B2}. It is stated therein for compressions maps and for the case of or $*$-convex
combinations given in Corollaries \ref{C-2.3} and \ref{C-2.5} (the monotone case was earlier obtained in \cite{B1}). That the compression case
immediately entails the general case of an arbitrary unital positive map is mentioned in some subsequent papers, for instance in
\cite{AB} where some inequalities for Schur products are pointed out. From the Choi-Kraus representation of completely positive
linear maps, readers with a background on positive maps may also notice that Corollary \ref{C-2.5} and Theorem \ref{T-2.1} are equivalent.
For scalar convex combinations and with the assumption that $f(t)$ is non-decreasing, Theorem \ref{T-2.1} is first noted in Brown-Kosaki's
paper \cite{BK}; with these assumptions, it is also obtained in Aujla-Silva's paper \cite{AS}.
\end{remark}

\vskip 10pt
\begin{remark} Let $g(t)$ denote either the convex function $t\mapsto |t|$ or $t\mapsto t_+$. Let $A,B\in\bM_n$ be Hermitian. Then
(2.11) and (2.12) show that
$$
g\left(\frac{A+B}{2}\right) -\frac{g(A)+g(B)}{4} \le V\frac{g(A)+g(B)}{4}V^*
$$
for some unitary $V\in\bM_n$. It would be interesting to characterize convex functions for which such a relation holds.
\end{remark}

\vskip 10pt
\begin{remark} Theorem  \ref{T-2.1} holds for operators acting on infinite dimensional spaces, with an additional $rI$ term. We state here the
monotone version. ${\mathcal{H}}$ and ${\mathcal{S}}$ are two separable Hilbert spaces and $r>0$ is fixed. {\it
Let $\Phi : {\mathrm{L}}({\mathcal{H}})\to {\mathrm{L}}({\mathcal{H}})$ be a unital positive linear map, let $f(t)$ be a monotone
convex function on $(-\infty,\infty)$ and let
 $A,B\in  {\mathrm{L}}({\mathcal{H}}) $  be Hermitian. Then, for some unitary $U\in {\mathrm{L}}({\mathcal{S}})$,}
\begin{equation}\label{F-2.13}
f(\Phi(A))\le U\Phi( f(A))U^* +rI.
\end{equation}
The proof is given in the first author's thesis when $\Phi$ is a compression map, this entail the general case. For convenience, the
proof is given at the end of this section.
\end{remark}

\vskip 5pt
\begin{remark}
The trace inequality \eqref{HP} is due to Hansen-Pedersen \cite{HP2}, and the special case \eqref{BK} is due to Brown-Kosaki.
Corollary  \ref{C-2.3} considerably improves \eqref{HP}: In case of a monotony assumption on the convex function $f(t)$, we have eigenvalue
inequalities; and, in the general case we may still infer the majorization relation
\begin{equation}\label{F-2.9}
\sigma_k \left[f\left( \sum_{i=1}^m Z^*_i A_iZ_i \right)\right]\le \sigma_k\left[ \sum_{i=1}^m Z^*_i f(A)_iZ_i \right], \qquad
k=1,\ldots,n,
\end{equation}
where $\sigma_k[X]:=\sum_{j=1}^k\lambda_j[X]$ is the sum of the $k$ largest eigenvalues of a Hermitian $X$. In fact, the basic
relation $\sigma_k[X]=\max {\mathrm{Tr\,}} XE$, where the maximum runs over all rank $k$ projections $E$, shows that
$\sigma_k[\cdot]$ is convex, increasing on the Hermitian part of $\bM_n$ so that \eqref{F-2.9} is an immediate consequence of Theorem
\ref{T-2.1}. The theorem also entails (see \cite{B2} for details) a rather unexpected eigenvalue inequality:
$$
\lambda_{2k-1}\left[ f\left( \sum_{i=1}^m Z^*_i A_iZ_i \right)\right] \le \lambda_k\left| \sum_{i=1}^m Z^*_i f(A)_iZ_i \right] ,
\qquad 1\le k\le (n+1)/2.
$$
\end{remark}

\vskip 5pt
\begin{remark}  Choi's inequality \cite{Choi} claims: {\it for an operator convex function $f(t)$ on  $\Omega$,
\begin{equation}\label{Choi}
f(\Phi(A)) \le \Phi(f(A))
\end{equation}
for all $A\in\bM_n\{\Omega\}$ and all unital positive linear map.} Thus
Theorem \ref{T-2.1} is a substitute of Choi's inequality for a general convex function. In the special case of a compression map, then
\eqref{Choi} is Davis' inequality \cite{Davis}, a famous characterization of operator convexity. The most well-known case of Davis'
inequality is for the inverse map on positive definite matrices, it is then an old classical fact of Linear Algebra. Exactly as
Theorem  \ref{T-2.1} entails Corollary  \ref{C-2.3}, Choi's inequality contains
Hansen-Pedersen's inequality \cite{HP1}, \cite{HP2}: {\it If $f(t)$ is operator convex on $\Omega$, then
\begin{equation*}
f\left( \sum_{i=1}^m Z^*_i A_iZ_i \right) \le
\sum_{i=1}^m Z^*_i f(A)_iZ_i
\end{equation*}
for  all  unitary columns $\{Z_i\}_{i=1}^m$ in $\bM_n$ and $A_i\in\bM_n\{\Omega\}$, $i=1,\ldots,m$.}
For operator concave functions, the inequality reverses. A special case is Hansen's inequality \cite{Hansen}: {\it if $f(t)$ is
operator concave on $\Omega$, $0\in\Omega$ and $f(0)\ge 0$, then
\begin{equation}\label{Hansen}
f(Z^*AZ) \ge Z^*f(A)Z
\end{equation}
for all $A\in\bM_n\{\Omega\}$ and all contractions $Z\in\bM_n$.}
\end{remark}

\vskip 10pt
\begin{remark} Hansen's inequality \eqref{Hansen} may be formulated with an expansive operator $Z\in\bM_n$, i.e., $Z^*Z\ge I$; then
\eqref{Hansen} obviously reverses. We might expect that in a similar way, Corollary \ref{C-2.5} or the Brown-Kosaki trace inequality
reverses. But this does not hold. Corollary \ref{C-2.5} can not reverse when $Z$ is expansive, even under the monotony assumption on $f(t)$.
An unexpected positivity assumption is necessary, and we must confine to weaker inequalities, such as trace inequalities: {\it if
$f(t)$ is a concave function on the positive half-line with $f(0) \ge0$, then,
\begin{equation*}
{\mathrm{Tr\,}} f\left( Z^* AZ \right)\le {\mathrm{Tr\,}} Z^* f(A)Z
\end{equation*}
for all $A\in\bM_n^+$ and all expansive $Z\in\bM_n$.} For a proof, see \cite{B1} and also \cite{B3}, \cite{BL-JOT} where remarkable
extensions to norm inequalities are given.
\end{remark}

\vskip 10pt
\begin{remark} Lemma \ref{L-2.9} is a part of Stinespring' s theory of positive and completely positive linear maps in the influential 1955
paper \cite{S}. The proof given here is somewhat original
and is taken from \cite{AB}. Note that in the course of the proof, we prove Naimark's dilation theorem: {\it If $\{A_i\}_{i=1}^n$ are
positive operators on a space ${\mathcal{S}}$ such that $\sum_{i=1}^n A_i \le I$, then there exist some mutually orthogonal
projections $\{P_i\}_{i=1}^n$ on a larger space ${\mathcal{H}}\supset{\mathcal{S}}$ such that $(P_i)_{\mathcal{S}}=A_i$, ($1\le i\le
n$). }
\end{remark}

\vskip 10pt
\begin{remark}  Given a symmetric norm $\|\cdot\|$ and $p<0$, the  functionals defined on $\bM_n^+$,
$A\mapsto \|A^p\|^{1/p}$, are introduced in \cite{BH2} and called {\it derived anti-norms}. Corollary \ref{C-2.10} is given therein,
\cite[Proposition 4.6]{BH2}. The above proof is much simpler than the original one. For more details and many results on {\it
anti-norms} and {\it derived anti-norms}, often in connection with Theorem \ref{T-2.1}, see \cite{BH1} and \cite{BH2}. Several results in
these papers are generalizations of Corollary \ref{C-2.3}. By using \eqref{F-I.1} and arguing as in the proof of Corollary \ref{C-2.10}, we may derive the
triangle inequality for the Schatten $p$-norms on $\bM_n^+$, i.e.,
\begin{equation}\label{trpos}
\{ {\mathrm{Tr}\,} (A+B)^p \}^{1/p} \le \{ {\mathrm{Tr\,}} A^p \}^{1/p} + \{ {\mathrm{Tr\,}} B^p \}^{1/p}, \qquad A,B\in\bM_n^+, \
p>1.
\end{equation}
\end{remark}

\vskip 10pt
\begin{remark} The inequality \eqref{F-2.11} for normal operators can be extended to general $A,B\in\bM_n$, with a similar proof, as
\begin{equation}\label{BR}
|A+B|\le\frac{|A|+|B|+V(|A^*|+|B^*|)V^*}{2}
\end{equation}
for some unitary $V\in\bM_n$. This is pointed out in \cite{BR}. This is still true for operators $A,B$ in a von Neumann algebra
$\mathcal{M}$ with $V$ a partial isometry in $\mathcal{M}$. If $\mathcal{M}$ is endowed with a regular trace, this gives a short,
simple proof of the triangle inequality for the trace norm on $\mathcal{M}$.
Inequality \eqref{F-2.11} raises the question of comparison $|A+B|$ and $|A|+|B|$. The following result is given in \cite{Lee2}.
 {\it Let $A_1,\cdots, A_m$ be invertible operators  with
condition numbers dominated by  $\omega > 0 $. Then}
$$
|A_1+\cdots +A_m|\leq \frac{\omega+1}{2\sqrt{\omega}}(|A_1|+\cdots
+|A_m|).
$$
Here the condition number of an invertible operator $A$ on a Hilbert space is $\|A\|\|A^{-1}\|^{-1}$. Note that the bound is
independent of the number of operators. Though it is a rather low bound, it is not known whether it is sharp. Combining \eqref{BR}
and \eqref{trpos} we get the triangle inequality for the Schatten $p$-norms on the whole space $\bM_n$,
\begin{equation*}
\{ {\mathrm{Tr}\,} |A+B|^p \}^{1/p} \le \{ {\mathrm{Tr\,}} |A|^p \}^{1/p} + \{ {\mathrm{Tr\,}} |B|^p \}^{1/p}, \qquad A,B\in\bM_n, \
p>1.
\end{equation*}

\end{remark}

\vskip 5pt
It remains to give a proof of the infinite dimensional version \eqref{F-2.13} of the monotone case of Theorem 2.1, the non-monotone
case following in a similar way to the finite dimensional version. As for finite dimensional spaces, we may assume that $\Phi$ is a
compression map, thus we consider a subspace $\mathcal{S}\subset{\mathcal H}$ and the map $A\mapsto A_{\mathcal{S}}$. By replacing
$f(t)$ by $f(-t)$ and $A$ by $-A$, we may also assume that $f(t)$ is nondecreasing.

If $X$ is a Hermitian on $\mathcal{H}$, we define a sequence of numbers $\{\lambda_k(X)\}_{k=1}^{\infty}$,
$$
\lambda_k(X)=\sup_{\{{\cal F}\,:\,\dim{\cal F}=k\}}\,\inf_{\{h\in{\cal F}\,:\,\Vert h\Vert=1\}}\langle h, Xh\rangle
$$
where the supremum runs over $k$-dimensional subspaces. Note that $\{\lambda_k(X)\}_{k=1}^{\infty}$ is a non-increasing sequence
whose limit is the upper bound of the essential spectrum of $X$. We also define $\{\lambda_{-k}(X)\}_{k=1}^{\infty}$,
$$
\lambda_{-k}(X)=\sup_{\{{\cal F}\,:\,{\rm codim\,}{\cal F}=k-1\}}\,\inf_{\{h\in{\cal F}\,:\,\Vert h\Vert=1\}}\langle h, Xh\rangle.
$$
Then, $\{\lambda_{-k}(X)\}_{k=1}^{\infty}$ is a nondecreasing sequence whose limit is the lower bound of the essential spectrum of
$X$. The following fact (a) is obvious and fact (b) is easily checked.
\begin{itemize}
\item[(a)] If  $X\le Y$, then $\lambda_k(X)\le\lambda_k(Y)$ and $\lambda_{-k}(X)\le\lambda_{-k}(Y)$ for all $k=1,\cdots$.

\item[(b)] if $r>0$ and $X,\,Y$ are Hermitian, $\lambda_k(X)\le\lambda_k(Y)$ and $\lambda_{-k}(X)\le\lambda_{-k}(Y)$,for all
$k=1,\dots$, then $X\le UYU^* + rI$ for some unitary $U$.
\end{itemize}
These facts show that, given $r>0$, two Hermitians $X$, $Y$ with $X\le Y$, and a continuous nondecreasing function $\phi$, there
exists a unitary $U$ such that $\phi(X)\le U\phi(Y)U^*+rI$.

By fact (b) it suffices to show that
\begin{equation}\label{F-2.a}
\lambda_k(f(A_{\cal S}))\le\lambda_k(f(A)_{\cal S})
\end{equation}
and
\begin{equation}\label{F-2.b}
\lambda_{-k}(f(A_{\cal S}))\le\lambda_{-k}(f(A)_{\cal S})
\end{equation}
for all $k=1,\cdots$. Now, we prove \eqref{F-2.b}  and  distinguish two cases:
\vskip 5pt\noindent
1.\ $\lambda_{-k}(A_{\cal S})$ is an eigenvalue of $A_{\cal S}$. Then, for $1\le j\le k$, $\lambda_{-j}(f(A_{\cal S}))$ are
eigenvalues for $f(A_{\cal S})$. Consequently, there exists a subspace
${\cal F}\subset{\cal S}$, ${\rm codim}_{\cal S}\,{\cal F}=k-1$, such that
\begin{align*}
\lambda_{-k}(f(A_{\cal S})) =& \min_{\{h\in{\cal F}\,:\,\Vert h\Vert=1\}}\langle h, f(A_{\cal S})h\rangle \\
=& \min_{\{h\in{\cal F}\,:\,\Vert h\Vert=1\}}f(\langle h, A_{\cal S}h\rangle) \\
\le& \inf_{\{h\in{\cal F}\,:\,\Vert h\Vert=1\}}\langle h, f(A)h\rangle \le \lambda_{-k}(f(A)_{\cal S})
\end{align*}
where we have used that $f$ is non-decreasing and convex.

\noindent
2.\ $\lambda_{-k}(A_{\cal S})$ is not an eigenvalue of $A_{\cal S}$ (so, $\lambda_{-k}(A_{\cal S})$ is the lower bound of the
essential spectrum of $A_{\cal S}$).
 Fix $\varepsilon>0$ and choose  $\delta>0$
such that $|f(x)-f(y)|\le \varepsilon$ for all $x$, $y$ are in the convex hull of the spectrum of $A$ with $|x-y|\le\delta$. There
exists a subspace ${\cal F}\subset{\cal S}$, ${\rm codim}_{\cal S}\,{\cal F}=k-1$, such that
$$
\lambda_{-k}(A_{\cal S}) \le \inf_{\{h\in{\cal F}\,:\,\Vert h\Vert=1\}}\langle h, A_{\cal S}h\rangle +\delta
.
$$
Since $f$ is continuous nondecreasing we have $f(\lambda_{-k}(A_{\cal S}))=\lambda_{-k}(f(A_{\cal S}))$ so that, as $f$ is
nondecreasing,
$$
\lambda_{-k}(f(A_{\cal S})) \le f\left(\inf_{\{h\in{\cal F}\,:\,\Vert h\Vert=1\}}\langle h, A_{\cal S}h\rangle +\delta\right).
$$
Consequently,
$$
\lambda_{-k}(f(A_{\cal S})) \le \inf_{\{h\in{\cal F}\,:\,\Vert h\Vert=1\}}f(\langle h, A_{\cal S}h\rangle) + \varepsilon,
$$
 so, using the convexity of $f$ and the definition of $\lambda_{-k}(\cdot)$, we get
$$
\lambda_{-k}(f(A_{\cal S}))\le \lambda_{-k}(f(A)_{\cal S}) + \varepsilon.
$$
By letting $\varepsilon\longrightarrow 0$, the proof of \eqref{F-2.b} is complete.  The proof of \eqref{F-2.a} is similar.
Thus \eqref{F-2.13} is established.

\vskip 5pt

\section{A matrix subadditivity inequality}

\subsection{Sub/super-additivity inequalities via unitary orbits}

This section deals with some recent subadditive properties for concave functions, and similarly superadditive properties of convex
functions. The main result is:

\vskip 10pt
\begin{theorem}\label{T-3.1} Let $f(t)$ be a monotone concave function on $[0,\infty)$ with $f(0)\ge 0$ and let
$A,B\in\bM_n^+$. Then, for some unitaries $U,V\in\bM_n$,
$$
f(A+B)\le Uf(A)U^* + Vf(B)V^*.
$$
\end{theorem}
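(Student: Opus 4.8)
The plan is to reduce to the normalized case $f(0)=0$ and then to realize $f(A+B)$ from a positive block matrix, turning the problem into a comparison between $f$ of a positive matrix and $f$ of its diagonal blocks. First I would dispose of the additive constant. Setting $\tilde f=f-f(0)$, the function $\tilde f$ is again nondecreasing, concave and nonnegative with $\tilde f(0)=0$; since $f(X)=\tilde f(X)+f(0)I$ for every Hermitian $X$, any bound $\tilde f(A+B)\le U\tilde f(A)U^*+V\tilde f(B)V^*$ upgrades to the desired one, because the leftover $f(0)I$ satisfies $f(0)I\le 2f(0)I$ when $f(0)\ge 0$. So I may assume $f(0)=0$.

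Next, with $R=\begin{pmatrix} A^{1/2} & B^{1/2}\\ 0 & 0\end{pmatrix}$ one computes $RR^*=(A+B)\oplus 0$ and $R^*R=M$, where $M=\begin{pmatrix} A & A^{1/2}B^{1/2}\\ B^{1/2}A^{1/2} & B\end{pmatrix}\ge 0$. Since $RR^*$ and $R^*R$ are unitarily equivalent and $f(0)=0$, we obtain $f(A+B)\oplus 0\simeq f(M)$, so that $f(A+B)$ is, up to unitary equivalence, the compression of $f(M)$ onto the range of $M$. The target inequality will then follow once $f(M)$ is compared, in the unitary-orbit order, with the diagonal blocks $f(A)$ and $f(B)$. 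Two guiding special cases feed this comparison: the linear function $f(t)=t$, for which the statement is exactly Thompson's triangle inequality \eqref{Thompson} applied to $A,B\ge 0$; and the scalar subadditivity $f(s+t)\le f(s)+f(t)$, valid for concave $f$ with $f(0)\ge 0$, which controls eigenvalues since monotonicity gives $\lambda_k(f(H))=f(\lambda_k(H))$. Combined with Weyl's inequalities for $A+B$, these produce the necessary eigenvalue (weak majorization) data.

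The hard part is upgrading this eigenvalue information to the genuine operator inequality $f(A+B)\le Uf(A)U^*+Vf(B)V^*$, and this cannot be done by majorization alone: the unitary-orbit domination $C\le UDU^*+VEV^*$ is strictly stronger than the weak majorization $C\prec_w D+E$, since a rank or trace obstruction can defeat a weak-majorization hypothesis. One must therefore actually construct the two unitaries. The mechanism I would invoke is the decomposition technique for positive block matrices developed later in the thesis, namely that a positive partitioned matrix is a sum of two unitary conjugates of the matrices formed from its diagonal blocks. Applying such a splitting to $f(M)$ — or to $M$ before taking $f$, using concavity to pass to the blocks — should exhibit the compression that realizes $f(A+B)$ as dominated by a unitary conjugate of $f(A)$ plus a unitary conjugate of $f(B)$. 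I expect the construction of these \emph{non-commuting} unitaries, rather than any of the preliminary eigenvalue estimates, to be the main obstacle, precisely because the required pair cannot in general be taken to commute.
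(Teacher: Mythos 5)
Your reduction to $f(0)=0$ and the observation that $f(A+B)\oplus 0$ is unitarily equivalent to $f(M)$, where $M=R^*R=\begin{pmatrix} A & A^{1/2}B^{1/2}\\ B^{1/2}A^{1/2} & B\end{pmatrix}$, are both correct, but the core of the argument is missing, and the mechanism you point to does not close it. If you apply the block decomposition of Lemma~\ref{L-3.4} to $M$ itself, you get $M=U_0(A\oplus 0)U_0^*+V_0(0\oplus B)V_0^*$; to then ``pass $f$ to the blocks'' you would need $f(S+T)\le Uf(S)U^*+Vf(T)V^*$ for the two positive summands $S=U_0(A\oplus 0)U_0^*$ and $T=V_0(0\oplus B)V_0^*$ --- which is exactly the theorem you are trying to prove, so this route is circular. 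If instead you apply the decomposition to $f(M)$, its diagonal blocks are the compressions $(f(M))_{1,1}$ and $(f(M))_{2,2}$, which are \emph{not} $f(A)$ and $f(B)$; relating $f(M_{\mathcal S})$ to $(f(M))_{\mathcal S}$ is precisely the Jensen-type inequality via unitary orbits (Theorem~\ref{T-2.1}, or its congruence form Corollary~\ref{C-2.5}), and that ingredient never appears in your sketch. So you have correctly diagnosed that constructing the two unitaries is the main obstacle, but you have not overcome it; the preliminary eigenvalue and weak-majorization remarks do not contribute to the final step, as you yourself note.

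The paper supplies exactly the missing engine, and does so without any block matrix. Passing to the equivalent convex statement with $g(0)=0$ and $A+B$ invertible, one writes $A=X(A+B)X^*$ and $B=Y(A+B)Y^*$ with the contractions $X=A^{1/2}(A+B)^{-1/2}$ and $Y=B^{1/2}(A+B)^{-1/2}$, which satisfy $X^*X+Y^*Y=I$. Corollary~\ref{C-2.5} (monotone case) gives $g(X(A+B)X^*)\le U_0Xg(A+B)X^*U_0^*$; using that $T^*T$ and $TT^*$ are unitarily congruent, each of the two resulting inequalities is rewritten as $Ug(A)U^*\le (|g|(A+B))^{1/2}X^*X(|g|(A+B))^{1/2}$ up to the sign $\varepsilon$ of $g$, and adding them yields $Ug(A)U^*+Vg(B)V^*\le g(A+B)$ precisely because $X^*X+Y^*Y=I$. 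If you wish to salvage your block-matrix route, you must import the same Corollary~\ref{C-2.5} in its compression form to compare $f(A)=f(M_{\mathcal S})$ with $W(f(M))_{\mathcal S}W^*$; at that point the detour through $M$ buys nothing over the direct congruence argument.
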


\vskip 5pt\noindent
Thus, the obvious scalar inequality $f(a+b)\le f(a)+f(b)$ can be extended to positive matrices $A$ and $B$ by considering element in
the unitary orbits of $f(A)$ and $f(B)$. This inequality via unitary orbits considerably improves the famous Rotfel'd trace
inequality \eqref{Rot1} for a non-negative concave function on the positive half-line,
and its symmetric norm version
\begin{equation}\label{subnorm}
\| f(A+B)\|  \le \|f(A)\| + \|f(B)\|
\end{equation}
for all $A,B\in \bM_n^+$ and all symmetric norms $\|\cdot\|$ on $\bM_n$.

Of course Theorem \ref{T-3.1} is equivalent to the next statement for  convex functions:

\vskip 10pt
\begin{cor}\label{cor-subadditivity} Let $g(t)$ be a monotone convex function on $[0,\infty)$ with $g(0)\le0$ and let
$A,B\in \bM_n^+$ . Then, for some unitaries $U,V\in\bM_n$,
\begin{equation}\label{supad}
g(A+B)\ge Ug(A)U^* + Vg(B)V^*.
\end{equation}
\end{cor}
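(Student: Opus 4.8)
The plan is to deduce this corollary directly from Theorem \ref{T-3.1} by applying it to the concave function $f:=-g$. The whole point is that the passage from concave functions to convex functions corresponds to a mere sign change, so the two statements are equivalent rather than genuinely distinct results; this is exactly the remark made just before the corollary.

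First I would check that $f=-g$ fulfills the hypotheses of Theorem \ref{T-3.1}. Since $g$ is convex on $[0,\infty)$, the function $f=-g$ is concave there; since $g$ is monotone, so is $f$; and since $g(0)\le 0$, we get $f(0)=-g(0)\ge 0$. Thus Theorem \ref{T-3.1} applies to the pair $A,B\in\bM_n^+$ and produces unitaries $U,V\in\bM_n$ with
$$
f(A+B)\le Uf(A)U^* + Vf(B)V^*.
$$

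Next I would translate this back in terms of $g$. For any Hermitian $X\in\bM_n^+$ the functional calculus gives $f(X)=(-g)(X)=-g(X)$, so the displayed inequality reads $-g(A+B)\le -Ug(A)U^*-Vg(B)V^*$. Since for Hermitian operators $M\le N$ is equivalent to $-N\le -M$, multiplying through by $-1$ reverses the inequality and yields exactly \eqref{supad}, with the same unitaries $U,V$.

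I do not expect any real obstacle here: the argument is a formal equivalence driven by the substitution $f=-g$. The only two points needing a (trivial) verification are the functional-calculus identity $(-g)(X)=-g(X)$ for Hermitian $X$ and the order-reversing property of negation on the Hermitian operators; both are immediate, so the content of the corollary lies entirely in Theorem \ref{T-3.1}.
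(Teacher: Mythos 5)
Your reduction is formally valid in isolation: the hypotheses of Theorem \ref{T-3.1} do transfer under $f=-g$, the functional calculus gives $(-g)(X)=-g(X)$, and negation reverses the Loewner order. But in the context of this paper the argument is circular. The paper's proof, placed immediately after the corollary, opens with ``It suffices to prove the convex version, Corollary \ref{cor-subadditivity}'': Theorem \ref{T-3.1} is given no independent proof and is itself obtained from the corollary by exactly the sign change you describe (the authors even announce the equivalence in the sentence preceding the corollary). So your proposal, combined with the paper's reduction, establishes only that the two statements are equivalent; the inequality itself is never proved. The substantive content of the corollary does not ``lie entirely in Theorem \ref{T-3.1}'' --- it lies in the direct argument for the convex version.

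That missing argument runs as follows. One first reduces to $g(0)=0$ (if \eqref{supad} holds for $g$ it holds for $g-\alpha$, $\alpha>0$); combined with monotonicity this forces $g$ to have constant sign $\varepsilon\in\{-1,1\}$, so $g=\varepsilon|g|$. Assuming $A+B$ invertible, write $A=X(A+B)X^*$ and $B=Y(A+B)Y^*$ with the contractions $X=A^{1/2}(A+B)^{-1/2}$ and $Y=B^{1/2}(A+B)^{-1/2}$, which satisfy $X^*X+Y^*Y=I$. The Jensen-type inequality for contractive congruences (Corollary \ref{C-2.5}, in its convex monotone form), together with the unitary congruence of $TT^*$ and $T^*T$ applied to $T=X(|g|(A+B))^{1/2}$, yields a unitary $U$ with $Ug(A)U^*\le \varepsilon\,(|g|(A+B))^{1/2}X^*X(|g|(A+B))^{1/2}$, and similarly a unitary $V$ for $B$; adding the two estimates and using $X^*X+Y^*Y=I$ gives \eqref{supad}. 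If you wish to keep your sign-flip plan, you must first supply an independent proof of Theorem \ref{T-3.1}; as the paper is organized, the logical dependence runs in the opposite direction.
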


\begin{proof} It suffices to prove the convex version, Corollary \ref{cor-subadditivity}. We may confine the proof to the case $g(0)=0$ as if
\eqref{supad} holds for a function $g(t)$ then it also holds for $g(t)-\alpha$ for any $\alpha>0$. This assumption combined with the
monotony of $g(t)$ entails that $g(t)$ has a constant sign $\varepsilon\in\{-1,1\}$, hence $g(t)=\varepsilon |g|(t)$.

We may also assume that $A+B$ is invertible. Then
 $$
 A=X(A+B)X^*
 \quad {\rm and} \quad
  B=Y(A+B)Y^*
 $$
where
 $X=A^{1/2}(A+B)^{-1/2}$ and $Y=B^{1/2}(A+B)^{-1/2}$ are
 contractions. For any $T\in \bM_n,$ $T^*T$ and $TT^*$ are unitarily
congruent. Hence, using Corollary \ref{C-2.5} we have  two unitary operators $U_0$ and $U$ such that
 \begin{align*}
 g(A) &= g(X(A+B)X^*)\\
&\leq U_0 Xg(A+B)X^* U_0^*\\
&= \varepsilon U^*(|g|(A+B))^{1/2}X^*X(|g|(A+B))^{1/2} U,
\end{align*}
so,
\begin{equation}
Ug(A)U^*\leq \varepsilon(|g|(A+B))^{1/2}X^*X(|g|(A+B))^{1/2}.
\end{equation}
Similarly there exists a unitary operator $V$ such that
\begin{equation}
Vg(B)V^*\leq\varepsilon (|g|(A+B))^{1/2}Y^*Y(|g|(A+B))^{1/2}.
\end{equation}
Adding (3.3) and (3.4) we get
$$
 Ug(A)U^*+Vg(B)V^*\leq g (A+B)
$$
since $X^*X+Y^*Y=I.$ \end{proof}

\vskip 10pt
The following corollary is matrix version of another obvious scalar inequality.
\vskip 10pt
 \begin{cor}\label{C-3.3}
 Let $f:[0,\infty)\to [0,\infty)$ be concave and let
$A,B\in\bM_n$ be Hermitian.
  Then, for some unitaries $U,V\in\bM_n$,
$$
 Uf(A)U^*-Vf(B)V^*\leq f (|A-B|).
$$
\end{cor}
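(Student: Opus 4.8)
The plan is to mirror the scalar inequality $f(a)-f(b)\le f(|a-b|)$, valid for $a,b\ge 0$ because, when $a\ge b$, subadditivity gives $f(a)=f\bigl(b+(a-b)\bigr)\le f(b)+f(a-b)$, and the case $a<b$ is trivial. The matrix replacement for scalar subadditivity is exactly Theorem \ref{T-3.1}, so the whole proof is built around it. Since $f$ is defined only on $[0,\infty)$, I read $A,B$ as positive semidefinite (as in Theorem \ref{T-3.1}), which is what makes $f(A)$, $f(B)$, $f(|A-B|)$ meaningful. I would first note that a nonnegative concave function on $[0,\infty)$ is automatically nondecreasing (if its slope ever turned negative, concavity would drive $f\to-\infty$, contradicting nonnegativity), so $f$ satisfies the hypotheses of Theorem \ref{T-3.1} and the eigenvalue-monotonicity principle \eqref{fact} applies.

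First I would record the operator inequality $A\le B+|A-B|$, which holds because $|A-B|\ge A-B$ for the Hermitian matrix $A-B$. As $f$ is only nondecreasing, this does \emph{not} yield $f(A)\le f\bigl(B+|A-B|\bigr)$ directly, since operator monotonicity may fail; instead the min--max characterization \eqref{fact} produces a unitary $W$ with
$$
Wf(A)W^*\le f\bigl(B+|A-B|\bigr).
$$
Next I would apply Theorem \ref{T-3.1} to the two positive matrices $B$ and $|A-B|$, giving unitaries $U_0,V_0$ with
$$
f\bigl(B+|A-B|\bigr)\le U_0f(B)U_0^*+V_0f(|A-B|)V_0^*.
$$
Chaining the two displays yields $Wf(A)W^*\le U_0f(B)U_0^*+V_0f(|A-B|)V_0^*$.

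The final step is a bookkeeping of unitaries chosen to free the term $f(|A-B|)$ from its conjugation. Conjugating the last inequality by $V_0^*$ and setting $U=V_0^*W$ and $V=V_0^*U_0$ turns the factor $V_0^*V_0f(|A-B|)V_0^*V_0$ into $f(|A-B|)$, so that
$$
Uf(A)U^*\le Vf(B)V^*+f(|A-B|),
$$
which is precisely the assertion of Corollary \ref{C-3.3}. The only genuinely delicate point is the decision in the middle step to invoke the eigenvalue comparison \eqref{fact} rather than an (unavailable) operator-monotone inequality; once that is combined with the matrix subadditivity of Theorem \ref{T-3.1}, the concluding conjugation to isolate $f(|A-B|)$ is routine.
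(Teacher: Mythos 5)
Your proof is correct and follows essentially the same route as the paper: start from $A\le B+|A-B|$, use the eigenvalue-monotonicity principle \eqref{fact} for the nondecreasing $f$, apply the subadditivity Theorem \ref{T-3.1} to the positive matrices $B$ and $|A-B|$, and then conjugate to isolate $f(|A-B|)$. The extra details you supply (checking that a nonnegative concave function on $[0,\infty)$ is automatically nondecreasing, and the explicit bookkeeping of unitaries) are accurate and only make explicit what the paper leaves implicit.
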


\vskip 10pt
 \begin{proof} Note that
 $$
 A\leq |A-B|+B.
 $$
 Since $f(t)$ is non-decreasing  and concave there exists  unitaries $W,\,S,\,T$ such that
 $$
 Wf(A)W^*\leq f(|A-B|+B)\leq Sf(|A-B|)S^*+Tf(B)T^*.
 $$
Hence, we have
$$
 Uf(A)U^*-Vf(B)V^*\leq f (|A-B|)
$$
for some unitaries $U,\, V.$ \end{proof}

We can employ Theorem \ref{T-3.1} to get an elegant inequality for positive block-matrices,
 $$\begin{bmatrix} A &X \\
X^* &B\end{bmatrix}\in \bM_{n+m}^+, \qquad A\in\bM_n^+, \,  B\in\bM_m^+,
$$
which nicely extend \eqref{subnorm}. To this end we need an interesting
  decomposition lemma  for elements in $\bM_{n+m}^+$.

\begin{lemma}\label{L-3.4} For every matrix in  $\bM_{n+m}^+$ written in blocks, we have a decomposition
\begin{equation}\label{eqdecfund}
\begin{bmatrix} A &X \\
X^* &B\end{bmatrix} = U
\begin{bmatrix} A &0 \\
0 &0\end{bmatrix} U^* +
V\begin{bmatrix} 0 &0 \\
0 &B\end{bmatrix} V^*
\end{equation}
for some unitaries $U,\,V\in  \bM_{n+m}$.
\end{lemma}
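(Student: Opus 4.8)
The plan is to realize both summands as $TT^*$-type matrices arising from a single \emph{Hermitian} square root, so that the off-diagonal blocks of $M$ are reproduced automatically rather than having to be repaired by hand. Write $M=\begin{bmatrix} A & X \\ X^* & B\end{bmatrix}\in\bM_{n+m}^+$ and let $M^{1/2}$ be its positive semidefinite square root. I would partition $M^{1/2}$ into column blocks, $M^{1/2}=\begin{bmatrix} F_1 & F_2\end{bmatrix}$, where $F_1$ collects the first $n$ columns and $F_2$ the last $m$ columns, so that $F_1$ is $(n+m)\times n$ and $F_2$ is $(n+m)\times m$.

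The decisive point is that $M^{1/2}$ is Hermitian, whence the two identities $M=M^{1/2}(M^{1/2})^*$ and $M=(M^{1/2})^*M^{1/2}$ hold simultaneously. Reading the first in block-column form yields the exact additive splitting
$$
M=F_1F_1^*+F_2F_2^*,
$$
with no off-diagonal defect left over. Reading the second in block form identifies the diagonal blocks, giving $F_1^*F_1=A$ and $F_2^*F_2=B$.

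Next I would convert each summand into a unitary orbit of the padded matrices $\hat A=\begin{bmatrix} A & 0 \\ 0 & 0\end{bmatrix}$ and $\hat B=\begin{bmatrix} 0 & 0 \\ 0 & B\end{bmatrix}$. For any rectangular $T$ the Hermitian matrices $TT^*$ and $T^*T$ share the same nonzero eigenvalues with the same multiplicities. Since $F_1F_1^*$ is $(n+m)\times(n+m)$ and $F_1^*F_1=A$, the eigenvalue list of $F_1F_1^*$ is exactly that of $A$ together with $(n+m)-\mathrm{rank}(A)$ zeros, which is precisely the eigenvalue list of $\hat A$. Hence $F_1F_1^*$ and $\hat A$ are unitarily equivalent, $F_1F_1^*=U\hat AU^*$ for some unitary $U\in\bM_{n+m}$, and likewise $F_2F_2^*=V\hat BV^*$. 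Substituting these into the splitting gives the claimed decomposition.

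The only step needing care is this eigenvalue bookkeeping: one must check that the zero multiplicities match after padding, not merely that the nonzero spectra agree. This is immediate from $\mathrm{rank}(F_1F_1^*)=\mathrm{rank}(F_1^*F_1)=\mathrm{rank}(A)=\mathrm{rank}(\hat A)$, so I expect no genuine obstacle. Note also that no invertibility reduction or limiting argument is required, since both the square-root construction and the $TT^*$/$T^*T$ equivalence are valid without any nondegeneracy assumption on $A$, $B$, or $M$.
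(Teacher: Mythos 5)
Your proof is correct and is essentially the paper's own argument: the paper likewise factors the matrix as the square of its Hermitian square root and splits that square root into two pieces (it uses the row blocks, i.e.\ the adjoints of your $F_1,F_2$, padded with zero columns so that $TT^*$ equals the padded diagonal block on the nose), then invokes the unitary equivalence of $T^*T$ and $TT^*$. Your column-block variant with the explicit rank bookkeeping is the same idea in slightly different packaging.
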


\vskip 5pt
\begin{proof} To obtain this decomposition of the positive semi-definite block matrix, factorize it as a square of positive matrices,\begin{equation*}
\begin{bmatrix} A &X \\
X^* &B\end{bmatrix} =
\begin{bmatrix} C &Y \\
Y^* &D\end{bmatrix}
\begin{bmatrix} C &Y \\
Y^* &D\end{bmatrix}
\end{equation*}
and observe that it can be written as
\begin{equation*}
\begin{bmatrix} C &0 \\
Y^* &0\end{bmatrix}
\begin{bmatrix} C &Y \\
0 &0\end{bmatrix} +
\begin{bmatrix} 0 &Y \\
 0&D\end{bmatrix}
\begin{bmatrix} 0 &0 \\
Y^* &D\end{bmatrix} = T^*T + S^*S.
\end{equation*}
Then, use the fact that $T^*T$ and $S^*S$ are unitarily congruent to
$$
TT^*= \begin{bmatrix} A &0 \\
0 &0\end{bmatrix}
\quad \mathrm{and}
\quad
SS^*=\begin{bmatrix} 0 &0 \\
0 &B\end{bmatrix},
$$
completing the proof of the decomposition.
\end{proof}

Combined with Theorem \ref{T-3.1}, the lemma yields a norm inequality for block-matrices. A symmetric norm on $\bM_{n+m}$ induces a
symmetric norm on $\bM_{n}$, via $\|A\|=\|A \oplus 0\|$.

\vskip 10pt
\begin{cor}\label{corlee} Let $f(t)$ be a non-negative concave
function on $[0,\infty)$. Then, given an arbitrary partitioned
positive semi-definite matrix,
$$
\left\| \,f\left( \begin{bmatrix} A &X \\ X^* &B\end{bmatrix}\right)
 \right\|
\le \left\| f(A) \right\| +  \left\| f(B) \right\|
$$
for all symmetric norms.
\end{cor}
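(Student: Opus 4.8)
The plan is to combine the block decomposition of Lemma \ref{L-3.4} with the scalar subadditivity inequality of Theorem \ref{T-BU}. Write $M=\begin{bmatrix} A & X \\ X^* & B\end{bmatrix}\in\bM_{n+m}^+$. By Lemma \ref{L-3.4} there are unitaries $U,V\in\bM_{n+m}$ with $M=U\widetilde A U^*+V\widetilde B V^*$, where $\widetilde A=A\oplus 0_m$ and $\widetilde B=0_n\oplus B$. Setting $P=U\widetilde A U^*$ and $Q=V\widetilde B V^*$ and applying Theorem \ref{T-BU} to the pair $P,Q\in\bM_{n+m}^+$ gives, for every symmetric norm,
$$
\|f(M)\|=\|f(P+Q)\|\le \|f(P)+f(Q)\|\le \|f(P)\|+\|f(Q)\|=\|f(\widetilde A)\|+\|f(\widetilde B)\|,
$$
the last step by unitary invariance of the norm and of the functional calculus. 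When $f(0)=0$ this already finishes the proof, because then $f(\widetilde A)=f(A)\oplus 0_m$, so $\|f(\widetilde A)\|=\|f(A)\|$ for the induced norm, and likewise $\|f(\widetilde B)\|=\|f(B)\|$.

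The sole obstacle is the case $f(0)>0$, and it is the crux of the argument. Here the zero padding in $\widetilde A=A\oplus 0_m$ turns into $m$ eigenvalues equal to $f(0)$ in $f(\widetilde A)=f(A)\oplus f(0)I_m$, so the bound above overshoots the target by multiples of $f(0)$ and must be refined. I would reduce to the (unnormalized) Ky Fan norms $\|Y\|_{(k)}=\sum_{j=1}^k\lambda_j(Y)$ by the subadditive form of Fan's dominance principle: for positive $Y,Z_1,Z_2$, if $\|Y\|_{(k)}\le\|Z_1\|_{(k)}+\|Z_2\|_{(k)}$ holds for every $k$, then $\lambda(Y)$ is weakly majorized by $(\lambda_j(Z_1)+\lambda_j(Z_2))_j$, whence $\|Y\|\le\|\mathrm{diag}(\lambda_j(Z_1)+\lambda_j(Z_2))\|\le\|Z_1\|+\|Z_2\|$ for all symmetric norms. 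Thus it suffices to prove the inequality for each $\|\cdot\|_{(k)}$ with $Y=f(M)$, $Z_1=f(A)\oplus 0_m$ and $Z_2=0_n\oplus f(B)$.

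For the Ky Fan norms I would split $f=g+c$ with $c:=f(0)\ge 0$ and $g:=f-c$ non-negative concave with $g(0)=0$; note also that a non-negative concave function on $[0,\infty)$ is automatically non-decreasing (a negative slope would, by concavity, drive $f$ to $-\infty$), so $\lambda_j(f(M))=f(\lambda_j(M))$ in decreasing order. Counting eigenvalues then gives $\|f(M)\|_{(k)}=\|g(M)\|_{(k)}+kc$, while the zero-padded blocks yield $\|f(A)\|_{(k)}=\|g(A)\|_{(k)}+\min(k,n)c$ and $\|f(B)\|_{(k)}=\|g(B)\|_{(k)}+\min(k,m)c$. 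The already-settled case $g(0)=0$, applied to the symmetric norm $\|\cdot\|_{(k)}$, furnishes $\|g(M)\|_{(k)}\le\|g(A)\|_{(k)}+\|g(B)\|_{(k)}$, so the proof closes once one checks the elementary inequality $k\le\min(k,n)+\min(k,m)$, valid for every $1\le k\le n+m$. Adding this constant bookkeeping to the $g$-inequality yields $\|f(M)\|_{(k)}\le\|f(A)\|_{(k)}+\|f(B)\|_{(k)}$ for all $k$, and the Fan principle of the previous paragraph completes the proof. The structural content is carried entirely by Lemma \ref{L-3.4} and Theorem \ref{T-BU}; the only delicate point is the accounting for the additive constant $f(0)$ through the Ky Fan norms.
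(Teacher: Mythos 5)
Your proof is correct, and its skeleton --- the decomposition of Lemma \ref{L-3.4} followed by a subadditivity theorem applied to $f(P+Q)$ --- is the same as the paper's. Two points of comparison are worth recording. First, you invoke Theorem \ref{T-BU} where the paper uses Theorem \ref{T-3.1}; since you only exploit the weaker consequence $\|f(P+Q)\|\le\|f(P)\|+\|f(Q)\|$, the unitary-orbit inequality $f(P+Q)\le Uf(P)U^*+Vf(Q)V^*$ of Theorem \ref{T-3.1} would serve equally well and has the advantage of keeping the argument self-contained within this chapter. Second, and more substantially, the paper compresses the passage from
$$
f(M)\le U\bigl(f(A)\oplus f(0)I_m\bigr)U^*+V\bigl(f(0)I_n\oplus f(B)\bigr)V^*
$$
to the stated norm inequality into the single remark that symmetric norms are nondecreasing functions of the singular values; when $f(0)>0$ this is exactly where care is required, since the naive estimate $\|f(A)\oplus f(0)I_m\|\le\|f(A)\oplus 0_m\|$ fails already for the trace norm. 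Your reduction to the Ky Fan norms, the splitting $f=g+f(0)$ with $g(0)=0$, and the elementary count $k\le\min(k,n)+\min(k,m)$ supply precisely the bookkeeping that the paper leaves implicit, so on this point your write-up is the more complete of the two.
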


\vskip 10pt
\begin{proof}
From \eqref{eqdecfund} and Theorem \ref{T-3.1}, we have
\begin{equation*}
f\left(\begin{bmatrix} A &X \\
X^* &B\end{bmatrix}\right) = U
\begin{bmatrix} f(A) &0 \\
0 &f(0)I\end{bmatrix} U^* +
V\begin{bmatrix} f(0)I &0 \\
0 &f(B)\end{bmatrix} V^*
\end{equation*}
for some unitaries $U,\,V\in  \bM_{n+m}$. The result then follows from the simple fact
that symmetric norms are nondecreasing functions of the singular values.
\end{proof}

\vskip 10pt
 Applied to  $X=A^{1/2}B^{1/2}$, this result yields the  Rotfel'd type inequalities \eqref{Rot1}-\eqref{subnorm}, indeed,
$$
\begin{bmatrix} A &X \\
X^* &B\end{bmatrix}=\begin{bmatrix} A^{1/2} &0 \\
B^{1/2} &0\end{bmatrix}\begin{bmatrix} A^{1/2} &B^{1/2} \\
0 &0\end{bmatrix}
$$
is then  unitarily equivalent to $(A+B)\oplus 0$.
In case of the trace norm, the above result may be restated as a trace inequality without any non-negative assumption: {\it For all
concave functions $f(t)$ on the positive half-line and all positive block-matrices,}
$$
{\mathrm{Tr\,}} f\left( \begin{bmatrix} A &X \\ X^* &B\end{bmatrix}\right)
\le {\mathrm{Tr\,}} f(A) +  {\mathrm{Tr\,}} f(B).
$$
The case of $f(t)=\log t$ then gives Fisher's inequality,
$$
\det \begin{bmatrix} A &X \\ X^* &B\end{bmatrix} \le \det A \det B.
$$

Theorem \ref{T-3.1} may be used to extend another classical (superadditive and concavity) property of the determinant, the Minkowski
inequality \eqref{Minkowski}. We have the following extension:

\begin{cor}\label{C-3.6}  If $g:[0,\infty)\to [0,\infty)$ is a convex function, $g(0)=0$,
and  $A,\,B\in \bM_n^+$, then,
\begin{equation*}
{\det}^{1/n} g(A+B) \ge {\det}^{1/n} g(A) + {\det}^{1/n} g(B).
\end{equation*}
\end{cor}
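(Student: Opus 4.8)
The plan is to deduce this determinantal superadditivity from the matrix superadditivity already established in Corollary \ref{cor-subadditivity}, combined with the monotonicity of the determinant on $\bM_n^+$ and Minkowski's inequality \eqref{Minkowski}.

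First I would check that the hypotheses of Corollary \ref{cor-subadditivity} are met, namely that $g$ is monotone. Although the statement only assumes $g$ convex with $g(0)=0$ and $g\ge 0$, these conditions force $g$ to be nondecreasing: a convex function on $[0,\infty)$ either is monotone or attains its minimum at some interior point, being strictly decreasing before it; in the latter case one would have $g(t)<g(0)=0$ on an initial interval, contradicting $g\ge 0$. Hence $g$ is nondecreasing and $g(0)=0\le 0$, so Corollary \ref{cor-subadditivity} applies and yields unitaries $U,V\in\bM_n$ with
$$
g(A+B)\ge Ug(A)U^* + Vg(B)V^*.
$$

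Both sides are positive semidefinite (since $g\ge 0$), so by Weyl's monotonicity the eigenvalues of $g(A+B)$ dominate those of $Ug(A)U^*+Vg(B)V^*$, whence $\det^{1/n} g(A+B)\ge \det^{1/n}\!\left(Ug(A)U^*+Vg(B)V^*\right)$. Applying Minkowski's inequality \eqref{Minkowski} to the two positive matrices $Ug(A)U^*$ and $Vg(B)V^*$, together with the unitary invariance of the determinant, bounds the right-hand side below by $\det^{1/n} g(A) + \det^{1/n} g(B)$, which is exactly the claim.

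The only genuinely delicate point is the preliminary observation that the stated hypotheses silently impose the monotonicity of $g$; once Corollary \ref{cor-subadditivity} is thereby unlocked, the rest is a routine two-line assembly from standard properties of the determinant. I do not expect any serious obstacle beyond being careful that both matrices entering the superadditive inequality are positive semidefinite, so that the monotonicity of the determinant is legitimately applied.
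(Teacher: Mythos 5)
Your proof is correct and is exactly the argument the paper intends (the corollary is stated without proof, immediately after the remark that Theorem \ref{T-3.1} extends Minkowski's inequality): apply Corollary \ref{cor-subadditivity} and then the Minkowski determinantal inequality \eqref{Minkowski} together with unitary invariance and monotonicity of the determinant on $\bM_n^+$. Your preliminary observation that convexity, nonnegativity and $g(0)=0$ force $g$ to be nondecreasing is the right way to reconcile the corollary's hypotheses with the monotonicity assumption required in Corollary \ref{cor-subadditivity}.
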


\vskip 10pt
As another example of combination of Theorem \ref{T-3.1} and \eqref{eqdecfund}, we have:

\vskip 10pt
\begin{cor}\label{C-3.7} Let $f:[0,\infty)\to [0,\infty)$ be concave and let $A=(a_{i,j})$
be a positive semi-definite matrix in $\bM_n$. Then, for some
rank one ortho-projections $\{E_i\}_{i=1}^n$ in $\bM_n$,
$$
f(A)\le \sum_{i=1}^n f(a_{i,i})E_i.
$$
\end{cor}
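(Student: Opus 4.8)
The plan is to combine the decomposition \eqref{eqdecfund} with the subadditivity inequality of Theorem \ref{T-3.1}, in the spirit of the previous two corollaries. The first step is to record the \emph{diagonal decomposition}
$$
A=\sum_{i=1}^n a_{i,i}\,G_i ,
$$
where the $G_i\in\bM_n$ are rank one orthogonal projections. This is the scalar-block version of Lemma \ref{L-3.4}: factoring $A=M^2$ with $M=A^{1/2}$ and letting $T_i$ be the matrix obtained from $M$ by keeping only its $i$-th row, one checks that $\sum_i T_i^*T_i=M^2=A$ while $T_iT_i^*=a_{i,i}E_{i,i}$, so the unitary congruence $T_i^*T_i\simeq T_iT_i^*$ furnishes $T_i^*T_i=a_{i,i}G_i$ for rank one projections $G_i$. (Equivalently one peels off the diagonal entries one at a time, applying \eqref{eqdecfund} and recursing on the principal submatrix.)

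The second step applies Theorem \ref{T-3.1}. Observe first that a nonnegative concave function on $[0,\infty)$ is automatically nondecreasing (a negative slope would force $f$ towards $-\infty$), so Theorem \ref{T-3.1} is available; iterating its two-term form gives unitaries $U_i$ with
$$
f(A)=f\Big(\sum_{i=1}^n a_{i,i}G_i\Big)\le \sum_{i=1}^n U_i\,f(a_{i,i}G_i)\,U_i^* .
$$
Since each $G_i$ has rank one, functional calculus yields $f(a_{i,i}G_i)=f(a_{i,i})G_i+f(0)(I-G_i)$. In the case $f(0)=0$ this is exactly $f(a_{i,i})G_i$, and setting $E_i:=U_iG_iU_i^*$ (again rank one projections) gives $f(A)\le\sum_i f(a_{i,i})E_i$ immediately.

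The main obstacle is the genuine case $f(0)>0$: the correction terms $f(0)(I-G_i)$ have rank $n-1$ and cannot be absorbed into rank one projections, so the raw subadditivity bound overshoots the sharp right-hand side $\sum_i f(a_{i,i})E_i$. To remove them I would split $f=g+f(0)$ with $g:=f-f(0)$ nonnegative concave and $g(0)=0$, dispose of $g$ by the argument above, and then restore the constant $f(0)I$. The delicate point is that the projections produced for $g$ need not satisfy $\sum_i E_i\ge I$, so $g(A)$ and $f(0)I$ cannot be handled separately and must be dominated jointly. Here I would use Schur's majorization $(a_{i,i})\prec(\lambda_i(A))$ together with concavity — which gives $f(a_{i,i})\ge\sum_j d_{i,j}f(\lambda_j(A))$ for a doubly stochastic $D=(d_{i,j})$, and in particular $\sum_i f(a_{i,i})\ge\mathrm{Tr}\,f(A)$ — and construct the $E_i$ adapted to the eigenbasis of $A$ so that the eigenvalues of $\sum_i f(a_{i,i})E_i$ dominate those of $f(A)$ in the matching directions. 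Producing such rank one projections, a Loewner-domination (or realization) statement, is where the real work lies, and it is precisely the step that goes beyond the mechanical combination of Theorem \ref{T-3.1} and \eqref{eqdecfund}.
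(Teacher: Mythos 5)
Your first two steps are exactly the paper's proof: the diagonal decomposition $A=\sum_i a_{i,i}G_i$ obtained by iterating \eqref{eqdecfund}, followed by the subadditivity Theorem~\ref{T-3.1} and the observation that $f(a_{i,i}G_i)=f(a_{i,i})G_i$ when $f(0)=0$. Up to that point the argument is correct and complete.

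The gap is the case $f(0)>0$, which you correctly identify as the obstacle but do not close. The paper disposes of it with a short reduction that you missed: by a limit argument one may assume $A$ is invertible, so its spectrum lies in some $[r,s]$ with $r>0$; since $r I\le A\le sI$, every diagonal entry $a_{i,i}=\langle e_i,Ae_i\rangle$ also lies in $[r,s]$. One then replaces $f$ by the concave function $\tilde f$ with $\tilde f(0)=0$, $\tilde f=f$ on $[r,s]$, and $\tilde f$ affine on $[0,r]$ (this is concave because $f(0)\ge 0$ gives $f(r)/r\ge (f(r)-f(0))/r\ge f'_+(r)$). Then $\tilde f(A)=f(A)$ and $\tilde f(a_{i,i})=f(a_{i,i})$, so the already-settled case $\tilde f(0)=0$ yields the full statement. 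By contrast, the route you sketch --- splitting $f=g+f(0)$ and then realizing a Schur-type majorization by rank one projections adapted to the eigenbasis of $A$ --- is not only harder but cannot work as described: already for $f(t)=t$ the spectral projections of $A$ give $\sum_i a_{i,i}E_i=I$ when $A$ has eigenvalues $2,0$ and unit diagonal, and $A\le I$ fails; the projections $E_i$ in the corollary must encode the off-diagonal structure of $A$, as they do in the decomposition $A=\sum_i a_{i,i}G_i$. So the missing ingredient is the invertibility-plus-truncation reduction, not a new domination lemma.
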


\begin{proof} By a limit argument, we may assume that $A$ is invertible, and hence we may also assume that
$f(0)=0$,
indeed if the spectrum of $A$ lies in an interval $[r,s]$, $r>0$, we may replace $f(t)$
 by  any concave function on
$[0,\infty)$ such that $\tilde{f}(0)=0$ and $\tilde{f}(t)=f(t)$ for
$t\in[r,s]$.   By a repetition of \eqref{eqdecfund} we have
$$
A=\sum_{i=1}^n a_{i,i} F_i
$$
for some
rank one ortho-projections $\{F_i\}_{i=1}^n$ in $\bM_n$. An application of Theorem \ref{T-3.1} yields
$$
f(A) \le \sum_{i=1}^n U_i f(a_{i,i} F_i)U_i^*
$$
for some unitary operators $\{U_i\}_{i=1}^n$. Since $f(0)=0$, for each $i$, $U_i f(a_{i,i} F_i)U_i^*=f(a_{i,i})E_i$ for some rank one
projection $E_i$.
\end{proof}

\vskip 5pt
Corollary \ref{C-3.7} refines the standard majorization inequality relating a positive semi-definite $n$-by-$n$ matrix and its diagonal part,
$${\mathrm{Tr\,}} f(A) \le \sum_{i=1}^d f(a_{i,i}). $$

\subsection{Comments and references}

\vskip 5pt
\begin{remark} Theorem \ref{T-3.1}, Corollaries \ref{cor-subadditivity} and \ref{cor-subadditivity} are from \cite{AB}. In case of positive operators acting on an infinite
dimensional, separable Hilbert space, we have a version of Theorem \ref{T-3.1} with an additional $r$I term in the RHS, as in (2.13).
\end{remark}

\vskip 5pt
\begin{remark} The decomposition of a positive block-matrix in Lemma \ref{L-3.4} is due to the authors. It is used in \cite{Lee1} to obtain
the norm inequality stated in Corollary \ref{corlee}. The next two Corollaries \ref{C-3.6} and \ref{C-3.7} are new, though already announced in \cite{BH1}.
\end{remark}

\vskip 5pt
\begin{remark}
The concavity requirement on $f(t)$ in Rotfel'd inequality \eqref{Rot1} and hence in Theorem \ref{T-3.1} cannot be relaxed to a mere
superadditivity assumption; indeed take for $s,t>0$,
$$
A=\frac{1}{2}
\begin{bmatrix} s&\sqrt{st} \\
\sqrt{st}&t
\end{bmatrix},
\qquad B=\frac{1}{2}
\begin{bmatrix} s&-\sqrt{st} \\
-\sqrt{st}&t
\end{bmatrix},
$$
and observe that the trace inequality $
\mathrm{Tr\,}f(A+B) \le \mathrm{Tr\,}f(A)+f(B)
$
 combined with $f(0)=0$ means that $f(t)$ is concave. 
\end{remark}

\vskip 5pt
\begin{remark} There exists a norm version of Rotfel'd inequality which considerably improves \eqref{subnorm}. {\it If
$f:[0,\infty)\to[0,\infty)$ is concave and $A, B\in\bM_n^+$, then
$$
\| f(A+B) \| \le  \|f(A) +f(B) \|
$$
for all symmetric norms.} The case of operator concave functions is given in \cite{AZ} and the general case is established in
\cite{BU}, see also \cite{BL-JOT} for further results. Concerning differences, the following inequality holds
$$
\| f(A)-f(B) \| \le \| f(|A-B|) \|
$$
for all symmetric norms, $A,B\in\bM_n^+$, and operator monotone functions $f:[0,\infty)\to[0,\infty)$. This is a famous result of
Ando \cite{Ando}. Here the operator monotonicity assumption is essential, see \cite{AuAu} for some counterexamples. A very interesting
paper by Mathias \cite{M} gives a direct proof, without using the integral representation of operator monotone functions.
\end{remark}

\vskip 5pt
\begin{remark} There exists also some subaditivity results involving  convex functions \cite{BH1}. For instance:
{\it
Let $g(t)=\sum_{k=0}^m a_kt^k$ be a polynomial of degree $m$ with all non-negative
coefficients. Then, for all positive operators $A,\,B$ and all symmetric norms,}
\begin{equation*}
\| g(A+B) \|^{1/m} \le \|g(A) \|^{1/m} + \| g(B) \|^{1/m}.
\end{equation*}
\end{remark}

\vskip 5pt
\begin{remark} It is not known wether the monotonicity assumption in Theorem 3.1 can be deleted, i.e.,  wether the theorem holds for
all concave functions $f(t)$ on $[0,\infty)$ with $f(0) \ge 0$.
\end{remark}

\section{Around this article }

We will see several applications of   this article in the next chapters. Here we recall some results from \cite{BL-Clarkson} improving the Clarkson-McCarthy inequalities
 for $p\ge 2$. These inequalities  show that the unit ball for the Schatten $p$-norm is uniformly convex, 
\begin{equation*}\label{Clark}
 \left\| \frac{ A+B}{2}\right\|_p^p +   \left\| \frac{ A-B}{2}\right\|_p^p
\le 
\frac{\| A\|_p^p +\| B\|_p^p}{2}, \quad p\ge 2,
\end{equation*}
i.e.,
\begin{equation}\label{Clark'}
{\mathrm{Tr\,}} \left| \frac{ A+B}{2}\right|^p +   {\mathrm{Tr\,}} \left| \frac{ A-B}{2}\right|^p
\le 
\frac{{\mathrm{Tr\,}} | A|^p +{\mathrm{Tr\,}} | B|^p}{2}, \quad p\ge 2.
\end{equation}
Thus, if $\|A\|_p=\|B\|_p=1$ and $\| A-B\|_p = \varepsilon$, 
$$
\left\| \frac{ A+B}{2}\right\|_p \le \left(1-(\varepsilon/2)^p\right)^{1/p}, \quad  p\ge 2,
$$
which estimates the uniform convexity modulus of the Schatten $p$-classes for $p\ge 2$. 

Some nice extensions of these inequalities have been given by Bhatia-Holbrook \cite{Bh-H} 
 and Hirzallah-Kittaneh \cite{Hir-Kit}. The next series of corollaries provide several eigenvalue inequalities completing those of  Bhatia-Holbrook and Hirzallah-Kittaneh. First, we state the following improvement of \eqref{Clark'}.

\vskip 5pt
\begin{theorem}\label{thMcCarthy} Let $A,B\in\bM_n$ and $p>2$. Then there exists two unitarie $U,V\in\bM_n$ such that
$$
U\left| \frac{A+B}{2}\right|^pU^* +V\left| \frac{A-B}{2}\right|^pV^*\le \frac{|A|^p+|B|^p}{2}.
$$
\end{theorem}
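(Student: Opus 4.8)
The plan is to reduce the statement to two facts already in hand: the unitary-orbit operator-convexity inequality \eqref{F-I.1} and the superadditivity of convex functions, Corollary \ref{cor-subadditivity}. Write $R=(A+B)/2$ and $S=(A-B)/2$, so that $A=R+S$ and $B=R-S$. The first observation is the parallelogram identity $\frac{|A|^2+|B|^2}{2}=|R|^2+|S|^2$, which follows from $|A|^2+|B|^2=A^*A+B^*B=(R+S)^*(R+S)+(R-S)^*(R-S)=2R^*R+2S^*S$. This identity is the bridge between the two sides of the desired inequality, and the whole argument amounts to pushing the exponent $p$ through it.

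First I would apply \eqref{F-I.1} to the positive operators $|A|^2$ and $|B|^2$ with the exponent $p/2>1$ (this is where $p>2$ enters). This produces a unitary $W$ such that $\left(\frac{|A|^2+|B|^2}{2}\right)^{p/2}\le W\frac{|A|^p+|B|^p}{2}W^*$, since $(|A|^2)^{p/2}=|A|^p$ and similarly for $B$. Rearranging and inserting the parallelogram identity gives $\frac{|A|^p+|B|^p}{2}\ge W^*\left(|R|^2+|S|^2\right)^{p/2}W$.

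Next I would apply the superadditive inequality of Corollary \ref{cor-subadditivity} to the monotone convex function $g(t)=t^{p/2}$ (which satisfies $g(0)=0\le 0$) and to the positive matrices $|R|^2$ and $|S|^2$. This yields unitaries $U_0,V_0$ with $(|R|^2+|S|^2)^{p/2}\ge U_0|R|^pU_0^*+V_0|S|^pV_0^*$, again using $(|R|^2)^{p/2}=|R|^p$. Conjugating this inequality by $W^*$ and chaining it with the previous display gives $\frac{|A|^p+|B|^p}{2}\ge (W^*U_0)|R|^p(W^*U_0)^*+(W^*V_0)|S|^p(W^*V_0)^*$, which is exactly the assertion with $U=W^*U_0$ and $V=W^*V_0$.

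The steps are all routine once the parallelogram identity is set up; the one point that needs attention is the hypothesis check, namely that both \eqref{F-I.1} and Corollary \ref{cor-subadditivity} require the exponent $p/2$ to be at least $1$, which is precisely the assumption $p>2$ (and explains why $p=2$ degenerates to an operator-convex equality). I do not expect a genuine obstacle here; the only mild bookkeeping is to track the conjugating unitary $W$ and to absorb it cleanly into the unitaries coming from the superadditivity step.
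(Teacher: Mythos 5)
Your proof is correct and follows essentially the same route as the paper: both arguments pass through the intermediate quantity $\bigl(\tfrac{|A|^2+|B|^2}{2}\bigr)^{p/2}$, using the parallelogram identity $|A|^2+|B|^2=2|R|^2+2|S|^2$, Corollary \ref{cor-subadditivity} with $g(t)=t^{p/2}$ applied to $|R|^2,|S|^2$, and the unitary-orbit Jensen inequality (your \eqref{F-I.1} is the paper's Corollary \ref{C-2.2} for $t^{p/2}$) applied to $|A|^2,|B|^2$. The only difference is the order in which the two ingredients are chained, which is immaterial.
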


\vskip 5pt
\begin{proof} Note that
$$
\left| \frac{A+B}{2}\right|^p= \left(\frac{|A|^2+|B|^2 +A^*B +B^*A}{4}\right)^{p/2}
$$
and 
$$
\left| \frac{A-B}{2}\right|^p= \left(\frac{|A|^2+|B|^2 -(A^*B +B^*A)}{4}\right)^{p/2}.
$$
Now, recall 
Corollary \ref{cor-subadditivity} : Given two positive matrices $X,Y$ and a monotone convex function $g(t$) defined on $[0,\infty)$
such that $g(0)\le 0$, we have
\begin{equation}\label{eq-key1}
g(X+Y) \ge U_0g(X)U_0^* + V_0g(Y)V_0^*
\end{equation}
for some pair of unitary matrices $U_0$ and $V_0$. Applying this to $g(t)=t^{p/2}$,
$$
X=\frac{|A|^2+|B|^2 +A^*B +B^*A}{4}
$$
and
$$
Y=\frac{|A|^2+|B|^2 -(A^*B +B^*A)}{4},
$$
 we obtain
\begin{equation}\label{e1}
\left(\frac{|A|^2+|B|^2}{2}\right)^{p/2} \ge U_0\left| \frac{A+B}{2}\right|^pU_0^* +V_0\left| \frac{A-B}{2}\right|^pV_0^*.
\end{equation}
Next, recall Corollary \ref{C-2.2} : Given two positive matrices $X,Y$ and a monotone convex function $g(t$) defined on
$[0,\infty)$ , we have
\begin{equation}\label{eq-key2}
\frac{g(X)+g(Y)}{2} \ge Wg\left(\frac{X+Y}{2}\right)W^*
\end{equation}
for some unitary matrix $W$. Applying this to $g(t)=t^{p/2}$, $X=|A|^2$ and $Y=|B|^2$, we get
\begin{equation}\label{e2}
 \frac{|A|^p+|B|^p}{2}\ge W\left(\frac{|A|^2+|B|^2}{2}\right)^{p/2}W^*.
\end{equation}
Combining \eqref{e1} and \eqref{e2} completes the proof with $U=WU_0$ and $V=WV_0$.
\end{proof}

\vskip 5pt
\begin{cor}\label{cor1}
 Let $A,B\in\bM_n$ and $p>2$. Then, for all $k=1,2,\ldots,n$,
$$
\sum_{j=1}^k \lambda_j^{\uparrow}\left(\frac{|A|^p+|B|^p}{2}\right) \ge 
\sum_{j=1}^k \lambda_j^{\uparrow}\left(\left|\frac{A+B}{2}\right|^p\right)
+
\sum_{j=1}^k \lambda_j^{\uparrow}\left(\left|\frac{A-B}{2}\right|^p\right).
$$
\end{cor}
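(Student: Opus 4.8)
The plan is to deduce this eigenvalue inequality directly from the operator inequality of Theorem \ref{thMcCarthy} by exploiting the variational description of the sum of the $k$ smallest eigenvalues. Recall that for a Hermitian $X\in\bM_n$ and $1\le k\le n$,
$$
\sum_{j=1}^k \lambda_j^{\uparrow}(X) = \min_{E}\, \mathrm{Tr}\, XE,
$$
where the minimum runs over all rank $k$ ortho-projections $E\in\bM_n$. This is the exact dual of the Ky Fan maximum principle for the largest eigenvalues, and it supplies the two properties I need: the functional $X\mapsto \sum_{j=1}^k\lambda_j^{\uparrow}(X)$, being a minimum of the linear maps $X\mapsto\mathrm{Tr}\,XE$, is both monotone for the L\"owner order and superadditive.

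First I would invoke Theorem \ref{thMcCarthy} to obtain unitaries $U,V\in\bM_n$ for which
$$
P+Q \le \frac{|A|^p+|B|^p}{2}, \qquad P:=U\left|\frac{A+B}{2}\right|^pU^*, \quad Q:=V\left|\frac{A-B}{2}\right|^pV^*.
$$
By Weyl monotonicity (equivalently, by the displayed min-formula, since each $\mathrm{Tr}\,XE$ is nondecreasing in $X$ for fixed $E\ge0$), the operator inequality $P+Q\le \frac{|A|^p+|B|^p}{2}$ yields
$$
\sum_{j=1}^k \lambda_j^{\uparrow}(P+Q) \le \sum_{j=1}^k \lambda_j^{\uparrow}\left(\frac{|A|^p+|B|^p}{2}\right).
$$

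Next I would use superadditivity: for every rank $k$ projection $E$ one has $\mathrm{Tr}\,(P+Q)E=\mathrm{Tr}\,PE+\mathrm{Tr}\,QE\ge \min_{E'}\mathrm{Tr}\,PE'+\min_{E'}\mathrm{Tr}\,QE'$, and taking the minimum over $E$ on the left gives
$$
\sum_{j=1}^k \lambda_j^{\uparrow}(P+Q) \ge \sum_{j=1}^k \lambda_j^{\uparrow}(P) + \sum_{j=1}^k \lambda_j^{\uparrow}(Q).
$$
Finally, eigenvalues are unitarily invariant, so $\lambda_j^{\uparrow}(P)=\lambda_j^{\uparrow}(|\tfrac{A+B}{2}|^p)$ and $\lambda_j^{\uparrow}(Q)=\lambda_j^{\uparrow}(|\tfrac{A-B}{2}|^p)$ for every $j$. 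Chaining the last three displays produces exactly the asserted inequality. There is essentially no serious obstacle; the only point demanding care is to work with the \emph{minimum} principle for the smallest eigenvalues rather than the maximum principle for the largest ones, since it is precisely the minimum form that converts the single monotone operator inequality of Theorem \ref{thMcCarthy} into a superadditive eigenvalue statement pointing in the correct direction.
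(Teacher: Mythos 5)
Your argument is correct and is essentially the paper's own: the paper simply observes that $X\mapsto\sum_{j=1}^k\lambda_j^{\uparrow}(X)$ is a symmetric anti-norm (unitarily invariant, monotone, superadditive) and applies it to the operator inequality of Theorem \ref{thMcCarthy}, which is exactly what you do. The only difference is that you derive the monotonicity and superadditivity explicitly from the minimum principle $\sum_{j=1}^k\lambda_j^{\uparrow}(X)=\min_E\mathrm{Tr}\,XE$ over rank $k$ projections, whereas the paper cites these properties from the anti-norm literature.
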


\vskip 5pt
\begin{cor}\label{cor2}
 Let $A,B\in\bM_n$ and $p>2$. Then, for all $k=1,2,\ldots,n$,
$$
\left\{\prod_{j=1}^k \lambda_j^{\uparrow}\left(\frac{|A|^p+|B|^p}{2}\right)\right\}^{1/k} \ge 
\left\{\prod_{j=1}^k \lambda_j^{\uparrow}\left(\left|\frac{A+B}{2}\right|^p\right)\right\}^{1/k}
+
\left\{\prod_{j=1}^k \lambda_j^{\uparrow}\left(\left|\frac{A-B}{2}\right|^p\right)\right\}^{1/k}.
$$
\end{cor}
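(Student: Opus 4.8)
The plan is to read this product inequality as a superadditivity statement and feed it the operator inequality already proved in Theorem \ref{thMcCarthy}. The key object is the functional
$$
\Phi_k(X):=\left\{\prod_{j=1}^k \lambda_j^{\uparrow}(X)\right\}^{1/k},\qquad X\in\bM_n^+,
$$
the normalized geometric mean of the $k$ smallest eigenvalues. I would isolate three properties of $\Phi_k$: it is monotone for the L\"owner order, it is invariant under unitary congruence, and it is superadditive on $\bM_n^+$. Once these are in hand the corollary is one line, so the real content is the superadditivity.

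First I would record the variational description of $\Phi_k$, the ``minimum'' companion of the maximal formula for the largest singular values used just after the proof of Theorem \ref{th1}: for $X\in\bM_n^+$,
$$
\prod_{j=1}^k \lambda_j^{\uparrow}(X)=\min_{V\in\Theta(k,n)} \det (V^*XV),
$$
the minimum running over all $n\times k$ isometries. From this formula, together with Weyl's monotonicity of eigenvalues, monotonicity and unitary invariance of $\Phi_k$ are immediate. For superadditivity I would fix an isometry $V$, observe that $V^*(X+Y)V=V^*XV+V^*YV$ holds in $\bM_k^+$, and apply Minkowski's determinantal inequality \eqref{Minkowski} in dimension $k$ to get ${\det}^{1/k}(V^*(X+Y)V)\ge {\det}^{1/k}(V^*XV)+{\det}^{1/k}(V^*YV)\ge \Phi_k(X)+\Phi_k(Y)$, the last step using the variational formula for each of $X$ and $Y$ separately. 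Minimizing the left-hand side over $V\in\Theta(k,n)$ then yields $\Phi_k(X+Y)\ge\Phi_k(X)+\Phi_k(Y)$.

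With $\Phi_k$ available I would conclude as follows. Theorem \ref{thMcCarthy} supplies unitaries $U,V$ with $U|\tfrac{A+B}{2}|^pU^*+V|\tfrac{A-B}{2}|^pV^*\le \tfrac{|A|^p+|B|^p}{2}$. Applying the monotone functional $\Phi_k$ to this L\"owner inequality, then its superadditivity, then its unitary invariance, gives $\Phi_k(\tfrac{|A|^p+|B|^p}{2})\ge \Phi_k(|\tfrac{A+B}{2}|^p)+\Phi_k(|\tfrac{A-B}{2}|^p)$, which is precisely the stated inequality. The only genuinely non-formal step is the superadditivity of $\Phi_k$, and that is just Minkowski's inequality read through the variational formula; the single thing to get right is the direction of the variational principle (smallest eigenvalues correspond to the \emph{minimum} over compressions), after which the argument is routine.
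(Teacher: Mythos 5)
Your proof is correct and follows the same route as the paper: apply the monotone, unitarily invariant, superadditive functional $X\mapsto\{\prod_{j=1}^k\lambda_j^{\uparrow}(X)\}^{1/k}$ to the operator inequality of Theorem \ref{thMcCarthy}. The paper simply cites that this functional is a symmetric anti-norm (referring to \cite{BH1}, \cite{BH2}), whereas you supply a self-contained and correct verification of its superadditivity via the minimal variational formula for compressions and Minkowski's determinantal inequality.
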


\vskip 5pt
Here $\lambda_{1}^{\uparrow}(X )\le \lambda_{2}^{\uparrow}(X )\le \cdots\le \lambda_{n}^{\uparrow}(X ) $ stand for the eigenvalues of
$X\in\bM_n^+$ arranged in the nondecresaing order.
These two corollaries follow from Theorem \ref{thMcCarthy} and the fact that the functionals on $\bM_n^+$
$$X\mapsto \sum_{j=1}^k \lambda_j^{\uparrow}(X)$$
and
$$X\mapsto\left\{\prod_{j=1}^k  \lambda_j^{\uparrow}(X)\right\}^{1/k}$$
are two basic examples of symmetric anti-norms, see \cite{BH1}, \cite{BH2}.

The next corollary follows from Theorem \ref{thMcCarthy} combined with a classical inequality of Weyl for the eigenvalues of the sum of two
Hermitian matrices.

\vskip 5pt
\begin{cor}\label{cor3}
 Let $A,B\in\bM_n$ and $p>2$. Then, for all $j,k\in\{0,\ldots,n-1\}$ such that $j+k+1\le n$,
$$
 \lambda_{j+1}^{\downarrow}\left(\frac{|A|^p+|B|^p}{2}\right) \ge 
 \lambda_{j+k+1}^{\downarrow}\left(\left|\frac{A+B}{2}\right|^p\right)
+
 \lambda_{k+1}^{\uparrow}\left(\left|\frac{A-B}{2}\right|^p\right).
$$
\end{cor}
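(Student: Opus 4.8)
The plan is to deduce the corollary directly from Theorem \ref{thMcCarthy}, combined with the lower-bound (complementary) form of Weyl's inequality for sums of Hermitian matrices and the elementary monotonicity of eigenvalues under the Loewner order. Write $P=\frac{|A|^p+|B|^p}{2}$, $Q=\left|\frac{A+B}{2}\right|^p$ and $R=\left|\frac{A-B}{2}\right|^p$, all of which lie in $\bM_n^+$. Theorem \ref{thMcCarthy} furnishes unitaries $U,V\in\bM_n$ with $UQU^*+VRV^*\le P$, and everything reduces to transferring this Loewner inequality to the individual eigenvalues appearing in the statement.

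First I would pass from the operator inequality to a scalar one using the monotonicity $X\le Y\Rightarrow\lambda^{\downarrow}_m(X)\le\lambda^{\downarrow}_m(Y)$. Applied to $UQU^*+VRV^*\le P$ with $m=j+1$, this gives
$$
\lambda^{\downarrow}_{j+1}(P)\ge\lambda^{\downarrow}_{j+1}\bigl(UQU^*+VRV^*\bigr).
$$
It then remains to bound the right-hand side from below by $\lambda^{\downarrow}_{j+k+1}(Q)+\lambda^{\uparrow}_{k+1}(R)$.

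Second, I would invoke the complementary Weyl inequality: for Hermitian $M,N\in\bM_n$ and indices $a,b$ with $a+b\ge n+1$,
$$
\lambda^{\downarrow}_{a+b-n}(M+N)\ge\lambda^{\downarrow}_a(M)+\lambda^{\downarrow}_b(N).
$$
Apply this with $M=UQU^*$, $N=VRV^*$, $a=j+k+1$ and $b=n-k$. The index on the left is $a+b-n=j+1$, the constraint $a+b\ge n+1$ reduces to $j\ge 0$, and the requirement $a\le n$ is exactly the hypothesis $j+k+1\le n$; likewise $1\le b\le n$ from $0\le k\le n-1$. Since $\lambda^{\downarrow}_{n-k}(N)=\lambda^{\uparrow}_{k+1}(N)$, this yields
$$
\lambda^{\downarrow}_{j+1}\bigl(UQU^*+VRV^*\bigr)\ge\lambda^{\downarrow}_{j+k+1}(UQU^*)+\lambda^{\uparrow}_{k+1}(VRV^*).
$$
Finally, unitary invariance of eigenvalues gives $\lambda^{\downarrow}_{j+k+1}(UQU^*)=\lambda^{\downarrow}_{j+k+1}(Q)$ and $\lambda^{\uparrow}_{k+1}(VRV^*)=\lambda^{\uparrow}_{k+1}(R)$, and chaining the three displayed inequalities produces the claim.

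I do not anticipate a genuine obstacle, since the result is essentially a corollary of Theorem \ref{thMcCarthy}. The only point requiring care is the index bookkeeping in Weyl's inequality: one must select the lower-bound form (not the more familiar $\lambda^{\downarrow}_{a+b-1}(M+N)\le\lambda^{\downarrow}_a(M)+\lambda^{\downarrow}_b(N)$) and make the substitution $b=n-k$ so that the decreasing-order eigenvalue of $R$ converts into the increasing-order eigenvalue $\lambda^{\uparrow}_{k+1}(R)$, while verifying that the admissibility condition $a\le n$ coincides precisely with the stated hypothesis $j+k+1\le n$.
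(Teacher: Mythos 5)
Your proof is correct and follows the same route the paper indicates: Theorem \ref{thMcCarthy} gives $UQU^*+VRV^*\le P$, and the dual (lower-bound) form of Weyl's inequality with $a=j+k+1$, $b=n-k$ converts this into the stated eigenvalue inequality. The index bookkeeping, including the identification $\lambda^{\downarrow}_{n-k}=\lambda^{\uparrow}_{k+1}$ and the check that $a\le n$ is exactly the hypothesis $j+k+1\le n$, is accurate.
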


\vskip 5pt
For a monotone concave function $g(t$) defined on $[0,\infty)$ such that $g(0)\ge 0$, the inequalities \eqref{eq-key1} and \eqref{eq-key2}
are reversed.
Applying this to $g(t)=t^{q/2}$, $2>q>0$, the same proof than that of Theorem \ref{thMcCarthy} gives the following statement.

\vskip 5pt
\begin{theorem}\label{th2} Let $A,B\in\bM_n$ and $2>q>0$. Then, for some unitaries $U,V\in\bM_n$,
$$
U\left| \frac{A+B}{2}\right|^qU^* +V\left| \frac{A-B}{2}\right|^qV^*\ge \frac{|A|^q+|B|^q}{2}.
$$
\end{theorem}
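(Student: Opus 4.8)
The plan is to mimic the proof of Theorem \ref{thMcCarthy} essentially verbatim, replacing the convex function $t^{p/2}$ (with $p>2$) by the concave function $t^{q/2}$ (with $0<q<2$), which forces every matrix inequality invoked there to reverse. Since $0<q/2<1$, the map $g(t)=t^{q/2}$ is monotone increasing and concave on $[0,\infty)$ with $g(0)=0\ge 0$, so the hypotheses of the concave (reversed) versions of Corollary \ref{cor-subadditivity} and Corollary \ref{C-2.2}, namely of \eqref{eq-key1} and \eqref{eq-key2}, are all met.

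First I would record the two algebraic identities
$$
\left|\frac{A+B}{2}\right|^q=\left(\frac{|A|^2+|B|^2+A^*B+B^*A}{4}\right)^{q/2},\qquad
\left|\frac{A-B}{2}\right|^q=\left(\frac{|A|^2+|B|^2-(A^*B+B^*A)}{4}\right)^{q/2},
$$
and set
$$
X=\frac{|A|^2+|B|^2+A^*B+B^*A}{4},\qquad Y=\frac{|A|^2+|B|^2-(A^*B+B^*A)}{4},
$$
so that $X,Y\in\bM_n^+$ and $X+Y=(|A|^2+|B|^2)/2$.

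Next, because $g(t)=t^{q/2}$ is monotone concave with $g(0)\ge 0$, the inequality \eqref{eq-key1} reverses and gives, for some unitaries $U_0,V_0\in\bM_n$,
$$
\left(\frac{|A|^2+|B|^2}{2}\right)^{q/2}\le U_0\left|\frac{A+B}{2}\right|^qU_0^*+V_0\left|\frac{A-B}{2}\right|^qV_0^*.
$$
Likewise the reversed form of \eqref{eq-key2}, applied with the positive pair $|A|^2,|B|^2$, yields a unitary $W\in\bM_n$ with
$$
\frac{|A|^q+|B|^q}{2}\le W\left(\frac{|A|^2+|B|^2}{2}\right)^{q/2}W^*.
$$

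Finally I would conjugate the first of these two displays by the unitary $W$ (conjugation by a unitary preserves the Loewner order) and chain the result with the second display to obtain
$$
\frac{|A|^q+|B|^q}{2}\le WU_0\left|\frac{A+B}{2}\right|^qU_0^*W^*+WV_0\left|\frac{A-B}{2}\right|^qV_0^*W^*,
$$
whence the claim follows with $U=WU_0$ and $V=WV_0$. I do not anticipate any genuine obstacle: the argument is exactly dual to that of Theorem \ref{thMcCarthy}, and the only point demanding care is verifying that $0<q/2<1$ indeed makes $t^{q/2}$ concave and increasing with nonnegative value at the origin, so that both \eqref{eq-key1} and \eqref{eq-key2} are legitimately reversed.
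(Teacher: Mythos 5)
Your proposal is correct and follows exactly the route the paper itself indicates: the paper's proof of Theorem \ref{th2} is precisely to note that \eqref{eq-key1} and \eqref{eq-key2} reverse for the monotone concave function $g(t)=t^{q/2}$ with $g(0)\ge 0$ and then repeat the argument of Theorem \ref{thMcCarthy}. Your write-up just spells out the details that the paper leaves implicit; there is no gap.
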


\vskip 5pt
By using Weyl's inequality, this theorem yields an interesting eigenvalue estimate.

\vskip 5pt
\begin{cor}\label{cor4}
 Let $A,B\in\bM_n$ and $2>q>0$. Then, for all $j,k\in\{0,\ldots,n-1\}$ such that $j+k+1\le n$,
$$
 \lambda_{j+k+1}^{\downarrow}\left(\frac{|A|^q+|B|^q}{2}\right) \le 
 \lambda_{j+1}^{\downarrow}\left(\left|\frac{A+B}{2}\right|^q\right)
+
 \lambda_{k+1}^{\downarrow}\left(\left|\frac{A-B}{2}\right|^q\right).
$$
\end{cor}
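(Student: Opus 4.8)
The plan is to derive the eigenvalue estimate directly from Theorem \ref{th2} together with Weyl's inequality for the eigenvalues of a sum of Hermitian matrices, exactly mirroring the way Corollary \ref{cor3} is obtained from Theorem \ref{thMcCarthy}. Theorem \ref{th2} furnishes unitaries $U,V\in\bM_n$ with
$$
\frac{|A|^q+|B|^q}{2} \le U\left|\frac{A+B}{2}\right|^qU^* + V\left|\frac{A-B}{2}\right|^qV^*,
$$
so the whole argument reduces to reading off spectral information from this operator inequality.

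First I would use the monotonicity of eigenvalues under the L\"owner order: since the left-hand side is dominated by the right-hand side, we have for every index $m$
$$
\lambda_m^{\downarrow}\left(\frac{|A|^q+|B|^q}{2}\right) \le \lambda_m^{\downarrow}\left(U\left|\frac{A+B}{2}\right|^qU^* + V\left|\frac{A-B}{2}\right|^qV^*\right).
$$
Next I would apply Weyl's inequality to the two positive summands $S=U|\frac{A+B}{2}|^qU^*$ and $T=V|\frac{A-B}{2}|^qV^*$. With the choice $m=j+k+1$ and the splitting into indices $a=j+1$, $b=k+1$ (so that $a+b-1=j+k+1$, the hypothesis $j+k+1\le n$ guaranteeing that this index is admissible), Weyl's inequality gives
$$
\lambda_{j+k+1}^{\downarrow}(S+T) \le \lambda_{j+1}^{\downarrow}(S) + \lambda_{k+1}^{\downarrow}(T).
$$

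Finally I would remove the unitaries using the unitary invariance of the spectrum, namely $\lambda_{j+1}^{\downarrow}(S)=\lambda_{j+1}^{\downarrow}(|\frac{A+B}{2}|^q)$ and $\lambda_{k+1}^{\downarrow}(T)=\lambda_{k+1}^{\downarrow}(|\frac{A-B}{2}|^q)$. Chaining the three displayed inequalities yields precisely the claimed estimate. There is essentially no serious obstacle here; the only point requiring a little care is matching the index conventions so that the admissibility condition of Weyl's inequality coincides with the stated restriction $j+k+1\le n$, and observing that, in contrast with Corollary \ref{cor3}, the inequality in Theorem \ref{th2} points the opposite way, so that all three eigenvalues appear in decreasing order $\lambda^{\downarrow}$ rather than mixing $\downarrow$ and $\uparrow$.
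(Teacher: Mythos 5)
Your proposal is correct and follows exactly the route the paper intends: the paper introduces Corollary \ref{cor4} with the single remark that it follows from Theorem \ref{th2} ``by using Weyl's inequality,'' and your chain (monotonicity of eigenvalues under the L\"owner order, Weyl's inequality $\lambda_{1+j+k}^{\downarrow}(S+T)\le\lambda_{1+j}^{\downarrow}(S)+\lambda_{1+k}^{\downarrow}(T)$, then unitary invariance of the spectrum) is precisely that argument with the index bookkeeping done correctly. Your closing observation about why all three eigenvalues carry $\downarrow$ here, unlike in Corollary \ref{cor3}, is also accurate.
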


\section{References of Chapter 3}

{\small
\begin{itemize}

\item[[4\!\!\!]]  T.\ Ando, Comparison of norms $ \vert\vert\vert f(A)-f(B) \vert\vert\vert $ and $\| f(\vert A-B\vert )\vert\vert\vert
$. {\it Math. Z.}\ {\bf197} (1988), no. 3, 403-409.

\item[[10\!\!\!]] K.\ Audenaert and  J.\ S.\ Aujla, On Ando's inequalities for convex and concave functions,  arXiv:0704.0099v1.

\item[[11\!\!\!]]  J.\ S.\ Aujla
and F.\ C.\ Silva, Weak majorization inequalities and convex
functions, \textit{Linear Algebra Appl.} \textbf{369} (2003), 217-233.

\item[[12\!\!\!]]   J.S.\ Aujla and J.-C.\ Bourin, Eigenvalue inequalities for convex and log-convex functions,
\textit{Linear Algebra Appl.} 424 (2007), 25--35.

 \item[[13\!\!\!]]  R.\ Bhatia, Matrix Analysis, Gradutate Texts in Mathematics, Springer, New-York, 1996.

\item[[16\!\!\!]]  R.\ Bhatia and J.\ Holbrook, On the Clarkson-McCarthy inequalities, {\it Math. Ann.}\ 281 (1988), no.\ 1, 7--12.

\item[[22\!\!\!]]  J.-C.\ Bourin, Convexity or concavity inequalities for Hermitian operators. \textit{Math. Inequal. Appl.} 7
(2004), no.\ 4, 607–620.

\item[[23\!\!\!]]  J.-C. Bourin, Hermitian operators and convex functions,
\textit{J. Inequal. Pure Appl. Math.}\ 6 (2005), Article 139, 6 pp.

\item[[24\!\!\!]]  J.-C.\ Bourin, A concavity inequality for symmetric norms, \textit{ Linear
Algebra Appl.}\  413 (2006), 212-217.

\item[[28\!\!\!]] J.-C. Bourin and F. Hiai,
Norm and anti-norm inequalities for positive semi-definite matrices,
{\it Internat. J. Math.} \textbf{63} (2011), 1121-1138.

\item[[29\!\!\!]]  J.-C. Bourin and F. Hiai, Jensen and Minkowski inequalities for operator means and anti-norms. Linear Algebra Appl. 456 (2014), 22–53.

\item[[30\!\!\!]]  J.-C.\ Bourin and E.-Y.\ Lee, Concave functions of
positive operators, sums, and congruences, \textit{J.\ Operator Theory} \textbf{63}
(2010), 151--157.

\item[[31\!\!\!]]  J.-C.\ Bourin and E.-Y.\ Lee, Unitary orbits of Hermitian operators with convex or concave functions, {\it Bull.\ Lond.\
Math.\ Soc.}\ 44 (2012), no.\ 6, 1085--1102.

\item[[40\!\!\!]] J.-C.\ Bourin and E.-Y. Lee,
Clarkson-McCarthy inequalities with unitary and isometry orbits, {\it  Linear Algebra Appl.}\ 601 (2020), 170--179.

\item[[48\!\!\!]] J.-C.\ Bourin and E.\ Ricard,  An asymmetric Kadison's inequality, \textit{Linear Algebra Appl.}\
433 (2010) 499--510.

\item[[49\!\!\!]] J.-C.\ Bourin and M.\ Uchiyama, A matrix subadditivity inequality for $f(A+B)$ and $f(A)+f(B)$, \textit{Linear Algebra
Appl.}\
423 (2007), 512--518.

\item[[50\!\!\!]] L.\ G.\ Brown and H.\ Kosaki, Jensen's inequality in semi-finite
von Neuman algebras, \textit{J.\ Operator theory } 23
(1990), 3--19.

\item[[52\!\!\!]]  M.-D. Choi, A Schwarz inequality for positive linear maps on
$C^*$-algebras, \textit{Illinois J. Math.} \textbf{18} (1974), 565--574.

\item[[56\!\!\!]] C.\ Davis, \textit{A Schwarz inequality for convex operator functions}, Proc. Amer. Math. Soc. {\bf 8} (1957), 42-44.

\item[[61\!\!\!]]  F.\ Hansen,  An operator inequality, {\it Math. Ann}.\ 246 (1979/80), no. 3, 249–250.

\item[[62\!\!\!]]  F. Hansen and G. K. Pedersen, Jensen's inequality for operators and
L\"owner's theorem, \textit{Math. Ann.} 258 (1982), 229--241.

\item[[63\!\!\!]] F. Hansen and G. K. Pedersen, Jensen's operator inequality, {\it Bull.\ London Math.\ Soc.} 35 (2003), no. 4,
553--564.

\item[[64\!\!\!]]  F. Hiai, Matrix Analysis: Matrix Monotone Functions, Matrix Means, and
Majorization (GSIS selected lectures),
\textit{Interdisciplinary Information Sciences} 16 (2010), 139--248.

\item[[69\!\!\!]] O.\ Hirzallah and F.\ Kittaneh,  Non-commutative Clarkson inequalities for unitarily invariant norms, {\it Pacific J.\ Math.}\ 202 (2002), no.\ 2, 363--369.

\item[[75\!\!\!]] E.-Y. Lee, Extension of Rotfel’d Theorem,  {\it Linear Algebra Appl.}\
435 (2010), 735--741.

\item[[76\!\!\!]] E.-Y. Lee, How to compare the absolute values of operator sums and the sums of absolute values ?, to appear in {\it
Operator and Matrices}.

\item[[79\!\!\!]] R.\ Mathias, Concavity of monotone matrix functions of finite order, {\it Linear and Multilinear Algebra} {\bf 27}
(1990), no. 2, 129-138.

\item[[88\!\!\!]] S. Ju. Rotfel'd, The singular values of a
sum of completely continuous operators,  \textit{ Topics in
Mathematical Physics, Consultants Bureau}, Vol. \textbf{3} (1969)
73-78.

\item[[90\!\!\!]] W.\ F.\ Stinespring, Positive functions on $C^*$-algebras, {\it Proc.\ Amer.\ Math.\ Soc.}\ {\bf 6}, (1955). 211-216.

\item[[92\!\!\!]] R.-C.\ Thompson, Convex and concave functions of singular values of matrix sums, {\it Pacific J.\ Math.}\ 66 (1976),
285--290.

\end{itemize}


\chapter{Around Hermite-Hadamard}

{\color{blue}{\Large {\bf Matrix inequalities and majorizations around Hermite-Hadamard's inequality} \large{\cite{BL-HH}}}}

\vskip 10pt\noindent
{\bf Abstract.} We study the classical Hermite-Hadamard inequality in the matrix setting.
This leads to a number of interesting matrix inequalities such as the Schatten $p$-norm estimates
$$
\left(\|A^q\|_p^p + \|B^q\|_p^p\right)^{1/p} \le \|(xA+(1-x)B))^q\|_p+ \|(1-x)A+xB)^q\|_p
$$
for all  positive (semidefinite) $n\times n$ matrices $A,B$ and $0<q,x<1$. A related   decomposition, with the assumption $X^*X+Y^*Y=XX^*+YY^*=I$, is
$$
(X^*AX+Y^*BY)\oplus (Y^*AY+X^*BX) =\frac{1}{2n}\sum_{k=1}^{2n} U_k (A\oplus B)U_k^*
$$
for some family   of $2n\times 2n$ unitary matrices $U_k$.  This is a majorization which is obtained by using the Hansen-Pedersen trace inequality.

{\small 
\vskip 5pt\noindent
{\it Keywords.}     Positive definite matrices, block matrices, convex functions, matrix inequalities. 
\vskip 5pt\noindent
{\it 2010 mathematics subject classification.} 15A18, 15A60,  47A30.

}

\section{Elementary scalar inequalities }

Extending basic scalar inequalities, for instance $|a+b|\le |a|+|b|$, to matrices lies at the very heart of matrix analysis. Here, we are interested in the elementary inequality which supports the Hermite-Hadamard inequality. 
This classical  theorem can be stated  as follows:

\vskip 5pt
\begin{prop}\label{propHH1}  Let $f(t)$ be a convex function defined on the interval $[a,b]$. Then,
$$
f\left(\frac{a+b}{2}\right)\ \le \int_0^1 f((1-x)a+xb)\, {\mathrm{d}}x\le \frac{f(a)+f(b)}{2}.
$$
\end{prop}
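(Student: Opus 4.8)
The plan is to establish the two inequalities separately, since each reflects a different elementary manifestation of convexity. Before anything, I would record that a convex function on the closed interval $[a,b]$ is continuous on the open interval and bounded there, hence Riemann (indeed Lebesgue) integrable, so that the middle integral is well defined; the integrand $x \mapsto f((1-x)a+xb)$ is the composition of $f$ with an affine map, hence itself convex in $x$ and again integrable on $[0,1]$.

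For the right-hand inequality I would use convexity pointwise. For every $x \in [0,1]$ the point $(1-x)a+xb$ is a convex combination of $a$ and $b$, so
$$
f((1-x)a+xb) \le (1-x)f(a) + xf(b).
$$
Integrating this inequality over $x \in [0,1]$ and using $\int_0^1 (1-x)\,dx = \int_0^1 x\,dx = 1/2$ immediately yields the upper bound $\frac{f(a)+f(b)}{2}$.

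For the left-hand inequality I would exploit the symmetry of the integral under $x \mapsto 1-x$. The affine change of variables shows
$$
\int_0^1 f((1-x)a+xb)\,dx = \int_0^1 f(xa+(1-x)b)\,dx,
$$
so the integral equals the average of the two expressions. Then, for each fixed $x$, convexity applied to the midpoint of the arguments $(1-x)a+xb$ and $xa+(1-x)b$ gives
$$
\frac{f((1-x)a+xb) + f(xa+(1-x)b)}{2} \ge f\left(\frac{a+b}{2}\right),
$$
because the two arguments average to $\frac{a+b}{2}$ independently of $x$. Integrating this pointwise bound over $[0,1]$ produces the left inequality. As an alternative route one may invoke a supporting line $f(t) \ge f(c) + \lambda(t-c)$ at $c = \frac{a+b}{2}$ and integrate, using that the barycenter of the affine path is exactly $c$ so the linear term drops out; the symmetrization argument has the advantage of avoiding any appeal to the existence of such a line.

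The main obstacle --- to the extent that there is one for so classical a statement --- is not analytic but bookkeeping: keeping the inequalities pointwise in $x$ before integrating, and verifying that the affine substitution preserves the measure so that the symmetrization is an exact identity rather than an estimate. If one instead takes the supporting-line route, the only genuine input is the one-sided differentiability of convex functions, which is precisely why I would favour the self-contained symmetrization argument above.
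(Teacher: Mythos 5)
Your proof is correct, and for the left-hand inequality it is exactly the paper's argument: symmetrize the integral under $x\mapsto 1-x$ and apply midpoint convexity to the pair $(1-x)a+xb$ and $xa+(1-x)b$, whose average is $\frac{a+b}{2}$ for every $x$. For the right-hand inequality you take a slightly different, and in fact more direct, route: you integrate the one-sided convexity bound $f((1-x)a+xb)\le (1-x)f(a)+xf(b)$, which uses convexity only once per point. The paper instead derives the upper bound from the symmetric extremal inequality $f((1-x)a+xb)+f(xa+(1-x)b)\le f(a)+f(b)$, which it notes ``requires the convexity assumption twice.'' The difference is not cosmetic from the paper's point of view: that extremal inequality is precisely the scalar statement the rest of the chapter sets out to generalize to matrices (where the naive pointwise bound has no useful analogue), so the paper wants the Hermite--Hadamard upper bound to be seen as a consequence of it. Your version buys economy; the paper's buys the formulation that survives the passage to $\bM_n$. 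Either is a complete proof of the scalar proposition.
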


In spite of its simplicity, the Hermite-Hadamard inequality is a powerful tool for deriving a number of important inequalities; see the nice paper \cite{Ni} and references therein.

The first inequality immediately follows from the convexity assumption
\begin{equation}\label{eqbasic}
f\left(\frac{a+b}{2}\right) \le \frac{f((1-x)a+xb) +f(xa +(1-x)b)}{2} 
\end{equation}
The second inequality is slightly more subtle; it follows from the extremal property
\begin{equation}\label{eqext1}
 f((1-x)a+xb) +f(xa +(1-x)b) \le f(a) + f(b)
\end{equation}
which requires   the convexity assumption twice.
This  is the key for Proposition \ref{propHH1} and it has a clear geometric interpretation;  \eqref{eqext1} is equivalent to the increasingness of
$$
\varphi(t) := f(m+t) +f(m-t)
$$
with $m=(a+b)/2$ and $t\in[0,b-m]$. In fact,  if we  assume that $f(t)$ is $C^2$ and observe that for $t\in[0,b-m]$,
$$
\varphi'(t)=f'(m+t)-f'(m-t) =\int_{m-t}^{m+t} f''(s)  \,{\mathrm{d}}s,
$$
we can estimate  $\varphi'(t)$ with $f''(s)\ge 0$.

The extremal property \eqref{eqext1} of  $f(t)$  says that for four points in $[a,b]$,
\begin{equation}\label{eqext2}
p\le s\le t\le q, \ p+q=s+t \Rightarrow f(s)+f(t) \le f(p)+f(q).
\end{equation}

Let us see now what can be said for matrices.
Important matrix versions of  \eqref{eqbasic} are well-known. Let $\bM_n$ denote the space of $n\times n$ matrices and   $\bM_n^{s.a}$ its self-adjoint (Hermitian) part with the usual order $\le $ induced by the positive semidefinite cone $\bM_n^+$. We recall \cite[Corollary 2.2]{BL-London}.

\vskip 5pt
\begin{theorem} \label{thB2005} Let  $A,B\in \bM_n^{s.a}$ with spectra in $[a,b]$ and let   $f(t)$ be a convex function on $[a,b]$.  Then, for some unitaries $U,\,V\in\bM_n$,
\begin{equation*}
f\left(\frac{A+B}{2}\right)\le \frac{1}{2}\left\{U\frac{f(A)+f(B)}{2}U^*+V\frac{f(A)+f(B)}{2}V^*\right\}.
\end{equation*}
If furthermore $f(t)$ is monotone, then we can take $U=V$.
\end{theorem}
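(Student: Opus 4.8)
The plan is to obtain Theorem \ref{thB2005} as a direct special case of the general Jensen-type inequality for unital positive linear maps, Theorem \ref{T-2.1} (indeed the statement is literally Corollary \ref{C-2.2}). First I would introduce the positive linear map $\Phi:\bM_{2n}\to\bM_n$ defined on block form by
$$
\Phi\left(\begin{bmatrix} C & X \\ Y & D\end{bmatrix}\right) = \frac{C+D}{2}.
$$
This $\Phi$ is manifestly positive, and it is unital because $\Phi(I_{2n})=\tfrac12(I_n+I_n)=I_n$, so it meets the hypotheses of Theorem \ref{T-2.1}.

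Next I would apply Theorem \ref{T-2.1} to the block-diagonal Hermitian matrix
$$
M := \begin{bmatrix} A & 0 \\ 0 & B\end{bmatrix}\in\bM_{2n}^{s.a},
$$
whose spectrum is the union of the spectra of $A$ and $B$ and hence lies in $[a,b]$. The decisive point is that $\Phi$ interacts transparently with functional calculus on block-diagonal arguments: on the one hand $\Phi(M)=\tfrac12(A+B)$, and on the other hand $f(M)=f(A)\oplus f(B)$, so $\Phi(f(M))=\tfrac12(f(A)+f(B))$. Theorem \ref{T-2.1} then supplies unitaries $U,V\in\bM_n$ with
$$
f\!\left(\frac{A+B}{2}\right)=f(\Phi(M)) \le \frac{U\Phi(f(M))U^* + V\Phi(f(M))V^*}{2} = \frac12\left\{U\frac{f(A)+f(B)}{2}U^*+V\frac{f(A)+f(B)}{2}V^*\right\},
$$
which is precisely the claimed inequality. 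The monotone case is inherited verbatim: the monotone clause of Theorem \ref{T-2.1} permits the choice $U=V$.

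Since all the analytic content is packed into Theorem \ref{T-2.1}, no genuinely new obstacle arises at this stage; the only verifications needed are the unitality and positivity of $\Phi$ and the two functional-calculus identities above, all routine. If one instead wanted a self-contained argument, the hard part would be re-deriving the compression case of Theorem \ref{T-2.1}: one reduces via Lemma \ref{L-2.9} to a compression $A\mapsto A_{\mathcal S}$, splits $\mathcal S=\mathcal S'\oplus\mathcal S''$ into spectral subspaces on which $f$ is monotone, and runs a min-max eigenvalue estimate combining the monotonicity of $f$ on each piece with the scalar Jensen inequality $f(\langle h,Ah\rangle)\le\langle h,f(A)h\rangle$. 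Assembling the two one-sided unitary conjugations into a symmetric average, via $U=\mathrm{diag}(U_0,V_0)$ and $V=\mathrm{diag}(U_0,-V_0)$, is exactly what produces the factor $\tfrac12$ and the two unitaries in the conclusion.
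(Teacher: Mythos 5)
Your proposal is correct and is exactly the paper's own route: Theorem \ref{thB2005} is recorded there as Corollary \ref{C-2.2}, obtained by applying Theorem \ref{T-2.1} to the unital positive map $\Phi\left(\begin{bmatrix} C & X \\ Y & D\end{bmatrix}\right)=\frac{C+D}{2}$ evaluated at $A\oplus B$, just as you do. The verifications of unitality, positivity, and the block functional-calculus identities are routine, and the monotone clause carries over verbatim, so nothing is missing.
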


\vskip 5pt
Theorem \ref{thB2005} is a major improvement of the classical trace inequality of von Neumann (around 1920),
$$
{\mathrm{Tr\,}}f\left(\frac{A+B}{2}\right) \le {\mathrm{Tr\,}}\frac{f(A)+f(B)}{2}
$$
which entails the following trivial extension of Proposition \ref{propHH1}.

\vskip 5pt
\begin{prop}\label{propHH2}  Let $f(t)$ be a convex function defined on the interval $[a,b]$ and let $A,B\in\bM_n^{s.a}$ with spectra in $[a,b]$. Then,
$$
 {\mathrm{Tr\,}}f\left(\frac{A+B}{2}\right)\ \le  {\mathrm{Tr\,}}\int_0^1 f((1-x)A+xB)\, {\mathrm{d}}x\le  {\mathrm{Tr\,}}\frac{f(A)+f(B)}{2}.
$$
\end{prop}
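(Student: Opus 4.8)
The plan is to reduce everything to a single fact: the functional $g(X):=\mathrm{Tr\,}f(X)$ is convex on the set of Hermitian matrices whose spectra lie in $[a,b]$. Indeed, the von Neumann trace inequality recalled just above, $\mathrm{Tr\,}f((X+Y)/2)\le \mathrm{Tr\,}\{f(X)+f(Y)\}/2$, is exactly the assertion that $g$ is midpoint-convex on this set; since $f$ is continuous on $[a,b]$ so is $g$, and midpoint-convexity together with continuity yields full convexity, $g((1-x)X+xY)\le (1-x)g(X)+xg(Y)$ for all $x\in[0,1]$. Here I also use the elementary observation that $aI\le A,B\le bI$ forces $aI\le (1-x)A+xB\le bI$, so every convex combination of $A$ and $B$ again has spectrum in $[a,b]$ and $g$ is defined on all the matrices appearing below.

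For the right-hand inequality I would apply this pointwise convexity with $X=A$, $Y=B$ to get $\mathrm{Tr\,}f((1-x)A+xB)\le (1-x)\mathrm{Tr\,}f(A)+x\mathrm{Tr\,}f(B)$ and integrate in $x$ over $[0,1]$; since $\int_0^1(1-x)\,\mathrm{d}x=\int_0^1 x\,\mathrm{d}x=\tfrac12$ and the trace commutes with the integral, the right side becomes $\mathrm{Tr\,}\{f(A)+f(B)\}/2$, which is the second inequality of the proposition.

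For the left-hand inequality I would invoke the integral (Jensen) form of convexity of $g$: since $\int_0^1\big((1-x)A+xB\big)\,\mathrm{d}x=(A+B)/2$, convexity of $g$ gives $g\big((A+B)/2\big)\le \int_0^1 g\big((1-x)A+xB\big)\,\mathrm{d}x$, that is $\mathrm{Tr\,}f((A+B)/2)\le \mathrm{Tr\,}\int_0^1 f((1-x)A+xB)\,\mathrm{d}x$. Equivalently, and more in the spirit of \eqref{eqbasic}, one may apply von Neumann's inequality to the pair $(1-x)A+xB$ and $xA+(1-x)B$, whose average is $(A+B)/2$, and then integrate using the symmetry $x\mapsto 1-x$.

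The only point deserving care — and hence the ``main obstacle,'' such as it is — is the upgrade from the midpoint-convexity of $\mathrm{Tr\,}f(\cdot)$ (the recalled von Neumann inequality) to its full and integral convexity; after that both bounds are routine integrations over $x$. It is worth noting that the trace linearizes the problem: unlike the scalar argument of \eqref{eqext1}, where the upper bound comes from an extremal property using convexity twice, here the upper bound follows from a single pointwise use of convexity of $g$ followed by integration. Should one want the sharper, non-traced extremal statement $\mathrm{Tr\,}f((1-x)A+xB)+\mathrm{Tr\,}f(xA+(1-x)B)\le \mathrm{Tr\,}f(A)+\mathrm{Tr\,}f(B)$ paralleling \eqref{eqext1} exactly, it can be recovered by conjugating $A\oplus B\in\bM_{2n}$ by a rotation unitary whose conjugate has these two convex combinations as diagonal blocks and then applying the pinching/majorization inequality, but this refinement is not needed for Proposition \ref{propHH2}.
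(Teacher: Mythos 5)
Your proof is correct and follows exactly the route the paper intends: the paper offers no separate argument for Proposition \ref{propHH2}, presenting it as a ``trivial extension'' entailed by von Neumann's trace inequality, i.e.\ by the convexity of $X\mapsto {\mathrm{Tr\,}}f(X)$ on $\bM_n\{[a,b]\}$, which is precisely what you flesh out. One small repair: a convex $f$ on a \emph{closed} interval $[a,b]$ need not be continuous at the endpoints, so the step ``midpoint-convex $+$ continuous $\Rightarrow$ convex'' is not quite as stated; but the detour is unnecessary, since the full convexity ${\mathrm{Tr\,}}f((1-x)A+xB)\le (1-x){\mathrm{Tr\,}}f(A)+x\,{\mathrm{Tr\,}}f(B)$ for every $x\in[0,1]$ follows directly from the Hansen--Pedersen trace inequality \eqref{HP} with the isometric column $Z_1=\sqrt{1-x}\,I$, $Z_2=\sqrt{x}\,I$ (or from the elementary Peierls-type argument diagonalizing $(1-x)A+xB$). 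With that substitution both of your integrations go through verbatim.
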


What about matrix versions of the equivalent scalar inequalities \eqref{eqext1} and \eqref{eqext2}? There is no hope for \eqref{eqext2} : in general the trace inequality
$$
{\mathrm{Tr}\,} f(P) + {\mathrm{Tr}\,} f(Q)  \le {\mathrm{Tr}\,} f(S) + {\mathrm{Tr\,}} f(T) 
$$
does not hold for all Hermitian matrices $P,Q,S,T$ with spectra in $[a,b]$ and such that
$$
P\le S\le T \le Q, \quad P+Q=S+T.
$$

In the matrix setting \eqref{eqext1} and \eqref{eqext2} are not equivalent. This paper aims to establish two  matrix versions of the extremal inequality \eqref{eqext1}.  Doing so, we will obtain several new matrix inequalities. 

 For an operator convex functions $h(t)$ on $[a,b]$, and $A,B\in\bM_n^{s.a}$ with spectra in this interval, the matrix version of \eqref{eqext1} (as well as Proposition \ref{propHH1}) obvioulsy holds,
\begin{equation}\label{eqext3}
 h((1-x)A+xB) +h(xA +(1-x)B) \le h(A) + h(B)
\end{equation}
for any $0<x<1$. Our results hold for much more general convex/concave functions and have  applications to eigenvalue inequalities that cannot be derived from
\eqref{eqext3}, even in the case of the simplest operator convex/concave function $h(t)=t$. 

 We often use a crucial assumption: our functions are defined on the positive half-line and we deal with positive semidefinite matrices. In the matrix setting, the interval of definition of a function  may be quite important; for instance the class of operator monotone functions on the whole real line reduces to affine functions.

\section{The extremal property for matrices}

 For  concave functions, the inequality \eqref{eqext1} is reversed. Here is the matrix version.  An isometry 
$U\in\bM_{2n,n}$ means a $2n\times n$ matrix such that $U^*U=I$, the identity of $\bM_n$.

\vskip 5pt
\begin{theorem}\label{th-HH1}  Let $A, B\in\bM_{n}^+$, let $0<x<1$, and let $f(t)$ be a monotone concave function  on $[0,\infty)$ with $f(0)\ge 0$. Then, for some isometry matrices $U,V\in\bM_{2n,n}$,
$$
f(A)\oplus f(B) \le Uf((1-x)A+xB)U^* + Vf(xA+(1-x)B)V^*.
$$
\end{theorem}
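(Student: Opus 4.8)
The plan is to reduce the assertion to a system of scalar (eigenvalue) inequalities and then recover the operator inequality from them. First I would rewrite the conclusion in orbit form: since $Uf(C)U^{*}$, as $U$ runs over the isometries in $\bM_{2n,n}$, describes exactly the unitary orbit of $f(C)\oplus 0_{n}$ in $\bM_{2n}$, the statement is equivalent to the existence of unitaries $W_{1},W_{2}\in\bM_{2n}$ with
$$
f(A)\oplus f(B)\ \le\ W_{1}\bigl(f(C)\oplus 0_{n}\bigr)W_{1}^{*}+W_{2}\bigl(f(D)\oplus 0_{n}\bigr)W_{2}^{*},
$$
where $C=(1-x)A+xB$ and $D=xA+(1-x)B$. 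Geometrically this is natural: the explicit unitary $\Omega=\left(\begin{smallmatrix}\sqrt{1-x}\,I&\sqrt{x}\,I\\ \sqrt{x}\,I&-\sqrt{1-x}\,I\end{smallmatrix}\right)$ conjugates $A\oplus B$ into a block matrix whose two diagonal blocks are precisely $C$ and $D$, so $C\oplus D$ is a pinching of $A\oplus B$ and we are asking that $f$ of the whole be dominated by a sum of two orbits of $f$ of the blocks.

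Next I would invoke the principle governing domination by a sum of two unitary orbits: for positive matrices, $K\le W_{1}GW_{1}^{*}+W_{2}HW_{2}^{*}$ holds for some unitaries if and only if the weak majorization $\lambda(K)\prec_{w}\bigl(\lambda_{k}(G)+\lambda_{k}(H)\bigr)_{k}$ holds, the necessity being Ky Fan's maximum principle. With $G=f(C)\oplus 0_{n}$, $H=f(D)\oplus 0_{n}$, $K=f(A)\oplus f(B)$, and writing $\alpha_{k}=\lambda_{k}(A\oplus B)$ so that $\lambda_{k}(K)=f(\alpha_{k})$ (here $f$ nondecreasing is used), the task becomes
$$
\sum_{k=1}^{m} f(\alpha_{k})\ \le\ \sum_{k=1}^{m}\bigl[\lambda_{k}(G)+\lambda_{k}(H)\bigr],\qquad m=1,\ldots,2n,
$$
where $\lambda_{k}(G)=f(\lambda_{k}(C))$ for $k\le n$ and $\lambda_{k}(G)=0$ for $k>n$, and likewise for $H$. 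For $m\ge n$ the right-hand side equals $\mathrm{Tr}\,f(C)+\mathrm{Tr}\,f(D)$, while the left-hand side is at most $\mathrm{Tr}\,f(A)+\mathrm{Tr}\,f(B)$ (using $f\ge 0$), so this case reduces to the trace form of the extremal inequality, itself immediate from $\lambda(C\oplus D)\prec\lambda(A\oplus B)$ and the Schur-concavity of $X\mapsto\mathrm{Tr}\,f(X)$.

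The heart of the proof is the range $m\le n$. I would split $m=p+q$ so that the top $m$ eigenvalues of $A\oplus B$ are the top $p$ of $A$ together with the top $q$ of $B$, whence $\sum_{k=1}^{m}f(\alpha_{k})=\sum_{k=1}^{p}f(\lambda_{k}(A))+\sum_{k=1}^{q}f(\lambda_{k}(B))$. Choose a single isometry $V\colon\bC^{m}\to\bC^{n}$ whose range contains both the top-$p$ eigenspace of $A$ and the top-$q$ eigenspace of $B$ (possible since $m\le n$); writing $A'=V^{*}AV$ and $B'=V^{*}BV$ in $\bM_{m}^{+}$, the preserved top eigenvalues together with $f\ge 0$ give $\mathrm{Tr}\,f(A')+\mathrm{Tr}\,f(B')\ge\sum_{k=1}^{m}f(\alpha_{k})$. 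On the other hand $V^{*}CV=(1-x)A'+xB'$ and $V^{*}DV=xA'+(1-x)B'$, so the trace extremal inequality on $\bM_{m}^{+}$ yields $\mathrm{Tr}\,f(A')+\mathrm{Tr}\,f(B')\le\mathrm{Tr}\,f(V^{*}CV)+\mathrm{Tr}\,f(V^{*}DV)$; finally Cauchy interlacing, $\lambda_{k}(V^{*}CV)\le\lambda_{k}(C)$, with $f$ nondecreasing gives $\mathrm{Tr}\,f(V^{*}CV)\le\sum_{k=1}^{m}f(\lambda_{k}(C))$, and similarly for $D$. Chaining the three estimates proves the required inequality.

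The main obstacle is the passage from the verified eigenvalue inequalities back to the operator inequality, that is, the sufficiency direction of the orbit-domination principle; this is where one must work, either by citing the theory of sums of two unitary orbits or by an explicit construction of $W_{1},W_{2}$, for instance via Thompson's triangle inequality \eqref{Thompson}. I note that when $f(0)=0$ this difficulty evaporates: applying Lemma \ref{L-3.4} to $\Omega^{*}(A\oplus B)\Omega$ writes it as a sum of two positive matrices unitarily congruent to $C\oplus 0$ and $0\oplus D$, and Theorem \ref{T-3.1} then produces the isometries directly, with no stray $f(0)$ terms. The role of the hypothesis $f(0)\ge 0$ in the general case is precisely to keep the discarded compression eigenvalues nonnegative in the interlacing step, while concavity enters only through the easy trace extremal inequality.
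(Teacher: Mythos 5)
Your main route has a genuine gap at exactly the point you flag, and it is not a gap that can be closed by ``citing the theory'': the asserted equivalence between $K\le W_1GW_1^*+W_2HW_2^*$ and the weak majorization $\lambda(K)\prec_w(\lambda_k(G)+\lambda_k(H))_k$ is false. Ky Fan gives necessity, but sufficiency fails already for $K=I_2$, $G=\mathrm{diag}(2,0)$, $H=0$: here $(1,1)\prec_w(2,0)$, yet $W_1GW_1^*+W_2HW_2^*=2pp^*$ is singular and cannot dominate $I_2$. Domination by a sum of two unitary orbits is a Horn-type problem with no clean majorization characterization, so the eigenvalue inequalities you verify (which are correct, and a nice consistency check) do not reconstruct the operator inequality. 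A second, smaller issue: your route uses $f$ nondecreasing in several places ($\lambda_k(f(A\oplus B))=f(\alpha_k)$, the interlacing step), whereas the theorem allows monotone in either direction, in which case $f$ need not be nonnegative.

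Your fallback for $f(0)=0$ is exactly the paper's proof: conjugate $A\oplus B$ by the unitary $R=\bigl(\begin{smallmatrix}\sqrt{1-x}\,I&\sqrt{x}\,I\\ \sqrt{x}\,I&-\sqrt{1-x}\,I\end{smallmatrix}\bigr)$ to exhibit $(1-x)A+xB$ and $xA+(1-x)B$ as diagonal blocks, split off the diagonal blocks by Lemma \ref{L-3.4}, and apply the subadditivity theorem (Theorem \ref{T-3.1}). What is missing from your write-up is the reduction of the general case $f(0)\ge 0$ to $f(0)=0$; your remark on the role of $f(0)\ge0$ pertains only to the (invalid) majorization route. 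This reduction is not automatic: running the decomposition argument with $f(0)>0$ leaves stray $f(0)I$ blocks on the right-hand side and produces unitaries in $\bM_{2n}$ rather than isometries in $\bM_{2n,n}$. The paper removes them by an approximation: replace $f$ by a piecewise affine monotone concave $h$ agreeing with $f$ on the union of the spectra of $A$, $B$ and the two convex combinations, reduce by continuity to $A,B$ invertible so that all four spectra lie in $[r,\infty)$ with $r>0$, and then modify $h$ below $r$ to a function $h_r$ with $h_r(0)=0$ that is still monotone concave. You should supply this step (or an equivalent one) for the proof to be complete.
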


\vskip 5pt
\begin{proof} Let $x=\sin^2 \theta$, $1-x=\cos^2\theta$, consider the unitary Hermitian matrix
$$
R:=\begin{bmatrix}
\sqrt{1-x} I& \sqrt{x}I\\  \sqrt{x}I& -\sqrt{1-x} I
\end{bmatrix}
$$
and note the unitary congruence
\begin{equation}\label{eq-cong}
R
\begin{bmatrix}
A & 0 \\ 0 & B
\end{bmatrix}R=
\begin{bmatrix} (1-x)A+xB
 & \star \\ \star &  xA+(1-x)B
\end{bmatrix}
\end{equation}
where the stars hold for unspecified entries. 

Now, recall the decomposition \cite[Lemma 3.4]{BL-London} : Given any positive semidefinite matrix $\begin{bmatrix}
C & X \\ X^* & D
\end{bmatrix}$ partitioned in four blocks in $\bM_n$, we have
$$
\begin{bmatrix}
C & X \\ X^* & D
\end{bmatrix}
=
U_0\begin{bmatrix}
C & 0 \\ 0 & 0
\end{bmatrix}
U_0^*+
V_0
\begin{bmatrix}
0 & 0 \\ 0 & D
\end{bmatrix}
V_0^*
$$
for some unitary matrices $U_0,V_0\in\bM_{2n}$. Applying this to \eqref{eq-cong}, we obtain
\begin{equation}\label{eq-dec} 
\begin{bmatrix}
A & 0 \\ 0 & B\end{bmatrix} = U_1\begin{bmatrix}
(1-x)A+xB & 0 \\ 0 & 0
\end{bmatrix}
U_1^*+
V_1
\begin{bmatrix}
0 & 0 \\ 0 & xA+(1-x)B
\end{bmatrix}
V_1^*
\end{equation}
for two unitary matrices $U_1,V_1\in\bM_{2n}$.

Next, recall the subadditivity inequality \cite[Theorem 3.4]{BL-London} : Given any pair of positive semidefinite matrices $S,T\in\bM_d$, we have
$$
f(S+T) \le U_2f(S)U_2^* + V_2f(T)V_2^*
$$
for two unitary matrices $U_2,V_2\in\bM_{d}$. Applying this to \eqref{eq-dec} yields
$$
\begin{bmatrix}
f(A) & 0 \\ 0 & f(B)\end{bmatrix} \le U\begin{bmatrix}
f((1-x)A+xB) & 0 \\ 0 & f(0)I
\end{bmatrix}
U^*+
V
\begin{bmatrix}
f(0)I & 0 \\ 0 & f(xA+(1-x)B)
\end{bmatrix}
V^*
$$
for two unitary matrices $U,V\in\bM_{2n}$. This proves the theorem when $f(0)=0$.

 To derive the general case,  we may assume that $f(t)$ is continuous (a concave function on $[0,\infty)$ might be discontinuous at $0$). Indeed, it suffices to consider the values of $f(t)$  on a finite set, the union of the spectra of the four matrices $A$, $B$, $(1-x)A+xB$ and $xA+(1-x)B$. Hence we may replace $f(t)$ by a piecewise affine monotone concave function $h(t)$ with $h(0)\ge 0$. Now, since $h(t)$ is continuous, a limit argument allows us to suppose that $A$ and $B$ are invertible. Therefore, letting $\lambda_n^{\downarrow}(Z)$ denote the smallest eigenvalue of $Z\in\bM_n^{s.a}$,
$$
r:=\min\{\lambda_n^{\downarrow}(A), \lambda_n^{\downarrow}(B)\}>0.
$$
We may then replace $h(t)$ by $h_r(t)$ defined as $h_r(t):=h(t)$ for $t\ge r$, $h_r(0):=0$ and $h_r(s):=h(r)\frac{s}{r}$ for $0\le s\le r$. The function $h_r(t)$ is monotone concave on $[0,\infty)$ and vanishes at $0$, thus the case $f(0)=0$ entails the general case.
\end{proof}

\vskip 5pt
Let  $\lambda_j^{\downarrow}(Z)$, $j=1,2,\ldots n$, denote the  eigenvalues of $Z\in\bM_n^{s.a}$ arranged in the nonincreasing order.

\vskip 5pt
\begin{cor}\label{coreigen1}  Let $A, B\in\bM_{n}^+$, let $0<x<1$, and let $f(t)$ be a nonnegative concave function  on $[0,\infty)$. Then, for  $j=0,1,\ldots, n-1$,
$$
\lambda_{1+2j}^{\downarrow}\left(f(A\oplus B)\right) \le \lambda_{1+j}^{\downarrow}\left(f(xA+(1-x)B) \right) + \lambda_{1+j}^{\downarrow}\left(f((1-x)A+xB)\right)
$$
and
$$
\lambda_{1+j}^{\downarrow}\left\{\left(f(xA+(1-x)B) \right) + \left(f((1-x)A+xB)\right)\right\}
 \le 2\lambda_{1+j}^{\downarrow}\left(f\left(\frac{A+B}{2}\right) \right).
$$
\end{cor}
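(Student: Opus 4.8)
The plan is to derive both inequalities directly from the two decomposition/orbit theorems already available, namely Theorem \ref{th-HH1} for the first inequality and Theorem \ref{thB2005} for the second. The first preliminary observation I would record is that a nonnegative concave function on $[0,\infty)$ is automatically nondecreasing: if $f'(t_0)<0$ at some point, then by concavity $f'\le f'(t_0)<0$ on $[t_0,\infty)$, forcing $f\to-\infty$ and contradicting $f\ge 0$. Hence $f$ is a monotone concave function with $f(0)\ge 0$, so both Theorem \ref{th-HH1} and the monotone concave case of Theorem \ref{thB2005} apply. I would also use throughout the two standard facts that eigenvalues are monotone for the Loewner order, $X\le Y\Rightarrow\lambda_m^{\downarrow}(X)\le\lambda_m^{\downarrow}(Y)$, and Weyl's inequality $\lambda_{i+k-1}^{\downarrow}(P+Q)\le\lambda_i^{\downarrow}(P)+\lambda_k^{\downarrow}(Q)$ for Hermitian $P,Q$.

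For the first inequality, note that $f(A\oplus B)=f(A)\oplus f(B)$ by functional calculus, so Theorem \ref{th-HH1} gives isometries $U,V\in\bM_{2n,n}$ with
$$
f(A\oplus B)\le P+Q,\qquad P:=Uf((1-x)A+xB)U^*,\quad Q:=Vf(xA+(1-x)B)V^*.
$$
Since $f\ge 0$, the matrices $M_1:=f((1-x)A+xB)$ and $M_2:=f(xA+(1-x)B)$ are positive semidefinite, and for an isometry $U$ (so $U^*U=I_n$) the nonzero eigenvalues of $UM_1U^*$ are exactly those of $M_1$, the remaining $n$ eigenvalues being $0$; thus $\lambda_i^{\downarrow}(P)=\lambda_i^{\downarrow}(M_1)$ and $\lambda_i^{\downarrow}(Q)=\lambda_i^{\downarrow}(M_2)$ for $1\le i\le n$. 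I would then apply Loewner monotonicity followed by Weyl's inequality with indices $i=k=1+j$, so that $i+k-1=1+2j$ (all indices are admissible since $j\le n-1$ gives $1+j\le n$ and $1+2j\le 2n-1$):
$$
\lambda_{1+2j}^{\downarrow}(f(A\oplus B))\le\lambda_{1+2j}^{\downarrow}(P+Q)\le\lambda_{1+j}^{\downarrow}(P)+\lambda_{1+j}^{\downarrow}(Q)=\lambda_{1+j}^{\downarrow}(M_1)+\lambda_{1+j}^{\downarrow}(M_2),
$$
which is exactly the first claimed inequality.

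For the second inequality I would set $A':=(1-x)A+xB$ and $B':=xA+(1-x)B$, both in $\bM_n^+$, and observe the crucial algebraic identity $\tfrac{A'+B'}{2}=\tfrac{A+B}{2}$. Applying the monotone concave case of Theorem \ref{thB2005} to the pair $A',B'$ (monotonicity allows $U=V=W$, and the concave case reverses the displayed inequality) yields a unitary $W$ with
$$
f\!\left(\frac{A+B}{2}\right)=f\!\left(\frac{A'+B'}{2}\right)\ge W\,\frac{f(A')+f(B')}{2}\,W^*.
$$
Taking $\lambda_{1+j}^{\downarrow}$ of both sides, using unitary invariance on the right and Loewner monotonicity, gives $\lambda_{1+j}^{\downarrow}(f((A+B)/2))\ge\tfrac12\lambda_{1+j}^{\downarrow}(f(A')+f(B'))$, i.e. the second inequality. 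The only genuinely delicate points are the two preliminary reductions rather than any hard estimate: first, the automatic monotonicity of $f$, which is what legitimizes invoking both theorems (and the $U=V$ form of Theorem \ref{thB2005}); and second, the correct bookkeeping in the Weyl step together with the eigenvalue count for the rank-deficient matrices $UM_1U^*$ and $VM_2V^*$, where the padding by $n$ zeros must not shift the indices used.
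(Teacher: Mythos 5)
Your proof is correct and follows essentially the same route as the paper: the first inequality via Theorem \ref{th-HH1} combined with Weyl's inequality at indices $i=k=1+j$, and the second via the Jensen-type Theorem \ref{thB2005} (concave, monotone case) applied to the pair $(1-x)A+xB$ and $xA+(1-x)B$, whose average is $(A+B)/2$. Your preliminary observation that a nonnegative concave function on $[0,\infty)$ is automatically nondecreasing, which legitimizes invoking the monotone forms of both theorems, is a detail the paper leaves implicit but is worth recording.
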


\vskip 5pt
\begin{proof} The first inequality is a straighforward consequence of Theorem \ref{th-HH1} combined with the  inequalities of Weyl \cite[p.\ 62]{Bh} :
For all $S,T\in\bM_d^{s.a}$ and $j,k\in\{0,\ldots,d-1\}$ such that $j+k+1\le d$,
$$
\lambda_{1+j+k}^{\downarrow}(S+T) \le 
\lambda_{1+j}^{\downarrow}(S) + 
\lambda_{1+k}^{\downarrow}(T).
$$
The second inequality is not new; it follows from Theorem \ref{th-convex}.
\end{proof}

\vskip 5pt
\begin{cor}\label{corSchatt1}  Let $A, B\in\bM_{n}^+$, let $0<x<1$, and let $f(t)$ be nonnegative concave function  on $[0,\infty)$. Then, for all $p\ge 1$,
$$
\left(\|f(A)\|_p^p + \|f(B)\|_p^p\right)^{1/p} \le \|f(xA+(1-x)B))\|_p+ \|f((1-x)A+xB)\|_p.
$$
\end{cor}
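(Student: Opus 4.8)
The plan is to deduce the inequality directly from Theorem \ref{th-HH1}. First I would check that the hypotheses match. A nonnegative concave function on $[0,\infty)$ is automatically nondecreasing: if its slope were ever strictly negative, say on $[s,t]$, then by concavity all later slopes would be at most this negative value, forcing $f(u)\to-\infty$ as $u\to\infty$ and contradicting $f\ge 0$; moreover $f(0)\ge 0$ is part of the nonnegativity hypothesis. Hence $f$ is a monotone concave function with $f(0)\ge 0$, and Theorem \ref{th-HH1} applies, furnishing isometries $U,V\in\bM_{2n,n}$ with
$$
f(A)\oplus f(B) \le U f((1-x)A+xB)U^* + V f(xA+(1-x)B)V^*.
$$

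Next I would pass to Schatten $p$-norms. Both sides are positive semidefinite and the left side is dominated by the right, so by Weyl's monotonicity principle $\lambda_j\left(f(A)\oplus f(B)\right)\le \lambda_j\left(U f((1-x)A+xB)U^* + V f(xA+(1-x)B)V^*\right)$ for every $j$, whence the left norm is bounded by the right norm. Applying the triangle inequality for $\|\cdot\|_p$ (valid since $p\ge 1$) then splits the right-hand side, giving
$$
\|f(A)\oplus f(B)\|_p \le \|U f((1-x)A+xB)U^*\|_p + \|V f(xA+(1-x)B)V^*\|_p.
$$

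Finally I would evaluate the three norms. For an isometry $U\in\bM_{2n,n}$ and a positive $M\in\bM_n$, the nonzero eigenvalues of $UMU^*$ coincide with those of $MU^*U=M$, so $UMU^*$ has the singular values of $M$ together with $n$ extra zeros; thus $\|UMU^*\|_p=\|M\|_p$, and likewise with $V$. On the left, the singular values of a direct sum are the union of the singular values of the summands, so $\|f(A)\oplus f(B)\|_p=\left(\|f(A)\|_p^p+\|f(B)\|_p^p\right)^{1/p}$. Substituting these three identities into the displayed inequality yields exactly the claimed estimate.

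The proof is short because the substantive work is carried entirely by Theorem \ref{th-HH1}; the only points that require care are the remark that nonnegativity together with concavity forces monotonicity (so the theorem applies verbatim) and the invariance of the Schatten norm under conjugation by a rectangular isometry. Neither of these is a genuine obstacle, so I do not expect any difficult step here: the content is a clean transfer of the operator inequality of Theorem \ref{th-HH1} into the language of Schatten norms via the triangle inequality and the direct-sum norm identity.
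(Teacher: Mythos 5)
Your proof is correct and follows the same route as the paper: apply Theorem \ref{th-HH1}, pass to Schatten norms, and split with the triangle inequality, using invariance under isometric conjugation and the direct-sum identity. You also usefully make explicit a point the paper leaves tacit, namely that a nonnegative concave function on $[0,\infty)$ is automatically nondecreasing, so the monotonicity hypothesis of Theorem \ref{th-HH1} is indeed satisfied.
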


\vskip 5pt
\begin{proof} From Theorem \ref{th-HH1} we have 
$$
\|f(A)\oplus f(B)\|_p \le \|Uf((1-x)A+xB)U^* + Vf(xA+(1-x)B)V^*\|_p.
$$
The triangle inequality for $\|\cdot\|_p$ completes the proof.
\end{proof}

\vskip 5pt
Corollary \ref{corSchatt1} with $f(t)=t^q$ reads as the following trace inequality.

\vskip 5pt
\begin{cor}\label{cortrace1}  Let $A, B\in\bM_{n}^+$ and $0<x<1$. Then, for all $p\ge 1\ge q\ge 0$,
$$
\left\{{\mathrm{Tr\,}}A^{pq} + \mathrm{Tr\,}B^{pq}\right\}^{1/p} \le \left\{{\mathrm{Tr\,}}(xA+(1-x)B)^{pq}\right\}^{1/p}+ \left\{{\mathrm{Tr\,}}((1-x)A+xB))^{pq}\right\}^{1/p}.
$$
\end{cor}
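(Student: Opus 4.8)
The plan is to obtain Corollary \ref{cortrace1} as an immediate specialization of Corollary \ref{corSchatt1} to the power function $f(t)=t^q$. First I would record that for $0\le q\le 1$ the map $t\mapsto t^q$ is a nonnegative concave function on $[0,\infty)$; this covers the hypotheses of Corollary \ref{corSchatt1} for every $p\ge 1$, so that corollary applies verbatim with this choice of $f$.

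The second step is purely a matter of rewriting Schatten norms as traces. For a positive semidefinite $C\in\bM_n^+$ one has $\|C^q\|_p=\left(\mathrm{Tr\,}(C^q)^p\right)^{1/p}=\left(\mathrm{Tr\,}C^{pq}\right)^{1/p}$, since $C^q$ is again positive and $(C^q)^p=C^{pq}$. In particular $\|A^q\|_p^p=\mathrm{Tr\,}A^{pq}$ and likewise $\|B^q\|_p^p=\mathrm{Tr\,}B^{pq}$, so the left-hand side of Corollary \ref{corSchatt1} becomes $\left(\mathrm{Tr\,}A^{pq}+\mathrm{Tr\,}B^{pq}\right)^{1/p}$, while the two terms on the right-hand side become $\left(\mathrm{Tr\,}(xA+(1-x)B)^{pq}\right)^{1/p}$ and $\left(\mathrm{Tr\,}((1-x)A+xB)^{pq}\right)^{1/p}$. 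Substituting these identities into the inequality of Corollary \ref{corSchatt1} yields exactly the claimed trace inequality.

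There is essentially no obstacle here: the content is carried entirely by Corollary \ref{corSchatt1} (which itself rests on the decomposition of Theorem \ref{th-HH1} and the triangle inequality for $\|\cdot\|_p$), and the present statement is a transparent reformulation of it. The only points deserving a word of care are the concavity of $t^q$ on the whole half-line for $q\in[0,1]$ (including the endpoints $q=0$, where $f\equiv 1$ is constant hence concave, and $q=1$, where $f$ is affine) and the fact that the matrix powers $A^{pq}$, $B^{pq}$, $(xA+(1-x)B)^{pq}$, $((1-x)A+xB)^{pq}$ are well defined as positive matrices since all four matrices are positive semidefinite.
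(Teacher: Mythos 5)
Your proof is correct and is exactly the paper's route: the text derives Corollary \ref{cortrace1} by applying Corollary \ref{corSchatt1} to the nonnegative concave function $f(t)=t^q$ and rewriting the Schatten $p$-norms as traces via $\|C^q\|_p^p={\mathrm{Tr\,}}C^{pq}$. Your additional remarks on the endpoint cases $q=0,1$ are harmless and the argument stands as written.
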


\vskip 5pt
Choosing in Corollary \ref{cortrace1} $q=1$ and $x=1/2$ yields McCarthy's inequality,
$$
{\mathrm{Tr\,}}A^{p} +{\mathrm{Tr\,}}B^{p} \le {\mathrm{Tr\,}}(A+B)^{p}. $$
This shows that Theorem \ref{th-HH1} is already significant with $f(t)=t$.
Our next corollary, for convex functions, is equivalent to Theorem \ref{th-HH1}.

\vskip 5pt
\begin{cor}\label{cor-HH1}  Let $A, B\in\bM_{n}^+$, let $0<x<1$, and let $g(t)$ be a monotone convex function  on $[0,\infty)$ with $g(0)\le 0$. Then, for some isometry matrices $U,V\in\bM_{2n,n}$,
$$
g(A)\oplus g(B) \ge Ug((1-x)A+xB)U^* + Vg(xA+(1-x)B)V^*.
$$
\end{cor}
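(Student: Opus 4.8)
The plan is to derive this corollary directly from Theorem~\ref{th-HH1} by the elementary device of passing from a convex function to its negative. Specifically, I would set $f(t) := -g(t)$ and first check that $f$ satisfies the hypotheses of Theorem~\ref{th-HH1}: since $g$ is convex, $f$ is concave; since $g$ is monotone, $f$ is monotone (with the opposite sense of monotonicity, which is irrelevant, as Theorem~\ref{th-HH1} only asks for monotonicity and not a prescribed direction); and the condition $g(0)\le 0$ becomes $f(0)=-g(0)\ge 0$. Thus $f$ is a monotone concave function on $[0,\infty)$ with $f(0)\ge 0$, exactly as required.

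Next I would apply Theorem~\ref{th-HH1} to this $f$, to $A$, $B$ and $x$, obtaining isometry matrices $U,V\in\bM_{2n,n}$ with
$$
f(A)\oplus f(B) \le Uf((1-x)A+xB)U^* + Vf(xA+(1-x)B)V^*.
$$
The remaining point is that functional calculus commutes with negation: for any Hermitian $X$ one has $f(X)=(-g)(X)=-g(X)$, since if $X=\sum_i \lambda_i P_i$ is the spectral decomposition then $(-g)(X)=\sum_i(-g(\lambda_i))P_i=-g(X)$. Applying this to each of the four positive semidefinite matrices $A$, $B$, $(1-x)A+xB$ and $xA+(1-x)B$ rewrites the displayed inequality as
$$
-\bigl(g(A)\oplus g(B)\bigr) \le -\bigl(Ug((1-x)A+xB)U^* + Vg(xA+(1-x)B)V^*\bigr).
$$

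Finally I would multiply through by $-1$, using that negation reverses the L\"owner order on Hermitian matrices, to arrive at
$$
g(A)\oplus g(B) \ge Ug((1-x)A+xB)U^* + Vg(xA+(1-x)B)V^*,
$$
with the very isometries $U,V$ produced by Theorem~\ref{th-HH1}. This is exactly the claimed inequality.

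I do not anticipate any genuine obstacle here: all the analytic content has already been packed into Theorem~\ref{th-HH1}, and the only two things to watch are that ``monotone'' in that theorem is direction-free (so $-g$ qualifies even though it reverses the monotonicity of $g$) and that the Hermitian order is reversed under negation. In particular no new approximation or spectral argument is needed, since all four matrices remain positive semidefinite and the isometries are inherited unchanged; this also explains the paper's remark that the corollary is \emph{equivalent} to Theorem~\ref{th-HH1}, the reverse implication being obtained by the same substitution.
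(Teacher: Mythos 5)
Your proposal is correct and is exactly the argument the paper intends: the paper gives no separate proof, simply declaring Corollary \ref{cor-HH1} equivalent to Theorem \ref{th-HH1}, and the substitution $f=-g$ together with the reversal of the L\"owner order under negation is precisely that equivalence. Your checks (that ``monotone'' is direction-free and that $(-g)(X)=-g(X)$ by the spectral decomposition) are the right points to verify, and nothing further is needed.
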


\vskip 5pt
Since the Schatten $q$-quasinorms $\|\cdot\|_q$, $0<q<1$, are superadditive functionals on $\bM_n^+$, Corollary \ref{cor-HH1} yields the next one.

\vskip 5pt
\begin{cor}\label{corSchatt2} Let $A, B\in\bM_{n}^+$, let $0<x<1$, and let $g(t)$ be a nonnegative convex function  on $[0,\infty)$ with $g(0)\le 0$. Then, for all $0<q<1$,  
$$
\left(\|g(A)\|_q^q+ \|g(B)\|_q^q\right)^{1/q} \ge \|g(xA+(1-x)B))\|_q+ \|g((1-x)A+xB)\|_q.
$$
\end{cor}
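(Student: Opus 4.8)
The plan is to mirror exactly the proof of Corollary~\ref{corSchatt1}, replacing the triangle inequality of the $p$-norm ($p\ge 1$) by the reverse triangle (superadditivity) inequality of the $q$-quasinorm for $0<q<1$. First I would invoke Corollary~\ref{cor-HH1} to obtain isometries $U,V\in\bM_{2n,n}$ with
$$
g(A)\oplus g(B) \ge Ug((1-x)A+xB)U^* + Vg(xA+(1-x)B)V^*,
$$
where both summands on the right-hand side are positive semidefinite elements of $\bM_{2n}^+$.

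Next I would apply the $q$-quasinorm. Using that $\|\cdot\|_q$ is monotone on the positive cone (since $0\le P\le Q$ forces $\lambda_j^{\downarrow}(P)\le\lambda_j^{\downarrow}(Q)$ and $t\mapsto t^q$ is increasing), and then its superadditivity on $\bM_{2n}^+$, I would get
$$
\|g(A)\oplus g(B)\|_q \ge \|Ug((1-x)A+xB)U^*\|_q + \|Vg(xA+(1-x)B)V^*\|_q.
$$
Two elementary identities then finish the computation. For an isometry $U$ (so $U^*U=I$) and $M\in\bM_n^+$, the matrices $UMU^*$ and $M$ share the same nonzero spectrum, whence $\|UMU^*\|_q=\|M\|_q$; and for a block-diagonal positive matrix one has $\|g(A)\oplus g(B)\|_q^q = \|g(A)\|_q^q + \|g(B)\|_q^q$. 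Substituting these yields exactly the asserted inequality.

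The only genuine input beyond Corollary~\ref{cor-HH1} is the superadditivity of the $q$-quasinorm on the positive cone, which is precisely the fact emphasized in the sentence preceding the statement; everything else is bookkeeping with eigenvalues. I do not expect any real obstacle. The single point requiring a moment's care is the \emph{direction} of the quasinorm inequality: for $0<q<1$ the triangle inequality reverses on $\bM_n^+$, and it is exactly this reversal that turns the convex estimate of Corollary~\ref{cor-HH1} into a lower bound, in contrast with the upper bound produced in the concave case of Corollary~\ref{corSchatt1}.
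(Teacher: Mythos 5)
Your proposal is correct and follows exactly the paper's route: the paper derives Corollary \ref{corSchatt2} in one line from Corollary \ref{cor-HH1} together with the superadditivity of $\|\cdot\|_q$ on the positive cone, which is precisely your argument with the bookkeeping (monotonicity on $\bM_{2n}^+$, invariance under isometric congruence, and the block-diagonal identity) made explicit.
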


\vskip 5pt
From the first inequality of Corollary \ref{coreigen1} we also get the following statement.

\vskip 5pt
\begin{cor}\label{coreigen2}  Let $A, B\in\bM_{n}^+$ and let $f(t)$ be a nonnegative concave function  on $[0,\infty)$. Then, for $j=0,1,\ldots, n-1$,
$$
\lambda_{1+2j}^{\downarrow}\left(f(A\oplus B)\right) \le 2\int_{0}^1
\lambda_{1+j}^{\downarrow}\left(f(xA+(1-x)B) \right) \,{\mathrm{d}}x.
$$
\end{cor}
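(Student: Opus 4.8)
The plan is to obtain Corollary \ref{coreigen2} directly from the first inequality of Corollary \ref{coreigen1} by integrating in $x$ and exploiting the reflection symmetry $x\leftrightarrow 1-x$. No new ingredient is needed; the entire content is already packaged in Corollary \ref{coreigen1}.

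First, I would fix $j\in\{0,1,\ldots,n-1\}$ and record that, for every $x\in(0,1)$, the first inequality of Corollary \ref{coreigen1} reads
$$
\lambda_{1+2j}^{\downarrow}\left(f(A\oplus B)\right) \le \lambda_{1+j}^{\downarrow}\left(f(xA+(1-x)B) \right) + \lambda_{1+j}^{\downarrow}\left(f((1-x)A+xB)\right).
$$
The decisive observation is that the left-hand side does not depend on $x$: it is a fixed number attached to the pair $(A,B)$ and the function $f$.

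Next, I would integrate both sides over $x\in(0,1)$. Since the left-hand side is constant, its integral equals itself, while on the right-hand side I would apply the substitution $x\mapsto 1-x$. This change of variable sends $xA+(1-x)B$ to $(1-x)A+xB$, so it carries the second integral exactly onto the first. The two integrals on the right are therefore equal, and their sum is $2\int_0^1 \lambda_{1+j}^{\downarrow}\left(f(xA+(1-x)B)\right)\,\mathrm{d}x$. This gives precisely
$$
\lambda_{1+2j}^{\downarrow}\left(f(A\oplus B)\right) \le 2\int_{0}^1 \lambda_{1+j}^{\downarrow}\left(f(xA+(1-x)B) \right) \,{\mathrm{d}}x,
$$
which is the claimed inequality.

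The only point deserving attention—and it is mild rather than a genuine obstacle—is the integrability of the integrand. I would argue that the affine map $x\mapsto xA+(1-x)B$ is continuous from $[0,1]$ into $\bM_n^+$, that each ordered eigenvalue functional $\lambda_k^{\downarrow}(\cdot)$ is continuous on $\bM_n^{s.a}$, and that $f$ may be assumed continuous after the same reduction used in the proof of Theorem \ref{th-HH1} (replace $f$ by a continuous, piecewise affine monotone concave approximant and pass to the limit). Consequently $x\mapsto \lambda_{1+j}^{\downarrow}\left(f(xA+(1-x)B)\right)$ is continuous and bounded on $(0,1)$, hence Lebesgue integrable, so the integration step is fully justified. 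Thus the statement follows by a routine integration of Corollary \ref{coreigen1}.
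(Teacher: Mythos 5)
Your proof is correct and follows exactly the route the paper intends: the paper derives Corollary \ref{coreigen2} by integrating the first inequality of Corollary \ref{coreigen1} over $x\in(0,1)$, with the substitution $x\mapsto 1-x$ identifying the two terms on the right-hand side. Your additional remarks on integrability (continuity of the integrand on $(0,1)$ and boundedness) are a harmless extra precaution and do not change the argument.
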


\vskip 5pt
Up to now we have dealt with  convex combinations $(1-x)A +xB$ with scalar weights. It is natural to search for extensions with matricial weights ($C^*$-convex combinations). We may
generalize Theorem \ref{th-HH1} with commuting normal weights.

\vskip 5pt
\begin{theorem}\label{th-HH2}  Let $A, B\in\bM_{n}^+$ and let $f(t)$ be a monotone concave function  on $[0,\infty)$ with $f(0)\ge 0$. If $X,Y\in\bM_n$ are normal and satisfy  $XY=YX$ and $X^*X+Y^*Y=I$, then, for some isometry  matrices $U,V\in\bM_{2n,n}$,
$$
f(A)\oplus f(B) \le Uf(X^*AX+Y^*BY)U^* + Vf(Y^*AY+X^*BX)V^*.
$$
\end{theorem}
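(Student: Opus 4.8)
The plan follows the same three-step template as the proof of Theorem~\ref{th-HH1}, replacing the scalar-weighted rotation by a normal-weighted one. First I would build a unitary analogue of the matrix $R$ adapted to the commuting normal weights. Since $X,Y$ are normal, commute, and satisfy $X^*X+Y^*Y=I$, the $2n\times 2n$ block matrix
$$
R:=\begin{bmatrix} X & Y \\ Y^* & -X^* \end{bmatrix}
$$
is unitary: the commutativity and normality of $X,Y$ give $XY^*=Y^*X$ and $X^*Y=YX^*$, so the off-diagonal blocks of $R^*R$ and $RR^*$ vanish, while the diagonal blocks reduce to $X^*X+Y^*Y=I$ and $XX^*+YY^*=I$ (the latter also holding by normality). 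Conjugating $A\oplus B$ by $R$ then yields
$$
R\begin{bmatrix} A & 0 \\ 0 & B \end{bmatrix}R^* = \begin{bmatrix} X^*AX+Y^*BY & \star \\ \star & Y^*AY+X^*BX \end{bmatrix},
$$
the exact analogue of \eqref{eq-cong} with the desired diagonal blocks.

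Second, I would apply the block decomposition of \cite[Lemma 3.4]{BL-London} to this positive semidefinite block matrix. Because $R(A\oplus B)R^*$ is unitarily congruent to $A\oplus B$, the decomposition produces unitaries $U_1,V_1\in\bM_{2n}$ with
$$
\begin{bmatrix} A & 0 \\ 0 & B \end{bmatrix} = U_1\begin{bmatrix} X^*AX+Y^*BY & 0 \\ 0 & 0 \end{bmatrix}U_1^* + V_1\begin{bmatrix} 0 & 0 \\ 0 & Y^*AY+X^*BX \end{bmatrix}V_1^*,
$$
which is the analogue of \eqref{eq-dec}. Third, I would invoke the subadditivity inequality \cite[Theorem 3.4]{BL-London} (Theorem~\ref{T-3.1} here) to the right-hand side, obtaining unitaries $U,V\in\bM_{2n}$ with
$$
\begin{bmatrix} f(A) & 0 \\ 0 & f(B) \end{bmatrix} \le U\begin{bmatrix} f(X^*AX+Y^*BY) & 0 \\ 0 & f(0)I \end{bmatrix}U^* + V\begin{bmatrix} f(0)I & 0 \\ 0 & f(Y^*AY+X^*BX) \end{bmatrix}V^*,
$$
and restricting $U,V$ to the appropriate $n$-dimensional ranges converts them into the isometries in $\bM_{2n,n}$ demanded by the statement. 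This settles the case $f(0)=0$; the reduction of the general $f(0)\ge 0$ case to this one is verbatim the continuity/piecewise-affine truncation argument at the end of the proof of Theorem~\ref{th-HH1}, so I would simply refer to it.

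The one genuinely new step, and thus the main obstacle, is verifying that $R$ is unitary: this is precisely where the hypotheses $XY=YX$ and $X,Y$ normal are consumed. For general (non-commuting, non-normal) weights the off-diagonal blocks $X^*Y-YX^*$ and $XY^*-Y^*X$ of $R^*R$ and $RR^*$ need not vanish, so $R$ fails to be unitary and the congruence identity breaks down; this explains why the theorem is stated only for commuting normal weights. Once the unitarity of $R$ is established, the remaining two steps are entirely mechanical applications of the two cited lemmas from \cite{BL-London}, exactly as in Theorem~\ref{th-HH1}.
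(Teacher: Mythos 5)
Your matrix $R=\begin{bmatrix} X & Y \\ Y^* & -X^* \end{bmatrix}$ is indeed unitary --- the Fuglede--Putnam identities $XY^*=Y^*X$ and $X^*Y=YX^*$ do kill the off-diagonal blocks of $R^*R$ and $RR^*$, exactly as you say. The gap is in the congruence identity that follows. Computing blockwise,
$$
R\begin{bmatrix} A & 0 \\ 0 & B \end{bmatrix}R^* = \begin{bmatrix} XAX^*+YBY^* & \star \\ \star & Y^*AY+X^*BX \end{bmatrix},
$$
so the $(1,1)$ block is $XAX^*+YBY^*$, not $X^*AX+Y^*BY$; conjugating the other way, $R^*(A\oplus B)R$ has diagonal blocks $X^*AX+YBY^*$ and $Y^*AY+XBX^*$, again not the ones in the statement. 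The point is structural: to make $X^*AX+Y^*BY$ appear as the $(1,1)$ block of $V^*(A\oplus B)V$, the first block column of $V$ must be $\begin{bmatrix} X\\ Y\end{bmatrix}$, and to make $Y^*AY+X^*BX$ appear as the $(2,2)$ block, the second block column must be $\begin{bmatrix} Y\\ \pm X\end{bmatrix}$; your $R$ has block columns $\begin{bmatrix} X\\ Y^*\end{bmatrix}$ and $\begin{bmatrix} Y\\ -X^*\end{bmatrix}$, which is why adjoints land in the wrong places. Since $XAX^*\neq X^*AX$ in general even for normal $X$, your argument proves a different (and not obviously equivalent) inequality.

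The paper's own proof uses $H=\begin{bmatrix} X & Y\\ Y & -X\end{bmatrix}$, without adjoints in the second row, and the congruence $H^*(A\oplus B)H$, which does produce the desired diagonal blocks. Be aware, however, that the unitarity of $H$ is the delicate point there: the off-diagonal block of $H^*H$ is $X^*Y-Y^*X$, and Fuglede--Putnam only yields $X^*Y=YX^*$ and $Y^*X=XY^*$, not $X^*Y=Y^*X$; the latter fails already for $X=\tfrac{1}{\sqrt2}I$ and $Y=\tfrac1{\sqrt2}\,\mathrm{diag}(1,i)$, which satisfy all the hypotheses. In other words, your modification repairs the unitarity at the cost of the congruence, while the paper's choice secures the congruence at the cost of the unitarity; with candidate matrices of this two-by-two block form one cannot have both unless $X^*Y=Y^*X$. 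So the step you describe as "the one genuinely new step" and then dispatch in a line is exactly where the proof needs real work, and your write-up does not close it.
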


\vskip 5pt
\begin{proof} The proof is quite similar to that  of Theorem \ref{th-HH1} except that we first observe that the $2n\times 2n$ matrix
$$
H:=\begin{bmatrix}
X & Y \\ Y & -X
\end{bmatrix}
$$
is unitary. Indeed for two normal operators, $XY=YX$ ensures $X^*Y=YX^*$ and a direct computation shows that $H^*H$ is the identity in $\bM_{2n}$.
We then use the unitary congruence
\begin{equation}\label{eq-cong2}
H^*
\begin{bmatrix}
A & 0 \\ 0 & B
\end{bmatrix}H=
\begin{bmatrix} X^*AX+Y^*BY
 & \star \\ \star &  Y^*AY+X^*BX
\end{bmatrix}
\end{equation}
where the stars hold for unspecified entries. 
\end{proof}

\vskip 5pt
Hence, in the first inequality of Corolloray \ref{coreigen1} and in the series of Corollaries \ref{corSchatt1}-\ref{corSchatt2}, we can replace the scalar convex combinations $(1-x)A+B$ and $xA+(1-x)B$ by $C^*$-convex combinations $X^*AX+Y^*BY$ and $Y^*AY + X^*BX$ with commuting normal  weights. Here we explicitly state the generalization of Corollary \ref{cor-HH1}.

\vskip 5pt
\begin{cor}\label{cor-HH2}  Let $A, B\in\bM_{n}^+$ and let $g(t)$ be a monotone convex function  on $[0,\infty)$ with $g(0)\le 0$.  If $X,Y\in\bM_n$ are normal and satisfy  $XY=YX$ and $X^*X+Y^*Y=I$, then, for some isometry  matrices $U,V\in\bM_{2n,n}$,
$$
g(A)\oplus g(B) \ge Ug(X^*AX+Y^*BY)U^* + Vg(Y^*AY+X^*BX)V^*.
$$
\end{cor}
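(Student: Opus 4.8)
The plan is to recognize that Corollary \ref{cor-HH2} is nothing but the convex-function reformulation of Theorem \ref{th-HH2}, obtained by negation, exactly as Corollary \ref{cor-HH1} is the convex restatement of Theorem \ref{th-HH1}. So rather than re-running the three-step argument (the unitary $H$, the block decomposition of \cite[Lemma 3.4]{BL-London}, and the subadditivity inequality), I would simply transfer the concave statement already proved. The only thing that needs checking is that negation behaves correctly with respect to the unitary-orbit structure, which is immediate since $U(-M)U^{*}=-UMU^{*}$ for any isometry $U$; thus the map $M\mapsto UMU^{*}$ commutes with taking negatives.

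Concretely, I would set $f(t):=-g(t)$. Because $g$ is monotone convex on $[0,\infty)$ with $g(0)\le 0$, the function $f$ is monotone concave on $[0,\infty)$ (the direction of monotonicity flips, but monotonicity is preserved) with $f(0)=-g(0)\ge 0$, so the hypotheses of Theorem \ref{th-HH2} hold for $f$, $A$, $B$, $X$, $Y$. Applying that theorem furnishes isometries $U,V\in\bM_{2n,n}$ with
$$
f(A)\oplus f(B) \le Uf(X^*AX+Y^*BY)U^* + Vf(Y^*AY+X^*BX)V^*.
$$
Multiplying both sides by $-1$ reverses the inequality, and using $f=-g$ together with the identity $-Uf(\cdot)U^{*}=Ug(\cdot)U^{*}$ turns this into
$$
g(A)\oplus g(B) \ge Ug(X^*AX+Y^*BY)U^* + Vg(Y^*AY+X^*BX)V^*,
$$
which is exactly the claimed inequality with the same pair of isometries.

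There is essentially no obstacle here: the correspondence $g\mapsto -g$ is a bijection between the monotone convex functions with $g(0)\le 0$ and the monotone concave functions with $f(0)\ge 0$, so the corollary and Theorem \ref{th-HH2} are genuinely equivalent and not merely one-directional. Should a self-contained argument be preferred, I would instead repeat the proof scheme of Theorem \ref{th-HH2} verbatim—forming the unitary $H=\begin{bmatrix} X & Y \\ Y & -X\end{bmatrix}$, using the congruence \eqref{eq-cong2} and the block decomposition to write $A\oplus B$ as a sum of two unitary congruences of $(X^*AX+Y^*BY)\oplus 0$ and $0\oplus(Y^*AY+X^*BX)$—but invoke the superadditivity inequality of Corollary \ref{cor-subadditivity} (valid for monotone convex $g$ with $g(0)\le 0$) in place of the subadditivity inequality \cite[Theorem 3.4]{BL-London}; the $g(0)I$ blocks are then nonpositive and may be discarded when passing from unitaries in $\bM_{2n}$ to isometries in $\bM_{2n,n}$, yielding the same conclusion.
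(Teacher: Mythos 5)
Your primary argument --- applying Theorem \ref{th-HH2} to $f=-g$ and negating --- is correct and is exactly how the paper obtains this corollary (the paper gives no separate proof; it presents Corollary \ref{cor-HH2} as the convex reformulation of Theorem \ref{th-HH2}, just as Corollary \ref{cor-HH1} is declared equivalent to Theorem \ref{th-HH1}). One caveat on your optional self-contained route: the remark that the $g(0)I$ blocks ``are nonpositive and may be discarded'' is not justified as stated, since deleting a nonpositive summand from the right-hand side of a lower bound $g(A)\oplus g(B)\ge\cdots$ \emph{increases} that side; to make the direct route work you must first reduce to the case $g(0)=0$ (by the same piecewise-affine and limit argument used in the proof of Theorem \ref{th-HH1}), after which the blocks vanish identically and the unitaries in $\bM_{2n}$ collapse to isometries in $\bM_{2n,n}$.
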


\section{Majorization}

The  results of Section 2 require two essential assumptions : to deal with positive matrices and with subadditive (concave) or superadditive (convex) functions. Thanks to these assumptions, we have obtained operator inequalities for the usual order in the positive cone.

The results of this section will consider Hermitian matrices and general convex or concave functions. We will  obtain majorization relations. We also consider $C^*$-convex combinations more general than those with commuting normal weights.  

We  recall the notion of majorization. Let $A,B\in\bM_n^{s.a}$.
We say that $A$ is weakly majorized by $B$ and we write $A\prec_{w}B$, if 
$$
\sum_{j=1}^k\lambda_j^{\downarrow}(A) \le  \sum_{j=1}^k\lambda_j^{\downarrow}(B)
$$
for all $k=1,2,\ldots n$. If
 furthemore  the equality holds for $k=n$, that  is $A$ and $B$ have the same trace, then we say that $A$  is majorized by $B$, written $A\prec B$. See  \cite[Chapter 2]{Bh} and \cite{Hiai1} for a background on majorization. One easily checks that $A\prec_wB$ is equivalent to $A+C\prec B$ for some $C\in\bM_n^+$. We  need two fundamental  principles:
\begin{itemize}
\item[(1)]
  $A\prec B \Rightarrow g(A) \prec_w g(B)$ for all convex functions $g(t)$. 

\item[(2)] 
$A\prec_w B \iff {\mathrm{Tr\,}} f(A) \le {\mathrm{Tr\,}} f(B) $ for all nondecreasing convex functions $f(t)$. Equivalently $A\prec B \iff {\mathrm{Tr\,}} g(A) \le {\mathrm{Tr\,}} g(B) $ for all convex functions $g(t)$. 
\end{itemize}

\vskip 5pt
\begin{lemma}\label{lemma-trace}  Let $A, B\in\bM_{n}^{s.a}$ and let $g(t)$ be a convex function defined on an interval containing the spectra of $A$ and $B$. If $X,Y\in\bM_n$ satisfy $X^*X+Y^*Y=XX^*+YY^*=I$, then,
$$
{\mathrm{Tr\,}}\left\{g(X^*AX+Y^*BY)+g(Y^*AY+X^*BX)\right\}\le {\mathrm{Tr\,}} \left\{g(A)+g(B)\right\}.
$$
\end{lemma}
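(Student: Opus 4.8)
The plan is to reduce the statement to the Hansen--Pedersen trace inequality \eqref{HP} of Corollary \ref{C-2.6}, using the two hypotheses on $X,Y$ in complementary roles: the column relation $X^*X+Y^*Y=I$ supplies the hypothesis of \eqref{HP}, while the row relation $XX^*+YY^*=I$ collapses the resulting bound by cyclicity of the trace.

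First I would view the two $C^*$-convex combinations as compressions of the block-diagonal matrix $A\oplus B\in\bM_{2n}^{s.a}$. Taking the two-term column $(Z_1,Z_2)=(X,Y)$ and operators $(A_1,A_2)=(A,B)$, the relation $X^*X+Y^*Y=I$ is exactly the isometric-column condition $\sum_i Z_i^*Z_i=I$, and $\sum_i Z_i^*A_iZ_i=X^*AX+Y^*BY$. Since this compression equals $Z^*(A\oplus B)Z$ with $Z=\begin{bmatrix}X\\ Y\end{bmatrix}$ isometric, its spectrum lies in the convex hull of the spectra of $A$ and $B$, hence in the domain of $g$, so \eqref{HP} applies and gives ${\mathrm{Tr\,}}\,g(X^*AX+Y^*BY)\le {\mathrm{Tr\,}}\,(X^*g(A)X+Y^*g(B)Y)$. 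Applying \eqref{HP} once more to the column $(Y,X)$ with the same operators $(A,B)$ --- note $Y^*Y+X^*X=I$ also holds --- yields ${\mathrm{Tr\,}}\,g(Y^*AY+X^*BX)\le {\mathrm{Tr\,}}\,(Y^*g(A)Y+X^*g(B)X)$.

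Adding these two inequalities, the right-hand side becomes
$$
{\mathrm{Tr\,}}\,\bigl(X^*g(A)X+Y^*g(A)Y\bigr)+{\mathrm{Tr\,}}\,\bigl(X^*g(B)X+Y^*g(B)Y\bigr).
$$
Here I would invoke the cyclicity of the trace to rewrite ${\mathrm{Tr\,}}\,X^*g(A)X={\mathrm{Tr\,}}\,g(A)XX^*$ and ${\mathrm{Tr\,}}\,Y^*g(A)Y={\mathrm{Tr\,}}\,g(A)YY^*$, so that the first summand equals ${\mathrm{Tr\,}}\,g(A)(XX^*+YY^*)={\mathrm{Tr\,}}\,g(A)$ by the row relation $XX^*+YY^*=I$; the second summand equals ${\mathrm{Tr\,}}\,g(B)$ in the same way. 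This produces precisely ${\mathrm{Tr\,}}\{g(A)+g(B)\}$ and completes the proof.

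There is no genuine obstacle here; the only point requiring care is the interplay of the two hypotheses. The column identity $X^*X+Y^*Y=I$ is indispensable for invoking \eqref{HP}, whereas the row identity $XX^*+YY^*=I$ is what makes the cross terms telescope after summing; neither alone suffices, which is why the symmetric assumption $X^*X+Y^*Y=XX^*+YY^*=I$ appears in the statement.
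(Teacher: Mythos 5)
Your proof is correct and is essentially identical to the paper's: both apply the Hansen--Pedersen trace inequality to each of the two $C^*$-convex combinations using the column relation $X^*X+Y^*Y=I$, then sum and use cyclicity of the trace together with the row relation $XX^*+YY^*=I$ to collapse the right-hand side to ${\mathrm{Tr\,}}\{g(A)+g(B)\}$. Your remark that the spectra of the compressions stay in the domain of $g$ is a small point of care the paper leaves implicit.
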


\vskip 5pt
\begin{proof}
By the famous Hansen-Pedersen trace inequality \cite{HP}, see also \cite[Corollary 2.4]{BL-London} for a generalization,
\begin{align*}
{\mathrm{Tr\,}}&\left\{g(X^*AX+Y^*BY)+g(Y^*AY+X^*BX)\right\} \\
&\le {\mathrm{Tr\,}}\left\{X^*g(A)X+Y^*g(B)Y)+Y^*g(A)Y+X^*g(B)X\right\} \\
&= {\mathrm{Tr\,}}\left\{(g(A) + g(B))(XX^* +YY^*)\right\}= {\mathrm{Tr\,}} \left\{g(A)+g(B)\right\}
\end{align*}
where the first equality follows from the cyclicity of the trace.
\end{proof}

\vskip 5pt
\begin{theorem}\label{th-orbit}  Let $A, B\in\bM_{n}^{s.a}$ and $X,Y\in\bM_n$. If  $X^*X+Y^*Y=XX^*+YY^*=I$, then, for some  unitary matrices $\{U_k\}_{k=1}^{2n}$ in $\bM_{2n}$,
$$
(X^*AX+Y^*BY)\oplus (Y^*AY+X^*BX) =\frac{1}{2n}\sum_{k=1}^{2n} U_k (A\oplus B)U_k^*.
$$
\end{theorem}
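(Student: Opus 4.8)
The plan is to reduce the claimed identity to a majorization between the two $2n\times 2n$ Hermitian matrices
$$
M:=(X^*AX+Y^*BY)\oplus (Y^*AY+X^*BX), \qquad N:=A\oplus B,
$$
and then to realize that majorization concretely as an average of $2n$ unitary conjugates with equal weights $1/(2n)$.

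First I would record that for any convex function $g$ the trace of $g$ on a direct sum splits, so that $\mathrm{Tr\,}g(M)=\mathrm{Tr\,}g(X^*AX+Y^*BY)+\mathrm{Tr\,}g(Y^*AY+X^*BX)$ and $\mathrm{Tr\,}g(N)=\mathrm{Tr\,}g(A)+\mathrm{Tr\,}g(B)$. With this observation Lemma \ref{lemma-trace} reads exactly $\mathrm{Tr\,}g(M)\le \mathrm{Tr\,}g(N)$ for every convex $g$ on an interval containing the spectra of $A$ and $B$. By the second majorization principle recalled above, this is equivalent to $M\prec N$; applying it to $g$ and to $-g$ forces equality of the traces, so the majorization is genuine and not merely weak.

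Next I would convert $M\prec N$ into the rigid equal-weight representation. After conjugating the whole sought identity by a fixed unitary $W\in\bM_{2n}$ that diagonalizes $M$, I may assume $M$ is diagonal. Since the nonincreasing eigenvalue sequence of $M$ is majorized by that of $N$, the Schur--Horn theorem provides a unitary $V\in\bM_{2n}$ such that the diagonal of $VNV^*$ coincides with $M$; equivalently, the pinching (diagonal part) of $VNV^*$ equals $M$. Finally I would realize the pinching map by phase averaging: with $\omega=e^{2\pi i/(2n)}$ and $\Omega=\mathrm{diag}(1,\omega,\dots,\omega^{2n-1})$, the orthogonality relation $\frac{1}{2n}\sum_{k=0}^{2n-1}\omega^{k(a-b)}=\delta_{a,b}$ gives, for every $Z\in\bM_{2n}$,
$$
\frac{1}{2n}\sum_{k=0}^{2n-1}\Omega^k Z\,\Omega^{-k}=\text{diagonal part of }Z.
$$
Taking $Z=VNV^*$ and setting $U_k:=W\Omega^{k}V$ yields $2n$ unitaries with $M=\frac{1}{2n}\sum_{k=1}^{2n}U_kNU_k^*$ after reindexing, which is precisely the statement.

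The auxiliary inputs (the splitting of the trace over a direct sum, the second majorization principle, the Schur--Horn theorem, and the phase-averaging formula for the pinching) are all standard, so the genuinely substantive step is Lemma \ref{lemma-trace}, already established through the Hansen--Pedersen trace inequality. The step I expect to require the most care is the passage from the abstract relation $M\prec N$ to the rigid form with \emph{exactly} $2n$ terms and \emph{equal} weights $1/(2n)$: here the use of unitary rather than merely permutation conjugations is essential, since an equal-weight average of $2n$ permutation matrices cannot reproduce a general doubly stochastic action, whereas Schur--Horn combined with the phase-averaging realization of the pinching map supplies exactly $2n$ unitary conjugates each carrying weight $1/(2n)$.
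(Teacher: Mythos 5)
Your proof is correct and follows essentially the same route as the paper: Lemma \ref{lemma-trace} gives the trace inequality for all convex functions, which by the majorization principle yields $(X^*AX+Y^*BY)\oplus (Y^*AY+X^*BX)\prec A\oplus B$, and the equal-weight representation with $2n$ unitaries is then exactly \cite[Proposition 2.6]{BL-PAMS} (Proposition \ref{prop-maj} here). The only difference is that you unpack the proof of that proposition inline --- Schur--Horn followed by the realization of the pinching as a phase average over powers of $\Omega=\diag(1,\omega,\dots,\omega^{2n-1})$ --- which is precisely the argument the paper invokes by citation.
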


\vskip 5pt
\begin{proof} From Lemma \ref{lemma-trace}, we have the trace inequality
$$
{\mathrm{Tr\,}}g\left((X^*AX+Y^*BY)\oplus (Y^*AY+X^*BX) \right)\le  {\mathrm{Tr\,}}g(A\oplus B) 
$$
for all convex functions defined on $(-\infty,\infty)$. By a basic principle of majorization, this is equivalent to
\begin{equation}\label{eqmaj}
(X^*AX+Y^*BY)\oplus (Y^*AY+X^*BX) \prec A\oplus B.
\end{equation}
By \cite[Proposition 2.6]{BL-PAMS}, the majorization in $\bM_d^{s.a}$, $S\prec T$, ensures that (and thus is equivalent to)
$$
S\le \frac{1}{d} \sum_{j=1}^d V_jTV_j^*
$$
for $d$ unitary matrices $V_j\in\bM_d$. Applying this to \eqref{eqmaj} completes the proof.
 \end{proof}

\vskip 5pt
\begin{remark} We can prove the majorization \eqref{eqmaj} in a different way by oberving that our assumption on $X$ and $Y$ ensures that the map $\Phi$, defined on $\bM_{d}$, (here $d=2n$),
$$
\Phi \left(\begin{bmatrix} A&C \\ D&B\end{bmatrix}\right):=\begin{bmatrix}X^*AX+Y^*BY&0 \\ 0&Y^*AY+X^*BX)\end{bmatrix},
$$
is a  positive linear map unital and trace preserving. Such maps are also called doubly stochastic. It is a classical result (see Ando's survey \cite[Section 7]{Ando2} and references therein) that we have
\begin{equation}\label{eqchan}
\Phi(Z) \prec Z
\end{equation}
for every doubly stochastic map on $\bM_d$ and Hermitian $Z$. Here, our map is even completely positive (it is a so called  quantum channel). It is well known in the litterature (\cite[Theorem 7.1]{Ando2}) that we have then 
$$
\Phi(Z)=\sum_{j=1}^{m} t_j U_j^* ZU_j
$$
for some convex combination $0<t_j\le 1$, $\sum_{j=1}^{m} t_j=1$, and some unitary matrices $U_j$ (these scalars $t_i$ and matrices $U_i$ depend on $Z$). In 2003, Zhan \cite{Zhan}  noted that we can take $m=d$. That we can actually take an average,
$$
\Phi(Z)=\frac{1}{d}\sum_{j=1}^{d}  U_j^* ZU_j,
$$
follows from the quite  recent observation \cite[Proposition 2.6]{BL-PAMS} mentioned in the proof of Theorem \ref{th-orbit}. For quantum channels, one has the Choi-Kraus decomposition (see \cite[Chapter 3]{Bhatia2})
$$
\Phi(A) =\sum_{i=1}^{d^2} K_iAK_i^*
$$
for some weights $K_i\in\bM_d$ such that $\sum_{i=1}^{d^2} K_iK_i^*=\sum_{i=1}^{d^2} K_i^*K_i=I$. So, the proof of
Lemma 3.1  shows that, for quantum channels, one may derive the fundamental majorization \eqref{eqchan} from two results for convex functions: the basic principle of majorisation and Hansen-Pedersen's trace inequality. 
\end{remark}

\vskip 5pt
Theorem \ref{th-orbit} combined with Theorem \ref{thB2005} provide a number of interesting operator inequalities. The next corollary can be regarded as another matrix version of the scalar inequality \eqref{eqext1}. We will give a proof independent of Theorem \ref{thB2005}.

\vskip 5pt
\begin{cor}\label{cor-ext2}  Let $A, B\in\bM_n^{s.a}$ and let $f(t)$ be a convex function defined on an interval containing the spectra of $A$ and $B$.
 If $X,Y\in\bM_n$ satisfy $X^*X+Y^*Y=XX^*+YY^*=I$, then, for some  unitary matrices $\{U_k\}_{k=1}^{2n}$ in $\bM_{2n}$,
$$
f\left(X^*AX+Y^*BY\right)\oplus f\left(Y^*AY+X^*BX \right) \le\frac{1}{2n}\sum_{k=1}^{2n} U_k f\left(A\oplus B\right)U_k^*.
$$
\end{cor}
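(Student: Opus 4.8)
The plan is to route everything through the doubly stochastic (unital, trace-preserving, positive) map $\Phi$ on $\bM_{2n}$ from the Remark following Theorem \ref{th-orbit}, namely
$$
\Phi\left(\begin{bmatrix} P & C \\ D & Q\end{bmatrix}\right) := (X^*PX + Y^*QY) \oplus (Y^*PY + X^*QX).
$$
Writing $Z := A\oplus B$, the point is that the left-hand side of the corollary is exactly $f(\Phi(Z))$: indeed $\Phi(Z) = (X^*AX+Y^*BY)\oplus(Y^*AY+X^*BX)$ is block diagonal and functional calculus acts blockwise. So the target reduces to producing unitaries $U_k\in\bM_{2n}$ with $f(\Phi(Z)) \le \frac{1}{2n}\sum_{k=1}^{2n} U_k f(Z) U_k^*$, and the natural strategy is to first establish the weak majorization $f(\Phi(Z)) \prec_{w} f(Z)$ and then convert it into this averaged unitary-orbit inequality.

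For the weak majorization I would start from \eqref{eqmaj}, i.e. $\Phi(Z)\prec Z$, which holds because $\Phi$ is doubly stochastic (this is precisely what is proved en route to Theorem \ref{th-orbit}). Applying the first fundamental principle of majorization recalled in Section~3 — that $S\prec T$ implies $g(S)\prec_{w} g(T)$ for every convex $g$ — to the convex function $f$ yields
$$
f(\Phi(Z)) \prec_{w} f(Z).
$$
One small verification is needed here: $f$ must be defined on the spectrum of $\Phi(Z)$; but $\Phi(Z)\prec Z$ confines the eigenvalues of $\Phi(Z)$ to the interval between the extreme eigenvalues of $Z$, hence inside any interval containing the spectra of $A$ and $B$, so $f(\Phi(Z))$ is meaningful.

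To finish, I would turn the weak majorization into the stated order relation. Using the equivalence ``$S\prec_{w} T$ iff $S+C\prec T$ for some $C\in\bM_{2n}^{+}$'' recalled in Section~3, pick such a $C$ and set $N := f(\Phi(Z)) + C$, so that $f(\Phi(Z))\le N$ while $N\prec f(Z)$ is a \emph{full} majorization. Then \cite[Proposition 2.6]{BL-PAMS}, applied to $N\prec f(Z)$ in $\bM_{2n}^{s.a}$, supplies unitaries $U_k\in\bM_{2n}$ with $N \le \frac{1}{2n}\sum_{k=1}^{2n} U_k f(Z) U_k^*$. Chaining the two inequalities gives $f(\Phi(Z)) \le \frac{1}{2n}\sum_{k=1}^{2n} U_k f(A\oplus B) U_k^*$, which is the assertion; note that this argument uses only the two majorization principles and Proposition~2.6, so it is indeed independent of Theorem \ref{thB2005}, as promised.

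The argument is short, so the real content is organizational rather than computational: the step I expect to require the most care is obtaining the \emph{exactly normalized} average over \emph{precisely} $2n$ unitaries (as opposed to a generic convex combination of at most $2n$ unitary conjugates). This is exactly what \cite[Proposition 2.6]{BL-PAMS} delivers, and it is also what forces the intermediate upgrade of $\prec_{w}$ to $\prec$ by absorbing the positive matrix $C$ before averaging; without that upgrade Proposition~2.6 would not directly apply.
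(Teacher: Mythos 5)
Your proof is correct and follows essentially the same route as the paper's: start from the majorization $\Phi(A\oplus B)\prec A\oplus B$, apply the convex-function principle to get the weak majorization of $f(\Phi(A\oplus B))$ by $f(A\oplus B)$, absorb a positive matrix $C$ to upgrade to a full majorization, and invoke \cite[Proposition 2.6]{BL-PAMS} to realize the right-hand side as an average of $2n$ unitary conjugates. Your added remark verifying that the spectrum of $\Phi(A\oplus B)$ stays in the domain of $f$ is a detail the paper leaves implicit.
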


\vskip 5pt
In particular,  for the absolute value, we note that
$$
\left|X^*AX+Y^*BY\right|\oplus \left|Y^*AY+X^*BX \right| \le\frac{1}{2n}\sum_{k=1}^{4n} U_k \left|A\oplus B\right|U_k^*.
$$

\vskip 5pt
\begin{proof} The majorization \eqref{eqmaj} and the basic principle of majorizations show
that
$$
f\left(X^*AX+Y^*BY\right)\oplus f\left(Y^*AY+X^*BX \right) \prec_w f(A\oplus B)
$$
for all convex functions defined on an interval containing the spectra of $A$ and $B$.
Thus
$$
f\left(X^*AX+Y^*BY\right)\oplus f\left(Y^*AY+X^*BX \right) +C \prec f(A\oplus B)
$$
for some positive semidefinte matrice $C\in\bM_{2n}^+$. Hence, as in the previous proof,
$$f\left(X^*AX+Y^*BY\right)\oplus f\left(Y^*AY+X^*BX \right) +C \le \frac{1}{2n}\sum_{k=1}^{2n} U_k f\left(A\oplus B\right)U_k^*
$$
for some family of unitary matrices $U_k\in\bM_{2n}$.
\end{proof}

\vskip 5pt
\begin{cor}\label{cordet}  Let $A, B\in\bM_n^+$.  If $X,Y\in\bM_n$ satisfy $X^*X+Y^*Y=XX^*+YY^*=I$, then,
$$
\det(X^*AX+Y^*BY)\det (Y^*AY+X^*BX) \ge \det A\det B.
$$
\end{cor}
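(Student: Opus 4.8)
The plan is to read off the determinantal inequality directly from the majorization established in the proof of Theorem \ref{th-orbit}. First I would recall from that proof the relation
$$
(X^*AX+Y^*BY)\oplus (Y^*AY+X^*BX) \prec A\oplus B,
$$
where all four blocks are positive semidefinite, so both sides are positive operators on the $2n$-dimensional space $\bC^{2n}$. Since each of $X^*AX+Y^*BY$ and $Y^*AY+X^*BX$ is a sum of positive semidefinite matrices, the left-hand product of determinants is automatically nonnegative; hence if $\det A\det B=0$ the stated inequality holds trivially, and I may assume $A,B$ positive definite.

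In that case $T:=A\oplus B$ is positive definite, and the majorization $S\prec T$ with $S:=(X^*AX+Y^*BY)\oplus (Y^*AY+X^*BX)$ forces $\lambda_n^{\downarrow}(S)\ge \lambda_n^{\downarrow}(T)>0$ (the partial sums of $S$ being dominated by those of $T$ with equal total trace). Thus the spectra of both $S$ and $T$ lie in a common compact subinterval of $(0,\infty)$, and I may apply principle (2) of Section 3 to the convex function $g(t)=-\log t$ on that interval: $S\prec T$ yields ${\mathrm{Tr\,}}(-\log S)\le {\mathrm{Tr\,}}(-\log T)$, that is $\log\det S\ge \log\det T$, hence $\det S\ge \det T$. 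Using $\det(P\oplus Q)=\det P\det Q$ this reads precisely
$$
\det(X^*AX+Y^*BY)\det (Y^*AY+X^*BX) \ge \det A\det B,
$$
which is the assertion.

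The only point requiring care is the direction of the inequality. Whereas Corollary \ref{cor-ext2} uses that a convex function increases the trace under majorization, here the relevant functional is the determinant, i.e.\ the product of the eigenvalues, which is Schur-concave on the positive orthant (equivalently $-\log t$ is convex); majorization therefore \emph{decreases} the determinant, reversing the inequality relative to the convex-function estimates. Beyond this bookkeeping there is no genuine obstacle, since the substantive content is entirely carried by the majorization \eqref{eqmaj}.
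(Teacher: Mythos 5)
Your argument is correct, but it takes a slightly different route from the paper. The paper's proof is a one-liner: it applies the concavity of the Minkowski functional $Z\mapsto\det^{1/2n}Z$ on $\bM_{2n}^+$ directly to the identity of Theorem \ref{th-orbit}, which exhibits $(X^*AX+Y^*BY)\oplus(Y^*AY+X^*BX)$ as an \emph{average} of unitary conjugates of $A\oplus B$; superadditivity of a concave unitarily invariant functional under such averages gives $\det^{1/2n}S\ge\det^{1/2n}(A\oplus B)$ at once, with no need to separate out the singular case. You instead stop at the intermediate majorization \eqref{eqmaj} and invoke Schur-concavity of the determinant via the convex function $-\log t$ and principle (2) of Section 3. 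This is equally valid and in fact bypasses the appeal to \cite[Proposition 2.6]{BL-PAMS} that underlies Theorem \ref{th-orbit}, so in that sense your derivation uses less machinery; the price is the extra bookkeeping you correctly carry out (reduction to $A,B>0$, confinement of the spectra to a compact subinterval of $(0,\infty)$ so that $-\log$ is admissible). Your remark about the direction of the inequality is the right sanity check. One trivial slip: for the $2n\times 2n$ matrices $S$ and $T$ the smallest eigenvalue is $\lambda_{2n}^{\downarrow}$, not $\lambda_{n}^{\downarrow}$, though the trace-difference argument you sketch does prove $\lambda_{2n}^{\downarrow}(S)\ge\lambda_{2n}^{\downarrow}(T)>0$ as intended.
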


\vskip 5pt
\begin{proof} Since the classical Minkowski functional $Z\mapsto \det^{1/2n} Z$ is concave on $\bM_{2n}^+$, the result is an immediate consequence of  Theorem \ref{th-orbit}.
 \end{proof}

The case $A=B$ and $X,Y$ are two orthogonal projections reads as the classical Fisher's inequality.

\vskip 5pt
Corollary \ref{cor-ext2} yields inequalities for symmetric norms and antinorms on $\bM_{2n}^+$.
Symmetric norms, $\|\cdot\|$, also called unitarily invariant norms, are classical objects in matrix analysis. We refer to \cite{Bh}, \cite{Hiai1}, \cite[Chapter 6]{HP} and, in the setting of compact operators, \cite{Simon}. The most famous examples are the Schatten $p$-norms, $1\le p\le \infty$, and the Ky Fan $k$-norms.

Symmetric anti-norms $\|\cdot\|_!$ are the concave counterpart of symmetric norms. Famous examples are the Schatten $q$-quasi norms, $0<q<1$ and the Minkowski functional considered in the proof of Corollary \ref{cordet}. We refer to \cite{BH1} and \cite[Section 4]{BH2} for much more examples.

\vskip 5pt
\begin{cor}\label{corfinal}  Let $f(t)$ and $g(t)$ be  two nonnegative functions defined on  $[a,b]$ and let $A,B\in\bM_n$ be Hermitian with spectra in $[a,b]$.  If $X,Y\in\bM_n$ satisfy $X^*X+Y^*Y=XX^*+YY^*=I$, then :
\begin{itemize}
\item[(i)] If $f(t)$ is concave, then, for all symmetric antinorms,
$$
 \left\| f\left(X^*AX+Y^*BY\right)\oplus f\left(Y^*AY+X^*BX \right) \right\|_!\ge \left\|  f\left(A\oplus B\right)\right\|_!.
$$
\item[(ii)] If $g(t)$ is convex, then, for all symmetric norms,
$$
\left\| g\left(X^*AX+Y^*BY\right)\oplus g\left(Y^*AY+X^*BX \right) \right\| 
\le\left\| g\left(A\oplus B\right) \right\| .
$$
\end{itemize} 
\end{cor}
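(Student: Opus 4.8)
The plan is to reduce both parts to the single majorization \eqref{eqmaj}, already established inside the proof of Theorem \ref{th-orbit}, namely
$$
S:=(X^*AX+Y^*BY)\oplus (Y^*AY+X^*BX) \prec A\oplus B =: T .
$$
First I would record the two routine preliminaries. Since $X^*X+Y^*Y=I$, the bounds $aI\le A\le bI$ and $aI\le B\le bI$ give $aI\le X^*AX+Y^*BY\le bI$, and likewise $aI\le Y^*AY+X^*BX\le bI$; hence all four blocks, and therefore $S$ and $T$, have spectra in $[a,b]$, so $f$ and $g$ are legitimately applied. Secondly, functional calculus respects direct sums, so $g(S)=g(X^*AX+Y^*BY)\oplus g(Y^*AY+X^*BX)$ and $g(T)=g(A)\oplus g(B)=g(A\oplus B)$, and similarly for $f$. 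Because $f,g\ge 0$ on $[a,b]$, all these operators are positive semidefinite, so their singular values coincide with their eigenvalues and symmetric norms and antinorms apply directly.

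For part (ii) I would invoke the first majorization principle recalled in this section: since $S\prec T$ and $g$ is convex, $g(S)\prec_{w}g(T)$. As both sides are positive, the Ky Fan dominance principle (weak majorization implies the inequality of symmetric norms on $\bM_{2n}^+$) gives at once $\|g(S)\|\le\|g(T)\|$, which is the assertion.

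For part (i) I would dualize. Applying the same principle to the convex function $-f$ yields $-f(S)\prec_{w}-f(T)$; rewriting each inequality, the domination of the $k$ largest eigenvalues of $-f(S)$ by those of $-f(T)$ is exactly the statement that, for every $k$, the sum of the $k$ smallest eigenvalues of $f(S)$ dominates that of $f(T)$. This is the super weak majorization suited to antinorms, equivalently $f(S)-C\prec f(T)$ for some $C\in\bM_{2n}^+$. By \cite[Proposition 2.6]{BL-PAMS} one then has $f(S)\ge f(S)-C=\frac{1}{2n}\sum_{k=1}^{2n}U_k f(T)U_k^*$ for suitable unitaries $U_k$, exactly as in the proof of Corollary \ref{cor-ext2}. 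Since a symmetric antinorm $\|\cdot\|_!$ is monotone, homogeneous, superadditive and unitarily invariant (\cite{BH1}, \cite{BH2}), I obtain
$$
\|f(S)\|_! \ge \Big\|\frac{1}{2n}\sum_{k=1}^{2n}U_k f(T)U_k^*\Big\|_! \ge \frac{1}{2n}\sum_{k=1}^{2n}\|U_k f(T)U_k^*\|_! = \|f(T)\|_! ,
$$
which is precisely (i).

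The spectral bounds and the direct-sum functional calculus are routine; the one delicate point is part (i), where the majorization direction flips under a concave function and one must use the antinorm dominance (the reverse of the Ky Fan principle) rather than the ordinary one. I expect that matching the resulting super weak majorization to the defining monotonicity and superadditivity of antinorms — and, if a fully self-contained argument is preferred, reproving the absorption $f(S)-C\prec f(T)$ together with the averaging representation exactly as for Corollary \ref{cor-ext2} — will be the only genuinely non-mechanical step.
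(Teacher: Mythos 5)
Your proof is correct and follows essentially the same route as the paper: both parts rest on the majorization \eqref{eqmaj}, with (i) obtained exactly as in the paper from the concave version of Corollary \ref{cor-ext2} (super weak majorization, absorption into an exact majorization, the averaging representation of \cite[Proposition 2.6]{BL-PAMS}, and superadditivity plus unitary invariance of antinorms). The only cosmetic difference is in (ii), where you apply the Ky Fan dominance principle directly to $g(S)\prec_w g(T)$ instead of passing through the averaging identity of Corollary \ref{cor-ext2}; the two are equivalent.
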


\begin{proof} Since symmetric antinorms are unitarily invariant and superadditive, the first assertion follows from the version of Corollary \ref{cor-ext2} for concave version. The second assertion is an immediate consequence of  Corollary \ref{cor-ext2}. \end{proof}

We close this section by mentioning Moslehian's weak majorization which  provides a matrix version of the first inequality of Proposition \ref{propHH1}. We may restate \cite[Corollary 3.4]{Mo}
as inequalities
  for symmetric and antisymmetric norms.

\vskip 5pt
\begin{prop}\label{prop-HH-mox}  Let $f(t)$ and $g(t)$ be  two nonnegative functions defined on  $[a,b]$ and let $A,B\in\bM_n$ be Hermitian with spectra in $[a,b]$. 
\begin{itemize}
\item[(i)] If $f(t)$ is concave, then, for all symmetric antinorms,
$$
  \left\|f\left(\frac{A+B}{2}\right)\right\|_!
\ge
 \left\|\int_0^1 f((1-x)A+xB)\, {\mathrm{d}}x\right\|_!.
$$
\item[(ii)] If $g(t)$ is convex, then, for all symmetric norms,
$$
\left\|g\left(\frac{A+B}{2}\right)\right\|
\le\left\|\int_0^1 g((1-x)A+xB)\, {\mathrm{d}}x\right\| .
$$
\end{itemize}
\end{prop}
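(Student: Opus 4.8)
The plan is to deduce both assertions from two weak majorization relations and then invoke the dominance principles for symmetric norms and for symmetric antinorms. Concretely, I will show that for a continuous convex $g$,
$$
g\left(\frac{A+B}{2}\right) \prec_w \int_0^1 g((1-x)A+xB)\, {\mathrm{d}}x,
$$
and that for a continuous concave $f$ the sums of the $k$ smallest eigenvalues satisfy the reverse control,
$$
\sum_{j=n-k+1}^n \lambda_j^{\downarrow}\left(\int_0^1 f((1-x)A+xB)\, {\mathrm{d}}x\right) \le \sum_{j=n-k+1}^n \lambda_j^{\downarrow}\left(f\left(\frac{A+B}{2}\right)\right)
$$
for $k=1,\ldots,n$. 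Granting these, (ii) follows from Ky Fan's dominance principle ($X\prec_w Y$ implies $\|X\|\le\|Y\|$ for all symmetric norms), and (i) follows from its antinorm analogue in \cite{BH1}: if the sums of the $k$ smallest eigenvalues of $X$ dominate those of $Y$ for every $k$, then $\|X\|_!\ge\|Y\|_!$ for all symmetric antinorms. A preliminary reduction lets me assume $f,g$ continuous and $A,B$ with spectra in $[a,b]$, so that $x\mapsto g((1-x)A+xB)$ is norm-continuous and Riemann integrable; the resulting majorizations recover Moslehian's \cite[Corollary 3.4]{Mo}.

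The core is a discrete averaging estimate obtained from the unitary-orbit inequality of Theorem \ref{thB2005}, in its $C^*$-convex form \cite[Corollary 2.3]{BL-London}. Fix $N$, take the symmetric midpoints $x_i=(2i-1)/(2N)$, $i=1,\ldots,N$, and set $M_i:=(1-x_i)A+x_iB$. Choosing the isometric column $Z_i=N^{-1/2}I$ realizes $\frac1N\sum_i M_i$ as a $C^*$-convex combination, and since the $x_i$ are symmetric about $1/2$ one computes $\frac1N\sum_i M_i=\frac{A+B}{2}$ \emph{exactly}. Corollary 2.3 of \cite{BL-London} then supplies unitaries $U,V$ with
$$
g\left(\frac{A+B}{2}\right)\le \frac12\left\{U\Big(\tfrac1N\textstyle\sum_i g(M_i)\Big)U^*+V\Big(\tfrac1N\textstyle\sum_i g(M_i)\Big)V^*\right\}.
$$
Applying the functional $X\mapsto\sum_{j=1}^k\lambda_j^{\downarrow}(X)$, which is monotone, unitarily invariant and subadditive, and using the triangle inequality, gives $\sum_{j=1}^k\lambda_j^{\downarrow}(g(\frac{A+B}{2}))\le\sum_{j=1}^k\lambda_j^{\downarrow}(\frac1N\sum_i g(M_i))$ for every $k$, i.e.\ $g(\frac{A+B}{2})\prec_w \frac1N\sum_i g(M_i)$. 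For concave $f$ the inequality in Corollary 2.3 reverses, and I instead apply the superadditive, monotone, unitarily invariant functional $X\mapsto\sum_{j=n-k+1}^n\lambda_j^{\downarrow}(X)$ (the sum of the $k$ smallest eigenvalues), obtaining the corresponding domination of the smallest-eigenvalue sums of $f(\frac{A+B}{2})$ over those of $\frac1N\sum_i f(M_i)$.

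Finally I let $N\to\infty$. By norm-continuity of $x\mapsto g(M_x)$ the midpoint sums $\frac1N\sum_i g(M_i)$ converge in operator norm to $\int_0^1 g((1-x)A+xB)\,{\mathrm{d}}x$, whence all eigenvalues and in particular all partial sums $\sum_{j=1}^k\lambda_j^{\downarrow}$ and $\sum_{j=n-k+1}^n\lambda_j^{\downarrow}$ converge. Since weak majorization and its smallest-eigenvalue counterpart are closed conditions, the two displayed majorizations follow in the limit, and the dominance principles yield (i) and (ii). The main obstacle is exactly the step that produces an inequality at the level of the positive cone for the \emph{average} rather than a mere trace inequality: this is precisely what the strong unitary-orbit statement Theorem \ref{thB2005} / \cite[Corollary 2.3]{BL-London} provides; a naive convexity argument would only bound one Ky Fan functional by itself and could not compare the two sides. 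A secondary technical point is justifying the reduction to continuous $f,g$ and the interchange of the eigenvalue functionals with the limit.
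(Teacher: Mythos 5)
Your argument is correct, but note that the thesis itself offers no proof of this proposition: it is presented as a restatement of Moslehian's result, cited as \cite[Corollary 3.4]{Mo}, so there is no internal proof to compare against. What you have done is supply a self-contained derivation from the thesis's own machinery, and it holds up. The midpoint discretization is exact ($\frac1N\sum_i M_i=\frac{A+B}{2}$ since the nodes are symmetric about $1/2$), Corollary \ref{C-2.3} applies with the isometric column $Z_i=N^{-1/2}I$, and the passage from the two-unitary orbit bound to the Ky Fan functionals is precisely the mechanism already recorded in the paper as \eqref{F-2.9}; your $\tau_k$-version for the concave case is its exact superadditive mirror. The limit $N\to\infty$ is harmless because $\sigma_k$ and $\tau_k$ are norm-continuous and the midpoint sums converge in operator norm. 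This is arguably more information than Moslehian's statement, since you obtain the two majorization relations themselves, of which the (anti)norm inequalities are corollaries.

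Two points deserve to be made explicit rather than waved at. First, the antinorm dominance principle you invoke --- if $\sum_{j=1}^k\lambda_j^{\uparrow}(X)\ge\sum_{j=1}^k\lambda_j^{\uparrow}(Y)$ for all $k$ then $\|X\|_!\ge\|Y\|_!$ --- does hold for every symmetric antinorm, but you should either locate the precise statement in \cite{BH1} or give the one-line argument: weak supermajorization yields $0\le V\le \lambda^{\uparrow}(X)$ with $V\prec\lambda^{\uparrow}(Y)$ (Marshall--Olkin), and then superadditivity plus positive homogeneity give both the monotonicity $\phi(\lambda^{\uparrow}(X))\ge\phi(V)$ and the concavity step $\phi(V)\ge\phi(\lambda^{\uparrow}(Y))$. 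Second, a convex function on the closed interval $[a,b]$ may jump up at the endpoints, so the reduction to continuous $f,g$ (needed both for the Riemann integral and for the norm-continuity of $x\mapsto g(M_x)$) should be stated as a hypothesis or handled by the standard endpoint modification; this is the same convention the paper uses tacitly in Theorem \ref{T-2.1}. Neither point is a gap in the logic, only in the write-up.
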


\section{References of chapter 4}

{\small
\begin{itemize}

\item[[5\!\!\!]] T.\ Ando, Majorization, doubly stochastic matrices, and comparison of eigenvalues,
{\it Linear Algebra Appl.\ } 118 (1989), 163-248.

\item[[13\!\!\!]]  R.\ Bhatia, Matrix Analysis, Gradutate Texts in Mathematics, Springer, New-York, 1996.

\item[[15\!\!\!]] R.\ Bhatia, Positive Definite Matrices, Princeton University press, Princeton 2007.

\item[[28\!\!\!]]  J.-C. Bourin and F. Hiai,
Norm and anti-norm inequalities for positive semi-definite matrices,
{\it Internat. J. Math.} \textbf{63} (2011), 1121-1138.

\item[[29\!\!\!]]  J.-C. Bourin and F. Hiai, Jensen and Minkowski inequalities for operator means and anti-norms. Linear Algebra Appl. 456 (2014), 22–53.

\item[[31\!\!\!]] J.-C.\ Bourin and E.-Y.\ Lee, Unitary orbits of Hermitian operators with convex or concave functions, {\it Bull.\ Lond.\
Math.\ Soc.}\ 44 (2012), no.\ 6, 1085--1102.

\item[[42\!\!\!]] J.-C.\ Bourin and E.-Y.\ Lee,  A Pythagorean Theorem for partitioned matrices, Proc. Amer. Math. Soc., in press.

\item[[43\!\!\!]]  J.-C.\ Bourin and E.-Y.\ Lee, Matrix inequalities and majorizations around Hermite-Hadamard's inequality, preprint

\item[[64\!\!\!]] F. Hiai, Matrix Analysis: Matrix Monotone Functions, Matrix Means, and
Majorization (GSIS selected lectures),
\textit{Interdisciplinary Information Sciences} 16 (2010), 139--248.

\item[[67\!\!\!]] F. Hiai, D. Petz, Introduction to Matrix Analysis and applications. Universitext, Springer, New Delhi, 2014.

\item[[81\!\!\!]] M.\ S.\ Moslehian, 
Matrix Hermite-Hadamard type inequalities, {\it Houston J.\ Math.}\ 39 (2013), no.\ 1, 177--189.

\item[[83\!\!\!]] C.\ P.\ Niculescu; L.-E.\ Persson,  Old and new on the Hermite-Hadamard inequality, {\it Real Anal.\ Exchange}\ 29 (2003/04), no.\ 2, 663--685.

\item[[89\!\!\!]] B.\ Simon, Trace ideal and their applications, Cambridge University Press, Cambridge, 1979.

\item[[94\!\!\!]] X.\ Zhan, The sharp Rado theorem for majorizations, {\it Amer.\ Math.\ Monthly} 110
(2003) 152--153.

\end{itemize}

}


\part{Operator diagonals and partitionned matrices}

\ 
\ 

\vskip 130pt

{\color{blue}
{\Large 
$$
\begin{bmatrix} A&X \\ X&B \end{bmatrix} =\frac{U(A+B)U^* + V(A+B)V^*}{2}
$$
}
}



\chapter{Pinchings and Masas}

For a background on the essential numerical and the proof of the pinching theorem used in this article, see Section 5.6 ``Around this article".

\vskip 10pt\noindent
{\color{blue}{\Large {\bf Pinchings and positive linear maps} \large{\cite{BL-JFA}}}}

\vskip 10pt\noindent
{\bf Abstract.} We employ the  pinching theorem, ensuring that some operators $A$ admit {\it any} sequence of
contractions as an operator diagonal of $A$, to deduce/improve two recent theorems of Kennedy-Skoufranis and Loreaux-Weiss for
conditional expectations onto a masa in the algebra of operators on a Hilbert space. Similarly, we obtain a proof of a theorem of
Akeman and Anderson showing that positive contractions in a continuous masa can be lifted to a projection.
 We also  discuss a few corollaries  for sums  of two operators in the same unitary orbit.

\vskip 3pt
{\small\noindent
Keywords: Pinching, essential numerical range, positive linear maps, conditional expectation onto a masa, unitary orbit.
}
\vskip 3pt \noindent
{\small\noindent
2010 Mathematics Subject Classification.  46L10, 47A20, 47A12.}

\section{The pinching theorem}

We recall two theorems which are fundamental in the next sections to obtain several results about positive linear maps, in particular
conditional expectations, and unitary orbits.
These theorems were established in \cite{B-JOT}, we also
refer to this article for various definitions and properties of the essential numerical range $W_e(A)$ of an operator $A$ in the
algebra ${\mathrm{L}}({\mathcal{H}})$ of all
 (bounded linear) operators on an infinite dimensional, separable (real or complex) Hilbert space ${\mathcal{H}}$. 

We denote by ${\mathcal{D}}$ the unit disc of $\bC$. We write $A\simeq B$ to mean that the operators $A$ and $B$ are unitarily
equivalent. This relation is extended to operators possibly acting on different Hilbert spaces, typically, $A$ acts on
${\mathcal{H}}$ and $B$ acts on an infinite dimensional subspace ${\mathcal{S}}$ of ${\mathcal{H}}$, or on the spaces
${\mathcal{H}}\oplus {\mathcal{H}}$ or $\oplus^{\infty}{\mathcal{H}}$.

\vskip 5pt
\begin{theorem}\label{pinching} Let $A\in{\mathrm{L}}({\mathcal{H}}) $ with $W_e(A)\supset{\mathcal{D}}$ and $\{X_i\}_{i=1}^{\infty}$
a sequence in ${\mathrm{L}}({\mathcal{H}})$ such that
$\sup_i\|X_i\|<1$. Then,  a  decomposition  ${\mathcal{H}}=\bigoplus_{i=1}^{\infty}{\mathcal{H}}_i$ holds with 
 $A_{{\mathcal{H}}_i}\simeq X_i$ for all $i$.
\end{theorem}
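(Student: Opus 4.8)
The plan is to reduce the theorem to a single-block compression statement and then iterate it by a back-and-forth exhaustion argument. First I would record the standing reductions: since $\mathcal{D}$ is the unit disc and $W_e(A)$ is a closed convex set containing it, every point of $\mathcal{D}$ lies in the interior of $W_e(A)$; in particular every diagonal value I will want to prescribe is an interior essential numerical value, because $\sup_i\|X_i\|<1$ forces each entry $\langle X_i e_k,e_k\rangle$ to lie strictly inside $\mathcal{D}$. I would also use the three standard stability properties of the essential numerical range: it is a unitary invariant, it is unchanged by compression to a finite-codimensional subspace, and it is unchanged under finite-rank perturbation. These let me pass freely between $A$ and its compressions to the orthogonal complements of the finite-dimensional pieces built so far, so that the hypothesis $W_e\supset\mathcal{D}$ persists throughout the construction.

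The technical heart is a one-vector lemma: if $\zeta\in\mathrm{int}\,W_e(A)$, then for every finite-dimensional subspace $\mathcal{M}$ and every $\varepsilon>0$ there is a unit vector $h\perp\mathcal{M}$ with $\langle Ah,h\rangle=\zeta$, $\|P_{\mathcal{M}}Ah\|<\varepsilon$ and $\|P_{\mathcal{M}}A^*h\|<\varepsilon$, where $P_{\mathcal{M}}$ is the orthogonal projection onto $\mathcal{M}$. I would prove this from the description of $W_e(A)$ by unit vectors $g_n$ tending weakly to $0$ with $\langle Ag_n,g_n\rangle\to\zeta$: weak nullity together with $\dim\mathcal{M}<\infty$ gives $\|P_{\mathcal{M}}g_n\|\to0$, $\|P_{\mathcal{M}}Ag_n\|\to0$ and $\|P_{\mathcal{M}}A^*g_n\|\to0$, a small correction removes the residual $P_{\mathcal{M}}$-components, and the hypothesis $\zeta\in\mathrm{int}\,W_e(A)$ upgrades ``$\langle Ah,h\rangle$ close to $\zeta$'' to ``equal to $\zeta$'' by taking a two-dimensional combination of vectors whose numerical values straddle $\zeta$.

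With this lemma I would realize a single strict contraction $X$, written as an infinite matrix $(x_{jk})$ in an orthonormal basis of its infinite-dimensional space, as an exact compression of $A$: I construct orthonormal $f_1,f_2,\dots$ in $\mathcal{H}$ by induction with $\langle Af_k,f_j\rangle=x_{jk}$ for all $j,k$, which for an orthonormal family is exactly the assertion that $\mathcal{S}:=\overline{\mathrm{span}}\{f_j\}$ satisfies $A_{\mathcal{S}}\simeq X$. At step $n$ I work inside the orthogonal complement $\mathcal{H}'$ of $\mathcal{M}_{n-1}:=\mathrm{span}\{f_1,\dots,f_{n-1}\}$, whose compression still satisfies $W_e\supset\mathcal{D}$. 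The prescribed column entries $\langle Af_n,f_j\rangle=x_{jn}$ and row entries $\langle Af_j,f_n\rangle=x_{nj}$ become finitely many linear constraints on $f_n$ involving $P_{\mathcal{H}'}A^*f_j$ and $P_{\mathcal{H}'}Af_j$, while $x_{nn}\in\mathrm{int}\,W_e$ is an interior diagonal value. I take $f_n$ to be the vector given by the lemma (with $\zeta=x_{nn}$ and $\mathcal{M}$ large enough to absorb the off-diagonal leakage) plus a small finite-dimensional correction solving the linear system, then renormalise; the correction is small because the entries of $X$ and the leakage terms are small, and a continuity/fixed-point adjustment makes the diagonal value and the norm exactly right.

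Finally I would assemble all blocks by a back-and-forth construction, rather than peeling off one block at a time, in order to guarantee that the $\mathcal{H}_i$ exhaust $\mathcal{H}$. Fixing an orthonormal basis $\{d_m\}$ of $\mathcal{H}$, I interleave two kinds of tasks in a single induction: \emph{entry tasks}, which add the next still-missing basis vector of some $X_i$ as above, and \emph{covering tasks}, which, whenever $d_m$ is not yet captured, append further block vectors chosen to point toward the part of $d_m$ orthogonal to the span built so far while still satisfying the finitely many linear constraints coming from past vectors. Because each block is an infinite matrix with infinitely many vectors left to place, there is always room to serve a covering task while performing an entry task, so both lists get exhausted; the resulting orthonormal system spans $\mathcal{H}$, the blocks give $A_{\mathcal{H}_i}\simeq X_i$, and $\mathcal{H}=\bigoplus_i\mathcal{H}_i$. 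The main obstacle, and where I expect the real work to concentrate, is exactly this simultaneous bookkeeping: hitting the prescribed entries \emph{exactly} and not merely approximately at every step, keeping the vectors orthonormal, covering the fixed basis of $\mathcal{H}$, and checking that the essential numerical range condition survives each finite-dimensional compression so that the lemma stays applicable throughout.
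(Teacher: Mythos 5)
Your overall architecture (one-vector lemma, single-block realization, back-and-forth exhaustion) is reasonable in outline, but the central step — realizing a \emph{general} strict contraction $X=(x_{jk})$ as an exact compression by prescribing its entries one column/row at a time — has a genuine gap. At stage $n$ the constraints $\langle Af_n,f_j\rangle=x_{jn}$ and $\langle Af_j,f_n\rangle=x_{nj}$ form an affine system in $f_n$ whose coefficient vectors are $P_{\mathcal{H}'}A^*f_j$ and $P_{\mathcal{H}'}Af_j$; nothing in your construction guarantees this system is solvable, let alone solvable by a small correction to the vector $h$ furnished by your lemma. Concretely, if $f_1$ happens to be an eigenvector of $A^*$ (which your choice of $f_1$ does not exclude), then $\langle Af_2,f_1\rangle=\overline{x_{11}}\langle f_2,f_1\rangle=0$ for \emph{every} $f_2\perp f_1$, so no nonzero $x_{12}$ is ever attainable. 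Even in nondegenerate cases your claim that ``the correction is small because the entries of $X$ \ldots{} are small'' is false: the entries of a strict contraction are only bounded by $\|X\|<1$ (e.g.\ $x_{12}=0.9$), the coefficient vectors may be nearly dependent so the least-norm solution can be large, and a correction of norm comparable to $1$ destroys the diagonal value $\langle Af_n,f_n\rangle=x_{nn}$, the normalization, and the previously achieved entries; the ``continuity/fixed-point adjustment'' you invoke has no reason to converge. This is precisely the difficulty the paper's proof is designed to avoid: it only ever realizes \emph{normal diagonalizable} targets directly (where the off-diagonal entries are forced to be $0$ by taking each $e_n$ orthogonal to the earlier $e_j$, $Ae_j$, $A^*e_j$ — killing an entry is easy, prescribing a nonzero one is not), writes a general $X$ via unitary dilation and the Berg--Weyl--von Neumann theorem as (diagonalizable) $+$ (small compact), expresses $X$ as the average $\frac{1}{2^l}(m\alpha D+n\beta\,\mathrm{Re}K+n\beta\,i\,\mathrm{Im}K)$ of $2^l$ diagonalizable blocks, and then conjugates the resulting block-diagonal compression by the Walsh--Hadamard orthogonal matrices $W_l$ so that every diagonal block of the conjugated matrix equals $X$.

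The exhaustion step has a second, related gap. Your ``covering tasks'' require appending to some block a vector that simultaneously satisfies the accumulated linear constraints, has the right diagonal value, \emph{and} has a nonzero (indeed bounded-below) component along the residual of $d_m$; without a quantitative statement there is nothing to prevent the admissible vectors from being forced orthogonal to that residual at every stage, in which case $\bigoplus_i\mathcal{H}_i$ is a proper subspace. The paper's Lemma 5.8.3 supplies exactly the missing ingredient: a constant $\varepsilon=\varepsilon(\rho,a)>0$, depending only on $\sup_i\|X_i\|<\rho<1$ and $a=\|A\|_\infty$, such that the block $E$ realizing $X$ can always be chosen with $\|Eh\|\ge\varepsilon$ for a prescribed unit vector $h$ while keeping $W_e(A_{E^\perp})\supset\mathcal{D}$ (the $\varepsilon=1/\sqrt{2^l}$ comes from the Hadamard averaging). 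Running this against a dense sequence $\{h_n\}$ in the unit sphere is what forces $\sum_jE_j=I$. If you want to salvage your plan, you should replace the entry-by-entry realization by the diagonalize-then-average argument and upgrade your covering tasks to a uniform capture estimate of this kind.
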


\vskip 5pt
Of course, the direct sum refers to an orthogonal decomposition, and $A_{{\mathcal{H}}_i}$ stands for the compression of $A$ onto the
subspace ${\mathcal{H}}_i$.

Theorem \ref{pinching} tells us that we have a unitary congruence between an operator in ${\mathrm{L}}(\oplus^{\infty}{\mathcal{H}})$
and a ``pinching" of $A$,
$$
\bigoplus_{i=1}^{\infty} X_i \simeq \sum_{i=1}^{\infty} E_iAE_i
$$
for some sequence of mutually orthogonal infinite dimensional projections $\{E_i\}_{i=1}^{\infty}$ in ${\mathrm{L}}({\mathcal{H}})$
summing up to the identity $I$. Thus $\{X_i\}_{i=1}^{\infty}$ can be regarded as an operator diagonal of $A$. In particular, if $X$
is an operator on ${\mathcal{H}}$ with $\| X\| <1$, then, $A$ is unitarily congruent to an operator on ${\mathcal{H}}\oplus
{\mathcal{H}}$ of the form,
\begin{equation}\label{compression}
A\simeq \begin{pmatrix} X&\ast \\ \ast& \ast\end{pmatrix}.
\end{equation}

For a sequence of normal operators, Theorem \ref{pinching} admits a variation. Given ${\mathcal{A}},{\mathcal{B}}\subset \bC$, the
notation ${\mathcal{A}}\subset_{st}{\mathcal{B}}$ means that ${\mathcal{A}}+r{\mathcal{D}}\subset {\mathcal{B}}$ for some $r>0$.

\vskip 5pt
\begin{theorem}\label{pinchingnormal} Let $A\in{\mathrm{L}}({\mathcal{H}}) $ and let $\{X_i\}_{i=1}^{\infty}$ be a sequence of normal
operators in ${\mathrm{L}}({\mathcal{H}})$ such that
$\cup_{i=1}^{\infty}W(X_i)\subset_{st} W_e(A)$. Then, a decomposition ${\mathcal{H}}=\bigoplus_{i=1}^{\infty}{\mathcal{H}}_i$ holds
with
 $A_{{\mathcal{H}}_i}\simeq X_i$ for all $i$. 
\end{theorem}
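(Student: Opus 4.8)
The plan is to deduce the normal pinching theorem from the disc version, Theorem \ref{pinching}, by combining a local affine normalization with a spectral decomposition of each $X_i$ into ``small'' normal pieces, and to absorb all approximation errors into the uniform gap provided by the strong containment.

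First I would unwind the hypothesis. By definition of $\subset_{st}$ there is a single $r>0$ with $\bigcup_i W(X_i)+r\mathcal{D}\subset W_e(A)$; since $W_e(A)$ is compact and convex, the closed convex hull $S:=\overline{\mathrm{conv}}\bigl(\bigcup_i W(X_i)\bigr)$ still satisfies $S+r\mathcal{D}\subset W_e(A)$, and in particular $W_e(A)$ has nonempty interior. A first reduction replaces each normal $X_i$ by one with finite spectrum: using a Weyl--von Neumann--Berg spectral truncation I can write $X_i=N_i+K_i$ with $N_i$ normal of finite spectrum, $W(N_i)\subset S$, and $\|K_i\|<r/2$; since realizing the $N_i$ as operator diagonal blocks produces blocks that differ from the $X_i$ by operators of norm $<r/2$, the remaining room $r/2$ lets me recover the $X_i$ themselves by a final perturbation/limit argument. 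Thus it suffices to treat $X_i=\bigoplus_j \lambda_{i,j}I$, an orthogonal sum of scalar blocks with $\lambda_{i,j}\in S$.

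The elementary building block is then the realization of a \emph{small} normal operator. If $N$ is normal with $W(N)\subset c+\tfrac{\rho}{2}\mathcal{D}$ for some $c$ and $\rho>0$ with $c+\rho\mathcal{D}\subset W_e(A)$, then the affine map $\phi(w)=(w-c)/\rho$ sends $A$ to $\phi(A)=(A-c)/\rho$ with $W_e(\phi(A))=\phi(W_e(A))\supset\mathcal{D}$, while $\phi(N)$ is normal with $\|\phi(N)\|=\sup_{w\in W(N)}|\phi(w)|\le \tfrac12<1$. Applying Theorem \ref{pinching} to $\phi(A)$ with $\phi(N)$ among the prescribed contractions, then undoing $\phi$, realizes $N$ exactly as a compression of $A$ on an infinite dimensional subspace. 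Partitioning the plane into a grid of squares of diameter $\delta<r$ and letting $E_{i,k}$ be the spectral projection of $X_i$ onto the $k$-th square, each piece $X_{i,k}:=X_i|_{\mathrm{ran}\,E_{i,k}}$ is exactly such a small normal operator: its numerical range lies in a square strongly inside $W_e(A)$, whose center $c_k$ satisfies $c_k+(r-\delta/2)\mathcal{D}\subset W_e(A)$, so for $\delta$ small every piece is individually a compression of $A$.

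The hard part, and the genuinely new content beyond Theorem \ref{pinching}, is the \emph{assembly}: I must place the pieces $X_{i,k}$ on mutually orthogonal subspaces $\mathcal{L}_{i,k}$ whose span over $k$ is a subspace $\mathcal{H}_i$ on which the compression of $A$ is \emph{exactly} $\bigoplus_k X_{i,k}\simeq X_i$, i.e. with all off-diagonal blocks between distinct pieces vanishing, and with the $\mathcal{H}_i$ together exhausting $\mathcal{H}$. Theorem \ref{pinching} only controls diagonal blocks, so a naive realization-then-regrouping leaves uncontrolled off-diagonal compressions inside each $\mathcal{H}_i$ and destroys normality of $A_{\mathcal{H}_i}$. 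To overcome this I would revisit the construction behind Theorem \ref{pinching} in \cite{B-JOT}: for each target value $\lambda\in S\subset W_e(A)$ one produces an infinite orthonormal system $(e_n)$ that is \emph{asymptotically $\lambda$-reducing}, meaning $\langle Ae_n,e_n\rangle\to\lambda$ and $\langle Ae_n,e_m\rangle\to0$ for $n\neq m$ in both orders, so that $A$ compressed to $\overline{\mathrm{span}}(e_n)$ is $\lambda I$ plus a compact perturbation of arbitrarily small norm. Interleaving such systems for the various $\lambda_{i,j}$ and orthogonalizing successively drives the off-diagonal couplings to zero, and the uniform room $r$ from the strong containment is exactly what allows the accumulated compact errors to be corrected, via a telescoping unitary, into genuine orthogonal subspaces with $A_{\mathcal{H}_i}\simeq X_i$ and $\bigoplus_i\mathcal{H}_i=\mathcal{H}$. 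The main obstacle throughout is this off-diagonal control; the scalar-block reduction and the affine building block are the routine ingredients, whereas producing mutually $A$-orthogonal reducing-type subspaces that reconstruct the full normal operators is where the strong containment $\subset_{st}$, rather than mere containment, is indispensable.
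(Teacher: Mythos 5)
Your overall architecture (reduce to normal operators with small spectral pieces, renormalize affinely, invoke Theorem \ref{pinching}) is reasonable, and you have correctly located the crux: forcing the off-diagonal blocks between the pieces of a single $X_i$ to vanish, so that $A_{\mathcal{H}_i}$ is exactly $X_i$ and the $\mathcal{H}_i$ exhaust $\mathcal{H}$. But your resolution of that crux is not a proof, and two of your steps rest on principles that do not exist. First, the passage from the finite-spectrum approximant $N_i$ back to $X_i=N_i+K_i$ ``by a final perturbation/limit argument'' is unjustified: the set of operators realizable as an exact compression of $A$ on a prescribed slot of a pinching is neither open nor closed in any useful sense, and there is no general mechanism by which ``room $r/2$'' in $W_e(A)$ upgrades an approximate operator diagonal to an exact one. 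The way compact perturbations of diagonalizable normal operators are actually handled in \cite{B-JOT} (see Step 3 of the proof of Lemma \ref{lemA} in Section 5.6) is an exact algebraic device, not a limit: one writes $X$ as an average $\frac{1}{2^l}\sum_{j=1}^{2^l} Y_j$ of diagonalizable normal operators built from $D$, ${\mathrm{Re}}\,K$ and ${\mathrm{Im}}\,K$, suitably scaled and recentred so that each $Y_j$ has spectrum inside $W_e(A)$ (this is where $\subset_{st}$ enters), realizes the $Y_j$ block-diagonally, and conjugates by the orthogonal matrices $W_l$ so that every diagonal block of the conjugated operator equals the average $X$.

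Second, your ``asymptotically $\lambda$-reducing'' systems with $\langle Ae_n,e_m\rangle\to 0$ are strictly weaker than what the construction provides, and with only asymptotic decay you would indeed be left with a nonzero compact error that nothing in your argument removes. The actual mechanism (Step 2 of the same proof) kills the off-diagonal entries \emph{exactly}: one chooses each new unit vector $e_{n+1}$ in $\bigl[{\mathrm{span}}\{e_j,Ae_j,A^*e_j : j\le n\}\bigr]^{\perp}$ with $\langle e_{n+1},Ae_{n+1}\rangle$ equal to the prescribed value, which is possible because this subspace has finite codimension and hence its compression has the same essential numerical range; the compression of $A$ to $\overline{{\mathrm{span}}}\{e_n\}$ is then \emph{exactly} diagonal, and the entire block structure is assembled inside such an exactly diagonal compression before conjugating by the $W_l$'s. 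Your ``telescoping unitary'' correcting ``accumulated compact errors'' has no construction behind it. Finally, you never address why the subspaces $\mathcal{H}_i$ sum to all of $\mathcal{H}$ rather than to a proper subspace; this requires the quantitative refinement of Lemma \ref{lemB} (the lower bound $\|Eh\|\ge\varepsilon$ tested against a dense sequence of unit vectors, as in the Conclusion of Section 5.6.2), which piecemeal applications of Theorem \ref{pinching} to the individual spectral pieces cannot deliver.
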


\vskip 5pt
If all the operators are self-adjoint, this is true for the strict inclusion in $\bR$ (if ${\mathcal{A}},{\mathcal{B}}\subset \bR$,
the notation ${\mathcal{A}}\subset_{st}{\mathcal{B}}$ then means that ${\mathcal{A}}+r{\mathcal{I}}\subset {\mathcal{B}}$ for some
$r>0$, where ${\mathcal{I}}=[-1,1]$). This is actually an easy consequence of Theorem \ref{pinching} or Theorem \ref{pinchingnormal}.

\vskip 5pt
\begin{cor}\label{pinchingself} 
Let $A\in{\mathrm{L}}({\mathcal{H}}) $ be self-adjoint and let $\{X_i\}_{i=1}^{\infty}$ be a sequence of self-adjoint operators in
${\mathrm{L}}({\mathcal{H}})$ such that
$\cup_{i=1}^{\infty}W(X_i)\subset_{st} W_e(A)$. Then, a decomposition ${\mathcal{H}}=\bigoplus_{i=1}^{\infty}{\mathcal{H}}_i$ holds
with
 $A_{{\mathcal{H}}_i}\simeq X_i$ for all $i$. 
\end{cor}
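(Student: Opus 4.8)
The plan is to run the pinching construction of \cite{B-JOT} that underlies Theorems \ref{pinching} and \ref{pinchingnormal}, but with the real interval $\mathcal{I}=[-1,1]$ playing the role of the disc $\mathcal{D}$. First I would normalize: after an affine change of variable $t\mapsto ct+d$ (which replaces $A$ by $cA+dI$ and each $X_i$ by $cX_i+dI$ without affecting the conclusion) we may assume $W_e(A)=[\alpha,\beta]$ with $\alpha<\beta$, and the hypothesis $\cup_i W(X_i)\subset_{st}W_e(A)$ then furnishes an $r>0$ with $\sigma(X_i)\subset[\alpha+r,\beta-r]$ for every $i$. Note that self-adjoint operators are normal, yet Theorem \ref{pinchingnormal} does \emph{not} apply verbatim: since $A$ is self-adjoint, $W_e(A)$ is a real segment with empty interior in $\bC$, so the planar strict inclusion $\cup_i W(X_i)+r\mathcal{D}\subset W_e(A)$ can never hold. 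The whole point is that in the self-adjoint situation the rotational room encoded by $\mathcal{D}$ is never used: both $A$ and every target are self-adjoint, and only the real room $+r\mathcal{I}$ is needed.

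Concretely, I would first establish the self-adjoint one-block analogue of \eqref{compression}: if $X$ is self-adjoint with $\sigma(X)\subset(\alpha,\beta)=\mathrm{int}\,W_e(A)$, then there is an infinite-dimensional subspace $\mathcal{H}_1$, with infinite-dimensional complement, such that $A_{\mathcal{H}_1}\simeq X$. This is exactly the construction behind Theorem \ref{pinching}, in which one successively adjusts the compression of $A$ on a growing chain of finite-dimensional spaces so that it converges to $X$, a final unitary making the match exact; because $\sigma(X)$ lies in the real interior of $W_e(A)$, each elementary step can be carried out using only essential-spectral values of $A$ lying on both sides of the relevant level, so the interval neighbourhood suffices and every intermediate block stays self-adjoint. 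Granting this, I would extract $\mathcal{H}_1$ with $A_{\mathcal{H}_1}\simeq X_1$, then repeat on $\mathcal{H}_1^{\perp}$ to find $\mathcal{H}_2\subset\mathcal{H}_1^{\perp}$ with $(A_{\mathcal{H}_1^{\perp}})_{\mathcal{H}_2}\simeq X_2$, and so on; a standard diagonal argument then assembles the orthogonal decomposition $\mathcal{H}=\bigoplus_i\mathcal{H}_i$ with $A_{\mathcal{H}_i}\simeq X_i$.

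The main obstacle is the bookkeeping on the essential numerical range across the infinitely many extractions: for the induction to continue I must know that the residual operator $A_{(\mathcal{H}_1\oplus\cdots\oplus\mathcal{H}_k)^{\perp}}$ still satisfies $W_e\supseteq[\alpha,\beta]$, so that $\sigma(X_{k+1})\subset[\alpha+r,\beta-r]\subset\mathrm{int}\,W_e$ remains available at every stage. This is where the construction must be run with care: each $\mathcal{H}_i$ should be carved so as to consume only part of the infinite multiplicity of each essential-spectral extreme of $A$, leaving a residual whose essential numerical range is unchanged; this is precisely the feature that allows extracting an entire sequence at once, as in Theorem \ref{pinchingnormal}, rather than a single block. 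The second delicate point is securing \emph{exact} unitary equivalence $A_{\mathcal{H}_i}\simeq X_i$ rather than mere approximation, which is handled by the successive-approximation-plus-unitary-correction device of \cite{B-JOT}. Since all data are self-adjoint, the disc $\mathcal{D}$ is replaced throughout by the interval $\mathcal{I}$, which is exactly the content of the corollary.
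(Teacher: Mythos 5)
Your route is genuinely different from the paper's, and the difference matters. The paper does \emph{not} redo the pinching construction over the interval $\mathcal{I}$; it derives the corollary from Theorem \ref{pinchingnormal} in a few lines by a fattening trick. Write $W_e(A)=[a,b]$ and split $A=A_1+A_2+A_3+A_4$ into mutually annihilating self-adjoint pieces with $a\in\sigma_e(A_1)\cap\sigma_e(A_2)$ and $b\in\sigma_e(A_3)\cap\sigma_e(A_4)$ (possible because each extreme of the essential spectrum has, in the relevant sense, infinite multiplicity to distribute). The operator $\tilde{A}=(1+i)A_1+(1-i)A_2+(1+i)A_3+(1-i)A_4$ is then normal, satisfies ${\mathrm{Re}}\,\tilde{A}=A$, and has $W_e(\tilde{A})\supset{\mathrm{conv}}\{(1\pm i)a,(1\pm i)b\}$, a planar quadrilateral containing the open segment $(a,b)$ in its two-dimensional interior. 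The hypothesis $\cup_i W(X_i)\subset_{st}[a,b]$ (in the real sense) therefore becomes $\cup_i W(X_i)\subset_{st}W_e(\tilde{A})$ in the planar sense, so Theorem \ref{pinchingnormal} applies verbatim to $\tilde{A}$ and yields $\tilde{A}_{{\mathcal{H}}_i}\simeq X_i$. Since each $X_i$ is self-adjoint, each compression $\tilde{A}_{{\mathcal{H}}_i}$ is self-adjoint, hence equals its real part $A_{{\mathcal{H}}_i}$, and the conclusion follows. In short, the obstruction you correctly identified (a real segment has empty planar interior) is circumvented by adding an imaginary part to $A$ rather than by rebuilding the machinery.

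As submitted, your argument has a real gap: everything is deferred to an interval-version of Theorem \ref{pinching} that you assert but do not prove. The two points you flag as ``delicate'' --- that the residual compressions keep $W_e\supset[\alpha,\beta]$ after infinitely many extractions, and that one gets exact unitary equivalence rather than approximation --- are precisely the content of Lemma \ref{lemB} and Step 5 of the proof in Section 5.6, and your proposal does not carry out their analogues; phrases such as ``Granting this'' and ``each elementary step can be carried out'' stand in for the entire technical core. I believe the construction does adapt to the self-adjoint setting (the dilation in Step 3 and the Clifford-type averaging simplify when everything is Hermitian), so your plan is salvageable, but it amounts to reproving the main theorem of \cite{B-JOT} rather than deducing a corollary from it. Given that the statement is presented as an easy consequence of Theorem \ref{pinchingnormal}, you should either supply the full adapted construction or, better, use the reduction above.
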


\vskip 5pt
To get Corollary \ref{pinchingself} from Theorem \ref{pinchingnormal}, let $W_e(A)=[a,b]$ and write $A=A_1+A_2+A_3+A_4$ where
$A_iA_j=0$ if $i\neq j$, with $a\in \sigma_e(A_1)\cap\sigma_e(A_2)$ and $b\in \sigma_e(A_3)\cap\sigma_e(A_4)$. Apply Theorem
\ref{pinchingnormal} to the normal operator $\tilde{A}=(1+i) A_1 +(1-i)A_2 + (1+i)A_3 +(1-i)A_4$ as $\cup_{i=1}^{\infty}W(X_i)
\subset_{st} {\mathrm{conv}}\{(1\pm i)a;(1\pm i)b\} \subset W_e(\tilde{A})$. We get a decomposition
${\mathcal{H}}=\bigoplus_{i=1}^{\infty}{\mathcal{H}}_i$ with $\tilde{A}_{{\mathcal{H}}_i}\simeq X_i$ for all $i$. Therefore, taking
real parts we also have
 $A_{{\mathcal{H}}_i}\simeq X_i$.

In Section 3, our concern is the study of generalized diagonals, i.e., conditional expectations onto a masa in
${\mathrm{L}}({\mathcal{H}}) $, of the unitary orbit of an operator. The pinching theorems are the good tools for this study; we
easily obtain and considerably improve two recent theorems, of Kennedy and Skoufranis
for normal operators, and Loreaux and Weiss for idempotent operators. For self-adjoint idempotents, i.e., projections, and continuous
masas, we obtain a theorem due to Akemann and Anderson.
Section 4 deals with an application to the class of unital, positive linear maps which are trace preserving. 
Section 5 collects a few questions on possible extension of Theorems \ref{pinching} and
\ref{pinchingnormal} in the setting of von Neumann algebras.

The next section gives applications which only require \eqref{compression}. These results mainly focus on sums of two operators in a
unitary orbit.

\section{Sums in a unitary orbit}

\vskip 5pt\noindent
We recall  a straightforward consequence of \eqref{compression} for the weak convergence, \cite[Corollary 2.4]{B-JOT}.

\vskip 5pt
\begin{cor}\label{cor1seq} Let $A,X\in{\mathrm{L}}({\mathcal{H}})$ with $W_e(A)\supset{\mathcal{D}}$ and $\| X\| \le 1$. Then there
exists a sequence of unitaries $\{U_n\}_{n=1}^{\infty}$ in ${\mathrm{L}}({\mathcal{H}})$ such that
$${\mathrm{wot}}\!\!\! \lim_{n\to+\infty}U_nAU_n^*= X. $$
\end{cor}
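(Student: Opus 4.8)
The plan is to deduce everything from the compression statement \eqref{compression}. First I would record the precise object it provides. Since $W_e(A)\supset{\mathcal{D}}$, relation \eqref{compression} says that for any operator $Y$ with $\|Y\|<1$ there is a unitary identification of ${\mathcal{H}}$ with ${\mathcal{H}}\oplus{\mathcal{H}}$ carrying $A$ to a block operator with top-left corner $Y$. Unwinding this, there is an isometry $J:{\mathcal{H}}\to{\mathcal{H}}$ whose range has infinite-dimensional orthocomplement and which satisfies $J^*AJ=Y$: if $\Omega:{\mathcal{H}}\to{\mathcal{H}}\oplus{\mathcal{H}}$ is the implementing unitary and $\iota$ the inclusion onto the first summand, one takes $J=\Omega^*\iota$. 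Thus the content of \eqref{compression} is that every strict contraction is a compression of $A$ along an isometry of infinite corank.

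Assuming first $\|X\|<1$, I would fix an orthonormal basis $\{e_k\}$ of ${\mathcal{H}}$, set $g_k=Je_k$ (an orthonormal system spanning ${\mathcal{S}}:=\mathrm{ran}\,J$), and choose an orthonormal basis $\{h_k\}$ of the infinite-dimensional space ${\mathcal{S}}^{\perp}$. For each $n$ I would define a unitary $W_n$ by $W_ne_k=g_k$ for $k\le n$, extended to a unitary by sending $\{e_{n+1},e_{n+2},\dots\}$ onto any enumeration of the orthonormal basis $\{g_{n+1},g_{n+2},\dots\}\cup\{h_1,h_2,\dots\}$ of $\{g_1,\dots,g_n\}^{\perp}$. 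The key computation is then immediate: for $i,j\le n$,
\[
\langle e_i, W_n^*AW_ne_j\rangle=\langle g_i,Ag_j\rangle=\langle e_i,J^*AJe_j\rangle=\langle e_i,Xe_j\rangle,
\]
so every matrix entry of $W_n^*AW_n$ stabilizes to the corresponding entry of $X$ as soon as $n\ge\max(i,j)$. Because $\|W_n^*AW_n\|=\|A\|$ is uniformly bounded and $\{e_k\}$ is total, the standard $\varepsilon/3$ argument upgrades entrywise convergence to convergence of $W_n^*AW_n$ to $X$ in the weak operator topology; setting $U_n=W_n^*$ settles the case $\|X\|<1$.

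For the boundary case $\|X\|=1$ I would approximate $X$ by the strict contractions $X_m=(1-1/m)X$. Here I would use that $\|A\|\ge 1\ge\|X\|$, which is forced by $W_e(A)\supset{\mathcal{D}}$ (this makes the numerical radius of $A$ at least $1$), so that $X$ and all orbit elements $UAU^*$ lie in the ball of radius $\|A\|$, on which the weak operator topology is metrizable since ${\mathcal{H}}$ is separable; let $d$ be such a metric. For each $m$ the previous step yields unitaries with $U^{(m)}_nA(U^{(m)}_n)^*$ converging to $X_m$, so I can choose $n(m)$ with $d\!\left(U^{(m)}_{n(m)}A(U^{(m)}_{n(m)})^*,X_m\right)<1/m$; since $X_m\to X$ in norm, the diagonal sequence $U_m:=U^{(m)}_{n(m)}$ satisfies that $U_mAU_m^*$ converges to $X$ in the weak operator topology. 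The main obstacle is really only this boundary case, the interior case being an exact stabilization of matrix entries; the care needed there is the uniform-boundedness and metrizability bookkeeping that lets the diagonal extraction produce a genuine sequence, rather than merely placing $X$ in the weak closure of the unitary orbit of $A$.
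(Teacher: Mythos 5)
Your proof is correct and follows essentially the route the paper intends: the statement is recalled there as a direct consequence of \eqref{compression}, and your construction of unitaries $W_n$ that agree with the isometry $J$ on the first $n$ basis vectors is the same device used in the paper's proof of the neighbouring Corollary \ref{cor2seq}. Your extra care for the boundary case $\|X\|=1$ (norm approximation by strict contractions plus wot-metrizability on bounded sets and a diagonal extraction) is exactly the bookkeeping the paper's phrase ``straightforward consequence'' suppresses, and it is carried out correctly.
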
 

\vskip 5pt
Of course, we cannot replace the weak convergence by the strong convergence; for instance if $A$ is invertible and $\| Xh\| <\|
A^{-1}\|^{-1}$ for some unit vector $h$, then $X$ cannot be a strong limit from the unitary orbit of $A$. However, the next best
thing does happen. Moreover, this is even true for the $\ast$-strong operator topology.

\vskip 5pt
\begin{cor}\label{cor2seq} Let $A,X\in{\mathrm{L}}({\mathcal{H}})$ with $W_e(A)\supset{\mathcal{D}}$ and $\| X\| \le 1$. Then there
exist two sequences of unitaries $\{U_n\}_{n=1}^{\infty}$ and $\{V_n\}_{n=1}^{\infty}$ in ${\mathrm{L}}({\mathcal{H}})$ such that
$$\ast\,{\mathrm{sot}}\!\!\! \lim_{n\to+\infty}\frac{U_nAU_n^*+V_nAV_n^* }{2}= X. $$
\end{cor}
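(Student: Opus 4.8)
The goal is to realize an arbitrary contraction $X$ with $\|X\|\le 1$ as the $\ast$-strong limit of averages $(U_nAU_n^* + V_nAV_n^*)/2$, given that $W_e(A)\supset{\mathcal{D}}$. The natural strategy is to exploit the averaging structure: I want to write $X$ as the midpoint of two contractions $Y,Z$ (so $X=(Y+Z)/2$), each of which can be approximated $\ast$-strongly by a single unitary orbit, and then combine the two approximations. This immediately suggests trying to bootstrap from a single-orbit $\ast$-strong approximation result. So the first step in my plan is to establish the one-operator analogue of Corollary~\ref{cor2seq}: namely, that if $\|Y\|<1$ strictly, then $Y$ is a $\ast$-strong limit of a sequence $U_nAU_n^*$. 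The reason I expect this to work for strict contractions but not for $\|Y\|=1$ is exactly the obstruction flagged before Corollary~\ref{cor2seq} (an invertible $A$ cannot have a strong-limit orbit point $X$ with $\|Xh\|$ too small); the averaging in the final statement is precisely what lets us circumvent this.

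**The single-orbit $\ast$-strong step.** To prove that a strict contraction $Y$ ($\|Y\|<1$) is a $\ast$-strong limit from the unitary orbit of $A$, I would use the pinching theorem (Theorem~\ref{pinching}) rather than just the weak-convergence Corollary~\ref{cor1seq}. The key idea: apply Theorem~\ref{pinching} with the \emph{constant} sequence $X_i = Y$ for all $i$, giving a decomposition ${\mathcal{H}}=\bigoplus_i {\mathcal{H}}_i$ with $A_{{\mathcal{H}}_i}\simeq Y$ for every $i$. Choosing unitaries $W_i:{\mathcal{H}}_i\to{\mathcal{H}}$ implementing these equivalences and gluing them together, one obtains a single unitary $U$ with $\bigoplus_i Y \simeq \sum_i E_i A E_i$, i.e.\ $U^*AU$ has block-diagonal compressions all equal to $Y$. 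By shifting which copies of ${\mathcal{H}}_i$ carry the ``active'' part and sending the off-diagonal blocks to zero in the strong topology (a standard tail-vanishing argument for block operators), one produces a sequence $U_n$ with $U_nAU_n^* \to Y$ both strongly and, applying the same reasoning to adjoints, $\ast$-strongly. The strictness $\|Y\|<1$ is exactly what Theorem~\ref{pinching} requires.

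**Averaging and removing strictness.** With the single-orbit result in hand for strict contractions, I combine two such sequences. Given $X$ with $\|X\|\le 1$, I write $X = \tfrac12\bigl((X+cW) + (X-cW)\bigr)$ for a suitable unitary (or isometry-like) $W$ and a small scalar $c>0$ chosen so that both $Y:=X+cW$ and $Z:=X-cW$ are strict contractions. A cleaner device: since $\{X : \|X\|\le 1\}$ is the closed convex hull of unitaries (or at least $X$ lies within $\frac12$ of two strict contractions by perturbing), pick strict contractions $Y,Z$ with $\|Y\|,\|Z\|<1$ and $Y+Z=2X$. Apply the single-orbit step to each: get $U_n A U_n^* \overset{\ast\text{-sot}}{\to} Y$ and $V_n A V_n^* \overset{\ast\text{-sot}}{\to} Z$. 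Since $\ast$-strong convergence is preserved under finite sums and scalar multiples on bounded sets, the average $(U_nAU_n^*+V_nAV_n^*)/2 \overset{\ast\text{-sot}}{\to} (Y+Z)/2 = X$, which is the claim.

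**The main obstacle.** The delicate point is \emph{not} the algebra of decomposing $X$ into two strict contractions---that is elementary---but rather the passage from the pinching decomposition (a unitary congruence to a fixed block-diagonal operator) to an actual $\ast$-strongly convergent \emph{sequence} of unitary conjugates of $A$. One must argue that the off-diagonal blocks in the pinched form can be pushed out to infinity so that, tested against any fixed vector $h$, both $(U_nAU_n^* - Y)h$ and $(U_nAU_n^* - Y)^*h$ tend to $0$. This requires care because $A$ itself is fixed and only the conjugating unitaries vary; the standard technique is to compose the pinching unitary with shifts that translate the support of $h$ into blocks where the approximation is exact, controlling the error by the square-summability of $\|E_j h\|^2$. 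I expect verifying this tail estimate for both $U_nAU_n^*$ and its adjoint simultaneously (which is what ``$\ast$-sot'' demands) to be the technical heart of the argument.
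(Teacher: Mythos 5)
There is a genuine gap, and it is fatal to the route you propose: your ``single-orbit $\ast$-strong step'' is false. You claim that every strict contraction $Y$ (with $\|Y\|<1$) is a $\ast$-strong limit of a sequence $U_nAU_n^*$. But the remark immediately preceding the corollary exhibits the obstruction: since $\|U_nAU_n^*h\|=\|A(U_n^*h)\|$, an invertible $A$ satisfies $\|U_nAU_n^*h\|\ge\|A^{-1}\|^{-1}$ for every unit vector $h$, so no operator $Y$ with $\|Yh\|<\|A^{-1}\|^{-1}$ for some unit $h$ can be a strong limit from the unitary orbit of $A$. Taking $A=2U$ with $U$ the bilateral shift gives $W_e(A)\supset{\mathcal{D}}$ and $\|A^{-1}\|^{-1}=2$, so \emph{no} contraction whatsoever --- strict or not, e.g.\ $Y=0$ --- is a strong limit of $U_nAU_n^*$. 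You misread the obstruction as separating $\|Y\|<1$ from $\|Y\|=1$; it in fact kills the single-orbit statement for essentially every target. For the same reason your proposed mechanism cannot work: unitary conjugation preserves $\|Ah\|$ pointwise, so the off-diagonal blocks of the pinched form cannot be ``pushed out to infinity'' by conjugation alone; they carry exactly the part of $A$ that a single orbit can never shed.

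The averaging of two conjugates is therefore not a device for relaxing $\|X\|<1$ to $\|X\|\le 1$; it is the essential mechanism, and it must act \emph{before} any limit is taken. The paper's proof: from $A\simeq\begin{pmatrix}X&R\\ S&T\end{pmatrix}$ as in \eqref{compression}, conjugating by $I\oplus(-I)$ gives $A\simeq\begin{pmatrix}X&-R\\-S&T\end{pmatrix}$, and the average of these two conjugates of $A$ is \emph{exactly} $X\oplus T$ --- the off-diagonal blocks cancel algebraically, with no error term. All that remains is to conjugate the block-diagonal operator $X\oplus T$ on ${\mathcal{H}}\oplus{\mathcal{H}}$ by unitaries $W_n:{\mathcal{H}}\oplus{\mathcal{H}}\to{\mathcal{H}}$ with $W_n(e_j\oplus 0)=e_j$ for $j\le n$; norm-boundedness plus convergence on a basis (and the same for adjoints) gives $\ast$-strong convergence of $W_n(X\oplus T)W_n^*$ to $X$, and $U_n=W_nU$, $V_n=W_nV$ finish the proof. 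Your decomposition of $X$ into two strict contractions is elementary and harmless, but it does no work, because the step it was meant to feed is the one that fails.
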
 

\vskip 5pt
\begin{proof} From \eqref{compression} we  have
\begin{equation*}
A\simeq \begin{pmatrix} X&-R \\ -S&T\end{pmatrix}.
\end{equation*}
Hence there exist two unitaries $U,V:{\mathcal{H}}\to{\mathcal{H}}\oplus{\mathcal{H}}$
such that 
\begin{equation}\label{eq1}
\frac{UAU^*+VAV^*}{2}= \begin{pmatrix} X&0 \\ 0&T\end{pmatrix}.
\end{equation}
Now let $\{e_n\}_{n=1}^{\infty}$ be a basis of ${\mathcal{H}}$ and choose any unitary $W_n:{\mathcal{H}}\oplus{\mathcal{H}}\to
{\mathcal{H}} $ such that $W_n(e_j\oplus 0)=e_j$ for all $ j\le n$. Then
$$
X_n:=W_n  \begin{pmatrix} X&0 \\ 0&T\end{pmatrix} W_n^*
$$
strongly converges to $X$. Indeed, $\{X_n\}$ is bounded in norm and, for all $j$, $X_ne_j\to Xe_j$. Taking adjoints,
$$
X_n^*=W_n  \begin{pmatrix} X^*&0 \\ 0&T^*\end{pmatrix} W_n^*,
$$
we also have $X_n^*\to X$ strongly. Setting $U_n=W_nU$ and $V_n=W_nV$ and using \eqref{eq1} completes the proof. \end{proof}

\vskip 5pt 
\begin{remark}\label{remmean} Corollary \ref{cor2seq} does not hold for the convergence in norm. We give an example. Consider the
permutation matrix
$$
T=\begin{pmatrix} 0&0&1 \\ 1&0&0 \\ 0&1&0 \end{pmatrix}
$$
and set $A=2(\oplus^{\infty} T)$ regarded as an operator in ${\mathrm{L}}({\mathcal{H}})$. Then $W_e(A)\supset{\mathcal{D}}$, however
$0$ cannot be a norm limit of means of two operators in the unitary orbit of $A$. Indeed $0$ cannot be a norm limit of means of two
operators in the unitary orbit of $(A+A^*)/2$ as $(A+A^*)/2= 2I-3P$ for some projection $P$.
\end{remark}

\vskip 5pt 
\begin{remark} The converse of Corollary \ref{cor2seq} holds: if $A\in {\mathrm{L}}({\mathcal{H}})$ has the property that any
contraction is a strong limit of a mean of two operators in its unitary orbit, then necessarily $W_e(A)\supset {\mathcal{D}}$. This
is checked by arguing as in the proof of Corollary \ref{cordoubly}.
\end{remark}

\vskip 5pt
We reserve the word ``projection" for self-adjoint idempotent. A strong limit of uniformly bounded idempotent operators is still an
idempotent; thus, the next corollary is rather surprising.

\vskip 5pt
\begin{cor}\label{idem2seq} Fix $\alpha>0$. There exists an idempotent $Q\in{\mathrm{L}}({\mathcal{H}})$ such that for every
$X\in{\mathrm{L}}({\mathcal{H}})$ with $\| X\| \le \alpha$ we have
 two sequences of unitaries $\{U_n\}_{n=1}^{\infty}$ and $\{V_n\}_{n=1}^{\infty}$ in ${\mathrm{L}}({\mathcal{H}})$ for which
$$\ast\,{\mathrm{sot}}\!\!\! \lim_{n\to+\infty} U_nQU_n^*+V_nQV_n^* = X. $$
\end{cor}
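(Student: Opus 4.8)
The plan is to deduce the result from Corollary \ref{cor2seq} by an elementary scaling, so that the only real task is to manufacture a \emph{single} idempotent whose essential numerical range is large enough. Concretely, suppose we can find an idempotent $Q\in\mathrm{L}(\mathcal{H})$ with $W_e(Q)\supset \tfrac{\alpha}{2}\mathcal{D}$. Given $X$ with $\|X\|\le \alpha$, set $Y=X/2$, so $\|Y\|\le \alpha/2$. Note that $W_e\bigl(\tfrac{2}{\alpha}Q\bigr)=\tfrac{2}{\alpha}W_e(Q)\supset\mathcal{D}$ and that $\tfrac{2}{\alpha}Y$ is a contraction. Applying Corollary \ref{cor2seq} to the operator $\tfrac{2}{\alpha}Q$ and to the contraction $\tfrac{2}{\alpha}Y$ produces unitaries $U_n,V_n$ with $\tfrac{1}{2}\bigl(U_n(\tfrac{2}{\alpha}Q)U_n^*+V_n(\tfrac{2}{\alpha}Q)V_n^*\bigr)\to \tfrac{2}{\alpha}Y$ in the $\ast$-strong topology; multiplying by $\alpha$ gives $U_nQU_n^*+V_nQV_n^*\to 2Y=X$, as desired. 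Thus everything reduces to the construction of $Q$.

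For the construction I would work with a fixed $2\times 2$ idempotent block and amplify it. Put
$$
q=\begin{pmatrix} 1& c\\ 0&0\end{pmatrix},
$$
which satisfies $q^2=q$ for every scalar $c$. Its numerical range $W(q)$ is the closed elliptical disc with foci $0$ and $1$ and minor semi-axis $|c|/2$; in particular $W(q)$ swallows arbitrarily large discs as $|c|\to\infty$, so for $|c|$ large enough $W(q)$ contains the closed disc $\tfrac{\alpha}{2}\mathcal{D}$. Fix such a $c$ and set $Q=\bigoplus_{i=1}^{\infty} q$, an idempotent on $\mathcal{H}\cong\bigoplus_{i=1}^{\infty}\bC^2$ since $Q^2=\bigoplus^{\infty} q^2=Q$. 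Because every tail of this direct sum is again $\bigoplus^{\infty} q$, the essential numerical range is unchanged by removing finitely many blocks, whence $W_e(Q)=\overline{W(q)}=W(q)\supset \tfrac{\alpha}{2}\mathcal{D}$. This $Q$ meets the requirement of the first paragraph.

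The only delicate points are the two geometric facts about numerical ranges, and these are where I expect to spend the effort. The first is the elliptical description of $W(q)$ — equivalently, a direct estimate showing that $W(q)$ contains an arbitrarily large disc centered near $1/2$, hence eventually the disc $\tfrac{\alpha}{2}\mathcal{D}$ centered at the origin — which I would verify by the usual $2\times2$ computation ($\mathrm{Tr}(q^*q)=1+|c|^2$ gives minor axis $|c|$). The second is the identity $W_e(\bigoplus^{\infty} q)=\overline{W(q)}$ for a constant block, which follows from the characterization of $W_e$ via weakly null unit vectors (testing in successive summands realizes every point of $W(q)$) together with $W_e(A)\subset\overline{W(A)}$. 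Granting these two standard lemmas, the corollary is immediate from the scaling reduction above.
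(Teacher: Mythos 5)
Your proof is correct and follows essentially the same route as the paper: you build $Q=\oplus^{\infty}q$ from a single $2\times 2$ idempotent with one large off-diagonal entry (the paper uses the transposed block $\begin{pmatrix}1&0\\a&0\end{pmatrix}$), observe via the elliptical range theorem that $W_e(Q)=W(q)$ contains a sufficiently large disc, and then rescale and invoke Corollary \ref{cor2seq}. The scaling bookkeeping and the two geometric lemmas you flag are exactly the ingredients of the paper's argument, so nothing is missing.
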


\vskip 5pt
\begin{proof}
Let $a>0$, define a two-by-two idempotent matrix
\begin{equation}\label{eqidempotent}
M_a=\begin{pmatrix} 1&0 \\ a&0 \end{pmatrix}
\end{equation}
and set $Q=\oplus^{\infty}M_a$  regarded as an operator in ${\mathrm{L}}({\mathcal{H}})$. Since the numerical range $W(\cdot)$ of
$$
\begin{pmatrix} 0&0 \\ 2&0 \end{pmatrix}
$$
is ${\mathcal{D}}$, we infer that $W(2\alpha^{-1}M_a)=W_e(2\alpha^{-1}Q)\supset{\mathcal{D}}$ for a large enough $a$. The result then
follows from Corollary \ref{cor2seq} with $A=2\alpha^{-1}Q$ and the contraction $\alpha^{-1}X$.
 \end{proof}

\vskip 5pt 
 Corollary \ref{idem2seq} does not hold for the convergence in norm.

\vskip 5pt
\begin{prop}\label{propsingle} Let $X\in{\mathrm{L}}({\mathcal{H}})$ be of the form $\lambda I +K$ for a compact operator $K$
and a scalar $\lambda\notin \{0,1,2\}$. Then $X$ is not norm limit of $U_nQU_n^*+V_nQV_n^*$ for any sequences of unitaries
$\{U_n\}_{n=1}^{\infty}$ and $\{V_n\}_{n=1}^{\infty}$ and any idempotent $Q$ in ${\mathrm{L}}({\mathcal{H}})$.
\end{prop}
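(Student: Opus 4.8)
The plan is to pass to the Calkin algebra $\mathcal{C}={\mathrm{L}}({\mathcal{H}})/\mathcal{K}$, where the compact perturbation $K$ disappears. Writing $\pi:{\mathrm{L}}({\mathcal{H}})\to\mathcal{C}$ for the quotient map, which is contractive, a norm convergence $U_nQU_n^*+V_nQV_n^*\to X=\lambda I+K$ would force $\pi(U_nQU_n^*)+\pi(V_nQV_n^*)\to\pi(X)=\lambda\cdot 1$ in $\mathcal{C}$, since $\pi(K)=0$. Setting $a_n:=\pi(U_nQU_n^*)$ and $b_n:=\pi(V_nQV_n^*)$, these are idempotents in $\mathcal{C}$ (conjugates of the idempotent $\pi(Q)$ by the unitaries $\pi(U_n),\pi(V_n)$), uniformly bounded with $\|a_n\|,\|b_n\|\le\|Q\|$. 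I would thereby reduce the whole problem to the purely algebraic claim that in a unital Banach algebra a sum of two uniformly bounded idempotents cannot converge in norm to $\lambda\cdot 1$ when $\lambda\notin\{0,1,2\}$.

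The key reduction is to replace idempotents by involutions. Put $s_n:=2a_n-1$ and $t_n:=2b_n-1$, so that $s_n^2=t_n^2=1$ (since $a_n^2=a_n$), while $a_n+b_n\to\lambda\cdot1$ becomes $s_n+t_n\to\mu\cdot1$ with $\mu:=2(\lambda-1)$. The excluded values $\lambda\in\{0,1,2\}$ correspond exactly to $\mu\in\{-2,0,2\}$, so the hypothesis reads $\mu\neq0,\pm2$. Writing $s_n+t_n=\mu\cdot1+r_n$ with $\|r_n\|\to0$, I would left-multiply by $s_n$ and right-multiply by $t_n$ to obtain $1+s_nt_n=\mu s_n+s_nr_n$ and $s_nt_n+1=\mu t_n+r_nt_n$; subtracting cancels the $s_nt_n$ terms and gives $\mu(s_n-t_n)=r_nt_n-s_nr_n$. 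Using $\mu\neq0$ together with the uniform bound $M:=2\|Q\|+1$ on $\|s_n\|,\|t_n\|$ yields $\|s_n-t_n\|\le(2M/|\mu|)\|r_n\|\to0$.

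From $s_n-t_n\to0$ and $s_n+t_n\to\mu\cdot1$ I get $2s_n\to\mu\cdot1$, that is $s_n\to(\mu/2)\cdot1$ in norm. Squaring and using norm-continuity of multiplication on the bounded sequence $\{s_n\}$ forces $(\mu/2)^2\cdot1=\lim s_n^2=1$, hence $\mu^2=4$, i.e. $\mu=\pm2$, contradicting $\mu\neq\pm2$. This contradiction shows that no such sequences of unitaries and no idempotent $Q$ can exist, proving the proposition.

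The computations are short; the main point to get right is the passage to the Calkin algebra (which is what turns $X$ into the scalar $\lambda\cdot1$) and the realization that the idempotents here are not assumed self-adjoint, so the argument must live in a general C${}^*$-algebra rather than exploit the geometry of Hilbert-space projections. The involution substitution $s=2a-1$ is exactly what makes the identity robust against the vanishing error term $r_n$, upgrading the exact fact that a sum of two idempotents equal to $\lambda\cdot1$ forces $\lambda\in\{0,1,2\}$ into a stable limiting statement.
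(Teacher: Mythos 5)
Your proof is correct, and it takes a somewhat different route from the paper's. The paper argues directly in ${\mathrm{L}}({\mathcal{H}})$: it conjugates by $U_n^*$ so that the hypothesis becomes $W_nQW_n^*-\left(-Q+\lambda I+U_n^*KU_n\right)\to 0$ with $W_n=U_n^*V_n$, invokes the elementary observation that $A_n-B_n\to 0$ for norm-bounded sequences implies $A_n^2-B_n^2\to 0$, and exploits $(W_nQW_n^*)^2=W_nQW_n^*$ to conclude that $T_n-T_n^2\to 0$ for $T_n=-Q+\lambda I+U_n^*KU_n$; expanding and using $\lambda\neq 1$ gives $Q=\frac{\lambda}{2}I+L$ with $L$ compact, and idempotency of $Q$ then forces $\lambda\in\{0,2\}$. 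You instead quotient by the compacts at the outset, replace the idempotents $a_n,b_n$ by the involutions $s_n=2a_n-1$, $t_n=2b_n-1$, and use the multiply-and-subtract identity $\mu(s_n-t_n)=r_nt_n-s_nr_n$ to show $s_n\to(\mu/2)\cdot 1$, which contradicts $s_n^2=1$ unless $\mu=\pm 2$. Both arguments rest on the same principle --- the exact fact that a sum of two idempotents equal to $\lambda\cdot 1$ forces $\lambda\in\{0,1,2\}$ is stable under norm-small perturbation --- but your implementation is more symmetric, makes transparent that the only role of the compactness of $K$ is that it dies in the Calkin algebra, and works verbatim in any quotient of a unital Banach algebra by a closed ideal. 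The paper's implementation stays in ${\mathrm{L}}({\mathcal{H}})$ and, as a by-product, yields the structural information that $Q$ would have to be a compact perturbation of $\frac{\lambda}{2}I$.
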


\vskip 5pt
\begin{proof} First observe that if $\{A_n\}_{n=1}^{\infty}$ and $\{B_n\}_{n=1}^{\infty}$ are two bounded sequences in
${\mathrm{L}}({\mathcal{H}})$ such that $A_n-B_n\to 0$ in norm, then we also have $A_n^2-B_n^2\to 0$ in norm; indeed
$$
A_n^2-B_n^2= A_n(A_n-B_n) + (A_n-B_n)B_n.
$$
Now, suppose that $\lambda\neq 1$ and that we have  the (norm) convergence,
$$
U_nQU_n^*+V_nQV_n^*\to \lambda I +K.
$$
Then we also have 
\begin{equation}\label{eqL1}
W_nQW_n^*-\left(-Q+\lambda I +U_n^*KU_n\right)\to 0
\end{equation}
where $W_n:=U_n^*V_n$. Hence,
 by the previous observation,
$$
(W_nQW_n^*)^2-\left(-Q+\lambda I +U_n^*KU_n\right)^2\to 0,
$$
that is
\begin{equation}\label{eqL2}
W_nQW_n^*-\left(-Q+\lambda I +U_n^*KU_n\right)^2\to 0.
\end{equation}
Combining \eqref{eqL1} and \eqref{eqL2} we get
$$
\left(-Q+\lambda I +U_n^*KU_n\right)-\left(-Q+\lambda I +U_n^*KU_n\right)^2\to 0
$$
hence
$$
(-2+2\lambda)Q +(\lambda -\lambda^2 )I + K_n\to 0
$$
for some bounded sequence of compact operators $K_n$. Since $\lambda\neq 1$, we have
$$
Q=\frac{\lambda}{2} I+L
$$
for some compact operator $L$. Since $Q$ is idempotent, either $\lambda=2$ or $\lambda=0$.
\end{proof}

\vskip 5pt The operator $X$ in Proposition \ref{propsingle} has the special property that $W_e(X)$ is reduced to a single point. However
Proposition \ref{propsingle} may also hold when $W_e(X)$ has positive measure.

\vskip 5pt
\begin{cor} Let $Q$ be an idempotent in ${\mathrm{L}}({\mathcal{H}})$ and $z\in\bC\setminus\{0,1,2\}$. Then, there exists $\alpha >0$
such that the following property holds:
\begin{itemize}
\item[]
If $X\in{\mathrm{L}}({\mathcal{H}})$ satisfies $\| X-zI\| \le \alpha$, then $X$ is not norm limit of $U_nQU_n^*+V_nQV_n^*$ for any
  sequences of unitaries $\{U_n\}_{n=1}^{\infty}$ and $\{V_n\}_{n=1}^{\infty}$ in ${\mathrm{L}}({\mathcal{H}})$.
\end{itemize}
\end{cor}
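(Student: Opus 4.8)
The plan is to adapt the purely algebraic argument used for Proposition \ref{propsingle}, replacing its concluding step (passage to the Calkin algebra, which is available only when $X-zI$ is compact) by a quantitative norm estimate. Arguing by contradiction, suppose that $\|X-zI\|\le\alpha$ and that $U_nQU_n^*+V_nQV_n^*\to X$ in norm for some sequences of unitaries $\{U_n\},\{V_n\}$. Put $W_n:=U_n^*V_n$ and conjugate by $U_n^*$, which preserves the norm; this gives $\|Q+W_nQW_n^*-Y_n\|\to 0$, where $Y_n:=U_n^*XU_n$ still satisfies $\|Y_n-zI\|\le\alpha$. Writing $P_n:=W_nQW_n^*$ (an idempotent) and $R_n:=Y_n-Q$, the relation reads $\|P_n-R_n\|\to 0$.

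First I would reuse the squaring observation from the proof of Proposition \ref{propsingle}: the sequences $P_n,R_n$ are uniformly bounded and $P_n-R_n\to 0$, so $P_n^2-R_n^2\to 0$; since $P_n^2=P_n$, subtracting yields $R_n-R_n^2\to 0$. The heart of the argument is then to expand $R_n-R_n^2$ with $R_n=(zI-Q)+E_n$, where $E_n:=Y_n-zI$ obeys $\|E_n\|\le\alpha$. A direct computation gives
\[
R_n-R_n^2=(z-z^2)I+(2z-2)Q+G_n,
\]
where $G_n:=E_n-(zI-Q)E_n-E_n(zI-Q)-E_n^2$ gathers every term containing $E_n$, whence $\|G_n\|\le\alpha(1+2M+\alpha)$ with $M:=|z|+\|Q\|$.

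Since $z\ne 1$ I may divide by $2z-2$, and using $(z-z^2)/(2-2z)=z/2$ together with $R_n-R_n^2\to 0$, the identity above gives, for every $n$,
\[
\Bigl\|Q-\tfrac{z}{2}I\Bigr\|\le\frac{\|G_n\|+\|R_n-R_n^2\|}{2|z-1|}.
\]
Letting $n\to\infty$ produces $\|Q-\frac{z}{2}I\|\le\alpha(1+2M+\alpha)/(2|z-1|)$, whose right-hand side depends only on $Q$ and $z$. The decisive point is that $\frac{z}{2}\notin\{0,1\}$ because $z\notin\{0,2\}$, so the scalar operator $\frac{z}{2}I$ is not idempotent and hence $\delta:=\|Q-\frac{z}{2}I\|>0$ is a fixed positive number. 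Choosing $\alpha=\alpha(Q,z)$ small enough that $\alpha(1+2M+\alpha)/(2|z-1|)<\delta$ forces $\delta\le\alpha(1+2M+\alpha)/(2|z-1|)<\delta$, a contradiction, which proves the claim.

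I expect the main obstacle to be the bookkeeping in the middle paragraph: one must cleanly isolate all $E_n$-dependence into the single remainder $G_n$ and control it uniformly in $n$, so that the final threshold $\alpha$ can be pinned down using only the fixed data $Q$ and $z$. It is also worth noting that the two exclusions play distinct roles, $z\ne 1$ being needed to invert $2z-2$ and $z\notin\{0,2\}$ to guarantee $\delta>0$; together they account for the hypothesis $z\notin\{0,1,2\}$. Notice moreover that, in contrast with the earlier corollaries of this section, no condition on $W_e(Q)$ enters, since this is a nonapproximation statement.
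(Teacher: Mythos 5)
Your proof is correct. It differs from the paper's, which is a two-line soft reduction: negating the corollary and letting $\alpha=1/k$ produces operators $X_k\to zI$ each lying in the norm closure of $\{UQU^*+VQV^*\}$, so $zI$ itself is a norm limit of such sums, contradicting Proposition \ref{propsingle} applied with $K=0$. You instead rerun the algebraic skeleton of that proposition's proof (conjugation by $U_n^*$, the squaring trick $P_n^2-R_n^2=P_n(P_n-R_n)+(P_n-R_n)R_n$, and the identity $(z-z^2)I+(2z-2)Q=(2z-2)\bigl(Q-\tfrac{z}{2}I\bigr)$) while carrying the perturbation $E_n=Y_n-zI$ through as an explicit remainder $G_n$ with $\|G_n\|\le\alpha(1+2M+\alpha)$. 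Your computations check out: $\|Y_n-zI\|=\|X-zI\|\le\alpha$ is preserved under the unitary conjugation, the bound $M=|z|+\|Q\|\ge\|zI-Q\|$ is valid, and $\delta=\|Q-\tfrac{z}{2}I\|>0$ precisely because a scalar idempotent must be $0$ or $I$, which is where $z\notin\{0,2\}$ enters, while $z\ne 1$ lets you divide by $2z-2$. What your route buys is an explicit admissible threshold $\alpha$ depending only on $\|Q\|$ and $z$, and independence from Proposition \ref{propsingle} as a black box; what it costs is the bookkeeping, which the paper avoids entirely by exploiting that the set of norm limits is closed.
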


\vskip 5pt
\begin{proof} By the contrary, $zI$ would be a norm limit of $U_nQU_n^*+V_nQV_n^*$ for some unitaries $U_n,V_n$, contradicting
Proposition \ref{propsingle}.
\end{proof}

\vskip 5pt More operators with large numerical and essential numerical ranges are given in the next proposition. An operator $X$ is
stable when its real part $(X+X^*)/2$ is negative definite (invertible).

\vskip 5pt
\begin{prop}\label{prop2} If $X\in{\mathrm{L}}({\mathcal{H}})$ is stable, then $X$ is not norm limit of $U_nQU_n^*+V_nQV_n^*$ for any
sequences of unitaries $\{U_n\}_{n=1}^{\infty}$ and $\{V_n\}_{n=1}^{\infty}$ and any idempotent $Q$ in ${\mathrm{L}}({\mathcal{H}})$.
\end{prop}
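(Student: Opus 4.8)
The plan is to argue by contradiction and exploit a hidden symmetry of the spectrum of a sum of two idempotents. Suppose that $U_nQU_n^*+V_nQV_n^*\to X$ in norm with $X$ stable, and set $P_n:=U_nQU_n^*$ and $R_n:=V_nQV_n^*$, so that $P_n,R_n$ are idempotents (conjugation by a unitary preserves idempotency) and $G_n:=P_n+R_n\to X$. Stability of $X$ means $\mathrm{Re}(X)\le-\delta I$ for some $\delta>0$; since $\sigma(X)\subset\overline{W(X)}\subset\{z:\mathrm{Re}\,z\le-\delta\}$, the spectrum of $X$ lies strictly in the open left half-plane. The aim is to show that each $G_n$ must carry a spectral value with real part $\ge 1$, a feature that cannot be squeezed into the left half-plane once $G_n$ is close to $X$.

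The heart of the matter is a spectral symmetry: for any two idempotents $P,R$, the spectrum of $P+R$ is symmetric about the point $1$. To obtain this I would set $A:=P+R-I$ and $S:=P-R$ and record the two elementary identities $AS+SA=0$ and $A^2+S^2=I$, both of which follow at once from $P^2=P$ and $R^2=R$. The anticommutation relation is the crucial one. Given $\lambda\in\sigma_{\mathrm{ap}}(A)$ with approximate eigenvectors $h_k$, one computes $(A+\lambda)(Sh_k)=-S(A-\lambda)h_k\to 0$; hence either $\|Sh_k\|$ stays bounded away from $0$ along a subsequence, so that $-\lambda\in\sigma_{\mathrm{ap}}(A)$, or $\|Sh_k\|\to 0$, in which case the identity $A^2+S^2=I$ forces $\lambda^2=1$. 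Thus any $\lambda\in\sigma_{\mathrm{ap}}(A)$ with $\lambda\ne\pm1$ is matched by $-\lambda\in\sigma_{\mathrm{ap}}(A)$, a symmetry about $0$ which, transported back to $G=P+R$, pairs each spectral value $\mu\ne 0,2$ with $2-\mu$.

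With this dichotomy the conclusion is quick. Because $G_n\to X$ in norm, upper semicontinuity of the spectrum gives $\sigma(G_n)\subset\{z:\mathrm{Re}\,z<-\delta/2\}$ for all large $n$, so every point of $\sigma(A_n)=\sigma(G_n)-I$ has real part $<-1-\delta/2$ and in particular differs from $\pm1$. Choosing a boundary point $\lambda_n\in\partial\sigma(A_n)\subset\sigma_{\mathrm{ap}}(A_n)$, the dichotomy applies and yields $-\lambda_n\in\sigma(A_n)$ with $\mathrm{Re}(-\lambda_n)>1$, contradicting $\sigma(A_n)\subset\{\mathrm{Re}<-1\}$. (Equivalently, $G_n$ is invertible for large $n$, which is the conceptual reason the exceptional values $\pm1$ never obstruct the argument.)

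The main obstacle is precisely the production of the symmetric spectral value when $S=P-R$ fails to be invertible: one cannot then simply conjugate $A$ by $S$ to get $SAS^{-1}=-A$ and read off $\sigma(A)=-\sigma(A)$. The approximate-point-spectrum dichotomy above is the device I would use to bypass this, with $A^2+S^2=I$ ruling out the degenerate case except at $\lambda=\pm1$ and the invertibility of the limit $X$ excluding those two values for large $n$. Everything else — the reduction to idempotents, the implication $\mathrm{Re}(X)\le-\delta I\Rightarrow\sigma(X)$ in the open left half-plane, and the upper semicontinuity of the spectrum under norm convergence — is routine.
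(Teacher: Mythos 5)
Your proof is correct, but it takes a genuinely different route from the paper's. The paper argues through the canonical form of a single idempotent, $Q\simeq P\oplus\begin{pmatrix} I&0\\ R&0\end{pmatrix}$ with $R$ positive and nonsingular, reads off from it that $\|(Q+Q^*)_+\|\ge\|(Q+Q^*)_-\|$, and concludes that $Y+Y^*$ cannot be negative definite for any norm limit $Y$ of $U_nQU_n^*+V_nQV_n^*$: the top of the spectrum of one summand's real part dominates the bottom of the other's precisely because both summands are conjugates of the \emph{same} $Q$. You replace this numerical-range obstruction by a purely spectral one: the identities $AS+SA=0$ and $A^2+S^2=I$ for $A=P+R-I$, $S=P-R$, the resulting symmetry of $\sigma_{\mathrm{ap}}(P+R)$ about $1$ away from $\{0,2\}$, and upper semicontinuity of the spectrum under norm convergence. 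I checked the details (the two algebraic identities, the approximate-eigenvector dichotomy, $\partial\sigma\subset\sigma_{\mathrm{ap}}$, and the semicontinuity step) and they are all sound. Your route in fact proves something more general: any $X$ whose spectrum lies in the open left half-plane --- in particular any stable $X$ --- is not a norm limit of sums $P_n+R_n$ of \emph{arbitrary} pairs of idempotents, with no requirement that the two summands belong to one unitary orbit, a hypothesis the paper's comparison of positive and negative parts genuinely needs. What the paper's approach buys in exchange is an operator-level (rather than spectral) obstruction on the limit, and the structural description \eqref{purelyns2} of $Q$, which is reused later in the proof of Corollary \ref{corLW}.
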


\vskip 5pt
\begin{proof} We have a decomposition ${\mathcal{H}}={\mathcal{H}}_s\oplus {\mathcal{H}}_{ns}$ in two invariant subspaces of $Q$ such
that $Q$ acts on ${\mathcal{H}}_s$ as a selfadjoint projection $P$, and $Q$ acts on ${\mathcal{H}}_{ns}$
as a purely nonselfadjoint idempotent, that is $A_{{\mathcal{H}}_{ns}}$ is unitarily equivalent to an operator on
${\mathcal{F}}\oplus{\mathcal{F}}$ of the form
\begin{equation}\label{purelyns}
Q_{{\mathcal{H}}_{ns}}\simeq\begin{pmatrix} I& 0 \\ R&0\end{pmatrix}
\end{equation}
where $R$ is a nonsingular (i.e., a zero kernel) positive operator  on a Hilbert space ${\mathcal{F}}$, so 
\begin{equation}\label{purelyns2}
Q\simeq P\oplus \begin{pmatrix} I& 0 \\ R&0\end{pmatrix}.
\end{equation}

Let $Y$ be  a norm limit of the sum of two sequences in the unitary orbit of $Q$. 
If the purely non-selfadjoint part ${\mathcal{H}}_{ns}$ is vacuous, then $Y$ is positive, hence $Y\neq X$. If ${\mathcal{H}}_{ns}$ is
not vacuous, \eqref{purelyns2} shows that
\begin{align*}
Q+Q^*&\simeq  2P\oplus \begin{pmatrix} 2I& R \\ R&0\end{pmatrix} \\
&\simeq  2P\oplus\left\{ \begin{pmatrix} I& I \\ I&I\end{pmatrix}
+  \begin{pmatrix} R& 0 \\ 0&-R\end{pmatrix}\right\}.
\end{align*} 
This implies that $\| (Q+Q^*)_+\| \ge \| (Q+Q^*)_-\|$, therefore $Y+Y^*$ cannot be negative definite, hence $X\neq Y$.
\end{proof} 
 
\vskip 5pt 
 It is known \cite{PT} that any operator is the sum of five idempotents.
We close this section by asking whether Corollorary \ref{idem2seq} admits a substitute for Banach space operators.

\vskip 5pt
\begin{question} Let ${\mathcal{ X}}$ be a separable Banach space and $T\in{\mathrm{L}}({\mathcal{ X}})$, the linear operators on
${\mathcal{ X}}$. Do there exist two sequences $\{P_n\}_{n=1}^{\infty}$ and $\{Q_n\}_{n=1}^{\infty}$ of idempotents in
${\mathrm{L}}({\mathcal{ X}})$ such that $T={\mathrm{sot}} \lim_{n\to+\infty} (P_n +Q_n)$ ?
\end{question}

\section{Conditional expectation onto a masa}

\subsection{Conditional expectation of general operators}

Kennedy and Skoufranis have studied the following problem: Let ${\mathfrak{X}}$ be a maximal abelian $\ast$-subalgebra (masa) of a
von Neumann algebra ${\mathfrak{M}}$, with corresponding expectation $\bE_{\mathfrak{X}}: {\mathfrak{M}}\to {\mathfrak{X}}$ (i.e., a
unital positive linear map such that $\bE_{\mathfrak{X}}(XM)= X\bE_{\mathfrak{X}}(M)$ for all $X\in{\mathfrak{X}}$ and
$M\in{\mathfrak{M}}$). Given a normal operator $A\in{\frak{M}}$, determine the image by $\bE_{\frak{X}}$ of the unitary orbit of $A$,
$$
\Delta_{\mathfrak{X}}(A)=\{\, \bE_{\mathfrak{X}}(UAU^*)\  : \ U \, {\mathrm{a\ unitary\ in}}\ {\mathfrak{M}}\, \}.
$$
In several cases, they determined the norm closure of $\Delta_{\frak{X}}(A)$. In particular, \cite[Theorem 1.2]{KS} can be stated in
the following two propositions.

\vskip 5pt
\begin{prop}\label{propKS} Let ${\mathfrak{X}}$ be a masa in ${\mathrm{L}}({\mathcal{H}})$, $X\in {\mathfrak{X}}$, and $A$ a normal
operator in $ {\mathrm{L}}({\mathcal{H}})$. If $\sigma(X)\subset {\mathrm{conv}}\sigma_e(A)$, then $X$ lies in the norm closure of
$\Delta_{\frak{X}}(A)$.
\end{prop}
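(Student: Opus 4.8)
The plan is to deduce the result from the normal pinching theorem (Theorem \ref{pinchingnormal}), using the classical identity $W_e(A)=\mathrm{conv}\,\sigma_e(A)$ valid for normal $A$, which converts the hypothesis $\sigma(X)\subset\mathrm{conv}\,\sigma_e(A)$ into a statement about the essential numerical range. Fix $\eps>0$; it suffices to produce a unitary $U$ with $\|\bE_{\mathfrak{X}}(UAU^*)-X\|<\eps$. Since $X$ lies in the masa $\mathfrak{X}$, so do its spectral projections. Partitioning $\sigma(X)$ into finitely many Borel sets $B_1,\dots,B_N$ of diameter $<\eps$, setting $E_j=\mathbf 1_{B_j}(X)$ and $X'=\sum_{j=1}^N z_jE_j$ for suitable $z_j\in B_j$, we obtain mutually orthogonal projections $E_j\in\mathfrak{X}$ summing to $I$ with $\|X-X'\|<\eps$. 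When $\mathrm{conv}\,\sigma_e(A)$ has nonempty interior I would further move each $z_j$ to a nearby interior point, so that $\{z_1,\dots,z_N\}\subset_{st}W_e(A)$ (one radius $r=\min_j\mathrm{dist}(z_j,\partial W_e(A))>0$ works), at the cost of slightly enlarging $\eps$.

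The heart of the argument is to realize $X'$ exactly as a conditional expectation. Because each $z_j$ is interior to $W_e(A)$, the scalar operators $z_jI$ are normal with numerical range the single point $\{z_j\}\subset_{st}W_e(A)$, hence eligible for Theorem \ref{pinchingnormal}. I would apply the pinching theorem to a sequence $\{X_i\}$ taking each value $z_j$ on infinitely many indices $i$ (via a map $c$ with $X_i=z_{c(i)}I$), chosen so that the pieces carrying the value $z_j$ assemble into a subspace $\mathcal L_j$ with $\dim\mathcal L_j=\mathrm{rank}\,E_j$ and $\bigoplus_j\mathcal L_j=\mathcal H$. Since $A_{\mathcal H_i}\simeq z_{c(i)}I$ forces $A_{\mathcal H_i}=z_{c(i)}I_{\mathcal H_i}$ (a scalar is unchanged by conjugation), the compression of $A$ to each $\mathcal L_j$ is exactly $z_jI$. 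Matching these dimensions is the fiddly point: a finite-rank $E_j$ (an atom of the masa) is handled by taking a finite-dimensional subspace of an infinite-dimensional pinched piece, on which the compression of $z_jI$ is still exactly $z_jI$, the leftover dimension being absorbed because $W_e(A)$ is unchanged under compression to a co-finite-dimensional subspace.

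Let $U$ be a unitary carrying $\mathcal L_j$ onto $\mathrm{ran}\,E_j$ and put $B=UAU^*$, so $E_jBE_j=z_jE_j$ for every $j$. Now I would exploit that $\bE_{\mathfrak{X}}$ is a contractive $\mathfrak{X}$-bimodule projection: for $i\neq j$ one has $\bE_{\mathfrak{X}}(E_iBE_j)=E_i\bE_{\mathfrak{X}}(B)E_j=\bE_{\mathfrak{X}}(B)E_iE_j=0$, since $\bE_{\mathfrak{X}}(B)\in\mathfrak{X}$ commutes with the $E_k\in\mathfrak{X}$, so the off-diagonal blocks drop out, while $\bE_{\mathfrak{X}}(E_jBE_j)=\bE_{\mathfrak{X}}(z_jE_j)=z_jE_j$. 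Summing the finitely many blocks yields $\bE_{\mathfrak{X}}(B)=\sum_j z_jE_j=X'$, whence $\|\bE_{\mathfrak{X}}(UAU^*)-X\|=\|X'-X\|<\eps$ and $X\in\overline{\Delta_{\mathfrak{X}}(A)}$.

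The main obstacle I anticipate is the degenerate geometry of $\mathrm{conv}\,\sigma_e(A)$ combined with the strict inclusion $\subset_{st}$ required by the pinching theorem, which the hypothesis supplies only as a closed inclusion. When $\mathrm{conv}\,\sigma_e(A)$ is a segment I would rotate and translate $A$ so that its essential numerical range becomes a real interval (its imaginary part then being compact) and invoke the self-adjoint pinching Corollary \ref{pinchingself}, whose strict inclusion is taken in $\bR$, with interior points of the open interval playing the role of the $z_j$. When $\mathrm{conv}\,\sigma_e(A)$ reduces to a single point $w$, necessarily $X=wI$ and $A=wI+K$ with $K$ compact normal, and the claim becomes that $\bE_{\mathfrak{X}}(UKU^*)$ can be made arbitrarily small in norm by a unitary conjugation spreading the mass of $K$ off the masa, a separate and more hands-on estimate. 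Verifying the dimension bookkeeping and these boundary cases is where the real work sits; the generic case, $\mathrm{int}\,\mathrm{conv}\,\sigma_e(A)\neq\emptyset$ with all $E_j$ of infinite rank, is immediate from the three ingredients above.
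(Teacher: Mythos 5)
Your overall strategy (discretize $X$ inside the masa, push the values $z_j$ into the interior of $W_e(A)=\mathrm{conv}\,\sigma_e(A)$, realize the step operator exactly via the pinching theorem, then use the bimodule property of $\bE_{\mathfrak{X}}$) is reasonable, but the central step as written has a genuine gap. Theorem \ref{pinchingnormal} prescribes only the \emph{diagonal} blocks $A_{\mathcal{H}_i}=z_{c(i)}I$ of the decomposition $\mathcal{H}=\bigoplus_i\mathcal{H}_i$; it says nothing about the off-diagonal blocks between $\mathcal{H}_i$ and $\mathcal{H}_{i'}$. Hence the compression of $A$ to $\mathcal{L}_j=\bigoplus_{c(i)=j}\mathcal{H}_i$ is a block operator with scalar diagonal and \emph{unknown} off-diagonal entries, not $z_jI_{\mathcal{L}_j}$, and the identity $E_jBE_j=z_jE_j$ on which your computation of $\bE_{\mathfrak{X}}(B)$ rests does not follow. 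The gap is repairable with tools you already invoke: split each infinite-rank $E_j$ into infinitely many infinite-rank projections $Q_i$ lying \emph{in the masa}, match each $\mathcal{H}_i$ to the range of a single $Q_i$, and let the bimodule property kill the cross terms $\bE_{\mathfrak{X}}(Q_iBQ_{i'})=0$ ($i\neq i'$) at the level of the $Q_i$ rather than of the $E_j$. A second unfinished point is the degenerate geometry: you explicitly leave the case $\mathrm{conv}\,\sigma_e(A)=\{w\}$ as ``a separate and more hands-on estimate,'' and in the segment case you invoke Corollary \ref{pinchingself}, which requires $A$ itself to be self-adjoint, not merely to have real essential numerical range.

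The paper's route sidesteps both difficulties. It first proves the exact statement under the strict hypothesis $W(X)\subset_{st}W_e(A)$ (Corollary \ref{corKS}) by feeding into Theorem \ref{pinchingnormal} the compressions $X_{\mathcal{H}_i}$ of $X$ \emph{itself} to the ranges of infinitely many infinite-rank projections $P_i\in\mathfrak{X}$ summing to $I$: no discretization is needed, so there is no grouping and no cross-term problem, and $\bE_{\mathfrak{X}}(UAU^*)=\sum_iP_i(UAU^*)P_i=\bigoplus_iX_{\mathcal{H}_i}=X$ exactly. It then deduces Proposition \ref{propKS} by a single perturbation: $W_e(A)\subset_{st}W_e(A+D)$ for some normal $D$ of arbitrarily small norm, so $X=\bE_{\mathfrak{X}}(U(A+D)U^*)$ and $\|\bE_{\mathfrak{X}}(UAU^*)-X\|\le\|D\|$. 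That one perturbation simultaneously supplies the strict inclusion required by the pinching theorem and disposes of all your degenerate cases at once.
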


\vskip 5pt
\begin{prop}\label{propKS2} Let ${\mathfrak{X}}$ be a continuous masa in ${\mathrm{L}}({\mathcal{H}})$, $X\in {\mathfrak{X}}$, and
$A$ a normal operator in $ {\mathrm{L}}({\mathcal{H}})$. If $X$ lies in the norm closure of $\Delta_{\frak{X}}(A)$, then
$\sigma(X)\subset {\mathrm{conv}}\sigma_e(A)$.
\end{prop}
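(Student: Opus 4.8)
The plan is to prove the slightly stronger fact that \emph{each} element $\bE_{\mathfrak{X}}(UAU^*)$ of the orbit already has spectrum inside $\mathrm{conv}\,\sigma_e(A)=W_e(A)$ (the two sets coincide because $A$ is normal), and then to pass to the norm limit. The whole argument rests on a single lemma, which is the only place where the \emph{continuity} of the masa is used: the conditional expectation onto a continuous masa annihilates the compact operators, $\bE_{\mathfrak{X}}(\mathcal{K})=\{0\}$.

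To prove this lemma I would combine the bimodule property of $\bE_{\mathfrak{X}}$ with an averaging trick. Identifying $\mathfrak{X}\cong L^\infty$ over a non-atomic space, I would pick the Rademacher unitaries $w_n=M_{r_n}\in\mathfrak{X}$. For any $T$ the bimodule identity gives $\bE_{\mathfrak{X}}(w_nTw_n^*)=w_n\bE_{\mathfrak{X}}(T)w_n^*=\bE_{\mathfrak{X}}(T)$, since $w_n$ and $\bE_{\mathfrak{X}}(T)$ both lie in the abelian algebra $\mathfrak{X}$ and $w_nw_n^*=I$. Hence $\bE_{\mathfrak{X}}(T)=\bE_{\mathfrak{X}}\big(\tfrac1N\sum_{n=1}^N w_nTw_n^*\big)$ for every $N$. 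For a unit vector $\xi$ and $T=\xi\otimes\xi$ the averaged operator $\tfrac1N\sum_n (w_n\xi)\otimes(w_n\xi)$ is positive of unit trace, and a Hilbert--Schmidt estimate (the diagonal terms contribute $1/N$ while the off-diagonal terms are controlled by the decay of the Walsh--Fourier coefficients of $|\xi|^2$) shows that its operator norm tends to $0$. As $\bE_{\mathfrak{X}}$ is contractive, $\bE_{\mathfrak{X}}(\xi\otimes\xi)=0$; by linearity and continuity this extends to all finite-rank, hence all compact, operators. I expect this lemma to be the main obstacle, both because it is the true heart of the matter and because the spreading estimate must be carried out with some care (or invoked from the literature).

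Granting the lemma, I would fix $B=UAU^*$ and $C=\bE_{\mathfrak{X}}(B)$, noting that $C$ is normal (it lies in the abelian algebra $\mathfrak{X}$) and that $\sigma_e(B)=\sigma_e(A)$, $W_e(B)=W_e(A)$. The goal $\overline{W(C)}\subseteq W_e(A)$ would be reached by comparing support functions: for each $\theta$ the self-adjoint operator $S=\mathrm{Re}(e^{-i\theta}B)$ satisfies $\mathrm{Re}(e^{-i\theta}C)=\bE_{\mathfrak{X}}(S)$, and writing $S\le \big(\max\sigma_e(S)\big)I+F$ with $F\ge 0$ finite rank, the lemma yields $\bE_{\mathfrak{X}}(S)\le\big(\max\sigma_e(S)\big)I$, so $\max\sigma(\bE_{\mathfrak{X}}(S))\le\max\sigma_e(S)$. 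Since $\max\sigma_e(S)=\max\mathrm{Re}\,(e^{-i\theta}W_e(A))$ because $A$ is normal, this is exactly the desired comparison of support functions, and therefore $\sigma(C)\subseteq\overline{W(C)}\subseteq W_e(A)$.

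Finally I would pass to the limit. By hypothesis $X=\lim_n C_n$ in norm with $C_n=\bE_{\mathfrak{X}}(U_nAU_n^*)$, and each $C_n$ is normal with $\sigma(C_n)\subseteq W_e(A)$. If $\lambda\notin W_e(A)$, then $d=\mathrm{dist}(\lambda,W_e(A))>0$ and, by normality, $\|(C_n-\lambda)^{-1}\|=1/\mathrm{dist}(\lambda,\sigma(C_n))\le 1/d$ for all $n$; a norm limit of invertibles with uniformly bounded inverses is invertible, so $X-\lambda$ is invertible and $\lambda\notin\sigma(X)$. Hence $\sigma(X)\subseteq W_e(A)=\mathrm{conv}\,\sigma_e(A)$, as claimed. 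It is worth noting that, apart from the lemma, continuity of the masa is never invoked, which is consistent with the fact that the converse inclusion (Proposition \ref{propKS}) holds for arbitrary masas.
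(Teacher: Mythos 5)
Your proof is correct, but it reaches the conclusion by a somewhat different route than the paper. The paper does not argue directly from support functions: it subsumes Proposition \ref{propKS2} into the more general Lemma \ref{lemred}, which asserts $W(\bE_{\mathfrak{X}}(Z))\subset\overline{W}(Z)$ and $W_e(\bE_{\mathfrak{X}}(Z))\subset W_e(Z)$ for an \emph{arbitrary} masa and an \emph{arbitrary} $Z$; the numerical-range inclusion is proved for normal $Z$ by realizing $f\mapsto\langle h,\bE_{\mathfrak{X}}(f(Z))h\rangle$ as a probability (Radon) measure on $\sigma(Z)$, extended to general $Z$ via Halmos's intersection formula over normal dilations, and the essential version is then obtained by splitting the masa into discrete and continuous parts and invoking the same fact you isolate, namely that a conditional expectation onto a continuous masa annihilates compact operators. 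Both arguments therefore hinge on that one fact; the paper dismisses it as ``simple folklore'' whereas you supply a Rademacher-averaging proof of it, which is a genuine addition (and your reduction of everything else to positivity, unitality and the bimodule property is clean). Two small points. First, in your support-function step the positive part $(S-\max\sigma_e(S)I)_+$ need not be finite rank --- eigenvalues of $S$ above $\max\sigma_e(S)$ may accumulate at that value from above --- it is only compact; this is harmless since your lemma kills all compacts, but the word ``finite rank'' should be ``compact.'' Second, your direct argument is tailored to normal $A$ and to continuous masas, which suffices here, while the paper's Lemma \ref{lemred} is stated in the generality it needs elsewhere (e.g.\ in the proof of Corollary \ref{corLW}); so the trade-off is self-containedness versus reusability.
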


\vskip 5pt
Since we deal with normal operators, $\sigma(X)\subset {\mathrm{conv}}\sigma_e(A)$ means $W(X)\subset W_e(A)$. 
Proposition \ref{propKS2} needs the continuous assumption. It is a rather simple fact; we generalize it in Lemma \ref{lemred}: {\it
Conditional expectations reduce essential numerical ranges, $W_e(\bE_{\frak{X}}(T))\subset W_e (T)$ for all $T\in
{\mathrm{L}}({\mathcal{H}})$}. Thus, the main point of \cite[Theorem 1.2]{KS} is Proposition \ref{propKS} which says that if
$W(X)\subset W_e(A)$ then $X$ can be approximated by operators of the form $\bE_{\frak{X}}(UAU^*)$ with unitaries $U$. With the
slightly stronger assumption $W(X)\subset_{st} W_e(A)$, Theorem \ref{pinchingnormal} guarantees, via the following corollary, that
$X$ is exactly of this form. Furthermore the normality assumption on $A$ is not necessary.

\vskip 10pt
\begin{cor}\label{corKS} Let ${\mathfrak{X}}$ be a masa in ${\mathrm{L}}({\mathcal{H}})$, $X\in {\mathfrak{X}}$ and $A\in
{\mathrm{L}}({\mathcal{H}})$.
If $W(X)\subset_{st} W_e(A)$, then $X=\bE_{\mathfrak{X}}(UAU^*)$ for some unitary operator $U\in{\mathrm{L}}({\mathcal{H}})$.
\end{cor}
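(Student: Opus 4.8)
The plan is to reduce the statement to the normal pinching theorem (Theorem \ref{pinchingnormal}) by slicing the masa $\mathfrak{X}$ into infinitely many infinite-rank pieces and realizing, on each piece, the operator $A$ as the matching compression of $X$. Since $X$ lies in the abelian algebra $\mathfrak{X}$ it is automatically normal, and the key conceptual point is that a conditional expectation onto $\mathfrak{X}$ is block-diagonal with respect to any family of projections taken from $\mathfrak{X}$; so if we can arrange a unitary conjugate $UAU^*$ whose diagonal blocks (relative to such a family) already coincide with those of $X$, then $\bE_{\mathfrak{X}}$ will return $X$ exactly. Note that $A$ itself is never assumed normal, which is fine because the normality hypothesis in Theorem \ref{pinchingnormal} falls on the target operators, not on $A$.

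First I would choose a countable family of mutually orthogonal projections $\{P_i\}_{i=1}^{\infty}$ in $\mathfrak{X}$, each of infinite rank, with $\sum_i P_i = I$; this is possible because $\mathfrak{X}$ is a masa on an infinite-dimensional separable space (group the atoms into infinitely many infinite blocks, and/or split the continuous part of the masa). Put $\mathcal{K}_i = P_i\mathcal{H}$ and let $N_i = XP_i$, regarded as a normal operator on $\mathcal{K}_i$; then $N_i \in \mathfrak{X}$ and $W(N_i) \subset W(X)$, so $\cup_i W(N_i) \subset W(X) \subset_{st} W_e(A)$. Theorem \ref{pinchingnormal} then yields a decomposition $\mathcal{H} = \bigoplus_i \mathcal{L}_i$ with $A_{\mathcal{L}_i} \simeq N_i$ (each $\mathcal{L}_i$ infinite-dimensional). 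Picking unitaries $W_i : \mathcal{L}_i \to \mathcal{K}_i$ with $W_i A_{\mathcal{L}_i} W_i^* = N_i$ and assembling them into a single unitary $U$ on $\mathcal{H}$ via $U|_{\mathcal{L}_i} = W_i$, set $B = UAU^*$. A short computation using $P_i U = U P_{\mathcal{L}_i}$ gives $P_i B P_i = U A_{\mathcal{L}_i} U^* = N_i$.

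It then remains to verify $\bE_{\mathfrak{X}}(B) = X$. Using the $\mathfrak{X}$-bimodule property of the conditional expectation together with $N_i \in \mathfrak{X}$, one obtains for each $i$ that $P_i \bE_{\mathfrak{X}}(B) P_i = \bE_{\mathfrak{X}}(P_i B P_i) = \bE_{\mathfrak{X}}(N_i) = N_i = P_i X P_i$. Hence $D := \bE_{\mathfrak{X}}(B) - X$ lies in $\mathfrak{X}$, commutes with every $P_i$, and satisfies $P_i D P_i = 0$; commutation turns this into $D P_i = 0$ for all $i$, and summing over $i$ against $\sum_i P_i = I$ gives $D = 0$, i.e. $X = \bE_{\mathfrak{X}}(UAU^*)$. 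I expect the only genuinely delicate point to be the first step: producing the infinite-rank slicing $\{P_i\}$ of the masa and confirming that the compressions $N_i$ satisfy the strict-inclusion hypothesis of Theorem \ref{pinchingnormal}. Once the slicing is set up, everything else is the bimodule bookkeeping above, and the slightly stronger assumption $W(X)\subset_{st}W_e(A)$ (rather than $W(X)\subset W_e(A)$) is exactly what upgrades the norm-approximation of Proposition \ref{propKS} to an exact equality.
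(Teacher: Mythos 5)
Your proposal is correct and follows essentially the same route as the paper: slice $\mathcal{H}$ by infinitely many infinite-rank projections from $\mathfrak{X}$, apply Theorem \ref{pinchingnormal} to the (automatically normal) compressions of $X$, and then observe that the conditional expectation of the resulting unitary conjugate of $A$ reproduces exactly the block diagonal $\bigoplus_i X_{\mathcal{K}_i}=X$. The only cosmetic difference is that the paper packages your final bimodule computation as a stated ``simple fact'' that $\bE_{\mathfrak{X}}(Z)=\sum_i P_iZP_i$ whenever each $P_iZP_i\in\mathfrak{X}$, which is what your commutation argument proves.
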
 

\vskip 5pt
\begin{proof} First, we note a simple fact: Let $\{P_i\}_{i=1}^{\infty}$ be a sequence of orthogonal projections in ${\mathfrak{X}}$
such that $\sum_{i=1}^{\infty}P_i=I$, and let $Z\in {\mathrm{L}}({\mathcal{H}})$ such that $P_iZP_i\in {\mathfrak{X}}$ for all $i$.
Then, we have a strong sum
$$\bE_{\mathfrak{X}}(Z)=\sum_{i=1}^{\infty} P_iZP_i.$$

Now, denote by ${\mathcal{H}}_i$ the range of $P_i$ and assume $\dim{\mathcal{H}}_i=\infty$ for all $i$. We have
$W(X_{{\mathcal{H}}_i})\subset W(X)$, hence $$\cup_{i=1}^{\infty}W(X_{{\mathcal{H}}_i})\subset_{st} W_e(A).$$
We may then apply Theorem \ref{pinchingnormal} and get a unitary $U$ on ${\mathcal{H}}=\bigoplus_{i=1}^{\infty}{\mathcal{H}}_i$ such
that
$$
A\simeq UAU^* =
\begin{pmatrix} 
X_{{\mathcal{H}}_1} &\ast & \cdots &\cdots  \\ 

\ast &X_{{\mathcal{H}}_2} &\ast &\cdots  \\ 

 \vdots &\ast  & \ddots &\ddots \\ 

\vdots  &\vdots  &\ddots  &\ddots \\ 
\end{pmatrix}.
$$
Since $0\oplus\cdots\oplus X_{{\mathcal{H}}_i}\oplus0\cdots \in {\mathfrak{X}}$ for all $i$, the previous simple fact shows that
$$
\bE_{\mathfrak{X}}(UAU^*)=\bigoplus_{i=1}^{\infty}X_{{\mathcal{H}}_i}= X. 
$$
\end{proof}

\vskip 5pt
We remark that
Corollary \ref{corKS} also covers the assumption $W(X)\subset W_e(A)$ of Proposition \ref{propKS}. Indeed, $W_e(A)\subset_{st}
W_e(A+D)$ for some normal operator $D$ with arbitrarily small norm, and we may apply Corollary \ref{corKS} to $X$ and $A+D$.

Kadison's article \cite{Kad} completely describes the diagonals of a projection, thus, providing in his terminology a carpenter
theorem for discrete masas ${\mathrm{L}}({\mathcal{H}})$. Our method can be used to obtain a similar statement for continuous masas
in ${\mathrm{L}}({\mathcal{H}})$ given in the next corollary. This result is due to Akemann and Anderson, see \cite[Corollary
6.19]{AA}. The case of a masa in a type-${\mathrm{II}}_{1}$ factor is solved in Ravichandran's paper \cite{Ra}.

\vskip 5pt
\begin{cor}\label{corCT} Let ${\mathfrak{X}}$ be a continuous masa in ${\mathrm{L}}({\mathcal{H}})$ and let $X\in {\mathfrak{X}}$ be
a positive contraction. Then there exists a projection $P$ in $ {\mathrm{L}}({\mathcal{H}})$ such that $X=\bE_{\mathfrak{X}}(P)$.
\end{cor}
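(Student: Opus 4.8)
The plan is to use the spectral projections of $X$, which all lie in the masa $\mathfrak{X}$, to cut $X$ into pieces on which its spectrum stays away from the two critical values $0$ and $1$, and then to realize each such piece as a conditional expectation of a projection by a single application of Corollary \ref{corKS}. The difficulty is concentrated entirely at the endpoints: a projection $A$ of infinite rank and infinite corank is self-adjoint with $\sigma_e(A)=\{0,1\}$, hence $W_e(A)=[0,1]$, and the hypothesis $W(Y)\subset_{st}W_e(A)$ of Corollary \ref{corKS} forces the spectrum of $Y$ to be bounded away from $0$ and $1$. Away from these endpoints everything goes through directly, so the whole point is to isolate the endpoint behaviour.

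First I would write $I=E_0+E_1+\sum_{k\ge 1}G_k$, where $E_0$ is the spectral projection of $X$ for $\{0\}$, $E_1$ the spectral projection for $\{1\}$, and $G_k$ the spectral projection for a Borel set $J_k$, the family $\{J_k\}$ being a partition of $(0,1)$ into subsets each contained in a compact subinterval of $(0,1)$ (for instance dyadic slices accumulating at $0$ and at $1$). All these projections belong to $\mathfrak{X}$, and since $\mathfrak{X}$ is continuous each nonzero one has infinite-dimensional range; write $\mathcal{H}_0,\mathcal{H}_1,\mathcal{G}_k$ for the corresponding reducing subspaces and $\mathfrak{X}_0,\mathfrak{X}_1,\mathfrak{X}_k$ for the compressed masas $E\mathfrak{X}E$, each again a continuous masa. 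On $\mathcal{H}_0$ one has $X=0$, so I take the projection $0$; on $\mathcal{H}_1$ one has $X=I$, so I take the projection $I$; in both cases $\bE_{\mathfrak{X}}$ reproduces the given piece trivially.

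On each middle block $\mathcal{G}_k$, set $Y_k=X|_{\mathcal{G}_k}\in\mathfrak{X}_k$; by construction $\sigma(Y_k)$ is contained in a compact subinterval of $(0,1)$, whence $W(Y_k)\subset_{st}[0,1]$. Choosing any projection $A_k\in\mathrm{L}(\mathcal{G}_k)$ of infinite rank and infinite corank, so that $W_e(A_k)=[0,1]$, Corollary \ref{corKS} yields a unitary $U_k$ with $Y_k=\bE_{\mathfrak{X}_k}(U_kA_kU_k^*)$, and $P_k:=U_kA_kU_k^*$ is again a projection. It then remains to reassemble: let $P$ be the projection $0\oplus I\oplus\bigoplus_{k\ge 1}P_k$ relative to $\mathcal{H}=\mathcal{H}_0\oplus\mathcal{H}_1\oplus\bigoplus_k\mathcal{G}_k$. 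Since each block projection $E_0,E_1,G_k$ lies in $\mathfrak{X}$, the expectation $\bE_{\mathfrak{X}}$ is a normal $\mathfrak{X}$-bimodule map and therefore respects this orthogonal decomposition, its restriction to the $k$-th corner being $\bE_{\mathfrak{X}_k}$; hence $\bE_{\mathfrak{X}}(P)=0\oplus I\oplus\bigoplus_k\bE_{\mathfrak{X}_k}(P_k)=0\oplus I\oplus\bigoplus_k Y_k=X$. The main obstacle is, as indicated, precisely the endpoint values $0$ and $1$, and the role of the continuity hypothesis is to guarantee that every nonzero spectral slice is infinite-dimensional and can itself be split into infinitely many infinite-dimensional pieces inside $\mathfrak{X}_k$, which is exactly what makes the pinching theorem underlying Corollary \ref{corKS} applicable on each block.
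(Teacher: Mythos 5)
Your proof is correct and follows essentially the same route as the paper: split off the spectral parts at $0$ and $1$ (where the answer is trivially $0$ and $I$), slice the remainder into spectral pieces whose numerical ranges sit strictly inside $(0,1)$, use the continuity of the masa to guarantee infinitely many infinite-dimensional sub-blocks in each piece, and realize each piece as the expectation of a projection via the pinching machinery before reassembling. The only cosmetic difference is that you invoke Corollary \ref{corKS} on each compressed masa, whereas the paper applies Corollary \ref{pinchingself} directly together with the block-diagonal description of $\bE_{\mathfrak{X}}$ — which is the form one really needs here, since for a projection $A_k$ the set $W_e(A_k)=[0,1]$ has empty interior in $\bC$ and the hypothesis $W(Y_k)\subset_{st}W_e(A_k)$ must be read in the self-adjoint, real-interval sense.
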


\vskip 5pt
\begin{proof} We may suppose that $0<\| Xh\| <1$ for all unit vectors $h\in{\mathcal{H}}$; otherwise decompose $X$ as $Q+X_0$ for
some projection $Q\in{\mathfrak{X}}$ and consider $X_0$ in place of $X$. Then we have a decomposition
${\mathcal{H}}=\bigoplus_{i=1}^{\infty}{\mathcal{H}}_i$, where each summand ${\mathcal{H}}_i$ is the range of some projection in
${\mathfrak{X}}$, such that the decomposition
$
X=\bigoplus_{i=1}^{\infty} X_{{\mathcal{H}}_i}
$
satisfies $W(X_{{\mathcal{H}}_i})\subset_{st} [0,1]$ for all $i$.
We may further decompose each Hilbert space ${\mathcal{H}}_i$ as 
${\mathcal{H}}_i=\bigoplus_{j=1}^{\infty}{\mathcal{H}}_{i,j}$ where
${\mathcal{H}}_{i,j}$ is the range of some nonzero projection in ${\mathfrak{X}}$. 
We have a corresponding decomposition 
$
X=\bigoplus_{i=1}^{\infty}\left( \bigoplus_{j=1}^{\infty} X_{{\mathcal{H}}_{i,j}}\right)
$
Since $\cup_{j=1}^{\infty}W( X_{{\mathcal{H}}_{i,j}})\subset W(X_{{\mathcal{H}}_i})\subset_{st} [0,1] $, Corollary \ref{pinchingself}
yields
 some projection $P_i\in {\mathrm{L}}({\mathcal{H}}_i)$, $W_e(P)=[0,1]$, such that
$$
P_i=
\begin{pmatrix} 
X_{{\mathcal{H}}_{i,1}} &\ast & \cdots &\cdots  \\ 

\ast &X_{{\mathcal{H}}_{i,2}} &\ast &\cdots  \\ 

 \vdots &\ast  & \ddots &\ddots \\ 

\vdots  &\vdots  &\ddots  &\ddots \\ 
\end{pmatrix}.
$$
Letting $P=\bigoplus_{i=1}^{\infty}P_i$, we have  $X=\bE_{\mathfrak{X}}(P)$.
 \end{proof}

\subsection{A reduction lemma}

The following result extends Proposition \ref{propKS2}, the ``easy" part of Kennedy-Skoufranis' theorem \cite[Theorem 1.2]{KS}.

\vskip 5pt
\begin{lemma}\label{lemred} If ${\mathfrak{X}}$ is a masa in ${\mathrm{L}}({\mathcal{H}})$ and $Z\in{\mathrm{L}}({\mathcal{H}})$,
then $W(\bE_{\mathfrak{X}}(Z))\subset
\overline{W}(Z)$ and $W_e(\bE_{\mathfrak{X}}(Z))\subset
W_e(Z)$.
\end{lemma}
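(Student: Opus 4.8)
The plan is to treat the two inclusions separately, in each case passing from numerical ranges to values of suitable states and then pulling those states back through $\bE_{\mathfrak{X}}$. Throughout I write $\mathcal{K}=\mathcal{K}(\mathcal{H})$ for the ideal of compact operators.

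First I would dispose of $W(\bE_{\mathfrak{X}}(Z))\subseteq\overline{W}(Z)$. For a unit vector $h\in\mathcal{H}$, the functional $\phi_h(X):=\langle h,\bE_{\mathfrak{X}}(X)h\rangle$ is a state on $\mathrm{L}(\mathcal{H})$, since $\bE_{\mathfrak{X}}$ is unital and positive and $\phi_h(I)=\|h\|^2=1$. The classical Bonsall--Duncan identity, that the set of state values of $X$ is the closed convex hull of $W(X)$, combined with the Toeplitz--Hausdorff convexity of $W(X)$, gives $\phi(X)\in\overline{W}(X)$ for every state $\phi$. Hence $\langle h,\bE_{\mathfrak{X}}(Z)h\rangle=\phi_h(Z)\in\overline{W}(Z)$, and taking the union over all unit vectors $h$ yields the first inclusion.

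For the essential part I would use the standard Fillmore--Stampfli--Williams characterization $W_e(X)=\{\phi(X):\phi\text{ a state on }\mathrm{L}(\mathcal{H})\text{ with }\phi|_{\mathcal{K}}=0\}$, the singular states being exactly those that factor through the Calkin algebra. Given $\lambda\in W_e(\bE_{\mathfrak{X}}(Z))$, choose such a singular state $\psi$ with $\psi(\bE_{\mathfrak{X}}(Z))=\lambda$ and set $\phi:=\psi\circ\bE_{\mathfrak{X}}$. Then $\phi$ is again a state (it is unital and positive, being the composite of a state with a unital positive map) and $\phi(Z)=\lambda$, so it remains only to verify that $\phi$ too annihilates $\mathcal{K}$, i.e. that $\psi(\bE_{\mathfrak{X}}(K))=0$ for every compact $K$. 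This reduces to the key fact $\bE_{\mathfrak{X}}(\mathcal{K})\subseteq\mathcal{K}$.

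The main obstacle is precisely this compact-preservation, and it is where the nature of the masa enters. I would argue from the fact that the canonical normal expectation onto a masa preserves the trace $\mathrm{Tr}$ on $\mathrm{L}(\mathcal{H})$. For a finite-rank positive $K$ one then has $\bE_{\mathfrak{X}}(K)\ge 0$ with $\mathrm{Tr}\,\bE_{\mathfrak{X}}(K)=\mathrm{Tr}\,K<\infty$, so $\bE_{\mathfrak{X}}(K)$ is a positive trace-class, hence compact, operator; writing an arbitrary finite-rank $K$ as a combination of four finite-rank positives gives $\bE_{\mathfrak{X}}(K)\in\mathcal{K}$, and norm-density of the finite-rank operators in $\mathcal{K}$ together with $\|\bE_{\mathfrak{X}}\|=1$ extends this to all of $\mathcal{K}$. (In the diffuse case this says more, namely $\bE_{\mathfrak{X}}(\mathcal{K})=\{0\}$, since the only trace-class element of a continuous masa is $0$; it is reassuring that the lemma forces exactly this, which is what makes the continuous case delicate.) With $\bE_{\mathfrak{X}}(K)\in\mathcal{K}$ in hand we get $\phi(K)=\psi(\bE_{\mathfrak{X}}(K))=0$, so $\phi$ is singular and $\lambda=\phi(Z)\in W_e(Z)$, completing the second inclusion.
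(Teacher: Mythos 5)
Your first inclusion is correct and in fact more direct than the paper's route: you invoke the identity between the set of state values of $Z$ and $\overline{W}(Z)$, whereas the paper first treats normal $Z$ via a Radon--measure argument and then reaches general $Z$ through Halmos's intersection formula over normal dilations. Your reduction of the essential inclusion to the single fact $\bE_{\mathfrak{X}}(\mathcal{K})\subset\mathcal{K}$, via singular states and the Fillmore--Stampfli--Williams description of $W_e$, is also a legitimate strategy, genuinely different from the paper's (which splits the masa into its discrete and continuous parts and handles each separately).

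The gap is in your proof of $\bE_{\mathfrak{X}}(\mathcal{K})\subset\mathcal{K}$. The premise that a conditional expectation onto a masa preserves the trace is false whenever the masa has a continuous part: there is no normal conditional expectation onto a continuous masa at all, and any conditional expectation onto a continuous masa annihilates every compact operator (the folklore fact the paper itself uses), so for a rank-one projection $P$ one has $\mathrm{Tr}\,\bE_{\mathfrak{X}}(P)=0\neq 1=\mathrm{Tr}\,P$. Your own parenthetical remark exposes the inconsistency: you assert both $\mathrm{Tr}\,\bE_{\mathfrak{X}}(K)=\mathrm{Tr}\,K$ and $\bE_{\mathfrak{X}}(K)=0$ in the diffuse case, which together would force every finite-rank positive operator to have trace zero. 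The fact you need is nevertheless true, but it must be proved differently: for $P=hh^*$ and any finite partition of the identity by projections $Q_i\in\mathfrak{X}$ one has $\bE_{\mathfrak{X}}(P)=\sum_i\bE_{\mathfrak{X}}(Q_iPQ_i)\le\sum_i\|Q_ih\|^2Q_i$, and refining the partition shows that $\bE_{\mathfrak{X}}(P)$ is dominated by a compact element of $\mathfrak{X}$ supported on its atoms (the continuous part contributing nothing in the limit); linearity, density of the finite-rank operators in $\mathcal{K}$ and $\|\bE_{\mathfrak{X}}\|=1$ then give $\bE_{\mathfrak{X}}(\mathcal{K})\subset\mathcal{K}$. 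Equivalently, you could split $\mathfrak{X}$ into its discrete and continuous summands as the paper does, apply your trace argument only to the discrete summand (where it is valid), and use the annihilation of compacts on the continuous summand.
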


\vskip 5pt
\begin{proof} (1) Assume $Z$ is normal. We may identify the unital $C^*$-algebra ${\mathfrak{A}}$ spanned by $Z$ with
$C^0(\sigma(Z))$ via a $\ast$-isomorphism $\varphi: C^0(\sigma(Z))\to {\mathfrak{A}}$ with $\varphi(z\mapsto z)=Z$. Let
$h\in{\mathcal{H}}$ be a unit vector.
For $f\in C^0(\sigma(Z))$, set
$$
\psi(f)=\langle h, \bE_{\mathfrak{X}}(\varphi(f))h\rangle.
$$
Then $\psi$ is a positive linear functional on $C^0(\sigma(Z))$ and $\psi(1)=1$. Thus $\psi$ is a Radon measure induced by a
probabilty measure $\mu$,
$$
\psi(f)=\int_{\sigma(Z)} f(z)\, {\mathrm{d}}\mu(z).
$$
We then have $
\langle h, \bE_{\mathfrak{X}}(Z)h\rangle =\psi(z) \in {\mathrm{conv}}(\sigma(Z)).
$
Since ${\mathrm{conv}}(\sigma(Z))=\overline{W}(Z)$, we obtain $W(\bE_{\mathfrak{X}}(Z))\subset
\overline{W}(Z)$.

(2) Let $Z$ be a general operator in ${\mathrm{L}}({\mathcal{H}})$ and define a conditional expectation
$$\bE_2:{\mathrm{L}}({\mathcal{H}}\oplus{\mathcal{H}})\to {\mathfrak{X}}\oplus{\mathfrak{X}}$$ by
$$
\bE_2\left( 
\begin{pmatrix} A&C \\ D& B\end{pmatrix}
\right)=
\begin{pmatrix} \bE_{\mathfrak{X}}(A)&0 \\ 0& \bE_{\mathfrak{X}}(B)\end{pmatrix}.
$$
From the first part of the proof, we infer
$$
W(\bE_{\mathfrak{X}}(Z))\subset W\left( 
\begin{pmatrix} \bE_{\mathfrak{X}}( Z)&0 \\ 0& \bE_{\mathfrak{X}}(B)\end{pmatrix}
\right)
\subset\overline{W}\left( 
\begin{pmatrix} Z&C \\ D& B\end{pmatrix}
\right)
$$
whenever 
$
\begin{pmatrix} Z&C \\ D& B\end{pmatrix}
$
is normal. Since  we have, by a simple classical fact \cite{Hal},
$$
\overline{W}(Z)=\bigcap \overline{W}\left(\begin{pmatrix} Z&C \\ D& B\end{pmatrix}\right)
$$
where the intersection runs over all $B,C,D$ such that
$
\begin{pmatrix} Z&C \\ D& B\end{pmatrix}
$
is normal, we obtain $W(\bE_{\mathfrak{X}}(Z))\subset
\overline{W}(Z)$. 

(3) We deal with the essential numerical range inclusion. We can split ${\mathfrak{X}}$ into its discrete part ${\mathfrak{D}}$ and
continuous part ${\mathfrak{C}}$ with the corresponding decomposition of the Hilbert space,
$$
{\mathfrak{X}}={\mathfrak{D}}\oplus{\mathfrak{C}}, \qquad {\mathcal{H}}={\mathcal{H}}_d\oplus{\mathcal{H}}_c.
$$
We then have
\begin{equation}\label{red1}
W_e(\bE_{\mathfrak{X}}(Z))={\mathrm{conv}}\left\{
W_e(\bE_{\mathfrak{D}}(Z_{{\mathcal{H}}_d}))
;W_e(\bE_{\mathfrak{C}}(Z_{{\mathcal{H}}_c}))
 \right\}.
\end{equation}
We have an obvious inclusion
\begin{equation}\label{red2}
W_e(\bE_{\mathfrak{D}}(Z_{{\mathcal{H}}_d}))\subset W_e(Z_{{\mathcal{H}}_d}).
\end{equation}
On the other hand, for all compact operators $K\in{\mathrm{L}}({\mathcal{H}})$,
$$
W_e(\bE_{\mathfrak{C}}(Z_{{\mathcal{H}}_c}))=W_e(\bE_{\mathfrak{C}}(Z_{{\mathcal{H}}_c})+K_{{\mathcal{H}}_c})=W_e(\bE_{\mathfrak{C}}(Z_{{\mathcal{H}}_c}+K_{{\mathcal{H}}_c}))\subset\overline{W}(Z_{{\mathcal{H}}_c}+K_{{\mathcal{H}}_c})
$$
by the simple folklore fact that a conditional expectation onto a continous masa vanishes on compact operators and part (2) of the
proof. Thus, when $K$ runs over all compact operators, we obtain
\begin{equation}\label{red3}
W_e(\bE_{\mathfrak{D}}(Z_{{\mathcal{H}}_c}))\subset W_e(Z_{{\mathcal{H}}_c}).
\end{equation}
Combining \eqref{red1}, \eqref{red2} and  \eqref{red3} completes the proof.
\end{proof}

\subsection{Conditional expectation of idempotent operators}

\vskip 5pt
For discrete masas, unlike continuous masas \cite{KadSing}, there is a unique conditional expectation, which merely consists in
extracting the diagonal with respect to an orthonormal basis. In a recent article, Loreaux and Weiss give a detailed study of
diagonals of idempotents in ${\mathrm{L}}({\mathcal{H}})$. They established that a nonzero idempotent $Q$ has a zero diagonal with
respect to some orthonormal basis if and only if $Q$ is not a Hilbert-Schmidt perturbation of a projection (i.e., a self-adjoint
idempotent). They also showed that any sequence $\{a_n\}\in l^{\infty}$ such that $|a_n|\le \alpha$ for all $n$ and, for some
$a_{n_0}$, $a_k=a_{n_0}$ for infinitely many $k$, one has an idempotent $Q$ such that $\| Q\|\le 18\alpha +4$ and $Q$ admits
$\{a_n\}$ as a diagonal with respect to some orthonormal basis \cite[Proposition 3.4]{LW}. Using this, they proved that any sequence
in $l^{\infty}$ is the diagonal of some idempotent operator \cite[Theorem 3.6]{LW}, answering a question of Jasper. This statement is
in the range of Theorem \ref{pinching}. Further, it is not necessary to confine to diagonals, i.e., discrete masas, and the constant
$18\alpha +4$ can be improved; in the next corollary we have $3$ as the best constant when $\alpha=1$.

\vskip 5pt
\begin{cor}\label{corLW} Let ${\mathfrak{X}}$ be a masa in ${\mathrm{L}}({\mathcal{H}})$ and $\alpha>0$. There exists an idempotent
$Q\in {\mathrm{L}}({\mathcal{H}})$, such that for all $X\in {\mathfrak{X}}$ with $\| X\|<\alpha$, we have $X=\bE_{\frak{X}}(UQU^*)$
for some unitary operator $U\in{\mathrm{L}}({\mathcal{H}})$. If $\alpha=1$, $\| Q\|=3$ is the smallest possible norm.
\end{cor}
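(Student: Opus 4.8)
The plan is to take $Q$ to be an infinite ampliation of the idempotent $2\times 2$ matrix $M_a=\begin{pmatrix} 1&0\\ a&0\end{pmatrix}$ already used in Corollary \ref{idem2seq}, and to split the statement into an existence part, obtained from the pinching theorem exactly as in Corollary \ref{corKS}, and an optimality part, obtained from the shape of the numerical range of an idempotent. First I would record that for $Q=\oplus^\infty M_a$ one has $W_e(Q)=\overline W(M_a)$ (the essential numerical range of an infinite ampliation is the closed numerical range of the single block) and that $\|Q\|=\|M_a\|=\sqrt{1+a^2}$.

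For existence: given $\alpha>0$, choose $a$ so large that $W_e(\alpha^{-1}Q)=\alpha^{-1}\overline W(M_a)\supseteq\mathcal D$. Fix $X\in\mathfrak X$ with $\|X\|<\alpha$. As in the proof of Corollary \ref{corKS}, decompose $\mathcal H=\oplus_{i}\mathcal H_i$ into infinitely many infinite-dimensional subspaces, each the range of a projection $P_i\in\mathfrak X$; since $X$ commutes with every $P_i$ we get $X=\oplus_i X_{\mathcal H_i}$ with $\sup_i\|\alpha^{-1}X_{\mathcal H_i}\|\le\alpha^{-1}\|X\|<1$. Applying Theorem \ref{pinching} to $A=\alpha^{-1}Q$ and the sequence $\{\alpha^{-1}X_{\mathcal H_i}\}$ furnishes a unitary $U$ with $(U\alpha^{-1}QU^*)_{\mathcal H_i}\simeq\alpha^{-1}X_{\mathcal H_i}$; after composing with block-unitaries inside each $\mathcal H_i$ we may take $(UQU^*)_{\mathcal H_i}=X_{\mathcal H_i}$. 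Then $P_i(UQU^*)P_i=XP_i\in\mathfrak X$, so the simple fact on conditional expectations recalled in Corollary \ref{corKS} gives $\bE_{\mathfrak X}(UQU^*)=\sum_i XP_i=X$.

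For optimality when $\alpha=1$: the upper bound is the choice $a=2\sqrt2$, for which $\|Q\|=3$ and $W_e(Q)=\overline W(M_{2\sqrt2})$ is the closed ellipse with foci $0,1$ and major axis $3$; since $\max_{z\in\mathcal D}(|z|+|z-1|)=3$ (attained at $z=-1$), this ellipse contains $\mathcal D$, so the existence argument applies verbatim with $\|Q\|=3$. For the lower bound, if $Q$ has the stated property then applying the numerical range inclusion of Lemma \ref{lemred} to the scalars $X=wI$ with $|w|<1$ (which lie in any masa) yields $w\in\overline W(UQU^*)=\overline W(Q)$ for every such $w$, hence $\mathcal D\subseteq\overline W(Q)$. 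The key point is then the classical canonical form of a nontrivial idempotent: writing $Q\simeq I\oplus 0\oplus\bigl(\begin{smallmatrix} I&B\\ 0&0\end{smallmatrix}\bigr)$ with respect to $\mathrm{ran}\,Q\oplus(\mathrm{ran}\,Q)^\perp$ and diagonalizing $B$ by its singular value decomposition shows that $Q$ is a direct sum of the blocks $\begin{pmatrix} 1&s_k\\ 0&0\end{pmatrix}$ together with copies of $1$ and $0$, whence $\overline W(Q)$ is the filled confocal ellipse with foci $0,1$ and major axis $\|Q\|=\sqrt{1+\|B\|^2}$. Combining $\mathcal D\subseteq\overline W(Q)$ with the focal-sum characterization of the ellipse evaluated at $-1$ gives $\|Q\|\ge|-1|+|-1-1|=3$.

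The main obstacle I anticipate is the identification of $\overline W(Q)$ with the confocal ellipse of major axis $\|Q\|$: this needs the two-subspaces canonical form of an idempotent and the reduction of the numerical range of $\bigl(\begin{smallmatrix} I&B\\ 0&0\end{smallmatrix}\bigr)$ to the scalar $2\times2$ blocks $\begin{pmatrix} 1&s\\0&0\end{pmatrix}$ via the spectral/singular value decomposition of $B$, including the case where $\|B\|$ is not attained, which is handled by passing to the closure. The remaining steps, namely that $\oplus^\infty M_a$ has the required essential numerical range and that the pinching construction lands the conditional expectation exactly on $X$, are routine adaptations of Corollaries \ref{idem2seq} and \ref{corKS}.
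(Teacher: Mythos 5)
Your proof is correct, and while the existence part and the choice of $Q=\oplus^{\infty}M_{a}$ with the critical value $a_{*}=2\sqrt{2}$ coincide with the paper's, your treatment of the sharpness of the constant $3$ goes by a genuinely different route. The paper computes $W(M_a)$ from scratch as a union of circles $\Gamma_r$ with centers $r^2$ and radii $ar\sqrt{1-r^2}$, finds the condition $-1\in\Gamma_r$, and minimizes $a(r)=(1+r^2)/(r\sqrt{1-r^2})$ by calculus; you instead invoke the elliptical range theorem, identifying $W\bigl(\begin{smallmatrix}1&s\\0&0\end{smallmatrix}\bigr)$ as the ellipse with foci $0,1$ and major axis $\sqrt{1+s^2}$, so that $\mathcal{D}\subseteq W$ reduces to the focal-sum inequality $3\le\sqrt{1+s^2}$ at $z=-1$ — cleaner and less computational, at the price of quoting the $2\times 2$ ellipse theorem. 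For the lower bound, the paper passes through the \emph{essential} numerical range: it shows $W_e(Q)\supseteq\mathcal{D}$ is necessary via Lemma \ref{lemred}, then approximates the positive operator $R$ in the purely non-selfadjoint part by diagonalizable $R_\varepsilon$ and concludes $\overline{\lim}\,a_n\ge a_*$. You use only the ordinary numerical range inclusion of Lemma \ref{lemred} applied to the scalars $wI$, obtaining $\mathcal{D}\subseteq\overline{W}(Q)$, and then identify $\overline{W}(Q)$ globally as the closed confocal ellipse of major axis $\|Q\|=\sqrt{1+\|B\|^2}$; this is a slightly weaker hypothesis used more efficiently, though it shifts the analytic burden onto the identification of $\overline{W}(Q)$ when $\|B\|$ is not an eigenvalue, which you correctly flag and which requires the same kind of approximation (or a direct-integral argument) that the paper performs on $R_\varepsilon$. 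Both arguments are sound; yours makes the geometric content (nested confocal ellipses, maximal focal sum attained at $-1$) more transparent, while the paper's stays entirely within the essential-numerical-range framework it has already set up.
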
 

\vskip 5pt
\begin{proof} As in the proof of Corollary \ref{idem2seq} we have an idempotent $Q$ such that $W_e(Q)\supset \alpha{\mathcal{D}}$,
hence the first and main part of Corollary \ref{corLW} follows from
Corollary \ref{corKS}. The remaining parts require a few computations.

To obtain the bound $3$ when $\alpha=1$ we get a closer look at
 $\oplus^{\infty} M_a$ with $M_a$ given by \eqref{eqidempotent} where $a$ is a positive scalar. We have
\begin{align*}
W(M_a)&=\left\{  \langle h, M_ah\rangle \ : \ h\in\bC^2, \|h\|=1\right\} \\
&=\left\{  |h_1|^2 + a\overline{h_2}h_1 \ : |h_1|^2 +|h_2|^2=1\right\}, 
\end{align*}
hence, with $h_1=re^{i\theta}$, $h_2=\sqrt{1-r^2}e^{i\alpha}$,
$$
W(M_a)=\bigcup_{0\le r\le 1}\left\{  r^2 + ar\sqrt{1-r^2}e^{i(\theta-\alpha)} : \ \theta, \alpha \in [0,2\pi] \right\}. 
$$
Therefore $W(M_a)$ is a union of circles $\Gamma_r$ with centers $r^2$ and radii $ar\sqrt{1-r^2}$. To have ${\mathcal{D}}\subset
W(M_a)$ it is necessary and sufficient that $-1\in \Gamma_r$ for some $r\in [0,1]$, hence
\begin{equation}\label{min}
a=\frac{1+r^2}{r\sqrt{1-r^2}}.
\end{equation}
Now we minimize $a=a(r)$ given by \eqref{min} when $r\in(0,1)$ and thus obtain the matrix $M_{a_{\ast}}$ with smallest norm such that
$W(M_{a_{\ast}})\supset{\mathcal{D}}$. Observe that $a(r)\to +\infty$ as $r\to 0$ and as $r\to 1$, and
$$
r^2(1-r^2)^{3/2}a'(r)= 3r^2-1.
$$
Thus $a(r)$  takes its minimal value $a_{\ast}$ when $r=1/\sqrt{3}$.
We have
$a_{\ast}=2\sqrt{2}$, hence 
$$\| M_{a_{\ast}}\|=3.$$ 
Now, letting $Q=\oplus^{\infty} M_{a_{\ast}}$, we have $W_e(Q)=W(M_{a_{\ast}})$, so that $Q$ is an idempotent in
${\mathrm{L}}({\mathcal{H}})$
such that $W_e(Q)\supset {\mathcal{D}}$, and thus by Corollary \ref{corKS} any operator $X$ such that $\| X\|<1$ satifies
$\bE_{\mathfrak{X}}(UQU^*)=X$ for some unitary $U$.

It remains to check that if $Q$ is an idempotent such that Corollary \ref{corLW} holds for any operator $X$ such that $\| X\|<1$,
then $\| Q\| \ge 3$. To this end, we consider the purely nonselfadjoint part $Q_{{\mathcal{H}}_{ns}}$ of $Q$ in \eqref{purelyns},
\begin{equation*}
Q_{{\mathcal{H}}_{ns}}\simeq\begin{pmatrix} I& 0 \\ R&0\end{pmatrix}.
\end{equation*}
We have $W_e(Q)\supset{\mathcal{D}}$ if and only if $W_e(Q_{{\mathcal{H}}_{ns}})\supset{\mathcal{D}}$. By Lemma \ref{lemred} this is
necessary. We may approximate $W_e(Q_{{\mathcal{H}}_{ns}})$ with sligthly larger essential numerical ranges, by using a positive
diagonalizable operator $R_{\varepsilon}$ such that $R_{\varepsilon} \ge R \ge R_{\varepsilon}-\varepsilon I$, for which
$$
W_e\left( \begin{pmatrix} I& 0 \\ R_{\varepsilon}&0\end{pmatrix} \right)=
W_e\left( \bigoplus_{n=1}^{\infty}\begin{pmatrix} 1& 0 \\ a_n&0\end{pmatrix}\right)
$$
where $\{a_n\}_{n=1}^{\infty}$ is a sequence of positive scalars, the eigenvalues of $R_{\varepsilon}$. By the previous step of the
proof, this essential numerical range contains $\mathcal{D}$ if and only if $\overline{\lim}\, a_n \ge a_{\ast}$. If this holds for
all $\varepsilon >0$, then $\| Q\|\ge 3$. \end{proof}

\section{Unital, trace preserving positive linear maps}

\vskip 5pt\noindent
Unital positive linear maps $\Phi:\bM_n\to\bM_n$, the matrix algebra, which preserve the trace play an important role in matrix
analysis and its applications. These maps are sometimes called doubly stochastic \cite{Ando2}.

We say that $\Phi: {\mathrm{L}}({\mathcal{H}})\mapsto {\mathrm{L}}({\mathcal{H}})$ is trace preserving if it preserves the trace
ideal ${\mathcal{T}}$ and ${\mathrm{Tr}}\,\Phi(Z)= {\mathrm{Tr}}\, Z$ for all $Z\in{\mathcal{T}}$.

\vskip 5pt
\begin{cor}\label{cordoubly} Let $A\in {\mathrm{L}}({\mathcal{H}})$. The following two conditions are equivalent:
\begin{itemize}
\item[(i)] $W_e(A)\supset {\mathcal{D}}$.
\item[(ii)] For all $X\in {\mathrm{L}}({\mathcal{H}})$ with $\| X\|<1$, there exists a unital, trace preserving, positive linear map
$\Phi: {\mathrm{L}}({\mathcal{H}})\to {\mathrm{L}}({\mathcal{H}})$ such that $\Phi(A)=X$.
\end{itemize}
We may further require in (ii) that $\Phi$ is completely positive and {\rm{sot}}- and {\rm{wot}}-sequentially continuous.
\end{cor}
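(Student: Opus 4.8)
The plan is to prove the equivalence (i) $\iff$ (ii) of Corollary \ref{cordoubly}, upgrading the existence of a positive linear map to the completely positive, sequentially continuous case.

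$\textbf{(i) $\Rightarrow$ (ii).}$ The key input is the pinching theorem (Theorem \ref{pinching}) together with the splitting trick already used in Corollary \ref{cor2seq}. Assume $W_e(A)\supset\mathcal{D}$ and fix $X\in{\mathrm{L}}({\mathcal{H}})$ with $\|X\|<1$. First I would recall from \eqref{compression} that $A$ is unitarily congruent to a $2\times 2$ operator matrix with $X$ in the upper-left corner; averaging two copies as in \eqref{eq1} produces unitaries $U,V:{\mathcal{H}}\to{\mathcal{H}}\oplus{\mathcal{H}}$ with
$$
\frac{UAU^*+VAV^*}{2}=\begin{pmatrix} X&0 \\ 0&T\end{pmatrix}
$$
for some $T$. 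The plan is then to define $\Phi$ as a composition: conjugate by the averaging data and follow by the corner-extraction (compression) map onto the first summand. Concretely, set
$$
\Phi(Z):=E\left(\frac{UZU^*+VZV^*}{2}\right)E^*,
$$
where $E:{\mathcal{H}}\oplus{\mathcal{H}}\to{\mathcal{H}}$ is the projection onto the first coordinate. This $\Phi$ is manifestly completely positive (it is a sum of two congruences followed by a compression, all completely positive), and $\Phi(A)=X$ by construction. The remaining work is to arrange that $\Phi$ is simultaneously unital and trace preserving; a raw compression is neither on the nose, so I would correct it by adding a suitable auxiliary congruence term. The cleanest route is to realize $\Phi$ as a genuine doubly stochastic map, using that the pinching picture gives an operator diagonal, and then absorb the defect into a trace-preserving, unital completely positive remainder built from orthogonal projections summing to $I$.

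$\textbf{(ii) $\Rightarrow$ (i).}$ For the converse I would argue by contraposition, exactly along the lines announced in Remark following Corollary \ref{cor2seq} and in the proof structure of Corollary \ref{cordoubly} itself. The essential tool is Lemma \ref{lemred}: positive unital (indeed conditional-expectation-type, more generally doubly stochastic) maps reduce essential numerical ranges, so $W_e(\Phi(A))\subset W_e(A)$. If $W_e(A)\not\supset\mathcal{D}$, then $W_e(A)$ misses some point $z$ of the open unit disc, and by choosing $X$ a suitable contraction (for instance a multiple of the identity, or a normal operator) whose essential numerical range is not contained in $W_e(A)$, the reduction inequality forbids $\Phi(A)=X$ for any such $\Phi$. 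This contradicts (ii), giving (i).

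$\textbf{The main obstacle}$ will be the sequential continuity and the simultaneous unital-plus-trace-preserving normalization in the $\textbf{(i)}\Rightarrow\textbf{(ii)}$ direction. Complete positivity and the equation $\Phi(A)=X$ come essentially for free from the pinching construction, but making $\Phi$ honestly doubly stochastic while keeping it $\mathrm{sot}$- and $\mathrm{wot}$-sequentially continuous requires care: the infinite orthogonal decomposition ${\mathcal{H}}=\bigoplus_i{\mathcal{H}}_i$ furnished by Theorem \ref{pinching} must be assembled into a single map whose defining sum converges in the right operator topologies and whose Choi--Kraus weights $\{K_i\}$ satisfy $\sum K_iK_i^*=\sum K_i^*K_i=I$ with strong convergence. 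I expect the bookkeeping of these weights, and the verification that the resulting strongly convergent sum defines a $\mathrm{wot}$-continuous completely positive map, to be where the real effort lies; the numerical-range and pinching ingredients themselves are already in hand from the earlier sections.
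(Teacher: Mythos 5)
Your plan for (i)$\Rightarrow$(ii) breaks down precisely at the step you defer, and the ``absorption'' you propose cannot repair it. The map $\Phi(Z)=E\left(\frac{UZU^*+VZV^*}{2}\right)E^*$ is completely positive, sends $A$ to $X$, and is in fact already unital (since $U,V$ are unitary and $EE^*=I$); what fails is trace preservation, because compressing to the first summand discards the trace carried by the block $T$. Adding a positive ``remainder'' $\Phi_0$ destroys unitality and changes the value at $A$ unless $\Phi_0=0$, so there is nothing to absorb the defect into: the two-block picture is structurally too rigid. The paper's device is different and is the missing idea: apply Theorem \ref{pinching} with the \emph{constant} sequence $X_i\equiv X$, so that $UAU^*$ is an infinite operator matrix all of whose diagonal blocks equal $X$; then $\Phi(T)=\sum_{i}t_i(UTU^*)_{i,i}$ satisfies $\Phi(A)=X$ for \emph{every} choice of weights $t_i\ge 0$ with $\sum_i t_i=1$, which frees the weights entirely for the normalization (the paper takes $t_i=2^{-i}$, and unitality, complete positivity and sot/wot sequential continuity then come at once). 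Even so, I would urge you to verify the trace condition for this weighted diagonal sum rather than take it on faith: for a rank-one trace-class $Z$ one gets $\mathrm{Tr}\,\Phi(Z)=\sum_i 2^{-i}\mathrm{Tr}\bigl((UZU^*)_{i,i}\bigr)$, and its equality with $\mathrm{Tr}\,Z$ is not automatic; this normalization is genuinely the delicate point of the whole corollary, as you suspected.

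Your converse is in the right spirit but invokes Lemma \ref{lemred}, which is stated and proved only for conditional expectations onto masas. For a general positive unital map the reduction $W_e(\Phi(A))\subset W_e(A)$ needs the fact that $\Phi$ maps compact operators to compact operators, and that is exactly where the trace-preserving hypothesis enters (preservation of the trace ideal plus automatic boundedness gives preservation of its norm closure); without it the reduction is false, e.g.\ for $\Phi(Z)=\langle e_1,Ze_1\rangle I$. The paper sidesteps the lemma and argues directly: if $z\notin W_e(A)$ with $|z|<1$, then after a rotation $\frac{A+A^*}{2}\le zI+L$ for a compact selfadjoint $L$, so $\Phi(A)=\frac{1+z}{2}I$ would force $\frac{1+z}{2}I\le zI+\Phi(L)$ with $\Phi(L)$ compact, which is impossible. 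Your ``suitable contraction'' has to be chosen in this way --- a scalar strictly between $z$ and $1$ times the identity --- not merely as some $X$ with $W_e(X)\not\subset W_e(A)$.
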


\vskip 5pt
\begin{proof} Assume (i). By Theorem \ref{pinching} we have a unitary $U: {\mathcal{H}}\to\oplus^{\infty}{\mathcal{H}}$ such that
$$
A\simeq UAU^* =
\begin{pmatrix} 
X &\ast & \cdots &\cdots  \\ 

\ast &X &\ast &\cdots  \\ 

 \vdots &\ast  & X &\ddots \\ 

\vdots  &\vdots  &\ddots  &\ddots \\ 
\end{pmatrix}.
$$
Now consider the map $\Psi: {\mathrm{L}}(\oplus^{\infty}{\mathcal{H}})\to {\mathrm{L}}({\mathcal{H}})$,
$$
\begin{pmatrix} 
Z_{1,1} & Z_{1,2} & \cdots  \\ 

Z_{2,1} &Z_{2,2} &\cdots   \\ 

 \vdots &\vdots &\ddots \\  
\end{pmatrix}
\mapsto
\sum_{i=1}^{\infty} 2^{-i} Z_{i,i}
$$
and define $\Phi: {\mathrm{L}}({\mathcal{H}})\to {\mathrm{L}}({\mathcal{H}})$ as $\Phi(T)=\Psi(UTU^*)$. Since both $\Psi$ and the
unitary congruence with $U$ are sot- and wot-sequentially continuous, and trace preseverving, completely positive and unital, so is
$\Phi$. Further $\Phi(A)=X$.

Assume (ii) and suppose that $z\notin W_e(A) $ and $|z|<1$ in order to reach a contradiction. If $z=|z| e^{i\theta}$, replacing $A$
by $ e^{-i\theta}A$, we may assume $1>z\ge 0$. Hence,
$$
W_e((A+A^*)/2)\subset (-\infty, z]
$$
and there exists a selfadjoint compact operator $L$ such that
$$
\frac{A+A^*}{2}  \le zI +L.
$$
This implies that $X:=\frac{1+z}{2}I$ cannot be in the range of $\Phi$ for any unital, trace preserving positive linear map. Indeed,
we would have
$$
\frac{1+z}{2}I= \frac{X+X^*}{2}=\Phi\left( \frac{A+A^*}{2}\right)\le zI +\Phi(L)
$$
which is not possible as $\Phi(L)$ is compact.
\end{proof}

\vskip 5pt
In the finite dimensional setting, two Hermitian matrices $A$ and $X$ satisfy the relation $X=\Phi(A)$ for some positive, unital,
trace preserving linear map if and only if $X$ is in the convex hull of the unitary orbit of $A$. In the infinite dimensional
setting, if two Hermitian $A,X\in{\mathrm{L}}({\mathcal{H}})$ satisfy $W_e(A)\supset [-1,1]$ and $\| X\| \le 1$, then $X$ is in the
norm closure of the unitary orbit of $A$. This is easily checked by approximating the operators with diagonal operators. Such an
equivalence might not be brought out to the setting of Corollary \ref{cordoubly}.

\vskip 5pt
\begin{question} Do there exist $A,X\in {\mathrm{L}}({\mathcal{H}})$ such that $W_e(A)\supset {\mathcal{D}}$, $\|X\| <1$, and $X$
does not belong to the norm closure of the convex hull of the unitary orbit of $A$ ?
\end{question}

Here we mention a result of Wu \cite[Theorem 6.11]{Wu}: {\it If $A\in{\mathrm{L}}({\mathcal{H}})$ is not of the form scalar plus
compact,
then every $X\in{\mathrm{L}}({\mathcal{H}})$ is a linear combination of operators in the unitary orbit of $A$.}

 If one deletes the positivity assumption, the most regular class of linear maps on
${\mathrm{L}}({\mathcal{H}})$ might be given in the following definition.

\vskip 5pt
\begin{definition}\label{ultra} A linear map $\Psi: {\mathrm{L}}({\mathcal{H}})\to {\mathrm{L}}({\mathcal{H}})$ is said ultra-regular
if it fulfills two conditions:
\begin{itemize}
\item[(u1)] $\Psi(I)=I$ and $\Psi$ is trace preserving.
\item[(u2)] Whenever a sequence $A_n\to A$ for either the norm-, strong-, or weak-topology, then we also have $\Psi(A_n)\to \Psi(A)$
for the same type of convergence.
\end{itemize}
\end{definition}

\vskip 5pt
Any ultra-regular linear map preserves the set of essentially scalar operators (of the form $\lambda I + K$ with $\lambda\in\bC$ and
a compact operator $K$). For its complement, we state our last corollary.

\vskip 5pt
\begin{cor}\label{corultra} Let $A\in {\mathrm{L}}({\mathcal{H}})$ be essentially nonscalar. Then, for all $X\in
{\mathrm{L}}({\mathcal{H}})$ there exists a ultra-regular linear map $\Psi: {\mathrm{L}}({\mathcal{H}})\to
{\mathrm{L}}({\mathcal{H}})$ such that $\Psi(A)=X$.
\end{cor}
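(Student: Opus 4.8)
The plan is to use Wu's theorem, quoted just above, as the sole structural input and to reduce the whole problem to a normalization of coefficients. Since $A$ is essentially nonscalar, Wu's theorem tells us that the linear span of the unitary orbit of $A$ is all of $\mathrm{L}(\mathcal{H})$; in particular the target $X$ admits a representation $X = \sum_{j=1}^m c_j U_j A U_j^*$ with scalars $c_j$ and unitaries $U_j$. The natural candidate is then the finite combination of unitary conjugations $\Psi(T) = \sum_{j=1}^m c_j U_j T U_j^*$. Each map $T\mapsto U_j T U_j^*$ is a norm isometry, is sequentially continuous for the strong and weak topologies, preserves the trace ideal, and satisfies $\mathrm{Tr}\, U_j T U_j^* = \mathrm{Tr}\, T$; hence $\Psi$ automatically meets the continuity condition (u2) and sends $A$ to $X$. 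The only issue is (u1): since $\Psi(I) = (\sum_j c_j) I$ and $\mathrm{Tr}\,\Psi(Z) = (\sum_j c_j)\mathrm{Tr}\, Z$, both unitality and trace preservation hold if and only if $\sum_j c_j = 1$. Thus everything reduces to producing a representation of $X$ whose coefficients sum to $1$.

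To control the coefficient sum I would study, for each $Y\in\mathrm{L}(\mathcal{H})$, the set $\mathcal{R}(Y)$ of all numbers $\sum_j c_j$ arising from representations $Y = \sum_j c_j U_j A U_j^*$. Wu's theorem gives $\mathcal{R}(Y)\neq\emptyset$, and since an affine combination of two representations of the same operator is again a representation of that operator, each $\mathcal{R}(Y)$ is an affine subset of $\bC$, hence either a single point or all of $\bC$. In particular $\mathcal{R}(0)$ is a $\bC$-subspace. The key claim is that $\mathcal{R}(0) = \bC$: once this is known, adding to a fixed representation of $X$ a representation of $0$ with suitable coefficient sum yields $\mathcal{R}(X) = \bC$, so $1\in\mathcal{R}(X)$ and the normalization is achieved.

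The heart of the matter, and the step I expect to be the main obstacle, is to rule out $\mathcal{R}(0) = \{0\}$. Assume it holds; then the coefficient sum is well defined and gives a linear functional $\ell$ on $\mathrm{L}(\mathcal{H})$ with $\ell(A) = 1$ and $\ell(WTW^*) = \ell(T)$ for every unitary $W$. Replacing $T$ by $TW$ in the invariance relation gives $\ell(WT) = \ell(TW)$, and, writing an arbitrary operator as a finite linear combination of unitaries, this extends to $\ell(ST) = \ell(TS)$ for all $S,T$; thus $\ell$ vanishes on every commutator. Since $\mathcal{H}$ is infinite dimensional, the linear span of commutators is all of $\mathrm{L}(\mathcal{H})$ (a classical fact, following from the Brown--Pearcy description of commutators), so $\ell\equiv 0$, contradicting $\ell(A) = 1$. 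Hence $\mathcal{R}(0) = \bC$.

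Finally, choosing a representation $X = \sum_{j=1}^m c_j U_j A U_j^*$ with $\sum_j c_j = 1$ and setting $\Psi(T) = \sum_{j=1}^m c_j U_j T U_j^*$ produces an ultra-regular linear map with $\Psi(A) = X$, which completes the argument. The external ingredients are exactly Wu's spanning theorem and the infinite-dimensional fact that commutators span $\mathrm{L}(\mathcal{H})$; the affineness of $\mathcal{R}(\cdot)$ and the verification of (u1)--(u2) are routine.
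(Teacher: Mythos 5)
Your argument is correct, and it takes a genuinely different route from the paper. The paper's proof first uses the Anderson--Stampfli lemma to exhibit, in a suitable unitary representation of $A$, a block-diagonal corner $\oplus_n D_n$ with $D_n=\mathrm{diag}(a_n,b_n)$, $a_n\to a$, $b_n\to b$ for two distinct points $a,b\in W_e(A)$; it then conjugates by a uniformly bounded invertible $T=(\oplus_n T_n)\oplus I$ to force $W_e(SAS^{-1})\supset{\mathcal{D}}$ for some invertible $S$, applies Corollary \ref{cordoubly} (which rests on the pinching Theorem \ref{pinching}) to obtain a completely positive, unital, trace-preserving, sot/wot-sequentially continuous $\Phi$ with $\Phi(SAS^{-1})=X$, and sets $\Psi(\cdot)=\Phi(S\cdot S^{-1})$. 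You instead take Wu's spanning theorem as the structural input and reduce everything to normalizing the coefficient sum; the dichotomy for ${\mathcal{R}}(0)$ and the nonexistence of a nonzero tracial functional on ${\mathrm{L}}({\mathcal{H}})$ (commutators span, by Brown--Pearcy or Halmos's ``sum of two commutators'' theorem, or by the usual isometry swindle applied to your relation $\ell(ST)=\ell(TS)$) close the argument. Each step you flag as routine is indeed routine, and the hypotheses match: ``essentially nonscalar'' is exactly ``not scalar plus compact,'' which is Wu's hypothesis. What the paper's route buys is a $\Psi$ that is a similarity-conjugate of a completely positive map, produced uniformly from a single pinching, and consistency with the chapter's machinery (it is the same mechanism that proves Corollary \ref{cordoubly}). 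What your route buys is independence from the pinching theorem altogether: the text remarks that the authors ``cannot find an alternative proof, not based on the pinching theorem, for Corollaries \ref{cordoubly} and \ref{corultra},'' and your argument supplies precisely such an alternative for Corollary \ref{corultra} (though not for Corollary \ref{cordoubly}, whose positivity conclusion your signed coefficients $c_j$ cannot deliver) -- at the cost of importing Wu's theorem, itself a nontrivial result, as a black box.
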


\vskip 5pt
\begin{proof} An operator is essentially nonscalar precisely when its essential numerical range is not reduced to a single point. So,
let $a,b\in W_e(A)$, $a\neq b$.
By a lemma of Anderson and Stampfli \cite{AndS}, $A$ is unitarily equivalent to an operator on ${\mathcal{H}}\oplus{\mathcal{H}}$ of
the form
$$
B=\begin{pmatrix} D & \ast \\
\ast & \ast
\end{pmatrix}
$$
where $D=\oplus_{n=1}^{\infty} D_n$, with two by two matrices $D_n$,
$$
D_n=
\begin{pmatrix} a_n & 0\\
0 & b_n\end{pmatrix}
$$
such that $a_n\to a$ and $b_n\to b$ as $n\to\infty$. We may assume that, for some $\alpha,\beta >0$, we have $\alpha >|a_n|+|b_n|$
and $|a_n-b_n|>\beta$. Hence there exist $\gamma >0$ and two by two intertible matrices $T_n$ such that, for all $n$,
$W(T_nD_nT_n^{-1})\supset {\mathcal{D}} $ and $\| T_n\| +\| T_n^{-1}\|\le \gamma$. So, letting $T=\left(\oplus_{n=1}^{\infty}
T_n\right)\oplus I$, we obtain an invertible operator $T$ on ${\mathcal{H}}\oplus{\mathcal{H}}$ such that $W_e(TBT^{-1})\supset
{\mathcal{D}}$.

Hence we have an invertible operator $S$ on ${\mathcal{H}}$ such that $W_e(SAS^{-1})\supset {\mathcal{D}}$. Therefore we may apply
Corollary \ref{cordoubly} and obtain a wot- and sot-sequentially continuous, unital, trace preserving map $\Phi$ such that
$\Phi(SAS^{-1})=X$. Letting $\Psi(\cdot)=\Phi(S\cdot S^{-1})$ completes the proof.
\end{proof}

\vskip 5pt
We cannot find an alternative proof, not based on the pinching theorem, for Corollaries
\ref{cordoubly} and \ref{corultra}. 

If we trust in Zorn, there exists a linear map $\Psi :{\mathrm{L}}({\mathcal{H}})\to {\mathrm{L}}({\mathcal{H}})$ which satifies the
condition (u1) but not the condition (u2). Indeed, let $\{a_p\}_{p\in\Omega}$ be a basis in the Calkin algebra
${\frak{C}}={\mathrm{L}}({\mathcal{H}})/{\mathrm{K}}({\mathcal{H}})$, indexed on an ordered set $\Omega$, whose first element
$a_{p_0}$ is the image of $I$ by the canonical projection $\pi: {\mathrm{L}}({\mathcal{H}})\to {\frak{C}}$. Thus, for each operator
$X$, we have a unique decomposition
$
\pi(X)= \sum_{p\in\Omega} (\pi(X))_p a_p
$
with only finitely many nonzero terms. Further $(\pi(X))_{p_0} = 0$ if $X$ is compact, and $(\pi(I))_{p_0} = 1$.
We then define a map  $\psi:{\mathrm{L}}({\mathcal{H}})\to  {\mathrm{L}}({\mathcal{H}}\oplus {\mathcal{H}})$ by
$$
\psi(X)= \begin{pmatrix} X&0 \\ 0& (\pi(X))_{p_0}I\end{pmatrix}.
$$
Letting $\Psi(X)=V\psi(X)V^*$ where $V:{\mathcal{H}}\oplus {\mathcal{H}}\to {\mathcal{H}}$ is unitary, we obtain a linear map $\Psi
:{\mathrm{L}}({\mathcal{H}})\to {\mathrm{L}}({\mathcal{H}})$ which satifies (u1) but not (u2): it is not norm continuous.

Let $\omega$ be a Banach limit on $l^{\infty}$ and define a map $\phi:l^{\infty}\to l^{\infty}$, $\{a_n\}\mapsto \{b_n\}$, where
$b_1=\omega(\{a_n\})$ and $b_n=a_{n-1}$, $n\ge 2$. Letting $\Psi(X)=\phi(diag(X))$, where $diag(X)$ is the diagonal of $X\in
{\mathcal{H}}$ in an orthonormal basis, we obtain a linear map $\Psi$ which is norm continuous, satisfies (u1) but not (u2): it is
not strongly sequentially continuous.

However, it seems not possible to define explicitly a linear map $\Psi :{\mathrm{L}}({\mathcal{H}})\to {\mathrm{L}}({\mathcal{H}})$
satisfying (u1) but not (u2).

\section{Pinchings in factors ?}

We discuss possible extensions to our results to a von Neumann algebra ${\mathfrak{R}}$ acting on a separable Hilbert space
${\mathcal{H}}$. First, we need to define an essential numerical range $W_e^{\mathfrak{R}}$ for ${\mathfrak{R}}$. Let
$A\in{\mathfrak{R}}$. If ${\mathfrak{R}}$ is type-${\mathrm{III}}$, then $W_e^{\mathfrak{R}}(A):=W_e(A)$. If ${\mathfrak{R}}$ is
type-${\mathrm{II}}_{\infty}$, then
$$
W_e^{\mathfrak{R}}(A):=\bigcap_{K\in {\mathcal{T}}}  \overline{W}(A+K)
$$
where ${\mathcal{T}}$ is the trace ideal in ${\mathfrak{R}}$ (we may also use its norm closure ${\mathcal{K}}$, the ``compact"
operators in ${\mathfrak{R}}$, or any dense sequence in ${\mathcal{K}}$)

\vskip 5pt
\begin{question} In Corollaries \ref{cor1seq}, \ref{cor2seq} and \ref{idem2seq}, can we replace ${\mathrm{L}}({\mathcal{H}})$ by a
type-${\mathrm{II}}_{\infty}$ or -${\mathrm{III}}$ factor ${\mathfrak{R}}$ with $W_e^{\mathfrak{R}}$ ?
\end{question}

\vskip 5pt
\begin{question} In Corollaries \ref{corKS} and \ref{corLW}, can we replace ${\mathrm{L}}({\mathcal{H}})$ by a
type-${\mathrm{II}}_{\infty}$ or -${\mathrm{III}}$ factor ${\mathfrak{R}}$ with $W_e^{\mathfrak{R}}$ ?
\end{question}

\vskip 5pt
\begin{question} In Corollaries \ref{cordoubly} and \ref{corultra}, can we replace ${\mathrm{L}}({\mathcal{H}})$ by a
type-${\mathrm{II}}_{\infty}$ factor ${\mathfrak{R}}$ with $W_e^{\mathfrak{R}}$ ?
\end{question}

\vskip 5pt
Recently, Dragan and Kaftal \cite{DK} obtained some decompositions for positive operators in von Neumann factors, which, in the case
of ${\mathrm{L}}({\mathcal{H}})$ were first investigated in \cite{BL-CR1}-\cite{BL-CR2} by using Theorem \ref{pinching}. This suggests that
our questions dealing with a possible extension to type-${\mathrm{II}}_{\infty}$ and -${\mathrm{III}}$ factors also have an
affirmative answer. In fact, it seems pausible that Theorem \ref{pinching} and Theorem \ref{pinchingnormal} admit a version for such
factors and this would affirmatively answer these questions.

Let ${\mathfrak{R}}$ be a type-${\mathrm{II}}_{\infty}$ or  -${\mathrm{III}}$ factor. 

\vskip 5pt
\begin{definition} A sequence $\{V_i\}_{i=1}^{\infty}$ of isometries in ${\mathfrak{R}}$ such that $\sum_{i=1}^{\infty} V_iV_i^*=I$
is called an isometric decomposition of ${\mathfrak{R}}$.
\end{definition}

\vskip 5pt
\begin{conjecture} Let $A\in{\mathfrak{R}} $ with $W_e^{\frak{R}}(A)\supset{\mathcal{D}}$ and $\{X_i\}_{i=1}^{\infty}$ a sequence in
${\mathfrak{R}}$ such that
$\sup_i\|X_i\|<1$. Then, there exists an isometric decomposition  $\{V_i\}_{i=1}^{\infty}$ of ${\mathfrak{R}}$ such that
 $V_i^*AV_i=X_i$ for all $i$.
\end{conjecture}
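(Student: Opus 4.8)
The plan is to imitate the proof of the pinching theorem (Theorem \ref{pinching}) in $\mathrm{L}(\mathcal{H})$, replacing orthogonal decompositions of $\mathcal{H}$ by isometric decompositions of $\mathfrak{R}$ and weakly null orthonormal sequences by families of projections that are negligible with respect to the ideal $\mathcal{K}$ (the norm closure of the trace ideal $\mathcal{T}$ in the type-$\mathrm{II}_\infty$ case, the ideal defining $W_e^{\mathfrak{R}}$). First I would recast the statement in ``pinching form'': since $\mathfrak{R}$ acts on a separable Hilbert space and is properly infinite, $\mathfrak{R}\cong\mathfrak{R}\,\overline{\otimes}\,\mathrm{L}(\ell^2)$, so there is a fixed Cuntz-type isometric decomposition $\{S_i\}_{i=1}^{\infty}$ with $\sum_i S_iS_i^*=I$ and $S_i^*S_j=\delta_{ij}I$. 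It then suffices to produce a single unitary $U\in\mathfrak{R}$ whose ``diagonal'' relative to $\{S_i\}$ is the prescribed sequence, that is $S_i^*\,U^*AU\,S_i=X_i$ for all $i$; setting $V_i:=US_i$ yields the desired isometric decomposition $\{V_i\}$ with $V_i^*AV_i=X_i$.

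The technical core is a factor analogue of \eqref{compression}. Fix $r$ with $\sup_i\|X_i\|<r<1$. From the hypothesis $\mathcal{D}\subset W_e^{\mathfrak{R}}(A)$ I would extract a diffuse covering structure: a sequence of mutually orthogonal, pairwise equivalent projections $\{P_n\}$ with $P_n\to 0$ in the $\ast$-strong topology, along which the compressions of $A$ (viewed inside a matrix amplification of $\mathfrak{R}$) approximately fill $\mathcal{D}$ modulo $\mathcal{K}$. Using proper infiniteness to split this structure into $\aleph_0$ mutually orthogonal copies, each still covering $\mathcal{D}$ essentially, and then applying, inside each copy, the elementary fact that a strict contraction $X$ with $\|X\|\le r<1$ is an \emph{exact} corner of a matrix whose numerical range covers $r\overline{\mathcal{D}}$ (a countable matrix-completion argument, where strictness is precisely what upgrades approximate realization to exact realization), I would realize each $X_i$ as an exact compression $V_i^*AV_i$ after a unitary conjugation.

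The remaining point is to guarantee $\sum_i V_iV_i^*=I$ rather than leaving a residual projection. Here I would run a back-and-forth (marriage) argument interlacing two tasks: at odd steps peel off the next $X_i$ on a corner $E_i\sim I$ with co-infinite complement, and at even steps absorb a piece of the fixed decomposition $\{S_iS_i^*\}$, so that both families are exhausted simultaneously and the partial sums $\sum_{i\le N}E_i$ increase $\ast$-strongly to $I$. At each stage one compresses to the residual projection $Q=I-\sum_{i\le N}E_i$, which is again $\sim I$, so $Q\mathfrak{R}Q$ is a factor of the same type, and one must check that $QAQ$ still has $W_e^{Q\mathfrak{R}Q}(QAQ)\supset\mathcal{D}$; this holds provided the covering structure of the previous step is chosen to survive compression by $Q$.

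The hard part will be the covering lemma in the type-$\mathrm{III}$ case. There is no trace and no dimension function, all nonzero projections are equivalent, and $W_e^{\mathfrak{R}}$ is only accessible through $\bigcap_{K}\overline{W}(A+K)$; producing a genuinely diffuse family $\{P_n\}$ that covers $\mathcal{D}$ essentially and is preserved under compression is exactly the phenomenon the authors isolate as a conjectural extension of Theorem \ref{pinching}. I expect the decompositions of Dragan--Kaftal \cite{DK}, together with the techniques of \cite{BL-CR1}--\cite{BL-CR2}, to supply the positive self-adjoint prototype; the genuinely new input required is the non-self-adjoint, essential-numerical-range version, along with the verification that the essential numerical range relative to the reduced factor is preserved along the exhaustion. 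This last preservation property is the crux, and it is what keeps the statement at the level of a conjecture.
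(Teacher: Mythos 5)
This statement is presented in the paper only as a \emph{conjecture}; the paper contains no proof of it, and your text is, by your own admission in its last sentence, a strategy outline rather than a proof. So the honest verdict is that there is a genuine gap --- in fact several --- and what you have written cannot be accepted as a demonstration. The plan you describe is a faithful transcription of the proof of the pinching theorem in ${\mathrm{L}}({\mathcal{H}})$ (Section 5.7), but every step of that proof that you propose to ``imitate'' uses structure that is absent in a type-${\mathrm{II}}_{\infty}$ or type-${\mathrm{III}}$ factor. Concretely: the ${\mathrm{L}}({\mathcal{H}})$ argument realizes a diagonalizable normal strict contraction as an exact compression by inductively choosing unit vectors $e_n$ with $\langle e_n, Ae_n\rangle=\lambda_n(X)$, orthogonal to the previously chosen $e_j, Ae_j, A^*e_j$; this is a construction with rank-one projections, which do not exist in ${\mathfrak{R}}$. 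Your ``countable matrix-completion argument, where strictness is precisely what upgrades approximate realization to exact realization'' names the desired conclusion but supplies no substitute mechanism. Likewise the reduction of a general strict contraction to the diagonalizable case goes through the Berg--Weyl--von Neumann theorem ($X=D+K$ with $K$ small and compact), for which no analogue relative to the ideal defining $W_e^{\mathfrak{R}}$ is offered --- and in the type-${\mathrm{III}}$ case the factor is simple, so there is no nontrivial ideal to perturb by at all.

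The second unfilled gap is the one you correctly flag yourself: the exhaustion step requires that after peeling off each corner $E_i$, the compression of $A$ to the residual projection $Q$ still has $W_e^{Q{\mathfrak{R}}Q}(QAQ)\supset{\mathcal{D}}$. In Lemma 5.7.3 this is statement (ii), and it is obtained by an explicit construction of an orthonormal system $\{f_n\}$ whose odd-indexed diagonal entries are dense in ${\mathcal{D}}$ --- again a rank-one argument. Asserting that the ``covering structure of the previous step is chosen to survive compression by $Q$'' is precisely the missing lemma, not a proof of it. A correct write-up would either prove a factor version of Lemma 5.7.3 (which is what the authors identify as the open problem) or find a route avoiding it; as it stands, your proposal reproduces the shape of the known proof while leaving open exactly the points that make the statement a conjecture. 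Citing Dragan--Kaftal for the positive self-adjoint prototype is a reasonable pointer for future work, but it does not close either gap.
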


\section{Around this article}

\subsection{Essential numerical range}

Let us give three equivalent definitions of the essential numerical range $W_e(A)$ of an operator $A$ acting on the Hilbert space ${\mathcal{H}}$.

\begin{itemize}
\item[(1)]\ $W_e(A)=\cap\overline{W}(A+K)$, the intersection running over the compact operators $K$ 

\item[(2)]\ Let $\{E_n\}$ be any sequence of finite rank projections converging strongly to the 
identity and denote by $B_n$ the compression of $A$ to the subspace $E_n^{\perp}$. Then
$W_e(A)=\cap_{n\ge1}\overline{W}(B_n)$ 

\item[(3)]\ $W_e(A)=\{\lambda\ |\ {\rm there\ is\ an\ orthonormal\ system\ \{ e_n \}_{n=1}^\infty \
with\ \lim\langle e_n, Ae_n\rangle =\lambda} \}.$
\end{itemize}

It follows that $W_e(A)$ is a compact convex set containing the essential spectrum of $A$,
$Sp_e(A)$. The equivalence between these definitions has been known since the early seventies
if not soone. The very first definition of $W_e(A)=$ is (1); however (3)
is also a natural notion and easily entails convexity and compactness of the essential numerical
range.

\subsection{Proof of the pinching theorem}

Recall that an operator  mean an element of the algebra  ${\rm L}({\cal H})$ of all bounded linear
operators acting on the usual (i.e.\ complex, separable, infinite dimensional) Hilbert space 
${\cal H}$. We will denote by the same letter a projection and the corresponding subspace. Thus,
if $F$ is a projection and $A$ is an operator, we denote by $A_F$ the compression of $A$ by $F$,
that is the restriction of $FAF$ to the subspace $F$. Given a
total sequence of nonzero mutually orthogonal projections $\{E_n\}$, we consider the pinching
$$
{\cal P}(A)=\sum_{n=1}^{\infty}E_nAE_n=\bigoplus_{n=1}^{\infty}A_{E_n}.
$$
If $\{A_n\}$ is a sequence of operators acting on separable Hilbert spaces with $A_n$ 
unitarily equivalent to $A_{E_n}$ for all $n$, we also naturally write 
${\cal P}(A)\simeq\bigoplus_{n=1}^{\infty}A_n$. Our main result  \cite{B-JOT} for operator diagonals
can then be stated as:

\vskip 10pt\noindent
\begin{theorem}\label{th2003} Let $A$ be an operator with $W_e(A)\supset{\cal D}$
and let $\{A_n\}_{n=1}^{\infty}$ be a sequence of operators such that
$\sup_n\Vert A_n\Vert_{\infty}<1$. Then, we have a pinching
$$
{\cal P}(A)\simeq \bigoplus_{n=1}^{\infty}A_n.
$$
\end{theorem}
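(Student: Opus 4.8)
The plan is to prove Theorem \ref{th2003} by reducing it to the compression statement \eqref{compression} that has already been extracted from Theorem \ref{pinching}, and then building the full orthogonal decomposition one summand at a time via an inductive ``peeling'' argument. The central observation I would exploit is that the essential numerical range is insensitive to compressions onto subspaces of finite codimension, and more importantly that the hypothesis $W_e(A)\supset\mathcal{D}$ is preserved under passing to the complementary corner at each stage. First I would fix $\rho$ with $\sup_n\|A_n\|_\infty<\rho<1$, so that every $A_n$ is a strict contraction after rescaling by $1/\rho$; this uniform gap is what keeps the construction from degenerating.

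The inductive step is the heart of the matter. Suppose we have already split off subspaces carrying $A_1,\dots,A_{k-1}$, leaving a corner $A'$ (the compression of $A$ to the orthogonal complement of what we have used). The key claim I would establish is that $A'$ still satisfies $W_e(A')\supset\mathcal{D}$: since each piece removed is acting on a subspace on which $A$ looks like a bounded operator, and $W_e$ is governed by the behaviour of $A$ ``at infinity'' (definition (2) in Section 6.1, using compressions to complements of finite-rank projections), removing infinite-dimensional corners in a controlled way does not shrink the essential numerical range below $\mathcal{D}$. Granting this, I would apply the single-step result \eqref{compression}: because $\|A_k\|_\infty<1$ and $W_e(A')\supset\mathcal{D}$, the operator $A'$ is unitarily congruent to a $2\times 2$ operator block whose $(1,1)$ entry is exactly $A_k$, i.e.
$$
A'\simeq\begin{pmatrix} A_k&\ast\\ \ast&\ast\end{pmatrix},
$$
which exhibits an infinite-dimensional subspace $\mathcal{H}_k$ with $A'_{\mathcal{H}_k}\simeq A_k$ and leaves a new infinite-dimensional corner to continue. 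Iterating produces mutually orthogonal $\{\mathcal{H}_k\}$ with $A_{\mathcal{H}_k}\simeq A_k$.

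The main obstacle, and the step requiring genuine care, is proving that the accumulated decomposition is \emph{total}, that is $\bigoplus_k \mathcal{H}_k=\mathcal{H}$ rather than leaving a nontrivial residual subspace. A naive greedy construction can leak a positive-codimension remainder. The standard remedy, which I would adopt, is a back-and-forth bookkeeping: interleave the construction so that at odd stages one peels off the prescribed summand $A_k$ as above, and at even stages one ensures that the first not-yet-covered basis vector $e_m$ of a fixed orthonormal basis of $\mathcal{H}$ is absorbed into the span of the subspaces used so far, by enlarging the current $\mathcal{H}_k$ slightly (a finite-rank adjustment, harmless for the unitary-equivalence class $A_{\mathcal{H}_k}\simeq A_k$ since $A_n$ acts on an infinite-dimensional space and finite perturbations of the subspace can be reabsorbed). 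This guarantees exhaustion of $\mathcal{H}$. The second delicate point is maintaining the uniform norm gap throughout: one must verify that the residual corner $A'$ at each stage remains a genuine operator to which \eqref{compression} applies, which follows from boundedness of $A$ together with the persistence of $W_e(A')\supset\mathcal{D}$ established above. Once totality and the norm control are in place, the resulting pinching $\mathcal{P}(A)=\sum_n E_n A E_n$ with $E_n$ the projection onto $\mathcal{H}_n$ satisfies $\mathcal{P}(A)\simeq\bigoplus_n A_n$, completing the proof.
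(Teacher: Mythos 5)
There are two genuine gaps, and both occur exactly at the places you flagged as ``requiring care'' but then dismissed too quickly. First, the claim that the residual corner $A'$ automatically satisfies $W_e(A')\supset\mathcal{D}$ after peeling off an \emph{infinite-dimensional} subspace is false. The essential numerical range is stable under compression to complements of \emph{finite-rank} projections (definition (2) in Section 5.6.1), but not under compression to the complement of an infinite-dimensional subspace: take $A$ diagonal with eigenvalues dense in $\mathcal{D}$ and let the removed subspace carry all eigenvalues of modulus greater than $1/2$; the complement is still infinite-dimensional, yet its essential numerical range shrinks to a disc of radius $1/2$. The single-step statement \eqref{compression} gives no control whatsoever over the complementary corner, so it cannot be iterated as you propose. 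This is precisely why the paper proves a \emph{refined} version (Lemma \ref{lemB}), whose conclusion (ii) explicitly \emph{arranges} that $W_e(B_{E^{\perp}})\supset\mathcal{D}$ by reserving, inside the construction, an infinite family of mutually orthogonal pieces $F_j$ each with $W_e(B_{F_j})\supset\mathcal{D}$ and spending only finitely many of them on the current target. Preservation of the hypothesis is something you must build, not something you inherit.

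Second, your totality argument does not work. Enlarging $\mathcal{H}_k$ by a finite-dimensional space to absorb a basis vector changes the compression: $A_{\mathcal{H}_k\oplus F}$ is a nontrivial dilation of $A_{\mathcal{H}_k}$ and is in general not unitarily equivalent to the \emph{prescribed} operator $A_k$ (its norm, rank and spectrum all move), and you have no freedom to replace $A_k$ by something close to it. The paper's mechanism is quantitative instead: Lemma \ref{lemB}(ii) guarantees $\Vert Eh\Vert\ge\varepsilon$ for any prescribed unit vector $h$, with $\varepsilon=\varepsilon(\rho,a)=2^{-l/2}$ depending only on the norm bounds --- this comes for free from the fact that $h$ lies in a space split into $2^l$ orthogonal pieces each carrying a copy of the target, so $h$ has a component of size at least $2^{-l/2}$ in one of them. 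Running the induction against a dense sequence $\{h_n\}$ in the unit sphere, the uniform lower bound forces $F_n=\sum_{j\le n}E_j$ to increase strongly to $I$. So the correct skeleton is: (a) prove the one-block Lemma \ref{lemA} (itself nontrivial: it needs the Berg--Weyl--von Neumann decomposition and the averaging trick with the orthogonal matrices $W_k$ to pass from diagonalizable normal targets to arbitrary strict contractions); (b) upgrade it to Lemma \ref{lemB} with the two extra conclusions about the complement and about $\Vert Eh\Vert$; (c) only then iterate. Your outline collapses (b) into an unproved assertion and replaces the $\Vert Eh\Vert\ge\varepsilon$ device with an adjustment that destroys the very property $A_{\mathcal{H}_k}\simeq A_k$ you are trying to preserve.
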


\vskip 5pt
We need two lemmas. The first one is Theorem \ref{th2003} for a single strict contraction:

\vskip 5pt\noindent
\begin{lemma}\label{lemA}Let $A$ be an operator with $W_e(A)\supset{\cal D}$ and let $X$ be a strict contraction. Then there exists a
projection $E$ such that $A_E=X$. \end{lemma}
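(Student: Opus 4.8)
The plan is to realize $X$ as a compression by building, one vector at a time, an orthonormal system $\{f_j\}_{j\ge 1}$ in $\mathcal{H}$ whose ``$A$-Gram matrix'' reproduces $X$ exactly. Fixing an orthonormal basis $\{e_j\}$ of the space on which $X$ acts and writing $x_{ij}=\langle e_i,Xe_j\rangle$, it suffices to produce orthonormal $f_j$ with $\langle f_i,Af_j\rangle=x_{ij}$ for all $i,j$: then the isometry $V\colon e_j\mapsto f_j$ satisfies $V^*AV=X$, and $E=VV^*$ is the desired projection with $A_E\simeq X$. So everything reduces to an inductive matching argument, and the key resource is that compressing to a finite-codimensional subspace does not change the essential numerical range, so $W_e$ remains $\supset\mathcal{D}$ at every stage.

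For the inductive step, suppose $f_1,\dots,f_n$ are orthonormal with $\langle f_i,Af_j\rangle=x_{ij}$ for $i,j\le n$, and let $P_n$ be the projection onto their span. I look for a unit vector $f_{n+1}\in(I-P_n)\mathcal{H}$ matching the new column $(x_{i,n+1})_{i\le n}$, row $(x_{n+1,j})_{j\le n}$ and diagonal entry $x_{n+1,n+1}$. The row and column conditions are purely \emph{linear}: they prescribe the inner products of $f_{n+1}$ with the finitely many fixed vectors $(I-P_n)A^*f_i$ and $(I-P_n)Af_j$, hence they determine exactly the component $g:=P_{\mathcal{G}}f_{n+1}$ of $f_{n+1}$ on the finite-dimensional subspace $\mathcal{G}$ spanned by those vectors (after a harmless perturbation removing any linear dependence). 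Writing $f_{n+1}=g+h$ with $h\perp\mathcal{G}$, I additionally demand $h\perp Ag$ and $h\perp A^*g$, which kills all cross terms; the remaining diagonal requirement then collapses to the single scalar equation $\langle h,Ah\rangle=x_{n+1,n+1}-\langle g,Ag\rangle$ subject to $\|h\|^2=1-\|g\|^2$.

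Here the vector $h$ ranges over $\mathcal{M}=(I-P_n)\mathcal{H}\cap\mathcal{G}^\perp\cap\{Ag,A^*g\}^\perp$, which is of finite codimension, so $W_e(A_{\mathcal{M}})=W_e(A)\supset\mathcal{D}$. Since $\mathcal{D}\subset W_e$ and $W_e$ is closed and convex, the open unit disc lies in $\mathrm{int}\,W_e(A_{\mathcal{M}})=\mathrm{int}\,\overline{W}(A_{\mathcal{M}})\subset W(A_{\mathcal{M}})$. Thus the scalar equation is solvable as soon as $|x_{n+1,n+1}-\langle g,Ag\rangle|<1-\|g\|^2$: one takes $h\in\mathcal{M}$ of the prescribed norm with $\langle h,Ah\rangle/\|h\|^2$ equal to the required quotient, and sets $f_{n+1}=g+h$. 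The base case $n=0$ is just the attainment of $x_{11}$ (which lies in the open disc) as a numerical value of $A$, and passing to $E=\overline{\mathrm{span}}\{f_j\}$ completes the construction.

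The main obstacle is exactly the budget inequality $|x_{n+1,n+1}-\langle g,Ag\rangle|<1-\|g\|^2$: this is where the strict contractivity $\|X\|<1$ must be spent, and it is the only genuinely nontrivial estimate. I expect to secure it by a Schur-complement/defect-operator argument, using the positivity of $I-X^*X$ to bound the prescribed row and column and to force $\|g\|<1$ with slack to spare, while arranging during the induction that the auxiliary vectors $\{Af_j\}$ stay uniformly well conditioned — imposing a few extra (still finitely many) orthogonality constraints on each $h$ — so that $\|g\|$ is controlled by the norm of the prescribed row and column rather than by the geometry of $A$. Everything else, namely the invariance of $W_e$ under finite-codimension compression, the convexity of $W_e$, and the linear algebra matching the off-diagonal entries, is routine.
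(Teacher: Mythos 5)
Your reduction of the lemma to the inductive construction of an orthonormal system $\{f_j\}$ with $\langle f_i,Af_j\rangle=x_{ij}$ is fine, and the solvability of the final scalar equation under the budget inequality $|x_{n+1,n+1}-\langle g,Ag\rangle|<1-\|g\|^2$ is correct (interior of the closed convex set $\overline{W}(A_{\mathcal{M}})$ lies in $W(A_{\mathcal{M}})$, and $W_e$ survives finite-codimensional compression). But the proof has a genuine gap exactly where you flag it: the budget inequality is never established, and it is not a routine estimate. The component $g$ is determined by the linear system $\langle (I-P_n)A^*f_i,\,g\rangle=x_{i,n+1}$, $\langle (I-P_n)Af_j,\,g\rangle$ fixed, so $\|g\|$ is governed by the inverse of the Gram matrix of the vectors $(I-P_n)A^*f_i$ and $(I-P_n)Af_j$ --- quantities you do not control. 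In the extreme case where $f_1$ happens to be a reducing eigenvector of $A$, one has $(I-P_1)A^*f_1=(I-P_1)Af_1=0$ and the row/column conditions force $x_{12}=x_{21}=0$, so the induction dies at stage $2$ for a generic $X$. This shows the needed conditioning of $\{(I-P_n)A^*f_i,(I-P_n)Af_j\}$ is a property of choices made at \emph{earlier} stages, before the future constraints are known; your proposal to impose ``a few extra orthogonality constraints on each $h$'' cannot repair a bad $f_1$, and no mechanism is given for making the early choices future-proof. Moreover, even with well-conditioned constraint vectors, $\langle g,Ag\rangle$ is only bounded by $\|A\|\,\|g\|^2$ with $\|A\|$ unbounded under the hypothesis $W_e(A)\supset\mathcal{D}$, so the inequality $\|A\|\,\|g\|^2\lesssim 1-\|g\|^2$ requires a quantitative smallness of $\|g\|$ that the ``Schur-complement/defect-operator argument'' is only announced, not carried out.

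For comparison, the paper's proof sidesteps entrywise matching altogether. It first treats $X$ normal and diagonalizable, where only the diagonal entries $\lambda_j(X)\in\mathcal{D}$ must be realized and all off-diagonal entries are forced to vanish by choosing each new vector $e_{k+1}$ in $[\mathrm{span}\{e_j,Ae_j,A^*e_j:j\le k\}]^{\perp}$; the general case is then reduced to this one by dilating $X/\|X\|_{\infty}$ to a unitary (so $X$ becomes a corner of a normal $N$ with $\|N\|_{\infty}<1$), writing $N=D+K$ by the Berg--Weyl--von Neumann theorem, expressing $X$ as an average $\frac{1}{2^l}(m\alpha D+n\beta\,{\rm Re}K+n\beta\,i\,{\rm Im}K)$ of normal diagonalizable summands, and conjugating the corresponding block-diagonal operator by the Hadamard-type unitaries $W_l$ so that every diagonal block of the result equals $X$. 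If you want to salvage your direct approach you must either prove the conditioning and budget estimates uniformly over the induction, or adopt a reduction of this kind.
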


\vskip 5pt\noindent
The second Lemma  is a refined version of the first one:

\vskip 5pt\noindent
\begin{lemma}\label{lemB}  Let $a\ge 1$ and $1>\rho>0$ be two constants. Let $h$ be a norm one vector, let $X$ be a strict contraction
with $\Vert X\Vert_{\infty}<\rho$ and let $B$ be an operator with $\Vert B\Vert_{\infty}\le a$ and $W_e(B)\supset{\cal D}$. Then,
there exist a number $\varepsilon>0$, only depending on $\rho$ and $a$, and a projection $E$ such that:
\vskip 5pt
\ {\rm (i)} $\dim E=\infty$ and $B_E=X$,
\vskip 5pt
{\rm (ii)} $\dim E^{\perp}=\infty$, $W_e(B_{E^{\perp}})\supset{\cal D}$ and $\Vert Eh\Vert\ge
\varepsilon$. 
\end{lemma}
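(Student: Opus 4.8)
The plan is to realize $X$ as a compression $B_E\simeq X$ through an inductive construction of an orthonormal sequence $\{v_i\}_{i\ge 1}$, in the spirit of Lemma \ref{lemA}, but with two modifications dictated by the extra conclusions of Lemma \ref{lemB}: the first vector $v_1$ is forced to carry a fixed proportion of $h$, and an infinite-dimensional piece of the space is reserved so that the orthocomplement keeps a large essential numerical range. Throughout I use the characterization (3) of the essential numerical range recalled earlier: $\lambda\in W_e(T)$ iff there is an orthonormal system $\{e_n\}$ with $\langle e_n,Te_n\rangle\to\lambda$. Let $X$ act on a separable infinite-dimensional space $\mathcal{K}$ (forced by $\dim E=\infty$ and $B_E=X$), fix an orthonormal basis $\{k_i\}$ of $\mathcal{K}$ with $k_1$ arbitrary, and write $x_{ij}=\langle k_i,Xk_j\rangle$; then $\tau:=x_{11}=\langle k_1,Xk_1\rangle\in W(X)\subset\rho\mathcal{D}$.

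First I would produce $v_1$, and this quantitative step is the heart of the refinement. Put $c=\langle h,Bh\rangle$, so $|c|\le a$, and set $\varepsilon_0:=\sqrt{(1-\rho)/(a+1)}$. Choosing $z:=(\tau-\varepsilon_0^2 c)/(1-\varepsilon_0^2)$ one checks $|z|\le(\rho+\varepsilon_0^2 a)/(1-\varepsilon_0^2)\le 1$, using precisely $\varepsilon_0^2(a+1)=1-\rho$, so $z\in\mathcal{D}\subset W_e(B)$. Pick an orthonormal system $\{e_n\}$ with $\langle e_n,Be_n\rangle\to z$; since $e_n\to 0$ weakly, the cross terms $\langle h,Be_n\rangle$, $\langle e_n,Bh\rangle$ and the overlap $\langle h,e_n\rangle$ all tend to $0$. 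Taking $v=\cos\theta\,h+\sin\theta\,e_n$ with $\cos^2\theta=\varepsilon_0^2$ and $n$ large, the norm of $v$ tends to $1$, $\langle v,h\rangle\to\cos\theta$, and $\langle v,Bv\rangle\to\cos^2\theta\,c+\sin^2\theta\,z=\tau$. A short intermediate-value argument, letting $\theta$ and $z$ vary slightly, then yields a genuine unit vector $v_1$ with $\langle v_1,Bv_1\rangle=\tau$ exactly and $|\langle v_1,h\rangle|\ge\varepsilon:=\varepsilon_0/2$. The crucial feature is that $\varepsilon$ depends only on $\rho$ and $a$.

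Next I would set up the orthocomplement bookkeeping inside $\mathcal{H}':=\{v_1\}^{\perp}$, which has finite codimension, so $W_e(B_{\mathcal{H}'})=W_e(B)\supset\mathcal{D}$. Split $\mathcal{H}'=\mathcal{R}\oplus\mathcal{W}$ into two infinite-dimensional subspaces with $W_e(B_{\mathcal{R}})\supset\mathcal{D}$ and $W_e(B_{\mathcal{W}})\supset\mathcal{D}$ (a standard interlacing of two orthonormal systems, each fed by a countable dense subset of $\mathcal{D}$ and the characterization (3)). I then build $v_2,v_3,\dots$ inside $\mathcal{W}$. Encoding the interaction of $B$ with the fixed vector $v_1$ as $\beta:=P_{\mathcal{W}}B^{*}v_1$ and $\gamma:=P_{\mathcal{W}}Bv_1$, I require simultaneously $\langle v_i,Bv_j\rangle=x_{ij}$ for $i,j\ge 2$, together with the border conditions $\langle\beta,v_j\rangle=x_{1j}$ and $\langle v_j,\gamma\rangle=x_{j1}$ for $j\ge 2$. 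At the $m$-th step these amount to finitely many prescribed inner products against the fixed vectors $v_2,\dots,v_m$, $Bv_2,\dots,Bv_m$, $B^{*}v_2,\dots,B^{*}v_m$, $\beta$, $\gamma$, plus one prescribed diagonal value $x_{m+1,m+1}$; since deleting finitely many directions leaves the essential numerical range unchanged, $W_e(B_{\mathcal{W}})\supset\mathcal{D}$ persists and $v_{m+1}$ is produced by the very construction underlying Lemma \ref{lemA} (the two border constraints against $\beta,\gamma$ being merely two additional fixed vectors of the same type).

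Finally I would assemble the conclusion. The isometry $k_i\mapsto v_i$ gives $B_E\simeq X$ for $E=\overline{\mathrm{span}}\{v_i\}_{i\ge 1}$, and $\dim E=\infty$ since there are infinitely many $v_i$; this is (i). For (ii), $E\subset \mathbb{C}v_1\oplus\mathcal{W}$ forces $\mathcal{R}\subset E^{\perp}$, so $\dim E^{\perp}=\infty$; moreover, for an orthonormal system contained in $\mathcal{R}$ the diagonal entries of $B_{\mathcal{R}}$, of $B_{E^{\perp}}$ and of $B$ coincide, so the characterization (3) yields $W_e(B_{E^{\perp}})\supset W_e(B_{\mathcal{R}})\supset\mathcal{D}$. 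Lastly $\|Eh\|\ge|\langle v_1,h\rangle|\ge\varepsilon$ because $v_1\in E$. The main obstacle is the first step: one must guarantee the overlap with a constant $\varepsilon$ uniform in $h$, $B$ and $X$ (depending only on $\rho$ and $a$), and it is exactly the inequality $\varepsilon_0^2(a+1)\le 1-\rho$, which keeps the corrective diagonal value $z$ inside $\mathcal{D}\subset W_e(B)$, that makes this possible; everything after it is the already established machinery of Lemma \ref{lemA} together with the harmless reservation of $\mathcal{R}$.
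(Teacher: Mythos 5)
Your opening step is an attractive and essentially correct idea, and it is genuinely different from the paper's: you buy the uniform constant by the algebraic identity $\varepsilon_0^2(a+1)=1-\rho$, which keeps the compensating diagonal value $z=(\tau-\varepsilon_0^2c)/(1-\varepsilon_0^2)$ inside $\mathcal{D}\subset W_e(B)$. (The ``intermediate-value argument'' is not needed and is under-justified as stated: simply take $w$ a unit vector orthogonal to $h$, $Bh$ and $B^*h$ with $\langle w,Bw\rangle=z$ exactly --- such a $w$ exists by the same argument as in Step 2 of the proof of Lemma \ref{lemA}, since these three vectors span a finite-dimensional space --- and set $v_1=\varepsilon_0h+\sqrt{1-\varepsilon_0^2}\,w$; the cross terms vanish identically.) By contrast, the paper obtains the constant $\varepsilon=2^{-l/2}$ by pigeonhole: $h$ is kept inside a subspace $G$ which the rotation $W_l$ splits into $2^l$ mutually orthogonal pieces $E_j$ with $B_{E_j}=X$ for every $j$, and some piece must capture a $2^{-l/2}$ fraction of $h$.

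The gap is in your inductive step. The vector-by-vector construction ``underlying Lemma \ref{lemA}'' (Step 2 of the paper's proof) only ever imposes constraints of the form $\langle v_{m+1},u\rangle=0$ against finitely many fixed vectors $u$, plus one diagonal value; these are met by passing to a finite-codimensional subspace, on which $W_e$ is unchanged. Your constraints $\langle\beta,v_j\rangle=x_{1j}$ and $\langle B^*v_i,v_j\rangle=x_{ij}$ with \emph{nonzero} targets are of a different nature: they are affine and need not be solvable by a unit vector. Concretely, take $B=A_0\oplus 0$ with $W_e(A_0)\supset\mathcal{D}$ and $h$ in the infinite-dimensional kernel summand; if $x_{11}=0$ your construction may produce $v_1$ with $B^*v_1=0$, hence $\beta=0$, and then $\langle v_1,Bv_2\rangle=0\neq x_{12}$ for every choice of $v_2$, so no subspace containing this $v_1$ as the image of $k_1$ can compress $B$ to $X$. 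Even when $\beta,\gamma\neq0$, the unique solution $u$ of the affine system in the span of the constraint vectors may have $\Vert u\Vert>1$, or leave a residual diagonal target $(x_{m+1,m+1}-\langle u,Bu\rangle)/(1-\Vert u\Vert^2)$ outside $\mathcal{D}$, and nothing controls this uniformly over the infinite induction. This is precisely the difficulty the paper's proof of Lemma \ref{lemA} is built to avoid: a general $X$ is never realized entry by entry, but by dilating to a normal operator, splitting it as diagonal plus small compact (Berg--Weyl--von Neumann) so that only zero off-diagonal constraints ever arise, and then rotating by $W_l$ so that $X$ appears as a diagonal block. To repair your argument you would need a quantitative solvability lemma for the affine constraints (including a lower bound on $\Vert\beta\Vert$ in terms of $\Vert X^*k_1\Vert$), or else abandon the requirement that the $h$-heavy vector be the image of $k_1$ and, as the paper does, extract $\varepsilon$ by pigeonhole over the block containing $h$.
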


\vskip 10pt\noindent
{\bf Proof of Theorem \ref{th2003}.} The proof is organized in five steps:

\noindent
{\it Step 1.} Some preliminaries are given.

\noindent
{\it Step 2.} Proof of Lemma \ref{lemA} in the special case when $X$ is  normal, diagonalizable.

\noindent
{\it Step 3.} Proof of Lemma \ref{lemA} in the general case.

\noindent
{\it Step 4.} Proof of Lemma \ref{lemB}.

\noindent
{\it Step 5.} Conclusion.

\vskip 10pt\noindent
\centerline{ {\it 1. Preliminaries} }
\vskip 7pt\noindent
We shall use a sequence $\{V_k\}_{k\ge1}$ of orthogonal matrices acting on spaces
of dimensions $2^k$. This sequence is built up by induction:
$$
V_1=\frac{1}{\sqrt2}\begin{pmatrix}1&1\\ -1&1\end{pmatrix}\quad{\rm then}\quad
V_k=\frac{1}{\sqrt2}\begin{pmatrix}V_{k-1}&V_{k-1}\\ -V_{k-1}&V_{k-1}\end{pmatrix}\quad{\rm for}\ k\ge2.
$$
Given a Hilbert space ${\cal G}$ and a decomposition
$$
{\cal G}=\bigoplus_{j=1}^{2^k}{\cal H}_j\quad {\rm with}\ {\cal H}_1=\dots={\cal H}_{2^k}
={\cal H},
$$
we may consider the unitary
(orthogonal) operator on ${\cal G}$ : $W_k=V_k\bigotimes I$,
where $I$ denotes the identity on ${\cal H}$, 

Now, let $B:{\cal G}\rightarrow{\cal G}$ be an operator which, with respect to the above
decomposition of ${\cal G}$, has a block diagonal matrix
$$
B=\begin{pmatrix}B_1&\ &\ \\ \ &\ddots&\ \\ \ &\ &B_{2^k}\end{pmatrix}.
$$
We observe that the block matrix representation of $W_kBW_k^*$ has its diagonal
entries all equal to
$$
\frac{1}{2^k}\left(B_1+\dots B_{2^k}\right).
$$
So, the orthogonal operators $W_k$ allow us to pass from a block diagonal matrix representation
to a block matrix representation in which
the diagonal entries are all equal.

\vskip 10pt
\centerline{ {\it 2. Proof of Lemma \ref{lemA} when $X$ is normal, diagonalizable.}  }
\vskip 7pt\noindent
Let $\{ \lambda_n(X)\}_{n\ge1}$ be the eigenvalues of $X$ repeated
according to their multiplicities. Since $|\lambda_n(X)|<1$ for all $n$ and  $W_e(A)
\supset{\cal D}$, we may find a norm one vector $e_1$ such that $\langle e_1,Ae_1\rangle=
\lambda_1(T)$. Let $F_1=[{\rm span}\{e_1,Ae_1,A^*e_1\}]^{\perp}$. As $F_1$ is of finite 
codimension, $W_e(A_{F_1})\supset{\cal D}$. So, there exists a norm one vector $e_2\in F_1$
such that $\langle e_2,Ae_2\rangle=\lambda_2(T)$. Next, we set
$F_2=[{\rm span}\{e_1,Ae_1,A^*e_1,e_2,Ae_2,A^*e_2\}]^{\perp}$, $\dots$. If we go on like this, 
we exhibit an orthonormal system $\{e_n\}_{n\ge1}$ such that, setting 
$E={\rm span}\{e_n\}_{n\ge1}$, we have $A_E=X$. 

\vskip 10pt\noindent
\centerline{ {\it 3. Proof of Lemma \ref{lemA} in the general case.}  }
\vskip 7pt\noindent
The contraction $Y=(1/\Vert X\Vert_{\infty})X$ can be dilated in a unitary 
$$
U=\begin{pmatrix}Y&-(I-YY^*)^{1/2}\\ (I-Y^*Y)^{1/2}&Y^*\end{pmatrix}
$$
thus $X$ can be dilated in a normal operator $N=\Vert X\Vert_{\infty} U$ with
$\Vert N\Vert_{\infty}<\rho$.
This permits to restrict  to the case when $X$ is a normal strict contraction. So, let
 $X$ be a  normal operator with $\Vert X\Vert_{\infty}<\rho<1$. 
 We remark with the Berg-Weyl-von Neumann theorem, that $X$  can be written as
$$
X=D+K \eqno (1)
$$
where $D$ is normal diagonalizable, $\Vert D\Vert_{\infty}=\Vert X\Vert_{\infty}<\rho$, and $K$ is compact with an arbitrarily small
norm. Let $K={\rm Re}K+{\rm iIm}K$ be the Cartesian decomposition of $K$.
We can find an integer $l$, a real $\alpha$
and a real $\beta$ such that  decomposition (1) satisfies:

\noindent
a) the operators $\alpha D$, $\beta{\rm Re}K$, $\beta{\rm Im}K$ are dominated in norm by $\rho$,

\noindent
b) there are positive integers $m$, $n$ with $2^l=m+2n$ and
$$
X=\frac{1}{2^l}(m\alpha D+n\beta{\rm Re}K+n\beta{\rm iIm}K). \eqno (2)
$$
More precisely we can take any $l$ such that $[2^l/(2^l-2)].\Vert X\Vert_{\infty}<\rho$. Next, assuming
$\Vert K\Vert_{\infty}<\rho/2^l$, we can take $m=2^l-2$, $n=1$, $\alpha=2^l/(2^l-2)$ and $\beta=2^l$.

Let then $T$ be the diagonal normal operator acting on the space
$$
{\cal G}=\bigoplus_{j=1}^{2^l}{\cal H}_j\quad {\rm with}\ {\cal H}_1=\dots={\cal H}_{2^l}
={\cal H},
$$
and defined by
$$
T=\left( \bigoplus_{j=1}^m D_j\right)\bigoplus
\left( \bigoplus_{j=m+1}^{m+n} R_j\right)\bigoplus
\left( \bigoplus_{j=m+n+1}^{2^l} S_j\right)
$$
where $D_j=\alpha D$, $S_j=\beta{\rm Re}K$ and $S_j=\beta{\rm iIm}K$.
\vskip 5pt\noindent
We note that $\Vert T\Vert_{\infty}<\rho<1$ and that the operator $W_lTW_l^*$, represented in the
preceding decomposition of ${\cal G}$, has its diagonal entries all equal to
$X$ by (2). Hence, applying the preceding step  to $T$ yields Lemma \ref{lemA}. 

\vskip 10pt\noindent
\centerline{ {\it 4. Proof of Lemma \ref{lemB}.}  }
\vskip 7pt\noindent
Let $a\ge 1$ and let $1>\rho>0$ be two constants. We take an arbitrary norm one vector $h$ and any operator $B$ satisfying to the
assumptions of Lemma \ref{lemB}. We can show, using the same reasoning as that applied in
the above  Step 2, that we have an orthonormal system $\{f_n\}_{n\ge0}$,
with $f_0=h$, such that:

\vskip 5pt\noindent
a) $\langle f_{2j}, Bf_{2j}\rangle=0$ for all $j\ge1$.

\noindent
b) $\{ \langle f_{2j+1}, Bf_{2j+1}\rangle \}_{j\ge0}$ is a dense sequence in ${\cal D}$. 

\noindent
c) If $F={\rm span}\{f_j\}_{j\ge0}$, then $B_F$ is the normal operator
$$
\sum_{j\ge0}\langle f_j, Bf_j\rangle f_j\otimes f_j.
$$
Setting $F_0={\rm span}\{f_{2j}\}_{j\ge0}$ and $F'_0={\rm span}\{f_{2j+1}\}_{j\ge0}$, we then have:

\vskip 5pt\noindent 
a) With respect to the decomposition $F=F_0\bigoplus F'_0$, $B_F$ can be written
$$
B_F=\begin{pmatrix}B_{F_0}&0\\ 0&B_{F'_0}\end{pmatrix}.
$$

\noindent 
b) $W_e(B_{F'_0})\supset{\cal D}$ and $h\in F_0$.

\vskip 5pt\noindent
We can then write a decomposition of  $F'_0$, $F'_0=\bigoplus_{j=1}^{\infty}F_j$
where for each index $j$, $F_j$ commutes with $B_F$ and $W_e(B_{F_j})\supset{\cal D}$;
so that the decomposition $F=\bigoplus_{j=0}^{\infty}F_j$ yields a representation
of $B_F$ as a block diagonal matrix, 
$$
B_F=\bigoplus_{j=0}^{\infty}B_{F_j}.
$$
Since $W_e(B_{F_j})\supset{\cal D}$ when $j\ge1$, the same reasoning as  in Step 3 
 entails that for any sequence  
$\{ X_j \}_{j\ge0}$ of strict contractions we have decompositions
$(\dagger)$ $F_j=G_j\bigoplus G'_j$ allowing us to write, for $j\ge1 $,
$$  
B_{F_j}=\begin{pmatrix}X_j&*\\ \ast&*\end{pmatrix}.
$$
Since $\Vert X\Vert_{\infty}<\rho<1$ and $\Vert B\Vert_{\infty}\le a$, we can find an integer $l$ only depending on $\rho$
and $a$, as well as strict contractions $X_1,\dots,X_{2^l}$, such that
$$
X=\frac{1}{2^l}\left( B_{F_0}+\sum_{j=1}^{2^l-1}X_j \right). \eqno (3)
$$
Considering decompositions $(\dagger)$ adapted to these $X_j$,  we set
$$
G=F_0\bigoplus \left( \bigoplus_{j=1}^{2^l-1}G_j \right). 
$$
With respect to this decomposition,
$$
B_G=\begin{pmatrix}B_{F_0}&\ &\ &\ \\
\ &X_1&\ &\ \\
\ &\ &\ddots&\ \\
\ &\ &\ &X_{2^l-1}\end{pmatrix}.
$$ 
Then we deduce from (3) that the block matrix $W_lB_GW_l^*$ has its diagonal entries 
all equal to $X$.
\vskip 5pt
Summary:
$h\in G$ and there exists a decomposition $G=\bigoplus_{j=1}^{2^l}E_j$, in which $l$ depends only on $\rho$ and $a$, such that
$B_{E_j}=X$ for each $j$. Thus we have an integer $j_0$ such that, setting
$E_{j_0}=E$, we have
$$
B_E=X \quad {\rm and} \quad \Vert Eh\Vert\ge \frac{1}{\sqrt{2^l}}.                                        
$$ 
Taking $\varepsilon=1/{\sqrt{2^l}}$ ends the proof of Lemma \ref{lemB}.

\vskip 10pt\noindent
\centerline{ {\it 4. Conclusion.}  }
\vskip 7pt\noindent
Fix a dense sequence $\{ h_n\}$ in the unit sphere of ${\cal H}$ and set $a=\Vert A\Vert_{\infty}$. We claim that the statement (i)
and (ii) of Lemma  \ref{lemB} ensure that
there exists a sequence of mutually orthogonal projections $\{E_j\}$ such that, setting
$F_n=\sum_{j\le n}E_j$, we have for all integers $n$:
\vskip 5pt\noindent
\quad \ ($*$) $A_n=A_{E_n}$ and $W_e(A_{F_n^{\perp}})\supset{\cal D}$ \
(so $\dim F_n^{\perp}=\infty$),
\vskip 5pt\noindent
\quad ($**$) $\Vert F_n h_n\Vert \ge \varepsilon$.
\vskip 5pt\noindent
In Lemma \ref{lemB}, set $a=\Vert A\Vert_{\infty}$.
Replacing $B$ by $A$, Lemma  \ref{lemB} proves ($*$) and ($**$) for $n=1$. Suppose this holds for an $N\ge1$. Let
$\nu(N)\ge N+1$ be the first integer for which $F_N h_{\nu(N)} \neq 0$. Note that
$\Vert A_{F_N^{\perp}}\Vert_{\infty} \le \Vert A\Vert_{\infty}$. We apply Lemma  \ref{lemB} to $B=A_{F_N^{\perp}}$,
$X=A_{N+1}$ and $h=F_N h_{\nu(N)} / \Vert F_N h_{\nu(N)}\Vert$. 
We then deduce that $(*)$ and $(**)$ are still valid for $N+1$. Therefore
$(*)$ and $(**)$ hold for all $n$. Denseness of $\{ h_n\}$ and $(**)$ show that $F_n$ strongly
increases to the identity $I$ so that $\sum_{j=1}^{\infty}E_j=I$ as required. \quad $\Box$

\subsection{M\"uller-Tomilov's theorem}

Define the diagonal set $\Delta(A)$ of an operator $A$ as the scalars $\lambda\in\bC$ such that
$$
\lambda=\langle e_n, Ae_n\rangle, \ n=1,2,\ldots
$$
for some orthonormal basis $\{e_n\}_{n=1}^{\infty}$. One has
$$
 {\mathrm{int}}\, W_e(A) \subset \Delta (A) \subset W_e(A)
$$
The set $\Delta(A)$ is an analytic set, I do not know examples where $\Delta(A)$ is not Borel.
The boundary of $\Delta(A)$ might be quite complicated.
In 2003 \cite{B-JOT}, I asked wether $\Delta(A)$ is always convex. M\"uller and Tomilov give a positive answer in the recent paper \cite{MT}.

\section{References of Chapter 5}

{\small
\begin{itemize}

\item[[1\!\!\!]] C.A.\ Akemann and J.\ Anderson. Lyapunov theorems for operator algebras. {\it Mem. Amer.
Math. Soc.,} 94 no 458, 1991.

\item[[3\!\!\!]]  J.H.\ Anderson and J.G.\ Stampfli, Commutators and compressions,  {\it Israel J.\ Math.\ }
10 (1971), 433--441.

\item[[5\!\!\!]]  T.\ Ando, Majorization, doubly stochastic matrices, and comparison of eigenvalues,
{\it Linear Algebra Appl.\ } 118 (1989), 163-248.

\item[[21\!\!\!]]  J.-C. Bourin, Compressions and pinchings, {\it J.\ Operator Theory} 50 (2003), no.\ 2, 211-220.

\item[[33\!\!\!]] J.-C. Bourin and E.-Y. Lee, Sums of Murray-von Neumann equivalent positive operators,
{\it C.\ R.\  Math.\ Acad.\ Sci.\ Paris} 351 (2013), no.\ 19-20, 761-764.

\item[[34\!\!\!]] 
J.-C. Bourin and E.-Y. Lee, Sums of unitarily equivalent positive operators, {\it C.\ R.\ Math.\ Acad.\ Sci.\ Paris} 352 (2014), no.\
5, 435--439.

\item[[37\!\!\!]] J.-C.\ Bourin and E.-Y. Lee, Pinchings and positive linear maps, {\it J.\ Funct.\ Anal.}\ 270 (2016), no.\ 1, 359--374.

\item[[58\!\!\!]] C.\ Dragan and V.\ Kaftal, Sums of equivalent sequences of positive operators in von Neumann factors, preprint,
arXiv:1504.03193.

\item[[60\!\!\!]] P.R.\ Halmos, \ Numerical ranges and normal dilations, {\it Acta Sci.\ Math.\ (Szeged)} 25 (1964) 1--5. 

\item[[71\!\!\!]]  R.\ Kadison, 
The Pythagorean theorem. II. The infinite discrete case, {\it Proc.\ Natl.\
Acad.\ Sci.\ } USA 99 (2002), no.\ 8, 5217-5222.

\item[[72\!\!\!]]  R.\ Kadison and I.\ Singer, Extensions of pure states, {\it Amer.\ J.\ Math.}\ (1959),
383-400.

\item[[73\!\!\!]]  M.\ Kennedy and P.\ Skoufranis, The Schur-Horn Problem for Normal Operators, {\it Proc.\ London Math.\ Soc.,} in press,
arXiv:1501.06457.

\item[[78\!\!\!]] J.\ Loreaux and G.\ Weiss, Diagonality and idempotents with applications to problems in operator theory and frame
theory, {\it J.\ Operator Theory}, in press, arXiv:1410.7441.

\item[[82\!\!\!]] V. M\"uller, Y.\ Tomilov, In search of convexity: diagonals and numerical ranges, {\it Bull.\ London Math soc.}, 53 (2021), no.\ 4, 1016--1029.

\item[[85\!\!\!]] C.\ Pearcy and D.\ Topping, Sums of small numbers of idempotents, {\it  Michigan J.\ Math.\ }
14 (1967) 453--465.

\item[[87\!\!\!]] M.\ Ravichandran, The Schur-Horn theorem in von Neumann algebras, preprint, arXiv:1209.0909.

\item[[93\!\!\!]] P.Y.\ Wu, Additive combination of special operators, {\it Banach Center Publ.} 30  (1994), 337-361.

\end{itemize}


\chapter{Partial trace}

{\color{blue}{\Large {\bf Decomposition and partial trace of  positive   matrices with Hermitian blocks
} \large{\cite{BL-IJM1}}}}

\vskip 10pt\noindent
{\bf Abstract.}
\noindent Let $H=[A_{s,t}]$ be a positive definite matrix written in $\beta\times\beta$ Hermitian blocks and let
$\Delta=A_{1,1}+\cdots+A_{\beta,\beta}$ be its partial trace. Assume that $\beta=2^p$ for some $p\in\bN$. Then, up to a direct sum
operation, $ H$ is the average of $\beta$ matrices isometrically congruent to $ \Delta$.
A few corollaries are given, related to important inequalities in quantum information theory such as the Nielsen-Kempe separability
criterion.

{\small\noindent
Keywords: Positive definite matrices,  norm inequalities, partial trace, separable state.

\noindent
AMS subjects classification 2010:  15A60, 47A30,  15A42.}

\section{Introduction and a key lemma}

Positive semi-definite matrices partitioned in two by two blocks occur as an efficient tool in matrix analysis, sometimes a magic
tool ! $-$ according to Bhatia's famous book \cite{Bh}. These partitions allow to derive a lot of important inequalities and
those with Hermitian blocks shed much light on the geometric and harmonic matrix means. Partitions into a larger number of blocks are
naturally involved with tensor products, in the theory of positive linear maps and in their application in quantum physics.

This article deals with positive matrices partitioned in Hermitian blocks. By using unitary or isometry congruences, we will improve
some nice majorisations, or norm estimates, first obtained in the field of quantum information theory.

For  partitioned positive matrices,
the diagonal blocks play a quite special role. This is apparent in a rather striking decomposition due to the authors \cite{BL-London}.

\begin{lemma} \label{BL-lemma} For every matrix in  $\bM_{n+m}^+$ written in blocks, we have a decomposition
\begin{equation*}
\begin{bmatrix} A &X \\
X^* &B\end{bmatrix} = U
\begin{bmatrix} A &0 \\
0 &0\end{bmatrix} U^* +
V\begin{bmatrix} 0 &0 \\
0 &B\end{bmatrix} V^*
\end{equation*}
for some unitaries $U,\,V\in  \bM_{n+m}$.
\end{lemma}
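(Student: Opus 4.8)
The plan is to exhibit the decomposition explicitly by factoring the positive block matrix as a square and then using the elementary fact that $T^*T$ and $TT^*$ are unitarily congruent for any operator $T$. First I would write the positive semidefinite matrix as the square of its positive square root,
$$
\begin{bmatrix} A & X \\ X^* & B \end{bmatrix}
=
\begin{bmatrix} C & Y \\ Y^* & D \end{bmatrix}^2,
$$
where $\begin{bmatrix} C & Y \\ Y^* & D \end{bmatrix}\in\bM_{n+m}^+$ is the square root, again written in blocks with $C,D$ Hermitian. The identity $M = M^{1/2}\cdot M^{1/2}$ is the only structural input needed, and it is available since $M\ge 0$.

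Next I would split this square into two pieces by writing the product as a sum of two products of ``column times row'' type. Concretely,
$$
\begin{bmatrix} C & Y \\ Y^* & D \end{bmatrix}
\begin{bmatrix} C & Y \\ Y^* & D \end{bmatrix}
=
\begin{bmatrix} C & 0 \\ Y^* & 0 \end{bmatrix}
\begin{bmatrix} C & Y \\ 0 & 0 \end{bmatrix}
+
\begin{bmatrix} 0 & Y \\ 0 & D \end{bmatrix}
\begin{bmatrix} 0 & 0 \\ Y^* & D \end{bmatrix}
= T^*T + S^*S,
$$
where I set $T=\begin{bmatrix} C & Y \\ 0 & 0 \end{bmatrix}$ and $S=\begin{bmatrix} 0 & 0 \\ Y^* & D \end{bmatrix}$, using $C=C^*$ and $D=D^*$. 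The point of this particular grouping is that the two ``swapped'' products $TT^*$ and $SS^*$ collapse to the desired diagonal blocks: a direct computation gives
$$
TT^* = \begin{bmatrix} C^2 + YY^* & 0 \\ 0 & 0 \end{bmatrix}
= \begin{bmatrix} A & 0 \\ 0 & 0 \end{bmatrix},
\qquad
SS^* = \begin{bmatrix} 0 & 0 \\ 0 & Y^*Y + D^2 \end{bmatrix}
= \begin{bmatrix} 0 & 0 \\ 0 & B \end{bmatrix},
$$
where the identifications $C^2+YY^*=A$ and $Y^*Y+D^2=B$ are read off from the $(1,1)$ and $(2,2)$ blocks of the squared square root.

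Finally, invoking that $T^*T\simeq TT^*$ and $S^*S\simeq SS^*$ unitarily (there exist unitaries $U,V\in\bM_{n+m}$ with $T^*T=U\,TT^*\,U^*$ and $S^*S=V\,SS^*\,V^*$, since for any operator the two products $TT^*$ and $T^*T$ share the same nonzero spectrum and hence are unitarily congruent), I would conclude
$$
\begin{bmatrix} A & X \\ X^* & B \end{bmatrix}
= U\begin{bmatrix} A & 0 \\ 0 & 0 \end{bmatrix}U^*
+ V\begin{bmatrix} 0 & 0 \\ 0 & B \end{bmatrix}V^*,
$$
which is the claimed decomposition. There is no serious obstacle here: the only thing to be careful about is the bookkeeping in the factorization $T^*T+S^*S$ and the verification that the cross terms vanish (they do, precisely because the two summands are grouped so that the overlapping middle blocks $Y$ and $Y^*$ land in complementary corners). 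The deepest ingredient, the unitary congruence of $TT^*$ and $T^*T$, is entirely standard.
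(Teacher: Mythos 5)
Your proposal is correct and is essentially the paper's own proof: the same factorization of the positive matrix as the square of its positive square root, the same splitting of the product into $T^*T+S^*S$ with the same matrices $T$ and $S$, and the same appeal to the unitary congruence of $T^*T$ with $TT^*$. The block computations identifying $TT^*$ with $A\oplus 0$ and $SS^*$ with $0\oplus B$ are exactly as in the paper.
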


This lemma leads to study partitions via unitary congruences. It is the key of the subsequent results. A proof and several
consequences can be found in \cite{BL-London} and \cite{BH1}.
Of course, $\bM_n$ is the algebra of $n\times n$ matrices with real or complex entries, and $\bM_n^+$ is the positive part. That is,
$\bM_n$ may stand either for $\bM_n(\bR)$, the matrices with real entries, or for $\bM_n(\bC)$, those with complex entries. The
situation is different
in the next statement, where complex entries seem unavoidable. 

\vskip 10pt
\begin{theorem}\label{thm-four} Given any matrix in $\bM_{2n}^+(\bC)$ written in blocks in $\bM_n(\bC)$ with Hermitian off-diagonal
blocks, we have
\begin{equation*}
\begin{bmatrix} A &X \\
X &B\end{bmatrix}= \frac{1}{2}\left\{ U(A+B)U^* +V(A+B)V^*\right\}
\end{equation*} for some isometries $U,V\in\bM_{2n,n}(\bC)$.
\end{theorem}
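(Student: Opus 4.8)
The plan is to recast the asserted identity as a problem about equalizing the diagonal blocks of a matrix inside a fixed unitary orbit, and then to feed that problem the matricial Schur--Horn theorem. Write $\Delta=A+B$ and set $G:=2H$. I claim it suffices to produce a single unitary $\Theta\in\bM_{2n}$ for which $\Theta G\Theta^*$ has both diagonal blocks equal to $\Delta$, say $\Theta G\Theta^*=\begin{bmatrix}\Delta & Y\\ Y^* & \Delta\end{bmatrix}$. Indeed, given such a $\Theta$, put $\mathcal T:=\Theta G^{1/2}$, so that $\mathcal T^*\mathcal T=G$ and $\mathcal T\mathcal T^*=\Theta G\Theta^*$; writing $\mathcal T=\begin{bmatrix}T\\ S\end{bmatrix}$ with $T,S\in\bM_{n,2n}$ the top and bottom halves, the block computation yields $TT^*=SS^*=\Delta$ and $T^*T+S^*S=\mathcal T^*\mathcal T=2H$. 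Since for a rectangular $T$ of size $n\times 2n$ the operators $T^*T$ and $TT^*=\Delta$ share the same nonzero spectrum, one has $T^*T\simeq\Delta\oplus 0_n$, hence $T^*T=U\Delta U^*$ for some isometry $U\in\bM_{2n,n}$ (restrict to its first $n$ columns a unitary implementing the equivalence), and likewise $S^*S=V\Delta V^*$. Dividing by $2$ gives exactly $H=\tfrac12\{U\Delta U^*+V\Delta V^*\}$.

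Thus the entire theorem reduces to the existence of $\Theta$. This is precisely the content of the block (matricial) Schur--Horn theorem: a Hermitian $G\in\bM_{2n}$ admits a unitary conjugate with prescribed Hermitian diagonal blocks $D_1,D_2$ exactly when $D_1\oplus D_2\prec G$. Taking $D_1=D_2=\Delta$, the proof comes down to the single eigenvalue majorization $\Delta\oplus\Delta\prec 2H$.

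The heart of the matter, and the only place where the Hermiticity of the off-diagonal block enters, is the unitary equivalence $H\simeq(\Delta\oplus\Delta)-H$. I would establish it by the explicit conjugation with $W:=\begin{bmatrix}0 & I\\ -I & 0\end{bmatrix}\in\bM_{2n}$: a direct multiplication gives $WHW^*=\begin{bmatrix}B & -X\\ -X & A\end{bmatrix}$, and since $\Delta-A=B$ and $\Delta-B=A$ this is exactly $(\Delta\oplus\Delta)-H$. Note that this identity is what forces $X=X^*$ (for a non-Hermitian off-diagonal block the $(1,2)$ slot would come out as $-X^*$), which is the structural reason Hermitian blocks are the right hypothesis and complex entries are needed. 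Granting the equivalence, $\lambda_j(H)=\lambda_j((\Delta\oplus\Delta)-H)$ for all $j$, and the Ky Fan maximum principle applied to $\Delta\oplus\Delta=H+\bigl((\Delta\oplus\Delta)-H\bigr)$ gives, for every $k$, $\sum_{j=1}^k\lambda_j(\Delta\oplus\Delta)\le \sum_{j=1}^k\lambda_j(H)+\sum_{j=1}^k\lambda_j\bigl((\Delta\oplus\Delta)-H\bigr)=2\sum_{j=1}^k\lambda_j(H)=\sum_{j=1}^k\lambda_j(2H)$, with equality at $k=2n$ because both sides have trace $2\,\mathrm{Tr}\,\Delta$. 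This is precisely $\Delta\oplus\Delta\prec 2H$.

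The main obstacle is the realization step: converting the eigenvalue majorization $\Delta\oplus\Delta\prec 2H$ into an honest unitary $\Theta$ that makes both diagonal blocks equal to $\Delta$. A mere averaging statement of the type $\Delta\oplus\Delta=\frac1{2n}\sum_k V_k(2H)V_k^*$ (the principle used in Theorem \ref{th-orbit}) is not enough, since it realizes the block-diagonal only as an average rather than as the pinching of one conjugate; the genuine matricial Schur--Horn theorem, prescribing the actual Hermitian blocks and not just their spectra, is the required input and is where I would either cite the literature or argue directly. A minor secondary point is non-invertibility of $\Delta$, which is harmless because the argument never inverts $\Delta$; if one prefers, replacing $\Delta$ by $\Delta+\varepsilon I$ and letting $\varepsilon\to0$ removes it.
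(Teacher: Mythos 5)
Your reduction is fine up to the point where everything hinges on producing a single unitary $\Theta\in\bM_{2n}$ with $\Theta (2H)\Theta^*=\begin{bmatrix}\Delta & Y\\ Y^*&\Delta\end{bmatrix}$; the passage from such a $\Theta$ to the two isometries via $\mathcal T=\Theta(2H)^{1/2}$ and $T^*T\simeq TT^*$ is correct (it is essentially a re-derivation of Lemma \ref{BL-lemma}). The gap is the input you invoke to get $\Theta$: the ``block Schur--Horn theorem'' as you state it --- that a Hermitian $G$ has a unitary conjugate with prescribed Hermitian diagonal blocks $D_1,D_2$ \emph{exactly when} $D_1\oplus D_2\prec G$ --- is false. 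Majorization is necessary (a pinching is doubly stochastic) but not sufficient: each diagonal block of $\Theta G\Theta^*$ is a compression of $G$ to an $n$-dimensional subspace, so Cauchy interlacing imposes the further constraints $\lambda_j(D_i)\le\lambda_j(G)$, which do not follow from $D_1\oplus D_2\prec G$. Concretely, take $G=\mathrm{diag}(3,1,0,0)$, $D_1=2I_2$, $D_2=0_2$: then $(2,2,0,0)\prec(3,1,0,0)$, yet no unitary conjugate of $G$ has diagonal blocks $2I_2$ and $0$ (positivity would force the off-diagonal block to vanish, giving the wrong spectrum; equivalently $\lambda_2(D_1)=2>1=\lambda_2(G)$). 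So the majorization $\Delta\oplus\Delta\prec 2H$ that you correctly establish via $W H W^*=(\Delta\oplus\Delta)-H$ --- a nice observation, which already gives Hiroshima's norm inequality and the averaging decomposition of Theorem \ref{th-orbit} over $2n$ unitaries --- cannot, by itself, deliver the single conjugation you need. You flag this yourself as ``the main obstacle,'' and it is exactly the unproved content of the theorem.

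The paper's proof (from \cite{BLL1}, \cite{BLL2}; the same device appears in the proof of Theorem \ref{thBM} in Section 7.4) closes this gap by writing $\Theta$ down explicitly: conjugating by the unitary $\frac{1}{\sqrt2}\begin{bmatrix} I & iI\\ iI & I\end{bmatrix}$ sends $\begin{bmatrix}A&X\\X&B\end{bmatrix}$ to $\begin{bmatrix}\frac{A+B}{2} & Y\\ Y^* & \frac{A+B}{2}\end{bmatrix}$ with $Y=X-\frac{i}{2}(A-B)$ --- the diagonal blocks equalize precisely because $X=X^*$ --- and then Lemma \ref{BL-lemma} finishes. This is also the true reason complex isometries are unavoidable even for real data, rather than the conjugation by $\begin{bmatrix}0&I\\-I&0\end{bmatrix}$ that you identify. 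If you replace your appeal to a general realization theorem by this explicit unitary, your argument becomes a complete (and essentially the standard) proof.
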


\vskip 10pt
Here $\bM_{p,q}(\bC)$ denote the space of $p$ rows and $q$ columns matrices with complex entries, and $V\in\bM_{p,q}(\bC)$ is an
isometry if $p\ge q$ and $V^*V=I_q$. Even for a matrix in $\bM_{2n}^+(\bR)$, it seems essential to use isometries with complex
entries ! The result, due to Lin and the authors, is based on Lemma \ref{BL-lemma}, a proof is in \cite{BLL2} and implicitly in
\cite{BLL1}.

There is no evidence whether a positive block-matrix $H$ in $\bM^+_{3n}$, 
$$
H=\begin{bmatrix}
A&X&Y \\ X&B&Z \\Y&Z&C
\end{bmatrix}
$$
with Hermitian off-diagonal blocks $X,Y,Z$, could be decomposed as
$$
H=\frac{1}{3}\left\{ U\Delta U^*+ V\Delta V^* +W\Delta W^*\right\}
$$
where $\Delta=A+B+C$ and $U,V,W$ are isometries. In fact, this would be surprising. However, a quite nice decomposition is possible
by considering direct sum copies: this provides a substitute to Theorem \ref{thm-four} for partitions into an arbitrary number of
blocks; it is the main result of this article.

These decompositions entail some nice inequalities. Lemma \ref{BL-lemma} yields a simple estimate for all symmetric (or unitarily
invariant) norms,
\begin{equation}\label{fact-0}
\left\|\begin{bmatrix} A &X \\
X^* &B\end{bmatrix} \right\|\le \| A\|+\| B \|. 
\end{equation}
Recall that a symmetric norm on $\bM_m$ satisfies $\|A\|=\|UA\|=\|AU\|$ for all $A\in\bM_m$ and all unitaries $U\in\bM_m$. This
obviously induces a symmetric norm on $\bM_n$, $1\le n\le m$. The most familiar symmetric norms are the Schatten $p$-norms, $1\le p<
\infty$,
\begin{equation}\label{Schatt}
\| A\|_p = \{{\mathrm{Tr\,}} (A^*A)^{p/2}\}^{1/p},
\end{equation}
and, with $p\to\infty$, the operator norm. In general, the sum of the norms $\|A\| +\| B\|$ can not be replaced in \eqref{fact-0} by
the norm of the sum $\| A+B \|$. However, Theorem \ref{thm-four} implies the following remarkable corollary.

\begin{cor} Given any matrix in $\bM_{2n}^+$ written in blocks in $\bM_n$ with Hermitian off-diagonal blocks, we have
\begin{equation*}
\left\|\begin{bmatrix} A &X \\
X &B\end{bmatrix} \right\|\le \| A+B \|
\end{equation*} for all symmetric norms.
\end{cor}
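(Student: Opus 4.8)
The plan is to deduce this corollary directly from Theorem \ref{thm-four}. First I would apply that theorem to the given positive block matrix, writing
$$
H=\begin{bmatrix} A &X \\ X &B\end{bmatrix}= \frac{1}{2}\left\{ U(A+B)U^* +V(A+B)V^*\right\}
$$
for two isometries $U,V\in\bM_{2n,n}$. Since every symmetric norm is subadditive, applying $\|\cdot\|$ and the triangle inequality immediately reduces the claim to controlling each of the two terms $\|U(A+B)U^*\|$ and $\|V(A+B)V^*\|$.

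The key observation is that an isometric congruence $M\mapsto UMU^*$, with $U\in\bM_{2n,n}$ and $U^*U=I_n$, leaves every symmetric norm invariant: one has $\|UMU^*\|=\|M\|$, where on the left the norm is the symmetric norm on $\bM_{2n}$ and on the right it is the induced symmetric norm on $\bM_n$ defined by $\|M\|=\|M\oplus 0\|$, exactly as in \eqref{fact-0}. To verify this I would complete $U$ to a unitary $[\,U\ U'\,]\in\bM_{2n}$ by adjoining a matrix $U'$ whose columns form an orthonormal basis of the orthocomplement of the range of $U$. Using $U^*U=I_n$ and $U'^*U=0$, a short direct computation gives
$$
[\,U\ U'\,]^*\,(UMU^*)\,[\,U\ U'\,]=\begin{bmatrix} M & 0 \\ 0 & 0\end{bmatrix}=M\oplus 0_n,
$$
so that $UMU^*$ is unitarily equivalent to $M\oplus 0_n$. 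Consequently the two matrices share the same singular values, and hence the same value under any symmetric norm.

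Taking $M=A+B$ and combining this invariance with the decomposition then yields
$$
\left\|\begin{bmatrix} A &X \\ X &B\end{bmatrix}\right\|\le \frac{1}{2}\|U(A+B)U^*\|+\frac{1}{2}\|V(A+B)V^*\|=\|A+B\|,
$$
which is the asserted inequality. I do not expect any genuine obstacle in this argument: the entire difficulty is concentrated in Theorem \ref{thm-four} itself, whose proof rests on Lemma \ref{BL-lemma} together with the essential use of complex isometries, and which I am allowed to assume. The only point demanding a little care is the bookkeeping between a symmetric norm on $\bM_{2n}$ and the symmetric norm it induces on $\bM_n$; the invariance of symmetric norms under isometric congruence established above handles this cleanly, since it shows that padding by zeros (as in \eqref{fact-0}) does not change the norm.
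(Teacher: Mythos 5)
Your proof is correct and follows exactly the route the paper intends: the corollary is stated there as an immediate consequence of Theorem \ref{thm-four}, obtained by applying the triangle inequality to the decomposition $\frac{1}{2}\{U(A+B)U^*+V(A+B)V^*\}$ and using the unitary equivalence $UMU^*\simeq M\oplus 0_n$ for an isometry $U$. Your careful verification of the isometric-congruence invariance of symmetric norms is precisely the bookkeeping the paper leaves implicit.
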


This is the simplest case of Hiroshima's theorem, discussed in the next section.
There are some positive  matrices in $\bM_6$ partitioned in blocks in $\bM_3$, with  normal off-diagonal blocks $X$, $X^*$, such that\begin{equation*}
\left\| \begin{pmatrix} A& X\\ X^*&B \end{pmatrix} \right\|_{\infty} > \| A+B\|_{\infty}.
\end{equation*}
Hence the assumptions  are rather optimal. 

In Section 2, we state our decomposition and derive several inequalities, most of them related to Hiroshima's theorem. The proof of
the decomposition is given in Section 3. A discussion of previous results for small partitions and some remarks related to quantum
information are given in the last section.

\section{Direct sum and partial trace}

A typical example of positive matrices written in blocks are formed by tensor products. Indeed, 
the tensor product $A\otimes B$ of $A\in\bM_{\beta}$ with $B\in\bM_n$ can be identified with an element of
$\bM_{\beta}(\bM_n)=\bM_{\beta n}$. Starting with positive matrices in $\bM_{\beta}^+$ and $\bM_n^+$ we then get a matrix in
$\bM_{\beta n}^+$ partitioned in blocks in $\bM_n$. In quantum physics, sums of tensor products of positive semi-definite (with trace
one) occur as so-called separable states. In this setting of tensor products, the sum of the diagonal block is called the partial
trace (with respect to $\bM_{\beta}$). We will use this terminology.

\vskip 10pt
\begin{theorem}\label{thm-direct} Let $H=[A_{s,t}]\in \bM_{\beta n}^+$ be written in $\beta\times \beta$ Hermitian blocks in $\bM_n$
and let $\Delta=\sum_{s=1}^{\beta}{A_{s,s}}$ be its partial trace. If $\beta$ is dyadic, then, with $m=2^{\beta}$, we have
\begin{equation*}
\oplus^{m} H =\frac{1}{\beta} \sum_{k=1}^{\beta} V_k\left(\oplus^{m}\Delta\right) V_k^*
\end{equation*} 
where   $\{V_k\}_{k=1}^{\beta}$  is a family of isometries in $\bM_{m\beta n,mn}$.
\end{theorem}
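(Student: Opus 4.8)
The plan is to induct on $p$, where $\beta=2^{p}$. The base case $p=1$, i.e. $\beta=2$, is exactly Theorem \ref{thm-four}: writing $H=\tfrac12(W_1\Delta W_1^*+W_2\Delta W_2^*)$ with isometries $W_1,W_2\in\bM_{2n,n}$ and taking $m=4=2^{\beta}$ direct-sum copies gives
\[
\oplus^{4}H=\frac12\sum_{k=1}^{2}(\oplus^{4}W_k)(\oplus^{4}\Delta)(\oplus^{4}W_k)^*,
\]
with $\oplus^{4}W_k\in\bM_{4\beta n,4n}$ isometries, which is the asserted form. So the content is the doubling step.

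For the inductive step, set $\beta=2\beta'$ with $\beta'=2^{p-1}$ and $m'=2^{\beta'}$, so that $m=2^{\beta}=(m')^{2}$. Group the $\beta\times\beta$ blocks of $H$ into four super-blocks of size $\beta' n$,
\[
H=\begin{bmatrix} P & Q\\ Q^{*} & R\end{bmatrix},
\]
where $P=[A_{s,t}]_{s,t\le\beta'}$ and $R=[A_{s,t}]_{s,t>\beta'}$ are themselves positive matrices partitioned into $\beta'\times\beta'$ Hermitian blocks, with partial traces $\Delta_P,\Delta_R$ satisfying $\Delta_P+\Delta_R=\Delta$. First, Lemma \ref{BL-lemma} peels off the diagonal super-blocks: $H=U_0(P\oplus0)U_0^{*}+V_0(0\oplus R)V_0^{*}$ for unitaries $U_0,V_0\in\bM_{\beta n}$. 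Taking $m'$ copies and applying the induction hypothesis to $P$ and to $R$ (after the permutation identifying $\oplus^{m'}(P\oplus0)$ with $(\oplus^{m'}P)\oplus0$) yields
\[
\oplus^{m'}H=\frac1{\beta'}\sum_{k=1}^{\beta'}\Big(\hat S_k^{P}(\oplus^{m'}\Delta_P)(\hat S_k^{P})^{*}+\hat S_k^{R}(\oplus^{m'}\Delta_R)(\hat S_k^{R})^{*}\Big),
\]
where the $\hat S_k^{P},\hat S_k^{R}\in\bM_{m'\beta n,m'n}$ are isometries obtained by composing the isometries furnished by induction with $\oplus^{m'}U_0$, $\oplus^{m'}V_0$ and the relevant permutations.

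It remains to convert the pair $(\Delta_P,\Delta_R)$ into $\Delta$. Applying Theorem \ref{thm-four} to the two-block matrix $\Delta_P\oplus\Delta_R$ (off-diagonal blocks vanish, hence are Hermitian, and the partial trace is $\Delta$) gives $\Delta_P\oplus\Delta_R=\tfrac12(W_1\Delta W_1^{*}+W_2\Delta W_2^{*})$ with isometries $W_1,W_2\in\bM_{2n,n}$. For each $k$ I would pair the $\Delta_P$-term with the $\Delta_R$-term, regard their contribution as a congruence of $\oplus^{m'}(\Delta_P\oplus\Delta_R)$, and then substitute the last identity; the $m'$ remaining direct-sum copies (recall $m=(m')^{2}$) are exactly what is needed to host the two isometry families $\hat S_k^{P},\hat S_k^{R}$ as a single isometric dilation of $\oplus^{m'}\Delta$. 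Tracking the permutations and the scalar $\tfrac12\cdot\tfrac1{\beta'}=\tfrac1\beta$, together with the term count $2\beta'=\beta$, then produces isometries $V_1,\dots,V_\beta\in\bM_{m\beta n,mn}$ with $\oplus^{m}H=\tfrac1\beta\sum_{k=1}^{\beta}V_k(\oplus^{m}\Delta)V_k^{*}$, closing the induction.

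The delicate point, and the step I expect to be the genuine obstacle, is this last combination. Within each bracket of the displayed identity for $\oplus^{m'}H$ the isometries $\hat S_k^{P}$ and $\hat S_k^{R}$ do \emph{not} have orthogonal ranges, so $[\,\hat S_k^{P}\ \hat S_k^{R}\,]$ fails to be an isometry and the bracket is not literally an isometric congruence of $\Delta_P\oplus\Delta_R$; moreover $\Delta_P$ alone cannot be an isometric-congruence-average of $\Delta$, since their traces differ, so the two terms genuinely must be handled together. This is precisely why passing to copies is unavoidable: the extra factor $m'$ provides the orthogonal room to realise the two families simultaneously as the compression of one isometry, and the crux is to verify that this can be arranged while keeping every $V_k$ a true isometry (not merely a contraction) and preserving the equality exactly. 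The fact, already visible in Theorem \ref{thm-four}, that complex isometries are needed even for real $H$ strongly suggests that no shortcut bypassing this mechanism is available.
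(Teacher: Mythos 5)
Your plan is not the paper's argument, and the step you yourself flag as ``the genuine obstacle'' is a real gap, not a technicality to be tracked. After peeling off the diagonal super-blocks and invoking the induction hypothesis you have $\oplus^{m'}H=\frac1{\beta'}\sum_{k=1}^{\beta'}\bigl(\hat S_k^{P}(\oplus^{m'}\Delta_P)(\hat S_k^{P})^{*}+\hat S_k^{R}(\oplus^{m'}\Delta_R)(\hat S_k^{R})^{*}\bigr)$, i.e.\ $2\beta'=\beta$ isometric congruences with weight $1/\beta'$; to reach the asserted form you must merge each $P$-term with its $R$-partner without increasing the term count. The only general device for merging two congruences whose isometries have non-orthogonal ranges is the two-copies-and-a-swap identity
$$
\oplus^2(SYS^*+TZT^*)=T_1(Y\oplus Z)T_1^*+T_2(Y\oplus Z)T_2^*,\qquad T_1=\begin{pmatrix}S&0\\0&T\end{pmatrix},\quad T_2=\begin{pmatrix}0&T\\S&0\end{pmatrix},
$$
and this turns two congruences into two congruences of the direct sum: it never reduces the count. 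Feeding in $\Delta_P\oplus\Delta_R=\tfrac12(W_1\Delta W_1^*+W_2\Delta W_2^*)$ from Theorem \ref{thm-four} then gives, per index $k$, four congruences of $\oplus^{m'}\Delta$, so one lands on $\oplus^{2m'}H=\frac1\beta\sum_{l=1}^{2\beta}C_l(\oplus^{m'}\Delta)C_l^*$: the weight $1/\beta$ is right but there are $2\beta$ terms, and further copying leaves the count at $2\beta$. Your hope that the extra $m'$ copies ``host the two isometry families as a single isometric dilation'' is precisely the unproved point: $[\hat S_k^P\ \hat S_k^R]$ is an isometry only if the two ranges are orthogonal, and you cannot rotate one range off the other without changing the left-hand side. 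A secondary defect: the theorem is asserted for real as well as complex entries, while Theorem \ref{thm-four}, on which both your base case and your merging step rest, forces complex isometries even for real $H$.

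The paper's proof is a direct, non-inductive construction that gets the count $\beta$ for free. It passes to $G=[\oplus^{2^{\beta}}A_{s,t}]$ and conjugates by $W=\oplus_j(Q_j\otimes I_n)$, where the $Q_j$ are the anticommuting Hermitian unitaries representing the Clifford algebra ${\mathcal C}_\beta$ in $\bM_{2^{\beta}}$ --- this is where $m=2^{\beta}$ comes from. The anticommutation makes the off-diagonal blocks of $\Omega=WGW^*$ antisymmetric, $\Omega_{s,t}=-\Omega_{t,s}$, and a further congruence by the Hadamard-type reflection $J_p\otimes I$ then makes all $\beta$ diagonal blocks of the conjugated matrix equal to $\frac1\beta\oplus^{m}\Delta$. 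Iterating Lemma \ref{BL-lemma} writes this matrix as a sum of exactly $\beta$ isometric congruences of its (now equal) diagonal blocks. The number of terms is the number of diagonal blocks, so no merging of overlapping ranges is ever needed; that is the mechanism your induction is missing, and it also keeps everything real.
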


\vskip 10pt
Here the spaces $\bM_n$ and $\bM_{p,q}$ denote either the real or complex spaces of matrices. By a dyadic number $\beta$, we mean
$\beta=2^p$ for some $p\in\bN$.

This theorem has strong links with quantum information theory (QIT) as detailed in Section 4. Researchers in QIT may like to restate
the theorem by replacing direct sums with tensors products, $\oplus^{m} H \rightarrow I_m\otimes H $ and $\oplus^{m}\Delta\rightarrow
I_m\otimes \Delta$. Tensoring in identity means that an operator on a Hilbert space ${\mathcal H}$ is lifted to an operator acting on
${\mathcal F}\otimes {\mathcal H}$ where ${\mathcal F}$ is an auxiliary Hilbert space, an {\it ancilla} space in the QIT terminology.

\subsection{Around Hiroshima's theorem}

A straightforward application of Theorem \ref{thm-direct} is the following beautiful result first proved by Hiroshima in 2003 (see
Section 4 for the complete form of Hiroshima's theorem and is relevance in quantum physics).

\vskip 10pt
\begin{cor}\label{Hiroshima} Let $H=[A_{s,t}]\in \bM_{\alpha n}^+$ be written in $\alpha\times \alpha$ Hermitian blocks in $\bM_n$
and let $\Delta=\sum_{s=1}^{\alpha}{A_{s,s}}$ be its partial trace. Then, we have
\begin{equation*}
\left\|H \right\|\le \left\| \Delta\right\|
\end{equation*} for all symmetric norms.
\end{cor}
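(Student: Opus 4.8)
The plan is to deduce Corollary \ref{Hiroshima} from the decomposition of Theorem \ref{thm-direct} together with standard majorization theory, first for a dyadic number of blocks and then reducing the general case by padding with zero blocks.

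First I would treat the case where $\alpha=2^p$ is dyadic. Put $m=2^{\alpha}$. Since $H\in\bM_{\alpha n}^+$ and $\Delta\in\bM_n^+$ are positive, so are $\oplus^m H$ and $\oplus^m\Delta$, and for each isometry $V_k\in\bM_{m\alpha n,mn}$ the congruence $V_k(\oplus^m\Delta)V_k^*$ is a positive matrix whose nonzero eigenvalues are exactly those of $\oplus^m\Delta$ (because $XY$ and $YX$ share their nonzero spectrum and $V_k^*V_k=I$); hence its decreasingly ordered eigenvalue vector is $\lambda^{\downarrow}(\oplus^m\Delta)$, completed by zeros. Applying Theorem \ref{thm-direct} and the Ky Fan majorization $\lambda^{\downarrow}(\sum_k B_k)\prec\sum_k\lambda^{\downarrow}(B_k)$ for Hermitian matrices to the average
\begin{equation*}
\oplus^m H=\frac{1}{\alpha}\sum_{k=1}^{\alpha}V_k(\oplus^m\Delta)V_k^*,
\end{equation*}
I obtain
\begin{equation*}
\lambda^{\downarrow}(\oplus^m H)\prec \frac{1}{\alpha}\sum_{k=1}^{\alpha}\lambda^{\downarrow}\bigl(V_k(\oplus^m\Delta)V_k^*\bigr)=\lambda^{\downarrow}(\oplus^m\Delta).
\end{equation*}

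Next I would descend from $\oplus^m$ to the original matrices. The eigenvalues of $\oplus^m H$ are those of $H$, each repeated $m$ times, and likewise for $\Delta$. Comparing the partial sums of the two ordered spectra at the indices $mk$ gives $m\sum_{j=1}^k\lambda_j^{\downarrow}(H)\le m\sum_{j=1}^k\lambda_j^{\downarrow}(\Delta)$ for every $k$, that is $H\prec_w\Delta$ (in fact $H\prec\Delta$, since $\mathrm{Tr\,}H=\mathrm{Tr\,}\Delta$). By the Ky Fan principle this yields $\|H\|\le\|\Delta\|$ for all symmetric norms, settling the dyadic case.

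Finally, for an arbitrary $\alpha$ I would choose a dyadic $\beta=2^p\ge\alpha$ and form $\tilde H:=H\oplus 0_{(\beta-\alpha)n}\in\bM_{\beta n}^+$, viewed in $\beta\times\beta$ Hermitian blocks by adjoining $(\beta-\alpha)$ zero rows and columns of blocks. Its off-diagonal blocks remain Hermitian and its partial trace is unchanged, $\tilde\Delta=\Delta$. The dyadic case gives $\|\tilde H\|\le\|\tilde\Delta\|=\|\Delta\|$, while $\|\tilde H\|=\|H\oplus 0\|=\|H\|$ because a symmetric norm depends only on the singular values. This completes the proof. The genuine content lies entirely in Theorem \ref{thm-direct}; the only points demanding care are the translation of the $\oplus^m$-inflated majorization back to $H$ and $\Delta$, and the zero-padding reduction to a dyadic number of blocks, both of which are routine once the positivity of the blocks has been used.
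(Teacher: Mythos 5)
Your proof is correct and follows essentially the same route as the paper: pad with zero blocks to reduce to a dyadic $\alpha$, apply Theorem \ref{thm-direct} to the inflated matrices, and descend from $\oplus^m H$ and $\oplus^m\Delta$ to $H$ and $\Delta$. The only difference is cosmetic: where you pass through the Ky Fan majorization and the Ky Fan principle, the paper obtains $\|\oplus^m H\|\le\|\oplus^m\Delta\|$ directly from the triangle inequality and unitary invariance of symmetric norms applied to the average of the isometric congruences.
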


\vskip 10pt
\begin{proof} By completing $H$ with some zero rows and columns, we may assume that $\alpha=\beta$ is dyadic. Theorem
\ref{thm-direct} then implies, with $m=2^{\beta}$,
$$
\| \oplus^m H \| \le \| \oplus^m \Delta \| 
$$
for all symmetric norms, which is equivalent to the claim of the corollary.
\end{proof}

\vskip 10pt
By the Ky Fan principle, Corollary \ref{Hiroshima} is equivalent to the majorisation relation
$$
\sum_{i=1}^j\lambda_{i}(H) \le \sum_{i=1}^j\lambda_{i}(\Delta)
$$
for all $j=1,\ldots,\alpha n$. (we set $\lambda_j(A)=0$ when $A\in\bM_d^+$ and $j>d$).

Theorem \ref{thm-direct} says much more than this majorisation. For instance, we may completes these eigenvalue relations with the
following ones.

\vskip 10pt
\begin{cor}\label{cor-eigenvalue1} Let $H=[A_{s,t}]\in \bM_{\alpha n}^+$ be written in $\alpha\times \alpha$ Hermitian blocks in
$\bM_n$ and let $\Delta=\sum_{s=1}^{\alpha}{A_{s,s}}$ be its partial trace. Then, we have
\begin{equation*}
\lambda_{1+\beta k}(H) \le \lambda_{1+k}( \Delta)
\end{equation*} 
 for all $k=0,\ldots,n-1$,    where $\beta$ is the smallest dyadic number such that $\alpha \le\beta$.
\end{cor}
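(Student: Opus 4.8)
The plan is to read the eigenvalue inequality straight off the decomposition of Theorem~\ref{thm-direct}, combining it with an iterated Weyl inequality for sums of positive matrices. First I would reduce to the dyadic case. Completing $H$ with zero rows and columns yields $\tilde H = H \oplus 0 \in \bM_{\beta n}^+$, where $\beta$ is the smallest dyadic number with $\alpha \le \beta$; this adds $\beta - \alpha$ zero diagonal blocks, so the partial trace $\Delta$ is unchanged, while the eigenvalues of $\tilde H$ are those of $H$ followed by extra zeros. With the convention $\lambda_j(\cdot)=0$ past the dimension, one has $\lambda_{1+\beta k}(\tilde H) = \lambda_{1+\beta k}(H)$ for every $k$, so it suffices to prove the bound for $\tilde H$; in other words we may assume $\alpha = \beta$ is dyadic from the start.

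Next I would apply Theorem~\ref{thm-direct} with $m = 2^\beta$ to write $\oplus^m H = \frac{1}{\beta}\sum_{s=1}^\beta V_s(\oplus^m \Delta)V_s^*$ for isometries $V_s$. Setting $X_s := \frac{1}{\beta} V_s(\oplus^m\Delta)V_s^*$, we realise $\oplus^m H = \sum_{s=1}^\beta X_s$ as a sum of $\beta$ positive matrices. Since each $V_s$ is an isometry ($V_s^*V_s = I$), the nonzero spectrum of $X_s$ coincides with that of $\frac1\beta(\oplus^m\Delta)$; hence $\lambda_j(X_s) = \frac{1}{\beta}\lambda_j(\oplus^m\Delta)$ for all $j$ and all $s$.

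The key step is then an eigenvalue estimate for the $\beta$-fold sum. Iterating Weyl's inequality $\lambda_{i+j-1}(S+T)\le \lambda_i(S)+\lambda_j(T)$ (see \cite[p.\ 62]{Bh}) gives, for positive matrices $A_1,\dots,A_\beta$, the bound $\lambda_{i_1+\cdots+i_\beta-(\beta-1)}\bigl(\sum_s A_s\bigr) \le \sum_s \lambda_{i_s}(A_s)$. Taking all indices equal to a common value $j$ and applying this to $A_s = X_s$ yields $\lambda_{\beta(j-1)+1}(\oplus^m H) \le \sum_{s=1}^\beta \lambda_j(X_s) = \lambda_j(\oplus^m\Delta)$. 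Finally I would unwind the direct sums: since $\oplus^m$ repeats each eigenvalue $m$ times, $\lambda_{1+m\beta k}(\oplus^m H) = \lambda_{1+\beta k}(H)$ and $\lambda_{1+mk}(\oplus^m\Delta)=\lambda_{1+k}(\Delta)$. Choosing $j = 1+mk$ makes $\beta(j-1)+1 = 1+m\beta k$, and the displayed inequality becomes exactly $\lambda_{1+\beta k}(H) \le \lambda_{1+k}(\Delta)$, valid for $k=0,\dots,n-1$.

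I expect the only genuine obstacle to be the index bookkeeping: matching the spreading of eigenvalue multiplicities produced by the $\oplus^m$ operation with the shift $i_1+\cdots+i_\beta-(\beta-1)$ of the iterated Weyl inequality, and confirming that the choice $j = 1+mk$ lands precisely on the index $1+m\beta k$ while staying within the valid ranges for $k=0,\dots,n-1$. Everything else is formal once Theorem~\ref{thm-direct} is granted.
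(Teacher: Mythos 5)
Your proposal is correct and follows essentially the same route as the paper's proof: reduce to the dyadic case by padding with zero blocks, invoke Theorem~\ref{thm-direct}, iterate Weyl's inequality over the $\beta$ isometrically congruent summands, and translate indices through the $m$-fold direct sum. The only difference is cosmetic bookkeeping (you specialize the Weyl index to $j=1+mk$ directly, while the paper first records the inequality for $\oplus^m H$ and then substitutes $j=km$ into the ceiling formula), so there is nothing to add.
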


\vskip 10pt
\begin{proof} By completing $H\in\bM_{\alpha n}^+$ with some zero blocks, we may assume that $H\in\bM_{\beta n}^+$. Theorem
\ref{thm-direct} then yields the decomposition
\begin{equation*}
\oplus^{m} H =\frac{1}{\beta} \sum_{k=1}^{\beta} V_k\left(\oplus^{m}\Delta\right) V_k^*
\end{equation*} 
where   $\{V_k\}_{k=1}^{\beta}$  is a family of isometries in $\bM_{m\beta n,mn}$ and $m=2^{\beta}$.
We recall a simple fact, Weyl's theorem: if $Y, Z \in\bM_d$ are Hermitian,  then
\[\lambda_{r+s+1}(Y+Z)\le \lambda_{r+1}(Y) + \lambda_{s+1}(Z)\]
for all nonnegative integers $r,s$ such that $r + s\le d-1$. When $Y,Z$ are positive, this still holds for all nonnegative integers
$r,s$ with our convention ($\lambda_j(A)=0$ when $A\in\bM_d^+$ and $j>d$). From the previous decomposition we thus infer
\begin{equation}\label{F-1}
\lambda_{1+\beta k}\left(\oplus^m H\right) \le \lambda_{1+ k}\left(\oplus^m \Delta\right)
\end{equation}
for all $k=0,1,\ldots$. Then,  observe that for all $A\in\bM_d^+$ and all $j=0,1,\ldots$,
\begin{equation}\label{F-2}
\lambda_{1+j}\left(\oplus^m A\right) = \lambda_{\langle(1+j)/m\rangle}(A)
\end{equation}
where $\langle u\rangle$ stands for the smallest integer greater than or equal to $u$. Combining \eqref{F-1} and \eqref{F-2} 
 we get
\begin{equation*}
\lambda_{\langle (1+\beta j)/m\rangle}\left(H\right) \le \lambda_{\langle (1+ j)/m\rangle}\left( \Delta\right)
\end{equation*}
for all $j=0,1,\ldots$. Taking $j=km$, $k=0,1\ldots$ completes the proof.
\end{proof}

\vskip 10pt
The above proof actually shows more eigenvalue inequalities.

\vskip 10pt
\begin{cor}\label{cor-eigenvalueC} Let $H=[A_{s,t}]\in \bM_{\alpha n}^+$ be written in $\alpha\times \alpha$ Hermitian blocks in
$\bM_n$ and let $\Delta=\sum_{s=1}^{\alpha}{A_{s,s}}$ be its partial trace. Then, we have
$$
\lambda_{1+\beta k}( S) \le \frac{1}{\beta}\left\{ \lambda_{1+k_1}\left( \Delta \right)
+\cdots+ \lambda_{1+k_{\beta}}\left( \Delta \right)\right\}
$$
where $k_1+\cdots+k_{\beta}=\beta k$ and $\beta$ is the smallest dyadic number such that $\alpha \le\beta$.
\end{cor}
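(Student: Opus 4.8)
The plan is to follow the pattern of the proof of Corollary \ref{cor-eigenvalue1}, but to feed the decomposition of Theorem \ref{thm-direct} into the \emph{many-term} form of Weyl's inequality rather than the two-term form used there; Corollary \ref{cor-eigenvalue1} is then recovered as the case $k_1=\cdots=k_\beta=k$. (On the left-hand side $S$ should read $H$.) First I would reduce to the dyadic situation exactly as before: completing $H\in\bM_{\alpha n}^+$ with zero blocks, I may assume $H\in\bM_{\beta n}^+$, where $\beta$ is the smallest dyadic number with $\alpha\le\beta$. Theorem \ref{thm-direct} then furnishes, with $m=2^\beta$,
$$
\oplus^m H=\frac{1}{\beta}\sum_{j=1}^{\beta}V_j\left(\oplus^m\Delta\right)V_j^*
$$
for isometries $V_j\in\bM_{m\beta n,mn}$.

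Next I would analyze the summands $Y_j:=\tfrac1\beta V_j(\oplus^m\Delta)V_j^*$. Each $Y_j$ is positive semidefinite, and since $AB$ and $BA$ share the same nonzero spectrum (take $A=V_j$, $B=(\oplus^m\Delta)V_j^*$, using $V_j^*V_j=I$), an isometric congruence only appends zeros: the decreasingly ordered eigenvalues of $Y_j$ are $\tfrac1\beta\lambda_i(\oplus^m\Delta)$ followed by zeros. Hence $\lambda_{1+r}(Y_j)=\tfrac1\beta\lambda_{1+r}(\oplus^m\Delta)$ under the convention $\lambda_j(\cdot)=0$ beyond the matrix size. I would then apply the many-term Weyl inequality, obtained by iterating the two-term inequality already recalled in the proof of Corollary \ref{cor-eigenvalue1}: for Hermitian $Y_1,\dots,Y_\beta$ and nonnegative integers $r_1,\dots,r_\beta$,
$$
\lambda_{1+r_1+\cdots+r_\beta}\!\left(\sum_{j=1}^\beta Y_j\right)\le\sum_{j=1}^\beta\lambda_{1+r_j}(Y_j).
$$
Choosing $r_j=mk_j$, so that $\sum_j r_j=m\beta k=\beta(mk)$, this gives
$$
\lambda_{1+\beta m k}\left(\oplus^m H\right)\le\frac1\beta\sum_{j=1}^\beta\lambda_{1+mk_j}\left(\oplus^m\Delta\right).
$$

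Finally I would translate back to $H$ and $\Delta$ through the identity \eqref{F-2}, namely $\lambda_{1+u}(\oplus^m A)=\lambda_{\langle(1+u)/m\rangle}(A)$. The scaling $r_j=mk_j$ is precisely what makes the ceilings collapse, since $\langle(1+\beta mk)/m\rangle=1+\beta k$ and $\langle(1+mk_j)/m\rangle=1+k_j$; this yields the asserted
$$
\lambda_{1+\beta k}(H)\le\frac1\beta\left\{\lambda_{1+k_1}(\Delta)+\cdots+\lambda_{1+k_\beta}(\Delta)\right\}.
$$

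Once Theorem \ref{thm-direct} is available, the argument is essentially routine, so I do not expect a genuine obstacle; the only delicate points are bookkeeping ones. The first is the disciplined use of the convention $\lambda_j=0$ beyond the matrix size, which is what allows the many-term Weyl inequality to be invoked for \emph{all} admissible tuples with $k_1+\cdots+k_\beta=\beta k$ rather than only in the classical range $\le d-1$. The second, and the step I would double-check most carefully, is the index arithmetic with the ceiling function: the trick of inflating the indices by the factor $m$ is exactly what converts the $\oplus^m$-level inequality into the clean statement for $H$ and $\Delta$.
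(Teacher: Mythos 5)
Your proposal is correct and is exactly the argument the paper intends: the paper gives no separate proof, merely remarking that the proof of Corollary \ref{cor-eigenvalue1} "actually shows" this statement, and your write-up is precisely that proof with the iterated many-index form of Weyl's inequality applied to the decomposition of Theorem \ref{thm-direct}, followed by the same index inflation $r_j=mk_j$ and the identity \eqref{F-2}. The bookkeeping points you flag (the convention $\lambda_j=0$ beyond the matrix size, and the ceiling arithmetic) are handled correctly, and your observation that $S$ should read $H$ is also right.
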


\vskip 10pt
Corollary \ref{Hiroshima} implies the following rearrangement inequality.

\vskip 10pt
\begin{cor}\label{cor-rearr}
 Let $\{S_i\}_{i=1}^{\alpha}$ be a commuting family of Hermitian operators in $\bM_n$ and let $T\in \bM_n^+$. Then,
\begin{equation*}
\left\| \sum_{i=1}^{\alpha} S_iT^2S_i \right\| \le \left\|  \sum_{i=1}^{\alpha} TS^2_iT \right\|
\end{equation*}
for all symmetric norms.
\end{cor}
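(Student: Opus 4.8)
The plan is to realize the two sides of the inequality as $\mathbf{X}\mathbf{X}^*$ and $\mathbf{X}^*\mathbf{X}$ for a single rectangular block matrix $\mathbf{X}$, and then to apply Hiroshima's theorem (Corollary \ref{Hiroshima}) to $\mathbf{X}\mathbf{X}^*$. First I would set $X_s := TS_s$ for $s=1,\ldots,\alpha$; since $S_s$ and $T$ are Hermitian we have $X_s^* = S_sT$. Stacking the $X_s$ into the column block matrix $\mathbf{X} \in \bM_{\alpha n,n}$ whose $s$-th block is $X_s$, I form the Gram-type matrix
\begin{equation*}
H := \mathbf{X}\mathbf{X}^* = [X_sX_t^*]_{s,t=1}^{\alpha} = [TS_sS_tT]_{s,t=1}^{\alpha} \in \bM_{\alpha n}^+.
\end{equation*}

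The key observation, and the only place the hypotheses are used, is that each off-diagonal block $TS_sS_tT$ is Hermitian: indeed $(TS_sS_tT)^* = TS_tS_sT$, which equals $TS_sS_tT$ precisely because the family $\{S_i\}$ is commuting. Thus $H$ is a positive matrix in $\alpha\times\alpha$ Hermitian blocks, exactly the setting of Corollary \ref{Hiroshima}. Its partial trace is the sum of the diagonal blocks,
\begin{equation*}
\Delta = \sum_{s=1}^{\alpha} X_sX_s^* = \sum_{s=1}^{\alpha} TS_s^2T,
\end{equation*}
so Hiroshima's theorem gives $\|H\| \le \|\Delta\| = \big\|\sum_s TS_s^2T\big\|$ for every symmetric norm.

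It then remains to identify $\|H\|$ with the left-hand side. Computing the other product,
\begin{equation*}
\mathbf{X}^*\mathbf{X} = \sum_{s=1}^{\alpha} X_s^*X_s = \sum_{s=1}^{\alpha} S_sT^2S_s,
\end{equation*}
I would invoke the fact, already used repeatedly in this paper, that $\mathbf{X}\mathbf{X}^*$ and $\mathbf{X}^*\mathbf{X}$ share the same nonzero singular values, hence $\|\mathbf{X}\mathbf{X}^*\| = \|\mathbf{X}^*\mathbf{X}\|$ for all symmetric norms (padding by zeros as needed). Combining these three displays yields $\big\|\sum_s S_sT^2S_s\big\| = \|H\| \le \big\|\sum_s TS_s^2T\big\|$, which is the claim. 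There is no real obstacle here beyond choosing the correct substitution $X_s = TS_s$: the whole argument hinges on recognizing that this choice simultaneously makes the off-diagonal blocks Hermitian (via commutativity) and swaps the two expressions under the $\mathbf{X}\mathbf{X}^* \leftrightarrow \mathbf{X}^*\mathbf{X}$ interchange.
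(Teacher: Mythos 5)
Your proof is correct and is essentially identical to the paper's: both form the column block matrix with blocks $TS_i$, apply Corollary \ref{Hiroshima} to $\mathbf{X}\mathbf{X}^*=[TS_iS_jT]$ (whose off-diagonal blocks are Hermitian by commutativity), and conclude via the equality of symmetric norms of $\mathbf{X}\mathbf{X}^*$ and $\mathbf{X}^*\mathbf{X}$.
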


\vskip 10pt
\begin{proof}
Define a matrix $Z\in\bM_{\alpha n}$ by
$$ Z= XX^*=
\begin{bmatrix} TS_1\\ \vdots \\  TS_{\alpha}
\end{bmatrix}
\begin{bmatrix} S_1T & \cdots  &S_{\alpha}T
\end{bmatrix}.
$$
Hence $Z=[TS_iS_jT]$ is positive and partitioned in Hermitian blocks in $\bM_n$, with diagonal blocks $TS_i^2T$, $1\le i\le \alpha$.
Thus, for all symmetric norms,
$$
\| Z\| \le \left\|\sum_{i=1}^{\alpha} TS_i^2T \right\|
$$
Since $XX^*$ and $X^*X$ have  same symmetric norms for any rectangular matrix $X$, we infer
\begin{equation*}
\left\|\sum_{i=1}^{\alpha}  S_iT^2S_i \right\| \le \left\|\sum_{i=1}^{\alpha} TS_i^2T \right\|
\end{equation*}
as claimed.
\end{proof}

\vskip 10pt
From Corollary \ref{cor-eigenvalue1} we similarly get the next one.

\vskip 10pt
\begin{cor}\label{cor-rearr2}
 Let $\{S_i\}_{i=1}^{\alpha}$ be a commuting family of Hermitian operators in $\bM_n$ and let $T\in \bM_n^+$. Then,
\begin{equation*}
\lambda_{1+\beta k}\left( \sum_{i=1}^{\alpha} S_iT^2S_i \right) \le \lambda_{1+k} \left(  \sum_{i=1}^{\alpha} TS^2_iT \right)
\end{equation*}
 for all $k=0,\ldots,n-1$,    where $\beta$ is the smallest dyadic number such that $\alpha \le\beta$.
\end{cor}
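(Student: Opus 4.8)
The plan is to reuse verbatim the block-matrix construction from the proof of Corollary \ref{cor-rearr}, but to feed it into the eigenvalue inequality of Corollary \ref{cor-eigenvalue1} rather than into Hiroshima's norm estimate (Corollary \ref{Hiroshima}). First I would form the rectangular matrix
$$
X=\begin{bmatrix} TS_1\\ \vdots \\  TS_{\alpha}\end{bmatrix}\in\bM_{\alpha n,n}
$$
and set $Z=XX^*=[TS_iS_jT]\in\bM_{\alpha n}^+$. Because the family $\{S_i\}_{i=1}^{\alpha}$ is commuting and Hermitian, each product $S_iS_j$ is Hermitian, so every block $TS_iS_jT$ is Hermitian; thus $Z$ is a positive matrix partitioned into $\alpha\times\alpha$ Hermitian blocks in $\bM_n$, whose diagonal blocks are $TS_i^2T$. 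Its partial trace is therefore $\Delta=\sum_{i=1}^{\alpha}TS_i^2T$.

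Next I would apply Corollary \ref{cor-eigenvalue1} directly to $Z$, which yields
$$
\lambda_{1+\beta k}(Z)\le \lambda_{1+k}(\Delta)=\lambda_{1+k}\left(\sum_{i=1}^{\alpha}TS_i^2T\right)
$$
for all $k=0,\ldots,n-1$, where $\beta$ is the smallest dyadic number with $\alpha\le\beta$.

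Finally, it remains to replace $Z=XX^*$ by $X^*X=\sum_{i=1}^{\alpha}S_iT^2S_i$ on the left-hand side. The one point requiring care is the elementary but essential fact that for a rectangular matrix $X$ the positive operators $XX^*$ and $X^*X$ share the same nonzero eigenvalues with the same multiplicities; with the convention $\lambda_j(\cdot)=0$ past the dimension, this gives $\lambda_j(XX^*)=\lambda_j(X^*X)$ for every index $j$. Substituting $\lambda_{1+\beta k}(Z)=\lambda_{1+\beta k}(X^*X)=\lambda_{1+\beta k}(\sum_i S_iT^2S_i)$ into the displayed inequality then completes the argument. I do not expect a serious obstacle here: the whole combinatorial content is already packaged in Corollary \ref{cor-eigenvalue1}, and the only thing to verify is that the passage $XX^*\leftrightarrow X^*X$ respects the indexing of eigenvalues used in the statement (which it does, since $X^*X\in\bM_n$ and the surplus eigenvalues of $XX^*\in\bM_{\alpha n}$ are all zero).
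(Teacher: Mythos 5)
Your argument is correct and is exactly the proof the paper intends: the paper disposes of this corollary with the single remark that it follows ``similarly'' from Corollary \ref{cor-eigenvalue1}, meaning precisely the construction $Z=XX^*=[TS_iS_jT]$ from the proof of Corollary \ref{cor-rearr} combined with the equality of the (zero-padded) eigenvalue sequences of $XX^*$ and $X^*X$. Your write-up just makes explicit the two points the paper leaves implicit, namely the Hermitianity of the blocks via commutativity and the indexing convention $\lambda_j(\cdot)=0$ past the dimension.
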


\subsection{Around Rotfel'd inequality}

\noindent
 Given two Hermitian matrices $A$, $B$ in $\bM_n$ and a concave function $f(t)$ defined on the real line,
\begin{equation}\label{VN}
{\mathrm{Tr\,}} f\left(\frac{ A+B}{2}\right) \ge {\mathrm{Tr\,}}\frac{f(A) +f(B)}{2}
\end{equation}
and, if further $f(0)\ge 0$ and both $A$ and $B$ are positive semi-definite,
\begin{equation}\label{Rot}
{\mathrm{Tr\,}} f(A+B) \le {\mathrm{Tr\,}} f(A)+ {\mathrm{Tr\,}}f(B).
\end{equation}
The first inequality goes back to von-Neumann in the 1920's, the second is more subtle and has been proved only in 1969 by Rotfel'd
\cite{Rot}. These trace inequalities are matrix versions of obvious scalar inequalities. Theorem \ref{thm-direct} yields a refinement
of the Rotfel'd inequality for families of positive operators $\{A_i\}_{i=1}^{\alpha}$ by considering these operators as the diagonal
blocks of a partitioned matrix $H$ as follows.

\vskip 10pt
\begin{cor}\label{cor-rot} Let $H=[A_{s,t}]\in \bM_{\alpha n}^+$ be written in $\alpha\times \alpha$ Hermitian blocks in $\bM_n$.
Then, we have
$$
{\mathrm{Tr}}\, f\left(\sum_{s=1}^{\alpha} A_{s,s}\right)
  \le  {\mathrm{Tr}}\,f(H)
 \le  \sum_{s=1}^{\alpha}{\mathrm{Tr}}\,f(A_{s,s})
$$
for all concave functions $f(t)$ on $\bR^+$ such that $f(0)\ge 0$.
\end{cor}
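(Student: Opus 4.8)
The statement splits into two independent inequalities, and I would attack them with two different tools already available in the thesis. The upper bound $\mathrm{Tr}\, f(H)\le \sum_{s}\mathrm{Tr}\, f(A_{s,s})$ is a Rotfel'd phenomenon, which I would obtain by iterating the two-block case; the lower bound $\mathrm{Tr}\, f(\Delta)\le \mathrm{Tr}\, f(H)$ is a majorization phenomenon, which I would deduce from Hiroshima's theorem (Corollary \ref{Hiroshima}). It is worth noting at the outset that the upper bound requires no hypothesis on the off-diagonal blocks, whereas the lower bound genuinely uses that they are Hermitian, through Corollary \ref{Hiroshima}.

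For the upper bound I would induct on the number $\alpha$ of diagonal blocks. The case $\alpha=1$ is trivial. For the inductive step, regroup $H$ as a two-by-two block matrix
\[
H=\begin{bmatrix} A_{1,1} & Y \\ Y^* & H'\end{bmatrix},
\]
where $H'=[A_{s,t}]_{2\le s,t\le\alpha}\in\bM_{(\alpha-1)n}^+$ and $Y=[A_{1,t}]_{2\le t\le\alpha}$. The trace-norm form of Corollary \ref{corlee}, i.e.\ the Rotfel'd-type trace inequality for positive two-block matrices, gives $\mathrm{Tr}\, f(H)\le \mathrm{Tr}\, f(A_{1,1})+\mathrm{Tr}\, f(H')$; this form holds for every concave $f$ with no non-negativity hypothesis, since one may subtract a constant (an affine function) without changing either side, the dimensions $n+(\alpha-1)n$ and $n,(\alpha-1)n$ matching. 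The induction hypothesis then bounds $\mathrm{Tr}\, f(H')$ by $\sum_{s=2}^{\alpha}\mathrm{Tr}\, f(A_{s,s})$, and adding $\mathrm{Tr}\, f(A_{1,1})$ yields the claim.

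For the lower bound I would invoke the majorization stated just after Corollary \ref{Hiroshima}, namely $\sum_{i=1}^{j}\lambda_i(H)\le \sum_{i=1}^{j}\lambda_i(\Delta)$ for all $j$, with the convention $\lambda_j(\cdot)=0$ past the dimension. Because $\mathrm{Tr}\, H=\sum_s \mathrm{Tr}\, A_{s,s}=\mathrm{Tr}\, \Delta$, this weak majorization is in fact a genuine majorization of the two eigenvalue vectors padded with zeros to length $\alpha n$, that is $\lambda(H)\prec \lambda(\Delta\oplus 0_{(\alpha-1)n})$. Applying the Schur-concavity principle (equivalence (2) recalled in Chapter 4, used with the convex function $-f$) gives $\mathrm{Tr}\, f(H)\ge \mathrm{Tr}\, f(\Delta\oplus 0_{(\alpha-1)n})=\mathrm{Tr}\, f(\Delta)+(\alpha-1)n\,f(0)$. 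The hypothesis $f(0)\ge 0$ lets me discard the last term, which is precisely where that hypothesis is needed.

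The proof is essentially a careful assembly of cited results, so I do not expect a deep obstacle; the delicate point is the bookkeeping in the lower bound, where the dimension mismatch between $H\in\bM_{\alpha n}^+$ and $\Delta\in\bM_n^+$ forces the zero-padding and the explicit $(\alpha-1)n\,f(0)$ correction, and where the direction of the inequality must be tracked through the passage from the convex function $-f$ back to the concave $f$. One should also check that the two-block trace inequality is legitimately applied with blocks $A_{1,1}$ and $H'$ of unequal sizes, which is fine since Corollary \ref{corlee} holds for arbitrary positive partitions.
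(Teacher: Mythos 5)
Your proof is correct, and the two halves compare differently with the paper's own argument. For the upper bound your route is essentially the paper's: the paper applies Lemma \ref{BL-lemma} (in its obvious $\alpha$-block extension) once to write $H=\sum_{s}V_sA_{s,s}V_s^*$ and then invokes Rotfel'd's trace inequality \eqref{Rot}, after first reducing to $f(0)=0$ by adding a constant; your induction on $\alpha$ through the two-block trace form of Corollary \ref{corlee} is the same decomposition unrolled one block at a time, and your observation that the trace version needs no sign hypothesis on $f$ matches the paper's remark following Corollary \ref{corlee}. For the lower bound you take a genuinely different route. You pass through Corollary \ref{Hiroshima} plus the trace equality ${\mathrm{Tr}}\,H={\mathrm{Tr}}\,\Delta$ to get the majorization $H\prec\Delta\oplus 0_{(\alpha-1)n}$ and then apply the Hardy--Littlewood--P\'olya principle to the convex function $-f$, discarding the $(\alpha-1)n\,f(0)\ge 0$ correction; the paper instead applies Theorem \ref{thm-direct} directly, writing $\oplus^{m}H$ as an average $\frac{1}{\beta}\sum_k V_k(\oplus^m\Delta)V_k^*$ of isometry-congruences and feeding this into the von Neumann trace inequality \eqref{VN}, then dividing by $m$. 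The paper's Remark immediately after the corollary acknowledges exactly your equivalence (first inequality $\iff$ Hiroshima via the majorization principle) and states that its proof deliberately avoids that principle: the operator-level decomposition yields the trace inequality and the majorization simultaneously without invoking the convex-function characterization of $\prec$. Your version is shorter if one takes Corollary \ref{Hiroshima} as a black box, but since that corollary is itself deduced from Theorem \ref{thm-direct}, the paper's argument is the more primitive one. Your bookkeeping of the zero-padding, the $(\alpha-1)n\,f(0)$ term, and the sign reversal through $-f$ is all correct.
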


\vskip 10pt
\begin{proof}
Note that if these inequalities hold for a non-negative concave function $f(t)$ with $f(0)=0$, then they also hold for the function
$f(t)+c$ for any constant $c>0$. Therefore it suffice to consider concave functions vanishing at the origine. This assumption entails
that
\begin{equation}\label{FF-1}
f(VAV^*)=Vf(A)V^*
\end{equation}
for all $A\in\bM_n^+$ and all isometries $V\in\bM_{m,n}$. By Lemma \ref{BL-lemma} we have a decomposition
$$
H=\sum_{s=1}^{\alpha} V_sA_{s,s}V_s^*
$$
for some isometries $V_s\in\bM_{\alpha n,n}$. Inequality \eqref{Rot} then yields
$$
 {\mathrm{Tr}}\,f(H) \le \sum_{s=1}^{\alpha} {\mathrm{Tr}}\,f(V_sA_{s,s}V_s^*)
$$
and using \eqref{FF-1} establishes the second inequality. To prove the first inequality, we use Theorem \ref{thm-direct}. By
completing $H$ with some zero blocks (we still suppose $f(0)=0$) we may assume that $\alpha=\beta$ is dyadic. Thus we have a
decomposition
\begin{equation*}
\oplus^{m} H =\frac{1}{\beta} \sum_{k=1}^{\beta} V_k\left(\oplus^{m}\Delta\right) V_k^*
\end{equation*} 
where   $\{V_k\}_{k=1}^{\beta}$  is a family of isometries in $\bM_{m\beta n,mn}$ and $m=2^{\beta}$. Inequality \eqref{VN} then gives$$ {\mathrm{Tr}}\, f\left(\oplus^{m} H \right) \ge 
\frac{1}{\beta}\sum_{k=1}^{\beta}{\mathrm{Tr}}\,f\left( V_k\left(\oplus^{m}\Delta\right) V_k^*\right)
$$
and using \eqref{FF-1} we obtain
$$ {\mathrm{Tr}}\ f\left(\oplus^{m} H \right) \ge {\mathrm{Tr}}\ f\left(\oplus^{m} \Delta \right).
$$
The proof is completed by dividing both sides by $m$.
\end{proof}

\vskip 10pt
\begin{remark} The first inequality of Corollary \ref{cor-rot} is actually equivalent to Corollary \ref{Hiroshima}
by a well-known majorisation principle for convex/concave functions. The above proof does not require this principle. The simplest
case of Corollary \ref{cor-rot} is the double inequality
$$
 f(a_1+\cdots+a_n) \le {\mathrm{Tr}}\, f(A)
\le f(a_1)+\cdots +f(a_n)
$$
for all $A\in\bM_n^+$ with diagonal entries $a_1,\ldots,a_n$.

\end{remark}

\vskip 10pt
\begin{remark} A special case of Corollary \ref{cor-rot} refines a well-known determinantal inequality. Taking as a concave function
on $\bR^+$, $f(t)=\log(1+t)$, we obtain:
Let $A,B\in \bM_n^+$. Then, for any Hermitian $X\in\bM_n$  such that 
$$H=\begin{bmatrix} A&X \\X&B\end{bmatrix}$$
is positive semi-definite, we have 
\begin{equation*}
\det(I+A+B)
\le \det(I+H) \le \det(I+A)\det(I+B).
\end{equation*}
This was noted in \cite{BLL2}.
\end{remark}

\section{Proof of Theorem 2.1}

\noindent
A Clifford algebra ${\mathcal{C}}_{\beta}$ is the associative real algebra generated by $\beta$ elements $q_1,\ldots,q_{\beta}$
satisfying the canonical anticommutation relations $q_i^2=1$ and
$$
q_iq_j +q_jq_i =0 
$$
for $i\neq j$. This structure was introduced by Clifford in \cite{Cli}. It turned out to be of great importance in quantum theory and
operator algebras, for instance see the survey \cite{Der}. From the relation
$$
\begin{pmatrix}
0&1 \\
1&0
\end{pmatrix}
\begin{pmatrix}
1&0 \\
0&-1
\end{pmatrix}
+
\begin{pmatrix}
1&0 \\
0&-1
\end{pmatrix}
\begin{pmatrix}
0&1 \\
1&0
\end{pmatrix}=0
$$
we infer a representation of ${\mathcal{C}}_{\beta}$ as a a real subalgebra of $M_{2^{\beta}}=\otimes^{\beta} \bM_2$ by mapping the
generators $q_j\mapsto Q_j$, $1\le j\le \beta$, where
\begin{equation}\label{cli-gen}
Q_j=\left\{\otimes^{j-1}\begin{pmatrix}
1&0 \\
0&-1
\end{pmatrix}\right\}\otimes
\begin{pmatrix}
0&1 \\
1&0
\end{pmatrix}
\otimes
\left\{\otimes^{\beta-j}\begin{pmatrix}
1&0 \\
0&1
\end{pmatrix}
\right\}.
\end{equation}
We use these matrices in the following proof of Theorem \ref{thm-direct}.

\vskip 10pt
\begin{proof} 
First, replace the positive block matrix $H=[A_{s,t}]$ where $1\le s,t,\le \beta$
and all blocks are Hermitian by a bigger one in which each block in counted $2^{\beta}$ times :
$$G= [G_{s,t}]:= \left[I_{2^{\beta}}\otimes A_{s,t}\right]= \left[\oplus^{2^{\beta}} A_{s,t}\right]$$
where $I_r$ stands for the identity of $\bM_r$.
Thus $G\in\bM_{\beta 2^{\beta}n}$ is written in $\beta$-by-$\beta$ blocks in $\bM_{2^{\beta}n}$. Then perform a unitary congruence
with the unitary $W\in\bM_{\beta2^{\beta}n}$ defined as
\begin{equation}\label{unitaryW}
W=\bigoplus_{j=1}^{\beta} \left\{Q_j \otimes I_n\right\}
\end{equation}
where $Q_j$ is given by \eqref{cli-gen}, $1\le j\le \beta$.
Thanks to the anticommutation relation for each pair of summands in
 \eqref{unitaryW},
$$
\left\{Q_j \otimes I_n\right\}\left\{Q_l \otimes I_n\right\}+\left\{Q_l \otimes I_n\right\}\left\{Q_j \otimes I_n\right\}=0, \quad
j\neq l,
$$
 the block matrix (with $W=W^*$)
\begin{equation}\label{omega1}
\Omega:=WGW^*=[ \Omega_{s,t}]
\end{equation}
satisfies the following  :
For $1\le s<t\le \beta$,
\begin{equation}\label{omega2}
\Omega_{s,t}=-\Omega_{t,s}.
\end{equation}

Next, consider the reflexion matrix
$$
J_1=\frac{1}{\sqrt{2}} \begin{pmatrix} 1&1 \\1&-1 \end{pmatrix}
$$
and define inductively for all integers $p>1$, a reflexion
$$
J_p=\frac{1}{\sqrt{2}} \begin{pmatrix} J_{p-1} & J_{p-1}  \\ J_{p-1} &- J_{p-1}  \end{pmatrix},
$$
that is $J_p=\otimes^p J_1$. Observe, that given any matrix $S\in\bM_{2^p}$, $S=[s_{i,j}]$, such that $s_{i,j}=-s_{i,j}$ for all
$i\neq j$, the matrix
$$
T=J_pSJ_p^*
$$
has its diagonal entries $t_{j,j}$ all equal to the normalized trace
$2^{-p}{\mathrm{Tr\,}} S$. Indeed, letting $J_p=[z_{i,j}]$, 
\begin{align*}
t_{j,j} &=\sum_{k} z_{j,k} \left(\sum_{l}s_{k,l}z_{l,j}\right) \\
&=\sum_{k,l} z_{j,k} s_{k,l} z_{l,j} \\
&=\sum_{k} z_{j,k} s_{k,k} z_{k,j} +\sum_{k\neq l}  z_{j,k} s_{k,l} z_{l,j}\\
&=2^{-p}{\mathrm{Tr\,}} S +\sum_{k<l}\left( s_{k,l} z_{j,k}z_{l,j}+s_{l,k} z_{l,j}z_{k,j}\right)\\
&=2^{-p}{\mathrm{Tr\,}} S.
\end{align*}

Now, since we assume that $\beta=2^p$ for some integer $p$, we may perform a unitary congruence to the matrix $\Omega$ in
\eqref{omega1} with the unitary matrix
$$
R_p=J_p\otimes I_{2^{\beta}}\otimes I_n
$$
and, making use of \eqref{omega2} and the above property of $J_p$, we note that 
$
R_p\Omega R_p^*
$
has  its $\beta$ diagonal blocks $(R_p\Omega R_p^*)_{j,j}$, $1\le j\le \beta$, all equal to the matrix $D\in\bM_{2^{\beta}n}$,
$$
D =\frac{1}{\beta}\sum_{s=1}^{\beta} \left\{ \oplus^{2^{\beta}}A_{s,s}\right\}.
$$
Thanks to the decomposition of Lemma \ref{BL-lemma} and its obvious extension for $\beta\times\beta$ partitions, there exist some
isometries $U_k\in\bM_{\beta2^{\beta}n, 2^{\beta}n}$, $1\le k\le \beta$, such that
$$
\Omega=\sum_ {k=1}^{\beta} U_k D U_k^*.
$$
Since $\Omega$ is unitarily equivalent to $\oplus^{2^{\beta}} H$, that is $\Omega=V^*(\oplus^{2^{\beta}} H)V$ for some unitary $V\in
\bM_{\beta 2^{\beta}n, 2^{\beta}n}$, we get
$$
\oplus^{2^{\beta}} H = \sum_ {k=1}^{\beta} VU_k D U_k^*V^*
$$
wich is the claim of Theorem \ref{thm-direct} by setting $VU_k=:V_k$, $1\le k\le \beta$, as $2^{\beta}=m$, and
$D=\frac{1}{\beta}\oplus^m\Delta$.
\end{proof}

\section{Comments}

\subsection{Complex matrices and small partitions}

If one uses isometries with complex entries, then, in case of partitions into a small number of $\beta\times\beta$ blocks, the number
$m$ of copies in the direct sum $\oplus^m H$ and $\oplus^m \Delta$ can be reduced. For $\beta=2$, Theorem \ref{thm-four} shows that
it suffices to take $m=1$. For $\beta=3$ or $\beta=4$ the following result holds \cite{BLL2}.

\vskip 10pt
\begin{theorem}\label{thm-quaternion} Let $H=[A_{s,t}]\in \bM_{\beta n}^+(\bC)$ be written in Hermitian blocks in $\bM_n(\bC)$ with
$\beta\in\{3,4\}$ and let $\Delta=\sum_{s=1}^{\beta}A_{s,s}$ be its partial trace. Then,
\begin{equation*}
H\oplus H =\frac{1}{4}\sum_{k=1}^4 V_k\left(\Delta\oplus\Delta\right)V_k^*
\end{equation*} for some isometries $V_k\in\bM_{2\beta n,2n}(\bC)$, $k=1,2,3,4$.
\end{theorem}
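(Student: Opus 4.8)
The plan is to run the argument of Theorem \ref{thm-direct} but to replace the $2^\beta$-dimensional Clifford system $\{Q_j\}$ by a two-dimensional one coming from the quaternions; this is exactly what lets the number of copies drop from $2^\beta$ to $2$. The proof of Theorem \ref{thm-direct} only uses that the congruence $W=\bigoplus_s(Q_s\otimes I_n)$ turns the off-diagonal blocks into antisymmetric ones, i.e. that $Q_sQ_t^*$ is skew-Hermitian for $s\neq t$, together with $Q_sQ_s^*=I$. Identifying $\bM_2(\bC)$ with the quaternions $\mathbb{H}$ through the standard representation sending $1\mapsto I_2$ and $i,j,k$ to three mutually anticommuting skew-Hermitian unitaries, one has $Q^*\leftrightarrow\bar q$, so $Q_sQ_t^*\leftrightarrow q_s\bar q_t$, whose scalar part is the Euclidean inner product $\langle q_s,q_t\rangle$ in $\mathbb{H}\cong\bR^4$. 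Hence $Q_sQ_t^*$ is skew-Hermitian precisely when $q_s\perp q_t$. A family of $\beta$ generators with the required property therefore exists in $\bM_2(\bC)$ if and only if there are $\beta$ mutually orthogonal unit quaternions, which forces $\beta\le 4$; this is the structural reason behind the restriction $\beta\in\{3,4\}$.

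First I would treat $\beta=4$, taking for $\{Q_s\}_{s=1}^4$ the images of the orthonormal basis $\{1,i,j,k\}$ of $\mathbb{H}$. As in the proof of Theorem \ref{thm-direct} I double every block, forming $G=[I_2\otimes A_{s,t}]$, which is a permutation of $H\oplus H=I_2\otimes H$. Conjugating by $W=\bigoplus_{s=1}^4(Q_s\otimes I_n)$ produces $\Omega=WGW^*$ with $(s,t)$ block $Q_sQ_t^*\otimes A_{s,t}$: the diagonal blocks are $I_2\otimes A_{s,s}$, and by the orthogonality above together with $A_{t,s}=A_{s,t}$ the off-diagonal blocks satisfy $\Omega_{s,t}=-\Omega_{t,s}$. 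Since $4=2^2$ is dyadic, I then conjugate $\Omega$ by $J_2\otimes I_{2n}$, with $J_2=J_1\otimes J_1$ the reflexion of the proof of Theorem \ref{thm-direct}; the antisymmetry of the off-diagonal blocks makes all four diagonal blocks of the result equal to $\tfrac14\sum_{s=1}^4(I_2\otimes A_{s,s})=\tfrac14(\Delta\oplus\Delta)$. Applying the $\beta\times\beta$ form of Lemma \ref{BL-lemma} to this positive matrix with equal diagonal blocks yields $\tfrac14\sum_{k=1}^4 W_k(\Delta\oplus\Delta)W_k^*$ with isometries $W_k\in\bM_{8n,2n}$. Undoing the three unitary congruences ($J_2\otimes I_{2n}$, $W$, and the permutation) turns this into $H\oplus H=\tfrac14\sum_{k=1}^4 V_k(\Delta\oplus\Delta)V_k^*$ with $V_k\in\bM_{8n,2n}=\bM_{2\beta n,2n}$, as required.

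For $\beta=3$ I would pad $H$ with a zero block, viewing $H^\sharp=H\oplus 0_n\in\bM_{4n}^+$ as a $4\times4$ block matrix whose fourth diagonal block vanishes; its partial trace is still $\Delta$. The case $\beta=4$ gives $H^\sharp\oplus H^\sharp=\tfrac14\sum_k V_k^\sharp(\Delta\oplus\Delta)(V_k^\sharp)^*$, and $H^\sharp\oplus H^\sharp$ is unitarily equivalent to $(H\oplus H)\oplus 0_{2n}$. Since the last $2n\times 2n$ diagonal block of the left-hand side is zero while each summand on the right is positive, each summand is supported on the first $6n$ coordinates; adjusting each $V_k^\sharp$ on $\ker(\Delta\oplus\Delta)$ (harmless, as $\Delta\oplus\Delta$ annihilates it) lets me view it as an isometry into $\bC^{6n}$, and deleting the common zero block gives $H\oplus H=\tfrac14\sum_{k=1}^4 V_k(\Delta\oplus\Delta)V_k^*$ with $V_k\in\bM_{6n,2n}=\bM_{2\beta n,2n}$.

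The routine steps are the block bookkeeping and the averaging property of $J_2$, both already established in the proof of Theorem \ref{thm-direct}. The conceptual heart, and the step I expect to require the most care, is the quaternionic realization of the antisymmetrizing congruence in dimension $2$: checking that orthonormality of $\{1,i,j,k\}$ in $\mathbb{H}$ is exactly what makes every $Q_sQ_t^*$ ($s\ne t$) skew-Hermitian, which simultaneously explains why two copies suffice and why the method stops at $\beta=4$. The minor technical nuisance is the zero-block removal for $\beta=3$ when $\Delta$ is singular, handled by modifying the isometries off the range of $\Delta\oplus\Delta$.
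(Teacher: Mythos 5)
Your proof is correct and follows essentially the route the paper indicates: the paper only remarks that ``the proof makes use of quaternions and thus confines to $\beta\le 4$'' and defers to \cite{BLL2}, and your argument is precisely the quaternionic analogue of the Clifford-algebra proof of Theorem \ref{thm-direct} given in Section 6.3 (antisymmetrize the off-diagonal blocks by conjugating with $\bigoplus_s(Q_s\otimes I_n)$, average with $J_2\otimes I_{2n}$, then apply the $\beta\times\beta$ form of Lemma \ref{BL-lemma}). Your identification of skew-Hermitianness of $Q_sQ_t^*$ with orthogonality in $\mathbb{H}\cong\bR^4$, and the padding-plus-kernel-adjustment for $\beta=3$, are both sound.
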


\vskip 10pt
Likewise for Theorem \ref{thm-four}, we must consider isometries with complex entries, even for a full matrix $H$ with real entries.
The proof makes use of quaternions and thus confines to $\beta\le 4$.

\subsection{Separability criterion}

\vskip10pt
Let ${\mathcal{H}}$ and ${\mathcal{F}}$ be two finite dimensional Hilbert spaces that may be either real spaces, identified to
$\bR^n$ and $\bR^m$, or complex spaces, identified to $\bC^n$ and $\bC^m$. The space of operators on ${\mathcal{H}}$, denoted by
${\mathrm{B}}({\mathcal{H}})$, is identified with the matrix algebra $\bM_n$ (with real or complex entries according the nature of
${\mathcal{H}}$). A positive (semi-definite) operator $Z$ on the tensor product space ${\mathcal{H}}\otimes{\mathcal{F}}$ is said to
be separable if it can be decomposed as a sum of tensor products of positive operators,
\begin{equation}\label{sum-tens2}
Z=\sum_{j=1}^k A_j\otimes B_j
\end{equation}
where $A_j$'s are positive operators on ${\mathcal{H}}$ and so $B_j$'s are on ${\mathcal{F}}$ (the positivity assumption on the
$A_j$'s and $B_j$'s is essential, otherwise \ref{sum-tens2} is always possible for any $Z$).
It is  difficult in general to determine if a given
positive operator in the  matrix algebra $\bM_n\otimes\bM_m$
is separable or not, though some theoretical criteria do exist \cite{Hor}, \cite{Chen-Wu}. The partial trace of $Z$ with respect to
${\mathcal{H}}$ is the operator acting on ${\mathcal{F}}$,
$$
{\mathrm{Tr}}_{\mathcal{H}} Z= \sum_{j=1}^k ({\mathrm{Tr}}A_j)B_j.
$$
These notions have their own mathematical interest and moreover play a fundamental role in the description of bipartite systems in
quantum theory, see \cite[Chapter 10]{Petz}, where the positive operators act on complex spaces and are usually normalized with trace
one and called states. Thus a separable state is an operator of the type \eqref{sum-tens2} with ${\mathrm{Tr\,}} Z=1$. The richness
of the mathematical theory of separable operators/states and their application in quantum physics is apparent in many places in the
literature, for instance in \cite{Hor} and \cite{AlSh}. Nielsen and Kempe in 2001 proved a majorisation separability criterion
\cite{NK}. It can be stated as the following norm comparison.

\vskip 10pt
\begin{theorem}\label{part-trace} Let $Z$ be a separable state on the tensor product of two finite dimensional Hilbert spaces
${\mathcal{H}}$ and ${\mathcal{F}}$. Then, for all symmetric norms,
$$
\| Z\| \le \left\| {\mathrm{Tr}}_{\mathcal{H}} Z \right\|.
$$
\end{theorem}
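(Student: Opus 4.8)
The plan is to derive Theorem~\ref{part-trace} from the doubly stochastic majorization \eqref{eqchan}, after recasting both sides as $k\times k$ Schur (Hadamard) products of Gram matrices; this bypasses the fact that a separable state, viewed as a block matrix in the $\mathcal{H}$-partition, does \emph{not} have Hermitian off-diagonal blocks, so that Corollary~\ref{Hiroshima} cannot be applied to it verbatim.

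First I would reduce to rank one. Spectrally decomposing each $A_j$ and $B_j$ in the separable representation \eqref{sum-tens2} and reindexing, I may write $Z=\sum_{r=1}^k w_rw_r^*$ with $w_r=a_r\otimes b_r$, $a_r\in\mathcal{H}$, $b_r\in\mathcal{F}$. Letting $W\in\bM_{nm,k}$ be the matrix with columns $w_r$, we have $Z=WW^*$, hence $\|Z\|=\|WW^*\|=\|W^*W\|$ for every symmetric norm, since $XX^*$ and $X^*X$ share the same singular values (the tool already used in the proof of Corollary~\ref{cor-rearr}). A direct computation gives $(W^*W)_{r,r'}=\langle a_r,a_{r'}\rangle\langle b_r,b_{r'}\rangle$, that is $W^*W=G_A\circ G_B$, the Schur product of the Gram matrices $G_A=[\langle a_r,a_{r'}\rangle]$ and $G_B=[\langle b_r,b_{r'}\rangle]$. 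On the other side, $\mathrm{Tr}_{\mathcal{H}}Z=\sum_r\|a_r\|^2\,b_rb_r^*$, so the same argument applied to the columns $\|a_r\|b_r$ yields $\|\mathrm{Tr}_{\mathcal{H}}Z\|=\|dd^*\circ G_B\|$, where $d=(\|a_r\|)_r$.

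The key observation is that $dd^*$ and $G_A$ have the \emph{same diagonal}, namely $(G_A)_{r,r}=\|a_r\|^2=d_r^2$. Writing $\mathcal{D}=\mathrm{diag}(\|a_r\|)$ and $G_A=\mathcal{D}\Gamma\mathcal{D}$, the matrix $\Gamma$ is a correlation matrix: $\Gamma\ge 0$ with $\Gamma_{r,r}=1$. Setting $M:=dd^*\circ G_B=\mathcal{D}G_B\mathcal{D}$, a one-line check that Schur multiplication commutes with diagonal congruence gives the central identity
$$
G_A\circ G_B=\mathcal{D}(\Gamma\circ G_B)\mathcal{D}=\Gamma\circ M .
$$
Thus $\|Z\|=\|\Gamma\circ M\|$ while $\|\mathrm{Tr}_{\mathcal{H}}Z\|=\|M\|$, and it remains only to compare $\Gamma\circ M$ with $M$.

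To finish, I would invoke that the Schur multiplier $\Phi(X)=\Gamma\circ X$ is doubly stochastic: it is positive (a Schur product of positive matrices is positive), unital ($\Gamma\circ I=I$ because $\Gamma$ has unit diagonal), and trace preserving ($\mathrm{Tr}(\Gamma\circ X)=\sum_r\Gamma_{r,r}X_{r,r}=\mathrm{Tr}\,X$). Hence by \eqref{eqchan} we have $\Phi(M)\prec M$ for the Hermitian matrix $M$, so $\Gamma\circ M\prec M$, and the Ky Fan principle gives $\|\Gamma\circ M\|\le\|M\|$ for all symmetric norms. Combining, $\|Z\|=\|\Gamma\circ M\|\le\|M\|=\|\mathrm{Tr}_{\mathcal{H}}Z\|$, which is the claim. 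The only delicate point is the remark of the first paragraph: the complex phases of the $a_r$ obstruct a direct block-matrix application of Corollary~\ref{Hiroshima}, and the content of the proof is precisely that the reformulation through $\Gamma$ absorbs those phases into a correlation matrix, where double stochasticity renders them harmless.
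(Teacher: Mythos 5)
Your proof is correct, and it takes a genuinely different route from the paper --- in fact it establishes more than the paper does. The paper obtains Theorem \ref{part-trace} as a consequence of Corollary \ref{Hiroshima} only when the factor ${\mathcal{H}}$ is a \emph{real} Hilbert space (so that the blocks of $Z$ in the ${\mathcal{H}}$-partition are Hermitian), explicitly flags the restriction to real spaces as severe, and otherwise refers to Nielsen--Kempe for the general statement; the underlying machinery is the Clifford-algebra decomposition of Theorem \ref{thm-direct}. Your reformulation $\|Z\|=\|W^*W\|=\|G_A\circ G_B\|$, the identity $G_A\circ G_B=\Gamma\circ M$ with $\Gamma$ a correlation matrix and $M=dd^*\circ G_B$ carrying the spectrum of ${\mathrm{Tr}}_{\mathcal{H}}Z$, and the observation that Schur multiplication by a correlation matrix is a doubly stochastic map, together give the full complex case and indeed the stronger majorization $Z\prec {\mathrm{Tr}}_{\mathcal{H}}Z$, since the Schur multiplier preserves traces. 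What the paper's approach buys instead is the decomposition as an average of isometric congruences, from which the finer eigenvalue inequalities of Corollaries \ref{cor-eigenvalue1} and \ref{cor-eigenvalueC} follow; those cannot be extracted from the majorization alone. Two small points you should make explicit: discard the indices with $a_r=0$ before forming $\Gamma=\mathcal{D}^{-1}G_A\mathcal{D}^{-1}$, and note that the passage between $\|XX^*\|$ and $\|X^*X\|$ for rectangular $X$ uses the convention that a symmetric norm on $\bM_{nm}$ induces one on each smaller matrix algebra by padding with zeros --- both conventions are already standard in the paper.
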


\vskip 10pt
Regarding ${\mathrm{B}}({\mathcal{H}}\otimes{\mathcal{F}})$ as $\bM_n(\bM_m)$, an operator
$Z\in{\mathrm{B}}({\mathcal{H}}\otimes{\mathcal{F}})$ is written as a block-matrix $Z=[Z_{i,j}]$ with $Z_{i,j}\in\bM_m$, $1\le i,j\le
n$. The partial trace of $Z$ with respect to ${\mathcal{H}}$ is then the sum of the diagonal blocks,
$$
{\mathrm{Tr}}_{\mathcal{H}} Z=\sum_{j=1}^n Z_{j,j}.
$$
This observation makes obvious that Theorem \ref{part-trace} is a straightforward consequence of Corollary \ref{Hiroshima} whenever
the factor ${\mathcal H}$ is a {\bf real} Hilbert space. Indeed, we then have
$$
Z=\sum_{j=1}^k A_j\otimes B_j
$$
where, for each index $j$, $A_j\in\bM_n(\bR)$ and $B_j$ is Hermitian in $\bM_m$, so that $A_j\otimes B_j$ can be regarded as an
element of $\bM_n( \bM_m)$ formed of Hermitian blocks.

From Corollary \ref{cor-eigenvalue1}, we may complete the majorisation of Theorem \ref{part-trace}, when a factor is a real space
with a few more eigenvalue estimate as stated in the next corollary.

\vskip 10pt
\begin{cor} Let $Z$ be a separable positive operator on the tensor product of two finite dimensional Hilbert space
${\mathcal{H}}\otimes{\mathcal{F}}$ with a {\bf real} factor ${\mathcal{H}}$. Then,
$$
\lambda_{1+\beta k}( Z) \le \lambda_{1+k}\left( {\mathrm{Tr}}_{\mathcal{H}} Z \right)
$$
for all $k=0,\ldots,\dim{\mathcal{F}}-1$,    where $\beta$ is the smallest dyadic number such that $\dim{\mathcal{H}} \le\beta$.
\end{cor}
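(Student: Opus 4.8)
The plan is to reduce this statement to Corollary \ref{cor-eigenvalue1} by exhibiting $Z$ as a positive matrix partitioned into Hermitian blocks whose partial trace is exactly ${\mathrm{Tr}}_{\mathcal{H}} Z$. First I would write the separability decomposition $Z = \sum_{j=1}^k A_j \otimes B_j$, where each $A_j$ is a positive operator on $\mathcal{H}$ and each $B_j$ is a positive operator on $\mathcal{F}$. Setting $n = \dim\mathcal{H}$ and $m = \dim\mathcal{F}$, I identify ${\mathrm{B}}(\mathcal{H}\otimes\mathcal{F})$ with $\bM_n(\bM_m)$ and read off the blocks of $Z = [Z_{s,t}]$ as $Z_{s,t} = \sum_{j=1}^k (A_j)_{s,t}\, B_j$, for $1\le s,t\le n$.

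The decisive observation --- and the one place where the real-factor hypothesis is indispensable --- is that every block $Z_{s,t}$ is Hermitian. Indeed, since $\mathcal{H}$ is a real Hilbert space, each positive $A_j$ is a real symmetric matrix, so its entries $(A_j)_{s,t}$ are real scalars; as each $B_j$ is positive, hence Hermitian, the combination $\sum_j (A_j)_{s,t} B_j$ is Hermitian. (Were $\mathcal{H}$ complex, the scalars $(A_j)_{s,t}$ would in general be complex and this conclusion would fail, which is why no such eigenvalue bound is claimed in the complex case.) Thus $Z\in\bM_{nm}^+$ is written in $n\times n$ Hermitian blocks in $\bM_m$, and its partial trace, being the sum of the diagonal blocks, is $\sum_{s=1}^n Z_{s,s} = {\mathrm{Tr}}_{\mathcal{H}} Z$.

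It then remains only to invoke Corollary \ref{cor-eigenvalue1} with the number of blocks equal to $n = \dim\mathcal{H}$ and the block size equal to $m = \dim\mathcal{F}$. With $\beta$ the smallest dyadic integer satisfying $\dim\mathcal{H}\le\beta$, the corollary gives
$$
\lambda_{1+\beta k}(Z) \le \lambda_{1+k}\left({\mathrm{Tr}}_{\mathcal{H}} Z\right)
$$
for $k = 0,\ldots, m-1 = \dim\mathcal{F}-1$, which is the asserted inequality. The only point demanding care, beyond the Hermiticity of the blocks, is bookkeeping: one must match the roles of the two dimensions correctly, since in Corollary \ref{cor-eigenvalue1} the admissible range for $k$ is governed by the block size (here $\dim\mathcal{F}$) while the dyadic bound $\beta$ is governed by the number of blocks (here $\dim\mathcal{H}$). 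No genuine analytic difficulty arises; the substance of the result is entirely contained in Theorem \ref{thm-direct} through Corollary \ref{cor-eigenvalue1}.
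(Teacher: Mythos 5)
Your proof is correct and follows exactly the route the paper takes: the real-factor hypothesis makes the entries $(A_j)_{s,t}$ real, so the blocks $Z_{s,t}=\sum_j (A_j)_{s,t}B_j$ are Hermitian, the partial trace is the sum of the diagonal blocks, and Corollary \ref{cor-eigenvalue1} applies with the number of blocks equal to $\dim\mathcal{H}$ and the block size equal to $\dim\mathcal{F}$. Your bookkeeping of which dimension governs the range of $k$ and which governs $\beta$ matches the paper's statement.
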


\vskip 10pt
Similarly, from Corollary \ref{cor-eigenvalueC}, we actually have a larger set of eigenvalue inequalities.

\vskip 10pt
\begin{cor} Let $Z$ be a separable positive operator on the tensor product of two finite dimensional Hilbert space
${\mathcal{H}}\otimes{\mathcal{F}}$ with a {\bf real} factor ${\mathcal{H}}$. Then,
$$
\lambda_{1+\beta k}( Z) \le \frac{1}{\beta}\left\{ \lambda_{1+k_1}\left( {\mathrm{Tr}}_{\mathcal{H}} Z  \right)
+\cdots+ \lambda_{1+k_{\beta}}\left( {\mathrm{Tr}}_{\mathcal{H}} Z  \right)\right\}
$$
for all $k=0,\ldots,\dim{\mathcal{F}}-1$,
where $k_1+\cdots+k_{\beta}=\beta k$ and $\beta$ is the smallest dyadic number such that $\dim{\mathcal{H}} \le\beta$.
\end{cor}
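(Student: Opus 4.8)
The plan is to reduce the statement to Corollary \ref{cor-eigenvalueC} by exploiting the reality of the factor $\mathcal{H}$, exactly as the Nielsen--Kempe criterion (Theorem \ref{part-trace}) was reduced to Corollary \ref{Hiroshima} in the discussion preceding it.

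First I would write the separable operator as $Z=\sum_{j=1}^k A_j\otimes B_j$, where each $A_j$ is a positive operator on the real space $\mathcal{H}$, hence a real symmetric matrix in $\bM_{\dim\mathcal{H}}(\bR)$, and each $B_j$ is a positive (Hermitian) matrix on $\mathcal{F}$. Viewing $Z$ as a block matrix $Z=[Z_{s,t}]$ with blocks $Z_{s,t}\in\bM_{\dim\mathcal{F}}$, the $(s,t)$ block is $Z_{s,t}=\sum_{j=1}^k (A_j)_{s,t}\,B_j$. Since the scalars $(A_j)_{s,t}$ are real and each $B_j$ is Hermitian, every block $Z_{s,t}$ is Hermitian. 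Thus $Z$ is a positive matrix partitioned into $\dim\mathcal{H}\times\dim\mathcal{H}$ Hermitian blocks in $\bM_{\dim\mathcal{F}}$, and its partial trace with respect to $\mathcal{H}$ is precisely the sum of the diagonal blocks, ${\mathrm{Tr}}_{\mathcal{H}}Z=\sum_{s}Z_{s,s}$, i.e.\ the quantity $\Delta$ appearing in Corollary \ref{cor-eigenvalueC}.

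Then I would simply apply Corollary \ref{cor-eigenvalueC} to $H=Z$ with the number of blocks equal to $\dim\mathcal{H}$ and block size $\dim\mathcal{F}$. Taking $\beta$ to be the smallest dyadic number with $\dim\mathcal{H}\le\beta$ and $\Delta={\mathrm{Tr}}_{\mathcal{H}}Z$, this yields, for every admissible splitting $k_1+\cdots+k_\beta=\beta k$, the inequality
$$
\lambda_{1+\beta k}(Z)\le\frac{1}{\beta}\left\{\lambda_{1+k_1}\left({\mathrm{Tr}}_{\mathcal{H}}Z\right)+\cdots+\lambda_{1+k_\beta}\left({\mathrm{Tr}}_{\mathcal{H}}Z\right)\right\},
$$
valid for $k=0,\ldots,\dim\mathcal{F}-1$, which is exactly the claim.

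The argument is essentially immediate once this reduction is in place; the only genuine point --- and the step that actually uses the hypothesis --- is the reality of the factor $\mathcal{H}$, which is what forces the off-diagonal blocks $Z_{s,t}$ to be Hermitian. Without it, the scalars $(A_j)_{s,t}$ need no longer be real, the blocks need no longer be Hermitian, and Corollary \ref{cor-eigenvalueC} does not apply; the complex example in $\bM_6$ recorded in Section 1, where even the operator-norm conclusion fails for non-Hermitian blocks, indicates that this restriction is essential and not merely an artifact of the method.
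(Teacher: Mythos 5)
Your proof is correct and follows exactly the paper's own route: the reality of the factor $\mathcal{H}$ makes every block $Z_{s,t}=\sum_j (A_j)_{s,t}B_j$ Hermitian, the partial trace is the sum of the diagonal blocks, and the statement is then an immediate application of Corollary \ref{cor-eigenvalueC}. The only (cosmetic) issue is that you use the letter $k$ both for the number of terms in the separable decomposition and for the eigenvalue index; otherwise nothing to add.
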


\vskip 10pt
Of course, confining to real spaces is a severe restriction. It would be desirable to obtain similar estimates for usual complex
spaces. A related problem would be to obtain a decomposition like in Theorem \ref{thm-direct} for the class of partitioned matrices
considered in the full form of Hiroshima's theorem:
\vskip 10pt\noindent
{\it If $A=[A_{s,t}]$ is a positive matrix partitioned in $\alpha\times \alpha$ blocks such that $B=[B_{s,t}]:=[A_{s,t}^T]$ is
positive too, then the majorisation of Corollary \ref{Hiroshima} holds.}
\vskip 10pt\noindent
 Here $X^T$ means the transposed matrix. This statement extends  Corollary \ref{Hiroshima} and implies Theorem \ref{part-trace}.

\section{Around this article}

A companion result of Lemma \ref{BL-lemma} is given in \cite{BLL1} :

\begin{lemma} \label{BLL-lemma} Let $\begin{bmatrix} A &X \\
X^* &B\end{bmatrix}$ be a positive matrix partitioned into four blocks in $\bM_n$. Then, for some unitary $U,V\in\bM_{2n}$,
\begin{equation*}
\begin{bmatrix} A &X \\
X^* &B\end{bmatrix} = U
\begin{bmatrix} \frac{A+B}{2} + {\mathrm{Re}}\, X&0 \\
0 &0\end{bmatrix} U^* +
V\begin{bmatrix} 0 &0 \\
0 &\frac{A+B}{2}- {\mathrm{Re}}\, X\end{bmatrix} V^*
\end{equation*}.
\end{lemma}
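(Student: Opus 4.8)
The plan is to reduce the statement to Lemma \ref{BL-lemma} by first performing a unitary congruence that moves $\frac{A+B}{2}\pm\mathrm{Re}\,X$ onto the diagonal blocks. First I would introduce the Hermitian unitary
$$
R=\frac{1}{\sqrt 2}\begin{bmatrix} I & I \\ I & -I\end{bmatrix}\in\bM_{2n},
$$
which satisfies $R=R^*=R^{-1}$. The key computation is the congruence
$$
R\begin{bmatrix} A & X \\ X^* & B\end{bmatrix}R=\begin{bmatrix} \frac{A+B}{2}+\mathrm{Re}\,X & \star \\ \star & \frac{A+B}{2}-\mathrm{Re}\,X\end{bmatrix},
$$
where the stars denote explicitly computable blocks. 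This is a direct multiplication: the $(1,1)$ block equals $\frac{1}{2}(A+B+X+X^*)$, the $(2,2)$ block equals $\frac{1}{2}(A+B-X-X^*)$, and one uses $\mathrm{Re}\,X=(X+X^*)/2$.

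Since the left-hand side is a unitary congruence of a positive semidefinite matrix, it is itself positive semidefinite. Then I would apply Lemma \ref{BL-lemma} to this positive block matrix, obtaining unitaries $U_0,V_0\in\bM_{2n}$ with
$$
R\begin{bmatrix} A & X \\ X^* & B\end{bmatrix}R = U_0\begin{bmatrix} \frac{A+B}{2}+\mathrm{Re}\,X & 0 \\ 0 & 0\end{bmatrix}U_0^* + V_0\begin{bmatrix} 0 & 0 \\ 0 & \frac{A+B}{2}-\mathrm{Re}\,X\end{bmatrix}V_0^*.
$$
Finally, conjugating both sides by $R$ and using $R^2=I$, then setting $U=RU_0$ and $V=RV_0$ (both unitary, so that $U^*=U_0^*R$ and $V^*=V_0^*R$), yields the claimed decomposition.

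As for the main obstacle, there is none of real substance: the argument is a two-line reduction once the congruence $R$ is spotted. The only points requiring care are verifying the congruence computation above and checking that $R$ absorbs cleanly into the unitaries supplied by Lemma \ref{BL-lemma}. I would also remark that $R$ has real entries and $\mathrm{Re}\,X=(X+X^*)/2$ is formed within the same field, so the proof works verbatim whether $\bM_n=\bM_n(\bR)$ or $\bM_n(\bC)$, with no need to pass to complex isometries as in Theorem \ref{thm-four}.
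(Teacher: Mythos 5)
Your proof is correct and follows essentially the same route as the paper: the decomposition is obtained from Lemma \ref{BL-lemma} after the unitary congruence by $R=\frac{1}{\sqrt 2}\begin{bmatrix} I & I \\ I & -I\end{bmatrix}$, whose effect on the diagonal blocks (equivalently, the positivity of $\frac{A+B}{2}\pm\mathrm{Re}\,X$ via the congruences with $\begin{bmatrix} I \\ \pm I\end{bmatrix}$) is exactly the observation the paper uses. The same $R$-congruence-then-decompose technique appears again, with general weights, in the proof of Theorem \ref{th-HH1}.
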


\vskip 5pt
This lemma is the main tool to obtain original estimates between the full matrix 
and its partial trace $A+B$. These estimates involve the geometry of the numerical range $W(X)$. Here we state two theorems, the next chapter will develop this
topic in more details. 
 The main theorem of \cite{BM} reads as follows.

\vskip 5pt
\begin{theorem}\label{thBM} Let $\begin{bmatrix} A &X \\
X^* &B\end{bmatrix} $ be a positive matrix  partitioned into four blocks
in $\bM_n$. Suppose that $W(X)$ has the width $\omega$. Then, for all symmetric norms,
$$
\left\| \begin{bmatrix} A &X \\
X^* &B\end{bmatrix}\right\| \le \| A+B +\omega I \|.
$$
\end{theorem}

\vskip 5pt
Here the width  of $W(X)$ is the smallest distance between two parallel straight lines such
that the strip between these two lines contains $W(X)$. Hence the partial trace $A+B$ may be used to give an upper bound for the
norms of the full block-matrix.  

A lower bound is given in  \cite{BL-BAMS} in which the distance from $0$ to  $W(X)$ contributes. We state the main result of \cite{BL-BAMS}.

\vskip 10pt
\begin{theorem}\label{th-dist} Let $\begin{bmatrix} A &X \\
X^* &B\end{bmatrix} $ be a positive matrix  partitioned into four blocks
in $\bM_n$ and let $d={\mathrm{dist}}(0,W(X))$. Then, for all symmetric norms,
$$
 \left\| \begin{bmatrix} A &X \\
X^* &B\end{bmatrix}\right\| \ge
\left\| \left(\frac{A+B}{2} +dI\right) \oplus  \left(\frac{A+B}{2} -dI\right) \right\|.
$$
\end{theorem}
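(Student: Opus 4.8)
The plan is to reduce the theorem to a scalar-level majorization through two congruences followed by a convexity/pinching argument.

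First I would normalize the off-diagonal block. The quantities $\|H\|$, the partial trace $A+B$, and $d=\mathrm{dist}(0,W(X))$ are all unchanged when $X$ is replaced by $e^{i\theta}X$: the block matrix merely undergoes the unitary congruence by $\mathrm{diag}(e^{-i\theta/2}I,e^{i\theta/2}I)$, and $W(e^{i\theta}X)=e^{i\theta}W(X)$. If $d=0$ the asserted inequality is just the pinching bound $\|H\|\ge\|\tfrac{A+B}{2}\oplus\tfrac{A+B}{2}\|$, so I assume $d>0$. Then $0\notin W(X)$, and the supporting line of the convex compact set $W(X)$ at its nearest point $de^{i\phi}$ yields $\mathrm{Re}(e^{-i\phi}X)\ge dI$. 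Replacing $X$ by $e^{-i\phi}X$, I may assume $Y:=\mathrm{Re}\,X\ge dI$.

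Next I extract a symmetric block matrix by pinching. With $R=\tfrac{1}{\sqrt2}\begin{bmatrix} I & I\\ I & -I\end{bmatrix}$ and $C:=\tfrac{A+B}{2}$, a direct computation shows that $RHR^*$ has diagonal blocks $C+\mathrm{Re}\,X$ and $C-\mathrm{Re}\,X$. Pinching off the off-diagonal blocks can only decrease a symmetric norm (it is the average $\tfrac12(\,\cdot+W\,\cdot\,W^*)$ with $W=\mathrm{diag}(I,-I)$), so
$$
\|H\|=\|RHR^*\|\ge\|(C+Y)\oplus(C-Y)\|=\left\|\begin{bmatrix} C & Y\\ Y & C\end{bmatrix}\right\|,
$$
the last equality again via the congruence by $R$. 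Since $\begin{bmatrix} C & dI\\ dI & C\end{bmatrix}\simeq(C+dI)\oplus(C-dI)$, the theorem reduces to the inequality $g(dI)\le g(Y)$, where $g(Z):=\left\|\begin{bmatrix} C & Z\\ Z & C\end{bmatrix}\right\|$ and $Y\ge dI$.

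The heart of the argument is this last inequality, and the main obstacle is that $C$ and $Y$ need not commute. I would overcome it by a pinching-plus-convexity reduction: $g$ is convex in $Z$ (a symmetric norm of an affine function of $Z$), and $g(UZU^*)=g(Z)$ for every unitary $U$ commuting with $C$, since conjugating the whole block matrix by $U\oplus U$ leaves the diagonal $C$ fixed. Averaging such conjugations over the diagonal phase unitaries in an eigenbasis of $C$ realizes the diagonal pinching $\mathcal P(Y)$ of $Y$ in that basis; convexity then gives $g(\mathcal P(Y))\le g(Y)$. Moreover $\mathcal P(Y)$ commutes with $C$ and still satisfies $\mathcal P(Y)\ge dI$, its diagonal entries being $\ge\lambda_{\min}(Y)\ge d$. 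Finally, in the commuting case I simultaneously diagonalize $C$ and $\mathcal P(Y)$, reducing the singular values of the two block matrices to the scalar pairs $\{|c_i\pm d|\}$ and $\{|c_i\pm p_i|\}$ with $p_i\ge d$; the elementary identities $\max(|c+d|,|c-d|)=|c|+d$ and $|c+d|+|c-d|=2\max(|c|,d)$ give the weak majorization of each pair, and weak majorization of concatenations (via the splitting of $\|\cdot\|_{(m)}$ over partitions) yields $g(dI)\le g(\mathcal P(Y))$ for all symmetric norms by Ky Fan dominance. Chaining $\|H\|\ge g(Y)\ge g(\mathcal P(Y))\ge g(dI)$ completes the proof.
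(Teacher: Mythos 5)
Your argument is correct. Note that the thesis itself does not reproduce a proof of Theorem \ref{th-dist} (it is quoted from \cite{BL-BAMS} in the ``Around this article'' section), so the natural benchmark is the proof of the companion upper bound, Theorem \ref{thBM}, given in Section 7.4. Your first half runs exactly parallel to that proof: the rotation $X\mapsto e^{-i\phi}X$ together with the projection property $W(\mathrm{Re}\,X)=\mathrm{Re}\,W(X)$ to get $\mathrm{Re}\,X\ge dI$, and the congruence by $R=\frac{1}{\sqrt 2}\begin{bmatrix} I&I\\ I&-I\end{bmatrix}$ to surface the diagonal blocks $\frac{A+B}{2}\pm\mathrm{Re}\,X$; where Theorem \ref{thBM} then invokes the decomposition of Lemma \ref{BLL-lemma} to go upward, you pinch to go downward, which is the right move. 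The second half --- reducing $g(dI)\le g(Y)$ for $Y\ge dI$ to the commuting case by averaging over diagonal unitaries in an eigenbasis of $C=\frac{A+B}{2}$ and using convexity of $g$, then checking the scalar weak majorization of the pairs $(|c_i+d|,|c_i-d|)\prec_w(|c_i+p_i|,|c_i-p_i|)$ and concatenating --- is sound: the pinching $\mathcal{P}(Y)$ does commute with $C$, its diagonal entries are $\ge d$, the two elementary identities you quote give the pairwise majorization, and weak majorization is stable under concatenation by the usual splitting of Ky Fan sums over partitions. This is a clean, self-contained route; whether or not it coincides with the argument of \cite{BL-BAMS}, it buys a proof that stays entirely within the toolkit already displayed in Chapters 6--7 (congruence, pinching, convexity of symmetric norms, Ky Fan dominance) and in fact yields the slightly stronger weak majorization of eigenvalues, not just the norm inequality.
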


\vskip 5pt
Several consequences follow. We mention two corollaries

\vskip 10pt
\begin{cor}\label{cor-diam} For every  positive matrix  partitioned into four blocks of same size,
$$ 
{\mathrm{diam}}\, W\left(\begin{bmatrix} A &X \\
X^* &B\end{bmatrix}\right)
- {\mathrm{diam}}\, W\left(\frac{A+B}{2}\right) \ge 2d,
$$
where $d$ is the distance from $0$ to $W(X)$.
\end{cor}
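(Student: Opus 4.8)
The plan is to reduce the statement to a single eigenvalue inequality and then prove it by an explicit numerical range computation. Since $H:=\begin{bmatrix} A & X \\ X^* & B\end{bmatrix}$ is positive semidefinite and $M:=\frac{A+B}{2}$ is Hermitian, both numerical ranges are real intervals, namely $W(H)=[\lambda_{\min}(H),\lambda_{\max}(H)]$ and $W(M)=[\lambda_{\min}(M),\lambda_{\max}(M)]$. Hence $\mathrm{diam}\,W(H)=\lambda_{\max}(H)-\lambda_{\min}(H)$ and $\mathrm{diam}\,W\!\left(\frac{A+B}{2}\right)=\lambda_{\max}(M)-\lambda_{\min}(M)$, so the corollary is equivalent to the scalar estimate $\lambda_{\max}(H)-\lambda_{\min}(H)\ge \lambda_{\max}(M)-\lambda_{\min}(M)+2d$.

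The key step I would carry out is a lifting of a unit vector $p\in\bC^n$ to $\bC^{2n}$. For a phase $e^{i\phi}$, set $w_\phi=\frac{1}{\sqrt2}\begin{bmatrix} p\\ e^{i\phi}p\end{bmatrix}$, which is a unit vector. A direct computation, using $\langle p,X^*p\rangle=\overline{\langle p,Xp\rangle}$, gives $\langle w_\phi,Hw_\phi\rangle=\langle p,Mp\rangle+\mathrm{Re}\,(e^{i\phi}\langle p,Xp\rangle)$. Optimizing over $\phi$, the two values $\langle p,Mp\rangle\pm|\langle p,Xp\rangle|$ therefore both belong to $W(H)$. Since $\langle p,Xp\rangle\in W(X)$ and $d=\mathrm{dist}(0,W(X))$, we have $|\langle p,Xp\rangle|\ge d$; thus both $\langle p,Mp\rangle+d$ and $\langle p,Mp\rangle-d$ lie in the interval $W(H)$ for every unit vector $p$.

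To conclude, I would choose $p$ to be a unit eigenvector of $M$ for $\lambda_{\max}(M)$ and $q$ a unit eigenvector for $\lambda_{\min}(M)$. The construction then produces a point of $W(H)$ that is at least $\lambda_{\max}(M)+d$ and another point of $W(H)$ that is at most $\lambda_{\min}(M)-d$; as $W(H)$ is an interval, its diameter is at least the difference of these two points, namely $(\lambda_{\max}(M)+d)-(\lambda_{\min}(M)-d)=\mathrm{diam}\,W(M)+2d$, which is exactly the claim. I do not expect a serious obstacle: the only delicate point is recognizing that pairing the extreme eigenvectors of the partial trace $M$ with the correct phase simultaneously recovers the spread of $M$ and the additive term $2d$ coming from the off-diagonal block, together with the elementary bound $|\langle p,Xp\rangle|\ge d$. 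As an alternative route staying closer to the paper, one can instead invoke Theorem \ref{th-dist} twice, once for $H$ and once for the positive block matrix $cI-H$ with $c\ge\lambda_{\max}(H)$ (whose off-diagonal block $-X$ again satisfies $\mathrm{dist}(0,W(-X))=d$), extracting $\lambda_{\max}(H)\ge\lambda_{\max}(M)+d$ and $\lambda_{\min}(H)\le\lambda_{\min}(M)-d$ from the operator norm and then subtracting.
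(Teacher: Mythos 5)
Your proof is correct. Since $H=\begin{bmatrix} A&X\\ X^*&B\end{bmatrix}\ge 0$ and $M=\frac{A+B}{2}$ is Hermitian, both numerical ranges are compact real intervals, and your test-vector computation $\langle w_\phi,Hw_\phi\rangle=\langle p,Mp\rangle+\mathrm{Re}\,(e^{i\phi}\langle p,Xp\rangle)$ is exact (with the inner product linear in the second variable, $\langle p,X^*p\rangle=\overline{\langle p,Xp\rangle}$ indeed makes the cross terms combine into $2\,\mathrm{Re}\,(e^{i\phi}\langle p,Xp\rangle)$); together with $|\langle p,Xp\rangle|\ge d$ and the choice of extreme eigenvectors of $M$, this gives $\lambda_{\max}(H)\ge\lambda_{\max}(M)+d$ and $\lambda_{\min}(H)\le\lambda_{\min}(M)-d$, hence the claim. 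The route is genuinely different from the paper's: the thesis obtains Corollary \ref{cor-diam} as a consequence of Theorem \ref{th-dist}, whose proof goes through the decomposition of Lemma \ref{BLL-lemma} (rotating $X$ so that $\mathrm{Re}\,(e^{i\theta}X)\ge dI$ and splitting $H$ into unitary conjugates of $\frac{A+B}{2}\pm\mathrm{Re}\,(e^{i\theta}X)$), a statement valid for all symmetric norms via a direct-sum comparison. Your argument isolates exactly the operator-norm (extreme eigenvalue) content needed for the diameter inequality and proves it from scratch with a one-line Rayleigh-quotient computation; what you lose is the full symmetric-norm strength of Theorem \ref{th-dist}, what you gain is a short, self-contained and elementary proof. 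Your sketched alternative (applying Theorem \ref{th-dist} to $H$ and to $cI-H$ with $c\ge\lambda_{\max}(H)$, noting $\mathrm{dist}(0,W(-X))=d$) is essentially the paper's intended derivation and is also sound.
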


\vskip 10pt
\begin{cor}\label{cor-det} Let $\begin{bmatrix} A &X \\
X^* &B\end{bmatrix} $ be a positive matrix  partitioned into four blocks
in $\bM_n$ and let $d={\mathrm{dist}}(0,W(X))$. Then, 
$$
\det \left\{\left(\frac{A+B}{2}\right)^2 -d^2I\right\} \ge
 \det \left(\begin{bmatrix} A &X \\
X^* &B\end{bmatrix}\right).$$
\end{cor}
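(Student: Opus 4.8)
The plan is to deduce Corollary \ref{cor-det} from Theorem \ref{th-dist} by upgrading the symmetric-norm inequality there to a genuine majorization and then applying the logarithm. First I would record the elementary positivity fact that $\frac{A+B}{2}\ge dI$. Testing the positive block matrix against vectors of the form $(h,e^{i\theta}h)$ and choosing the phase $\theta$ so that $e^{i\theta}\langle h,Xh\rangle=-|\langle h,Xh\rangle|$ gives
$$
\langle h,(A+B)h\rangle\ge 2|\langle h,Xh\rangle|\ge 2d
$$
for every unit vector $h$, whence $\frac{A+B}{2}-dI\ge 0$. Consequently the operator
$$
Y:=\left(\frac{A+B}{2}+dI\right)\oplus\left(\frac{A+B}{2}-dI\right)
$$
is positive semidefinite, and so is $Z:=\begin{bmatrix}A&X\\ X^*&B\end{bmatrix}$.

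Next, Theorem \ref{th-dist} states precisely that $\|Y\|\le\|Z\|$ for all symmetric norms. By the Ky Fan dominance principle this is equivalent to the weak majorization $Y\prec_w Z$. The crucial observation I would then add is that the two operators have the same trace, $\mathrm{Tr}\,Y=\mathrm{Tr}(A+B)=\mathrm{Tr}\,Z$, since the $\pm\,ndI$ contributions cancel. Weak majorization together with equality of traces promotes $Y\prec_w Z$ to the full majorization $Y\prec Z$.

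Finally I would invoke the basic majorization principle recalled in Chapter~4: $Y\prec Z$ implies $\mathrm{Tr}\,g(Y)\le\mathrm{Tr}\,g(Z)$ for every convex function $g$. Taking the convex function $g=-\log$ (assuming, by a routine limiting argument replacing $A,B$ with $A+\varepsilon I,\,B+\varepsilon I$, that the matrices are invertible) yields $\log\det Y\ge\log\det Z$, that is $\det Y\ge\det Z$. Since the two commuting factors of $Y$ multiply,
$$
\det Y=\det\left\{\left(\tfrac{A+B}{2}+dI\right)\left(\tfrac{A+B}{2}-dI\right)\right\}=\det\left\{\left(\tfrac{A+B}{2}\right)^2-d^2I\right\},
$$
while $\det Z$ is the determinant of the full block matrix, giving the claimed inequality.

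The hard part will be the passage from weak majorization to majorization: Theorem \ref{th-dist} only controls symmetric norms, i.e.\ partial \emph{sums} of eigenvalues, whereas the determinant is a \emph{product}, and weak majorization alone runs the wrong way for products. The two load-bearing steps are therefore the verification $\mathrm{Tr}\,Y=\mathrm{Tr}\,Z$ and the positivity $\frac{A+B}{2}\ge dI$ (the latter also being exactly what guarantees that $\det Y$ is a product of nonnegative eigenvalues, so that the inequality is not vacuous); once both are in place, the determinant estimate is a routine consequence of the concavity of $\log$.
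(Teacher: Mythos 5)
Your proof is correct. The thesis states Corollary \ref{cor-det} as a consequence of Theorem \ref{th-dist} without supplying an argument, and your derivation --- upgrading the symmetric-norm inequality to a weak majorization via Ky Fan dominance, promoting it to a genuine majorization through the trace identity $\mathrm{Tr}\,Y=\mathrm{Tr}\,Z$, and then applying the convex function $-\log$ --- is precisely the intended route; the two auxiliary facts you single out, namely $\frac{A+B}{2}\ge dI$ (proved by testing against $(h,e^{i\theta}h)$) and the cancellation of the $\pm dI$ terms in the trace, are both verified correctly.
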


\vskip 5pt
Letting $X=0$, we recapture a basic property: the determinant is a log-concave map on the positive cone of $\bM_n$. Hence Corollary
\ref{cor-det} refines this property.

We will see in the next, short chapter several eigenvalue inequalities associated to Theorem \ref{thBM}.

\section{References of Chapter 6}

{\small
\begin{itemize}

\item[[13\!\!\!]] R.\ Bhatia, Matrix Analysis, Gradutate Texts in Mathematics, Springer, New-York, 1996.

\item[[28\!\!\!]] J.-C. Bourin and F. Hiai,
Norm and anti-norm inequalities for positive semi-definite matrices,
{\it Internat. J. Math.} \textbf{63} (2011), 1121-1138.

\item[[31\!\!\!]] J.-C.\ Bourin and E.-Y.\ Lee, Unitary orbits of Hermitian operators with convex or concave functions, {\it Bull.\ Lond.\
Math.\ Soc.}\ 44 (2012), no.\ 6, 1085--1102.

\item[[32\!\!\!]] J.-C.\ Bourin and E.-Y.\ Lee, Decomposition and partial trace of positive matrices with Hermitian blocks, {\it Internat.\ J.\ Math.}\ 24 (2013), no.\ 1, 1350010, 13 pp.

\item[[41\!\!\!]] J.-C.\ Bourin and E.-Y.\ Lee, 
Numerical range and positive block matrices, {\it Bull.\ Aust.\ Math.\ Soc.}\ 103 (2021), no.\ 1, 69--77.

\item[[45\!\!\!]] J.-C. Bourin, E.-Y. Lee and M.\ Lin, On a decomposition lemma for positive semi-definite block-matrices, {\it Linear Algebra Appl.}\
437 (2012), 1906--1912.

\item[[46\!\!\!]] J.-C. Bourin, E.-Y. Lee and  M.\ Lin, Positive matrices partitioned into a small number of Hermitian blocks, {\it Linear Algebra Appl.}\ 438 (2013), no.\ 5, 2591--2598. 

\item[[47\!\!\!]] J.-C. Bourin, A.\ Mhanna,
Positive block matrices and numerical ranges, {\it C.\ R.\ Math.\ Acad.\ Sci.\ Paris} 355 (2017), no.\ 10, 1077--1081.

\item[[51\!\!\!]] K.\ Chen and L.-A.\ Wu, A matrix realignment method for recognizing entanglement, {\it Quantum Inf.\ Comput.}\ \textbf{3} (2003),
193-202

\item[[53\!\!\!]] Clifford, Applications of Grassmann' extensive algebra, {\it Amer.\ Journ.\ Math.}\ \textbf{1} (1878), 350-358.

\item[[57\!\!\!]] J.\ Derezi\'nski, {\it Introduction to representations of the canonical commutation and anticommutation relations},
Lect.\ Note Phys.\ {\bf 695}, 65-145 (2006), Springer.

\item[[70\!\!\!]] M.\ Horodecki, P.\ Horodecki, R.\ Horodecki, Separability of mixed states: necessary and sufficient conditions,
{\it Phys.\ Lett.\ A} \textbf{223} (1996) l-8.

\item[[84\!\!\!]] M.\ A.\ Nielsen and J.\ Kempe, Separable states are more disordered globally than locally, {\it Phys.\ Rev.\ Lett.}\
\textbf{86} (2001) 5184-5187.

\item[[86\!\!\!]] D.\ Petz, Matrix Analysis with some applications, $<$http://www.math.hu/petz$>$.

\item[[88\!\!\!]] S. Ju. Rotfel'd, The singular values of a
sum of completely continuous operators,  \textit{ Topics in
Mathematical Physics, Consultants Bureau}, Vol. \textbf{3} (1969)
73-78.

\end{itemize}


\chapter{Block matrices and numerical range}

For sake of completeness the proof of Theorem \ref{thBM} is given in Section 7.4 ``Around this article".

\vskip 10pt\noindent
{\color{blue}\Large {\bf Eigenvalue inequalities for  positive block matrices with the inradius of the numerical range} \large{\cite{BL-inradius}}}

\vskip 10pt\noindent
{\small 
{\bf Abstract.} We prove the operator norm inequality, for a positive matrix partitioned into four blocks in $\bM_n$, 
$$
\left\| \begin{bmatrix} A &X \\
X^* &B\end{bmatrix}\right\|_{\infty} \le \| A+B  \|_{\infty} +\delta(X),
$$
where $\delta(X)$ is the diameter of the largest possible disc in the  numerical range of $X$. This shows that
the inradius   $\varepsilon(X):=\delta(X)/2$ satisfies
$
\varepsilon(X) \ge \| X\|_{\infty} - \| (|X^*|+ |X|)/2\|_{\infty}.
 $ 
Several  eigenvalue inequalities are  derived. In particular, if $X$ is a normal matrix whose spectrum lies in a disc of radius $r$, the third eigenvalue of the full matrix is bounded by the second eigenvalue of the sum of the diagonal block,
$$
\lambda_{3}\left(\begin{bmatrix} A &X \\
X^* &B\end{bmatrix}\right)  \le \lambda_{2}( A+B ) + r.
$$
 We think that $r$ is optimal and we propose a conjecture related to a norm inequality of Hayashi. 
\vskip 5pt\noindent
{\it Keywords.}  Numerical range,   Partitioned matrices, eigenvalue inequalities. 
\vskip 5pt\noindent
{\it 2010 mathematics subject classification.} 15A60, 47A12, 15A42, 47A30.
}

\section{Introduction}

Positive matrices partitioned into four blocks play a central role in Matrix Analysis, and in applications, for instance quantum information theory. A lot of 
important theorems deal with these matrices.  Some of these results give comparison between the full matrix and its diagonal blocks, in particular the sum of the diagonal blocks (the partial trace in the quantum terminology). This note focuses on a recent result  of Bourin and Mhana \cite{BM}, involving the numerical range  of the offdiagonal block.   Recall that a symmetric norm $\|\cdot\|$ on $\bM_{2n}$ means a unitarily invariant norm.  It induces a symmetric norm on $\bM_n$ in an obvious way. The Schatten $p$-norms $\|\cdot\|_p$, $1\le p\le \infty$, and the operator norm $(p=\infty)$ are classical examples of symmetric norms. The main result of \cite{BM} reads as follows.

\vskip 5pt
\begin{theorem}\label{thBM} Let $\begin{bmatrix} A &X \\
X^* &B\end{bmatrix} $ be a positive matrix  partitioned into four blocks
in $\bM_n$. Suppose that $W(X)$ has the width $\omega$. Then, for all symmetric norms,
$$
\left\| \begin{bmatrix} A &X \\
X^* &B\end{bmatrix}\right\| \le \| A+B +\omega I \|.
$$
\end{theorem}

\vskip 5pt
Here $I$ stands for the identity matrix, $W(X)$ denotes the numerical range of $X$,  and the width of $W(X)$ is the smallest distance between two parallel straight lines such that the strip between these two lines contains $W(X)$. If $\omega=0$, that is $W(X)$ is a line segment, Theorem 1.1 was first proved by Mhanna \cite{Mhanna}. Recently \cite{BL-BAMS}, Theorem \ref{thBM} has been completed with the reversed inequality
$$
\left\| \begin{bmatrix} A &X \\
X^* &B\end{bmatrix}\right\| \ge \left\|  \begin{bmatrix} \frac{A+B}{2} +d I& 0 \\ 0&\frac{A+B}{2} -d I  \end{bmatrix}\right\|.
$$
where $d:=\min\{ |z|\, :\, z\in W(X)\}$ is the distance from $0$ to $W(X)$. Several applications
were derived.

Some equality cases in Theorem \ref{thBM} occur for the operator norm $\|\cdot\|_{\infty}$ with the following block matrices, where $a,b$ are two arbitrary nonnegative real numbers.
\begin{equation*}\label{ex1}
\begin{bmatrix} \begin{pmatrix} a&0 \\ 0&b \end{pmatrix} &   \begin{pmatrix} 0&a \\ b&0 \end{pmatrix}\\
 \begin{pmatrix} 0&b \\ a&0 \end{pmatrix} &  \begin{pmatrix} b&0 \\ 0&a \end{pmatrix}\end{bmatrix}.
\end{equation*}
This follows from the fact that 
$
W\left(\begin{pmatrix} 0&b \\ a&0 \end{pmatrix} \right)
$
has the width $2\left| |a|-|b|\right|$.

Though Theorem \ref{thBM} is sharp for the operator norm, a subtle improvement is possible. This is our concern in the next section.
Once again, a geometric feature of $W(X)$ will contribute: its inradius.  Our approach leads to a remarkable list of eigenvalue that cannot be derived from the norm inequalities of Theorem \ref{thBM}.
 The last section is devoted  to some related operator norm inequalities, in particular we will discuss a property due to Hayashi  (2019) and propose a conjecture.

\section{Eigenvalue inequalities}

We define the indiameter $\delta(\Lambda)$ of a compact convex set $\Lambda\subset\bC$  as the diameter of the largest possible  disc in $\Lambda$. For matrices $X\in\bM_n$, we shorten
$\delta(W(X))=:\delta(X)$. Recall that the numerical range of a two-by-two matrix is an elliptical disc (or a line segment, or a single point).

A matrix  $X\in\bM_n$ is identified as an operator on $\bC^n$. If ${\mathcal{S}}$ is a subspace of $\bC^n$, we denote by $X_ {\mathcal{S}}$ the compression of $X$ onto ${\mathcal{S}}$.
We then define the {\it elliptical width} of  $X$ as
$$
\delta_2(X):= \sup_{\dim {\mathcal{S}}=2} \delta(X_ {\mathcal{S}}).
$$
Of course $\delta_2(X)\le \delta(X)\le \omega$ where $\omega$ still denotes the width of $W(X)$. If $X$ is a contraction, then $\delta_2(X)\le 1$, while $\delta(X)$ may be arbitrarily close to 2 (letting $n$ be large enough). We state our main result.

\vskip 5pt
\begin{theorem}\label{thinner} Let $\begin{bmatrix} A &X \\
X^* &B\end{bmatrix} $ be a positive matrix  partitioned into four blocks
in $\bM_n$.  Then, for all $j\in\{0,1,\ldots, n-1\}$,
$$
\lambda_{1+2j}\left(\begin{bmatrix} A &X \\
X^* &B\end{bmatrix}\right)  \le \lambda_{1+j}( A+B ) + \delta_2(X).
$$
\end{theorem}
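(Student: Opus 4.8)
The plan is to reduce the inequality, by two successive compressions, to the operator-norm case $j=0$ in dimension at most two, where Theorem~\ref{thBM} becomes exact because for a $2\times 2$ matrix the width of the numerical range coincides with its indiameter.

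First I would establish the operator-norm statement ($\dagger$): for every positive block matrix $H$ as above, $\|H\|_\infty \le \|A+B\|_\infty + \delta_2(X)$. Fix any unit vector $v=\binom{x}{y}\in\bC^n\oplus\bC^n$ and set $\mathcal S=\mathrm{span}\{x,y\}$, a subspace of dimension at most $2$. Since $x,y\in\mathcal S$, the form $\langle v,Hv\rangle$ only sees the compressions to $\mathcal S$; that is, $\langle v,Hv\rangle=\langle v,H_{\mathcal S}v\rangle$ where $H_{\mathcal S}=\begin{bmatrix}A_{\mathcal S}&X_{\mathcal S}\\ (X_{\mathcal S})^*&B_{\mathcal S}\end{bmatrix}$ is a positive block matrix with blocks in $\bM_{\dim\mathcal S}$. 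Applying Theorem~\ref{thBM} to $H_{\mathcal S}$ for the operator norm gives $\|H_{\mathcal S}\|_\infty\le \|A_{\mathcal S}+B_{\mathcal S}\|_\infty+\omega(X_{\mathcal S})$. The crucial point is that $X_{\mathcal S}$ is at most $2\times 2$, so $W(X_{\mathcal S})$ is an elliptical disc; for an ellipse the width equals the minor axis, which is exactly the diameter of the largest inscribed disc, whence $\omega(X_{\mathcal S})=\delta(X_{\mathcal S})\le\delta_2(X)$. Combining with $\|A_{\mathcal S}+B_{\mathcal S}\|_\infty=\|(A+B)_{\mathcal S}\|_\infty\le\|A+B\|_\infty$ yields $\langle v,Hv\rangle\le\|A+B\|_\infty+\delta_2(X)$, and taking the supremum over $v$ proves ($\dagger$).

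For general $j$ I would use the Courant--Fischer principle in the form $\lambda_{1+2j}(H)=\min_{\dim\mathcal K=2j}\lambda_1(H|_{\mathcal K^\perp})$. Let $\mathcal L\subseteq\bC^n$ be the span of eigenvectors of $A+B$ for its $j$ largest eigenvalues and take $\mathcal K=\mathcal L\oplus\mathcal L$, of dimension $2j$. Then $H|_{\mathcal K^\perp}$ is precisely the positive block matrix $\tilde H=\begin{bmatrix}A_{\mathcal L^\perp}&X_{\mathcal L^\perp}\\ (X_{\mathcal L^\perp})^*&B_{\mathcal L^\perp}\end{bmatrix}$ with partial trace $(A+B)_{\mathcal L^\perp}$. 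Applying ($\dagger$) to $\tilde H$ gives $\lambda_{1+2j}(H)\le\|\tilde H\|_\infty\le\|(A+B)_{\mathcal L^\perp}\|_\infty+\delta_2(X_{\mathcal L^\perp})$. Since $\mathcal L^\perp$ is invariant under $A+B$ and carries its eigenvalues $\lambda_{1+j}(A+B)\ge\cdots$, one has $\|(A+B)_{\mathcal L^\perp}\|_\infty=\lambda_{1+j}(A+B)$; and as every $2$-dimensional $\mathcal S\subseteq\mathcal L^\perp$ is also a subspace of $\bC^n$ with $(X_{\mathcal L^\perp})_{\mathcal S}=X_{\mathcal S}$, we get $\delta_2(X_{\mathcal L^\perp})\le\delta_2(X)$. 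This is exactly the asserted inequality.

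The routine facts I rely on (compressions of positive matrices are positive, the operator norm of a compression of a positive matrix does not exceed that of the matrix, and the min--max identity) are standard. The only genuinely delicate point is the passage from the global width $\omega$ of Theorem~\ref{thBM} to the elliptical width $\delta_2$, and this is precisely where the reduction to $\le 2$-dimensional compressions is indispensable: it is only for $2\times 2$ matrices that the width and the indiameter of the numerical range coincide. I expect that carefully verifying this two-dimensional geometric identity (width $=$ minor axis $=$ indiameter for an elliptical disc, together with the degenerate segment and point cases) and confirming that the compressions $H_{\mathcal S}$ and $\tilde H$ have the stated block form are the steps demanding the most attention, though neither is technically hard.
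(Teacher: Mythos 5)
Your proposal is correct and follows essentially the same route as the paper: compress to the (at most two-dimensional) subspace spanned by the two halves of a test vector, invoke Theorem \ref{thBM} there, and use that for a $2\times 2$ compression the width of the (elliptical) numerical range coincides with its indiameter, then handle general $j$ by the min--max principle applied to compressions of the form $\mathcal{S}\oplus\mathcal{S}$. The only differences are cosmetic: you run over all unit vectors rather than a maximizing one, and for general $j$ you pick the specific orthocomplement of the top-$j$ eigenspace of $A+B$ instead of taking the infimum over all $(n-j)$-dimensional subspaces.
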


\vskip 5pt
Here $\lambda_1(S)\ge\cdots\ge \lambda_d(S)$ stand for the eigenvalue of any Hermitian matrix $S\in\bM_d$. If we denote by $\lambda_1^{\uparrow}(S)\le \cdots\le \lambda^{\uparrow}_d(S)$ these eigenvalues arranged in the increasing order, then Theorem \ref{thinner} reads as
$$
\lambda_{2k}^{\uparrow}\left(\begin{bmatrix} A &X \\
X^* &B\end{bmatrix}\right)  \le \lambda_{k}^{\uparrow}( A+B ) + \delta_2(X).
$$
for all $k\in\{1,2,\ldots, n\}$.

The case $j=0$ in Theorem \ref{thinner} improves Theorem \ref{thBM} for the operator norm. We may consider that Theorem \ref{thinner} is trivial for $j=n-1$. Indeed, using the  decomposition \cite[Lemma 3.4]{BL-London},
\begin{equation*}
\begin{bmatrix} A &X \\
X^* &B\end{bmatrix} = U
\begin{bmatrix} A &0 \\
0 &0\end{bmatrix} U^* +
V\begin{bmatrix} 0 &0 \\
0 &B\end{bmatrix} V^*,
\end{equation*}
for some unitary matrices $U,\,V\in  \bM_{2n}$, we obtain from Weyl's inequality 
\cite[p.\ 62]{Bh},
\begin{align*}
\lambda_{2n-1}\left(\begin{bmatrix} A &X \\
X^* &B\end{bmatrix}\right)  &\le \lambda_{n} \left(\begin{bmatrix} A &0 \\
0 &0\end{bmatrix}\right)+ \lambda_{n} \left(\begin{bmatrix} 0 &0 \\
0 &B\end{bmatrix}\right) \\
&=  \lambda_{n}(A) + \lambda_n(B) \\ 
&\le \lambda_n(A+B).
\end{align*}

\vskip 5pt
We turn to the proof of the theorem. 

\vskip 5pt
\begin{proof} We first consider the case $j=0$. We may assume that the norm of the block matrix is strictly greater than
the norms of its two diagonal blocks $A$ and $B$, otherwise the statement is trivial. Hence we have two nonzero (column) vectors $h_1, h_2\in\bC^n$ such that $\| h_1\|^2+\| h_2\|^2=1$ and
$$
\lambda_1\left( \begin{bmatrix} A &X \\
X^* &B\end{bmatrix}\right)=
\left\| \begin{bmatrix} A &X \\
X^* &B\end{bmatrix}\right\|_{\infty}= 
\begin{pmatrix}
h^*_1 & h_2^*
\end{pmatrix}
\begin{bmatrix} A &X \\
X^* &B\end{bmatrix}
\begin{pmatrix} h_1 \\ h_2
\end{pmatrix}.
$$
Therefore, denoting by $E_1$ and $E_2$ the rank one projections corresponding to the one dimensional subspaces spanned by $h_1$ and by $h_2$, we have
$$
\left\| \begin{bmatrix} A &X \\
X^* &B\end{bmatrix}\right\|_{\infty}= 
\left\|
\begin{bmatrix} E_1 &0 \\
0 &E_2\end{bmatrix}
\begin{bmatrix} A &X \\
X^* &B\end{bmatrix}
\begin{bmatrix} E_1 &0 \\
0 &E_2\end{bmatrix}
\right\|_{\infty}
$$
Hence, denoting by $F$ a rank two projection such that $E_1\le F$ and $E_2\le F$, we have
\begin{align*}
\left\| \begin{bmatrix} A &X \\
X^* &B\end{bmatrix}\right\|_{\infty}&= 
\left\|
\begin{bmatrix} F &0 \\
0 &F\end{bmatrix}
\begin{bmatrix} A &X \\
X^* &B\end{bmatrix}
\begin{bmatrix} F &0 \\
0 &F\end{bmatrix}
\right\|_{\infty} \\
&=\left\| \begin{bmatrix} FAF &FXF \\
FX^*F &FBF\end{bmatrix}\right\|_{\infty}.
\end{align*}
So, letting ${\mathcal{S}}$  denote the range of $F$, we have
$$
\left\| \begin{bmatrix} A &X \\
X^* &B\end{bmatrix}\right\|_{\infty}
=\left\| \begin{bmatrix} A_{\mathcal{S}} &X_{\mathcal{S}} \\
X^*_{\mathcal{S}} &B_{\mathcal{S}}\end{bmatrix}\right\|_{\infty}.
$$ 
Hence applying Theorem \ref{thBM} for the operator norm, we obtain 
$$
\left\| \begin{bmatrix} A &X \\
X^* &B\end{bmatrix}\right\|_{\infty}\le
\| A_{\mathcal{S}} +B_{\mathcal{S}}  \|_{\infty} + \eps
$$ 
where $\eps$ is the width of $W(X_{\mathcal{S}})$. Since $W(X_{\mathcal{S}})$ is an elliptical disc (as $X_{\mathcal{S}}$ acts on a two-dimensional space), its width equals to its indiameter, hence $\eps\le \delta_2(X)$, and since
$$
\| A_{\mathcal{S}} +B_{\mathcal{S}}  \|_{\infty} =\| (A+B)_{\mathcal{S}} \|_{\infty} \le \| A+B \|_{\infty}=\lambda_1(A+B),
$$
the proof for $j=0$ is complete.

We turn to the general case, $j=1,\ldots, n-1$. By the min-max principle,
\begin{align*}
\lambda_{1+2j}\left(\begin{bmatrix} A &X \\
X^* &B\end{bmatrix}\right)  &\le \inf_{\dim{\mathcal{S}}=n-j}\lambda_{1}\left(\begin{bmatrix} A &X \\
X^* &B\end{bmatrix}_{{\mathcal{S}}\oplus{\mathcal{S}}}\right)  \\
&= \inf_{\dim{\mathcal{S}}=n-j}\lambda_{1}\left(\begin{bmatrix} A_{\mathcal{S}} &X_{\mathcal{S}} \\
X^*_{\mathcal{S}} &B_{\mathcal{S}}\end{bmatrix}\right),
\end{align*}
hence, from the first part of the proof,
\begin{align*}
\lambda_{1+2j}\left(\begin{bmatrix} A &X \\
X^* &B\end{bmatrix}\right)  &\le 
\inf_{\dim{\mathcal{S}}=n-j}\lambda_{1}\left( A_{\mathcal{S}} +
B_{\mathcal{S}}\right) + \delta_2(X) \\
&=\lambda_{1+j}(A+B) + \delta_2(X)
\end{align*}
which is the desired claim. \end{proof}

\vskip 5pt
If $X\in\bM_n$, we denote by ${\mathrm{dist}}(X,\bC I)$ the $\|\cdot\|_{\infty}$-distance from $X$ to  $\bC I$.
Thus, for a  scalar perturbation of a contraction,   $X=\lambda I + C$ for some contraction $C\in\bM_n$ and some $\lambda\in\bC$, we have  ${\mathrm{dist}}(X,\bC I)\le 1$.

\vskip 5pt
\begin{cor}\label{pertu1} Let $\begin{bmatrix} A &X \\
X^* &B\end{bmatrix} $ be a positive matrix  partitioned into four blocks
in $\bM_n$. Then, for all $j\in\{0,1,\ldots, n-1\}$, 
$$
\lambda_{1+2j}\left( \begin{bmatrix} A &X \\
X^* &B\end{bmatrix}\right)  \le \lambda_{1+j} (A+B) + {\mathrm{dist}}(X,\bC I).
$$
\end{cor}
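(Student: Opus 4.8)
The plan is to deduce Corollary \ref{pertu1} directly from Theorem \ref{thinner} by showing that the elliptical width $\delta_2(X)$ is bounded above by the distance from $X$ to the scalar multiples of the identity. In fact, I claim that $\delta_2(X) \le {\mathrm{dist}}(X,\bC I)$ for every $X\in\bM_n$, and once this is established the corollary is immediate: for each $j$ we simply replace $\delta_2(X)$ by the larger quantity ${\mathrm{dist}}(X,\bC I)$ in the inequality of Theorem \ref{thinner}.

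To prove the inequality $\delta_2(X)\le {\mathrm{dist}}(X,\bC I)$, I would first recall that $\delta_2(X)$ is a supremum over two-dimensional subspaces $\mathcal{S}$ of the indiameter of the elliptical disc $W(X_{\mathcal{S}})$, so it suffices to fix such an $\mathcal{S}$ and bound $\delta(X_{\mathcal{S}})$. The key observation is that both the indiameter and the distance to $\bC I$ are invariant under translation by scalars: for any $\lambda\in\bC$ one has $W(X-\lambda I)=W(X)-\lambda$, hence $\delta(X_{\mathcal{S}})=\delta((X-\lambda I)_{\mathcal{S}})$, while $\delta((X-\lambda I)_{\mathcal{S}})\le 2\|(X-\lambda I)_{\mathcal{S}}\|_{\infty}\le 2\|X-\lambda I\|_{\infty}$ is far too weak. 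Instead I would use that the indiameter of the elliptical disc $W(X_{\mathcal{S}})$ equals its minor axis (its width, since the set is an ellipse), and that the width of a planar set is controlled by the diameter in the direction perpendicular to the shortest axis. A cleaner route: for a two-by-two compression $X_{\mathcal{S}}$ the indiameter $\delta(X_{\mathcal{S}})$ equals the smaller semi-axis doubled, and an elementary estimate gives $\delta(X_{\mathcal{S}})\le 2\,{\mathrm{dist}}(X_{\mathcal{S}},\bC I_{\mathcal{S}})$; but the optimal constant is $1$, which follows because the width of $W(Y)$ for $Y\in\bM_2$ satisfies $\omega(W(Y))\le {\mathrm{dist}}(Y,\bC I)$ after recentering $Y$ at the scalar realizing the distance.

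The main obstacle will be pinning down the correct geometric estimate relating the indiameter of the elliptical disc $W(X_{\mathcal{S}})$ to ${\mathrm{dist}}(X,\bC I)$ with the sharp constant $1$. Concretely, I would translate $X$ by the optimal scalar $\lambda_0$ so that $\|X-\lambda_0 I\|_{\infty}={\mathrm{dist}}(X,\bC I)=:\rho$; then $W((X-\lambda_0 I)_{\mathcal{S}})$ is an elliptical disc contained in the disc of radius $\rho$ centered at the origin (since $\|(X-\lambda_0 I)_{\mathcal{S}}\|_{\infty}\le \rho$ and the numerical radius is at most the operator norm). An ellipse inscribed in a disc of radius $\rho$ has minor axis at most $2\rho$, whence its indiameter is at most $2\rho$; this only gives $\delta_2(X)\le 2\,{\mathrm{dist}}(X,\bC I)$. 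To recover the sharp constant I would argue more carefully that the relevant bound is on the \emph{width} rather than twice the radius: the recentered compression has numerical range symmetric enough that its minor axis is bounded by $\rho$ directly. I expect this sharp-constant argument to require the explicit parametrization of $W(Y)$ for $Y\in\bM_2$ used in the proof of Theorem \ref{thBM}, so I would lean on that computation (reproduced in the ``Around this article'' section) to close the gap.

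Finally, assuming the geometric lemma $\delta_2(X)\le {\mathrm{dist}}(X,\bC I)$, the corollary follows in one line from Theorem \ref{thinner}:
\begin{equation*}
\lambda_{1+2j}\left( \begin{bmatrix} A &X \\ X^* &B\end{bmatrix}\right)  \le \lambda_{1+j}(A+B) + \delta_2(X) \le \lambda_{1+j}(A+B) + {\mathrm{dist}}(X,\bC I).
\end{equation*}
The remark about scalar perturbations of contractions is then just the observation that if $X=\lambda I + C$ with $C$ a contraction, then ${\mathrm{dist}}(X,\bC I)\le \|X-\lambda I\|_{\infty}=\|C\|_{\infty}\le 1$, so the additive term is at most $1$ in that case.
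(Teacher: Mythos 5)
Your route is exactly the paper's: deduce the corollary from Theorem \ref{thinner} by proving $\delta_2(X)\le {\mathrm{dist}}(X,\bC I)$, which the paper does by combining the two observations that compression does not increase the distance to the scalars and that, for a two-dimensional compression, ${\mathrm{dist}}(X_{\mathcal{S}},\bC I_{\mathcal{S}})\ge \delta\left(W(X_{\mathcal{S}})\right)$. The gap in your write-up is that you never actually establish this last inequality with the constant $1$: your inscribed-ellipse argument only gives $\delta(X_{\mathcal{S}})\le 2\,{\mathrm{dist}}(X,\bC I)$, and the proposed repair (``the recentered compression has numerical range symmetric enough that its minor axis is bounded by $\rho$ directly'') is not an argument --- an ellipse inscribed in a disc of radius $\rho$ can perfectly well have minor axis $2\rho$, so containment in the disc cannot suffice and some finer structure of $W(Y)$ for $Y\in\bM_2$ must be used.

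Here is how to close it. Every $Y\in\bM_2$ is unitarily similar to $\begin{pmatrix}\tau & y\\ x & \tau\end{pmatrix}$ with $\tau$ the normalized trace (the standard reduction behind the elliptical range theorem, used in the paper's Corollary \ref{corsimple}); then $W(Y)$ is an ellipse with semi-axes $(|x|+|y|)/2$ and $\bigl||x|-|y|\bigr|/2$, so $\delta(Y)=\bigl||x|-|y|\bigr|$. On the other hand, for every $\lambda\in\bC$ the operator norm of $Y-\lambda I=\begin{pmatrix}\tau-\lambda & y\\ x & \tau-\lambda\end{pmatrix}$ dominates the Euclidean norm of each of its columns, hence $\|Y-\lambda I\|_{\infty}\ge \max\{|x|,|y|\}\ge \bigl||x|-|y|\bigr|=\delta(Y)$. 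This gives ${\mathrm{dist}}(Y,\bC I)\ge\delta(Y)$ for all $Y\in\bM_2$ with the sharp constant $1$; applying it to $Y=X_{\mathcal{S}}$ for each two-dimensional $\mathcal{S}$ and using that compression is norm-contractive yields $\delta_2(X)\le{\mathrm{dist}}(X,\bC I)$, after which your one-line deduction from Theorem \ref{thinner} is correct.
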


\vskip 5pt
\begin{proof} For any subspace ${\mathcal{S}}\subset C^n$, we have 
$$
{\mathrm{dist}}(X,\bC I)\ge {\mathrm{dist}}(X_{\mathcal{S}},\bC I_{\mathcal{S}}).
$$
If ${\mathcal{S}}$ has dimension 2, then 
$$
{\mathrm{dist}}(X_{\mathcal{S}},\bC I_{\mathcal{S}} )\ge \delta\left(W(X_{\mathcal{S}})\right).
$$ 
Therefore ${\mathrm{dist}}(X,\bC I)\ge \delta_2(X)$ and Theorem \ref{thinner} completes the proof.
\end{proof}

\vskip 5pt
\begin{cor}\label{cor0}  Let $A,B\in\bM_n$. Then, for every $j\ge 0$ such that $1+2j\le n$,
$$
\lambda_{1+2j}\left( A^*A + B^*B \right) \le \lambda_{1+j}\left(AA^*+ BB^*  \right)+\delta_2(AB^*)
$$
\end{cor}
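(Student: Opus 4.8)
The plan is to realize the three quantities $A^*A+B^*B$, $AA^*+BB^*$ and $AB^*$ as data attached to a single positive block matrix, and then to invoke Theorem \ref{thinner}. First I would set $X=\begin{bmatrix} A \\ B\end{bmatrix}\in\bM_{2n,n}$, so that
$$
XX^*=\begin{bmatrix} AA^* & AB^* \\ BA^* & BB^*\end{bmatrix}\in\bM_{2n}^+, \qquad X^*X=A^*A+B^*B\in\bM_n^+.
$$
The matrix $XX^*$ is positive semidefinite by construction and is partitioned into four blocks in $\bM_n$ whose diagonal blocks $AA^*$ and $BB^*$ have partial trace $AA^*+BB^*$, while its off-diagonal block is $AB^*$ (note that $(AB^*)^*=BA^*$, so the partition is of the required Hermitian-symmetric form).

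Next I would apply Theorem \ref{thinner} to $XX^*$, with off-diagonal block $X_{\mathrm{od}}=AB^*$: for every $j\in\{0,1,\ldots,n-1\}$,
$$
\lambda_{1+2j}\left(\begin{bmatrix} AA^* & AB^* \\ BA^* & BB^*\end{bmatrix}\right)\le \lambda_{1+j}(AA^*+BB^*)+\delta_2(AB^*).
$$
This is a direct instantiation of the theorem, since all three blocks lie in $\bM_n$ and the range of $j$ matches.

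Finally I would convert the left-hand side into an eigenvalue of $A^*A+B^*B$ by the standard fact that $XX^*$ and $X^*X$ share the same nonzero eigenvalues with multiplicities. Because $X^*X\in\bM_n$ has rank at most $n$, the decreasingly ordered eigenvalues satisfy $\lambda_k(XX^*)=\lambda_k(X^*X)$ for $1\le k\le n$. The hypothesis $1+2j\le n$ places the index $1+2j$ in this range, whence $\lambda_{1+2j}(XX^*)=\lambda_{1+2j}(X^*X)=\lambda_{1+2j}(A^*A+B^*B)$, and the displayed inequality becomes exactly the asserted bound.

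The argument is essentially a bookkeeping reduction to Theorem \ref{thinner}, so I do not anticipate a genuine obstacle; the only point requiring care is the index constraint $1+2j\le n$, which is precisely what legitimizes the passage from the spectrum of $XX^*$ to that of $X^*X$.
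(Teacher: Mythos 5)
Your proof is correct and is essentially the paper's own argument: both form $T=\begin{bmatrix} A\\ B\end{bmatrix}$, identify $TT^*$ as the positive block matrix with diagonal blocks $AA^*,BB^*$ and off-diagonal block $AB^*$, apply Theorem \ref{thinner}, and pass from $\lambda_{1+2j}(TT^*)$ to $\lambda_{1+2j}(T^*T)=\lambda_{1+2j}(A^*A+B^*B)$. Your explicit remark on the index constraint $1+2j\le n$ is a point the paper leaves implicit, but there is no difference in substance.
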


\vskip 5pt
\begin{proof} 
Note that 
$$
\lambda_{1+2j}\left(A^*A + B^*B \right) = \lambda_{1+2j}\left( T^*T\right)  =  \lambda_{1+2j}\left( TT^*\right)  
$$
with $T= \begin{bmatrix} A \\ B \end{bmatrix}$ and
$
TT^*= \begin{bmatrix} AA^* &AB^* \\
BA^* &BB^*\end{bmatrix}
$
so that Theorem \ref{thinner} yields the  desired claim.
\end{proof}

\vskip 5pt
\begin{cor}\label{cornormal1} Let $\begin{bmatrix} A &N \\
N^* &B\end{bmatrix} $ be a positive matrix  partitioned into four blocks
in $\bM_n$. If $N$ is  normal and its spectrum is contained in a disc of radius $r$, then, 
$$
\lambda_{1+2j}\left(\begin{bmatrix} A &N \\
N^* &B\end{bmatrix}\right)  \le \lambda_{1+j}( A+B ) + r.
$$
for all $j=0,1,\ldots,n-1$.
 \end{cor}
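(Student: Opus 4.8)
The plan is to deduce the statement directly from Corollary \ref{pertu1}, which already gives
\[
\lambda_{1+2j}\left( \begin{bmatrix} A &N \\ N^* &B\end{bmatrix}\right)  \le \lambda_{1+j} (A+B) + {\mathrm{dist}}(N,\bC I)
\]
for every $j\in\{0,\ldots,n-1\}$. Thus the whole matter reduces to the scalar estimate ${\mathrm{dist}}(N,\bC I)\le r$, and the role of the hypotheses on $N$ is precisely to supply this bound.

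First I would use normality. Let $D(z_0,r)$ be a disc of radius $r$ containing the spectrum $\sigma(N)$. Since $N-z_0I$ is again normal, its operator norm equals its spectral radius, so
\[
{\mathrm{dist}}(N,\bC I)\le \|N-z_0I\|_{\infty}=\rho(N-z_0I)=\max_{\mu\in\sigma(N)}|\mu-z_0|\le r.
\]
Plugging this into Corollary \ref{pertu1} finishes the proof. The one place where normality is essential is this identification of the operator norm with the spectral radius: for a non-normal $N$, a spectrum confined to $D(z_0,r)$ does not control $\|N-z_0I\|_{\infty}$ (for instance a nilpotent block), so the argument genuinely needs it.

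An alternative route, which also indicates where the difficulty lies, is to go through Theorem \ref{thinner} and bound the elliptical width $\delta_2(N)$ by $r$. Here I would note that $\delta_2$ is translation invariant, reduce to $z_0=0$, i.e.\ $\|N\|_{\infty}\le r$, and then for each two-dimensional compression $M=N_{\mathcal{S}}$ invoke the elliptical range theorem: the indiameter of the elliptical disc $W(M)$ equals its minor axis, of length $\bigl({\mathrm{Tr}}\,M^*M-|\lambda_1(M)|^2-|\lambda_2(M)|^2\bigr)^{1/2}$. Combining ${\mathrm{Tr}}\,M^*M=\sigma_1^2+\sigma_2^2$ with $|\lambda_1(M)|^2+|\lambda_2(M)|^2\ge 2|\lambda_1(M)\lambda_2(M)|=2\sigma_1\sigma_2$ gives minor axis $\le \sigma_1-\sigma_2\le \|M\|_{\infty}\le \|N\|_{\infty}\le r$, whence $\delta_2(N)\le r$. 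The subtle point, and the main obstacle in this second approach, is that the crude estimate $\delta_2(N)\le\delta(N)\le\omega\le 2r$ loses a factor of two; one really must exploit the finer elliptical-range identity (equivalently the $\sigma_1-\sigma_2$ refinement) to bring the bound down from $2r$ to $r$.
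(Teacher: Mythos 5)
Your first argument is exactly the paper's proof: the paper also reduces to Corollary \ref{pertu1} by writing $N=\lambda I+R$ with $\lambda$ the centre of the disc and using normality to get $\|R\|_{\infty}=\rho(R)\le r$, hence ${\mathrm{dist}}(N,\bC I)\le r$. The alternative route through $\delta_2(N)\le r$ and the elliptical range theorem is a correct (and slightly sharper in spirit) variant, but it is not needed; the proposal is correct and matches the paper.
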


\vskip 5pt
\begin{proof}
Corollary \ref{cornormal1} is a special case of corollary \ref{pertu1}, as $N=\lambda I +R$,
where $\lambda$ is the center of the disc of radius $r$ containing the spectrum of $N$, and $\| N\|_{\infty}\le r$.
\end{proof}

\vskip 5pt
\begin{question}\label{quest1} Fix $r>0$ and $\varepsilon>0$. Can we find (with $n$ large enough) a normal matrix $N$  with spectrum in a disc of radius $r$ and  a positive block matrix $\begin{bmatrix} A &N \\
N^* &B\end{bmatrix} $ 
such that
$$
\lambda_{1+2j}\left(\begin{bmatrix} A &N \\
N^* &B\end{bmatrix}\right)  \ge \lambda_{1+j}( A+B ) + r-\varepsilon
$$
for some $j\in\{0,\ldots, n-1\}$ ? Is it true for for $j=0$ ?
\end{question}

\section{Norm inequalities }

Corollary \ref{cornormal1} with $j=0$ reads as follows.

\vskip 5pt
\begin{cor}\label{cornormal} Let $\begin{bmatrix} A &N \\
N^* &B\end{bmatrix} $ be a positive matrix  partitioned into four blocks
in $\bM_n$. If $N$ is  normal and its spectrum is contained in a disc of radius $r$, then, 
$$
\left\| \begin{bmatrix} A &N \\
N^* &B\end{bmatrix}\right\|_{\infty}  \le \| A+B \|_{\infty}  + r.
$$
 \end{cor}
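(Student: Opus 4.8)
The plan is to derive Corollary \ref{cornormal} directly from Corollary \ref{cornormal1} by specializing the index $j$ to $0$. Recall that for a positive Hermitian matrix $S\in\bM_{2n}$, the largest eigenvalue coincides with the operator norm, namely $\lambda_1(S)=\|S\|_{\infty}$. Applying this identity to the positive block matrix on the left and to the positive matrix $A+B$ on the right is the only translation required.

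\begin{proof}
This is the special case $j=0$ of Corollary \ref{cornormal1}. Indeed, for $j=0$, Corollary \ref{cornormal1} gives
$$
\lambda_{1}\left(\begin{bmatrix} A &N \\
N^* &B\end{bmatrix}\right)  \le \lambda_{1}( A+B ) + r.
$$
Since the block matrix is positive semidefinite, its largest eigenvalue equals its operator norm,
$$
\lambda_{1}\left(\begin{bmatrix} A &N \\
N^* &B\end{bmatrix}\right) = \left\| \begin{bmatrix} A &N \\
N^* &B\end{bmatrix}\right\|_{\infty},
$$
and likewise $\lambda_1(A+B)=\|A+B\|_{\infty}$ because $A+B\in\bM_n^+$. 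Substituting these two identities yields
$$
\left\| \begin{bmatrix} A &N \\
N^* &B\end{bmatrix}\right\|_{\infty}  \le \| A+B \|_{\infty}  + r,
$$
as claimed.
\end{proof}

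There is essentially no obstacle here, since all the work has already been done in Theorem \ref{thinner} and Corollary \ref{cornormal1}. The only point worth noting is that the reduction relies on positivity of both matrices involved: the full block matrix is positive by hypothesis, and its partial trace $A+B$ is a sum of two positive diagonal blocks and hence positive, so that the passage from the top eigenvalue to the operator norm is legitimate on both sides of the inequality.
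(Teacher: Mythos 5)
Your proof is correct and is exactly the paper's route: the paper introduces this corollary with the sentence ``Corollary \ref{cornormal1} with $j=0$ reads as follows,'' so the intended argument is precisely the specialization $j=0$ together with the identification $\lambda_1(S)=\|S\|_\infty$ for positive semidefinite $S$. Your remark that positivity of both the full block matrix and of $A+B$ is what legitimizes this identification on each side is accurate and is the only point that needed checking.
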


\vskip 5pt
 We do not know wether the constant $r$ is sharp or not (Question \ref{quest1}). If $n=2$, we can replace $r$ by $0$ as the numerical range of $N$ is then a line segment. If $n=3$ there are some simple examples with $N=U$ unitary such that
$$
\left\| \begin{bmatrix} A &U \\
U^* &B\end{bmatrix}\right\|_{\infty}  > \| A+B \|_{\infty}.
$$
See Hayashi's example  in the discussion of [H, Problem 3] and  the interesting study and examples in [G] where we further have $A+B=kI$ for some scalars $k$. The next result is due to Hayashi [H, Theorem 2.5].

\vskip 5pt
\begin{theorem}\label{thHa} Suppose that $X\in\bM_n$ is invertible with $n$ distinct singular values. If
the inequality 
$$
\left\| \begin{bmatrix} A &X \\
X^* &B\end{bmatrix}\right\|_{\infty}  \le \| A+B \|_{\infty}.
$$
holds for all positive block-matrix with $X$ as off-diagonal block, then $X$ is normal.
\end{theorem}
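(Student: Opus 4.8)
The plan is to argue by contraposition: assuming $X$ is invertible with $n$ distinct singular values $\sigma_1 > \cdots > \sigma_n > 0$ and is \emph{not} normal, I would produce a positive block-matrix with off-diagonal block $X$ violating the operator-norm inequality. Writing the polar decomposition $X = U|X|$ with $U$ unitary (legitimate since $X$ is invertible), normality of $X$ is equivalent to $|X| = |X^*|$, where $|X^*| = U|X|U^*$. So the whole point is to show that the hypothesis forces the right singular vectors $v_k$ (eigenvectors of $|X|$) and the left singular vectors $u_k = Uv_k$ (eigenvectors of $|X^*|$) to coincide up to a phase for every $k$.

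The basic test is the extremal choice $A = |X^*|$, $B = |X|$. Conjugating by $U\oplus I$ one checks that $\begin{bmatrix} |X^*| & X \\ X^* & |X|\end{bmatrix} = (U\oplus I)\,\big(|X|\otimes\begin{bmatrix}1&1\\1&1\end{bmatrix}\big)\,(U\oplus I)^*$, which is positive and has operator norm exactly $2\sigma_1$ (the eigenvalues of $\begin{bmatrix}1&1\\1&1\end{bmatrix}$ being $0$ and $2$). Its partial trace is $|X^*|+|X|$, so the hypothesis gives $2\sigma_1 \le \||X^*|+|X|\|_\infty$. Since $\||X^*|+|X|\|_\infty \le \||X^*|\|_\infty + \||X|\|_\infty = 2\sigma_1$, equality holds in the triangle inequality for the two positive operators $|X^*|$ and $|X|$. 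For positive operators the equality $\|P+Q\|_\infty = \|P\|_\infty+\|Q\|_\infty$ forces a common top eigenvector; as $\sigma_1$ is simple (the singular values being distinct) this vector must be $u_1$ and $v_1$ simultaneously, whence $u_1 = v_1 =: w_1$ up to a phase. A short computation then gives $Xw_1 = \sigma_1 w_1 = X^* w_1$, so $w_1^\perp$ reduces $X$ and $X = \sigma_1 \oplus X_1$ orthogonally, with $X_1$ invertible, non-normal, and carrying the distinct singular values $\sigma_2 > \cdots > \sigma_n$.

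It remains to run an induction on $n$, lifting a violating configuration for $X_1$ to one for $X$; here lies the main obstacle. Every positive block-matrix with off-diagonal $X = \sigma_1\oplus X_1$ has partial-trace norm at least $2\sigma_1$, because the $2\times 2$ principal minor in the $w_1$-directions is $\begin{bmatrix} a & \sigma_1 \\ \sigma_1 & b\end{bmatrix}\ge 0$, forcing $a+b \ge 2\sqrt{ab}\ge 2\sigma_1$. Consequently a violation inherited from $X_1$ (whose singular values are all $<\sigma_1$) must be \emph{amplified} above the threshold $2\sigma_1$, and naive block-diagonal padding cannot achieve this. The device I would use is a rank-one ``boost'': choosing an orthonormal pair $(\phi,\psi)$ with $\langle \phi, X_1\psi\rangle \ne 0$ and setting $A_1 = |X_1^*| + c\,\phi\phi^*$, $B_1 = |X_1| + c\,\psi\psi^*$ keeps the matrix positive, drives the full norm above any prescribed level as $c\to\infty$ (test against $\tfrac{1}{\sqrt{2}}(\phi\oplus\psi)$), while increasing the partial-trace norm only by $c$ (since $\phi\perp\psi$ gives $\|\phi\phi^*+\psi\psi^*\|_\infty = 1$); padding the $\sigma_1$-slot with $a=b=\sigma_1$ then transfers the surviving gap back to $X$.

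The delicate point is that the gap produced this way is governed by $2\,\mathrm{Re}\langle\phi, X_1\psi\rangle$ compared with $\mu := \||X_1^*|+|X_1|\|_\infty$, and by Theorem~\ref{thinner} it is in any case at most $\delta_2(X_1)$, hence never large. In the regime where the top left and right singular vectors of $X_1$ are nearly parallel — that is, weak non-normality — the best achievable $2\,\mathrm{Re}\langle\phi,X_1\psi\rangle$ over orthonormal pairs and the quantity $\mu$ both tend to the same limit $2\sigma_2$, and the amplification races against the partial-trace growth. Resolving this regime is the technical heart of the theorem, and it is exactly where the simplicity of the singular values, and thus the rigidity of the singular-vector pairs, must be exploited quantitatively. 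This is the step I expect to require the most care, and it is essentially Hayashi's contribution; the remaining bookkeeping of the induction is then routine.
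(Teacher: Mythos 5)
The paper itself offers no proof of this statement --- it is quoted from Hayashi [H, Theorem 2.5] --- so I can only assess your argument on its own terms. Your first step is correct and well executed: the test configuration $A=|X^*|$, $B=|X|$ is positive with norm exactly $2\sigma_1$ and partial trace $|X^*|+|X|$, the hypothesis then forces equality in $\||X^*|+|X|\|_\infty\le 2\sigma_1$, equality in the triangle inequality for two positive operators yields a common norming eigenvector, and simplicity of $\sigma_1$ identifies it with both $u_1$ and $v_1$, so that $\bC v_1$ reduces $X$ and $X=\lambda_1\oplus X_1$ with $|\lambda_1|=\sigma_1$ (you drop the phase $\lambda_1=\sigma_1 e^{i\theta}$, a harmless slip). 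This is a sound and natural opening.

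The genuine gap is the inductive step, and you concede it yourself. The obstruction is real, not bookkeeping: since every positive completion of the off-diagonal block $\lambda_1\oplus X_1$ has a $2\times2$ principal submatrix $\begin{bmatrix}a&\lambda_1\\ \bar\lambda_1&b\end{bmatrix}\ge0$, the partial trace always has norm at least $2\sigma_1$, while by Theorem \ref{thBM} any violation coming from $X_1$ exceeds $\|A_1+B_1\|_\infty$ by at most $\omega(X_1)$; so a violation for $X_1$ survives block-diagonal padding only if it can be pushed above the $2\sigma_1$ floor, and your rank-one boost reduces exactly this to an unproved comparison between $\lambda_1$ of the $2\times2$ compression $\begin{bmatrix}\langle\phi,|X_1^*|\phi\rangle&\langle\phi,X_1\psi\rangle\\ \overline{\langle\phi,X_1\psi\rangle}&\langle\psi,|X_1|\psi\rangle\end{bmatrix}$ and $\lambda_1$ of the compression of $|X_1^*|+|X_1|$ to ${\rm span}\{\phi,\psi\}$ --- an inequality you neither prove nor show to be achievable for every non-normal $X_1$. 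Note also that your ansatz keeps $A$ and $B$ block-diagonal with respect to $\bC v_1\oplus v_1^\perp$ (up to rank-one perturbations inside $v_1^\perp$); the off-diagonal entries of $A$ and $B$ coupling the two summands are free parameters of the problem and are very likely indispensable for the construction, so the ansatz may be too narrow in principle, not merely hard to optimize. As it stands the argument establishes only that $u_1=v_1$, which is far from normality, and the "technical heart" you defer is precisely the content of the theorem.
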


\vskip 5pt
Theorem \ref{thHa} and Theorem \ref{thBM} suggest a natural conjecture. If $W(T)$ is line segment, then $T$ is a so-called  essentially Hermitian matrix.

\vskip 5pt
\begin{conjecture}\label{quest2} Let $X\in\bM_n$. If
the inequality 
$$
\left\| \begin{bmatrix} A &X \\
X^* &B\end{bmatrix}\right\|_{\infty}  \le \| A+B \|_{\infty}
$$
holds for all positive block-matrix with $X$ as off-diagonal block, then $X$ is essentially Hermitian.
\end{conjecture}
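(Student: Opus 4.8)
The plan is to prove the contrapositive: if $X$ is \emph{not} essentially Hermitian, then there is a positive completion whose operator norm exceeds $\|A+B\|_\infty$. Recall that $X$ is essentially Hermitian precisely when $W(X)$ is a (possibly degenerate) line segment, equivalently when its width vanishes, equivalently when $\mathrm{Re}(e^{i\phi}X)=(e^{i\phi}X+e^{-i\phi}X^*)/2$ is scalar for some $\phi$. A preliminary remark is that the hypothesis is invariant under the rotations $X\mapsto e^{i\phi}X$: the diagonal unitary $\mathrm{diag}(I,e^{-i\phi}I)$ conjugates the block matrix, fixes $A+B$, and preserves $\|\cdot\|_\infty$, so this freedom is available to orient $W(X)$. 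Since the converse implication is already known (Theorem \ref{thBM} with vanishing width), all the content lies in this contrapositive.

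The first step is a geometric reduction: if the width of $W(X)$ is positive, I would produce a two-dimensional subspace $\mathcal S\subset\bC^n$ whose compression $Y:=X_{\mathcal S}$ is \emph{not} normal, so that $W(Y)$ is a genuine elliptical disc with indiameter $\delta(Y)>0$. Indeed $W(X)$ then has nonempty interior, so one can choose unit vectors $h_1,h_2$ with $\langle h_1,Xh_1\rangle\neq\langle h_2,Xh_2\rangle$; on their span the compression is a $2\times 2$ matrix, and if it happened to be normal one perturbs $h_2$, using that normality of a $2\times 2$ compression is a positive-codimension condition while the numerical range varies continuously. This yields the required non-normal $Y$.

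The second step is a $2\times 2$ base case together with a padding argument. For a non-normal $Y\in\bM_2$, the sharpness of Theorem \ref{thBM} for the operator norm, illustrated by the displayed $2\times 2$ block example and, in the invertible case, by reading Hayashi's Theorem \ref{thHa} contrapositively, provides strictly positive $A_1,B_1\in\bM_2$ realising a strict gain $g:=\left\|\begin{bmatrix}A_1&Y\\Y^*&B_1\end{bmatrix}\right\|_\infty-\|A_1+B_1\|_\infty>0$. Writing $X=\begin{pmatrix}Y&C\\D&Z\end{pmatrix}$ with respect to $\mathcal S\oplus\mathcal S^\perp$, I would set $A=A_1\oplus tI$, $B=B_1\oplus tI$, and take $t$ large enough that the Schur complement of the padding block $\begin{bmatrix}tI&Z\\Z^*&tI\end{bmatrix}$ in the full matrix $M:=\begin{bmatrix}A&X\\X^*&B\end{bmatrix}$ remains positive; since that complement differs from $\begin{bmatrix}A_1&Y\\Y^*&B_1\end{bmatrix}$ by an $O(1/t)$ term, a finite threshold $t_0$ suffices. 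As $\begin{bmatrix}A_1&Y\\Y^*&B_1\end{bmatrix}$ is a principal submatrix of $M$, one has $\|M\|_\infty\ge\|A_1+B_1\|_\infty+g$, while $\|A+B\|_\infty=\max(\|A_1+B_1\|_\infty,2t)$; hence a violation follows as soon as $t$ can be chosen with $t_0\le t<\tfrac12(\|A_1+B_1\|_\infty+g)$.

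The main obstacle is exactly the existence of such a $t$, that is, the interference of the coupling blocks $C,D$ (and $Z$) between $\mathcal S$ and $\mathcal S^\perp$ with the small gain produced on $\mathcal S$. The positivity threshold $t_0$ grows with $\|C\|_\infty,\|D\|_\infty,\|Z\|_\infty$ and with the reciprocal of the slack of the $2\times 2$ completion, whereas the operations that enlarge the slack (scaling $A_1,B_1$ up, or adding a scalar) deplete the gain $g$ at essentially the same rate; the same obstruction blocks the alternative strategy of showing that the hypothesis descends to the compression $Y$. Resolving this is the heart of the matter. I would attempt either to select $\mathcal S$ adapted to $X$ so that $C,D$ are small (ideally making $\mathcal S$ nearly reducing), or to replace the block-diagonal padding by a non-block-diagonal completion whose corner blocks $A_{12},B_{12}$ are engineered to cancel the Schur-complement contribution of $C,D$. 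The delicate examples with $A+B=kI$ of Hayashi and of [G] cited above indicate that this coupling cannot be neglected, which is precisely why the statement remains a conjecture.
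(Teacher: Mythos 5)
This statement is labelled a \emph{conjecture} in the paper and is presented there as an open problem; the paper offers no proof, so there is nothing to compare your argument against except the surrounding discussion (Hayashi's Theorem~\ref{thHa}, the examples of Gumus et al.\ with $A+B=kI$, and the sharpness examples after Theorem~\ref{thBM}). Your proposal is therefore best judged on its own terms, and by your own admission it is not a proof: the third step is left unresolved, and that unresolved step is precisely where all the difficulty of the conjecture lives.

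Concretely, the gap is this. The hypothesis of the conjecture quantifies over all positive completions of the \emph{full} $n\times n$ off-diagonal block $X$, and you need to contradict it by exhibiting one bad completion. Your plan manufactures the bad completion on a two-dimensional subspace $\mathcal S$ where the compression $Y=X_{\mathcal S}$ is non-normal, and then pads by $tI$ on $\mathcal S^\perp$. But positivity of the full matrix forces $t\ge t_0$, where $t_0$ is governed by the coupling blocks of $X$ between $\mathcal S$ and $\mathcal S^\perp$, while the violation $\|M\|_\infty>\|A+B\|_\infty$ forces $2t<\|A_1+B_1\|_\infty+g$; nothing in the construction guarantees these two constraints are compatible, and the natural remedies (rescaling $A_1,B_1$, adding scalars) increase the slack and the gain $g$ at the same rate, so the window never provably opens. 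Equivalently, the hypothesis does not obviously descend to compressions: a positive completion of $Y$ on $\mathcal S$ need not extend to a positive completion of $X$ with controlled $\|A+B\|_\infty$. A secondary, repairable looseness is the first step (existence of a non-normal $2\times2$ compression when $W(X)$ is not a segment) and the $2\times2$ base case when $Y$ is singular, where Hayashi's theorem does not directly apply and you would need an explicit example in the spirit of the $b=0$ instance of the paper's equality example. Until the padding obstruction is overcome, the statement remains, as the paper says, a conjecture.
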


\vskip 5pt
 If we replace the operator norm by the Frobenius (or Hilbert-Schmidt) norm $\|\cdot\|_2$ then the following characterization holds.

\vskip 5pt
\begin{prop}\label{prop1} Let $X\in \bM_n$. Then, 
the inequality 
$$
\left\| \begin{bmatrix} A &X \\
X^* &B\end{bmatrix}\right\|_{2}  \le \| A+B \|_{2}
$$
holds for all positive block-matrix with $X$ as off-diagonal block if and only if $X$ is normal.
 \end{prop}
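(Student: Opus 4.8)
The plan is to reduce the whole statement to a single scalar trace inequality and then analyze when it holds. Write $H=\begin{bmatrix} A & X \\ X^* & B\end{bmatrix}\ge 0$. Since $H$ is Hermitian, $\|H\|_2^2=\mathrm{Tr}\,H^2=\mathrm{Tr}(A^2)+\mathrm{Tr}(B^2)+2\,\mathrm{Tr}(X^*X)=\|A\|_2^2+\|B\|_2^2+2\|X\|_2^2$, while $\|A+B\|_2^2=\|A\|_2^2+\|B\|_2^2+2\,\mathrm{Tr}(AB)$. Hence for each admissible pair $(A,B)$ the inequality $\|H\|_2\le\|A+B\|_2$ is equivalent to $\mathrm{Tr}(AB)\ge \|X\|_2^2$. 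So the proposition becomes: the inequality $\mathrm{Tr}(AB)\ge\|X\|_2^2$ holds for every $A,B$ making $H$ positive if and only if $X$ is normal.

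Next I would remove the variable $B$. For fixed invertible $A>0$, the positivity of $H$ is equivalent, by the Schur complement, to $B\ge X^*A^{-1}X$, and since $\mathrm{Tr}(A(B-X^*A^{-1}X))\ge 0$ for $A\ge 0$, the quantity $\mathrm{Tr}(AB)$ is minimized at $B=X^*A^{-1}X$, giving value $g(A):=\mathrm{Tr}(AX^*A^{-1}X)$. (A short limiting argument handles non-invertible $A$.) Thus the inequality holds for all admissible $(A,B)$ if and only if $g(A)\ge\|X\|_2^2$ for all $A>0$; note $g(I)=\mathrm{Tr}(X^*X)=\|X\|_2^2$, so $I$ must be a global minimizer of $g$.

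For the ``if'' direction I would parametrize $A=e^{S}$ with $S$ Hermitian and set $h(t)=g(e^{tS})=\mathrm{Tr}(e^{tS}X^*e^{-tS}X)$. Working in an orthonormal basis diagonalizing $S=\mathrm{diag}(s_1,\dots,s_n)$, a direct entrywise computation gives the clean formula $h(t)=\sum_{i,j}e^{t(s_i-s_j)}|X_{ji}|^2$, which is a nonnegative combination of exponentials, hence convex in $t$. Its derivative at $0$ is $h'(0)=\sum_{i,j}(s_i-s_j)|X_{ji}|^2=\mathrm{Tr}\!\big(S(X^*X-XX^*)\big)$, since $\sum_j|X_{ji}|^2=(X^*X)_{ii}$ and $\sum_i|X_{ji}|^2=(XX^*)_{jj}$. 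When $X$ is normal this vanishes, so $t=0$ is the minimum of the convex function $h$, whence $h(1)=g(A)\ge h(0)=\|X\|_2^2$ for every $A>0$. The crucial point — and what I expect to be the only delicate step — is precisely this use of normality: it is what forces row norms to equal column norms and kills $h'(0)$, turning the obvious convexity into a genuine global minimum at $I$.

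For the ``only if'' direction I would argue contrapositively with the same formula. If $X$ is not normal then $X^*X-XX^*\neq 0$, so one may choose $S$ Hermitian with $\mathrm{Tr}(S(X^*X-XX^*))<0$ (e.g. a suitable sign of $S=X^*X-XX^*$). Then $h'(0)<0$, so for small $t>0$ we get $g(e^{tS})=h(t)<\|X\|_2^2$; taking $A=e^{tS}>0$ and $B=X^*A^{-1}X$ produces a positive block matrix with $\mathrm{Tr}(AB)<\|X\|_2^2$, i.e. $\|H\|_2>\|A+B\|_2$, contradicting the hypothesis. This completes the equivalence.
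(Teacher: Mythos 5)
Your proof is correct, and it takes a genuinely different route from the paper's in both directions, even though both arguments begin with the same reduction of $\|H\|_2\le\|A+B\|_2$ to the trace inequality $\mathrm{Tr}\,X^*X\le \mathrm{Tr}\,AB$. For the ``if'' direction the paper factors $X=A^{1/2}KB^{1/2}$ with $K$ a contraction and invokes the lemma that a normal matrix $N=ST$ satisfies $\|N\|\le\|TS\|$ for every symmetric norm; applied to the Frobenius norm and combined with $K^*K\le I$ this gives $\mathrm{Tr}\,X^*X\le\mathrm{Tr}\,AB$ in two lines. You instead eliminate $B$ by the Schur complement, reduce to showing $\mathrm{Tr}(AX^*A^{-1}X)\ge\|X\|_2^2$ for $A>0$, and get this from the convexity of $h(t)=\sum_{i,j}e^{t(s_i-s_j)}|X_{ji}|^2$ together with $h'(0)=\mathrm{Tr}\bigl(S(X^*X-XX^*)\bigr)=0$ under normality; this is self-contained and avoids the $\|ST\|\le\|TS\|$ lemma. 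For the ``only if'' direction the paper exhibits the explicit witness $\begin{bmatrix}|X^*|&X\\ X^*&|X|\end{bmatrix}$ and uses strict Cauchy--Schwarz in $(\bM_n,\|\cdot\|_2)$ to see that $\mathrm{Tr}\,|X|\,|X^*|<\|X\|_2^2$ when $|X|\ne|X^*|$; your counterexample is instead perturbative, obtained by pushing $A$ slightly off the identity in the direction $S$ with $\mathrm{Tr}\bigl(S(X^*X-XX^*)\bigr)<0$. The paper's version is more explicit and shorter given its toolbox; yours is more unified (both directions come from the single function $h$) and has the nice by-product of identifying normality as exactly the first-order condition for $A=I$ to minimize $A\mapsto\mathrm{Tr}(AX^*A^{-1}X)$. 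Your two technical shortcuts — that $\mathrm{Tr}(AB)$ is minimized at $B=X^*A^{-1}X$ because $\mathrm{Tr}$ of a product of positive matrices is nonnegative, and the limiting argument for singular $A$ — are both sound.
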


\vskip 5pt
\begin{proof} Suppose that $X$ is normal. To prove the inequality, squaring both side,  it suffices to establish the trace inequality
\begin{equation}\label{tr}
{\mathrm{Tr\,}} X^*X \le {\mathrm{Tr\,}} AB.
\end{equation}
Note that $X=A^{1/2}KB^{1/2}$, for some contraction $K$. Recall that, for all symmetric norms on $\bM_n,$ and any normal matrix $N\bM_n$, decomposed as $N=ST$, we have
$\| N\| \le \| TS \|$. Therefore
$$
\| X\| = \| A^{1/2}KB^{1/2}\| \le\| KB^{1/2}A^{1/2}\|.
$$
Squaring this inequality with the Frobenius norm yields the desired  inequality \eqref{tr}.

Suppose that $X$ is nonnormal, and note
that
$
 \begin{bmatrix} |X^*|&X \\
X^* &|X|\end{bmatrix}
$ is positive semidefinite and satisfies 
$$
\left\| \begin{bmatrix} |X^*|&X \\
X^* &|X|\end{bmatrix}
\right\|_2^2= 4 \| |X| \|_2^2
$$
while
$$
\| |X^*| + |X| \|_2^2= 2 \| |X| \|_2^2 + 2 {\mathrm{Tr\,}} |X| |X^*|
$$
In the Hilbert space $(\bM_n, \|\cdot\|_2)$, the assumption
$$
\| |X| \|_2=\| |X^*| \|_2,\quad |X| \neq |X^*|
$$
ensures strict inequality in the Cauchy-Schwarz inequality
$$
{\mathrm{Tr\,}} |X| |X^*| < \| X\|_2^2.
$$
Therefore 
$$
\| |X^*| + |X| \|_2^2 < \left\| \begin{bmatrix} |X^*|&X \\
X^* &|X|\end{bmatrix}
\right\|_2^2
$$
and this completes the proof.
\end{proof}

\vskip 5pt
Proposition \ref{prop1} suggests a question: for which $p\in[1,\infty]$, the schatten $p$-norm inequality
$$
\left\| \begin{bmatrix} A &N \\
N^* &B\end{bmatrix}\right\|_p  \le \| A+B \|_p
$$
holds for any positive partitioned matrices with a normal off-diagonal block $N$ ?

\vskip 5pt
\begin{cor}\label{cor01}  Let $H,K,X\in\bM_n$ be Hermitian. If $X$ is invertible and  $HK$ is a scalar perturbation of a contraction, then,
$$
\left\| XH^2X+ X^{-1}K^2X^{-1} \right\|_{\infty}  \le \left\| HX^2H+ KX^{-2}K  \right\|_{\infty}  + 1.
$$
\end{cor}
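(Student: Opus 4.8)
The plan is to reduce the statement to the rearrangement inequality of Corollary~\ref{cor0} (equivalently, to the $j=0$ case of Theorem~\ref{thinner}) by a judicious choice of the two matrices to which it is applied. Since $H$, $K$, $X$ and hence $X^{-1}$ are all Hermitian, I would set
\[
A := HX, \qquad B := KX^{-1},
\]
two (generally non-Hermitian) matrices in $\bM_n$. A direct computation using $H^*=H$, $K^*=K$, $X^*=X$ and $(X^{-1})^*=X^{-1}$ then gives
\[
A^*A + B^*B = XH^2X + X^{-1}K^2X^{-1}, \qquad
AA^* + BB^* = HX^2H + KX^{-2}K,
\]
and, crucially, $AB^* = HX\,X^{-1}K = HK$.

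With these identifications, Corollary~\ref{cor0} applied at $j=0$ reads
\[
\lambda_1\!\left( XH^2X + X^{-1}K^2X^{-1}\right)
\le \lambda_1\!\left( HX^2H + KX^{-2}K\right) + \delta_2(HK).
\]
Both matrices involved are positive semidefinite, so their largest eigenvalues coincide with their operator norms; thus this is exactly the asserted inequality, except that the additive constant is $\delta_2(HK)$ rather than $1$.

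The final step is therefore to bound $\delta_2(HK) \le 1$. Here I would invoke the hypothesis that $HK$ is a scalar perturbation of a contraction, i.e. $HK = \lambda I + C$ with $\lambda\in\bC$ and $\|C\|_{\infty} \le 1$, which gives ${\mathrm{dist}}(HK,\bC I) \le 1$. Combining this with the inequality $\delta_2(\cdot) \le {\mathrm{dist}}(\cdot,\bC I)$ established inside the proof of Corollary~\ref{pertu1} (for any two-dimensional compression onto $\mathcal S$ one has ${\mathrm{dist}}(T,\bC I)\ge {\mathrm{dist}}(T_{\mathcal S},\bC I_{\mathcal S}) \ge \delta(W(T_{\mathcal S}))$, and one takes the supremum over $\mathcal S$) yields $\delta_2(HK)\le {\mathrm{dist}}(HK,\bC I)\le 1$, completing the argument.

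I do not expect a genuine obstacle: the whole content is already packaged in Corollary~\ref{cor0}. The only points demanding care are the purely algebraic verification of the three block identities (where Hermiticity of all of $H,K,X,X^{-1}$ is used to identify the adjoints), and the observation that $\delta_2$ is dominated by the distance to the scalar matrices, so that the \emph{scalar perturbation of a contraction} hypothesis delivers precisely the constant $1$.
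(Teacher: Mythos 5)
Your proof is correct and follows essentially the same route as the paper: the authors likewise apply Corollary \ref{cor0} with $j=0$ to $A=HX$, $B=KX^{-1}$ (so that $A^*A+B^*B=XH^2X+X^{-1}K^2X^{-1}$, $AA^*+BB^*=HX^2H+KX^{-2}K$, $AB^*=HK$) and then observe that each two-dimensional compression $(HK)_{\mathcal S}$ is a scalar perturbation of a contraction, forcing $\delta_2(HK)\le 1$. Your explicit intermediate bound $\delta_2(HK)\le{\mathrm{dist}}(HK,\bC I)\le 1$ is exactly the mechanism used in the proof of Corollary \ref{pertu1}, so nothing is missing.
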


\vskip 5pt
\begin{proof} We apply Corollary \ref{cor0} with $j=0$ and $A=HX$, $B=KX^{-1}$, to get
$$
\left\| XH^2X+ X^{-1}K^2X^{-1} \right\|_{\infty}  \le \left\| HX^2H+ KX^{-2}K  \right\|_{\infty}  + \delta_2(HK)
$$
Since $(HK)_{\mathcal{S}}$ is a scalar perturbation of a contraction acting on a space of dimension 2, necessarily $\delta_2(HK)\le 1$.
\end{proof}

\vskip 5pt
For a normal operator,  the numerical range is the convex hull of the spectrum. For a non normal operator $X$,  several lower bounds for the indiameter
of $W(X)$ can be obtained from the left and right modulus $|X^*|$ and $|X|$.

\vskip 5pt
\begin{cor}\label{corcomp} Let $X \in \bM_n$ and let $f(t)$ and $g(t)$ are two nonnegative functions defined on $[0,\infty)$ such that $f(t)g(t)=t^2$. Then,
$$
\delta_2(X) \ge \left\|  f(|X|)+g(|X|) \right\|_{\infty} - \left\|   f(|X^*|) + g(|X|) \right\|_{\infty}.
$$
\end{cor}

\vskip 5pt
\begin{proof} First, observe that we have a function $h(t)$ defined on $[0,\infty)$ such that
\begin{equation}\label{h1}
f(t)=th^2(t^{1/2}), \quad g(t)=th^{-2}(t^{1/2}),
\end{equation}
and $h(t)>0$ for all $t\ge0$ (we may, for instance, set $h(0)=1$). Hence $h(T)$ is invertible for any positive $T$, and from the polar decomposition
$$X=|X^{*}|^{1/2} U |X|^{1/2}$$
with a unitary factor $U$, we infer the factorization
$$
X=|X^{*}|^{1/2}h(|X^{*}|^{1/2}) U |X|^{1/2}h^{-1}(|X|^{1/2}).
$$
Thus $X=AB^*$ where
$
A=|X^{*}|^{1/2}h(|X^{*}|^{1/2}) $  and $B^*=U |X|^{1/2}h^{-1}(|X|^{1/2})$.
Therefore Corollary \ref{cor0} yields
$$
\left\| |X^{*}|h^2(|X^{*}|^{1/2}) +U |X|h^{-2}(|X|^{1/2})U^* \right\|_{\infty} \le
\left\| |X|h^2(|X|^{1/2}) +|X|h^{-2}(|X|^{1/2})U^* \right\|_{\infty} 
+\delta_2(X)
$$
Using \eqref{h1} and the fact that $\varphi(|X^*|)=U\varphi(|X|)U^*$ for any function $\varphi(t)$ defined on $[0,\infty)$, the proof is complete.
\end{proof}

\vskip 5pt
The following special case shows that Corollary \ref{corcomp} is rather optimal.

\vskip 5pt
\begin{cor}\label{corsimple} If $X \in \bM_n$ has a numerical range of inradius  $\varepsilon(X)$,  then, for all $a\in\bC$,
$$
\varepsilon(X) \ge \left\|  X-aI\right\|_{\infty} - \left\| \frac{ |X-aI| +|X^*-\overline{a}I|}{2} \right\|_{\infty}.
$$
If $X\in\bM_2$ and $a=\tau$ is the normalized trace of $X$, then this inequality is an equality.
\end{cor}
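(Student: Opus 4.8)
The plan is to derive the inequality directly as a specialization of Corollary \ref{corcomp}, and then to settle the equality case by an explicit two-dimensional computation.

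For the inequality, I would set $Y=X-aI$ and apply Corollary \ref{corcomp} to $Y$ with the admissible pair $f(t)=g(t)=t$ (so that $f(t)g(t)=t^2$). Since $\||Y|\|_{\infty}=\|Y\|_{\infty}$, this reads
$$
\delta_2(Y)\ge \left\| 2|Y| \right\|_{\infty} - \left\| |Y^*|+|Y| \right\|_{\infty} = 2\|Y\|_{\infty} - \left\| |Y|+|Y^*| \right\|_{\infty}.
$$
Next I would use the translation invariance of the numerical range: $W(Y)=W(X)-a$, so the inscribed discs of $W(Y)$ and $W(X)$ coincide up to translation, giving $\varepsilon(Y)=\varepsilon(X)$ and hence $\delta(Y)=2\varepsilon(X)$. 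Combined with the elementary bound $\delta_2(Y)\le\delta(Y)$ (a compression shrinks the numerical range, hence its inscribed discs, as already noted in the text), dividing the displayed inequality by $2$ yields
$$
\varepsilon(X)=\frac{\delta(Y)}{2}\ge\frac{\delta_2(Y)}{2}\ge \|Y\|_{\infty} - \left\| \frac{|Y|+|Y^*|}{2}\right\|_{\infty},
$$
which is the claim once $Y=X-aI$, $|Y|=|X-aI|$ and $|Y^*|=|X^*-\overline{a}I|$ are substituted.

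For the equality case I would take $n=2$ and $a=\tau=\tfrac12\mathrm{Tr}\,X$, so that $Y=X-\tau I$ is traceless. Two facts then turn both inequalities above into equalities. First, when $n=2$ the only two-dimensional subspace is the whole space, so $\delta_2(Y)=\delta(Y)$ and the middle step is an equality. Second, I would evaluate the right-hand side for a traceless $2\times2$ matrix. Writing $s_1\ge s_2$ for the singular values of $Y$, a short computation shows $Y^*Y+YY^*=(s_1^2+s_2^2)I$: for a traceless $2\times2$ matrix the $(1,2)$-entries of $Y^*Y$ and $YY^*$ cancel (scalars commute), and the diagonals agree. Feeding this into the $2\times2$ square-root identity $|Y|=(Y^*Y+s_1s_2\,I)/(s_1+s_2)$ together with its analogue for $|Y^*|$ gives $|Y|+|Y^*|=(s_1+s_2)I$, whence $\|Y\|_{\infty}-\tfrac12\||Y|+|Y^*|\|_{\infty}=s_1-\tfrac12(s_1+s_2)=\tfrac12(s_1-s_2)$. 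On the other hand, by the elliptical range theorem $W(Y)$ is an elliptical disc with foci the eigenvalues $\pm\mu$ of $Y$ and minor axis $\sqrt{\mathrm{Tr}(Y^*Y)-2|\mu|^2}=\sqrt{s_1^2+s_2^2-2s_1s_2}=s_1-s_2$ (using $|\mu|^2=|\det Y|=s_1s_2$); its inradius is the minor semi-axis $\tfrac12(s_1-s_2)$, so $\varepsilon(Y)=\tfrac12(s_1-s_2)$, matching the right-hand side.

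The inequality itself is routine, being essentially a reading of Corollary \ref{corcomp}; the only genuinely computational point is the equality case. There the crux is the traceless identity $Y^*Y+YY^*=(s_1^2+s_2^2)I$, which is exactly what forces $|Y|+|Y^*|$ to be a scalar matrix, and it is precisely here that the hypothesis $a=\tau$ enters: centering the numerical range at the origin makes $Y$ traceless. Without this centering $|Y|+|Y^*|$ need not be a multiple of the identity, and one should expect the sharpness to fail.
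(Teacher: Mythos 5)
Your proof is correct, and for the main inequality it is exactly the paper's argument: apply Corollary \ref{corcomp} to $X-aI$ with $f(t)=g(t)=t$ and divide by $2$, using $\varepsilon(X)=\tfrac12\delta(X-aI)\ge\tfrac12\delta_2(X-aI)$. For the equality case the paper takes a slightly different but equivalent route: it first replaces $X$ by the unitarily equivalent matrix $\begin{pmatrix}\tau&y\\x&\tau\end{pmatrix}$, so that $|X-\tau I|=\mathrm{diag}(|x|,|y|)$ and $|X^*-\overline{\tau}I|=\mathrm{diag}(|y|,|x|)$ are diagonal and both sides are evaluated directly; you instead keep a general traceless $Y$ and prove the anticommutator identity $Y^*Y+YY^*=\mathrm{Tr}(Y^*Y)I$, deduce $|Y|+|Y^*|=(s_1+s_2)I$ from the $2\times2$ square-root formula, and compute the inradius via the elliptical range theorem. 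Both arguments rest on the same underlying fact, namely that $|Y|+|Y^*|$ is a scalar matrix for traceless $Y\in\bM_2$, so the difference is cosmetic. Your version has the incidental merit of avoiding a small slip in the paper's computation, where $\max(|x|,|y|)-\tfrac12(|x|+|y|)$ is written as $\bigl||x|-|y|\bigr|$ instead of $\tfrac12\bigl||x|-|y|\bigr|$; the final equality is unaffected, since the inradius of the elliptical disc $W\bigl(X-\tau I\bigr)$ is likewise $\tfrac12\bigl||x|-|y|\bigr|$, in agreement with your value $\tfrac12(s_1-s_2)$.
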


\vskip 5pt
\begin{proof} Applying Corollary \ref{corcomp} with $X-aI$ and $f(t)=g(t)=t$ yields the inequality. If $X\in\bM_2$, then $X$ is unitarily equivalent to
$$
\begin{pmatrix}
\tau & y \\
x & \tau 
\end{pmatrix}.
$$
So
\begin{align*}
\| X -\tau I\|_{\infty}-\left\| \frac{ |X-\tau I| +|X^*-\overline{\tau}I|}{2} \right\|_{\infty}
&=
\left\|
\begin{pmatrix}
|x| & 0 \\
0 & |y| 
\end{pmatrix}
\right\|_{\infty}
-
\frac{1}{2}\left\|
\begin{pmatrix}
|x| +|y| & 0 \\
0 & |x|+|y| 
\end{pmatrix}
\right\|_{\infty} \\
&= \left| |x|-|y| \right| \\
&=\varepsilon(X)
\end{align*}
establishing the desired equality.
\end{proof}

\vskip 5pt
The special case $f(t)=g(t)=t$ in Corollary \ref{corcomp} seems important, we record it as a proposition:

\vskip 5pt
\begin{prop}\label{thcomp} The elliptical width of the numerical range  of  $X \in \bM_n$ satisfies
$$\delta_2(X)\ge 2\| X\|_{\infty}-\||X|+|X^*|\|_{\infty}. $$
\end{prop}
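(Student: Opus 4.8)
The plan is to recognize Proposition \ref{thcomp} as the special case $f(t)=g(t)=t$ of Corollary \ref{corcomp}, which is already established. First I would check the hypothesis of that corollary: with $f(t)=g(t)=t$ we have $f(t)g(t)=t^2$ on $[0,\infty)$, so Corollary \ref{corcomp} applies and gives
$$
\delta_2(X) \ge \bigl\| f(|X|)+g(|X|) \bigr\|_{\infty} - \bigl\| f(|X^*|)+g(|X|) \bigr\|_{\infty}.
$$

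Next I would simplify the two terms. On the left of the subtraction, $f(|X|)+g(|X|)=2|X|$, and since a symmetric norm depends only on the singular values we have $\| 2|X| \|_{\infty}=2\| |X| \|_{\infty}=2\|X\|_{\infty}$. On the right, $f(|X^*|)+g(|X|)=|X^*|+|X|$. Substituting these into the displayed inequality yields exactly
$$
\delta_2(X)\ge 2\|X\|_{\infty}-\||X|+|X^*|\|_{\infty},
$$
which is the assertion of the proposition.

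Should a self-contained argument be preferred to a mere pointer, I would instead invoke Corollary \ref{cor0} directly (with $j=0$). Writing the polar decomposition $X=|X^*|^{1/2}U|X|^{1/2}$ and setting $A=|X^*|^{1/2}$ and $B=|X|^{1/2}U^*$, one computes $A^*A=AA^*=|X^*|$, $BB^*=|X|$, $B^*B=U|X|U^*$, and $AB^*=X$. Corollary \ref{cor0} then reads
$$
\lambda_1\bigl(|X^*|+U|X|U^*\bigr)\le \lambda_1\bigl(|X^*|+|X|\bigr)+\delta_2(X).
$$
The one identity that makes the left-hand side collapse is $U|X|U^*=|X^*|$, valid for the polar unitary $U$; with it the left-hand side becomes $\lambda_1(2|X^*|)=2\|X\|_{\infty}$, while the first term on the right is $\||X|+|X^*|\|_{\infty}$, and rearranging finishes the proof. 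The only points deserving care are this identity $U|X|U^*=|X^*|$ and the routine fact $\||X|\|_{\infty}=\|X\|_{\infty}$; there is no genuine obstacle here, the proposition being essentially the recording of a clean special instance of a result already in hand.
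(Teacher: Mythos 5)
Your proof is correct and follows the paper's own route: the proposition is recorded there precisely as the special case $f(t)=g(t)=t$ of Corollary \ref{corcomp}, which is exactly your first paragraph. Your self-contained alternative via Corollary \ref{cor0} is also sound (the identity $U|X|U^*=|X^*|$ for the polar unitary is standard), but it is just the proof of Corollary \ref{corcomp} unrolled with $h\equiv 1$, so it is not genuinely different.
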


In particular, the inradius $\varepsilon(X)$ of the numerical range of $X$ satisfies $$\varepsilon(X)\ge \| X\|_{\infty}-\|(|X|+|X^*|)/2\|_{\infty}.$$

\vskip 5pt
\begin{remark}
Our results  still hold for operators on infinite dimensional separable Hilbert space (assuming in Corollary \ref{corcomp} that $f(t)$ and
$g(t)$ are Borel functions).
\end{remark}

\section{Around this article}

We give the proof of Theorem \ref{thBM}.

\vskip 5pt
\begin{proof} (Theorem \ref{thBM}) By using the unitary congruence implemented by $$ \begin{bmatrix} e^{i\theta}&0 \\
0 &I\end{bmatrix}$$
we see that our block matrix is unitarily equivalent to
$$
 \begin{bmatrix} A &e^{i\theta}X \\
e^{-i\theta}X^* &B\end{bmatrix}.
$$
As $W(e^{i\theta} X)=e^{i\theta}W(X)$, by choosing the adequate $\theta$ and replacing $X$ by $e^{i\theta}X$, we may and do assume that $W(X)$ lies in a strip ${\mathcal{S}}$
of width $\omega$ and parallel to the imaginary axis,
$$
{\mathcal{S}} =\left\{\, x+iy\ : \quad y\in\bR, \quad r \le x \le r + \omega\, \right\}.
$$
The projection property for the real part ${\mathrm{Re}\,} W(X) =W({\mathrm{Re}\,} X)$, then ensures that
\begin{equation}\label{proof1}
rI \le {\mathrm{Re}\,} X \le (r+\omega) I.
\end{equation}

Now we use the decomposition \cite[Corollary 2.1]{BLL1} derived from \eqref{key},
\begin{equation}\label{proof2}
\begin{bmatrix} A &X \\
X^* &B\end{bmatrix} = U
\begin{bmatrix} \frac{A+B}{2} +{\mathrm{Re}\,} X&0 \\
0 &0\end{bmatrix} U^* +
V\begin{bmatrix} 0 &0 \\
0 &\frac{A+B}{2} -{\mathrm{Re}\,} X\end{bmatrix} V^*
\end{equation}
for some unitaries $U,\,V\in  \bM_{2n}$. Note that the two matrices in the right hand side of \eqref{proof2} are positive since so are
$$
\begin{bmatrix} I &I \end{bmatrix} 
\begin{bmatrix} A &X \\
X^* &B\end{bmatrix} \begin{bmatrix} I \\I \end{bmatrix} \quad{\mathrm{and}}  
\begin{bmatrix} I &-I \end{bmatrix} 
\begin{bmatrix} A &X \\
X^* &B\end{bmatrix} \begin{bmatrix} I \\-I \end{bmatrix}. 
$$
Combining \eqref{proof1} and \eqref{proof2} yields
\begin{equation*}
\begin{bmatrix} A &X \\
X^* &B\end{bmatrix} \le U
\begin{bmatrix} \frac{A+B}{2} + (r+\omega)I&0 \\
0 &0\end{bmatrix} U^* +
V\begin{bmatrix} 0 &0 \\
0 &\frac{A+B}{2} -rI \end{bmatrix} V^*
\end{equation*}
where the two matrices of the right hand side are positive. From each Ky Fan $k$-norm, $k=1,2,\ldots, 2n$, we then have 
\begin{align*}
\left\|\begin{bmatrix} A &X \\
X^* &B\end{bmatrix}\right\|_{(k)} &\le \left\| \frac{A+B}{2} + (r+\omega)I\right\|_{(k)} + \left\| \frac{A+B}{2} - rI\right\|_{(k)}    \\
& =\left\| A+B + \omega I\right\|_{(k)}.
\end{align*}
The Ky Fan principle then guarantees that this inequality hold for all symmetric norms.
\end{proof}

\section{References of Chapter 7}

{\small
\begin{itemize}

\item[[G\!\!\!]] M.\ Gumus, J.\ Liu, S.\ Raouafi, T-Y.\ Tam,  Positive semi-definite $2\times 2$ block matrices and norm inequalities, {\it Linear Algebra Appl.}\ 551 (2018), 83--91.

\item[[H\!\!\!]] T.\ Hayashi, On a norm inequality for a positive block-matrix, {\it Linear Algebra Appl.}\ 566 (2019), 86--97.

\item[[41\!\!\!]] J.-C.\ Bourin and E.-Y.\ Lee, 
Numerical range and positive block matrices, {\it Bull.\ Aust.\ Math.\ Soc.}\ 103 (2021), no.\ 1, 69--77.

\item[[44\!\!\!]] J.-C.\ Bourin and E.-Y.\ Lee,
Eigenvalue inequalities for positive block matrices with the inradius of the numerical range,
preprint

\item[[45\!\!\!]]
J.-C. Bourin, E.-Y. Lee and M.\ Lin, On a decomposition lemma for positive semi-definite block-matrices, {\it Linear Algebra Appl.}\
437 (2012), 1906--1912.

\item[[47\!\!\!]] J.-C. Bourin, A.\ Mhanna,
Positive block matrices and numerical ranges, {\it C.\ R.\ Math.\ Acad.\ Sci.\ Paris} 355 (2017), no.\ 10, 1077--1081. 

\item[[80\!\!\!]] A.\ Mhanna, On symmetric norm inequalities and positive definite block-matrices,  {\it Math.\ Inequal.\ Appl.}\ 21 (2018), no.\ 1, 133--138.

\end{itemize}


\chapter{Block matrices and Pythagoras}

{\color{blue}\Large {\bf A Pythagorean theorem for partitioned matrices} \large{\cite{BL-PAMS}}}

\vskip 10pt\noindent
{\small 
{\bf Abstract.} We establish a Pythagorean theorem for the absolute values of the blocks of a partitioned matrix. This leads to a
series of remarkable operator inequalities. For instance, if the matrix $\bA$ is partitioned into three blocks $A,B,C$, then
$$
|\bA|^3 \ge U|A|^3U^* + V|B|^3V^*+ W|C|^3W^*,
$$
$$
\sqrt{3} |\bA| \ge  U|A|U^* + V|B|V^*+ W|C|W^*,
$$
for some isometries $U,V,W$, and
$$
\mu_4^2(\bA) \le \mu_3^2(A) +\mu_2^2(B) + \mu_1^2(C)
$$
where $\mu_j$ stands for the $j$-th singular value. Our theorem may be used to extend a result by Bhatia and Kittaneh for the
Schatten $p$-noms and to give a singular value version of Cauchy's Interlacing Theorem.

\vskip 5pt\noindent
{\it Keywords.}     Partitioned matrices, functional calculus, matrix inequalities. 
\vskip 5pt\noindent
{\it 2010 mathematics subject classification.} 15A18, 15A60,  47A30.

}

\section{Introduction}

Let $\bM_{d}$ denote the space of $d$-by-$d$ matrices. If $\bA\in\bM_{d}$, the polar decomposition holds, 
\begin{equation}\label{polar}
\bA = U |\bA|
\end{equation}
where $|\bA|\in\bM_{d}$ is positive semi-definite and $U\in\bM_{d}$ is a unitary matrix. The matrix $|\bA|$ is called the absolute
value of $\bA$, and its
eigenvalues are the singular values of $\bA$. The absolute value can be defined for $d\times d'$ matrices $\bA\in\bM_{d,d'}$ as a
positive matrix $|\bA|\in\bM_{d'}$, and the factor $U$ in \eqref{polar} is an isometry ($d\ge d'$) or a coisometry ($d<d'$).

If $\bA$ is partitioned in some number of rectangular blocks, say four blocks $A,B,C,D$, it is of interest to have a relation between
the absolute value $|\bA|$ and the absolute values of the blocks. By using the standard inner product of $\bM_{d,d'}$, we immediately
have the trace relation
$$
{\mathrm{Tr\,}} |\bA|^2 = {\mathrm{Tr\,}} |A|^2 + {\mathrm{Tr\,}} |B|^2+ {\mathrm{Tr\,}} |C|^2 +{\mathrm{Tr\,}} |D|^2.
$$
This note aims to point out a much stronger Pythagorean theorem, Theorem \ref{th-pyth}, and several consequences. This result holds
for many partitionings of $\bA$, not only when $A$ is a block matrix partitioned into $p\times q$ blocks. For instance, one may
consider the matrix
$$
\bA=\begin{pmatrix} a_1&a_2 &b_1 &b_2 &b_3 \\ a_3&a_4 &b_4 &b_5 &b_6 \\ a_5&a_6 &c_1 &c_2 &d_1 \\ a_7&a_8 &c_3 &c_4 &d_2 \\
a_9&a_{10} &c_5 &c_6 &d_3
\end{pmatrix}
$$
partitioned into four obvious blocks $A,B,C,D$.

If $\bA$ is partitioned into $r$ blocks $A_k\in\bM_{n_k,m_k}$, we write
\begin{equation}\label{part}
\bA=\bigcup_{k=1}^r A_k =A_1 \cup \cdots \cup A_r
\end{equation}
where we can use the $=$ sign if one considers  $A_k$ not only as an element of $\bM_{n_k,m_k}$ but also as a submatrix of
$\bA$ with its position in $\bA$. 

We say that the partitioning \eqref{part} is column compatible, or that $\bA$ is partitioned into column compatible blocks if for all
pairs of indexes $k,l$, either $A_k$ and $A_l$ lie on the same set of columns of $\bA$, or $A_k$ and $A_l$ lie on two disjoint sets
of columns of $\bA$. Similarly, \eqref{part} is row compatible, if for all pairs of indexes $k,l$, either $A_k$ and $A_l$ lie on the
same set of rows of $\bA$, or$A_k$ and $A_l$ lie on two disjoint sets of rows of $\bA$.

Our Pythagorean Theorem \ref{th-pyth} will be stated for row or column compatible blocks. An application is a Theorem of Bhatia and
Kittaneh for the Schatten $p$-norms (Corollary \ref{corBK}). Another application is an inequality for the singular values of
compression onto hyperplanes. A matrix $A\in\bM_d$ is an operator on $\bC^d$. Given a hyperplane ${\mathcal{S}}$ of $\bC^d$, we have
a unit vector $h$ such that ${\mathcal{S}}
=h^{\perp}$, that is $x\in{\mathcal{S}}\iff \langle h, x\rangle =h^*x=0$. The compression $ A_{\mathcal{S}}$ of $A$ onto
${\mathcal{S}}$ is the operator acting on ${\mathcal{S}}$ defined as the restriction of $EA$ to ${\mathcal{S}}$ where $E$ stands for
the (orthogonal) projection onto ${\mathcal{S}}$. Theorem \ref{th-pyth} entails a bound for the singular values of $A_{\mathcal{S}}$
in terms of those of $A$. These results are given in Section 3; we state a special case in the following corollary. Let $\mu_j$
denote the $j$-th singular value arranged in nonincreasing order.

\vskip 5pt
\begin{cor}\label{normal} 
Let $A\in\bM_d$ be a normal matrix and let ${\mathcal{S}}$ be a hyperplane of $\bC^d$ orthogonal to the unit vector $h$. Set $\beta=
\| Ah\|^2 -|\langle h,Ah\rangle|^2$. Then,
for $j=1,\ldots, d-1$,
$$
 \mu_j^2(A)\ge \mu_{j}^2(A_{\mathcal{S}}) \ge \mu^2_{j+1}(A) -\beta.
$$
\end{cor}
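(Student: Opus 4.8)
The plan is to realize the compression $A_{\mathcal{S}}$ as a block of $A$ and then feed the Pythagorean theorem (Theorem \ref{th-pyth}) into Weyl's inequalities. First I would decompose $\bC^d={\mathcal{S}}\oplus \bC h$ and write $A$ in the corresponding block form
$$
A=\begin{pmatrix} A_{\mathcal{S}} & b \\ c^* & a\end{pmatrix},
$$
where $A_{\mathcal{S}}\in\bM_{d-1}$ is the compression, $a=\langle h,Ah\rangle$, $b=EAh$ and $c=EA^*h$ (so that $c^*=h^*AE$). I would then partition $\bA=A$ into the two column compatible blocks $A_1=\begin{pmatrix} A_{\mathcal{S}} \\ c^*\end{pmatrix}$, sitting on the $d-1$ columns spanning ${\mathcal{S}}$, and $A_2=\begin{pmatrix} b \\ a\end{pmatrix}=Ah$, the remaining column; these lie on disjoint column sets, so the hypotheses of Theorem \ref{th-pyth} hold (in fact this plain column partition already reduces to the decomposition Lemma \ref{BL-lemma}). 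It produces an isometry $V_1\in\bM_{d,d-1}$ and a unit vector $v\in\bC^d$ with
$$
|A|^2=V_1|A_1|^2V_1^* + \|Ah\|^2\,vv^*.
$$

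Next I would expand $|A_1|^2=A_{\mathcal{S}}^*A_{\mathcal{S}}+cc^*=|A_{\mathcal{S}}|^2+cc^*$ and set $P:=V_1(cc^*)V_1^*+\|Ah\|^2 vv^*\ge 0$, which has rank at most $2$, so that
$$
|A|^2=V_1|A_{\mathcal{S}}|^2V_1^* + P.
$$
This is where normality enters: since $\|A^*h\|=\|Ah\|$, while $A^*h=c+\bar a h$ and $Ah=b+ah$ with $b,c\perp h$, one gets $\|c\|^2=\|Ah\|^2-|a|^2=\beta$. Because $V_1$ is an isometry, $V_1|A_{\mathcal{S}}|^2V_1^*$ has eigenvalues $\mu_1^2(A_{\mathcal{S}}),\dots,\mu_{d-1}^2(A_{\mathcal{S}})$ together with a single extra $0$, hence $\lambda_j(V_1|A_{\mathcal{S}}|^2V_1^*)=\mu_j^2(A_{\mathcal{S}})$ for $1\le j\le d-1$.

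The upper bound is then immediate from $P\ge 0$: eigenvalue monotonicity gives $\mu_j^2(A)=\lambda_j(|A|^2)\ge \lambda_j(V_1|A_{\mathcal{S}}|^2V_1^*)=\mu_j^2(A_{\mathcal{S}})$. For the lower bound I would bound $\lambda_2(P)$: written as $xx^*+yy^*$ with $\|x\|^2=\|c\|^2=\beta$ and $\|y\|^2=\|Ah\|^2$, its smaller nonzero eigenvalue is at most the smallest diagonal entry of the associated $2\times2$ Gram matrix, so $\lambda_2(P)\le \min(\beta,\|Ah\|^2)=\beta$. Applying Weyl's inequality $\lambda_{j+1}(Y+Z)\le\lambda_j(Y)+\lambda_2(Z)$ with $Y=V_1|A_{\mathcal{S}}|^2V_1^*$ and $Z=P$ yields $\mu_{j+1}^2(A)\le \mu_j^2(A_{\mathcal{S}})+\beta$, which is the claimed $\mu_j^2(A_{\mathcal{S}})\ge \mu_{j+1}^2(A)-\beta$.

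The main obstacle will be the bookkeeping rather than any deep step: correctly checking column compatibility so that Theorem \ref{th-pyth} applies to this partition, and the identity $\|c\|^2=\beta$, which is precisely the point where normality is indispensable (for a general $A$ one only controls $\|b\|^2=\|Ah\|^2-|a|^2$, not $\|c\|^2$). Once these two facts are in place, the rank-two structure of $P$ and Weyl's inequalities finish both directions cleanly.
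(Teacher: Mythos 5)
Your proof is correct and follows essentially the same route as the paper: the paper deduces this corollary from the more general Corollary \ref{hyper}, which it proves by partitioning $A$ into the three blocks $A_{\mathcal{S}}$, the border row $c^*$ and the last column, and then applying the Weyl-type Corollary \ref{cor-sing} with indices $(j-1,0,1)$ so that the column contributes $\mu_2^2=0$ and the row contributes $\|c\|^2=\|A^*h\|^2-|\langle h,Ah\rangle|^2$, which equals $\beta$ by normality. Your two-block column partition followed by peeling off $cc^*$ as part of a rank-two positive perturbation $P$ with $\lambda_2(P)\le\beta$ is the same computation in slightly different packaging (your Gram-matrix bound on $\lambda_2(P)$ is itself an instance of Weyl's inequality), and all the steps check out.
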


\vskip 5pt
We discuss the case of four and five blocks in Section 4. For four blocks, our Pythagorean theorem entails an interesting inequality
stated in the next corollary.

\vskip 5pt
\begin{cor}\label{revtriangle} Let $\bA\in\bM_{d,d'}$ be partitioned into four blocks $A, B, C, D$. Then, there exist some isometries
$U,V,W,X$ of suitable sizes such that
$$
2|\bA| \ge U|A|U^* + V|B|V^* + W|C|W^* + X|D|X^*.
$$
\end{cor}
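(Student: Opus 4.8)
The plan is to reduce the statement to the Pythagorean identity together with two classical facts: the submultiplicativity of singular values and the min--max characterization of the unitary-orbit order. First I would apply the Pythagorean theorem (Theorem~\ref{th-pyth}), if necessary iterated over the two steps of the $2\times2$ partition (splitting $\bA$ into its two block-rows and then each block-row into its two blocks), to produce isometries $V_1,\dots,V_4$ with $|\bA|^2=\sum_{k=1}^4 V_k|A_k|^2V_k^*$, where $A_1,\dots,A_4$ denote $A,B,C,D$. Setting $B_k:=V_k|A_k|V_k^*$ and using that each $V_k$ is an isometry while $t\mapsto\sqrt t$ vanishes at $0$, one gets $B_k=(V_k|A_k|^2V_k^*)^{1/2}\ge 0$, hence $B_k^2=V_k|A_k|^2V_k^*$ and
\[
\sum_{k=1}^4 B_k^2=|\bA|^2 .
\]
In this way the entire problem is recast in terms of positive matrices $B_1,\dots,B_4\in\bM_{d'}$ whose squares sum to $|\bA|^2$.

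Next I would assemble these into the row matrix $R=(B_1,B_2,B_3,B_4)\in\bM_{d',4d'}$ and the column matrix $J\in\bM_{4d',d'}$ obtained by stacking four copies of the identity $I_{d'}$. The crux is the pair of elementary identities $RR^*=\sum_k B_k^2=|\bA|^2$ and $RJ=\sum_k B_k$, together with $\|J\|_\infty=\sqrt4=2$. From $RR^*=|\bA|^2$ I read off $\mu_j(R)^2=\lambda_j(|\bA|^2)=\mu_j(\bA)^2$, so $\mu_j(R)=\lambda_j(|\bA|)$ for every $j$. The submultiplicativity $\mu_j(RJ)\le\mu_j(R)\,\|J\|_\infty$ then yields
\[
\mu_j\!\left(\sum_{k=1}^4 B_k\right)\le 2\,\lambda_j(|\bA|),\qquad j=1,\dots,d'.
\]
Since $\sum_k B_k$ is positive its singular values coincide with its eigenvalues, so this is precisely the eigenvalue domination $\lambda_j\!\left(\sum_k B_k\right)\le\lambda_j(2|\bA|)$ for all $j$.

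Finally I would invoke the min--max principle: eigenvalue domination between two Hermitian matrices is equivalent to the existence of a unitary $\Omega\in\bM_{d'}$ with $\sum_k B_k\le\Omega\,(2|\bA|)\,\Omega^*$. Conjugating by $\Omega^*$ gives
\[
2|\bA|\ge\sum_{k=1}^4(\Omega^*V_k)\,|A_k|\,(\Omega^*V_k)^*,
\]
and each $U_k:=\Omega^*V_k$ is again an isometry, being a unitary times an isometry. Relabelling $U_1,U_2,U_3,U_4$ as $U,V,W,X$ produces the asserted inequality.

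I expect the only genuine subtlety to be the justification of the Pythagorean input for an arbitrary four-block partition: for the standard $2\times2$ partition the blocks are simultaneously row- and column-compatible, so Theorem~\ref{th-pyth} applies directly or after one iteration, and this is the step that consumes all the structure of the partition. Everything afterward is formal, and the identical argument carried out with $r$ compatible blocks and $\|J\|_\infty=\sqrt r$ reproduces the constant $\sqrt r$ of the abstract (in particular $\sqrt3$ for three blocks and $2$ for four).
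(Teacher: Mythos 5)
Your argument is correct, but it reaches the conclusion by a genuinely different route than the paper. The paper deduces the corollary from Corollary \ref{cor-four2} with $p=1$: starting from the four-block Pythagorean identity it applies the unitary-orbit Jensen inequality of Chapter 3 (Theorem \ref{T-2.1}, for the monotone concave function $t\mapsto\sqrt{t}$) to the average $\tfrac14\sum_k V_k|A_k|^2V_k^*$, which yields $|\bA|/2\ge \Lambda\bigl(\tfrac14\sum_k V_k|A_k|V_k^*\bigr)\Lambda^*$ for a single unitary $\Lambda$. You instead encode $\sum_k B_k$ as the product $RJ$ with $RR^*=|\bA|^2$ and $\|J\|_\infty=2$, and use the elementary singular-value bound $\mu_j(RJ)\le\mu_j(R)\|J\|_\infty$ (equivalently, $RJJ^*R^*\le 4RR^*$ and Weyl monotonicity) before converting eigenvalue domination into a unitary conjugation via min--max. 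This bypasses the functional-calculus machinery entirely, is self-contained once the Pythagorean identity is granted, and, as you observe, gives the constant $\sqrt r$ for $r$ blocks in one stroke; what the paper's route buys is the whole family of inequalities for $f(t)=t^{p/2}$, $p\gtrless 2$, of which the present corollary is only the case $p=1$.

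One point to tighten: the corollary is stated for an \emph{arbitrary} partition of $\bA$ into four blocks, and such partitions need not be row or column compatible, nor of grid type (see the $5\times 5$ example in the introduction of the chapter). Iterating the two-block step over "block-rows then blocks" only covers the grid case; the general four-block Pythagorean identity is exactly Corollary \ref{cor-four}, whose proof is a nontrivial case analysis. You correctly flag this as the one genuine subtlety, so citing Corollary \ref{cor-four} as the input closes the argument; without it, your reduction covers only $2\times 2$ partitions.
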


The last section is devoted to   several other operator inequalities such as the first inequality in the abstract.

\section{A Pythagorean theorem}

\vskip 5pt
\begin{theorem}\label{th-pyth} Let $\bA\in\bM_{d,d'}$ be partitioned into $r$ row or column compatible blocks $A_k\in\bM_{n_k, m_k}$.
Then, there exist some isometries $U_k\in\bM_{d',m_k}$ such that
$$
|\bA|^2= \sum_{k=1}^{r} U_k  |A_k|^2 U_k^*.
$$
\end{theorem}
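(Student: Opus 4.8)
The plan is to reduce everything to the decomposition of Lemma \ref{BL-lemma}, in its general form for partitions into $r$ diagonal blocks (the form already used in the proofs of Theorem \ref{thm-direct} and Corollary \ref{cor-rot}). Two elementary observations will organize the argument. First, \emph{vertical stacking is additive}: if $M$ is obtained by stacking matrices $M_1,\ldots,M_p$ with the same number of columns on disjoint sets of rows, then $|M|^2=M^*M=\sum_i M_i^*M_i=\sum_i|M_i|^2$, since $M^*M$ is the sum over all rows of the outer products of the rows, grouped according to the $M_i$. Second, \emph{horizontal concatenation is handled by the decomposition lemma}: if $M=[M_1|\cdots|M_q]$ is a column concatenation (disjoint column groups, all rows), and $P_j$ is the isometry including the $j$-th column group, then $P_j^*|M|^2P_j=M_j^*M_j=|M_j|^2$ is exactly the $j$-th diagonal block of $|M|^2$, so the general form of Lemma \ref{BL-lemma} produces isometries $Q_j$ with $|M|^2=\sum_j Q_j|M_j|^2Q_j^*$.

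Then I would handle the column-compatible case. Let $C_1,\ldots,C_s$ be the distinct column sets occurring among the blocks; column compatibility means precisely that these sets are pairwise equal or disjoint, hence the distinct ones partition the columns of $\bA$. Writing $\bA_j$ for the submatrix of $\bA$ on the columns $C_j$, the family $\{\bA_j\}$ is a pure column concatenation, so the second observation gives isometries $Q_j$ with $|\bA|^2=\sum_j Q_j|\bA_j|^2Q_j^*$. On the other hand each $\bA_j$ is the vertical stack of the blocks $A_k$ whose column set is $C_j$ (these lie on disjoint rows which exhaust all rows), so the first observation gives $|\bA_j|^2=\sum_{C(A_k)=C_j}|A_k|^2$, all summands lying in $\bM_{m_k}=\bM_{|C_j|}$. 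Substituting and setting $U_k:=Q_j$ whenever $C(A_k)=C_j$ yields $|\bA|^2=\sum_k U_k|A_k|^2U_k^*$ with $U_k\in\bM_{d',m_k}$ isometries (note $m_k=|C_j|\le d'$).

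The row-compatible case is the mirror image, but it is \emph{not} a formal transpose of the previous one, since the statement concerns $|\bA|^2=\bA^*\bA$ and not $\bA\bA^*$; the two observations must be applied in the opposite order. Let $R_1,\ldots,R_t$ be the distinct row sets of the blocks, which partition the rows of $\bA$. Writing $\bA_{(i)}$ for the submatrix on rows $R_i$, the family $\{\bA_{(i)}\}$ is a vertical stack, so the first observation gives $|\bA|^2=\sum_i|\bA_{(i)}|^2$. Each $\bA_{(i)}$ is in turn a horizontal concatenation of the blocks $A_k$ with row set $R_i$, so the second observation yields isometries $U_k$ with $|\bA_{(i)}|^2=\sum_{R(A_k)=R_i}U_k|A_k|^2U_k^*$; summing over $i$ gives the result.

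The genuinely substantial input is the general decomposition Lemma \ref{BL-lemma}; once it is granted, the only real work left is combinatorial bookkeeping. The step I expect to require the most care is checking that column (resp.\ row) compatibility is exactly the condition guaranteeing that the blocks' column sets (resp.\ row sets) form a partition, so that the two-level ``stack of concatenations'' (resp.\ ``concatenation of stacks'') structure is genuinely present and the two observations apply cleanly at the correct level. A pleasant point to record is that no reordering of rows or columns is needed: working directly with the inclusion isometries attached to the (possibly non-contiguous) row and column sets keeps each $|A_k|^2$ intact and makes the substitutions transparent.
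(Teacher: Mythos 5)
Your proof is correct and follows essentially the same route as the paper's: in the column-compatible case you first apply the iterated form of Lemma \ref{BL-lemma} to the block matrix $|\bA|^2=(\mathbf{C}_i^*\mathbf{C}_j)$ and then use additivity of $M\mapsto M^*M$ over the vertical stack inside each block column, while in the row-compatible case you apply these two steps in the reverse order, exactly as in the paper. The only cosmetic difference is that you work with inclusion isometries attached to possibly non-contiguous row and column sets, whereas the paper relabels the blocks so that the block rows and columns are contiguous.
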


\vskip 5pt Recall that $U\in\bM_{d',m}$, $m\le d'$, is an isometry if $U^*U=\bold{1}_{m}$, the identity on $\bC^m$. If
$\bA\in\bM_{d,1}$, then the theorem reads as Pythagoras' Theorem.

\begin{proof} Consider a positive matrix in $\bM_{n+m}$ partitioned as
$$
\begin{bmatrix} A& X \\ X^* &B
\end{bmatrix}
$$
with diagonal blocks $A\in\bM_n$ and $B\in\bM_m$. By  \cite[Lemma 3.4]{BL-London} we have two unitary matrices $U,V\in\bM_{n+m}$ such that
\begin{equation}\label{key}
\begin{bmatrix} A& X \\ X^* &B
\end{bmatrix}
=U\begin{bmatrix} A& 0 \\ 0 &0
\end{bmatrix}U^* + V\begin{bmatrix} 0& 0 \\ 0 &B
\end{bmatrix}V^*,
\end{equation}
equivalently,
\begin{equation*}
\begin{bmatrix} A& X \\ X^* &B
\end{bmatrix}
=U_1AU_1^* +
U_2BU_2^*
\end{equation*}
for two isometry matrices $U_1\in\bM_{n+m,n}$ and $U_2\in\bM_{n+m,m}$. An obvious iteration of \eqref{key} shows that, given a
positive block matrix in $\bM_m$ partitioned into $p\times p$ blocks,
$$
\bB=\left(B_{i,j}\right)_{1\le i,j\le p},
$$
with square diagonal blocks $B_{i,i}\in \bM_{n_i}$ and $n_1+\cdots+n_p=m$, we have the decomposition
\begin{equation}\label{key2}
\bB=\sum_{i=1}^p U_i B_{i,i} U_i^*
\end{equation}
for some isometries $U_i\in\bM_{m, n_i}$.

We use \eqref{key2} to prove the theorem. Consider first the column compatible case. Thus we have a partitioning into $p$ block
columns,
\begin{equation}\label{col}
\bA={\mathbf{C}}_1\cup\cdots\cup {\mathbf{C}}_p,
\end{equation}
and each block $A_k$ belongs to one block column ${\mathbf{C}}_q$. By relabelling the $A_k$'s if necessary, we may assume that we
have $p$ integers $1=\alpha_1<\alpha_2<\cdots<\alpha_p<r$ such that
$$
{\mathbf{C}}_q= A_{\alpha_q} \cup \cdots \cup A_{\alpha_{q+1}-1}, \quad 1\le q< p, \quad{\text{and}} \quad {\mathbf{C}}_p=
A_{\alpha_p} \cup \cdots \cup A_{\alpha_{r}}.
$$
We also have a partitioning into $p$ block rows, 
\begin{equation}\label{row}
\bA^*={\mathbf{C}}^*_1\cup\cdots\cup {\mathbf{C}}^*_p,
\end{equation}
and multiplying \eqref{row} and \eqref{col} we then obtain a block matrix for $\bA^*\bA=|\bA|^2\in\bM_{d'}$,
$$
|\bA|^2= \left( {\mathbf{C}}^*_i{\mathbf{C}}_j\right)_{1\le i,j\le p}.
$$
By the decomposition \eqref{key2} we have
$$
|\bA|^2=\sum_{i=1}^p U_i{\mathbf{C}}^*_i{\mathbf{C}}_i U_i^*
$$
for some isometries $U_i\in\bM_{d' , n_i}$, where $n_i$ is the number of columns of ${\mathbf{C}}_i$. Hence, with the convention
$\alpha_{p+1}:=r+1$,
$$
|\bA|^2=\sum_{i=1}^p\sum_{k=\alpha_i}^{\alpha_{i+1}-1} U_i A_k^*A_k U_i^*
$$
establishing the theorem for a column compatible partitioning.

Now, we turn to the row compatible case. Thus we have a partitioning into $p$ block rows,
\begin{equation}\label{col1}
\bA={\mathbf{R}}_1\cup\cdots\cup {\mathbf{R}}_p,
\end{equation}
and each block $A_k$ belongs to one block row ${\mathbf{R}}_q$ and, as in the column compatible case, we may assume that we have $p$
integers $1=\alpha_1<\alpha_2<\cdots<\alpha_p<r$ such that
$$
{\mathbf{R}}_q= A_{\alpha_q} \cup \cdots \cup A_{\alpha_{q+1}-1}, \quad 1\le q< p, \quad{\text{and}} \quad {\mathbf{R}}_p=
A_{\alpha_p} \cup \cdots \cup A_{\alpha_{r}}.
$$
We also have a partitioning into $p$ block columns, 
\begin{equation}\label{row1}
\bA^*={\mathbf{R}}^*_1\cup\cdots\cup {\mathbf{R}}^*_p
\end{equation}
Mutiply \eqref{row1} and \eqref{col1} and note that
\begin{equation}\label{note1}
|\bA|^2=\sum_{l=1}^p {\mathbf{R}}^*_l{\mathbf{R}}_l.
\end{equation}
with  $p$  block matrices in $\bM_{d'}$, ($l=1,\ldots,p$),
\begin{equation}\label{note2}
{\mathbf{R}}^*_l{\mathbf{R}}_l = \left( A^*_iA_j\right)_{\alpha_{l}\le i,j<\alpha_{l+1}}
\end{equation}
where we still use $\alpha_{p+1}:=r+1$. Applying the decomposition \eqref{key2} to the block matrices 
\eqref{note2} yields
$$
{\mathbf{R}}^*_l{\mathbf{R}}_l=\sum_{i=\alpha_l}^{\alpha_{l+1}-1} U_i |A_i|^2U_i^*
$$
for some isometries $U_i$ of suitable sizes, 
and combining with \eqref{note1} completes the proof. \end{proof}

Denote by $\mu_1(S) \ge \mu_2(S)\ge \cdots$ the singular values of a matrix $S\in\bM_{n,m}$. This list is often limited to
$\min\{n,m\}$ elements, however we can naturally define $\mu_k(S)=0$ for any index $k$ larger than $\min\{n,m\}$. Given two matrices
of same size, a classical inequality of Weyl asserts that
$$
\mu_{j+k+1} (S+T) \le \mu_{j+1}(S) +\mu_{k+1}(T)
$$
for all nonnegative integers $j$ and $k$. This inequality and Theorem \ref{th-pyth} entail the next corollary.

\vskip 5pt
\begin{cor}\label{cor-sing} Let $\bA\in\bM_{d,d'}$ be partitioned into $r$ row or column compatible blocks $A_k\in\bM_{n_k, m_k}$.
Then, for all nonnegative integers $j_1,j_2,\ldots, j_r$,
$$
\mu^2_{j_1+j_2+\cdots+j_r+1}(\bA)\le \sum_{k=1}^{r} \mu^2_{j_k+1}(A_k).
$$
\end{cor}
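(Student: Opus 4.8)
The plan is to combine Theorem \ref{th-pyth} with Weyl's singular value inequality in its iterated form. By Theorem \ref{th-pyth}, since $\bA$ is partitioned into row or column compatible blocks, we have isometries $U_k\in\bM_{d',m_k}$ with
$$
|\bA|^2= \sum_{k=1}^{r} U_k  |A_k|^2 U_k^*.
$$
The key observation is that $\mu_{j_k+1}^2(A_k)=\mu_{j_k+1}(|A_k|^2)=\mu_{j_k+1}(U_k|A_k|^2U_k^*)$, because an isometric congruence $S\mapsto U_kSU_k^*$ preserves the nonzero singular values (and only pads with extra zeros), and $\mu_{j_k+1}$ of a positive matrix is its $(j_k+1)$-th eigenvalue. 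Thus each summand $U_k|A_k|^2U_k^*$ is a positive matrix in $\bM_{d'}$ whose singular values are exactly those of $|A_k|^2$, extended by zeros.

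Next I would apply the iterated Weyl inequality to the sum. The two-term version stated just before the corollary,
$$
\mu_{j+k+1}(S+T)\le \mu_{j+1}(S)+\mu_{k+1}(T),
$$
extends by a straightforward induction on the number of summands: for positive matrices $S_1,\ldots,S_r$ in $\bM_{d'}$ and nonnegative integers $j_1,\ldots,j_r$,
$$
\mu_{j_1+\cdots+j_r+1}\Bigl(\sum_{k=1}^r S_k\Bigr)\le \sum_{k=1}^r \mu_{j_k+1}(S_k).
$$
One sets $S=S_1$, $T=\sum_{k\ge 2}S_k$, indices $j=j_1$ and $k=j_2+\cdots+j_r$, and invokes the inductive hypothesis on the $(r-1)$-term tail. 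Applying this with $S_k=U_k|A_k|^2U_k^*$ gives
$$
\mu_{j_1+\cdots+j_r+1}(|\bA|^2)\le \sum_{k=1}^r \mu_{j_k+1}\bigl(U_k|A_k|^2U_k^*\bigr).
$$

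Finally I would identify both sides with the quantities in the statement. On the left, $\mu_{j_1+\cdots+j_r+1}(|\bA|^2)=\mu_{j_1+\cdots+j_r+1}^2(\bA)$ since the eigenvalues of $|\bA|^2$ are the squared singular values of $\bA$. On the right, by the isometry remark above, $\mu_{j_k+1}(U_k|A_k|^2U_k^*)=\mu_{j_k+1}^2(A_k)$. Substituting yields exactly
$$
\mu^2_{j_1+j_2+\cdots+j_r+1}(\bA)\le \sum_{k=1}^{r} \mu^2_{j_k+1}(A_k),
$$
completing the proof. I do not anticipate a serious obstacle here: the content is entirely carried by Theorem \ref{th-pyth}, and the only mild care needed is the bookkeeping in the induction for the multi-term Weyl inequality and the observation that padding with zeros through an isometric congruence does not create singular values that would spoil the index accounting.
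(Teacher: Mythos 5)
Your proof is correct and follows exactly the route the paper intends: Theorem \ref{th-pyth} gives $|\bA|^2=\sum_k U_k|A_k|^2U_k^*$, and the iterated Weyl inequality applied to this sum of positive matrices yields the claim, with the isometric congruences only padding spectra with zeros so the index bookkeeping survives. The paper proves the corollary by the same one-line combination of these two ingredients, so there is nothing to add.
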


\vskip 5pt
A special case of this inequality is given in the abstract for three blocks and $j_1=2$, $j_2=1$, $j_3=0$.

Since any partitioning into three blocks is  row or column compatible we have the next corollary.

\vskip 5pt
\begin{cor}\label{cor-three} Let $\bA\in\bM_{d,d'}$ be partitioned into three blocks $A, B, C$. Then, there exist some isometries
$U,V,W$ of suitable sizes such that
$$
|\bA|^2=U|A|^2U^* + V|B|^2V^* + W|C|^2W^*.
$$
\end{cor}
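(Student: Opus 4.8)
\textbf{Proof plan for Corollary \ref{cor-three}.}
The statement is the three-block case of the Pythagorean Theorem \ref{th-pyth}, so the natural approach is simply to verify that any partitioning of a matrix into exactly three blocks is automatically row or column compatible, and then invoke Theorem \ref{th-pyth} directly. The plan is to reduce everything to the compatibility condition and let the general theorem do the work.

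First I would recall the definitions. A partitioning $\bA = A \cup B \cup C$ is column compatible if for every pair of blocks, the two blocks either occupy the same set of columns of $\bA$ or occupy disjoint sets of columns; row compatibility is the analogous condition for rows. With only three blocks tiling the rectangle $\bA$, one checks by a short combinatorial argument that at least one of these two conditions must hold. Indeed, three rectangular tiles filling a rectangle can only be arranged (up to transpose) in essentially one way: either all three sit side by side in columns (giving column compatibility), or two of them stack in one column-band while the third occupies the complementary column-band (giving column compatibility again), or the same picture with rows and columns interchanged (giving row compatibility). In each configuration the pairwise ``same set or disjoint set'' requirement is satisfied for either the columns or the rows.

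Once compatibility is established, the conclusion is immediate: Theorem \ref{th-pyth} applied to $r=3$ yields isometries $U, V, W$ of suitable sizes with
$$
|\bA|^2 = U|A|^2U^* + V|B|^2V^* + W|C|^2W^*,
$$
which is exactly the claim. Thus the entire content of the corollary is the reduction to Theorem \ref{th-pyth}.

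The main obstacle, such as it is, lies in the combinatorial claim that three blocks are always compatible; this requires a careful but elementary case analysis of how three rectangular submatrices can partition a rectangle. One should be mindful that ``block'' here is used in the flexible sense of the paper (as in the $5\times 5$ example in Section 1), not merely the rigid $p\times q$ grid sense, so the argument must cover the L-shaped and stacked arrangements, not only the fully gridded one. Verifying that no three-block arrangement simultaneously violates both row and column compatibility is the only nontrivial point, and it is exactly here that the bound ``three'' is essential, since for four blocks a genuinely incompatible pinwheel arrangement exists.
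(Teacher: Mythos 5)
Your proposal is correct and matches the paper's own argument: the paper derives Corollary \ref{cor-three} precisely by remarking that any partitioning into three blocks is row or column compatible and then invoking Theorem \ref{th-pyth}. Your case analysis of the three-tile configurations simply makes explicit the compatibility claim that the paper states in one sentence.
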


By using the triangle inequality for the Schatten $p$-norms we have the trace inequality
\begin{equation}\label{trace}
\left\{{\mathrm{ Tr}}\,|\bA|^{2p}\right\}^{1/p}\le \left\{{\mathrm{ Tr}}\,|A|^{2p} \right\}^{1/p}+ \left\{{\mathrm{
Tr}}\,|B|^{2p}\right\}^{1/p} + \left\{{\mathrm{ Tr}}\,|C|^{2p}\right\}^{1/p}, \quad p\ge 1,
\end{equation}
equivalently
\begin{equation}\label{schatt}
\|\bA\|^2_q \le \| A\|^2_q + \| B\|^2_q +\| C\|^2_q
\end{equation}
for all Schatten $q$-norms, $q\ge 2$.

Theorem \ref{th-pyth} entails another interesting relation between the blocks of a partitioned matrix and the full matrix.

\begin{cor}\label{cor-direct} Let $\bA\in\bM_{d,d'}$ be partitioned into $r$ row or column compatible blocks $A_k\in\bM_{n_k, m_k}$.
Then, for some isometries $V_j\in\bM_{m,d'}$, with $m=\sum_{k=1}^r m_k$,
\begin{equation*}
\bigoplus_{k=1}^r |A_k|^2 =\frac{1}{r}\sum_{j=1}^r V_j|\bA|^2  V_j^*.
\end{equation*}
\end{cor}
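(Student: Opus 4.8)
The plan is to upgrade the additive decomposition of Theorem \ref{th-pyth} into the claimed average of isometric congruences by a Fourier (complex Hadamard) averaging. First I would reduce to the case where $\bA$ has full column rank, so that $|\bA|$ is invertible; the general case then follows by a routine limiting argument, since for fixed sizes the isometries form a compact set and both sides of the asserted identity depend continuously on $\bA$. Assuming $|\bA|$ invertible, Theorem \ref{th-pyth} supplies isometries $U_k\in\bM_{d',m_k}$ (so $U_k^*U_k=I_{m_k}$) with $|\bA|^2=\sum_{k=1}^r U_k|A_k|^2U_k^*$. The key device is the family of rectangular matrices
\[
T_k:=|A_k|\,U_k^*\,|\bA|^{-1}\in\bM_{m_k,d'},
\]
for which I claim the two identities
\[
T_k|\bA|^2T_k^*=|A_k|^2,\qquad \sum_{k=1}^r T_k^*T_k=I_{d'}.
\]
The first is immediate from $T_k|\bA|^2T_k^*=|A_k|U_k^*U_k|A_k|=|A_k|^2$, and the second from $\sum_k T_k^*T_k=|\bA|^{-1}\bigl(\sum_kU_k|A_k|^2U_k^*\bigr)|\bA|^{-1}=|\bA|^{-1}|\bA|^2|\bA|^{-1}$, using only $U_k^*U_k=I_{m_k}$ and the decomposition of Theorem \ref{th-pyth}.

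Next I would combine the $T_k$ with a scalar Fourier matrix to produce the isometries $V_j$. Let $F_k\in\bM_{m,m_k}$ be the isometric embedding of $\bC^{m_k}$ onto the $k$-th diagonal summand of $\bC^m=\bigoplus_{k}\bC^{m_k}$, so that $F_k^*F_l=\delta_{kl}I_{m_k}$ and $\bigoplus_k|A_k|^2=\sum_kF_k|A_k|^2F_k^*$. Fixing the unit-modulus scalars $\omega_{jk}=e^{2\pi i(j-1)(k-1)/r}$, which satisfy $\frac1r\sum_{j=1}^r\omega_{jk}\overline{\omega_{jl}}=\delta_{kl}$, I set
\[
V_j:=\sum_{k=1}^r\omega_{jk}\,F_kT_k\in\bM_{m,d'}.
\]
Then $V_j^*V_j=\sum_{k,l}\overline{\omega_{jk}}\omega_{jl}\,T_k^*F_k^*F_lT_l=\sum_k|\omega_{jk}|^2T_k^*T_k=I_{d'}$ by $F_k^*F_l=\delta_{kl}I_{m_k}$ and the second identity above, so each $V_j$ is a genuine isometry. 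Moreover the orthogonality of the rows of $(\omega_{jk})$ collapses the double sum:
\[
\frac1r\sum_{j=1}^rV_j|\bA|^2V_j^*=\sum_{k,l}\Bigl(\frac1r\sum_j\omega_{jk}\overline{\omega_{jl}}\Bigr)F_kT_k|\bA|^2T_l^*F_l^*=\sum_kF_k\bigl(T_k|\bA|^2T_k^*\bigr)F_k^*,
\]
which by the first identity equals $\sum_kF_k|A_k|^2F_k^*=\bigoplus_k|A_k|^2$, the desired relation.

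The genuinely delicate point is the reduction to invertible $|\bA|$: one must confirm that the compactness-and-continuity argument delivers isometries $V_j$ realizing the \emph{exact} equality in the limit, and not merely an inequality. This is ensured because the equality $\bigoplus_k|A_k|^2=\frac1r\sum_jV_j|\bA|^2V_j^*$ and the closed constraint $V_j^*V_j=I_{d'}$ both pass to limits of convergent subsequences of isometries. Everything else reduces to the two identities for $T_k$ and the standard Fourier cancellation $\frac1r\sum_j\omega_{jk}\overline{\omega_{jl}}=\delta_{kl}$; note that the hypothesis of row or column compatibility enters only through its use in Theorem \ref{th-pyth}, which furnishes the isometries $U_k$ on which the whole construction rests.
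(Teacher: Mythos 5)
Your proof is correct, but it takes a genuinely different route from the paper's. The paper derives the corollary by feeding the $r$ positive matrices $U_k|A_k|^2U_k^*$ from Theorem \ref{th-pyth} into the main result of \cite{BL-direct} (stated later as Theorem \ref{th-directsum}: a direct sum of positive matrices is an average of isometric congruences of their sum), obtaining $\bigoplus_k U_k|A_k|^2U_k^*=\frac1r\sum_j W_j|\bA|^2W_j^*$; it then compresses by the coisometry $C=\bigoplus_k U_k^*$ and uses a trace argument (valid when $|\bA|$ is invertible) to show that each contraction $CW_j$ is in fact an isometry, finishing with the same limit argument you invoke. You instead bypass the citation entirely and construct the $V_j$ explicitly: the operators $T_k=|A_k|U_k^*|\bA|^{-1}$ satisfy $T_k|\bA|^2T_k^*=|A_k|^2$ and $\sum_kT_k^*T_k=I_{d'}$, and the roots-of-unity averaging then produces genuine isometries directly, with the orthogonality relations doing the work that the external theorem and the trace argument do in the paper. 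Your computations all check out (including $V_j^*V_j=I_{d'}$, which uses $m=\sum_km_k\ge d'$, automatic for a compatible partitioning), and your closing limit argument is at exactly the level of rigor of the paper's own ``the general case follows by a limit argument'' --- both implicitly assume $d\ge d'$ so that matrices with $|\bA|$ invertible are dense; for $d<d'$ one should either pad $\bA$ with zero rows or replace $|\bA|^{-1}$ by the Moore--Penrose inverse and extend the resulting partial isometries. What your approach buys is self-containedness and an explicit formula for the $V_j$; what the paper's buys is brevity and the structural point that the corollary is an instance of the general direct-sum-as-average principle of \cite{BL-direct}.
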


\vskip 5pt
\begin{proof} From Theorem \ref{th-pyth} and the main result of \cite{BL-direct} we have
$$
\bigoplus_{k=1}^r U_k|A_k|^2U_k =\frac{1}{r}\sum_{j=1}^r W_j|\bA|^2  W_j^*
$$
for some isometries $U_k\in\bM_{d',m_k}$ and some isometries $W_j\in \bM_{rd',d'}$. Since
$$
\bigoplus_{k=1}^r |A_k|^2= C\left\{ \bigoplus_{k=1}^r U_k|A_k|^2U_k\right\}C^*
$$
for some contraction $C\in\bM_{m, rd'}$, we infer
$$
\bigoplus_{k=1}^r |A_k|^2=\frac{1}{r}\sum_{j=1}^r CW_j|\bA|^2  W_j^*C^*.
$$
If $|\bA|$ is invertible, then, taking trace, the above equality ensures that contractions $CW_j$ satisfy
$W_j^*C^*CW_j={\mathbf{1}}_{d'}$ for all $j$. Hence the result is proved with $V_j=CW_j$. The general case follows by a limit
argument.
\end{proof}

We are in a position to estimate the Schatten norms of the blocks with the full matrix.
The following corollary was first obtained by Bhatia and Kittaneh \cite{Bh-K} in case of a matrix partitioned into $n\times n$
blocks.

\vskip 5pt
\begin{cor}\label{corBK}  Let $\bA\in\bM_{d,d'}$ be partitioned into  $r$ row or column compatible blocks $A_k\in\bM_{n_k, m_k}$. 
Then, for all Schatten $q$-norms, $q\ge 2$,
\begin{equation*}r^{{\frac{2}{q}}-1}\sum_{k=1}^r \| A_k \|_q^2 \le \| \bA\|_q^2 \le \sum_{k=1}^r \| A_k \|_q^2
\end{equation*}
These two inequalities are reversed for $2>q>0$.
\end{cor}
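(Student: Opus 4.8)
The plan is to deduce both inequalities from Corollary \ref{cor-direct} together with the Schatten-norm version of the Cauchy–Schwarz/Hölder inequality for symmetric norms. First I would record the two directions separately and observe that they are formally dual under the exponent reflection $q \leftrightarrow$ (its conjugate in the quasi-norm range), which is why the inequalities reverse for $0<q<2$.

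For the upper bound $\|\bA\|_q^2 \le \sum_{k=1}^r \|A_k\|_q^2$ with $q\ge 2$, the cleanest route is to apply Corollary \ref{cor-three}'s general form, i.e. Theorem \ref{th-pyth}, directly: since $|\bA|^2 = \sum_k U_k|A_k|^2U_k^*$ with isometries $U_k$, and the functional $X\mapsto \|X^{1/2}\|_q^2 = \|X\|_{q/2}$ is a genuine symmetric norm on positive matrices precisely when $q/2\ge 1$, the triangle inequality for $\|\cdot\|_{q/2}$ yields
$$
\|\bA\|_q^2 = \left\| \sum_{k=1}^r U_k|A_k|^2U_k^* \right\|_{q/2} \le \sum_{k=1}^r \bigl\| U_k|A_k|^2U_k^* \bigr\|_{q/2} = \sum_{k=1}^r \|A_k\|_q^2,
$$
where the last equality uses that $U_k$ is an isometry, so the nonzero singular values are preserved. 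This is exactly the trace inequality \eqref{schatt} extended to $r$ blocks, and it needs no more than that $q\ge 2$.

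For the lower bound $r^{\frac{2}{q}-1}\sum_k \|A_k\|_q^2 \le \|\bA\|_q^2$, I would instead invoke Corollary \ref{cor-direct}, which gives $\bigoplus_{k=1}^r |A_k|^2 = \frac{1}{r}\sum_{j=1}^r V_j|\bA|^2 V_j^*$ with isometries $V_j$. Taking the $\|\cdot\|_{q/2}$ norm of both sides, the left side equals $\left(\sum_k \|A_k\|_q^q\right)^{2/q}$ after noting that a direct sum's Schatten norm is the $\ell^{q/2}$-combination of the summands' norms; but the target has $\sum_k\|A_k\|_q^2$, so I would pass through the power-mean inequality relating $\ell^{q/2}$ and $\ell^1$ combinations, which produces the factor $r^{2/q-1}$. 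On the right side, the triangle inequality and isometry-invariance give $\frac{1}{r}\sum_j \|V_j|\bA|^2V_j^*\|_{q/2} = \|\bA\|_q^2$. Combining these is where the constant $r^{2/q-1}$ is forced, and it is the step requiring the most care.

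The main obstacle will be keeping the norm bookkeeping correct across the three identifications $\|X\|_q^2 = \|\,|X|^2\,\|_{q/2}$, the behaviour of $\|\cdot\|_{q/2}$ on direct sums, and the reversal of all inequalities when $0<q<2$ (where $q/2<1$ makes $\|\cdot\|_{q/2}$ superadditive rather than subadditive, so triangle inequalities flip and the power-mean comparison reverses). I would handle the reversed range by the same two displays with every inequality sign turned around, justified by the superadditivity of the quasi-norm $\|\cdot\|_{q/2}$ for $q/2<1$ and the reversed power-mean inequality; a short limiting argument (assuming $|\bA|$ invertible, as in the proof of Corollary \ref{cor-direct}) removes any degeneracy.
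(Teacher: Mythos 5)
Your plan is correct and coincides with the paper's own proof: the upper bound from Theorem \ref{th-pyth} plus the triangle inequality for $\|\cdot\|_{q/2}$ with $q/2\ge 1$, and the lower bound from Corollary \ref{cor-direct} combined with the $\ell^{q/2}$ behaviour of Schatten norms on direct sums and the power-mean inequality producing the factor $r^{2/q-1}$, with everything reversing for $0<q<2$ by superadditivity of the quasi-norm. Nothing further is needed.
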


\vskip 5pt
\begin{proof} For $p:=q/2\ge 1$, the second inequality contains \eqref{schatt} and immediately follows from Theorem \ref{th-pyth} and
the triangle inequality for the Schatten $p$-norms. Corollary \ref{cor-direct} gives
$$
\| |\bA|^2\|_p\ge \left\| |A_1|^2\oplus \cdots\oplus |A_r|^2 \right\|_p 
$$
and since the concavity of $t\mapsto t^{1/p}$ entails
$$
\left\| |A_1|^2\oplus \cdots\oplus |A_r|^2 \right\|_p = \left(\| |A_1|^2\|_p^p+\cdots \| |A_r|^2\|_p^p\right)^{1/p}\ge
r^{\frac{1}{p}-1}\left(\| |A_1|^2\|_p+\cdots+ \| |A_r|^2\|_p\right)
$$
we get the first inequality. These inequalities are reversed for $0<p<1$.
\end{proof}

\vskip 5pt
Corollary \ref{cor-direct} is relevant to Majorisation Theory. We take  this opportunity to point out an interesting
fact about majorisation in the next proposition. Though this result might be well-known to some experts, it does not seem to be in
the literature. Let $A,B\in\bM_n^+$, the positive semi-definite cone of $\bM_n$. The majorisation $A\prec B$ means that
$$
\sum_{j=1}^k \mu_j(A) \le \sum_{j=1}^k \mu_j(B)
$$
for all $k=1,2,\ldots n$, with equality for $k=n$.
The majorisation $A\prec B$ is equivalent to
 $$A=\sum_{i=1}^{n} \alpha_iU_iBU_i^*$$
for some unitary matrices $U_i\in\bM_n$ and weights $\alpha_i\ge 0$ with $\sum_{i=1}^{n} \alpha_i=1$. This can be easily derived from
Caratheodory's theorem \cite{Zhan}. A more accurate statement holds.

\vskip 5pt
\begin{prop}\label{prop-maj}  Let $A,B\in\bM_n^+$, $A\prec B$.Then, for some unitary matrices $U_i\in\bM_n$,
$$
A=\frac{1}{n}\sum_{i=1}^n U_i BU_i^*.
$$
\end{prop}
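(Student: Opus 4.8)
The plan is to reduce to the diagonal case and then realize the majorization by combining the Schur--Horn theorem with a discrete-Fourier averaging that extracts the diagonal of a matrix. First I would normalize: since the asserted identity is unchanged when each $U_i$ is replaced by $VU_i$ for a fixed unitary $V$, I may conjugate and assume that $A$ is diagonal, $A=\diag(a_1,\dots,a_n)$ with $a_1,\dots,a_n$ the eigenvalues of $A$. In this reduction the hypothesis $A\prec B$ says exactly that the diagonal vector $(a_1,\dots,a_n)$ is majorized by the eigenvalue vector of $B$.

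Next I would invoke the Schur--Horn theorem: a Hermitian matrix with prescribed diagonal $d$ and prescribed spectrum $\lambda$ exists if and only if $d\prec\lambda$. Applying it with $d=(a_1,\dots,a_n)$ and $\lambda$ equal to the spectrum of $B$, I obtain a unitary $W\in\bM_n$ such that $M:=WBW^*$ has diagonal exactly $(a_1,\dots,a_n)$; that is, the diagonal part of $M$ equals $A$.

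The key step is to average away the off-diagonal part of $M$ by phase unitaries. Set $\omega=e^{2\pi i/n}$ and, for $k=0,1,\dots,n-1$, let $D_k=\diag(\omega^{k},\omega^{2k},\dots,\omega^{nk})$. For any $M=(m_{pq})\in\bM_n$ one has $(D_kMD_k^*)_{pq}=\omega^{(p-q)k}m_{pq}$, hence the $(p,q)$ entry of the average is
\begin{equation*}
\left(\frac1n\sum_{k=0}^{n-1}D_kMD_k^*\right)_{pq}=\frac1n\sum_{k=0}^{n-1}\omega^{(p-q)k}\,m_{pq}=\delta_{pq}\,m_{pq},
\end{equation*}
because the geometric sum $\sum_{k=0}^{n-1}\omega^{(p-q)k}$ equals $n$ when $p=q$ and vanishes otherwise, as $0<|p-q|<n$ forces $\omega^{p-q}\neq 1$. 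Thus $\frac1n\sum_{k=0}^{n-1}D_kMD_k^*$ is precisely the diagonal part of $M$, namely $A$. Setting $U_{k+1}:=D_kW$ for $k=0,\dots,n-1$ (each a product of unitaries, hence unitary) yields $A=\frac1n\sum_{k=1}^n U_kBU_k^*$, which is the claim.

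The only genuine input is the appeal to Schur--Horn to produce $W$; once that realization is available, the cleaning of the off-diagonal entries by the $n$ Fourier phases $D_0,\dots,D_{n-1}$ is a one-line verification, so I do not expect a serious obstacle. I would emphasize that the sharp features of the statement---the \emph{equal} weights $1/n$ and the use of \emph{exactly} $n$ unitaries---come for free from the $n$ available phases, which is what makes this stronger than the standard convex-combination form $A=\sum_i\alpha_iU_iBU_i^*$.
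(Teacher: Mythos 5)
Your proof is correct and follows essentially the same route as the paper: reduce via Schur--Horn to the case where $A$ is the diagonal part of a unitary conjugate of $B$, then extract that diagonal as an average of $n$ conjugations by the Fourier phase unitaries $D_k=\diag(\omega^{k},\dots,\omega^{nk})$. The paper merely cites Bhatia's Monthly article for the averaging identity that you write out explicitly.
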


\vskip 5pt
\begin{proof} By the Schur-Horn Theorem, we may assume that $A$ is the diagonal part of $B$. Then we use the simple idea of Equation
(2) in the nice paper of Bhatia \cite{Bh-monthly}.
\end{proof}

\vskip 5pt
Note that Corollary \ref{cor-direct} may be restated as 
\begin{equation*}
\bigoplus_{k=1}^r |A_k|^2 =\frac{1}{r}\sum_{j=1}^r U_j(|\bA|^2\oplus O)  U_j^*.
\end{equation*}
for some unitary matrices $U_j$ and some fixed zero matrix $O$. Hence we have an average of $r$ matrices in the unitary orbit of
$|\bA|^2\oplus O$, this number $r$ being (much) smaller than the one given by Proposition \ref{prop-maj}, $d=m_1+\cdots +m_r$.

\section{Compression onto a hyperplane}

By a hyperplane of $\bC^d$ we mean a vector subspace of dimension $d-1$. The next corollary is a singular value version of Cauchy's
Interlacing Theorem \cite[p.\ 59]{Bh}.

\vskip 5pt
\begin{cor}\label{hyper}  Let $A\in\bM_d$ and let  ${\mathcal{S}}$ be a hyperplane of  $\bC^d$ orthogonal to the unit vector $h$.
Set $\beta= \min\{\| Ah\|^2, \| A^*h\|^2 \} -|\langle h, Ah\rangle|^2$. Then, for all  $j=1,\ldots, d-1$,
$$
\mu_{j}^2(A) \ge \mu_{j}^2(A_{\mathcal{S}}) \ge \mu_{j+1}^2(A) -\beta.
$$
\end{cor}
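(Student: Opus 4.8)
The plan is to reduce everything to a $2\times 2$ block form and then feed the appropriate column and row partitions into Corollary \ref{cor-sing}, the singular value consequence of the Pythagorean theorem. First I would fix an orthonormal basis of $\bC^d$ whose last vector is $h$, so that ${\mathcal{S}}$ is spanned by the first $d-1$ basis vectors and $A$ takes the block form
$$
A=\begin{pmatrix} A_{\mathcal{S}} & b \\ c^* & \alpha\end{pmatrix},\qquad A_{\mathcal{S}}\in\bM_{d-1},\ \ b,c\in\bC^{d-1},\ \ \alpha=\langle h,Ah\rangle.
$$
Reading off the last column $Ah=\binom{b}{\alpha}$ and the last row, one gets $\|Ah\|^2=\|b\|^2+|\alpha|^2$ and $\|A^*h\|^2=\|c\|^2+|\alpha|^2$, whence $\beta=\min\{\|b\|^2,\|c\|^2\}$. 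This identification is the only computational step and it is immediate.

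The upper bound $\mu_j(A_{\mathcal{S}})\le \mu_j(A)$ is the easy half: $A_{\mathcal{S}}$ is the compression $EAE$ restricted to ${\mathcal{S}}$, with $E$ the orthogonal projection onto ${\mathcal{S}}$, and singular values do not increase under multiplication by the contraction $E$.

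For the lower bound I would exploit that a $2\times 2$ block array is simultaneously row and column compatible. Partition $A$ by columns into $C_1=\binom{A_{\mathcal{S}}}{c^*}$ (its first $d-1$ columns) and $C_2=Ah$ (its last column). Applying Corollary \ref{cor-sing} with $r=2$, $j_1=j-1$, $j_2=1$, and using that $C_2$ has rank one so $\mu_2(C_2)=0$, yields $\mu_{j+1}^2(A)\le \mu_j^2(C_1)$. Because $C_1^*C_1=A_{\mathcal{S}}^*A_{\mathcal{S}}+cc^*$, Weyl's inequality $\lambda_{r+s+1}(Y+Z)\le\lambda_{r+1}(Y)+\lambda_{s+1}(Z)$ with $r=j-1$, $s=0$ gives $\mu_j^2(C_1)\le \mu_j^2(A_{\mathcal{S}})+\|c\|^2$, so that $\mu_{j+1}^2(A)\le \mu_j^2(A_{\mathcal{S}})+\|c\|^2$. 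Running the symmetric argument with the row partition $R_1=(A_{\mathcal{S}}\ \, b)$, $R_2=(c^*\ \, \alpha)$ and using $R_1R_1^*=A_{\mathcal{S}}A_{\mathcal{S}}^*+bb^*$ gives $\mu_{j+1}^2(A)\le \mu_j^2(A_{\mathcal{S}})+\|b\|^2$. Taking the smaller of the two bounds produces $\mu_{j+1}^2(A)\le \mu_j^2(A_{\mathcal{S}})+\beta$, which rearranges to the claimed $\mu_j^2(A_{\mathcal{S}})\ge \mu_{j+1}^2(A)-\beta$.

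The argument is essentially bookkeeping; the points requiring care are the index matching in Corollary \ref{cor-sing} (checking $j_1=j-1\ge 0$ for $j\ge 1$ and that the discarded one-column and one-row blocks have vanishing second singular value), together with the correct alignment of Weyl's inequality. No genuine obstacle arises, but I would verify that the two partitions are indeed column- and row-compatible respectively, and that $\mu_j(A_{\mathcal{S}})$ may be computed from either $A_{\mathcal{S}}^*A_{\mathcal{S}}$ or $A_{\mathcal{S}}A_{\mathcal{S}}^*$, which is exactly what lets the column and row arguments run in parallel and permits taking the minimum at the end.
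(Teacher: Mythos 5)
Your proof is correct and follows essentially the same route as the paper: both arguments feed column and row partitions isolating the last column/row of $A$ into Corollary \ref{cor-sing} and then symmetrize to take the minimum of $\|Ah\|^2$ and $\|A^*h\|^2$. The only (cosmetic) difference is that the paper uses a three-block partition $A_{\mathcal{S}}\cup B\cup C$ with indices $(j-1,0,1)$, reading off $\mu_1^2(B)=\|c\|^2$ directly, whereas you use a two-block partition and peel off the rank-one term $cc^*$ from $C_1^*C_1$ with an extra application of Weyl's inequality — the two computations are equivalent.
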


\vskip 5pt
This double inequality is stronger than $\mu_{j}(A)\ge \mu_{j}(A_{\mathcal{S}}) \ge \mu_{j+1}(A) -\sqrt{\beta}$. If $A$ is a normal
matrix, then $\| Ah\|=\| A^*h\|$ and we have Corollary \ref{normal}. If $A=V$ is a unitary matrix, $\mu_j(V)=1$ for all $j$ and $\|
Vh\|=\| V^*h\|=1$ for all unit vectors, so we deduce from Corollary \ref{hyper} that
$\mu_{j}(V_{\mathcal{S}}) \ge |\langle h, Vh\rangle|$. In fact one can easily check that $\mu_j(V_{\mathcal{S}})=1$ for $j\le d-2$
and $\mu_{d-1}(V_{\mathcal{S}})= |\langle h, Vh\rangle|$. Hence Corollary \ref{hyper} is sharp.

\vskip 5pt
\begin{proof} (Corollary \ref{hyper}) The inequality $\mu_j(A)\ge \mu_j(A_{\mathcal{S}})$ is trivial. To deal with the other
inequality
we may assume that $h$ is the last vector of the canonical basis and that $A_{\mathcal{S}}$ is the submatrix of $A$ obtained by
deleting the last column and the last line. We partition $A$ as
$$
A=A_{\mathcal{S}}\cup B\cup C
$$
where $B$ contains  the $d-1$ entries below $A_{\mathcal{S}}$ and $C$ is the last column of $A$. We then apply to this partitioning 
Corollary \ref{cor-sing} with  $j_1=j-1$, $j_2=0$, and $j_3=1$ to get
$$
\mu_{j+1}^2(A) \le \mu_{j}^2(A_{\mathcal{S}}) +  \mu_1^2(B) +\mu_2^2(C).
$$
Since $\mu_2(C)=0$, we have
\begin{equation*}\
\mu_{j+1}^2(A) -\mu_1^2(B) \le \mu_{j}^2(A_{\mathcal{S}})
\end{equation*}
Observe that $\mu_1^2(B)= \| A^*h\|^2  -|\langle h,Ah\rangle|^2$, hence
\begin{equation}\label{svi1}
\mu_{j}^2(A_{\mathcal{S}}) \ge \mu_{j+1}^2(A) -\| A^*h\|^2  +|\langle h, Ah\rangle|^2.
\end{equation}

We may also partition $A$ as
$$
A=A_{\mathcal{S}}\cup R\cup L
$$
where $R$ contains the $d-1$ entries on the right of $A_{\mathcal{S}}$ and $L$ stands for the last line of $A$.
Arguing as above with $R$ and $L$ in place of $B$ and $C$ yields
\begin{equation}\label{svi2}
\mu_{j+1}^2(A) -\mu_{j}^2(A_{\mathcal{S}}) \le \mu_1^2(R)= \| Ah\|^2  -|\langle h, Ah\rangle|^2
\end{equation}
Combining \eqref{svi1} and \eqref{svi2} completes the proof.
\end{proof}

\vskip 5pt
\begin{cor}\label{cor-var} Let $\bA\in\bM_{d,d'}$ be partitioned into $r$ row or column compatible blocks $A_k\in\bM_{n_k, m_k}$.
Then, for each block $A_k$ and all $j\ge 1$,
$$
\mu^2_j(\bA)-\mu^2_j(A_k)\le \sum_{l\neq k} \mu^2_1(A_l).
$$
\end{cor}

\vskip 5pt
\begin{proof} Apply Corollary \ref{cor-sing} with $j_k=j-1$ and $j_l=0$ for all $l\neq k$. \end{proof}

\vskip 5pt
\begin{cor}\label{cor-comp} Let $A\in\bM_d$ and let ${\mathcal{S}}$ be a hyperplane of $\bC^d$ orthogonal to the unit vector $h$.
Then for all $j=1,\ldots, d-1$,
$$
\mu^2_j(A)-\mu^2_j(A_{\mathcal{S}})\le \| Ah\|^2 + \| A^*h\|^2-|\langle h,Ah\rangle|^2.
$$
\end{cor}

\vskip 5pt
\begin{proof}
We may suppose that $h$ is the last vector of the canonical basis and we partition $A$ into three blocks : $A_{\mathcal{S}}$, the
last column of $A$, and the $d-1$ entries below $A_{\mathcal{S}}$. We then apply the previous corollary.
 \end{proof}

\section{Four and five blocks}

Partitionings into four blocks are not necessarily row or column compatible. However, for such partitionings, Theorem \ref{th-pyth}
still holds.

\vskip 5pt
\begin{cor}\label{cor-four} Let $\bA\in\bM_{d,d'}$ be partitioned into four blocks $A, B, C, D$. Then, there exist some isometries
$U,V,W,X$ of suitable sizes such that
$$
|\bA|^2=U|A|^2U^* + V|B|^2V^* + W|C|^2W^* + X|D|^2X^*.
$$
\end{cor}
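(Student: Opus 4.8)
The plan is to reduce the four–block case to the already–proven Theorem \ref{th-pyth} for row or column compatible partitionings. A general partitioning of $\bA$ into four blocks $A,B,C,D$ need not be row or column compatible, but a four–block partitioning is always a $2\times 2$ array in the following sense: after grouping, $\bA$ splits into two block rows, the upper one containing $A$ and $B$ (side by side) and the lower one containing $C$ and $D$ (side by side), where the vertical cut separating $A$ from $B$ need not align with the cut separating $C$ from $D$. The key observation is that each block row is itself row compatible internally, and the two block rows are row compatible with each other; hence I would first apply Theorem \ref{th-pyth} to the \emph{row} partitioning $\bA = {\mathbf R}_1 \cup {\mathbf R}_2$ into its two block rows.

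First I would invoke Theorem \ref{th-pyth} in its row–compatible form to the two–block partitioning $\bA = {\mathbf R}_1 \cup {\mathbf R}_2$, obtaining isometries $P_1,P_2$ with
$$
|\bA|^2 = P_1 |{\mathbf R}_1|^2 P_1^* + P_2 |{\mathbf R}_2|^2 P_2^*.
$$
Next, each block row ${\mathbf R}_i$ is partitioned into two blocks lying side by side (hence column compatible within that row): ${\mathbf R}_1 = A \cup B$ and ${\mathbf R}_2 = C \cup D$. Applying Theorem \ref{th-pyth} again, now in the column–compatible form, to each ${\mathbf R}_i$ separately gives isometries with
$$
|{\mathbf R}_1|^2 = Q_1 |A|^2 Q_1^* + Q_2 |B|^2 Q_2^*, \qquad |{\mathbf R}_2|^2 = Q_3 |C|^2 Q_3^* + Q_4 |D|^2 Q_4^*.
$$
Substituting these into the first identity and setting $U = P_1 Q_1$, $V = P_1 Q_2$, $W = P_2 Q_3$, $X = P_2 Q_4$ yields
$$
|\bA|^2 = U|A|^2 U^* + V|B|^2 V^* + W|C|^2 W^* + X|D|^2 X^*,
$$
as desired. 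The final routine check is that a composition of two isometries is again an isometry, which is immediate since $(P_1Q_1)^*(P_1Q_1) = Q_1^* P_1^* P_1 Q_1 = Q_1^* Q_1 = \bold{1}$, and similarly for the others.

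The only real subtlety, and the point I would state carefully, is that the two–stage decomposition is legitimate even when the vertical cuts in the two rows are misaligned: compatibility is required only \emph{within} a single application of Theorem \ref{th-pyth}, and at each stage the relevant partitioning (first the two rows of $\bA$, then the two side–by–side blocks inside a fixed row) is genuinely row, respectively column, compatible. Thus the misalignment between ${\mathbf R}_1$ and ${\mathbf R}_2$ never enters a single invocation of the theorem, and no compatibility hypothesis is violated. This hierarchical argument also makes transparent why the five–block case is handled the same way, by first cutting $\bA$ into compatible strips and then refining each strip.
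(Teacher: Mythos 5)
There is a genuine gap at the very first step: your structural claim that a four-block partitioning of $\bA$ can always be grouped into two block rows, the upper containing $A$ and $B$ side by side and the lower containing $C$ and $D$ side by side (with possibly misaligned vertical cuts), is false. The paper's own introductory example already defeats it: in the $5\times 5$ matrix displayed in Section 1, the block $A$ occupies \emph{all} rows (columns $1$--$2$), $B$ occupies rows $1$--$2$ of columns $3$--$5$, and $C,D$ sit below $B$. Any pair of blocks forming a ``block row'' would have to occupy a set of rows disjoint from the rows of the other pair; since $A$ meets every row, no such $2+2$ grouping by rows exists, and one checks similarly that no $2+2$ grouping by columns exists either. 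The only guillotine cut there is the vertical one separating $A$ alone from $\{B,C,D\}$, i.e.\ a $1+3$ split. Your two-stage argument therefore never gets off the ground for such configurations, and the subsequent substitution of the refined decompositions, while correct where it applies, only covers partitionings that happen to have the $2\times 2$ strip structure.

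The hierarchical idea itself (decompose along a coarse compatible cut, then refine inside each strip using Theorem \ref{th-pyth} or Corollary \ref{cor-three}) is exactly the right mechanism, and it is what the paper uses; the difference is that the paper does not assume a fixed $2+2$ shape but runs a case analysis on the corner block $A$: if $A$ spans all $d$ rows or all $d'$ columns one gets a column- or row-compatible $1+3$ split and then invokes the three-block result on the complement; otherwise one examines the neighbouring blocks $B$ and $C$ and shows that in every sub-case the partitioning is either already row/column compatible or admits a compatible $3+1$ split. To repair your proof you would need to replace the false structure claim by this (or an equivalent) exhaustive case analysis, establishing in particular that a four-rectangle tiling always admits \emph{some} guillotine cut and that each resulting piece carries at most three blocks or a compatible partitioning.
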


\vskip 5pt
\begin{proof} We assume that $A$ is the block in the upper left corner and we distinguish three cases.

(1) 
 $A$ has the same number  $d$ of  lines as $\bA$.
In such a case, letting $A'=B\cup C\cup D$, the partitioning $\bA=A\cup A'$ is column compatible, and we have two isometry matrices
$U, U'$ such that
\begin{equation}\label{f1}
|\bA|^2=U|A|^2U^* + U'|A'|^2U'^*.
\end{equation}
Since $A'$ is partitioned into three blocks, necessarily a row or column partitioning, we can apply the theorem to obtain the
decomposition
\begin{equation}\label{f2}
|A'|^2=V'|B|^2V'^* + W'|B|^2W'^* +X'|B|^2X'^* 
\end{equation} 
for some isometry matrices $V',W',X'$ of suitable sizes. Combining \eqref{f1} and \eqref{f2} we get the conclusion of the corollary
with the isometry matrices $V=U'V'$, $W=U'W'$, and $X=U'X'$.

(2) 
 $A$ has the same number  $d'$ of  columns as $\bA$.
Letting again $A'=B\cup C\cup D$, the partitioning $\bA=A\cup A'$ is  row compatible, and we may argue as in case (1).

(3)
$A$ has $l<d$ lines and $c<d'$ columns. There exist then a block, say $B$, on the top position, and just on the right of $A$, and
another block, say $C$ just below $A$ and on the left side. We consider three subcases (a), (b), (c).

(a) $B$ has fewer than $l$ lines. Then, the last block $D$ is necessarily below $B$ with the same number of columns as $B$, and so
$C$ has either the same number of columns as $A$ or $C$ has $d'$ columns as $\bA$. In the first case, $\bA=A\cup C \cup B \cup D$ is
a column compatible partitioning and we can apply the theorem. In the second case, the situation is the same as in (2).

(b) $B$ has exactly $l$ lines, like $A$. We denote by $\gamma$  the number of columns of $B$ and we consider three situations.

(i) $C$ has more than $c+\gamma$ columns. Then necessarily $C$ has $d'$ columns and $D$ is the upper right block with $l$ lines,
hence $\bA=A\cup B \cup D \cup C $ is a line compatible partitioning and we may apply the theorem.

(ii) $C$ has exactly $c+\gamma$ columns. Then, letting $\bA''=A\cup B\cup C$ with have a partitioning into three blocks, and $\bA =
\bA''\cup D$. Thus applying the theorem twice as in case (1) yields the conclusion.

(iii) $C$ has fewer than $c+\gamma$ columns. Then $D$ is the lower right block, with the same number of lines as $B$, and $\bA$ is
partitioned into line compatible blocks. Thus the theorem can be applied.

(c)   $B$ has more lines than $A$.  Let $\lambda$ be the number of line of $B$. Hence $\lambda>l$. There exist two situations

(iv) $\lambda<d$. Then $D$ is the lower right block, with the same number of columns as $B$, and $\bA$ is partitioned into line
compatible blocks. Thus we may apply the theorem.

(v) $\lambda=d$. Then $A'''= A\cup C\cup D$ is a partitioning into three blocks and $\bA= A'''\cup B$, thus applying twice the
theorem completes the proof.
\end{proof}

We do not know whether Corollary \ref{cor-four} can be extended or not to any partitioning in five blocks. For instance we are not
able to prove or disprove a version of Corollary \ref{cor-four} for the matrices
$$
\bA= \begin{pmatrix}  a_1&a_2&b_1 \\ d_1 & x& b_2 \\ d_2 & c_1 & c_2\\
\end{pmatrix}
\quad {\mathrm{or}} \quad
\bA=\begin{pmatrix} a_1&a_2 &a_3 &b_1 &b_2 \\ a_4&a_5 &a_6 &b_3 &b_4 \\ d_1&d_2 &x &b_5 &b_6 \\ d_3&d_4 &c_1&c_2 &c_3 \\ d_5&d_6
&c_4&c_5 &c_6
\end{pmatrix}
$$
partitioned into five obvious blocks $A,B,C,D,X$. Hence, that Theorem \ref{th-pyth} holds or not for any partitioning into five
blocks is an open problem. More generally, we may consider the following two questions.

\begin{question} For which partitionings does Theorem \ref{th-pyth} hold ? For which partitionings does Corollary \ref{cor-sing} hold
?
\end{question}

\vskip 5pt
Matrices partitioned into four blocks (usually of same size) are comon examples of partitionings. A nontrivial inequality follows
from the previous corollary.

\vskip 5pt
\begin{cor}\label{cor-four2} Let $\bA\in\bM_{d,d'}$ be partitioned into four blocks $A, B, C, D$, and let $p>2$. Then, there exist
some isometries $U,V,W,X$ of suitable sizes such that
$$
2^{2-p}|\bA|^{p}\le U|A|^pU^* + V|B|^pV^* + W|C|^pW^* + X|D|^pX^*.
$$
The inequality reverses for  $2>p>0$.
\end{cor}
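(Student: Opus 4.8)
The plan is to reduce the statement to the squared Pythagorean decomposition of Corollary \ref{cor-four} and then transport it to the $p$-th power by a convexity argument in the spirit of Theorem \ref{thMcCarthy}. First I would invoke Corollary \ref{cor-four} to write, with $A_1=A$, $A_2=B$, $A_3=C$, $A_4=D$,
$$
|\bA|^2 = S_1 + S_2 + S_3 + S_4, \qquad S_k = U_{0,k}|A_k|^2 U_{0,k}^*,
$$
for some isometries $U_{0,1},\dots,U_{0,4}$, so that $|\bA|^p = (S_1+S_2+S_3+S_4)^{p/2}$. The scalar model to imitate is the elementary bound $(\sum_{k=1}^4 a_k)^{p/2}\le 4^{p/2-1}\sum_k a_k^{p/2}=2^{p-2}\sum_k a_k^{p/2}$ for $p>2$, which comes from convexity of $t\mapsto t^{p/2}$; the exponent $2$ in $2^{p-2}$ reflects $4=2^2$ and suggests treating the four summands by a dyadic pairing.

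Next I would apply the monotone convex case of Corollary \ref{C-2.2} (taking $U=V$) to the function $g(t)=t^{p/2}$, which is monotone convex on $[0,\infty)$ when $p>2$: for positive $P,Q\in\bM_{d'}$ there is a unitary $Y$ with
$$
(P+Q)^{p/2} \le 2^{p/2-1}\,Y\bigl(P^{p/2}+Q^{p/2}\bigr)Y^*.
$$
Using this once on the pairing $P=S_1+S_2$, $Q=S_3+S_4$, and once more on each pair $S_1+S_2$ and $S_3+S_4$, then conjugating the inner inequalities through the outer unitary (which preserves the order) and absorbing the products of unitaries, I obtain
$$
|\bA|^p = (S_1+S_2+S_3+S_4)^{p/2} \le 2^{p-2}\sum_{k=1}^4 Y_k\, S_k^{p/2}\, Y_k^*
$$
for unitaries $Y_1,\dots,Y_4\in\bM_{d'}$; equivalently $2^{2-p}|\bA|^p \le \sum_k Y_k S_k^{p/2} Y_k^*$.

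Then I would remove the squares inside the $S_k$. Since $U_{0,k}$ is an isometry and $t\mapsto t^{p/2}$ vanishes at $0$, the functional-calculus identity $\phi(VMV^*)=V\phi(M)V^*$ already used in the thesis (see \eqref{FF-1}) gives $S_k^{p/2}=U_{0,k}|A_k|^p U_{0,k}^*$, whence $Y_k S_k^{p/2} Y_k^*=(Y_kU_{0,k})|A_k|^p(Y_kU_{0,k})^*$. As each $Y_kU_{0,k}$ is again an isometry, setting $U=Y_1U_{0,1}$, $V=Y_2U_{0,2}$, $W=Y_3U_{0,3}$, $X=Y_4U_{0,4}$ yields the claim for $p>2$. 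For $0<p<2$ the function $t\mapsto t^{p/2}$ is monotone concave, so the concave case of Corollary \ref{C-2.2} reverses every inequality above while $\phi(0)=0$ still holds, giving the reverse inequality with the same constant $2^{2-p}$.

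The only genuinely delicate point is the combinatorics of the pairing: arranging the four blocks into a balanced binary tree is what produces the sharp factor $4^{p/2-1}=2^{p-2}$, whereas a naive threefold iteration of the two-term inequality would leave the two deepest blocks with the worse constant $2^{3(p/2-1)}$ and hence a weaker global constant $2^{3-3p/2}<2^{2-p}$. Everything else is routine, resting only on Corollary \ref{cor-four}, the monotone case of Corollary \ref{C-2.2}, and the isometric functional-calculus identity \eqref{FF-1}.
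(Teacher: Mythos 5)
Your proof is correct and follows essentially the same route as the paper: combine the Pythagorean decomposition of Corollary \ref{cor-four} with the Jensen-type unitary-orbit inequality for the monotone convex function $t\mapsto t^{p/2}$, then absorb the isometries via the functional-calculus identity. The only (cosmetic) difference is that you iterate the two-term inequality of Corollary \ref{C-2.2} along a balanced dyadic pairing, whereas the paper applies the four-term version (\cite[Corollary 2.4]{BL-London}, i.e.\ Corollary \ref{C-2.3} with weights $1/4$) in a single step to $f(|\bA|^2/4)$; both yield the same constant $2^{2-p}$.
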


\vskip 5pt
Letting $p=1$ we have Corollary \ref{revtriangle} with the constant 2 which is sharp, even for a positive block matrix, as shown by
the simple example
$$
\bA=\begin{bmatrix} A&A \\ A&A\end{bmatrix}.
$$

\vskip 5pt
\begin{proof}
For any monotone convex function $f(t)$ on the nonnegative axis, we have thanks to \cite[Corollary 2.4]{BL-London} and Corollary
\ref{cor-four},
\begin{align*}
f\left(\frac{|\bA|^2}{4}\right) &=f\left(\frac{U|A|^2U^* + V|B|^2V^* + W|C|^2W^* + X|D|^2X^*}{4}\right)  \\
&\le \Lambda\frac{f(U|A|^2U^*)+ V(f|B|^2V^*) + f(W|C|^2W^*) + f(X|D|^2X^*)}{4}\Lambda^*
\end{align*}
for some unitary matrix $\Lambda\in\bM_d'$. Picking $f(t)=t^{p/2}$ with $p>2$ yields the result. The reverse inequalities hold for
monotone concave functions $f(t)$ and $0<p<2$.
\end{proof}

\vskip 5pt
\begin{remark} The version of Corollary \ref{cor-four2} for three blocks $A,B,C$, and $p=1$ reads as the  inequality of the abstract,$$
\sqrt{3} |\bA| \ge  U|A|U^* + V|B|V^*+ W|C|W^*.
$$
The constant $\sqrt{3}$ is the best one: we cannot take a smaller constant for
 $$\bA=\begin{bmatrix} x&y&z \\ x&y&z\\ x&y&z\end{bmatrix}$$
 partitioned into its three lines. For two blocks, a similar sharp inequality holds with the constant $\sqrt{2}$.
\end{remark}

\section{Concave or convex functions}

For sake of simplicity we state our results for a square matrix $\bA$ partitioned into blocks. By adding some zero rows or zero
columns to a rectangular matrix, we could obtain statements for rectangular matrices (Remark \ref{last}).

Suppose that $\bA\in\bM_d$ is partitioned into blocks $A_k\in\bM_{n_k,m_k}$, $k=1,\ldots, r$. From Thompson's triangle inequality
(\cite{T} or \cite[p.\ 74]{Bh} we have
\begin{equation}\label{thomp}
|\bA| \le \sum_{k=1}^r U_k|A_k|U_k^*
\end{equation}
for some isometry matrices $U_k\in\bM_{d,m_k}$. The equality of Theorem \ref{th-pyth} and \eqref{thomp} suggest several other
inequalities, in particular, if $\bA$ is partioned in row or column compatible blocks,
\begin{equation}\label{spe}
|\bA|^3 \ge \sum_{k=1}^r V_k|A_k|^3V_k^*
\end{equation}
for some isometries $V_k\in\bM_{d,m_k}$. This is indeed true as shown in the following theorem. We do not know if \eqref{spe} can be
extended to any partitioning. Corollary \ref{cor-four} and the proof of Theorem \ref{th-convex} show that \eqref{spe} holds for four
blocks. The case of five blocks is open.

\vskip 5pt
\begin{theorem}\label{th-convex} Let $\bA\in\bM_{d}$ be partitioned into $r$ row or column compatible blocks $A_k\in\bM_{n_k, m_k}$,
and let $\psi(t)$ be a monotone function on $[0,\infty)$ such that $\psi(\sqrt{t})$ is convex and $\psi(0)= 0$. Then there exist some
isometries $V_k\in\bM_{d,m_k}$ such that
$$
\psi(|\bA|)\ge \sum_{k=1}^r V_k  \psi(|A_k|) V_k^*.
$$
\end{theorem}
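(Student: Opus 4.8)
The plan is to reduce the statement to the Pythagorean identity of Theorem \ref{th-pyth} combined with the superadditivity of convex functions under congruence (Corollary \ref{cor-subadditivity}). First I would introduce the auxiliary function $g(t):=\psi(\sqrt{t})$ on $[0,\infty)$. By hypothesis $g$ is convex, and since $\psi$ is monotone and $t\mapsto\sqrt{t}$ is increasing, $g$ is monotone as well; moreover $g(0)=\psi(0)=0$. The point of this substitution is that $g$ undoes the square root built into the absolute value: for every block $g(|A_k|^2)=\psi(|A_k|)$, and likewise $g(|\bA|^2)=\psi(|\bA|)$.

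Next I would invoke Theorem \ref{th-pyth}, which applies precisely because the blocks are row or column compatible, to write
$$
|\bA|^2=\sum_{k=1}^r U_k|A_k|^2U_k^*
$$
for some isometries $U_k\in\bM_{d,m_k}$. The key elementary observation is that congruence by an isometry commutes with the functional calculus of any function vanishing at $0$: since $U_k^*U_k=I$, the matrix $U_k|A_k|^2U_k^*$ is unitarily equivalent to $|A_k|^2$ on the range of $U_k$ and vanishes on the orthogonal complement, so $g(0)=0$ gives $g(U_k|A_k|^2U_k^*)=U_kg(|A_k|^2)U_k^*=U_k\psi(|A_k|)U_k^*$.

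Then I would apply the superadditivity inequality for monotone convex functions with $g(0)\le0$. Corollary \ref{cor-subadditivity} is stated for two summands, so I would first extend it to $r$ summands by a straightforward induction: splitting off one term at a time and absorbing the leftover unitary conjugation into the unitaries furnished at the next stage, using that a product of two unitaries is again unitary. Applying this $r$-term version to the positive matrices $U_k|A_k|^2U_k^*$ yields unitaries $W_k\in\bM_d$ with
$$
\psi(|\bA|)=g\!\left(\sum_{k=1}^r U_k|A_k|^2U_k^*\right)\ge\sum_{k=1}^r W_k\,g\!\left(U_k|A_k|^2U_k^*\right)W_k^*
=\sum_{k=1}^r W_kU_k\,\psi(|A_k|)\,U_k^*W_k^*.
$$
Setting $V_k:=W_kU_k$ finishes the argument, since $V_k^*V_k=U_k^*W_k^*W_kU_k=U_k^*U_k=I$, so each $V_k\in\bM_{d,m_k}$ is an isometry.

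The only genuinely delicate point is the passage from two to $r$ summands in Corollary \ref{cor-subadditivity}; everything else is bookkeeping. I expect this induction to be routine provided one keeps careful track of the conjugating unitaries, but it is the place where the hypotheses $g(0)=0$ and the monotonicity of $g$ are genuinely used, the latter being required at each application of Corollary \ref{cor-subadditivity}.
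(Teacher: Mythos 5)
Your argument is correct and is essentially the paper's own proof: both substitute $g(t)=\psi(\sqrt t)$, feed the Pythagorean decomposition of Theorem \ref{th-pyth} into the superadditivity inequality $g(A+B)\ge Ug(A)U^*+Vg(B)V^*$ for monotone convex $g$ with $g(0)\le 0$, use $g(0)=0$ to commute $g$ past the isometric congruences, and set $V_k=W_kU_k$. The only difference is that you make explicit the routine induction from two to $r$ summands, which the paper leaves implicit.
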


\vskip 5pt
Theorem \ref{th-convex} considerably improves \eqref{trace}. A special case with $\psi(t)=t^3$ is given in the abstract.

\vskip 5pt
\begin{proof} Let $g(t)$ be a monotone convex function on $[0,\infty)$ such that $g(0)\le 0$, and let $A,B\in\bM_n$ be positive
(semidefinite). By \cite{AB} or \cite[Corollary 3.2]{BL-London} we have
\begin{equation*}
g(A+B) \ge Ug(A)U^* + Vg(B)V^*
\end{equation*}
for some unitary matrices $U,V\in\bM_n$. Using this inequality and Theorem \ref{th-pyth} we infer
\begin{equation*}\label{bien}
g(|\bA|^2)\ge \sum_{k=1}^r W_k g(U_k|A_k|^2U_k^*) W_k^*
\end{equation*}
for some unitary matrices $W_k$ and some isometry matrices $U_k\in\bM_{d,m_k}$. If $g(0)=0$, we have
$g(U_k|A_k|^2U_k^*)=U_kg(|A_k|^2)U_k^*$. Hence
\begin{equation*}
g(|\bA|^2)\ge \sum_{k=1}^r V_k g(|A_k|^2) V_k^*
\end{equation*}
with the isometry matrices $V_k=W_kU_k$. Applying this to $g(t)=\psi(\sqrt{t})$ completes the proof.
\end{proof}

\vskip 5pt
\begin{cor}\label{cor-concave} Let $\bA\in\bM_{d}$ be partitioned into $r$ row or column compatible blocks $A_k\in\bM_{n_k, m_k}$,
and let $\varphi(t)$ be a nonnegative function on $[0,\infty)$ such that $\varphi(\sqrt{t})$ is concave. Then there exist some
isometries $U_k\in\bM_{d,n_k}$ such that
$$
\varphi(|\bA|)\le \sum_{k=1}^{r} U_k  \varphi(|A_k|) U_k^*.
$$
\end{cor}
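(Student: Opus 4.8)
The statement is the concave companion to Theorem \ref{th-convex}, so the natural strategy is to mirror that proof exactly, passing from the convex inequality to the concave one by the usual trick of replacing a convex $g$ with its concave counterpart. The hypothesis is that $\varphi(\sqrt{t})$ is concave and $\varphi$ is nonnegative on $[0,\infty)$. First I would set $h(t):=\varphi(\sqrt{t})$, so that $h$ is a nonnegative concave function on $[0,\infty)$. The goal is then to produce isometries $U_k\in\bM_{d,n_k}$ with
$$
h(|\bA|^2)\le \sum_{k=1}^{r} U_k\, h(|A_k|^2)\, U_k^*,
$$
since $h(|\bA|^2)=\varphi(|\bA|)$ and $h(|A_k|^2)=\varphi(|A_k|)$.

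\textbf{Key steps.} The engine is Theorem \ref{th-pyth}, which gives isometries $W_k\in\bM_{d',m_k}$ (here $d'=d$) with
$$
|\bA|^2=\sum_{k=1}^{r} W_k\,|A_k|^2\,W_k^*.
$$
Next I would invoke the subadditivity inequality for concave functions, Theorem \ref{T-3.1} (equivalently \cite[Theorem 3.4]{BL-London}): for a monotone concave $f$ on $[0,\infty)$ with $f(0)\ge 0$ and positive matrices $S,T$, one has $f(S+T)\le U f(S)U^*+Vf(T)V^*$ for some unitaries $U,V$. Iterating this over the $r$ summands $W_k|A_k|^2W_k^*$ yields unitaries $\Lambda_k$ with
$$
h(|\bA|^2)\le \sum_{k=1}^{r}\Lambda_k\, h\!\left(W_k|A_k|^2W_k^*\right)\Lambda_k^*.
$$
The point where the hypotheses are used crucially is the identity $h(W_k X W_k^*)=W_k\,h(X)\,W_k^*$ for an isometry $W_k$, which holds precisely when $h(0)=0$; this is the analogue of \eqref{FF-1} and of the step in Theorem \ref{th-convex}. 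Setting $U_k:=\Lambda_k W_k$ (again an isometry) then gives the claimed inequality.

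\textbf{The main obstacle.} The one genuine subtlety is the normalization $h(0)=0$, which is \emph{not} assumed: $\varphi$ is only required nonnegative. Unlike the convex case, where one simply subtracts a constant, here I would handle it exactly as in the proof of Corollary \ref{cor-rot}: if the inequality holds for all nonnegative concave $h$ with $h(0)=0$, then it holds for $h+c$ with $c\ge 0$, because adding $c$ changes the left side by $cI_{d}$ and the right side by $c\sum_k U_kU_k^*\le cI_d$ (each $U_kU_k^*$ is a projection, but they need not be orthogonal), so one must check $\sum_k U_kU_k^*\le I_d$. In fact the $U_k$ arising from the row/column compatible decomposition have ranges summing to the whole space with $\sum_k U_kU_k^*=I_d$ in the exact Pythagorean identity, so the constant passes through cleanly; a short limiting argument (continuity of concave functions, restricting to the finite set of eigenvalues of the matrices involved, and approximating $h$ by a piecewise affine concave function vanishing at $0$, as in the proof of Theorem \ref{th-HH1}) reduces the general nonnegative case to the case $h(0)=0$. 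That reduction is the only place requiring care; the rest is a direct transcription of the proof of Theorem \ref{th-convex}.
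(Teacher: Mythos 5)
Your core argument for the case $\varphi(0)=0$ is correct and is essentially the paper's: the paper simply compresses your ``Pythagoras $+$ concave subadditivity $+$ $h(WXW^*)=Wh(X)W^*$'' chain into a one-line application of Theorem \ref{th-convex} to $\psi=-\varphi$, which unwinds to exactly what you wrote.

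The genuine gap is in your reduction of the general nonnegative case to $\varphi(0)=0$. First, the inequality you need goes the other way: replacing $h$ by $h+c$ adds $cI_d$ to the left side and $c\sum_k U_kU_k^*$ to the right side, so to preserve ``$\le$'' you need $\sum_k U_kU_k^*\ge I_d$, not $\le I_d$. Second, the claimed identity $\sum_k U_kU_k^*=I_d$ is false: the $U_k$ in Theorem \ref{th-pyth} are isometries of ranks $m_k$ with $\sum_k m_k$ typically strictly larger than $d$ (e.g.\ $2d$ for a $2\times 2$ block partition), and even when the ranks do add up to $d$ the ranges need not be mutually orthogonal, so neither $=I_d$ nor $\ge I_d$ can be asserted; moreover the unitaries $\Lambda_k$ from the subadditivity step scramble the ranges further. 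The constant-shift trick works for the \emph{trace} inequality of Corollary \ref{cor-rot} precisely because dimensions add up under the trace, but it does not transfer to an operator inequality with isometric congruences. Your fallback sketch (``restrict to the finite set of eigenvalues of the matrices involved and approximate by a piecewise affine concave function vanishing at $0$'') is also not quite right as stated: the eigenvalues of the blocks $|A_k|$ can sit at or arbitrarily near $0$ even when $|\bA|$ is invertible, and there $\varphi$ takes the value $\varphi(0)>0$ which no concave modification vanishing at $0$ can match. The correct repair, which is what the paper does, is one-sided: after a limit argument reducing to $|\bA|$ invertible with spectrum of $|\bA|^2$ in $[r^2,s^2]$, $r>0$, replace $\varphi$ by the function $\phi$ with $\phi(\sqrt t)=\varphi(\sqrt t)$ for $t\ge r^2$, $\phi(0)=0$, and $\phi(\sqrt t)$ affine on $[0,r^2]$; concavity gives $\phi\le\varphi$ everywhere while $\phi(|\bA|)=\varphi(|\bA|)$, so applying the case $\phi(0)=0$ and then enlarging $\phi(|A_k|)$ to $\varphi(|A_k|)$ on the right-hand side finishes the proof. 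The point you are missing is that the modified function need only agree with $\varphi$ on the spectrum of $|\bA|$ and be \emph{dominated} by $\varphi$ elsewhere.
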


\vskip 5pt
\begin{proof} Since $\varphi(\sqrt{t})$ is nonnegative and concave, it is necessarily a monotone function (nondecreasing), hence
continuous on $(0,\infty)$. Since we are dealing with matrices we may further suppose that $\varphi(t)$ is also continuous at $t=0$.

(1) Assume that $\varphi(0)=0$. Theorem \ref{th-convex} applied to $\psi(t)=-\varphi(t)$ proves the corollary.

(2) Assume that $\varphi(0)>0$. Since the continuous functional calculus is continuous on the positive semidefinite cone of any
$\bM_m$, by a limit argument, we may assume that $|\bA|$ is invertible. So, suppose that the spectrum of $|\bA|^2$ lies in an
interval $[r^2, s^2]$ with $r > 0$.
Define a convex function $\phi(\sqrt{t})$ by $\phi(\sqrt{t})=\varphi(\sqrt{t})$ for $t\ge r^2$, $\phi(0)=0$, and the graph of
$\phi(\sqrt{t})$ on $[0,r^2]$ is a line segment. Hence $\phi(t)\le \varphi(t)$ and $\phi(|\bA|)=\varphi(|\bA|)$. Applying case (1) to
$\phi$ yields
$$
\varphi(|\bA|)=\phi(|\bA|)\le \sum_{k=1}^{r} U_k  \phi(|A_k|) U_k^* \le \sum_{k=1}^{r} U_k  \varphi(|A_k|) U_k^*
$$
for some isometry matrices $U_k$.
\end{proof}

\vskip 5pt
The next three corollaries follow from Corollary \ref{cor-concave}.

\vskip 5pt
\begin{cor}\label{cor-1} Let $\bA\in\bM_{mn}$ be partitioned into an $m\times m$ family of blocks $A_{i,j}\in\bM_{n}$, and let $0<
q\le 2$. Then there exist some isometries $U_{i,j}\in\bM_{mn, n}$ such that
$$
|\bA|^q\le \sum_{i,j=1}^m U_{i,j}  |A_{i,j}|^q U_{i,j}^*.
$$
\end{cor}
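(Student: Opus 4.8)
The plan is to obtain Corollary \ref{cor-1} as a direct specialization of Corollary \ref{cor-concave}, so the whole argument reduces to checking that the hypotheses of that corollary are met. First I would record that the standard partitioning of $\bA\in\bM_{mn}$ into an $m\times m$ grid of square blocks $A_{i,j}\in\bM_n$ is simultaneously row and column compatible. Indeed, the block $A_{i,j}$ occupies rows $(i-1)n+1,\dots,in$ and columns $(j-1)n+1,\dots,jn$, so for any two blocks $A_{i,j}$ and $A_{k,l}$ the row indices satisfy either $i=k$ (same set of rows) or $i\neq k$ (disjoint sets of rows), and likewise for the columns. Hence the compatibility requirement needed to invoke Corollary \ref{cor-concave} holds automatically, with $r=m^2$ blocks and ambient dimension $d=mn$.

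Next I would verify that the function $\varphi(t)=t^q$ fits the requirements of Corollary \ref{cor-concave} precisely when $0<q\le 2$. It is nonnegative on $[0,\infty)$, and $\varphi(\sqrt{t})=t^{q/2}$ is concave on $[0,\infty)$ exactly because $0<q/2\le 1$. (For $q>2$ the power $t^{q/2}$ becomes convex, which is why the range $0<q\le 2$ is the natural one; that convex regime is instead the province of Theorem \ref{th-convex}.)

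With these two observations in place, applying Corollary \ref{cor-concave} to $\bA$ with this $\varphi$ produces isometries $U_{i,j}$ such that
$$
\varphi(|\bA|)\le \sum_{i,j=1}^m U_{i,j}\,\varphi(|A_{i,j}|)\,U_{i,j}^*,
$$
which is the asserted inequality, since $\varphi(|\bA|)=|\bA|^q$ and $\varphi(|A_{i,j}|)=|A_{i,j}|^q$. Because every square block $A_{i,j}$ lies in $\bM_n$, the isometries furnished by Corollary \ref{cor-concave} land in $\bM_{mn,n}$, matching the stated sizes. I do not expect any genuine obstacle here: all the analytic content is already carried by Corollary \ref{cor-concave} (which itself rests on the Pythagorean identity of Theorem \ref{th-pyth} together with the subadditivity inequality via unitary orbits), and the only points requiring attention are the routine compatibility of the grid partitioning and the concavity of $t^{q/2}$ for $q\le 2$.
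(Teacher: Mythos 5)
Your proof is correct and is exactly the paper's route: the paper simply states that this corollary follows from Corollary \ref{cor-concave}, and your verification that the $m\times m$ grid partitioning is row/column compatible and that $\varphi(t)=t^q$ satisfies the concavity hypothesis for $0<q\le 2$ supplies precisely the details left implicit there.
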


\vskip 5pt
\begin{cor}\label{cor-2} Let $\bA\in\bM_{mn}$ be partitioned into an $m\times m$ family of blocks $A_{i,j}\in\bM_{n}$, let $s\ge 1$
and $0< q\le 2$. Then,
$$
\left\{{\mathrm{Tr}\,}|\bA|^{qs}\right\}^{1/s}\le \sum_{i,j=1}^m  \left\{{\mathrm{Tr}\,}|A_{i,j}|^{qs}\right\}^{1/s}.
$$
\end{cor}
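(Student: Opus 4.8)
The plan is to derive this purely from Corollary \ref{cor-1} together with three elementary properties of the Schatten $s$-norm $\|\cdot\|_s$ (valid since $s\ge 1$): monotonicity on the positive cone, the triangle inequality, and invariance under isometric congruence. The first step is to recast both sides as Schatten norms. Because $|\bA|^q$ and each $|A_{i,j}|^q$ are positive semidefinite, we have
$$
\left\{{\mathrm{Tr}\,}|\bA|^{qs}\right\}^{1/s}=\left\| \, |\bA|^q\right\|_s
\qquad\text{and}\qquad
\left\{{\mathrm{Tr}\,}|A_{i,j}|^{qs}\right\}^{1/s}=\left\|\,|A_{i,j}|^q\right\|_s ,
$$
so the asserted inequality is equivalent to $\bigl\|\,|\bA|^q\bigr\|_s\le \sum_{i,j=1}^m \bigl\|\,|A_{i,j}|^q\bigr\|_s$.

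Next I would invoke Corollary \ref{cor-1}: since $0<q\le 2$, there exist isometries $U_{i,j}\in\bM_{mn,n}$ with
$$
|\bA|^q\le \sum_{i,j=1}^m U_{i,j}\,|A_{i,j}|^q\, U_{i,j}^* .
$$
Both sides are positive, so by monotonicity of the Schatten norm under the positive order, $\bigl\|\,|\bA|^q\bigr\|_s$ is bounded by the norm of the right-hand side. The triangle inequality for $\|\cdot\|_s$ (available precisely because $s\ge 1$) then yields
$$
\left\|\,|\bA|^q\right\|_s
\le \left\| \sum_{i,j=1}^m U_{i,j}\,|A_{i,j}|^q\, U_{i,j}^* \right\|_s
\le \sum_{i,j=1}^m \left\| U_{i,j}\,|A_{i,j}|^q\, U_{i,j}^* \right\|_s .
$$

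Finally I would use that $U_{i,j}$ is an isometry, so $U_{i,j}\,|A_{i,j}|^q\, U_{i,j}^*$ has the same nonzero singular values as $|A_{i,j}|^q$; hence $\bigl\| U_{i,j}\,|A_{i,j}|^q\, U_{i,j}^*\bigr\|_s=\bigl\|\,|A_{i,j}|^q\bigr\|_s$, and substituting this into the last display gives exactly the claim. There is no genuine obstacle here: the only points requiring a word of justification are the monotonicity of $\|\cdot\|_s$ on the positive cone and the isometric invariance of singular values, both of which are standard facts about symmetric norms as recalled in the introductory material. The whole argument is thus a short application of Corollary \ref{cor-1}.
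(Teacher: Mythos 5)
Your proof is correct and follows exactly the route the paper intends: the paper states that this corollary "follows from Corollary \ref{cor-concave}" (of which Corollary \ref{cor-1} is the case $\varphi(t)=t^q$), and the deduction via monotonicity, the triangle inequality for $\|\cdot\|_s$, and isometric invariance is the same argument used earlier for the analogous inequality \eqref{trace}. Nothing is missing.
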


\vskip 5pt
\begin{cor}\label{cor-3} Let $A\in\bM_{n}$, let $c_k$ be the norm of the $k$-th column of $A$ and let $0< q\le 2$. Then there exist
some rank one projections $E_k\in\bM_{n}$ such that
$$
|\bA|^q\le \sum_{k=1}^n c_k^q E_k
$$
\end{cor}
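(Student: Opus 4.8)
The plan is to obtain Corollary \ref{cor-3} as the special case of Corollary \ref{cor-concave} in which $A$ (that is, $\bA$) is partitioned into its $n$ columns. Write $A=A_1\cup\cdots\cup A_n$, where $A_k\in\bM_{n,1}$ denotes the $k$-th column of $A$, regarded as a submatrix sitting in its natural position. This is a column compatible partitioning in the sense of the definition preceding Theorem \ref{th-pyth}, since any two distinct columns occupy disjoint sets of columns of $A$. Thus the structural hypothesis of Corollary \ref{cor-concave} is automatic, and it remains only to feed in an admissible function and to identify the resulting blockwise data.

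First I would record the elementary reduction that the absolute value of a single column is a scalar: for the $n\times 1$ block $A_k$ one has $A_k^*A_k=\|A_k\|^2=c_k^2$, a $1\times 1$ matrix, so $|A_k|=(A_k^*A_k)^{1/2}=c_k\in\bM_1$, and hence $\varphi(|A_k|)=c_k^q$ for $\varphi(t)=t^q$. Next I would verify that this $\varphi$ is admissible for the range $0<q\le 2$: it is nonnegative on $[0,\infty)$, and $\varphi(\sqrt{t})=t^{q/2}$ is concave because $0<q/2\le 1$. Corollary \ref{cor-concave} then yields isometries $U_k$, one attached to each one-dimensional block, such that $|A|^q\le\sum_{k=1}^n U_k\,\varphi(|A_k|)\,U_k^*=\sum_{k=1}^n c_k^q\,U_kU_k^*$.

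To finish, I would observe that each $U_k$ is an isometry into the $k$-th block of size $1$, i.e.\ $U_k\in\bM_{n,1}$ with $U_k^*U_k=1$, which means $U_k$ is a unit vector of $\bC^n$; consequently $E_k:=U_kU_k^*$ is precisely an orthogonal rank one projection, and the inequality reads $|A|^q\le\sum_{k=1}^n c_k^q E_k$, as claimed. I do not anticipate any genuine obstacle here: the only subtlety is the degenerate block geometry, namely that an isometry into a $1$-dimensional block is a unit vector, so that the matrix congruence $U_k\varphi(|A_k|)U_k^*$ collapses to the scalar multiple $c_k^q E_k$ of a rank one projection rather than to a full matrix congruence.
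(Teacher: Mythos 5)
Your proof is correct and is exactly the argument the paper intends: it states that Corollary \ref{cor-3} "follows from Corollary \ref{cor-concave}", and your realization — partitioning $\bA$ into its $n$ columns (a column compatible partitioning), taking $\varphi(t)=t^q$ with $\varphi(\sqrt{t})=t^{q/2}$ concave for $0<q\le 2$, and noting that $|A_k|=c_k$ is a scalar so each isometry $U_k\in\bM_{n,1}$ is a unit vector with $U_kU_k^*$ a rank one projection — supplies precisely the details the paper leaves implicit.
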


\vskip 5pt
The last corollaries follow from Theorem \ref{th-convex}.

\vskip 5pt
\begin{cor}\label{cor-4} Let $\bA\in\bM_{mn}$ be partitioned into an $m\times m$ family of blocks $A_{i,j}\in\bM_{n}$, and let $p\ge
2$. Then there exist some isometries $U_{i,j}\in\bM_{mn, n}$ such that
$$
|\bA|^p\ge \sum_{i,j=1}^m U_{i,j}  |A_{i,j}|^p U_{i,j}^*.
$$
\end{cor}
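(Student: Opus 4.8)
The plan is to deduce Corollary \ref{cor-4} directly from the Pythagorean Theorem \ref{th-pyth} combined with the superadditivity inequality for convex functions (Theorem \ref{th-convex}, or more precisely its engine, \cite[Corollary 3.2]{BL-London}), in exactly the manner already carried out in the proof of Theorem \ref{th-convex}. First I would observe that any partitioning of a square matrix into an $m\times m$ array of square blocks $A_{i,j}\in\bM_n$ is automatically both row compatible and column compatible: the blocks in a fixed block-row share the same set of rows of $\bA$, and blocks in different block-rows occupy disjoint sets of rows (and symmetrically for columns). Hence Theorem \ref{th-pyth} applies without any of the compatibility subtleties that arise for arbitrary four- or five-block partitionings.

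By Theorem \ref{th-pyth}, there exist isometries $U_{i,j}\in\bM_{mn,n}$ such that
$$
|\bA|^2=\sum_{i,j=1}^m U_{i,j}|A_{i,j}|^2U_{i,j}^*.
$$
The next step is to apply the superadditivity of a monotone convex function $g$ with $g(0)\le 0$ on positive semidefinite matrices: by \cite[Corollary 3.2]{BL-London}, applied inductively across the $m^2$ summands, there exist unitary matrices $W_{i,j}$ with
$$
g\!\left(\sum_{i,j=1}^m U_{i,j}|A_{i,j}|^2U_{i,j}^*\right)\ge \sum_{i,j=1}^m W_{i,j}\,g\!\left(U_{i,j}|A_{i,j}|^2U_{i,j}^*\right)W_{i,j}^*.
$$
Choosing $g(t)=t^{p/2}$ with $p\ge 2$, which is monotone convex on $[0,\infty)$ and satisfies $g(0)=0$, and using $g(0)=0$ to pass the isometry through the functional calculus, namely $g(U_{i,j}|A_{i,j}|^2U_{i,j}^*)=U_{i,j}\,g(|A_{i,j}|^2)\,U_{i,j}^*=U_{i,j}|A_{i,j}|^pU_{i,j}^*$, yields
$$
|\bA|^p\ge \sum_{i,j=1}^m W_{i,j}U_{i,j}\,|A_{i,j}|^p\,U_{i,j}^*W_{i,j}^*.
$$
Setting $V_{i,j}:=W_{i,j}U_{i,j}$, which is again an isometry in $\bM_{mn,n}$ since it is a product of a unitary and an isometry, gives precisely the claimed inequality.

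I expect no genuine obstacle here, since the result is a clean specialization of Theorem \ref{th-convex} (take $\psi(t)=t^p$, so that $\psi(\sqrt t)=t^{p/2}$ is convex with $\psi(0)=0$) to the square block-array setting, which is one of the partitionings already covered by that theorem. The only points requiring care are the verifications that an $m\times m$ square-block array is row or column compatible, and that the identity $g(U|A_{i,j}|^2U^*)=Ug(|A_{i,j}|^2)U^*$ indeed holds for an isometry $U$ when $g(0)=0$ (this is the same use of $g(0)=0$ that appears in the proof of Theorem \ref{th-convex}, and follows because an isometry intertwines the functional calculus of a positive matrix with that of its isometric compression once the zero spectral value is handled). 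Both are routine, so the proof is essentially a one-line appeal to Theorem \ref{th-convex} with the explicit choice $\psi(t)=t^p$.
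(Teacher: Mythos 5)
Your proof is correct and follows essentially the same route as the paper, which simply derives Corollary \ref{cor-4} as the special case $\psi(t)=t^p$ of Theorem \ref{th-convex}; your write-up just unpacks that theorem's proof (Theorem \ref{th-pyth} plus the superadditivity inequality for monotone convex $g$ with $g(0)\le 0$) for the $m\times m$ block array, which is indeed row and column compatible.
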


\vskip 5pt
\begin{cor}\label{cor-5} Let $\bA\in\bM_{mn}$ be partitioned into an $m\times m$ family of blocks $A_{i,j}\in\bM_{n}$, let $0\le s\le
1$ and $p\ge 2$. Then,
$$
\left\{{\mathrm{Tr}\,}|\bA|^{ps}\right\}^{1/s}\ge \sum_{i,j=1}^m  \left\{{\mathrm{Tr}\,}|A_{i,j}|^{ps}\right\}^{1/s}.
$$
\end{cor}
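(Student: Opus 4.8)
The plan is to deduce Corollary~\ref{cor-5} directly from the operator inequality of Corollary~\ref{cor-4}, converting it into the desired scalar inequality by exploiting two complementary properties of the functional $X\mapsto ({\mathrm{Tr\,}}X^s)^{1/s}$ on the positive cone: its monotonicity for the operator order and its superadditivity. I restrict to $0<s\le 1$, the value $s=0$ being degenerate (and the case $s=1$ being trivial since the trace is additive).

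First I would apply Corollary~\ref{cor-4}, which is legitimate because the $m\times m$ family $\{A_{i,j}\}$ is a genuine row (and column) compatible partition of $\bA$ and $p\ge 2$. This yields isometries $U_{i,j}\in\bM_{mn,n}$ with
$$
|\bA|^p \ge \sum_{i,j=1}^m U_{i,j}|A_{i,j}|^pU_{i,j}^* =: C.
$$
Set $B=|\bA|^p$, so $B\ge C\ge 0$. Since $t\mapsto t^s$ is operator monotone for $s\in(0,1]$ (the L\"owner--Heinz inequality recalled in Chapter~2), we get $B^s\ge C^s$, and taking traces followed by the increasing map $t\mapsto t^{1/s}$ gives
$$
\left({\mathrm{Tr\,}}|\bA|^{ps}\right)^{1/s} = \left({\mathrm{Tr\,}}B^{s}\right)^{1/s} \ge \left({\mathrm{Tr\,}}C^{s}\right)^{1/s}.
$$

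It then remains to bound ${\mathrm{Tr\,}}C^s$ from below block by block. Here I would invoke the superadditivity of the Schatten $s$-quasi-norm on $\bM_{mn}^+$ for $0<s<1$ (as used already in Corollary~\ref{corSchatt2}), namely $({\mathrm{Tr\,}}(\sum_k X_k)^s)^{1/s}\ge \sum_k ({\mathrm{Tr\,}}X_k^s)^{1/s}$ for positive $X_k$, applied to $X_{i,j}=U_{i,j}|A_{i,j}|^pU_{i,j}^*$. Finally, since each $U_{i,j}$ is an isometry, $U_{i,j}|A_{i,j}|^pU_{i,j}^*$ is unitarily equivalent to $|A_{i,j}|^p\oplus 0$, so ${\mathrm{Tr\,}}(U_{i,j}|A_{i,j}|^pU_{i,j}^*)^s = {\mathrm{Tr\,}}|A_{i,j}|^{ps}$. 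Chaining these three steps produces exactly the claimed inequality.

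I do not expect a serious obstacle, as the statement is essentially a packaging of Corollary~\ref{cor-4}; the only point requiring care is the simultaneous use of the two opposing features of $X\mapsto({\mathrm{Tr\,}}X^s)^{1/s}$---operator monotonicity to pass the $\ge$ from $B$ to $C$, and superadditivity to split the sum over blocks---together with checking that the quasi-norm superadditivity genuinely holds throughout the range $0<s<1$ and degenerates correctly to trace additivity at $s=1$.
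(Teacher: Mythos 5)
Your proof is correct and follows the route the paper intends: the text states that Corollaries 8.5.4--8.5.6 "follow from Theorem \ref{th-convex}", and your chain (Corollary \ref{cor-4} with $\psi(t)=t^p$, then monotonicity of $X\mapsto\{{\mathrm{Tr}\,}X^s\}^{1/s}$ for the operator order, then superadditivity of the Schatten $s$-quasi-norm on the positive cone) is exactly the intended packaging, mirroring how Corollary \ref{cor-2} is deduced from Corollary \ref{cor-1} via the triangle inequality. The only cosmetic remark is that the L\"owner--Heinz step can be replaced by Weyl's eigenvalue monotonicity ($B\ge C\ge 0$ implies $\lambda_j(B)\ge\lambda_j(C)$), which suffices to compare the traces.
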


\vskip 5pt
\begin{cor}\label{cor-6} Let $A\in\bM_{n}$, let $r_k$ be the norm of the $k$-th row of $A$ and let $p\ge 2$. Then there exist some
rank one projections $E_k\in\bM_{n}$ such that
$$
|\bA|^p\ge \sum_{k=1}^n c_k^p E_k.
$$
\end{cor}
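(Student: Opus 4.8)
The plan is to obtain this as a direct specialization of Theorem \ref{th-convex}, applied to the partitioning of $A$ into its individual rows, together with the observation that the absolute value of a single row is a scalar multiple of a rank one projection. (In view of the companion results Corollary \ref{cor-3} and Corollary \ref{cor-5}, the symbol $c_k$ in the statement should read $r_k$, the norm of the $k$-th row; I establish the inequality in that form.)

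First I would write $A=R_1\cup\cdots\cup R_n$ with $R_k\in\bM_{1,n}$ the $k$-th row of $A$. Since two distinct rows occupy disjoint sets of rows of $A$, this partitioning is row compatible, so Theorem \ref{th-convex} applies. The next step is to compute $|R_k|$: setting $w_k=R_k^*\in\bM_{n,1}$, one has $R_k^*R_k=w_kw_k^*$, a positive rank one matrix whose only nonzero eigenvalue is $\|w_k\|^2=r_k^2$. Hence $|R_k|=(w_kw_k^*)^{1/2}=r_kP_k$, where $P_k=w_kw_k^*/\|w_k\|^2$ is the rank one orthogonal projection onto the line $\bC w_k$, and consequently $|R_k|^p=r_k^pP_k$ for every $p>0$ because $P_k$ is idempotent. (A zero row gives a vanishing term, so those indices may be discarded or assigned an arbitrary $P_k$.)

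Then I would invoke Theorem \ref{th-convex} with $\psi(t)=t^p$. Its hypotheses are immediate for $p\ge 2$: the map $\psi$ is monotone on $[0,\infty)$, satisfies $\psi(0)=0$, and $\psi(\sqrt t)=t^{p/2}$ is convex precisely because $p/2\ge 1$. This yields isometries $V_k$ with $|A|^p=\psi(|A|)\ge\sum_{k=1}^n V_k|R_k|^pV_k^*=\sum_{k=1}^n r_k^p\,V_kP_kV_k^*$. The concluding step is to put $E_k:=V_kP_kV_k^*$ and note, using $V_k^*V_k=I$, that $E_k^2=V_kP_k(V_k^*V_k)P_kV_k^*=E_k$, so that each $E_k$ is again a rank one projection; this produces the asserted inequality $|A|^p\ge\sum_{k=1}^n r_k^p E_k$.

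This argument is essentially routine, since all of the substance is already contained in Theorem \ref{th-convex}; the present corollary merely specializes it to the finest row partitioning. The only points needing genuine verification are the two identifications---that partitioning into single rows is admissible for Theorem \ref{th-convex}, and that $|R_k|^p=r_k^pP_k$---and the elementary fact that conjugating a rank one projection by an isometry yields a rank one projection. I therefore expect no real obstacle in carrying the plan through.
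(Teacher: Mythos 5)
Your proof is correct and is exactly the route the paper intends: the text simply states that Corollary \ref{cor-6} "follows from Theorem \ref{th-convex}", and your specialization to the partition of $A$ into its $n$ rows (which is row compatible), the identification $|R_k|^p=r_k^pP_k$, and the conjugation of $P_k$ by the resulting isometries supply precisely the missing routine details. You are also right that the $c_k$ in the displayed inequality is a typo for $r_k$, consistent with the companion Corollary \ref{cor-3}.
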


\vskip 5pt
\begin{remark}\label{last} The proof of Theorem \ref{th-convex} is the same for a $d\times d'$ matrix $\bA$. So Corollary
\ref{cor-concave} also holds for $\bA\in\bM_{d,d'}$ if $\varphi(0)=0$. In case of $d\ge d'$, we may again use a limit argument and
assume that $|\bA|$ is invertible. In case of $d'>d$ we may argue as follows. Add some zero lines to $\bA$ in order to obtain
a square matrix $\bA_0\in\bM_d'$. Let $B_1\ldots,B_p$ be the blocks at the bottom of $\bA$, and $R_1,\ldots,R_q$ be the remaining
blocks of $\bA$. Add some zeros to the blocks $B_i$ in order to obtain blocks $B_i^0$ of $\bA_0$ in such a way that
$$
\bA_0 =\left(\bigcup_{i} B_i^0\right)\cup \left(\bigcup_{j} R_j\right)
$$
is a row or column compatible partitioning of $\bA_0$. Since it is a square matrix, we may apply Corollary \ref{cor-concave} and
since
$|\bA_0|=|\bA|$ and  $|B_i^0|=|B^i|$, we see that Corollary \ref{cor-concave} holds for $d\times d'$ matrices.
\end{remark}

\section{Around this article}

For a positive block-diagonal matrix, Theorem \ref{th-pyth} is trivial. The following statement \cite{BL-direct} is more interesting.

\vskip 10pt
\begin{theorem} \label{th-directsum} Let $A_i\in\bM_n^+$, $i\in\bI_m$. Then, for some isometries $V_k\in\bM_{mn,n}$, $k\in\bI_m$, 
\begin{equation*}
\bigoplus_{i=1}^m A_i =\frac{1}{m}\sum_{k=1}^m V_k\left\{\sum_{i=1}^m A_i \right\} V_k^*.
\end{equation*}
\end{theorem}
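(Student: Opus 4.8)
The plan is to exhibit the isometries explicitly by means of a discrete Fourier transform on the index set, handling first the case in which the total sum $\Delta:=\sum_{i=1}^m A_i$ is invertible and then reducing the general case to this one by a compactness argument. First I would fix a primitive $m$-th root of unity $\omega$ and, assuming $\Delta$ invertible, define each $V_k\in\bM_{mn,n}$ through its $m$ block rows $(V_k)_i\in\bM_n$,
$$
(V_k)_i = \omega^{(k-1)(i-1)}\,A_i^{1/2}\,\Delta^{-1/2}, \qquad i=1,\ldots,m.
$$
(The construction uses complex isometries, through the roots of unity.) That $V_k$ is an isometry is immediate, since $|\omega^{(k-1)(i-1)}|=1$ gives
$$
V_k^*V_k = \sum_{i=1}^m (V_k)_i^*(V_k)_i = \Delta^{-1/2}\Big(\sum_{i=1}^m A_i\Big)\Delta^{-1/2} = \Delta^{-1/2}\Delta\,\Delta^{-1/2} = I_n.
$$

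Next I would compute the $(i,j)$ block of $V_k\Delta V_k^*$, namely $(V_k)_i\,\Delta\,(V_k)_j^*$. Because $\Delta^{-1/2}\Delta\,\Delta^{-1/2}=I_n$, this block equals $\omega^{(k-1)(i-j)}A_i^{1/2}A_j^{1/2}$, with no commutativity between $A_i^{1/2}$ and $\Delta^{-1/2}$ required. Averaging over $k$ and invoking the character-orthogonality relation $\tfrac1m\sum_{k=1}^m \omega^{(k-1)(i-j)} = \delta_{ij}$, which holds because $|i-j|<m$, the $(i,j)$ block of $\tfrac1m\sum_{k=1}^m V_k\Delta V_k^*$ collapses to $\delta_{ij}A_i$. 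Hence
$$
\frac1m\sum_{k=1}^m V_k\Delta V_k^* = \bigoplus_{i=1}^m A_i,
$$
which is the asserted identity whenever $\Delta$ is invertible.

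Finally, for arbitrary $A_i\in\bM_n^+$ I would apply this construction to the perturbed family $\{A_i+\varepsilon I\}_{i=1}^m$, whose sum $\Delta+m\varepsilon I$ is invertible, obtaining isometries $V_k^{(\varepsilon)}$ with $\tfrac1m\sum_k V_k^{(\varepsilon)}(\Delta+m\varepsilon I)V_k^{(\varepsilon)*}=\bigoplus_i(A_i+\varepsilon I)$. Since the set of isometries in $\bM_{mn,n}$ is compact, I can extract along a sequence $\varepsilon\to 0$ limits $V_k^{(\varepsilon)}\to V_k$, which are again isometries, and continuity of the congruence map yields $\tfrac1m\sum_k V_k\Delta V_k^*=\bigoplus_i A_i$. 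The only delicate point is precisely this passage to the limit: the explicit formula degenerates for singular $\Delta$ (the ranges of the $A_i$ need not span $\bC^n$, so $V_k$ would merely be a partial isometry), and it is compactness of the Stiefel manifold of isometries that repairs this. The algebraic heart of the argument, by contrast, is the clean cancellation of all off-diagonal blocks furnished by the orthogonality of the characters $k\mapsto\omega^{(k-1)(i-j)}$.
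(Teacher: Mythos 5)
Your argument is correct and is essentially the proof of the cited source \cite{BL-direct} (the thesis itself only states this theorem and refers there): your $V_k$ are exactly $U_kW$, where $W$ is the isometric factor in the polar decomposition of the column matrix with blocks $A_i^{1/2}$ (so that $W\Delta W^*=[A_i^{1/2}A_j^{1/2}]$) and the $U_k$ are the diagonal root-of-unity unitaries realizing the pinching onto the block diagonal as an average of $m$ unitary congruences. The $\varepsilon$-perturbation and compactness step correctly handles singular $\Delta$ (one could equivalently just extend the partial isometry $W$ to an isometry, since $V_k\Delta V_k^*$ only depends on $V_k$ on the range of $\Delta$).
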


\vskip 10pt 
This says that direct sums are averages of usual sums, up to isometric congruences. It is a genuine non-commutative fact with no analogous statement for positive vectors and permutations of their components. 

\section{References of Chapter 8}

{\small
\begin{itemize}

\item[[12\!\!\!]]  J.S.\ Aujla and J.-C.\ Bourin, Eigenvalue inequalities for convex and log-convex functions,
\textit{Linear Algebra Appl.} 424 (2007), 25--35.

\item[[13\!\!\!]]  R.\ Bhatia, Matrix Analysis, Gradutate Texts in Mathematics, Springer, New-York, 1996.

\item[[14\!\!\!]] R.\ Bhatia, Pinching, trimming, truncating, and averaging of matrices. {\it Amer.\ Math.\ Monthly} 107 (2000),

\item[[17\!\!\!]] 
 R.\ Bhatia and F.\ Kittaneh, Norm inequalities for partitioned operators and an application. {\it Math. Ann}.\ 287
(1990), no.\ 4, 719--726.
no. 7, 602--608.

\item[[31\!\!\!]]  J.-C.\ Bourin and E.-Y.\ Lee, Unitary orbits of Hermitian operators with convex or concave functions, {\it Bull.\ Lond.\
Math.\ Soc.}\ 44 (2012), no.\ 6, 1085--1102.

\item[[35\!\!\!]] J.-C.\ Bourin and E.-Y.\ Lee, Direct sums of positive semi-definite matrices. {\it Linear Algebra Appl}.\ 463
(2014), 273--281.

\item[[42\!\!\!]]  J.-C.\ Bourin and E.-Y.\ Lee,  A Pythagorean Theorem for partitioned matrices, Proc. Amer. Math. Soc., in press.

\item[[92\!\!\!]]   R.-C.\ Thompson, Convex and concave functions of singular values of matrix sums, {\it Pacific J.\ Math.}\ 66 (1976),
285--290.

\item[[94\!\!\!]]  X.\ Zhan, The sharp Rado theorem for majorizations, {\it Amer.\ Math.\ Monthly} 110
(2003) 152--153.

\end{itemize}


\chapter{The spectral Theorem}

\section{Introduction}

Hermitian (or symmetric) matrices acting on ${\mathcal{H}}_n=\bC^n$ (or $\bR^n$) form a nice finite dimensional real vector space,
let us denote it by $\bS({\mathcal{H}}_n)$, with a order structure, $A\le B$, whenever $B-A$ is positive semidefinte.

The fundamental property of $\bS({\mathcal{H}}_n$) is the {\bf matrix spectral theorem} asserting that a Hermitian matrix can be
diagonalized in a suitable orthonormal basis. It is a very important theorem and, important too, it is an easy result well understood
by students. Combined with the order structure, this theorem shows that Hermitian matrices can be regarded as a generalization of
$\bR^n$ and opens the way of a full theory of Matrix Analysis, where facts for real finite sequences of numbers search for
counterparts in the Hermitian matrix world.

But, the extension of this diagonalization theorem for operators on a infinite dimensional separable Hilbert space ${\mathcal{H}}$ -
a key stone result of Functional Analysis - is more delicate: the standard literature invoke abstract constructions based on the
Gelfand isomorphism between some abstract algebras. It might be a too abstract and unnatural approach; the link with matrices
unexpectedly disappears though operators obviously have matrix representations.

However, and fortunately as it is conceptually desirable, it is possible, to give a simple proof deriving from the matrix case. Hence
operators are not more complicated than matrices any longer ! We do this rather easy and definitely pleasant job in this note.

\section{The matrix case, and two natural definitions for operators}

Let us first recall a sketch of the proof
of the spectral theorem for Hermitian matrices.
\begin{itemize}
\item if a subspace ${\mathcal{S}}\subset {\mathcal{H}}_n$ is invariant by $A$, then so is its orthocomplement
${\mathcal{S}}^{\perp}$.
\item $\lambda_1^{\uparrow}(A):=\min\langle h, Ah\rangle$ must be an eigenvalue,
\end{itemize}
so that repeating the process with the restriction of $A$ to the orthocomplement .... This is the variational method. and we arrive
to the following spectral decomposition of $A$:
\begin{equation}\label{matrix1}
A=\sum_{j=1}^k  \lambda_j^{\uparrow}(A)  P_j
\end{equation}
A very important feature of this decompostion is to allow a matrix functional calculus, given $f(t)\in C^0$, the matrix $f(A)$ makes
sense. This is the starting point of a branch of Matrix analysis. Note that the matrix functional calculus behave well with the order
structure of $C^0$,: $f(t)\ge g(t))$ implies $f(A) \ge g(A)$ for all Hermitian matrices $A$. this will be crucial in our approach.

Now, let us consider the typical situation in the infinite dimensional case, the multiplication operator on $L_2([0,1],$
$$
Z: f(t)\mapsto tf(t), \qquad f(t)\in L_2([0,1], {\mathrm d}t)
$$
This basic example shows that it is problematic to use eigenvalues and projection $P_j$ as in \eqref{matrix1}. However it seems
possible to have a corresponding notion for the sum
\begin{equation}\label{mspp}
E_j=P_1 +P_{2}+\cdots P_{j},\qquad j=1,2\cdots
\end{equation}
 of the first $j$ spectral projections of $A$ by using the operators
$$
E_w: f(t)\mapsto {\mathbb{I}}_{[0, w]}(t) f(t), \qquad w\in [0,1].
$$
This is a very good point, because we may write the matrix spectral theorem \eqref{matrix1} as:
\begin{equation}\label{matrix2}
A= \lambda_1^{\uparrow}(A) E_1 + \lambda_2^{\uparrow}(A) (E_2-E_1)+ \lambda_3^{\uparrow}(A)(E_3-E_2)+....+
\lambda_n^{\uparrow}(A)(E_n-E_{n-1})
\end{equation}
We will obtain a similar statement for Hilbert space operators.
To adapt the matrix proof to the infinite dimensional setting, we need two simple definitions. Let $\bS({\mathcal{H}})$ be the set of
Hermitian operators on a infinite dimensional, separable Hilbert space ${\mathcal{H}}$.

\vskip 5pt
\begin{definition} (sot-convergence, or strong convergence) We say that the sequence $\{X_n\}$ in $\bS({\mathcal{H}})$ is sot
convergent, or strongly converges, if it is norm bounded and, for all vectors $h\in{\mathcal{H}}$ (equivalently for all elements of
an orthonormal basis), $\{X_nh\}$ converges in ${\mathcal{H}})$. If $\{X_n\}$ is sot convergent, then there exists (a unic) $X\in
\bS({\mathcal{H}})$ such that $Xh=\lim_n X_nh$ for all vectors $h\in{\mathcal{H}}$ and we say that $X$ is the strong limit of
${X_n}$, written as $X={\mathrm{sot}}\lim_{n\to \infty} X_n$.
\end{definition}

Hence the strong convergence is merely the pointwise convergence in $\bS({\mathcal{H}})$. Of course, it is a classical notion and
"sot" refers to strong operator topology.

Given $A\in\bS({\mathcal{H}})$, and fixing a (orthonormal) basis, we may write $A$ as an infinite matrix $A=(a_{i,j})$, and so
extracting the $n$-by-$n$ left upper corners, $n=1,2,3,....$ we obtain a basic sequence of Hermitian matrices for which the matrix
spectral theorem and the functional calculus are available. Since we wish to extend it to $A$, it seems natural to use this sequence
of matrices, and the following terminology is convenient.

\vskip 5pt
\begin{definition} The sequence $\{A_n\}$ in $\bS({\mathcal{H}})$ is a {\bf basic sequence} for $A$ if there exists an increasing
sequence of finite rank projections $\{E_n\}$, sot-convergent to the identity $I$, such that $A_n =E_nAE_n$.
\end{definition}

\section{Continuous functional calculus}

In the introduction we have noted that the matrix spectral theorem allows to define the functional calculus of Hermitian matrices.
Our strategy in the Hilbert space operator case is opposite: we first built up a continuous functional calculus (thanks to the the
two above definition, and -of course- to the matrix spectral theorem)

\vskip 5pt
\begin{lemma} If $\{A_n\}_{n\in\bN}$ is a basic sequence for $A\in\bS({\mathcal{H}})$ and $p(t)$ is a polynomial, then
$${\mathrm{sot}}\lim_{n\to \infty} p(A_n) = p(A).$$
\end{lemma}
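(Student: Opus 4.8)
The plan is to reduce the statement to monomials $p(t)=t^k$ and to exploit the good behaviour of the strong operator topology under products of uniformly bounded sequences. First I would record the two elementary facts that make everything run. Since $A_n=E_nAE_n$, we have $\|A_n\|\le\|A\|$, so the sequence is norm bounded, as required by the definition of strong convergence. Next I would check that $A_n\to A$ strongly: for a fixed $h$ we have $E_nh\to h$ because $E_n\to I$ strongly, hence $AE_nh\to Ah$ by boundedness of $A$, and finally $E_n(AE_nh)\to Ah$ by means of the estimate $\|E_nx_n-x\|\le\|x_n-x\|+\|E_nx-x\|$ applied with $x_n=AE_nh$ and $x=Ah$.

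The key tool I would isolate is that strong convergence passes through products when the left factors are uniformly bounded: if $X_n\to X$ and $Y_n\to Y$ strongly with $\sup_n\|X_n\|<\infty$, then $X_nY_n\to XY$ strongly. This follows at once from the estimate $\|X_nY_nh-XYh\|\le\|X_n\|\,\|Y_nh-Yh\|+\|(X_n-X)Yh\|$, in which both terms tend to $0$.

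With these in hand, an induction on $k$ gives $A_n^k\to A^k$ strongly for every $k\ge 1$: the step $A_n^{k+1}=A_n^k\cdot A_n$ applies the product tool with $X_n=A_n^k$, which is uniformly bounded by $\|A\|^k$, and $Y_n=A_n$. For $k=0$ the monomial is the constant operator $I$ on both sides. Writing $p(t)=\sum_k c_kt^k$, the polynomial functional calculus is just the algebraic combination $p(A_n)=\sum_k c_kA_n^k$, and a finite linear combination of strongly convergent, norm-bounded sequences converges strongly; hence $p(A_n)\to p(A)$. Note that the constant terms $c_0I$ cancel exactly, so no difficulty arises from the fact that $A_n$ vanishes on $E_n^{\perp}{\mathcal{H}}$.

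There is no serious obstacle at the level of polynomials; the whole point is that only strong, and not norm, convergence may be expected, and the single place where care is genuinely needed is the passage of the limit through products, which is precisely why the uniform bound $\|A_n\|\le\|A\|$ is recorded at the outset. I would also remark that the algebraic polynomial calculus on the operator $A_n$ coincides with the matrix functional calculus of the compression $E_nA|_{E_n{\mathcal{H}}}$, extended by $p(0)$ on the orthocomplement, so that this lemma is the natural springboard for the next step, where a general continuous $f$ is approximated uniformly by polynomials and the continuous functional calculus for $A$ is built.
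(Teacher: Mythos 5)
Your proof is correct. Note that the paper leaves this lemma with an empty proof environment, so there is nothing to compare against; your argument --- uniform boundedness $\|A_n\|\le\|A\|$, strong convergence $A_n\to A$, the product rule for strongly convergent uniformly bounded sequences, induction on monomials, and the observation that the algebraic polynomial calculus agrees with the spectral formula $p(0)P_0+\sum_j p(z_j)P_j$ used later in the chapter --- is exactly the standard argument the text intends and fills the gap completely.
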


\vskip 5pt
\begin{proof} \end{proof}

\vskip 5pt
\begin{lemma}\label{lem-1} Let $f(t)\in C^0$ and let $A\in \bS({\mathcal{H}})$. Then, there exists an operator in
$\bS({\mathcal{H}})$, that we denote $f(A)$, such that
\begin{equation}
f(A)= {\mathrm{sot}}\lim_{n\to \infty} f(A_n)
\end{equation}
for any basic sequence $\{A_n\}_{n\in\bN}$ for $A$. Moreover $f(A)A=Af(A)$.
\end{lemma}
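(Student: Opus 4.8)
The plan is to transfer everything from the already established polynomial case by a uniform approximation combined with a double-limit (three-$\eps$) argument. Since $A\in\bS(\mathcal{H})$ is bounded, set $I=[-\|A\|_{\infty},\|A\|_{\infty}]$; every compression $A_n=E_nAE_n$ is a Hermitian operator whose spectrum is contained in $\{0\}\cup I\subset I$, because $\|A_n\|_{\infty}\le\|A\|_{\infty}$. On the finite-dimensional range of $E_n$ the matrix spectral theorem of the introduction applies, so $g(A_n)$ is defined for every $g\in C^0$ and, crucially, obeys the $n$-independent bound
$$
\|g(A_n)\|_{\infty}=\max_{\lambda\in\sigma(A_n)}|g(\lambda)|\le\sup_{t\in I}|g(t)|.
$$
By the Weierstrass theorem I would pick real polynomials $p_k$ with $\eps_k:=\sup_{t\in I}|f(t)-p_k(t)|\to 0$.

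First I would construct the candidate operator $f(A)$. By the preceding Lemma, $p_k(A)={\mathrm{sot}}\lim_n p_k(A_n)$ is a well-defined element of $\bS(\mathcal{H})$, intrinsic to $A$. Applying the displayed estimate to $g=p_k-p_l$ gives $\|(p_k-p_l)(A_n)\|_{\infty}\le\eps_k+\eps_l$ for every $n$; letting $n\to\infty$ strongly yields $\|p_k(A)-p_l(A)\|_{\infty}\le\eps_k+\eps_l$. Hence $\{p_k(A)\}_k$ is Cauchy for the operator norm and converges in norm to some $f(A)\in\bS(\mathcal{H})$, with $\|f(A)-p_k(A)\|_{\infty}\le\eps_k$. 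Since each $p_k(A)$ is intrinsic, so is $f(A)$.

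Next I would verify $f(A)={\mathrm{sot}}\lim_n f(A_n)$ for an arbitrary basic sequence. The operators $f(A_n)$ are uniformly bounded by $\sup_{t\in I}|f|$, so strong convergence only requires pointwise convergence. For a unit vector $h$ I would split
$$
\|(f(A)-f(A_n))h\|\le\|(f(A)-p_k(A))h\|+\|(p_k(A)-p_k(A_n))h\|+\|(p_k(A_n)-f(A_n))h\|.
$$
The first term is $\le\eps_k$ and the third is $\le\eps_k$ (by the uniform bound with $g=p_k-f$), while the middle term tends to $0$ as $n\to\infty$ by the preceding Lemma. Given $\eta>0$, choosing $k$ with $\eps_k<\eta/3$ and then $n$ large forces the sum below $\eta$; this proves the strong convergence, and since $f(A)$ was built independently of the chosen basic sequence, the limit is the same for every basic sequence.

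Finally, the commutation relation follows from norm-continuity of operator multiplication: each polynomial $p_k(A)$ commutes with $A$, and
$$
\|f(A)A-Af(A)\|_{\infty}=\lim_{k\to\infty}\|p_k(A)A-Ap_k(A)\|_{\infty}=0,
$$
using $\|(f(A)-p_k(A))A\|_{\infty}\le\eps_k\|A\|_{\infty}\to 0$ on both sides. The one genuinely delicate point is the interchange of the Weierstrass (uniform) limit with the strong operator limit; this is exactly what the $n$-independent spectral estimate $\|g(A_n)\|_{\infty}\le\sup_{t\in I}|g(t)|$, inherited from the matrix spectral theorem, renders harmless.
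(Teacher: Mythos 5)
Your proof is correct and follows essentially the same route as the paper: uniform polynomial approximation of $f$ on $[-\|A\|,\|A\|]$ combined with the $n$-independent spectral bound $\|f(A_n)-p(A_n)\|\le\varepsilon$ and the already-proved polynomial case. The only organizational difference is that you first build $f(A)$ as the operator-norm limit of $p_k(A)$ and then run a three-$\varepsilon$ argument, whereas the paper shows directly that $\{f(A_n)h\}$ is Cauchy and defines $f(A)h$ as its limit; both are valid and the estimates involved are identical.
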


\vskip 5pt
Thus, we have natural way to define operators $f(A)$ from continuous function $f(t)$ and Hermitian arguments $A$. This is called the
(operator) continous functional calculus. Lemma 2.1 shows that for a polynomial, the continous functional calculus yields, of course,
the same operator as its algebraic definition.

\vskip 5pt
\begin{proof}
Fix a unit vector $h\in{\mathcal{H}}$  and a scalar $\varepsilon$, and pick a polynomial $p(t)$ such that
$
|f(t)-p(t)| <\varepsilon
$
whenever $|t|\le \|A\|$. Since $\|A_n\|\le \|A\|$, for any $n$, we infer
\begin{equation}\label{equa-1}
\| f(A_n)-p(A_n)\|\le \varepsilon.
\end{equation}
Let  $n_0$ be an integer such that
\begin{equation}\label{equa-2}
\| p(A_n)h-p(A)h \| \le \varepsilon
\end{equation}
for all $n\ge n_0$. Combining \eqref{equa-1} and \eqref{equa-2} we get
$
\| f(A_n)h-p(A)h \| \le 2\varepsilon
$
for all $n\ge n_0$. As $\varepsilon$ can be arbitrarily small, we obtain that $\{f(A_n)h\}$ is a Cauchy sequence in ${\mathcal{H}}$
and thus has a limit that we call $f(A)h$. It obviously defines a linear operator as $f(A_n)$ are linear operators. And since $\|
f(A_n)\|\le \max\{ |f(t)| : -\|A\| \le t\le \|A\|\}$,
$f(A)$ is a bounded linear operator. Thus we have the strong limit in $\bS$,
$
f(A)= {\mathrm{sot}}\lim_{n} f(A_n).
$
To check that $f(A)$ does not depend on the basic sequence of $A$, suppose that $\{A'_n\}$ is another basic sequence of $A$ and note
that we also have
$
\| f(A'_n)h-p(A)h \| \le 2\varepsilon
$
for $n$ large enough and thus $\{f'(A_n)\}$ also converges to $f(A)h$. Finally, as $f(A_n)A_n=A_nf(A_n)$, taking the strong limits,
we infer $f(A)A=Af(A)$.
 \end{proof}

\vskip 5pt
For a finite rank operator $Z\in \bS({\mathcal{H}})$, ${\mathrm{rank}}\, Z=m$, with spectral decomposition
$$
Z=\sum_{j=1}^m z_j^{\uparrow} P_j,
$$
where $z_j$ are the nonzero eigenvalues of $Z$ counted with their multiplicities and $P_j$ are corresponding rank one spectral
projections,
the functional calculus with $f\in C^0$ is obtained by the simple formulae
\begin{equation}\label{calculus-fd}
f(Z)= f(0)P_0+\sum_{j=1}^m f(z_j^{\uparrow}) P_j,
\end{equation}
 $P_0$ standing for the projection onto the nullspace of $Z$.
Therefore, given two continuous functions such that $f(t)\ge g(t)$, we have $f(Z)\ge g(Z)$. Applying this to a basic sequence we
obtain the following remark.

\vskip 5pt
\begin{remark}\label{remark}  If  $f(t), g(t)\in C^0$ satysfy $f(t)\ge g(t)$, then $f(A) \ge g(A)$ for all $A\in \bS({\mathcal{H}})$.\end{remark}

\vskip 5pt
The functional calculus for a finite rank operator \eqref{calculus-fd} also shows that
$$
\langle h, f(Z)f \rangle \le \sup_{t\in \bR} f(t)
$$
for all unit vectors $h$. Applying this to a basic sequence provides the next simple observation.

\vskip 5pt
\begin{remark}\label{remark2}  If  $f(t)\in C^0$ and $A\in \bS({\mathcal{H}})$, then, for all unit vectors $h$,
$$
\langle h, f(A)h \rangle \le \sup_{t\in \bR} f(t).
$$

\end{remark}

\section{Spectral projections}

The aim of this section is to define the operator version of the matrix spectral projections \eqref{mspp}. We first give a useful
fact, known as Vigier's theorem.

\vskip 5pt
\begin{lemma} Let $\{Z_n\}_{n\in\bN}$ be a decreasing sequence of positive operators on ${\mathcal{H}}$, that is $Z_n\ge Z_{n+1}$ for
all $n$. Then there exists a positive operator $Z$ such that $$Z ={\mathrm{sot}}\lim_{n\to\infty} Z_n$$
\end{lemma}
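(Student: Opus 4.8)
This is Vigier's theorem (monotone convergence for operators). The plan is to construct the limit operator $Z$ via a weak limit of quadratic forms and then upgrade the convergence to the strong operator topology.

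First I would establish the existence of the pointwise limit of the quadratic forms. Since $\{Z_n\}$ is decreasing and each $Z_n$ is positive, for every fixed $h\in{\mathcal{H}}$ the real sequence $\langle h, Z_nh\rangle$ is nonincreasing and bounded below by $0$, hence convergent. Moreover the sequence $\{Z_n\}$ is norm bounded: as $0\le Z_n\le Z_1$ we have $\|Z_n\|\le\|Z_1\|$ for all $n$ by positivity of $Z_1-Z_n$. Using the polarization identity, the limit $\lim_n\langle g, Z_nh\rangle$ then exists for every pair $g,h$, and defines a bounded sesquilinear form. By the Riesz representation of bounded forms there is a unique bounded operator $Z$, necessarily self-adjoint and positive (as the decreasing limit of positive forms satisfies $\langle h,Zh\rangle=\lim_n\langle h,Z_nh\rangle\ge 0$), with $\langle g, Zh\rangle=\lim_n\langle g, Z_nh\rangle$. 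This gives weak convergence $Z_n\to Z$.

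The main work is to promote this to strong convergence, i.e.\ to show $\|(Z_n-Z)h\|\to 0$ for every $h$. Set $Y_n:=Z_n-Z\ge 0$, a decreasing sequence of positive operators with $Y_n\to 0$ weakly. The key step will be the Cauchy--Schwarz inequality for the positive form associated to $Y_n$: for any $h$,
\begin{equation*}
\|Y_nh\|^2=\langle Y_nh, Y_nh\rangle \le \langle h, Y_nh\rangle^{1/2}\,\langle Y_nh, Y_n^2 h\rangle^{1/2}\le \langle h, Y_nh\rangle^{1/2}\,\|Y_n\|^{3/2}\|h\|.
\end{equation*}
Since $\|Y_n\|$ is uniformly bounded (by $\|Z_1\|+\|Z\|$) and $\langle h, Y_nh\rangle\to 0$ by weak convergence, the right-hand side tends to $0$. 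Hence $Y_nh\to 0$ in norm for each $h$, which is precisely $Z_n\to Z$ strongly. Together with the uniform norm bound already noted, this yields $Z={\mathrm{sot}}\lim_{n\to\infty}Z_n$ in the sense of the strong-convergence definition of the previous section.

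The single delicate point, and the step I expect to require the most care, is the operator Cauchy--Schwarz inequality $|\langle Y_nh, k\rangle|^2\le \langle h,Y_nh\rangle\langle k, Y_nk\rangle$, applied with $k=Y_nh$; this holds because $Y_n\ge 0$ so $(u,v)\mapsto\langle u, Y_nv\rangle$ is a genuine positive semidefinite form, and it is exactly what converts control of the scalar quantity $\langle h,Y_nh\rangle$ into control of the vector norm $\|Y_nh\|$. Everything else is routine boundedness and monotonicity bookkeeping.
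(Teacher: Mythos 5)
Your proof is correct and follows essentially the same route as the paper: weak convergence from monotone boundedness and polarization, then an upgrade to strong convergence by bounding $\|Wh\|^2$ by $\|W\|\,\langle h,Wh\rangle$ for the positive operator $W=Z_n-Z$ (the paper does the identical estimate, written via $W^{1/2}$ rather than Cauchy--Schwarz for the form, and applied to $W=Z_n-Z_{n+p}$ to get Cauchyness of $\{Z_nh\}$ instead). The two phrasings are interchangeable, so there is nothing to fix.
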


\begin{proof} The wot convergence follows from polarization. Then, we may infer the sot convergence. Indeed, given
$h\in{\mathcal{H}}$, one has
\begin{align*}
\| Z_{n+p}h-Z_nh \|^2&= \langle h, (Z_n-Z_{n+p})^2h\rangle \\
&= \langle h, (Z_n-Z_{n+p})^{1/2}  (Z_n-Z_{n+p})  (Z_n-Z_{n+p})^{1/2} h\rangle \\
&\le  \| Z_n-Z_{n+p}\|\langle h, (Z_n-Z_{n+p}) h\rangle.
\end{align*}
\end{proof}

\vskip 5pt
\begin{lemma} Let $A\in\bS({\mathcal{H}})$. For $\lambda\in(-\infty,\infty)$ and $n\in\bN^*$, define the three piece affine
continuous function $f_{\lambda,n}(t)$ such that
$$f_{\lambda,n}(t):=\begin{cases} 1 & \text{if \ $ t\le \lambda$},\\
1+n(\lambda-t) & \text{if \ $\lambda\le t\le \lambda + n^{-1}$}, \\
0 &  \text{if \ $ \lambda + n^{-1}\le t$}.\\
\end{cases}
$$
Then, there exists a projection $E(\lambda)\in\bS({\mathcal{H}})$ commuting with $A$ such that
$$ E(\lambda) = {\mathrm{sot}}\lim_{n\to\infty} f_{\lambda,n}(A)$$
and $\langle h, AE(\lambda)h\rangle \le \lambda$ for all unit vectors $h$.
\end{lemma}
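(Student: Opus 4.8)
The plan is to establish the strong convergence of $f_{\lambda,n}(A)$ to a projection, and then verify the spectral bound. First I would observe that the functions $f_{\lambda,n}(t)$ form a pointwise decreasing sequence in $n$: for fixed $t$ and $\lambda$, as $n$ grows the ``ramp'' on $[\lambda,\lambda+n^{-1}]$ steepens, so $f_{\lambda,n+1}(t)\le f_{\lambda,n}(t)$ for every $t$. By Remark~\ref{remark}, this monotonicity transfers to the functional calculus, giving $f_{\lambda,n+1}(A)\le f_{\lambda,n}(A)$. Since each $f_{\lambda,n}$ takes values in $[0,1]$, the operators $f_{\lambda,n}(A)$ are positive (again by Remark~\ref{remark}, comparing with the zero and constant-one functions). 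Thus $\{f_{\lambda,n}(A)\}_{n\in\bN^*}$ is a decreasing sequence of positive operators, and Vigier's theorem (the preceding lemma) yields a positive operator $E(\lambda)$ with $E(\lambda)={\mathrm{sot}}\lim_{n\to\infty} f_{\lambda,n}(A)$.

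Next I would show $E(\lambda)$ is a projection. The natural route is to control the products $f_{\lambda,n}(A)^2$ and compare them with $f_{\lambda,m}(A)$. One checks at the scalar level that $f_{\lambda,n}(t)^2 \le f_{\lambda,m}(t)$ whenever $n\ge m$, since on the interval where the ramp lives the square only decreases the value, while outside it the functions agree at $0$ or $1$; more simply, $0\le f_{\lambda,n}\le 1$ gives $f_{\lambda,n}^2\le f_{\lambda,n}\le f_{\lambda,m}$ for $n\ge m$. Passing to the strong limit in $n$ (using that multiplication is jointly strongly continuous on norm-bounded sets, so $f_{\lambda,n}(A)^2 \to E(\lambda)^2$), I obtain $E(\lambda)^2 \le f_{\lambda,m}(A)$ for every $m$, and then letting $m\to\infty$ gives $E(\lambda)^2\le E(\lambda)$. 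For the reverse inequality, the pointwise bound $f_{\lambda,n}(t)\le f_{\lambda,m}(t)^2$ is false, so instead I would use $f_{\lambda,n}(t)\,f_{\lambda,m}(t)=f_{\lambda,n}(t)$ when $n\ge m$ (because $f_{\lambda,m}=1$ on the support of the smaller ramp region of $f_{\lambda,n}$); taking strong limits first in $n$ then in $m$ yields $E(\lambda)=E(\lambda)^2$. Combined with selfadjointness (each $f_{\lambda,n}(A)$ is Hermitian, and strong limits of Hermitians are Hermitian), $E(\lambda)$ is an orthogonal projection. Commutation with $A$ follows because each $f_{\lambda,n}(A)$ commutes with $A$ by Lemma~\ref{lem-1}, and strong limits preserve commutation on bounded sequences.

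For the final bound $\langle h, AE(\lambda)h\rangle\le\lambda$ on unit vectors $h$, I would work at the level of the continuous functional calculus applied to the single function $t\mapsto t\,f_{\lambda,n}(t)$. The key scalar inequality is $t\,f_{\lambda,n}(t)\le \lambda + n^{-1}$ for all $t\in\bR$: indeed $f_{\lambda,n}(t)=0$ once $t\ge\lambda+n^{-1}$, and where $f_{\lambda,n}(t)>0$ we have $t<\lambda+n^{-1}$ while $0\le f_{\lambda,n}(t)\le 1$, so the product is at most $\lambda+n^{-1}$ (handling the sign of $\lambda$ by noting the product never exceeds the cutoff value). Applying Remark~\ref{remark2} to the continuous function $t\mapsto t\,f_{\lambda,n}(t)$ gives $\langle h, Af_{\lambda,n}(A)h\rangle\le \lambda+n^{-1}$ for every unit vector $h$ and every $n$; here I use that $A f_{\lambda,n}(A)=(\mathrm{id}\cdot f_{\lambda,n})(A)$, a consequence of Lemma~\ref{lem-1} together with the commutation and the fact that the functional calculus is multiplicative on a common basic sequence. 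Letting $n\to\infty$ and using the strong convergence $f_{\lambda,n}(A)h\to E(\lambda)h$ (so $\langle h, Af_{\lambda,n}(A)h\rangle\to\langle h, AE(\lambda)h\rangle$ by continuity of $A$ and of the inner product) yields $\langle h, AE(\lambda)h\rangle\le\lambda$.

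The main obstacle I anticipate is the idempotency argument: establishing $E(\lambda)=E(\lambda)^2$ cleanly requires the correct scalar identities for products $f_{\lambda,n}f_{\lambda,m}$ and careful use of joint strong continuity of multiplication on norm-bounded sequences (strong convergence is not preserved under products in general, but it is when the sequences are uniformly bounded, which holds here since all operators have norm at most $1$). Everything else reduces to transporting elementary scalar facts about the explicit piecewise-affine functions $f_{\lambda,n}$ through the order-preserving functional calculus of Remarks~\ref{remark} and~\ref{remark2}.
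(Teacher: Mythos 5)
Your overall architecture coincides with the paper's proof: the pointwise monotonicity $f_{\lambda,n+1}\le f_{\lambda,n}$ transported through the order-preserving functional calculus gives a decreasing sequence of positive contractions, Vigier's theorem produces the strong limit $E(\lambda)$, commutation with $A$ passes to the limit, and the inequality $E(\lambda)^2\le E(\lambda)$ is obtained correctly. The genuine gap is in the other half of idempotency. The scalar identity you invoke, $f_{\lambda,n}\,f_{\lambda,m}=f_{\lambda,n}$ for $n\ge m$, is false: $f_{\lambda,m}$ equals $1$ only on $(-\infty,\lambda]$, whereas $f_{\lambda,n}$ is strictly positive on the open ramp $(\lambda,\lambda+n^{-1})$, where $f_{\lambda,m}<1$. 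Concretely, with $\lambda=0$, $m=1$, $n=2$, $t=1/4$, one has $f_{0,2}(1/4)=1/2$ while $f_{0,1}(1/4)f_{0,2}(1/4)=3/8$. So the double limit of $f_{\lambda,n}(A)f_{\lambda,m}(A)$ cannot be identified with $E(\lambda)$ by this route. The paper instead uses the correct pointwise inequality $f_{\lambda,n}^2\ge f_{\lambda,2n}$ (on the ramp, $(1-ns)^2=1-2ns+n^2s^2\ge 1-2ns$ with $s=t-\lambda$), which gives $f_{\lambda,n}(A)^2\ge f_{\lambda,2n}(A)\ge E(\lambda)$ for every $n$ and hence $E(\lambda)^2\ge E(\lambda)$ in the strong limit. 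Your argument is repaired by substituting this inequality, or by noting $f_{\lambda,n}f_{\lambda,m}\ge f_{\lambda,n}^2\ge f_{\lambda,2n}$ for $n\ge m$; the joint strong continuity of multiplication on bounded sets that you appeal to is fine and is also implicit in the paper.

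A further caveat concerns the last step. Your scalar bound $t\,f_{\lambda,n}(t)\le\lambda+n^{-1}$ fails whenever $\lambda+n^{-1}<0$, since $t\,f_{\lambda,n}(t)=0$ for all $t\ge\lambda+n^{-1}$; what is true is $\sup_t t\,f_{\lambda,n}(t)\to\max(\lambda,0)$, and the parenthetical about ``handling the sign of $\lambda$'' does not close this. The paper's own proof has the same blind spot (it asserts that the supremum tends to $\lambda$), and in fact the conclusion $\langle h,AE(\lambda)h\rangle\le\lambda$ cannot hold for \emph{all} unit vectors when $\lambda<0$: for $A$ with some spectrum above $\lambda$ and $h$ orthogonal to the range of $E(\lambda)$ the left side is $0$. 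The intended assertion is presumably $E(\lambda)AE(\lambda)\le\lambda E(\lambda)$, i.e.\ the bound for unit vectors in the range of $E(\lambda)$, which follows for every $\lambda$ from $t\,f_{\lambda,n}^2(t)\le(\lambda+n^{-1})f_{\lambda,n}^2(t)$ and the strong convergence already established.
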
 

\vskip 5pt
\begin{proof} Note that $f_{\lambda, n}(t)\ge f_{\lambda, n+1}(t) \ge 0$, $n\in\bN$. Remark \ref{remark} shows that $\{f_{\lambda,
n}(A)\}_{n\in\bN}$ is a decreasing sequence of positive operators. Thanks to Lemma \ref{lem-1} we thus have a positive operator
$E(\lambda)$ as the strong limit
$$
 E(\lambda) = {\mathrm{sot}}\lim_{n\to\infty} f_{\lambda,n}(A).
$$
Since $ f_{\lambda,n}(A)A=A f_{\lambda,n}(A)$, we also have the commutativity of $A$ with this strong limit,
$E(\lambda)A=AE(\lambda)$.
Further, by Remark \ref{remark2}, given a unit vectors $h$,
$$
\langle h, A f_{\lambda,n}(A)h\rangle \le \sup_{t\in\bR} t f_{\lambda,n}(t) 
$$
As the supremum tends to $\lambda$ as $n\to\infty$, we get
 $\langle h, AE(\lambda)h\rangle \le \lambda$ for all unit vectors $h$.

Since $1\ge f_{\lambda,n}(t)$, Remark \ref{remark} shows that $I\ge E(\lambda)$ and hence $E(\lambda)\ge E^2(\lambda)$. We also have
$E^2(\lambda)\ge E(\lambda)$, indeed,
\begin{align*}
 E^2(\lambda)&= \left({\mathrm{sot}}\lim_{n\to\infty}  f_{\lambda,n}(A) \right)^2 \\
&={\mathrm{sot}}\lim_{n\to\infty} f^2_{\lambda,n}(A) \\
&\ge {\mathrm{sot}}\lim_{m\to\infty} f_{\lambda,m}(A)=E(\lambda)
\end{align*}
as for each integer $n$ there exists an integer $m$ such that $f^2_{\lambda,n}(t) \ge f_{\lambda,m}(t)$.
Thus $E^2(\lambda)=E(\lambda)$, i.e.,  $E(\lambda)$ is a projection.
\end{proof}

\begin{remark}\label{remarkcomm} The projection valued map $\lambda\to E(\lambda)$ is increasing and sot right continuous, we call
this map (as well as the family $\{E(\lambda)\}$ with $\lambda\in\bR)$ {\bf the spectral measure} of $A$. The previous lemma shows
that given two self-adjoint operators $A$ and $B$ with respectives spectral measures $\{E(\lambda)\}$ and $\{F(\nu)\}$, then the
commutativity assumption
$AB=BA$ ensures the commutativity of the spectral measures: $E(\lambda)F(\nu)= F(\nu)E(\lambda)$.
\end{remark}

\section{The spectral theorem}

In case of $A\in \bS({\mathcal{H}})$, the matrix decomposition \eqref{matrix2} (involving the matrix spectral measure....) remains
valid in a continuous form:

\begin{theorem}\label{thm1}
Let $A\in \bS({\mathcal{H}})$ with its spectral measure $E(\lambda)$. Then
$$
A=\int \lambda \,{\mathrm{d}} E(\lambda)
$$
\end{theorem}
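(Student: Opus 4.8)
The plan is to read $\int \lambda\,{\mathrm d}E(\lambda)$ as the operator-norm limit of Riemann--Stieltjes sums over partitions of $[-\|A\|,\|A\|]$, and to reduce the whole statement to a single operator sandwich inequality. First I would sharpen the scalar estimate of the previous lemma into operator inequalities. Setting $\phi_{\lambda,n}(t)=(t-\lambda)f_{\lambda,n}(t)$, the explicit formula for $f_{\lambda,n}$ shows that $\phi_{\lambda,n}(t)\le n^{-1}$ for all $t$, while on $[\lambda,\lambda+n^{-1}]$ one computes $(t-\lambda)\{1-f_{\lambda,n}(t)\}=n(t-\lambda)^2\ge0$, so $(t-\lambda)\{1-f_{\lambda,n}(t)\}\ge0$ for all $t$. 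By Remark \ref{remark} (comparing with the constant $n^{-1}$ and with $0$) these yield $\phi_{\lambda,n}(A)\le n^{-1}I$ and $(A-\lambda I)\{I-f_{\lambda,n}(A)\}\ge0$. Here I would invoke multiplicativity of the functional calculus in the form $(A-\lambda I)f_{\lambda,n}(A)=\phi_{\lambda,n}(A)$: along a basic sequence $\{A_m\}$ this holds by ordinary matrix functional calculus, and strong limits of products of uniformly bounded strongly convergent sequences are the products of the limits. Letting $n\to\infty$ and using that the order survives passage to strong limits, I obtain
$$AE(\lambda)\le \lambda E(\lambda),\qquad A\{I-E(\lambda)\}\ge \lambda\{I-E(\lambda)\}.$$

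Next I would record the structural facts about the spectral family. Since $f_{\mu,n}\le f_{\lambda,n}$ pointwise for $\mu\le\lambda$, Remark \ref{remark} gives $E(\mu)\le E(\lambda)$, so the $E(\lambda)$ form an increasing, hence mutually commuting, family of projections, all commuting with $A$. Because $f_{\lambda,n}$ agrees with the constant $1$ on $[-\|A\|,\|A\|]$ when $\lambda\ge\|A\|$, and agrees with $0$ there when $\lambda<-\|A\|$ and $n$ is large, the functional calculus yields $E(\lambda)=I$ for $\lambda\ge\|A\|$ and $E(\lambda)=0$ for $\lambda<-\|A\|$. Now fix $\mu<\lambda$ and put $\Delta=E(\lambda)-E(\mu)$, a projection with $\Delta=E(\lambda)\Delta=\{I-E(\mu)\}\Delta$. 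Compressing the two displayed inequalities by $\Delta$ (which commutes with $A$, $E(\lambda)$, $E(\mu)$) and using $\Delta E(\lambda)=\Delta$ and $\Delta\{I-E(\mu)\}=\Delta$, I arrive at the key sandwich
$$\mu\,\Delta\ \le\ A\Delta\ \le\ \lambda\,\Delta.$$

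Finally I would run the Riemann--Stieltjes estimate. Given $\varepsilon>0$, choose a partition $\lambda_0<\lambda_1<\cdots<\lambda_N$ of mesh below $\varepsilon$ with $\lambda_0<-\|A\|$ and $\lambda_N\ge\|A\|$, so that the projections $\Delta_j=E(\lambda_j)-E(\lambda_{j-1})$ are mutually orthogonal and sum to $I$. Writing $A=\sum_j A\Delta_j$ and $S=\sum_j\lambda_j\Delta_j$, the sandwich on each $\Delta_j$ gives $-\varepsilon\Delta_j\le (A-\lambda_j)\Delta_j\le 0$; since $(A-\lambda_j)\Delta_j=\Delta_j(A-\lambda_j)\Delta_j$ acts inside the orthogonal summand ${\mathrm{ran}}\,\Delta_j$, the self-adjoint operator $A-S$ is the orthogonal direct sum of blocks each of norm at most $\varepsilon$, whence $\|A-S\|\le\varepsilon$. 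As $\varepsilon$ is arbitrary, the Riemann--Stieltjes sums converge in operator norm to $A$, which is the asserted identity $A=\int\lambda\,{\mathrm d}E(\lambda)$.

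The main obstacle lies in the first paragraph: correctly upgrading the scalar estimates to the operator inequalities requires multiplicativity of the continuous functional calculus in the precise form $(A-\lambda I)f_{\lambda,n}(A)=\phi_{\lambda,n}(A)$, together with the facts that both the order and products survive passage to strong limits. Once the sandwich $\mu\,\Delta\le A\Delta\le\lambda\,\Delta$ is secured, the orthogonality of the $\Delta_j$ turns the approximation into a clean operator-norm bound and the conclusion is routine.
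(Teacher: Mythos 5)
Your proof is correct. The paper does not actually write out an argument for this theorem: it only declares the identity to be ``a straightforward consequence of the existence of the spectral measure built up in the previous Lemma,'' so the task was to supply the details, and you have done so along exactly the route the chapter prepares (order-preservation of the functional calculus, Vigier's theorem, the family $E(\lambda)$). The one genuinely new ingredient you add --- and it is indispensable --- is the lower estimate $A\{I-E(\lambda)\}\ge\lambda\{I-E(\lambda)\}$: the paper's lemma records only the one-sided bound $\langle h, AE(\lambda)h\rangle\le\lambda$, which gives the upper half of the sandwich $\mu\Delta\le A\Delta\le\lambda\Delta$ but not the lower half, without which the Riemann--Stieltjes estimate cannot close. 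Your derivation of both halves from the scalar inequalities $\phi_{\lambda,n}(t)\le n^{-1}$ (indeed the maximum is $1/(4n)$) and $(t-\lambda)\{1-f_{\lambda,n}(t)\}\ge 0$ is sound, and you correctly flag the only delicate point: multiplicativity of the continuous functional calculus, which does follow from the basic-sequence definition because matrix functional calculus is multiplicative and products of uniformly bounded strongly convergent sequences converge strongly to the product of the limits. The endpoint facts $E(\lambda)=I$ for $\lambda\ge\|A\|$ and $E(\lambda)=0$ for $\lambda<-\|A\|$ are needed to make the $\Delta_j$ sum to $I$, and you establish them. The final observation that $A-S=\sum_j(A-\lambda_j)\Delta_j$ is block-diagonal over the orthogonal ranges of the $\Delta_j$, so that $\|A-S\|\le\varepsilon$, upgrades the convergence of the Riemann--Stieltjes sums to the operator norm, which is the correct reading of the integral. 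No gaps.
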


Now we explain the Stieltjes-integral notation employed, and next give the proof which is a straightforward consequence of the existence
of the spectral measure built up in the previous Lemma.

\vskip 5pt
We may detail a little bit more the theorem, The spectral measure vanishes on an open set $\omega(A)$ of $\bR$:
$$
x\in \omega(A) \iff\exists \varepsilon >0 \  {\mathrm{s.t.}}\ E(x+\varepsilon)-E(x-\varepsilon)=0.
$$
The complementary set of $\omega$ is the compact set $\sigma(A)$, the spectrum of $A$. Then we may write the spectral theorem as

\vskip 5pt
\begin{theorem}\label{thm2}
Let $A\in \bS({\mathcal{H}})$ with its spectral measure $E(\lambda)$. Then
$$
A=\int_{\sigma(A)} \lambda \,{\mathrm{d}} E(\lambda).
$$
\end{theorem}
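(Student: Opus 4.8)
The plan is to realize $A$ as the operator-norm limit of Riemann--Stieltjes sums built from the spectral measure, which is exactly the meaning to be attached to $\int\lambda\,\mathrm{d}E(\lambda)$: for a partition $\lambda_0<\lambda_1<\cdots<\lambda_N$ of an interval containing $\sigma(A)$ and intermediate points $\xi_j\in[\lambda_{j-1},\lambda_j]$, the sum $\sum_{j=1}^N\xi_j\bigl(E(\lambda_j)-E(\lambda_{j-1})\bigr)$ should converge to $A$ as the mesh tends to $0$. First I would record the two localization inequalities that drive everything. The upper one, $E(\lambda)AE(\lambda)\le\lambda E(\lambda)$, follows by homogeneity from the bound $\langle h,AE(\lambda)h\rangle\le\lambda$ established when $E(\lambda)$ was constructed: applying it to $h=E(\lambda)k/\|E(\lambda)k\|$ gives $\langle k,E(\lambda)AE(\lambda)k\rangle\le\lambda\langle k,E(\lambda)k\rangle$ for all $k$. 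For the lower one, $A(I-E(\lambda))\ge\lambda(I-E(\lambda))$, I would use the functions $g_{\lambda,n}(t)=1-f_{\lambda,n}(t)$, observe that $(t-\lambda)g_{\lambda,n}(t)\ge0$ for every $t$, invoke the order property of the continuous functional calculus (Remark \ref{remark}) to get $(A-\lambda I)g_{\lambda,n}(A)\ge0$, and pass to the strong limit $g_{\lambda,n}(A)\to I-E(\lambda)$.

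Next I would set up the partition. Since $\|A_n\|\le\|A\|$ along any basic sequence, $f_{\lambda,n}(A)=I$ whenever $\lambda\ge\|A\|$, while $f_{\lambda,n}(A)=0$ for large $n$ whenever $\lambda<-\|A\|$; hence $E(\lambda)=0$ for $\lambda<-\|A\|$ and $E(\lambda)=I$ for $\lambda\ge\|A\|$, so the spectrum is bounded and finite Riemann--Stieltjes sums suffice. Fix $\varepsilon>0$ and a partition with $\lambda_0<-\|A\|$, $\lambda_N=\|A\|$ and mesh $<\varepsilon$, and put $\Delta_j=E(\lambda_j)-E(\lambda_{j-1})$. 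Because $\{E(\lambda)\}$ is an increasing family of commuting projections (Remark \ref{remarkcomm}), each $\Delta_j$ is a projection, the $\Delta_j$ are mutually orthogonal, and $\sum_{j=1}^N\Delta_j=E(\lambda_N)-E(\lambda_0)=I$. Since each $E(\lambda)$ commutes with $A$, so does each $\Delta_j$, which yields the orthogonal reduction $A=\sum_{j=1}^N\Delta_jA\Delta_j$, the off-diagonal terms $\Delta_iA\Delta_j=A\Delta_i\Delta_j$ vanishing for $i\neq j$.

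With $S=\sum_{j=1}^N\lambda_{j-1}\Delta_j$ I would then estimate $A-S=\sum_{j=1}^N\Delta_j(A-\lambda_{j-1}I)\Delta_j$ blockwise. On the range of $\Delta_j$ a vector $h$ satisfies $E(\lambda_j)h=h$ and $E(\lambda_{j-1})h=0$, so the two localization inequalities give $\lambda_{j-1}\|h\|^2\le\langle h,Ah\rangle\le\lambda_j\|h\|^2$; hence $0\le\Delta_j(A-\lambda_{j-1}I)\Delta_j\le(\lambda_j-\lambda_{j-1})\Delta_j$. By orthogonality of the ranges this gives $0\le A-S\le\varepsilon I$, so $\|A-S\|\le\varepsilon$, and the same bound holds for any choice of intermediate points $\xi_j\in[\lambda_{j-1},\lambda_j]$. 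Letting $\varepsilon\to0$ shows the Riemann--Stieltjes sums converge to $A$ in operator norm, establishing Theorem \ref{thm1}. Finally, on the open set $\omega(A)$ where the spectral measure is locally constant one has $\mathrm{d}E=0$, so the integral is unchanged when restricted to its complement, the compact set $\sigma(A)$, giving Theorem \ref{thm2}.

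The step I expect to be the main obstacle is the clean derivation of the lower localization inequality $A(I-E(\lambda))\ge\lambda(I-E(\lambda))$, together with the bookkeeping that the differences $\Delta_j$ are genuine orthogonal projections commuting with $A$; both rely on interchanging the strong limits defining $E(\lambda)$ with products and order relations, which is exactly where care is needed. Everything else is the familiar blockwise Riemann--Stieltjes estimate, made transparent here because the $\Delta_j$ reduce $A$.
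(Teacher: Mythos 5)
Your proof is correct, and it follows the route the paper intends: the chapter constructs the spectral projections $E(\lambda)$ as strong limits of $f_{\lambda,n}(A)$ and then asserts that the theorem is ``a straightforward consequence'' of their existence, without writing out the Riemann--Stieltjes argument. What you have written is precisely that missing argument, and it is sound: the renormalization $h=E(\lambda)k/\|E(\lambda)k\|$ correctly upgrades the paper's bound $\langle h, AE(\lambda)h\rangle\le\lambda$ to $E(\lambda)AE(\lambda)\le\lambda E(\lambda)$, and the orthogonality and commutation properties of the $\Delta_j$ follow from Remark \ref{remarkcomm} exactly as you say.

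One point where you add something the paper does not even state: the lower localization inequality $(I-E(\lambda))A(I-E(\lambda))\ge\lambda(I-E(\lambda))$. The paper's lemma only records the upper bound, yet without the lower bound the Riemann sums cannot be squeezed against $A$ from both sides, so this is a genuine gap in the source that your argument with $g_{\lambda,n}=1-f_{\lambda,n}$, the pointwise inequality $(t-\lambda)g_{\lambda,n}(t)\ge0$, Remark \ref{remark}, and passage to the strong limit closes correctly. The only step you should make explicit is the multiplicativity $h(A)=(A-\lambda I)g_{\lambda,n}(A)$ for $h(t)=(t-\lambda)g_{\lambda,n}(t)$: it is not literally contained in Remark \ref{remark}, but it follows from the finite-rank formula \eqref{calculus-fd} applied along a basic sequence together with the fact that products of uniformly bounded strongly convergent sequences converge strongly. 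With that observation recorded, the proof is complete.
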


\vskip 5pt
Denote by $C^*(A)$ the unital $C^*$-algebra spanned by $A$. We have the following concrete version of the Gelfand isomorphism
mentionned in the introduction.

\vskip 5pt
\begin{cor} Let $A\in \bS({\mathcal{H}})$. Then the functional calculus from $C^0(\sigma(A))$ to  $C^*(A)$,
$$
f(t)\mapsto f(A):= \int_{\sigma(A)} f(\lambda) \,{\mathrm{d}} E(\lambda).
$$
is an isometric  $*$-isomorphism.
\end{cor}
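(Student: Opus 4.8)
The plan is to verify in turn that the map $\Phi:f\mapsto f(A)$ is well defined on $C^0(\sigma(A))$, that it is a unital $*$-homomorphism, that it is isometric, and finally that its range is exactly $C^*(A)$; since an isometric homomorphism is automatically injective, these four facts yield the claimed isometric $*$-isomorphism. First I would address well-definedness: an $f\in C^0(\sigma(A))$ is only given on the compact set $\sigma(A)$, so I extend it by a norm-preserving Tietze extension to a continuous function on $[-\|A\|,\|A\|]$ with $\sup_{\mathbb R}|f|=\sup_{\sigma(A)}|f|$, and set $f(A)$ by the continuous functional calculus of Lemma \ref{lem-1}. By Theorem \ref{thm2} the spectral measure $E(\lambda)$ is carried by $\sigma(A)$, whence $f(A)=\int_{\sigma(A)}f\,\mathrm{d}E(\lambda)$ depends only on $f|_{\sigma(A)}$; thus $\Phi$ is unambiguous. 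Linearity and $\Phi(1)=I$ are immediate from the $\mathrm{sot}$-limit description of $f(A)$ and the corresponding facts for the matrix functional calculus.

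Next I would establish multiplicativity and $*$-preservation. For polynomials $p,q$ the identities $(pq)(A)=p(A)q(A)$ and $\bar p(A)=p(A)^*$ hold by the algebraic definition, which agrees with the functional calculus on polynomials. For general $f,g\in C^0(\sigma(A))$, Stone--Weierstrass furnishes polynomials $p_k\to f$ and $q_k\to g$ uniformly on $\sigma(A)$. Granting the norm bound $\|\Phi(h)\|\le\sup_{\sigma(A)}|h|$ (proved in the isometry step), one gets $p_k(A)\to f(A)$ and $q_k(A)\to g(A)$ in operator norm, so $p_k(A)q_k(A)\to f(A)g(A)$ while $p_k(A)q_k(A)=(p_kq_k)(A)\to(fg)(A)$; hence $(fg)(A)=f(A)g(A)$. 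Writing $f=u+iv$ with $u,v$ real gives $f(A)^*=u(A)-iv(A)=\bar f(A)$, since $u(A),v(A)$ are Hermitian.

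The isometry $\|f(A)\|=\|f\|_\infty$, with $\|f\|_\infty=\sup_{\lambda\in\sigma(A)}|f(\lambda)|$, is the heart of the matter and the step I expect to be the main obstacle. By the $C^*$-identity $\|f(A)\|^2=\|f(A)^*f(A)\|=\|(|f|^2)(A)\|$, it suffices to treat nonnegative real $f$, for which $f(A)$ is Hermitian and $\|f(A)\|=\sup_{\|h\|=1}\langle h,f(A)h\rangle$. The upper bound $\langle h,f(A)h\rangle\le\sup_{\sigma(A)}f$ is exactly Remark \ref{remark2}, applied to the norm-preserving extension and using that $E$ lives on $\sigma(A)$. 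For the lower bound I would pick $\lambda_0\in\sigma(A)$ with $f(\lambda_0)=\|f\|_\infty$; since $\lambda_0\notin\omega(A)$, the spectral projection $E(\lambda_0+\varepsilon)-E(\lambda_0-\varepsilon)$ is nonzero for every $\varepsilon>0$, so it contains a unit vector $h_\varepsilon$ that is an approximate eigenvector, $\|(A-\lambda_0)h_\varepsilon\|\le\varepsilon$. Pushing $h_\varepsilon$ through the functional calculus, and approximating $f$ near $\lambda_0$, gives $\langle h_\varepsilon,f(A)h_\varepsilon\rangle\to f(\lambda_0)$, whence $\|f(A)\|\ge\|f\|_\infty$. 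Controlling these approximate eigenvectors and their behaviour under $\Phi$ --- that is, showing the spectrum detects the operator norm --- is the genuinely analytic content that the spectral-measure machinery of the preceding sections is designed to supply.

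Finally I would identify the range. Each $f(A)$ is an operator-norm limit of polynomials in the Hermitian operator $A$, hence lies in the norm-closed unital $*$-algebra $C^*(A)$, so $\Phi(C^0(\sigma(A)))\subseteq C^*(A)$. Conversely, being isometric, $\Phi$ maps the complete space $C^0(\sigma(A))$ onto a complete, hence norm-closed, unital $*$-subalgebra of $\mathrm{L}(\mathcal H)$ containing $\Phi(1)=I$ and $\Phi(\iota)=A$, where $\iota(\lambda)=\lambda$; by minimality of $C^*(A)$ this forces $C^*(A)\subseteq\Phi(C^0(\sigma(A)))$. The two inclusions give surjectivity onto $C^*(A)$, and together with the homomorphism and isometry properties this completes the proof.
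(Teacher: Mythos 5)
Your proof is essentially correct, but there is nothing in the paper to compare it against: the corollary is stated without proof, as an immediate consequence of Theorem \ref{thm2}, in a chapter whose later arguments are only sketched. Your four-step scheme (well-definedness, $*$-homomorphism, isometry, range) is the standard one and it is compatible with the machinery the chapter actually builds: the upper bound $\|f(A)\|\le\sup_{\sigma(A)}|f|$ does follow from Remark \ref{remark2} applied to a sup-norm-preserving Tietze extension, and your lower bound via approximate eigenvectors is sound. Two details deserve to be made explicit. First, the statement defines $f(A)$ as the spectral integral $\int_{\sigma(A)}f\,{\mathrm{d}}E(\lambda)$, while you work throughout with the basic-sequence functional calculus of Lemma \ref{lem-1}; you should record that the two agree (for polynomials this is Theorem \ref{thm1} together with $E(\lambda)E(\mu)=E(\min(\lambda,\mu))$, and the general case follows by uniform approximation once the norm bound is in hand). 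Second, for the localization $\|(A-\lambda_0)h_\varepsilon\|\le\varepsilon$ you need the two-sided estimate $(\lambda_0-\varepsilon)P_\varepsilon\le AP_\varepsilon\le(\lambda_0+\varepsilon)P_\varepsilon$ on $P_\varepsilon=E(\lambda_0+\varepsilon)-E(\lambda_0-\varepsilon)$; the paper's lemma only states the upper half, $\langle h,AE(\lambda)h\rangle\le\lambda$, but the lower half $A(I-E(\lambda))\ge\lambda(I-E(\lambda))$ follows symmetrically from Remark \ref{remark} applied to $t\bigl(1-f_{\lambda,n}(t)\bigr)\ge\lambda\bigl(1-f_{\lambda,n}(t)\bigr)$, and once the numerical range of the self-adjoint operator $A|_{\mathrm{ran}\,P_\varepsilon}$ lies in $[\lambda_0-\varepsilon,\lambda_0+\varepsilon]$ your norm estimate follows. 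With these two points spelled out, your argument is a complete proof of the corollary and fills a genuine gap in the exposition.
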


\vskip 5pt
From Theorem \ref{thm1} and Remark \ref{remarkcomm} we get:

\vskip 5pt
\begin{cor} Let $A,B\in \bS({\mathcal{H}})$ be commuting. Then $AB\in \bS({\mathcal{H}})$.
\end{cor}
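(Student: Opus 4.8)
The plan is to follow the route indicated in the statement, namely to read off self-adjointness of $AB$ from the spectral decompositions of $A$ and $B$ together with the commutativity of their spectral measures. Before doing so, it is worth recording that there is a completely elementary shortcut: since $A$ and $B$ are bounded self-adjoint operators, one has $(AB)^*=B^*A^*=BA=AB$, so $AB\in\bS({\mathcal{H}})$ at once. The interest of the spectral-measure proof is that it fits the narrative of this chapter and exhibits $AB$ through a joint functional calculus, so I will organize the argument that way.

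For the spectral approach, first I would invoke Theorem \ref{thm1} to write $A=\int\lambda\,{\mathrm{d}}E(\lambda)$ and $B=\int\nu\,{\mathrm{d}}F(\nu)$, where $\{E(\lambda)\}$ and $\{F(\nu)\}$ are the spectral measures of $A$ and $B$. Since $AB=BA$, Remark \ref{remarkcomm} yields the crucial commutation relations $E(\lambda)F(\nu)=F(\nu)E(\lambda)$ for all $\lambda,\nu\in\bR$. I would then approximate by finite Riemann--Stieltjes sums: fixing partitions of the (bounded) spectra, set
$$
A_n=\sum_k \lambda_k\bigl(E(\lambda_{k+1})-E(\lambda_k)\bigr),\qquad B_n=\sum_j \nu_j\bigl(F(\nu_{j+1})-F(\nu_j)\bigr),
$$
so that $A_n\to A$ and $B_n\to B$ in operator norm. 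Each increment $E(\lambda_{k+1})-E(\lambda_k)$ and $F(\nu_{j+1})-F(\nu_j)$ is a projection, and by the commutation relations any product of two such increments is again a self-adjoint projection. Hence $A_nB_n=\sum_{k,j}\lambda_k\nu_j\,(E(\lambda_{k+1})-E(\lambda_k))(F(\nu_{j+1})-F(\nu_j))$ is a real linear combination of commuting projections, and is therefore self-adjoint.

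Finally I would pass to the limit: because $A_n\to A$ and $B_n\to B$ in norm and all operators are bounded, $A_nB_n\to AB$ in norm, and a norm limit of self-adjoint operators is self-adjoint, whence $AB\in\bS({\mathcal{H}})$. The only point requiring a little care — and the one I expect to be the main obstacle to present cleanly — is the bookkeeping that turns the two commuting one-parameter families into a genuine product over the partition, i.e.\ verifying that the cross terms $(E(\lambda_{k+1})-E(\lambda_k))(F(\nu_{j+1})-F(\nu_j))$ behave like the cells of a joint spectral measure and that the telescoping sums reproduce $A_n$ and $B_n$ on each factor. Given Remark \ref{remarkcomm} this is routine, since commuting projections multiply to projections, so the whole argument reduces to the elementary fact that real combinations and norm limits of self-adjoint operators stay self-adjoint.
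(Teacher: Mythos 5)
Your proof is correct and follows the same route the paper intends: the corollary is stated there as an immediate consequence of Theorem \ref{thm1} and Remark \ref{remarkcomm}, and your Riemann--Stieltjes approximation by commuting projection increments simply makes that derivation explicit. Your preliminary observation that $(AB)^*=B^*A^*=BA=AB$ already settles the statement is also valid and is in fact the shortest proof.
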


\vskip 5pt
Taking advantage that a normal operator $N$ has a decomposition $N=A+iB$ in which $A$ and $B$ are two commuting self-adjoint
operators, the above corollary and Remark \ref{remarkcomm} show that $A$ and $B$ have
two commuting respective spectral measures, say $E(x)$ and $F(y)$, $x,y\in\bR$. Thus  we obtain a spectral measure $G(z)$, $z\in\bC$, for $N$ and get the following spectral theorem for normal operators:

\vskip 5pt
\begin{theorem}
Let $N$ be a normal operator with its spectral measure $G(\lambda)$. Then
$$
N=\int_{\bC} \lambda \,{\mathrm{d}} G(\lambda)=\int_{\sigma(N)} \lambda \,{\mathrm{d}} G(\lambda)
$$
\end{theorem}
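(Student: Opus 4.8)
The final theorem asserts that a normal operator $N$ on a separable Hilbert space admits a spectral measure $G(\lambda)$, $\lambda\in\bC$, such that
$$
N=\int_{\bC} \lambda \,{\mathrm{d}} G(\lambda)=\int_{\sigma(N)} \lambda \,{\mathrm{d}} G(\lambda).
$$

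Let me think about how to prove this from the self-adjoint spectral theorem (Theorems 9.5.1/9.5.2), which is available.

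The key input is the Cartesian decomposition $N = A + iB$ with $A = (N+N^*)/2$ and $B = (N-N^*)/(2i)$ both self-adjoint. Since $N$ is normal, $NN^* = N^*N$, and a direct computation shows $AB = BA$. Then I invoke the earlier corollary that commuting self-adjoint operators have commuting spectral measures (Remark 9.4.5, "remarkcomm"): if $E(x)$ is the spectral measure of $A$ and $F(y)$ that of $B$, then $E(x)F(y) = F(y)E(x)$ for all $x,y \in \bR$.

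So the plan: build $G$ as the "product measure" of $E$ and $F$. For a complex number $z = x+iy$, one wants $G$ to assign to a rectangle $(x_1,x_2]\times(y_1,y_2]$ the projection $\bigl(E(x_2)-E(x_1)\bigr)\bigl(F(y_2)-F(y_1)\bigr)$; because $E$ and $F$ commute, this product of two commuting projections is again a projection, and these projections are mutually orthogonal over disjoint rectangles. One extends this to a projection-valued measure on Borel sets of $\bC$ in the standard way. Then since $A=\int x\,dE(x)$ and $B=\int y\,dF(y)$ by Theorem 9.5.1, and $E,F$ commute, the integrals combine: $N = A+iB = \int (x+iy)\,dG(x+iy) = \int_{\bC}\lambda\,dG(\lambda)$. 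The restriction to $\sigma(N)$ follows because $G$ vanishes off the support, exactly as in the self-adjoint case (Theorem 9.5.2) where the spectrum is the complement of the open set on which the measure is locally zero.

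The main obstacle I expect is the rigorous construction and well-definedness of the two-dimensional projection-valued measure $G$ from the one-dimensional commuting families $E$ and $F$ — in particular verifying countable additivity (strong $\sigma$-additivity) of $G$ on Borel sets and checking that the iterated Stieltjes integral $\int\int (x+iy)\,dE(x)\,dF(y)$ genuinely equals $A+iB$ in the strong-operator sense. This requires a Fubini-type argument for the commuting spectral measures, and care that $E$ and $F$ commuting (as projections) makes the product $E(\cdot)F(\cdot)$ behave like a genuine product measure. The convergence bookkeeping is the technical heart; the algebra ($AB=BA \Rightarrow$ the pieces recombine) is routine once commutativity of the measures is in hand. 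I would keep the exposition brief, treating the construction of $G$ on rectangles explicitly and citing the monotone/additivity properties established for the self-adjoint spectral measure to extend $G$ and to identify the support with $\sigma(N)$.
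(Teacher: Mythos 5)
Your proposal follows exactly the route the paper takes: Cartesian decomposition $N=A+iB$ into commuting self-adjoint parts, the corollary that commuting self-adjoint operators have commuting spectral measures, and the assembly of a product spectral measure $G$ on $\bC$ from $E$ and $F$. The paper in fact only sketches this in a single sentence and leaves the construction of $G$ and the Fubini-type verification implicit, so your identification of the technical work (well-definedness and $\sigma$-additivity of $G$ on rectangles, and recombining the integrals) is a faithful, slightly more explicit account of the same argument.
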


\end{document}